\documentclass{amsart}
\usepackage{amsmath,amssymb,amscd}
\usepackage{hyperref}
\usepackage{amsthm}

\usepackage{mathrsfs}
\usepackage{accents}
\usepackage{url}
\usepackage{xcolor}

\newcommand{\CC}{\mathbb{C}}
\newcommand{\QQ}{\mathbb{Q}}

\newcommand{\kld}{$k$-long decomposition}
\newcommand{\klds}{$k$-long decompositions}
\newcommand{\kspl}{$k$-skew periodic sequence of linear polynomials\textsf{-linear}}
\newcommand{\kspls}{$k$-skew periodic sequences of linear polynomials\textsf{-linear}}

\newcommand{\ksca}{k\textsf{-scalar}}
\newcommand{\kleq}{\mathscr{D}_k}
\newcommand{\Affk}{\operatorname{ASym}_k}
\newcommand{\Symk}{\operatorname{Sym}_k}
\newcommand{\STk}{\operatorname{ST}_k}
\newcommand{\STpk}{\operatorname{ST}^+_k}
\newcommand{\id}{\operatorname{id}}
\newcommand{\trace}{\operatorname{trace}}

\newcommand{\NN}{\mathbb{N}}

\theoremstyle{theorem}
\newtheorem{thm}{Theorem}[section]
\newtheorem{lem}[thm]{Lemma}
\newtheorem{prop}[thm]{Proposition}
\newtheorem{remark}[thm]{Remark}
\newtheorem{cor}[thm]{Corollary}

\newtheorem{fact}[thm]{Fact}
\newtheorem{claim*}{Claim}[thm]
\newtheorem{claim}{Claim}[thm]

\theoremstyle{definition}
\newtheorem{Def}[thm]{Definition}
\newtheorem{nota}[thm]{Notation}
\newtheorem{conv}[thm]{Convention}

\theoremstyle{remark}
\newtheorem{Rk}[thm]{Remark}

\newenvironment{pfc}{\noindent {\emph{Proof of Claim:}}}{\hspace{\fill} $\maltese$ \vspace{.1in}}

\newcommand{\ZZ}{\mathbb{Z}}

\newcommand{\reduct}{\overrightarrow}
\newcommand{\congru}{\overline}

\newcommand{\indeg}{\operatorname{in-deg}}
\newcommand{\outdeg}{\operatorname{out-deg}}

\newcommand{\ins}{\operatorname{in}}
\newcommand{\outs}{\operatorname{out}}
\newcommand{\pud}{\operatorname{pud}}

\begin{document}

\title[Skew-invariance and Mahler functions]{Skew-invariant curves and the algebraic independence of
Mahler functions}

\author{Alice Medvedev}
\email{medvedev.math.ccny@gmail.com}
\address{Signal Fox \\
175 Varick St., 2nd Floor \\
New York, NY 10014 \\
USA}

\author{Khoa D.~Nguyen}
\email{dangkhoa@ucalgary.ca}
\address{University of Calgary. MS 542 \\
2500 University Drive NW \\
Calgary, AB T2N 4T4 \\
Canada}

\author{Thomas Scanlon }
\email{scanlon@math.berkeley.edu}
\address{University of California, Berkeley \\
Department of Mathematics \\
Evans Hall \\
Berkeley, CA 94720-3840 \\
USA}

\begin{abstract}
For $p \in \mathbb{Q}_+ \smallsetminus \{ 1 \}$
a positive rational number different from one, we
say that the Puisseux series
$f \in \mathbb{C}((t))^\text{alg}$ is
$p$-Mahler of non-exceptional polynomial type if
there is a polynomial $P \in \CC(t)^\text{alg}[X]$
of degree at least two which is not conjugate to either a
monomial or to plus or minus a Chebyshev polynomial for
which the equation $f(t^p) = P(f(t))$ holds.
We show that if $p$ and $q$ are multiplicatively
independent and $f$ and $g$ are $p$-Mahler
and $q$-Mahler, respectively, of non-exceptional
polynomial type, then $f$ and $g$ are algebraically
independent over $\mathbb{C}(t)$.  This theorem is proven 
as a consequence of a more general theorem that if $f$ is 
$p$-Mahler of non-exceptional polynomial type, and 
$g_1, \ldots, g_n$ each satisfy some difference equation 
with respect to the substitution $t \mapsto t^q$, then 
$f$ is algebraically independent from 
$g_1, \ldots, g_n$.  These theorems are themselves consequences
of a refined classification of skew-invariant curves for 
split polynomial dynamical systems on $\mathbb{A}^2$.    	
\end{abstract}

\maketitle

\section{Introduction}

Mahler introduced the class of functions which now bear his name in~\cite{Mahler-first} with the
aim of systematically deducing transcendence, and more generally algebraic independence, of special
values of certain transcendental functions satisfying functional equations.  Specifically,
for a positive rational number $q$ not equal to $1$  we say that the formal Puissuex series
$f \in \CC((t))^\text{alg}$ is $q$-Mahler of polynomial type if there is a polynomial
$Q(X) \in \CC(t)^\text{alg}[X]$ of degree at least two for which $f(t^q) = Q(f(t))$.\footnote{Mahler
usually considers the case that $q$ is the reciprocal of a positive integer.  The greater
generality adds no complexity to our arguments and permits us to make suitable reductions throughout
the course of our proof.  Functions satisfying other functional equations appear in the literature under the
name of Mahler functions.  Notably, often, as in~\cite{Nishioka-book}, $Q$ is allowed to be a
rational function in which case we might say that $f$ is $q$-Mahler of rational
type. In other works (see, for instance~\cite{AB,ScSi}) $q$-Mahler functions are those $f$ for
which $f(t^{q^j})$ (for $j \in \mathbb{N}$) are linearly dependent over
$\mathbb{C}(t)$.  In this case, we might say that $f$ is $q$-Mahler
of linear type.  }

We address the problem of determining the possible algebraic relations between
$p$-Mahler and $q$-Mahler functions for multiplicatively independent $p$ and $q$.
Versions of this problem have been studied.  For example, Zannier shows in~\cite{Za99}
that a transcendental function cannot be simultaneously $p$-Mahler and $q$-Mahler of
polynomial type when $p$ and $q$ are multiplicatively independent integers and the
polynomials have coefficients in $\mathbb{C}[t]$.  Adamczewski and Bell~\cite{AB} prove
analogous results when ``polynomial type'' is replaced by ``linear type''.
Our main theorem is that
if $p$ and $q$ are multiplicatively independent
positive rational numbers and $f$ and $g$ are
$p$-Mahler and $q$-Mahler, respectively, of
\emph{non-exceptional} polynomial type, then
$f$ and $g$ are algebraically independent over
$\CC(t)$.

To state this theorem precisely we should recall
the notion of exceptional polynomials.

There are three basic kinds of exceptional
polynomials: linear polynomials, monomials, and
(scalings of) Chebyshev polynomials.  \footnote{
For each positive integer $N$ there is a
unique monic polynomial $C_N$ which
satisfies the functional equation
$C_N(X + \frac{1}{X}) = X^N + \frac{1}{X^N}$. 
For us, a Chebyshev polynomial is a polynomial of
the form $C_N$ for some $N \geq 2$.} 
A polynomial $P$ is exceptional if it is linear or
$P$ is conjugate~\footnote{Usually, for two polynomials $P$ and $Q$ we say that
$P$ and $Q$ are conjugate if there is a linear
polynomial $L$ with $P \circ L = L \circ Q$.
For the questions treated in this paper we require the notion of \emph{skew-conjugacy} relative to a field automorphism $\sigma$. The polynomials $P$ and $Q$ are skew-conjugate if there is a linear polynomial $L$ for which $P \circ L = L^\sigma \circ Q$ where if $L(x) = a x + b$, 
then $L^\sigma = \sigma(a) x + \sigma(b)$.  
If $\sigma$ acts trivially, then the usual notion
of conjugacy agrees with skew-conjugacy.   See 
Definition~\ref{def:lin-eq} for details.   } to
$X^N$ or to $\pm C_N$ where $N =
\operatorname{deg}(P)$.  
 
Otherwise,
$P$ is \emph{non-exceptional}.~\footnote{In~\cite{MS}
the non-exceptional polynomials are called ``disintegrated''.}  For
$p \in \QQ_+ \smallsetminus \{ 1 \}$ we say that
$f \in \CC((t))^\text{alg}$ is $p$-Mahler of
non-exceptional polynomial type if there is a
non-exceptional polynomial $P \in \CC(t)^\text{alg}[X]$
for which the equation $f(t^p) = P(f(t))$ holds.
Our main theorem takes the following form.

\begin{thm}
\label{thm:ind-mahler}
Let $p$ and $q$ be two multiplicatively independent
positive rational numbers, $f \in \CC((t))^\text{alg}$
and $g \in \CC((t))^\text{alg}$ be two formal
Puisseux series over the complex numbers, and suppose
that $f$ and $g$ are $p$-Mahler and $q$-Mahler,
respectively, of non-exceptional polynomial type.
Then $f$ and $g$ are
algebraically independent over $\CC(t)$.
\end{thm}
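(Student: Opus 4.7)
I would argue by contradiction, supposing $R(f,g)=0$ for some nonzero $R\in\CC(t)[X,Y]$, so that the point $(f,g)\in\mathbb{A}^2(\CC((t))^{\text{alg}})$ lies on an irreducible plane curve $C$ defined over $\CC(t)^{\text{alg}}$. Write $\sigma_p,\sigma_q$ for the commuting substitution endomorphisms $t\mapsto t^p$ and $t\mapsto t^q$ on $\CC((t))^{\text{alg}}$, so that the Mahler hypotheses read $f^{\sigma_p}=P(f)$ and $g^{\sigma_q}=Q(g)$. The asymmetry to be overcome is that there is no a priori control of $g^{\sigma_p}$ in terms of $g$, nor of $f^{\sigma_q}$ in terms of $f$: each function comes equipped with only one of the two natural difference operators.

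To circumvent this, I would exploit the algebraic relation $R(f,g)=0$ to bootstrap the missing control. Applying $\sigma_q$ gives $R^{\sigma_q}(f^{\sigma_q},Q(g))=0$, so $f^{\sigma_q}$ is algebraic over $\CC(t)^{\text{alg}}(f,g)$; iterating places the whole orbit $\{f^{\sigma_q^n}\}_{n\geq 0}$ inside a finitely generated algebraic extension of $\CC(t)^{\text{alg}}(f,g)$. An analogous argument using $\sigma_p$ controls $\{g^{\sigma_p^m}\}_{m\geq 0}$. From these boundedness statements I would extract an irreducible curve $C'\subset\mathbb{A}^2$ defined over a field closed under $\sigma_p$, containing some pair $(f, f^{\sigma_q^n})$ (or a suitably chosen pair of iterates), and, by carefully tracking the $p$-Mahler relation through the substitution, exhibit $C'$ as a skew-invariant curve for the split polynomial dynamical system $(x,y)\mapsto(P(x),P(y))$ with respect to $\sigma_p$, in the skew-conjugacy sense of the footnote to the introduction. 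A parallel construction, with the roles of $(p,P)$ and $(q,Q)$ exchanged, would yield a skew-invariance statement for $(Q,Q)$ with respect to $\sigma_q$.

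At this point I would invoke the classification of skew-invariant curves for split polynomial dynamics on $\mathbb{A}^2$ with non-exceptional iterating polynomial, which is the main technical theorem promised in the abstract. The non-exceptional hypothesis on $P$ (and on $Q$) rules out the monomial and Chebyshev phenomena and forces any such skew-invariant curve to come from a very restricted algebraic identity, essentially a compositional identity between iterates of the polynomial, skew-conjugated by a linear polynomial. This structural rigidity --- imposed simultaneously by $(P,P,\sigma_p)$ and $(Q,Q,\sigma_q)$ --- then has to be combined with the multiplicative independence of $p$ and $q$: the two rigidities would be mutually compatible only if they factored through a common iterate, and the multiplicative independence of $p$ and $q$ precisely forbids this.

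The main obstacle, as I see it, is the second paragraph: turning a bare algebraic dependence between $f$ and $g$, which couples two a priori unrelated difference operators, into a single honest skew-invariant curve for a split polynomial dynamical system, defined over a field on which one substitution acts as an endomorphism. Handling the twists carefully --- so that the conclusion lands squarely in the hypothesis of the skew-invariant curves classification rather than in some auxiliary variant --- is where the refinement from ordinary conjugacy to skew-conjugacy becomes indispensable, and is where I expect the bulk of the technical work of the proof to live.
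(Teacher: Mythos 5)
Your opening moves match the paper's: given $R(f,g)=0$, you apply $\sigma_q$ repeatedly to land $\{\sigma_q^n(f)\}_{n\geq 0}$ inside a finitely generated extension, deduce an algebraic relation among a finite collection of $\sigma_q$-iterates of $f$, and extract from that a skew-invariant curve. This is exactly what the paper does in Lemma~\ref{reductiontoneti}, using~\cite[Proposition 2.21]{MS} to cut the $(N+1)$-dimensional dependence down to a two-variable relation between $f$ and $\sigma_q^m(f)$ for some $m$. Two corrections to the setup, though: the skew-invariant curve lives in $\mathbb{A}^2$ for the system $(x,y)\mapsto(P(x),P^{\sigma_q^m}(y))$ with respect to $\sigma_p$, not for $(P(x),P(y))$ --- the twist by $\sigma_q^m$ on the second coordinate is what puts you into the hypotheses of Theorem~\ref{thm:skew-inv}. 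And the ``parallel construction'' controlling $\{\sigma_p^m(g)\}$ and producing a second skew-invariant curve for $(Q,Q^{\sigma_p^m})$ is unnecessary: the paper's Theorem~\ref{thm:abs-Mah} shows that $g$ only needs to satisfy \emph{some} $\sigma_q$-difference equation (it need not be non-exceptional, or even polynomial type). All $g$ contributes is that $K\langle g\rangle_{\sigma_q}$ has finite transcendence degree.

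The genuine gap is the final step, and your own diagnosis of where the difficulty lies is off. You predict the hard part is ``turning a bare algebraic dependence between $f$ and $g$ into a single honest skew-invariant curve''; in fact that step is short (Lemma~\ref{reductiontoneti}). The hard content is in two places you pass over. First, Theorem~\ref{thm:skew-inv} itself --- the classification that every $(P,P^\tau)$-skew-invariant curve for non-exceptional $P$ is a \emph{skew-twist} --- is where nearly all the paper's technical effort lives (all of Section~\ref{sec:skew-inv}); this is a refinement of~\cite{MS}, not something you can simply ``invoke.'' Second, and this is the missing step in your argument: you propose that ``the two rigidities would be mutually compatible only if they factored through a common iterate, and multiplicative independence forbids this,'' but you give no mechanism, and this is not what actually happens. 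The paper's route after the classification (Lemmas~\ref{failure-to-linear} and~\ref{nlin}) is quite different. The skew-twist structure gives $\tau(f)=\pi(f)$ for some compositional prefix $\pi$ of an iterate of $P$. Iterating this relation and re-applying Theorem~\ref{thm:skew-inv}, a count of indecomposable factors (divisibility by the Ritt length $k$ of $P$) forces the residual prefix to be \emph{linear} after replacing $\tau$ by a power: $\tau(f)=\lambda(f)$ with $\lambda$ linear. A direct coefficient computation then normalizes $P$ and $\lambda$ so that $\sigma\bigl(\tau(f)/f\bigr)=\bigl(\tau(f)/f\bigr)^{\deg P}$, and it is Condition (7) of Convention~\ref{axconv} --- whose verification for Puiseux series is precisely where multiplicative independence of $p$ and $q$ is used, via an order-of-vanishing comparison --- that shows this difference equation has no solution with $f\notin K$, contradicting transcendence of $f$. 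None of this is ``factoring through a common iterate,'' and without something like the linearization trick of Lemma~\ref{failure-to-linear} your sketch does not close.
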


The hypotheses in Theorem~\ref{thm:ind-mahler} are much
stronger than we actually require and the conclusion
is much weaker than what prove.   Indeed, it suffices
to assume merely that $f$ is $p$-Mahler
of non-exceptional polynomial type and that
$g$ satisfies some difference equation with respect
to the substitution $t \mapsto t^q$.  In fact,
our proof Theorem~\ref{thm:ind-mahler} is algebraic
and applies in a much wider context.

The substitutions $t \mapsto t^p$ and $t \mapsto t^q$
induce commuting field automorphisms $\sigma$ and
$\tau$ of the fields of algebraic
functions $\CC(t)^\text{alg}$ and of
formal Laurent series.  Moreover,
the multiplicative independence
of $p$ and $q$ imply that $\sigma$
and $\tau$ are independent in the sense
that if $h$ is a non-constant
formal Puisseux series
and $\left( m, n \right) \neq
\left( 0, 0 \right)$, then
$\sigma^m \tau^n(h) \neq h$.     To say that $f$ is a
$p$-Mahler of non-exceptional polynomial type is
simply to say that there is a non-exceptional
polynomial $P \in \CC(t)^\text{alg}[X]$ with
$\sigma(f) = P(f)$.   Abstracting this situation,
we formulate our more general theorem on
algebraic independence.

\begin{thm}
\label{thm:abs-Mah}
Let $C \subseteq K \subseteq L$ be a tower
of algebraically closed fields of characteristic
zero.   Suppose that $\sigma$ and
$\tau$ are commuting automorphisms of $L$ which
preserve $K$ and have $C$ as their common fixed
field.  Suppose moreover that $\sigma$ and
$\tau$ are independent in the sense that if
$a \in L \smallsetminus C$ and $\left( m, n \right)
\in \mathbb{Z}^2 \smallsetminus \{ \left( 0, 0 \right)
\}$, then $\sigma^m \tau^n (a) \neq a$.

We make two additional hypotheses about difference
equations.  First, for any automorphism $\rho$ in
the group generated by $\sigma$ and $\tau$ and any
$c \in C$, there are no nonconstant solutions to
the equation $\rho(x) = c x$ in $L$.  Secondly,
for $\rho$ and $\mu$ independent automorphisms
in the group generated by $\sigma$ and $\tau$,
$M \in \mathbb{Z}_+$ a positive integer, and
$a \in L^\times$ an element of $L$ satisfying
$\rho(\frac{\mu(a)}{a}) = (\frac{\mu(a)}{a})^M$,
then $a \in K$.

Suppose that $f \in L$ satisfies an equation of
the form $\sigma(f) = P(f)$ for some
non-exceptional polynomial $P \in K[X]$ and that
$g_1, \ldots, g_n \in L$ each satisfy non-trivial
$\tau$-difference equations over $K$.  That is,
for some $r \in \mathbb{N}$
there are non-zero polynomials
$Q_i(X_0, X_1, \ldots, X_r) \in K[X_0, \ldots, X_r]$
so that $Q_i(g_i,\tau(g_i), \ldots, \tau^r(g_i)) = 0$
for $1 \leq i \leq n$.   Then $f$ is algebraically
independent from $g_1, \ldots, g_n$ over $K$. 	
\end{thm}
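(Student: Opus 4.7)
The plan is to argue by contradiction and reduce matters to the paper's classification of skew-invariant curves for split non-exceptional polynomial dynamical systems on $\mathbb{A}^2$. Suppose for contradiction that $f$ is algebraic over $K(g_1,\dots,g_n)$; I will show this forces $f\in K$, which is equivalent to the asserted algebraic independence.

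\emph{Step 1: transfer the difference equations from the $g_i$ to $f$.} Let $N\subseteq L$ be the smallest $\tau$-stable subfield of $L$ containing $K$ together with $g_1,\dots,g_n$. The $\tau$-difference equations on the $g_i$ force $\operatorname{trdeg}_K N<\infty$, and hence also $\operatorname{trdeg}_K N^{\text{alg}}<\infty$. Since $f\in N^{\text{alg}}$ and $\tau$ preserves $N^{\text{alg}}$, the elements $\tau^j(f)$ all lie in $N^{\text{alg}}$, so there is a minimal $m\ge 0$ with $\tau^m(f)$ algebraic over $K(f,\tau(f),\dots,\tau^{m-1}(f))$. If $m=0$, then $f\in K^{\text{alg}}=K$ and we are done, so assume $m\ge 1$, and let $H\subseteq\mathbb{A}^{m+1}_K$ be the irreducible hypersurface over $K$ whose generic point is $(f,\tau(f),\dots,\tau^m(f))$.

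\emph{Step 2: exhibit $H$ as skew-invariant.} Because $\sigma\tau=\tau\sigma$ and $\sigma(f)=P(f)$, we compute $\sigma(\tau^j(f))=P^{\tau^j}(\tau^j(f))$ for every $j$. Consequently the split polynomial map $\Phi=(P,P^\tau,\dots,P^{\tau^m})\colon\mathbb{A}^{m+1}\to\mathbb{A}^{m+1}$ sends the generic point of $H$ to that of $H^\sigma$, and a dimension count gives $\Phi(H)=H^\sigma$. Each coordinate $P^{\tau^j}$ remains non-exceptional, since skew-conjugation in the sense of Definition~\ref{def:lin-eq} preserves the class of non-exceptional polynomials. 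Projecting $H$ onto any pair of coordinates $(i,j)$ yields a $\sigma$-skew invariant plane curve $C_{ij}\subseteq\mathbb{A}^2_K$ for the split system $(P^{\tau^i},P^{\tau^j})$.

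\emph{Step 3: apply the classification and reach the contradiction.} The paper's main classification theorem restricts each nontrivial $C_{ij}$ to be either (a) a coordinate-fiber $\{x_i=c\}$ or $\{x_j=c\}$ with $c\in K$ a preperiodic point, or (b) the graph of a polynomial $R$ that intertwines iterates of $P^{\tau^i}$ with those of $P^{\tau^j}$ up to a linear skew-conjugacy. In case (a) some $\tau^k(f)\in K$; since $\tau$ preserves $K$, also $f\in K$, contradicting $m\ge 1$. Case (b) gives a relation $\tau^j(f)=R(\tau^i(f))$ which, after composing with $\sigma(f)=P(f)$ and using $\sigma\tau=\tau\sigma$ to move through $\langle\sigma,\tau\rangle$, produces either a nontrivial solution to $\rho(a)=ca$ with $c\in C$, or a nontrivial solution to $\rho(\mu(a)/a)=(\mu(a)/a)^M$, for some independent $\rho,\mu\in\langle\sigma,\tau\rangle$ and some $M\in\mathbb{Z}_+$; both possibilities are excluded by the two explicit difference-equation hypotheses of the theorem. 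Either way we obtain the required contradiction.

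The principal obstacle is \emph{Step~3}: correctly translating the geometric skew-invariant curve classification on $\mathbb{A}^2$ into the precise arithmetic identities forbidden by the hypotheses, and checking that the $\rho$ and $\mu$ so produced are genuinely nontrivial elements of $\langle\sigma,\tau\rangle$. The commutativity of $\sigma$ and $\tau$ and the independence hypothesis on their joint action on $L\setminus C$ are exactly what guarantee the nontriviality needed to close the argument, and tracking the effect of skew-conjugation by $\tau$ through all the intermediate dynamical systems $(P^{\tau^i},P^{\tau^j})$ is where the bookkeeping becomes delicate.
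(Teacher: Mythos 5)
Your overall strategy is the paper's: reduce, via the finiteness of $\operatorname{tr.deg}_K K\langle g_1,\dots,g_n\rangle_\tau$, to an algebraic dependence among the $\tau$-translates of $f$, and then invoke the classification of skew-invariant curves for split non-exceptional systems. But there are two genuine gaps. First, in Step 2 the claim that projecting $H$ onto \emph{any} pair of coordinates yields a skew-invariant plane \emph{curve} is false. By the minimality of $m$, the elements $f,\tau(f),\dots,\tau^{m-1}(f)$ are algebraically independent over $K$, so for $i<j\le m-1$ the projection of $H$ to the $(i,j)$-coordinates is dominant onto $\mathbb{A}^2$ and is not a curve; and even for pairs involving the last coordinate, algebraicity of $\tau^m(f)$ over the whole field $K(f,\dots,\tau^{m-1}(f))$ does not imply algebraicity over a single $K(\tau^i(f))$. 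The existence of \emph{some} pair of coordinates whose joint locus is a curve is exactly the content of \cite[Proposition 2.21]{MS} (a structural "disintegratedness" fact special to non-exceptional polynomials), which the paper cites at this point and which your argument needs but does not supply. Without it, Step 2 produces no plane curve to classify.

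Second, Step 3 — which you correctly identify as the principal obstacle — is where essentially all of the remaining work lies, and your sketch does not carry it out. The classification (Theorem~\ref{thm:skew-inv}) gives a horizontal/vertical line or a skew-twist curve; the line case indeed yields $f\in K$. But in the skew-twist case the resulting relation is $\tau(f)=\pi(f)$ where $\pi$ is a compositional segment of the $\sigma$-iterates of $P$, and extracting a forbidden difference equation from this requires two further nontrivial steps that the paper supplies in Lemmas~\ref{failure-to-linear} and~\ref{nlin}: (i) a \emph{second} application of Theorem~\ref{thm:skew-inv} to the curve defined by $\pi^{\tau^{k-1}}\circ\cdots\circ\pi^{\tau}\circ\pi$, combined with a count of indecomposable compositional factors, to show that after replacing $\tau$ by $\sigma^{-m}\tau^{k}$ and $f$ by $\sigma^m(f)$ one may take $\pi$ to be \emph{linear}; and (ii) a normalization of $P$ (centering it and using the hypothesis on $\rho(x)=cx$ together with the fact that $P$ is not conjugate to a monomial to make the leading coefficient constant), from which the coefficient identity $A^\sigma=A^{\deg P}$ and hence $\sigma\bigl(\tau(f)/f\bigr)=\bigl(\tau(f)/f\bigr)^{\deg P}$ follow. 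Your phrase "composing with $\sigma(f)=P(f)$ and using $\sigma\tau=\tau\sigma$" does not indicate how the linearity of the intertwiner is obtained, and without it there is no route to either of the two forbidden equations.
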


Recently, Adamczewski, Dreyfus, Hardouin, and 
Wibmer~\cite{ADHW} proved a similar algebraic independence
statement under the hypothesis that $f$ satisfies a 
linear $\sigma$-difference equation.

Our proof of Theorem~\ref{thm:abs-Mah} makes use
of the full strength of the characterization of
skew-invariant curves from~\cite{MS}.  
Most uses of~\cite{MS} to date invoke merely the
characterization of invariant curves for
dynamical systems of the form $\left( x, y \right)
\mapsto \left( P(x), P(y) \right)$ where
$P$.  For example, the second author of the present paper obtained 
in~\cite{Ng15} 
a result on the algebraic independence of the analytic conjugacies associated to
Mahler functions by using just the characterization of invariant varieties.
For invariant varieties, the relevant description may also be
obtained using the theory of orbifolds
as shown by Pakovich~\cite{Pa17} and admits
a generalization to the case where $P$ is a
rational function~\cite{Pa19}.  For the application
to our problem on the algebraic independence of
Mahler functions, we need a precise description
of the possible skew-invariant curves for maps of
the form $\left( x, y \right)
\mapsto \left( P(x), P^\tau(y) \right)$ where
$P$ is a non-exceptional polynomial and
$\tau:K \to K$ is any field automorphism.   The
main theorem of~\cite{MS} describes these
combinatorially and in the
present paper we push that characterization further,
showing in the language of~\cite{MS},
that these skew-invariant curves are
given by skew-twists.

Let us give the requisite definitions and state
the precise form of our theorem on skew-invariant
curves.

A difference field $(K,\sigma)$ is a field $K$
given together with a distinguished field
automorphism $\sigma:K \to K$.   For any
object $X$ defined over $K$, we write $X^\sigma$
for the transform of $X$ over $K$.  For example,
for a polynomial $P \in K[X]$, $P^\sigma$ is the
polynomial obtained by applying $\sigma$ to the
coefficients of $P$.   Given an algebraic variety
$X$ over $K$ and a regular map $f:X \to X^\sigma$,
we say that the subvariety $Y \subseteq X$ is
\emph{skew-invariant} if the restriction of $f$ to
$Y$ maps $Y$ to $Y^\sigma$.

Given two one-variable polynomials $P, Q \in K[X]$,
we say that $Y \subseteq \mathbb{A}^2_K$ is a
skew-twist curve for $(P,Q)$ if there are polynomials $\alpha$
and $\beta$ and an integer $n$ so that 
$P=\beta\circ \alpha$, $Q=\alpha^{\sigma^{n+1}}\circ \beta^{\sigma^n}$, and
\begin{itemize}
\item either $n\geq 0$ and $Y$ is defined by
$y = \alpha^{\sigma^n} \circ P^{\sigma^{n-1}}
\circ \cdots \circ P^\sigma \circ P (x)$

\item or $n\leq -1$ and $Y$ is defined by 
$x = \beta^{\sigma^{-1}}\circ Q^{\sigma^{-n-2}}\circ Q^{\sigma^{-n-3}}\circ\cdots\circ Q(y)$; note that when $n=-1$, this simply mean $x=\beta^{\sigma^{-1}}(y)$.
\end{itemize}

With these definitions in place we may state our
theorem on skew-invariant curves.

\begin{thm}
\label{thm:skew-inv}
Let $(K,\sigma)$ be a difference field of characteristic
zero and $\tau:K \to K$ be a field automorphism commuting with $\sigma$.
Let $P \in K[X]$ be a non-exceptional polynomial.
Then every skew-invariant curve for $(P,P^\tau)$
is either a horizontal or vertical line or a skew-twist curve.	 
\end{thm}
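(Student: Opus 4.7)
The plan is to start from the combinatorial classification of skew-invariant curves for split polynomial dynamics proved in~\cite{MS} and then to use the non-exceptionality of $P$ to collapse that classification to the explicit skew-twist form. If $Y$ is a horizontal or vertical line there is nothing to prove, so assume $Y\subseteq\mathbb{A}^2_K$ is an irreducible skew-invariant curve projecting dominantly to each coordinate. Applying the main theorem of~\cite{MS} to $Y$ together with the regular map $(x,y)\mapsto(P(x),P^\tau(y))$ produces a pair of compatible ``long'' decompositions of iterates of $P$ and $P^\tau$ connected by an affine (``scalar'') intertwiner that carries the $\sigma$-twist. Concretely one obtains, after possibly swapping the two coordinates, an integer $m\geq 1$ and factorizations
\[
P^{\sigma^{m-1}}\circ \cdots\circ P^\sigma \circ P = B\circ A,
\qquad
(P^\tau)^{\sigma^{m-1}}\circ \cdots \circ P^\tau = B'\circ A',
\]
linked by a linear polynomial, so that $Y$ is cut out by the corresponding skew-equation.

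Next, I would invoke the rigidity of polynomial decompositions in the non-exceptional regime to show that this a priori loose long decomposition actually comes from a single two-step decomposition of $P$ itself. Ritt's theorems classify the relations between distinct decompositions of a composite polynomial, and, in the absence of monomial or Chebyshev factors, essentially every decomposition of $P^{\sigma^{m-1}}\circ\cdots\circ P$ is obtained by cutting at one of the boundaries between consecutive copies of $P^{\sigma^i}$, followed by at most one internal cut compatible with a fixed decomposition of $P$. Threading the intertwiner through this reduction identifies a unique integer $n$, whose sign records on which side of the split the internal cut sits, together with polynomials $\alpha$ and $\beta$ for which $P = \beta\circ\alpha$ and $P^\tau = \alpha^{\sigma^{n+1}}\circ\beta^{\sigma^n}$. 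A direct computation then matches the defining equation of $Y$ with the formula in the definition of a skew-twist curve, producing the two bulleted cases corresponding to the sign of $n$.

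The main obstacle is this collapse step. The \cite{MS} classification permits the length $m$ and the intertwining data to be essentially arbitrary, so it is not at all immediate that $Y$ can be described at a single decomposition level. To push the argument through one must track carefully how each Ritt swap interacts with the $\sigma$- and $\tau$-twists and show that any genuine additional level would force $P$ to commute nontrivially with a polynomial that is not an iterate of $P$; by the Ritt classification of commuting polynomials this is possible only when $P$ is conjugate to a monomial or to $\pm C_N$, precisely the situation excluded by the non-exceptional hypothesis. A secondary, but nontrivial, bookkeeping problem is handling the sign of $n$ and the (possible) initial swap of the two coordinates, which is what produces the two separate clauses in the definition of a skew-twist curve and must be traced consistently through the reduction.
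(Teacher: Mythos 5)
Your overall strategy --- invoke the encoding of skew-invariant curves from~\cite{MS} and then ``collapse'' it to a single skew-twist using non-exceptionality --- matches the paper's starting point (Remark~\ref{rk:encoding}: every such curve is encoded by $w \star \vec{f} = \vec{g}$ with $w \in \STpk$), but the mechanism you propose for the collapse step does not work, and that step is the entire content of the paper's Section~\ref{sec:skew-inv}.

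The gap is in your claim that ``in the absence of monomial or Chebyshev factors, essentially every decomposition of $P^{\sigma^{m-1}}\circ\cdots\circ P$ is obtained by cutting at one of the boundaries between consecutive copies of $P^{\sigma^i}$, followed by at most one internal cut.'' Non-exceptionality of $P$ is a hypothesis on $P$ as a whole (it is not skew-conjugate to $X^N$ or $\pm C_N$); it does \emph{not} prevent the indecomposable compositional factors of $P$ from being monomials, Chebyshev polynomials, or Ritt polynomials $x^k u(x^\ell)^n$. Indeed the generic hard case is exactly when every factor is swappable, so that the decompositions of $P$ and of its skew-iterates are very far from rigid: the Ritt-swap orbit of a long decomposition can be large, and the word $w$ encoding the curve can a priori involve the generators $t_k$ crossing period boundaries, powers of $\phi$, and the $\epsilon_p$'s encoding monomial curves. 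Your fallback --- that a genuine extra level would force $P$ to commute with a non-iterate, contradicting Ritt's classification of commuting polynomials --- is not the relevant dichotomy; nothing in the setup produces a commuting polynomial. What is actually needed is to show that $w$ can always be rewritten in the form $u\,\phi^N\,v$ with $u,v$ in the finite symmetric group $\Symk$ (no $t_k$, no $\epsilon_p$), and the paper does this by a long case analysis: unswappable factors give walls (Corollary~\ref{cor:warm-up}); clean $\mathsf{C}$-free decompositions are handled by the in/out-degree ``nomodata'' and trace machinery culminating in Proposition~\ref{prop:thm1.3-in-out-case}; and decompositions with type-$\mathsf{C}$ factors require the theory of clusterings, gates, and wandering quadratics (Proposition~\ref{prop:one-way-Mahler}). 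None of this is subsumed by Ritt's uniqueness theorems, so your proof as written establishes the result only in the (easy) case where $P$ has an unswappable indecomposable factor.
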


Section~\ref{sec:skew-inv} comprises the bulk of 
this paper and 
is devoted to proving Theorem~\ref{thm:skew-inv}.  
In~\cite{MS} we worked with a formalism of decompositions
and Ritt monoid actions on these decompositions in 
order to encode and manipulate skew-invariant 
curves.  Starting with Subsection~\ref{sec:monoid} we 
modify that formalism by replacing the decompositions
with what we call long decompositions.  The monoid 
actions are then replaced with partial group actions
and once again a $(P,Q)$-skew-invariant curve will be 
encoded by an equation of the form 
$w \star \vec{P} = \vec{Q}$ where $\vec{P}$ and 
$\vec{Q}$ are long decompositions of $P$ and $Q$, 
respectively. When $Q = P^\tau$, then 
decomposition $\vec{Q}$ may be taken to be 
$\vec{P}^\tau$, and then as far as the ``shape'' 
of the long decompositions are concerned, it would 
seem that the action by $w$ does not have any real
effect.  Theorem~\ref{thm:skew-inv} is a mathematical
expression of this observation.  

In practice, we must work to produce enough invariants
of these long decompositions to limit the possibilities
for such a $w$.  One such invariant might be 
what we call a wall, that is, a point in the 
decomposition at which swaps are impossible.  We 
prove early on with Corollary~\ref{cor:warm-up}
that Theorem~\ref{thm:skew-inv} holds when 
a long decomposition of $P$ admits a wall.  

In Subsection~\ref{sec:inout} we study the case in 
which Chebyshev functions do not appear in a 
decomposition of $P$.  In this case, we will 
find a semi-invariant way to break a long 
decomposition of $P$ into blocks and then to describe
the our partial group actions on the decomposition
in terms of certain numerical functions.    
Finally, in Subsection~\ref{sec:clusterings} we expand
the theory of clusterings from~\cite{MS} to 
produce sufficiently robust combinatorial 
invariants in the remaining cases.  
We collect all of the steps required for the proof of 
Theorem~\ref{thm:skew-inv} in 
Subsection~\ref{sec:complete-skew-inv}.
This paper concludes with the proof of 
Theorems~\ref{thm:ind-mahler} and~\ref{thm:abs-Mah}
in Section~\ref{non-exceptional-sect}.

\subsection*{Acknowledgements}
During the writing of this paper A.M.
was partially supported by the NSF grant DMS-1500976 
and the Simons Foundation award 317672, K.N. was partially 
supported by NSERC Discovery Grant RGPIN-2018-03770 and
CRC tier-2 research stipend 950-231716, and T.S. was 
partially supported by NSF grants FRG DMS-1760413 
and DMS-1800492 and a Simons Fellowship in Mathematics. 

\section{Skew-invariant curves}
\label{sec:skew-inv}

There are two natural procedures for producing skew-invariant
curves for $\left( P, Q \right)$ where $P$ and $Q$
are univariate polynomials.  First, we have
the skew-twists.  That is, if we factor
$P = \beta \circ \alpha$ and $Q = \alpha^{\sigma^{n+1}} \circ \beta^{\sigma^n}$
for some natural number $n$ and polynomials $\alpha$ and
$\beta$, then the curve defined by $y = \alpha^{\sigma^n} \circ P^{\sigma^{n-1}}
\circ \cdots \circ P(x)$, which we call a
\emph{skew-twist curve} is $\left( P, Q \right)$-skew-invariant.
Note that here we allow for $\alpha$ or $\beta$ to be linear and $n = 0$,
so that this includes the case that $Q$ is skew-linearly conjugate to $P$.
Another natural way to obtain a skew-invariant curve would be have an
identity of the form $X^n \circ P = Q \circ X^n$ for some positive
integer $n$ in which case the curve defined $y = x^n$, which we call
a \emph{monomial curve}, is
$\left( P, Q \right)$-skew-invariant.  Of course, we may
swap the roles of $P$ and $Q$ to obtain skew-invariant curves for
$\left( Q, P \right)$.  For example, if we have the identity
$P \circ X^n = X^n \circ Q$, then the curve defined by
$y^n = x$ is $\left( P, Q \right)$-skew-invariant.  Likewise,
we may compose such skew-invariant curves to obtain new ones.
That is, if $C$ is $\left( P, Q \right)$-skew-invariant and
$D$ is $\left( Q, R \right)$-skew-invariant, then the
(possibly reducible) $D \circ C$ is $\left( P, R \right)$-skew-invariant.
The process of iterating these operations and possibly taking components of
the resulting reducible curves gives rise to all of the
skew-invariant curves.  However, this fact on its own
does not give a sufficiently clear picture of the class of
skew-invariant curves.  The description is complicated, for
instance, by the fact that the composite of two skew-twists need not be
a skew-twist itself since a given polynomial may admit
several different presentations as a composite of other polynomials.
Moreover, our two classes of basic skew-invariant curves are not
disjoint.   If $P = R \circ X^n$ and $Q = X^n \circ R$ where
$R^\sigma = R$, then the $\left( P, Q \right)$-skew-invariant
curve defined by $y = x^n$ is a skew-twist curve and is also a
monomial curve.

The skew-invariant curves for pairs
$\left( P, Q \right)$ of non-exceptional
polynomials are described in~\cite[Proposition 6.19]{MS} as components of curves encoded
by equations of the form
$w \star \vec{f} = \vec{g}$ where $\vec{f}$
is a decomposition of $f$, $\vec{g}$  is a
decomposition of $g$, and $w$ belongs to the
augmented skew-twist monoid.
With~\cite[Theorem 6.26]{MS}, various results on canonical forms
for such actions are applied to convert the
equation $w \star \vec{f} = \vec{g}$ into a
concise geometric form.  In this section, which is dedicated to the 
proof of Theorem~\ref{thm:skew-inv}, 
we modify the formalism
of decompositions and monoid actions to
work with what we call long decompositions and will
then specialize to describe the skew-invariant
curves appearing in Theorem~\ref{thm:skew-inv}.

\subsection{Long decompositions and partial group actions}
\label{sec:monoid}

In this subsection we develop the formalism of long decompositions 
and partial group actions.  With this work we may prove Theorem~\ref{thm:skew-inv} in 
two cases: when $P$ is indecomposable (Proposition~\ref{prop:warmup-indecomposable}) and then 
when $P$ admits a decomposition with an unswappable factor (Corollary~\ref{cor:warm-up}). 

\subsubsection{Long decompositions}
While we recall some of the definitions and
results from~\cite{MS}, the reader is advised to consult
that text for further details. Let us begin by recalling and introducing our basic
definitions.

\begin{Def}
\label{def:indecomposable}
A polynomial $f \in K[X]$ is \emph{indecomposable} if
$\operatorname{deg}(f) \geq 2$ and whenever
$f = A \circ B$ 	with $A, B \in K[X]$, then either
$A$ or $B$ is linear.   A \emph{decomposition}
of the polynomial $f$ is a finite
sequence $\left( f_k, \ldots, f_1 \right)$
of indecomposable polynomials for which $f =
f_k \circ \cdots \circ f_1$.
A ($k$-)\emph{long decomposition} of
$f$ is a $k$-skew-periodic $\mathbb{Z}$-indexed sequence
$\left( f_i \right)_{i \in \mathbb{Z}}$ of
indecomposable polynomials (meaning that
for every $i \in \mathbb{Z}$ we have
$f_{i+k} = f_i^\sigma$) for which
$\left( f_k, \ldots, f_1 \right)$ is
a decomposition of $f$.
\end{Def}

\begin{nota}
Throughout the rest of this section we will
fix the number $k$.
Generally, when working with a \kld, we shall write
something like $f = \left( f_i \right)_{i \in
\mathbb{Z}}$ and may use the notation $\vec{f}$ when
we wish to distinguish between sequences and
individual polynomials.	
\end{nota}

\begin{Rk}
Every nonlinear polynomial admits a long decomposition.
Part of Ritt's~\cite{Ritt} theorem on polynomial decompositions is
that the number $k$ depends only on the polynomial.
Of course, unless the polynomial is indecomposable,
the long decomposition itself is not an invariant, but the
rest of Ritt's theorem expresses the extent to which
it is and how one decomposition may be obtained from
another.   	
\end{Rk}

\begin{Def}
\label{def:lin-eq}
A \kspl 
is a $\mathbb{Z}$-indexed sequence $L =
\left( L_i \right)_{i \in \mathbb{Z}}$ of linear
polynomials for which $L_{i + k} = L_i^\sigma$ holds
for all $i \in \mathbb{Z}$.

The \kld $f$ is \emph{linearly equivalent} to the
\kld $g$ via the \kspl $L$ provided that
$g_i = L_{i+1}^{-1} \circ f_i \circ L_i$
for all $i \in \mathbb{Z}$.	
We write $\kleq$ for the set of equivalence
classes of \klds.
\end{Def}

\begin{Rk}
On one hand, Definition~\ref{def:lin-eq} describes
an operation which takes as input $f$ and $L$ and
returns $g$.  On the other hand, the relation
``$f$ is linearly equivalent to $g$ via some
\kspl'' is an equivalence relation on the set of
\klds.  This notion matches the \emph{skew}-linear
equivalence of~\cite{MS} rather than linear equivalence.
For $f$ a \kld we write $[f]$ for its equivalence
class relative to linear equivalence.
\end{Rk}

Ritt's theorems on polynomial
decompositions~\cite{Ritt} express the
extent to which a polynomial admits a
unique presentation as a composition of
indecomposable polynomials.
Loosely speaking, if the polynomial
$P$ may be expressed in two different
ways as $P = P_k \circ \cdots \circ P_1$ and
$P = Q_\ell \circ \cdots \circ Q_1$ where each
$P_i$ and $Q_j$ is indecomposable, then
$\ell = k$ and the decomposition
$\left( Q_k, \ldots, Q_1 \right)$ may be
obtained from the decomposition
$\left( P_k, \ldots, P_1 \right)$ by transforming
these sequences through a finite number of what
in~\cite{MS} we call \emph{Ritt swaps}.
We encode this process of passing from
one decomposition to another via a
sequence of Ritt swaps through a certain
partial monoid action and then take this
further to encode all skew-invariant
curves through more general partial
monoid actions.

Let us recall some of the formalism of
Ritt identities and Ritt swaps.
For more details, consult~\cite[page 102]{MS}.

\begin{Def}
\label{def:Ritt-identity}
The following identities are called \emph{basic
Ritt identities.}
\begin{itemize}
\item $x^p \circ x^q = x^q \circ x^p$ for distinct
prime numbers $p$ and $q$,
\item $C_p \circ C_q = C_q \circ C_p$ for distinct
\emph{odd} primes $p$ and $q$ where $C_p$ is the
$p^\text{th}$ Chebyshev polynomial, and
\item $x^k \cdot u(x)^{p} \circ x^p = x^p
\circ x^k \cdot u(x^{p})$  (and the
identity with the left and right sides reversed)
where  $u(0) \neq 0$, $p$ is prime,
and  $x^k \cdot u(x^{p})$ is
indecomposable.
\end{itemize}

\end{Def}

\begin{Def}
\label{def:inoutdeg-1st}
Fix a polynomial $u$ with no initial nor terminal, compositional nor multiplicative monomial factors.
The in-degree and out-degree of a Ritt polynomial $g(x) = x^k u(x^\ell)^n$ are
 $$\operatorname{in-deg}(g) := \ell \mbox{ and } \operatorname{out-deg}(g) := n.$$
 \end{Def}

\begin{Rk}
Of course, for any polynomial $h$, $h \circ h = h \circ
h$.  However, we do not consider such tautological
identities to be basic Ritt identities.
It is also the case that for any prime $p$,
$C_2 \circ C_p = C_p \circ C_2$, but we
do not treat this relation as a basic Ritt
identity.   	
\end{Rk}

\begin{Def}
\label{def:Ritt-poly} 
The polynomials appearing in basic Ritt identities are
called \emph{Ritt polynomials}.  An indecomposable polynomial $P$ is \emph{swappable} if there are
linear polynomials $A$ and $B$ for which
$A \circ P \circ B$ is a Ritt polynomial.  A
swappable polynomial
 $P$ is said to be of type $\mathsf{C}$
if there are linear polynomials $A$ and $B$ for which
$A \circ P \circ B$ is a Chebyshev polynomial of
odd degree.  Otherwise, the swappable polynomial
$P$ is said to be \emph{$\mathsf{C}$-free}.	
\end{Def}

The distinction between type $\mathsf{C}$ and
$\mathsf{C}$-free swappable polynomials and the
choice of defining $C_2$ to be $\mathsf{C}$-free
will become clear in our work on clusterings
in Subsection~\ref{sec:clusterings}.

\begin{Rk}
In~\cite[Definition 2.41]{MS} Ritt polynomials are
required to be monic.  We do not impose that condition
here.  However, when we speak of \emph{monomials}
we always mean expressions of the form $x^n$ for
some $n \geq 2$.  	Another difference between the definition of 
Ritt polynomial here and in~\cite{MS} is that for this paper we require 
Ritt polynomials to be indecomposable, but this is not so in~\cite{MS}.
\end{Rk}

\begin{Def}
\label{def:Ritt-swap}
For $i \in \ZZ$, we consider the symbol $t_i$.
For \klds $f$ and $g$. We say that \emph{$g$ is obtained from $f$ by a Ritt swap at $i$} and write $t_i \star [f] = [g]$ if there are
linear polynomials $L$, $M$, and $N$ such that
$$g_i = S \circ N^{-1}, g_{i+1} = L \circ R, g_i = f_i \,\,\, \text{ for } j \not\equiv i, i+1 \pmod{k}$$
and $(L^{-1} \circ f_{i+1} \circ M) \circ (M^{-1} \circ f_i \circ N) = R \circ S$ is a basic Ritt identity.
\end{Def}

\begin{Def}\label{def:strict-Ritt-swap}
For $\tilde{f}$ and $\tilde{g}$ two \klds,
we say that $\tilde{g}$ is obtained from $\tilde{f}$
via a Ritt swap in the strict sense
at $i$ if $\tilde{f}_{i+1} \circ \tilde{f}_i =
\tilde{g}_{i+1} \circ \tilde{g}_i$ is a basic
Ritt identity and $\tilde{f}_j = \tilde{g}_j$
for $j \not \equiv i
\text{ or } i + 1 \pmod{k}$.  In this case, we
write $t_i \bullet \tilde{f} = \tilde{g}$.
\end{Def}

\begin{Rk}\label{rk:witness-Ritt-swap}
For $i \neq k$, our definition of $t_i \star [f] = [g]$ is exactly \cite[Definition 2.43]{MS}.

If $t_i \bullet \tilde{f} = \tilde{g}$, then taking $L=M=N=\id$ shows that $t_i \star [\tilde{f}] = [\tilde{g}]$.
For $k \neq 2$, the converse also holds: if $t_i \star [f] = [g]$, then there are long decompositions $\tilde{f}$ and $\tilde{g}$
such that $t_i \bullet \tilde{f} = \tilde{g}$. In this case, we say that $(L,\tilde{f},\tilde{g},M)$ is a witness of $t_i \star [f] = [g]$
 when $f$ and $g$ are linearly equivalent to $\tilde{f}$ and $\tilde{g}$ via $L$ and $M$.
\end{Rk}

\begin{Rk}
It follows from skew-periodicity that if the
\klds $f$ and $g$ are Ritt swap
related at $i$, then they are also
Ritt swap related at $j$ for any $j \equiv i
\pmod{k}$.	 Thus, it is natural to index the generators
$t_i$ by $\mathbb{Z}/ k \mathbb{Z}$.  So
for example, when we write an expression like
$t_{k+1}$ we mean $t_1$.   To use the order on the 
indices, we often choose to work with $\{ t_i ~:~ a \leq i < 
a + k \}$ for some $a$, which is not necessarily $0$.  \end{Rk}

\begin{Rk}
If the Ritt swap in Definition~\ref{def:Ritt-swap}
involves an identity of one of the
first two kinds, then it really is a swap in that
$f_{i+1} = g_i$ and $f_i = g_{i+1}$.
However, for an identity of the third kind,
a Ritt swap at $i$ will transform one of
$f_i$ or $f_{i+1}$ into a different polynomial.	
\end{Rk}

We shall consider two other kinds of
operations on $\kleq$.  The symbol
$\phi$ will be used to encode
skew-twist curves.

\begin{Def}
\label{def:phi-action}
For $f = (f_i)_{i \in \mathbb{Z}}$ a
\kld, we define $\phi \bullet f :=
(f_{i+1})_{i \in \mathbb{Z}}$ and
$\phi^{-1} \bullet f := (f_{i-1})_{i \in \mathbb{Z}}$.
This gives an action the symbol $\phi$ on
$\kleq$ defined by $\phi \star [ f ] = [ \phi \bullet f]$ and
of $\phi^{-1}$ via $\phi^{-1} \star [f] = [\phi^{-1} \bullet f]$.
\end{Def}

\begin{Rk}
Unlike the symbols $t_i$, for any $x \in
\kleq$, $\phi \star x$ is defined.  Moreover,
$\phi^{-1} \star = (\phi \star)^{-1}$.	
\end{Rk}

\begin{Rk}
What we call $\phi^{-1}$ corresponds to the
symbol $\beta$ of~\cite{MS}.	
\end{Rk}

Finally, we define partial operations
specifically connected to the third kind
of basic Ritt identity.  These are used to encode
monomial curves.

\begin{Def}
\label{def:epsilon}
For each prime number $p$ we have
symbols $\epsilon_p$ and $\epsilon_p^{-1}$.
If $\tilde{f}$ and $\tilde{g}$ are \klds such
that for all $i \in \mathbb{Z}$,
$\tilde{f}_i = x^{r_i} u_i (x^p)$ where either $u_i(0) \neq 0$ and $\deg u_i > 0$  or $p \neq r_i$ and
$\tilde{g}_i = x^{r_i} u_i(x)^p$, then we write $\epsilon_p \bullet \tilde{f} = \tilde{g}$
and $\epsilon_p^{-1} \bullet g = f$.  This relation
induces a partial action of $\epsilon_p$ and of $\epsilon_p^{-1}$ on $\kleq$
given by $\epsilon_p \star [\tilde{f}] := [\epsilon_p \bullet \tilde{f}]$ and
$\epsilon_p^{-1} \bullet [\tilde{g}] := [\epsilon_p^{-1} \bullet\tilde{g}]$.
If $f$ and $g$ are \klds with $[f] = [\tilde{f}]$ and $[g] = [\tilde{g}]$,
and $L$ and $M$ are \kspls so that $\tilde{f}$ is linearly equivalent to $f$ via $L$ and
$\tilde{g}$ is linearly equivalent to $g$ via $M$, then we say that $(L,\tilde{f},\tilde{g},M)$ is a
witness to $\epsilon_p \star [f] = [g]$.   In this context, $(M^{-1},\tilde{g},\tilde{f},L^{-1})$ is
a witness to $\epsilon_p^{-1} \star [g] = [f]$.
\end{Def}

\begin{Rk}
What we call here $\epsilon_p^{-1}$ corresponds
to the $\delta_p$ of~\cite{MS}.	
\end{Rk}

\begin{Rk}
The condition that $\tilde{f}_i$ is not a monomial of
degree $p$ is included to disambiguate the
monomial curves encoded by identities of the
form $\epsilon_p \star [f] = [g]$ and the skew-twist
curves encoded by relations of the form
$\phi \star [f] = [g]$.  	
\end{Rk}

With this notation in place, we may prove an easy case of Theorem~\ref{thm:skew-inv}.

\begin{prop}
\label{prop:warmup-indecomposable}
Theorem~\ref{thm:skew-inv} holds when $P$ is indecomposable.	
\end{prop}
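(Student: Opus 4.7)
My plan is to use the formalism of long decompositions and the partial group action developed in this subsection to encode any non-trivial skew-invariant curve, and then to show that in the indecomposable case only powers of $\phi$ can occur, yielding a skew-twist curve.

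By the correspondence between skew-invariant curves and orbits of the partial group action (in the spirit of~\cite[Proposition 6.19]{MS} adapted to long decompositions), I would associate to each non-horizontal non-vertical component of a $(P,P^\tau)$-skew-invariant curve an identity $w \star [\vec{P}] = [\vec{P}^\tau]$, where $w$ is a word in the generators $t_i$, $\phi^{\pm 1}$, and $\epsilon_p^{\pm 1}$ of the augmented skew-twist partial group. Since $P$ is indecomposable, $k = 1$, so $\vec{P}_i = P^{\sigma^i}$, and every $1$-long decomposition takes the form $\vec{F}_i = F^{\sigma^i}$ for a single indecomposable polynomial $F$, with the class $[\vec{F}]$ determining $F$ up to skew-linear conjugation $F \mapsto L^{-\sigma} \circ F \circ L$.

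Next I would rule out Ritt-swap letters $t_i$ from $w$: every basic Ritt identity in Definition~\ref{def:Ritt-identity} exhibits one of the composed factors as a monomial $x^p$ or both as Chebyshev polynomials (up to linear conjugation), which is impossible for the non-exceptional $P$. A direct check shows that the $\epsilon_p$ action $x^r u(x^p) \leftrightarrow x^r u(x)^p$ preserves the property of being neither a monomial nor (conjugate to) a Chebyshev polynomial under its side conditions, so no $t_i$ becomes applicable at any intermediate stage either. I would then rule out the $\epsilon_p^{\pm 1}$ letters by tracking the consequences of their presence: any non-trivial occurrence of $\epsilon_p$ in $w$ forces $P$, up to skew-linear conjugation, to equal $x^r u(x^p)$ while $P^\tau$ must equal $x^r u(x)^p$ (for some $r$, $u$ with $u(0) \neq 0$, after absorbing the intervening $\phi$ factors and $\sigma$-twists). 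Comparing the $p$-sparse exponent pattern of $x^r u^\tau(x^p)$ (exponents only in $r + p\mathbb{Z}_{\geq 0}$) with the generically dense exponent pattern of $x^r u(x)^p$ forces $u(x)^p$ to be $p$-sparse, hence $u$ to be a monomial, and therefore $P$ to be a monomial --- contradicting non-exceptionality. The same sparseness argument, applied at each stage, handles words with iterated applications of $\epsilon_p^{\pm 1}$.

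This leaves $w = \phi^n$ for some integer $n$. Unpacking $\phi^n \star [\vec{P}] = [\vec{P}^\tau]$ via Definitions~\ref{def:lin-eq} and~\ref{def:phi-action}, I obtain a linear polynomial $L$ with $P^\tau = L^{-\sigma} \circ P^{\sigma^n} \circ L$. Setting $\alpha := L^{\sigma^{-n}}$ and $\beta := P \circ \alpha^{-1}$ yields $P = \beta \circ \alpha$ and $P^\tau = \alpha^{\sigma^{n+1}} \circ \beta^{\sigma^n}$, which is exactly the data of a skew-twist curve given by $y = \alpha^{\sigma^n} \circ P^{\sigma^{n-1}} \circ \cdots \circ P(x)$ when $n \geq 0$, and by the mirror form when $n < 0$. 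The main obstacle in this plan is the detailed case analysis for words mixing $\epsilon_p^{\pm 1}$ and $\phi^{\pm 1}$ letters: one must verify that the $p$-sparseness obstruction persists under arbitrary orderings and under the action of $\sigma$ and $\tau$ on the coefficients, ensuring no exotic combination of operations secretly realizes the target identity.
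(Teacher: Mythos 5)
Your proposal takes a genuinely different route from the paper: you want to run the indecomposable case through the $\STpk$-encoding of skew-invariant curves, whereas the paper deliberately avoids that machinery here and instead argues directly from a parametrization. Concretely, the paper invokes \cite[Corollary 2.35]{MS} to parametrize the curve by $(A,B)$ with $P\circ A = A^\sigma\circ Q$ and $P^\tau\circ B = B^\sigma\circ Q$, peels off initial and terminal copies of $Q$ and $P$ to isolate cores $A_0,B_0$ (Lemma~\ref{lem:data QABA_0B_0}), shows the curve is a skew-twist when $A_0,B_0$ are linear (Lemma~\ref{lem:linear A0,B0}), and then rules out $\deg A_0\geq 2$ or $\deg B_0\geq 2$ using \cite[Corollary 5.24 and Theorem 3.15]{MS} and, in the Chebyshev-like case, explicit computations with $C_d$ (Lemmas~\ref{lem:L1CdL2=xU(x)^2} and~\ref{lem:2 cases for L, N, Q}). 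A first problem with your route is foundational: for indecomposable $P$ we have $k=1$, and Definition~\ref{def:rewrite-groups} explicitly declines to define $\Symk$, $\Affk$, $\STk$, $\STpk$ when $k=1$; Remark~\ref{rk:encoding} and the surrounding formalism are therefore not available in this paper for the case you are trying to prove. This is precisely why the indecomposable case is handled separately and first.

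The more serious mathematical gap is in your elimination of the $\epsilon_p^{\pm 1}$ letters. Your sparseness argument compares the exponent pattern of $x^r u^\tau(x^p)$ with that of $x^r u(x)^p$, but these two polynomials are only equal \emph{up to pre- and post-composition with linear polynomials}, and a translation destroys any statement about which exponents occur. The comparison only works once you know those linears are scalings, which \cite[Theorem 3.15]{MS} gives \emph{only when $P$ is not swappable of type $\mathsf{C}$}. An indecomposable non-exceptional polynomial can perfectly well be of type $\mathsf{C}$ (e.g.\ skew-conjugate to $C_q\circ N$ with $N$ linear, $N\neq\pm\operatorname{id}$): such a $P$ is an odd function up to linears, hence of the form $xv(x^2)$ with in-degree $2$, so $\epsilon_2$ is genuinely threatening and cannot be dismissed by counting exponents. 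This is exactly the hard half of the paper's proof (Remark~\ref{rem:before proof type C} through the final case analysis), which requires the dedicated Chebyshev identities $C_d(x^2\pm 2)=C_d(x)^2\pm2$ and a careful analysis of which translations can relate $C_d$ to $xU(x)^2$. Your proposal, as written, does not engage with this case at all; the "main obstacle" you identify (ordering of letters) is not where the difficulty lies.
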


Our proof of Proposition~\ref{prop:warmup-indecomposable} passes through several lemmas.

\begin{lem}\label{lem:data QABA_0B_0}
Let $P$ be an indecomposable non-exceptional polynomial and let $\mathcal{C}$ be a skew-invariant curve for 
$(P,P^{\tau})$. Then there exist non-constant polynomials $Q,A,B,A_0,B_0$ in $K[X]$ and integers $a,b,c,d$
with $a,b\geq 0$ and $c,d\geq -1$ satisfying the following properties:
\begin{itemize}
	\item [(i)] $\mathcal{C}$ is parametrized by $(A,B)$, i.e. the map $\mathbb{A}^1 \to \mathbb{A}^2$ given by 
	$t \mapsto (A(t),B(t))$ is dominant onto  $\mathcal{C}$. 
		\item [(ii)] $A$ and $B$ have no shared nonlinear initial compositional factors.
	\item [(iii)] $P\circ A=A^{\sigma}\circ Q$ and $P^{\tau}\circ B=B^{\sigma}\circ Q$.
	\item [(iv)] $A = P^{\sigma^{-1}} \circ  \cdots \circ P^{\sigma^{-a}} \circ A_0 \circ Q^{\sigma^c} \circ \cdots \circ Q$
and 
$B = P^{\tau \sigma^{-1}} \circ \cdots \circ P^{\tau \sigma^{-b}} \circ B_0 \circ Q^{\sigma^d} \circ \cdots \circ Q$.
	\item [(v)] $Q^{\sigma^{c+1}}$ is not an initial 
compositional factor of $A_0$, $P^{\sigma^{-a-1}}$
is not a terminal compositional factor of $A_0$, 
$Q^{\sigma^{d+1}}$ is not an initial 
compositional factor of $B_0$, and
 $P^{\tau \sigma^{-b-1}}$
is not a terminal compositional factor of $B_0$.

	\item [(vi)] $P^{\sigma^{-a}} \circ A_0 = A_0^\sigma \circ Q^{\sigma^{c+1}}$ and $P^{\tau \sigma^{-b}} \circ B_0 = B_0^\sigma	 \circ Q^{\sigma^{d+1}}$.
	
	\item [(vii)]   $A=A_0^{\sigma^a}\circ Q^{\sigma^{a+c}}\circ\cdots Q=P^{\sigma^{-1}}\circ\cdots\circ P^{\sigma^{-(a+c+1)}}\circ A_0^{\sigma^{-(c+1)}}$ and $B=B_0^{\sigma^b}\circ Q^{\sigma^{b+d}}\circ\cdots Q=P^{\tau\sigma^{-1}}\circ\cdots\circ P^{\tau\sigma^{-(b+d+1)}}\circ B_0^{\sigma^{-(d+1)}}$. Consequently, $A_0^{\sigma^{-(c+1)}}$ and $B_0^{\sigma^{-(d+1)}}$ do not have a nontrivial common initial compositional factor and either $(a,c)$ or $(b,d)$ is $(0,-1)$.
\end{itemize} 
\end{lem}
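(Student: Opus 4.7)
The plan is to proceed in four stages: (1) produce a primitive polynomial parametrization $(A,B)$ of $\mathcal{C}$, giving (i) and (ii); (2) extract $Q$ from the skew-invariance, giving (iii); (3) iteratively pull off compositional factors from $A$ and from $B$ to produce $a,b,c,d$ and $A_0,B_0$ satisfying (iv), (v), (vi); and (4) derive (vii) from (vi) together with (ii).

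For stage one, because $(P,P^\tau)$ restricts to a dominant morphism of degree $\deg P \geq 2$ from the normalization $\widetilde{\mathcal{C}}$ to $\widetilde{\mathcal{C}^\sigma}=\widetilde{\mathcal{C}}^\sigma$, Riemann--Hurwitz together with the non-exceptionality of $P$ (which rules out the Latt\`es-type elliptic self-correspondences, whose polynomial incarnations are precisely Chebyshev maps) forces $\widetilde{\mathcal{C}}$ to have genus zero; since $(P,P^\tau)$ preserves the point at infinity of $\mathbb{A}^2$, the curve $\mathcal{C}$ has a single point at infinity and $\widetilde{\mathcal{C}}\cong\mathbb{A}^1$. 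Hence $\mathcal{C}$ admits a dominant parametrization $t\mapsto(A(t),B(t))$ by polynomials, and any shared nonlinear initial compositional factor of $A,B$ can be absorbed by reparametrization, so after finitely many such reductions (i) and (ii) hold. For stage two, since $(A^\sigma,B^\sigma)$ is likewise a primitive parametrization of $\mathcal{C}^\sigma$, the skew-invariance furnishes a unique $Q\in K[X]$ with $P\circ A=A^\sigma\circ Q$ and $P^\tau\circ B=B^\sigma\circ Q$; degree comparison gives $\deg Q=\deg P\geq 2$, yielding (iii).

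For stage three I treat $A$ (the argument for $B$ being identical after replacing $\sigma^{-i}$ with $\tau\sigma^{-i}$). If $P^{\sigma^{-1}}$ is a terminal compositional factor of $A$, write $A=P^{\sigma^{-1}}\circ A'$; then $A^\sigma=P\circ A'^\sigma$, so substituting into (iii) gives $P\circ(P^{\sigma^{-1}}\circ A')=P\circ(A'^\sigma\circ Q)$, and left-cancelling $P$ yields $P^{\sigma^{-1}}\circ A'=A'^\sigma\circ Q$. Left-cancellation fails for generic polynomials but is justified here by non-exceptionality: writing $P(u)-P(v)=(u-v)R(u,v)$, the companion curve $R=0$ admits no polynomial parametrization when $P$ is indecomposable and non-exceptional, so $P\circ X=P\circ Y$ among polynomials forces $X=Y$. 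Iterating extracts maximal $P^{\sigma^{-i}}$-factors on the left, defining $a\geq 0$ and $\bar A$ with $P^{\sigma^{-a}}\circ\bar A=\bar A^\sigma\circ Q$. Then pulling off maximal initial $Q^{\sigma^j}$-factors from the right of $\bar A$---where right-cancellation by $Q$ is immediate from the surjectivity of nonconstant polynomials---defines $c\geq -1$ and $A_0$ satisfying $P^{\sigma^{-a}}\circ A_0=A_0^\sigma\circ Q^{\sigma^{c+1}}$. The maximality choices yield (v), the factorization is (iv), and the residual equation is (vi).

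For the final stage, applying $\sigma^i$ to (vi) lets one commute each terminal $P^{\sigma^{-j}}$ past $A_0$, introducing a new initial $Q$-factor; performing this for $j=1,\ldots,a$ gives $A=A_0^{\sigma^a}\circ Q^{\sigma^{a+c}}\circ\cdots\circ Q$, and the mirror manipulation (shifting by $\sigma^{-i}$ and migrating the $Q$-block through $A_0$) gives $A=P^{\sigma^{-1}}\circ\cdots\circ P^{\sigma^{-(a+c+1)}}\circ A_0^{\sigma^{-(c+1)}}$. The second form exhibits $A_0^{\sigma^{-(c+1)}}$ as an initial compositional factor of $A$ (and likewise $B_0^{\sigma^{-(d+1)}}$ for $B$), so (ii) forbids these from sharing a nonlinear initial factor. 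The first form shows that if both $a+c+1\geq 1$ and $b+d+1\geq 1$, then $Q$ is a nonlinear initial compositional factor common to $A$ and $B$, again contradicting (ii); hence $(a,c)=(0,-1)$ or $(b,d)=(0,-1)$. The chief technical point is the left-cancellation of $P$ in stage three, whose justification rests essentially on the non-exceptionality hypothesis.
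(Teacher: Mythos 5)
Your overall architecture tracks the paper's proof closely: parametrize $\mathcal{C}$ and reduce to a primitive pair $(A,B)$, extract $Q$ from skew-invariance, peel off maximal $P$- and $Q$-factors, and deduce (vii) formally from (vi) and (ii). The final stage and the $Q$-extraction (right-cancellation by surjectivity) are fine. But the step you yourself flag as the chief technical point — left-cancellation of $P$ — is justified incorrectly, and the claim you invoke is false. It is \emph{not} true that for an indecomposable non-exceptional $P$ the identity $P\circ X=P\circ Y$ among polynomials forces $X=Y$. Take $P(X)=X^3+1$: it is indecomposable (prime degree) and non-exceptional (it is not linearly conjugate to $X^3$, by comparing constant terms, nor to $\pm C_3$, by counting distinct critical points), yet $P(\zeta X)=P(X)$ for every cube root of unity $\zeta$. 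So the companion curve $\bigl(P(u)-P(v)\bigr)/(u-v)=0$ contains the polynomially parametrized lines $u=\zeta v$, and $P\circ R=P\circ S$ with $R=\zeta S\neq S$ is perfectly possible. More generally this happens for every $P=P_1\circ X^p$ with $P_1$ linear and $p$ prime, which is exactly the class of indecomposable polynomials with a nontrivial right symmetry $P\circ L=P$.

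This is precisely the case the paper's proof spends most of its effort on. After normalizing $P$ to be centered, the paper splits into two cases: if $P$ is not of the form $P_1\circ X^m$ with $m>1$, then centering makes the identity the only linear $L$ with $P\circ L=P$, and your cancellation goes through; if $P=P_1(X^d)$ with $P_1$ linear and $d$ prime, cancellation only holds up to a $d$-th root of unity, i.e.\ one gets $P^{\sigma^{-a}}\circ A_0=\zeta\cdot(A_0\circ Q^{\sigma^{c+1}})$. Establishing even this requires showing that the composite $P\circ P^{\sigma^{-1}}\circ\cdots\circ P^{\sigma^{-(a-1)}}$ factors as $P_2\circ X^{\ell}$ with $\ell$ exactly $d$ (not $d^2$), which uses centeredness together with the non-exceptionality hypothesis in the form ``$P$ is not a scaled monomial.'' Finally, the stray $\zeta$ is absorbed by replacing $A_0$ with $\zeta A_0$, which leaves $A$ unchanged precisely because $a>0$ in the problematic case. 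Without some version of this root-of-unity bookkeeping your stage three does not close, so as written the proof has a genuine gap at its central step.
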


\begin{proof}
Let $d:=\deg(P)$. If the lemma holds for the given data $(P,\mathcal{C})$ then it holds for $(L^{\sigma}\circ P\circ L^{-1},(L,L)(C))$ where $L$ is any linear polynomial. Hence we may assume that $P$ is 
centered meaning that the coefficient of $X^{d-1}$ is $0$.

From~\cite[Corollary 2.35]{MS}, we know that $\mathcal{C}$ is parametrized by $(A,B)$ and there exists a polynomial $Q$ such that (iii) 
holds 
since $\mathcal{C}$ is skew-invariant under $(P,P^{\tau})$. By choosing a pair $(A,B)$ with minimal
$\deg(A)+\deg(B)$, we may assume that they do not have a common nonlinear initial compositional factors.

We now express $A$ and $B$ as in (iv) with maximal $a,b,c,d$ then (v) holds. Now we  establish Property (vi). From (iii) and (iv), we have:
\begin{equation}\label{eq:(iii) and (iv)}
P\circ P^{\sigma^{-1}}\circ\cdots\circ P^{\sigma^{-a}}\circ A_0=P\circ P^{\sigma^{-1}}\circ\cdots\circ P^{\sigma^{-(a-1)}}\circ A_0\circ Q^{\sigma^{c+1}}.
\end{equation}
When $a=0$, this identity means $P\circ A_0=A_0\circ Q^{\sigma^{c+1}}$. From now on, assume $a>0$.

First, assume the case when $P$ does not have the form $P_1\circ X^m$ where $m>1$. Since $P$ is centered, the only linear polynomial $L$ such that $P\circ L=P$ is the identity polynomial $L(X)=X$. Consequently, whenever
$P\circ R=P\circ S$ for non-constant polynomials $R,S$ then we have $R=S$. Applying this observation repeatedly to
\eqref{eq:(iii) and (iv)}, we have:
$$P^{\sigma^{-a}}\circ A_0=A_0\circ Q^{\sigma^{c+1}}.$$

Now suppose that $P=P_1(X^m)$ where $m>1$. Since $P$ is indecomposable, we must have that $m=d$ is a prime and $P_1$ is linear. The polynomial $P\circ P^{\sigma^{-1}}\circ\cdots\circ P^{\sigma^{-(a-1)}}$
is centered and we write
\begin{equation}\label{eq:P2 circ X^ell}
P\circ P^{\sigma^{-1}}\circ\cdots\circ P^{\sigma^{-(a-1)}}=P_2\circ X^{\ell}
\end{equation}
where $\ell$ is largest possible. We claim that $\ell=d$. Suppose $\ell>d$ and we arrive at a contradiction. Since $d$ is a prime, $\ell$ is a power of $d$ and hence $d^2\mid \ell$. Identity \eqref{eq:P2 circ X^ell} implies:
$$P\circ P^{\sigma^{-1}}\circ\cdots\circ P^{\sigma^{-(a-2)}}\circ P_1^{\sigma^{-a-1}}=P_2\circ X^{\ell/d^2}\circ X^{d}.$$
This implies:
\begin{equation}\label{eq:P3}
P^{\sigma^{-(a-2)}}\circ P_1^{\sigma^{-a-1}}=P_3\circ X^d
\end{equation}
for some linear polynomial $P_3$. Since $P$ is centered, \eqref{eq:P3} implies that $P_1$ is a scaling and hence $P(X)=\lambda X^d$ for some constant $\lambda$, which is a contradiction. Therefore, we must have $\ell=d$.
Identity \eqref{eq:(iii) and (iv)} implies:
$$P^{\sigma^{-a}}\circ A_0=\zeta \cdot (A_0\circ Q^{\sigma^{c+1}})$$
where $\zeta$ is a $d$-th root of unity. After replacing $A_0$ by $\zeta A_0$, we have that $A$ is unchanged (since $a>0$) and $P^{\sigma^{-a}}\circ A_0=A_0\circ Q^{\sigma^{c+1}}$. By similar arguments, we can also arrange for the identity
$P^{\tau\sigma^{-b}}\circ B_0=B_0^{\sigma}\circ Q^{\sigma^{d+1}}$. Property (vii) follows from (iv) and repeated applications of (vi). Finally we have either $(a,c)$ or $(b,d)$ is $(0,-1)$ since otherwise $Q$ would be a common initial factor of $A$ and $B$.
\end{proof}

\begin{lem}\label{lem:linear A0,B0}
Let $P$ and $\mathcal{C}$ be as in Lemma~\ref{lem:data QABA_0B_0}. Let $Q,A,B,\ldots$ satisfy the conclusion of Lemma~\ref{lem:data QABA_0B_0}. If $A_0$ and $B_0$ are linear then $\mathcal{C}$ is a 
skew-invariant curve for $(P,P^{\tau})$.
\end{lem}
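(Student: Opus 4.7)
The statement to prove is that $\mathcal{C}$ is in fact a skew-twist curve. Property~(vii) of Lemma~\ref{lem:data QABA_0B_0} forces at least one of $(a,c)$, $(b,d)$ to be $(0,-1)$, and by the symmetry between the two coordinates of $\mathcal{C}$ I would focus on the case $(a,c) = (0,-1)$; then $A = A_0$ is linear, so $\mathcal{C}$ is the graph $y = \phi(x)$ with $\phi := B \circ A^{-1}$. My plan is to telescope $\phi$ into an explicit composition using (iii), (iv), and~(vi), and then to read off the data $(\alpha,\beta,n)$ exhibiting $\mathcal{C}$ as a skew-twist curve.

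Two ingredients drive the computation. First, from $P \circ A = A^\sigma \circ Q$ of~(iii), an induction on $i$ yields
$$Q^{\sigma^i}\circ \cdots \circ Q \circ A^{-1} = (A^{\sigma^{i+1}})^{-1}\circ P^{\sigma^i}\circ\cdots\circ P.$$
Second, (vi) gives $P^{\tau\sigma^{-b}}\circ B_0 = B_0^\sigma \circ Q^{\sigma^{d+1}}$, and iterating its $\sigma^j$-shifts for $j=0,\ldots,b-1$ telescopes to
$$P^{\tau\sigma^{-1}}\circ \cdots \circ P^{\tau\sigma^{-b}}\circ B_0 = B_0^{\sigma^b}\circ Q^{\sigma^{d+b}}\circ\cdots\circ Q^{\sigma^{d+1}}.$$
Substituting into the expression for $B$ from~(iv) and composing with $A^{-1}$ gives
$$\phi = B_0^{\sigma^b}\circ (A^{\sigma^{d+b+1}})^{-1}\circ P^{\sigma^{d+b}}\circ\cdots\circ P.$$
Setting $n := d+b+1 \geq 0$, $\alpha := B_0^{\sigma^{-d-1}}\circ A^{-1}$, and $\beta := P\circ\alpha^{-1}$, the equalities $P = \beta\circ\alpha$ and $\phi = \alpha^{\sigma^n}\circ P^{\sigma^{n-1}}\circ\cdots\circ P$ hold by inspection. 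The last and only nontrivial check is $\alpha^{\sigma^{n+1}}\circ\beta^{\sigma^n} = P^\tau$: after expansion, the middle block $(A^{\sigma^{n+1}})^{-1}\circ P^{\sigma^n}\circ A^{\sigma^n}$ collapses to $Q^{\sigma^{d+b+1}}$, and the remaining identity $P^\tau = B_0^{\sigma^{b+1}}\circ Q^{\sigma^{d+b+1}}\circ (B_0^{\sigma^b})^{-1}$ is the $\sigma^b$-shift of~(vi).

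The alternative case $(b,d)=(0,-1)$ runs in parallel: $B = B_0$ is linear, I work with $\psi := A\circ B_0^{-1}$, use $Q=(B_0^\sigma)^{-1}\circ P^\tau\circ B_0$ from~(iii) together with the identity $A = A_0^{\sigma^a}\circ Q^{\sigma^{a+c}}\circ\cdots\circ Q$ from~(vii), and obtain
$$\psi = A_0^{\sigma^a}\circ (B_0^{\sigma^{a+c+1}})^{-1}\circ (P^\tau)^{\sigma^{a+c}}\circ\cdots\circ P^\tau,$$
which fits the $n \leq -1$ branch of the skew-twist definition with $n := -(a+c+2)$. The proof is thus essentially formal bookkeeping; the only real obstacle is the $\sigma$-exponent arithmetic, that is, keeping the shifts coherent through the iterated identities so that the final compatibility identity reduces cleanly to a single application of~(vi).
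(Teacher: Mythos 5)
Your proof is correct and follows essentially the same route as the paper's: both telescope the identities (iii), (iv), (vi) of Lemma~\ref{lem:data QABA_0B_0} in the case $(a,c)=(0,-1)$ to exhibit the skew-twist data explicitly. The only cosmetic differences are that the paper first normalizes $A=A_0=X$ (so $Q=P$) and chooses the factorization with $\beta$ linear and $n=b+d$, whereas you keep $A$ general and take $\alpha$ linear with $n=b+d+1$.
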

\begin{proof}
From Lemma~\ref{lem:data QABA_0B_0}, we have:
$$A=A_0^{\sigma^a}\circ Q^{\sigma^{a+c}}\circ\cdots Q,\  B=B_0^{\sigma^b}\circ Q^{\sigma^{b+d}}\circ\cdots Q,$$
and without loss of generality, assume $a=0$ and $c=-1$. Then $A=A_0$ is linear.

After replacing $(B,Q,B_0)$ by $(B\circ A^{-1},A^{\sigma}\circ Q\circ A^{-1},B_0\circ (A^{-1})^{\sigma^{d+1}})$, we may assume that $A=A_0=X$. Then property (vi) gives that $P=Q$ and $P^{\tau\sigma^{-b}}\circ B_0=B_0^{\sigma}\circ Q^{\sigma^{d+1}}=B_0^{\sigma}\circ P^{\sigma^{d+1}}$. Therefore:
$$P^{\tau}=B_0^{\sigma^{b+1}}\circ P^{\sigma^{b+d+1}}\circ (B_0^{-1})^{\sigma^b}=(B_0^{\sigma^{-d}}\circ P)^{\sigma^{b+d+1}}\circ \left((B_0^{-1})^{\sigma^{-d}}\right)^{\sigma^{b+d}}.$$
Put $\alpha=B_0^{\sigma^{-d}}\circ P$, $\beta=(B_0^{-1})^{\sigma^{-d}}$, and $n=b+d$, then we have
$P=\beta\circ\alpha$, $P^{\tau}=\alpha^{\sigma^{n+1}}\circ\beta^{\sigma^n}$. and the curve
$\mathcal{C}$ is given by the equation:
$$y=B(x)=B_0^{\sigma^b}\circ Q^{\sigma^{b+d}}\circ\cdots Q(x)=\alpha^{\sigma^n}\circ P^{\sigma^{n-1}}\circ\cdots\circ P(x)$$
fitting the definition of a skew-twist curve.
\end{proof}

\begin{proof}[Proof of Proposition~\ref{prop:warmup-indecomposable} when $P$ is not swappable of type $\mathsf{C}$]
Let $P$ be an indecomposable non-exceptional polynomial and let $\mathcal{C}$ be a skew-invariant curve for $(P,P^{\tau})$. Let $Q,A,B,\ldots$ be the data as in the conclusion of Lemma~\ref{lem:data QABA_0B_0}.

Case 1: $P$ is unswappable. From $P^{\sigma^{-a}} \circ A_0 = A_0^\sigma \circ Q^{\sigma^{c+1}}$ and the fact that $A_0^\sigma$ does not have 
$P^{\sigma^{-a}}$ as a terminal compositional factor, we conclude that $A_0$ is linear. Similarly, $B_0$ is linear and $\mathcal{C}$ is a skew-twist curve thanks to Lemma~\ref{lem:linear A0,B0}.

Case 2: $P$ is swappable and not of type $\mathsf{C}$. If both $A_0$ and $B_0$ are linear then we are done thanks to Lemma~\ref{lem:linear A0,B0}. It remains to consider the following 2 cases.

Case 2.1: $A_0$ is linear and $N:=\deg(B_0)\geq 2$. From properties (v) and (vi) in 
Lemma~\ref{lem:data QABA_0B_0} and \cite[Corollary~5.24]{MS}, there exist linear polynomials $L_1$ and $L_2$ such that:
$$P^{\tau\sigma^{-b}}=L_2^{\sigma}\circ X^kU(X)^N\circ L_2^{-1},$$
$$B_0=L_2\circ X^N\circ L_1^{-1},$$
$$Q^{\sigma^{d+1}}=L_1^{\sigma}\circ X^kU(X^N)\circ L_1^{-1},$$
where $U(X)$ is a non-constant polynomial such that $U(0)\neq 0$ and $k\in\mathbb{N}$ with $\gcd(k,N)=1$.  
 
Put $\eta=\tau^{-1}\sigma^{b-a}$ and $\mu=\sigma^{c-d}$ then the above identities yield:
$$P^{\sigma^{-a}}=L_2^{\eta\sigma}\circ X^kU^{\eta}(X)^N\circ (L_2^{\eta})^{-1}\ \text{and}$$
$$Q^{\sigma^{c+1}}=L_1^{\mu\sigma}\circ X^kU^{\mu}(X^N)\circ (L_1^{\mu})^{-1}$$
Combining this with the identity $P^{\sigma^{-a}} \circ A_0 = A_0^\sigma \circ Q^{\sigma^{c+1}}$ and the 
assumption that $\deg(A_0)=1$, we have:
\begin{equation}\label{eq:case 2.1 first eq}
L^{\sigma}\circ X^kU^{\eta}(X)^N\circ L^{-1}=X^kU^{\mu}(X^N)
\end{equation}
where $L=A_0^{-1}\circ (L_1^{\mu})^{-1}\circ L_2^{\eta}$. Since $P$ is not type $\mathsf{C}$, identity
\eqref{eq:case 2.1 first eq} yields that $L$ is a scaling thanks to \cite[Theorem~3.15]{MS}. This implies:
\begin{equation}\label{eq:case 2.1 second eq}
U(X^N)=cU^{\eta\mu^{-1}}(X)^N
\end{equation}
for some $c\in K^*$. It follows immediately that $U(X)$ must be a monomial: otherwise suppose $X^r$ and $X^s$ with $r>s$ are the highest degree terms with a non-zero coefficients in $U$, then the two such terms in $U(X^N)$ are $X^{Nr}$ and $X^{Ns}$ while the ones for $U^{\eta\mu^{-1}}(X)^N$ are
$X^{Nr}$ and $X^{r(N-1)+s}$, contradicting \eqref{eq:case 2.1 second eq}. But when $U$ is a monomial, we have a contradiction to the assumption that $U(0)\neq 0$. Therefore the case $\deg(A_0)=1$ and $\deg(B_0)\geq 2$ cannot happen. It remains to treat the following case.

Case 2.2: $M:=\deg(A_0)\geq 2$ and $N:=\deg(B_0)\geq 2$. We proceed as in the above case. By \cite[Corollary~5.24]{MS}, there exist linear polynomials $\ell_1,\ell_2,L_1,L_2$ such that:
\begin{align}\label{eq:case 2.2 3 pairs}
\begin{split}
P^{\sigma^{-a}}=\ell_2^{\sigma}\circ X^{k'}V(X)^M\circ \ell_2^{-1}&,\  P^{\tau\sigma^{-b}}=L_2^{\sigma}\circ X^kU(X)^N\circ L_2^{-1},\\
A_0=\ell_2\circ X^M\circ \ell_1^{-1}&,\ B_0=L_2\circ X^N\circ L_1^{-1},\\
Q^{\sigma^{c+1}}=\ell_1^{\sigma}\circ X^{k'}V(X^M)\circ \ell_1^{-1}&,\ Q^{\sigma^{d+1}}=L_1^{\sigma}\circ X^kU(X^N)\circ L_1^{-1},
\end{split}
\end{align}
where $U(X),V(X)$ are non-constant polynomials such that $U(0)V(0)\neq 0$ and $k,k'\in\mathbb{N}$ with $\gcd(k',M)=\gcd(k,N)=1$. As before, put $\eta=\tau^{-1}\sigma^{b-a}$ and $\mu=\sigma^{c-d}$, we then have:
$$P^{\sigma^{-a}}=\ell_2^{\sigma}\circ X^{k'}V(X)^M\circ \ell_2^{-1},\  P^{\sigma^{-a}}=L_2^{\eta\sigma}\circ X^kU^{\eta}(X)^N\circ (L_2^{\eta})^{-1},$$
$$Q^{\sigma^{c+1}}=\ell_1^{\sigma}\circ X^{k'}V(X^M)\circ \ell_1^{-1},\ Q^{\sigma^{c+1}}=L_1^{\mu\sigma}\circ X^kU^{\mu}(X^N)\circ (L_1^{\mu})^{-1}.$$

This implies:
\begin{align}\label{eq:case 2.2 with 2 identities}
\begin{split}
X^{k'}V(X)^M=(\ell_2^{-1})^{\sigma}\circ L_2^{\eta\sigma}\circ X^kU^{\eta}(X)^N\circ (L_2^{\eta})^{-1}\circ \ell_2,\\
X^{k'}V(X^M)=(\ell_1^{-1})^{\sigma}\circ L_1^{\mu\sigma}\circ X^kU^{\mu}(X^N)\circ (L_1^{\mu})^{-1}\circ\ell_1.
\end{split}
\end{align}
The first identity together with the assumption that $P$ is not type $\mathsf{C}$ imply
that $\ell_2^{-1}\circ L_2^{\eta}$ is a scaling thanks to \cite[Theorem~3.15]{MS}. The second identity
together with the fact that $X^{k'}V(X^M)$ and $X^kU^{\eta}(X^N)$ are centered imply that 
$\ell_1^{-1}\circ L_1^{\mu}$ is a scaling. Thanks to these extra properties, \eqref{eq:case 2.2 with 2 identities} yields the following:
\begin{itemize}
\item $k=k'$.
\item $\indeg(V)=\indeg(U)$.
\item $M\cdot\indeg(V)=\indeg(X^{k'}V(X^M))=\indeg(X^kU^{\mu}(X^N))=N\cdot\indeg(U)$. Therefore $M=N$.
\end{itemize}

The middle pair of equations in \eqref{eq:case 2.2 3 pairs} gives:
$$B_0^{\mu}=L_2^{\mu}\circ X^N\circ (L_1^{\mu})^{-1}\circ \ell_1\circ \ell_1^{-1}=L_3\circ X^N\circ \ell_1^{-1}$$
for some linear $L_3$ since $(L_1^{\mu})^{-1}\circ \ell_1$ is a scaling. This implies that 
$X^N\circ \ell_1^{-1}$ is a common initial compositional factor of $A_0$ and $B_0^{\mu}$. After applying $\sigma^{-(c+1)}$, we have that $A_0^{\sigma^{-(c+1)}}$ and $B_0^{\sigma^{-(d+1)}}$ have a nontrivial common initial compositional factor, contradicting property (vii) in Lemma~\ref{lem:data QABA_0B_0}. Therefore the case when $\deg(A_0)>2$ and $\deg(B_0)>2$ cannot happen either.
\end{proof}

\begin{remark}\label{rem:before proof type C}
To finish the proof of Proposition~\ref{prop:warmup-indecomposable}, it remains to consider the case when $P$ is type $\mathsf{C}$. We express $P=R\circ C_q\circ N$ where $R$ and $N$ are linear and $q=\deg(P)$ is an odd prime. Since skew-invariant curves for one pair of polynomials
are transformed into skew-invariant curves for $\sigma$-linear conjugates of those polynomials, after replacing $P$ by an appropriate $\sigma$-linear conjugate, we may assume that $R=\id$. Hence $P=C_q\circ N$ with $N\neq \pm\id$. As in the above proof, we let $Q,A,B,\ldots$ be the data in the conclusion of Lemma~\ref{lem:data QABA_0B_0} then
\cite[Corollary~5.14]{MS} implies that there exist linear polynomials $L_1$ and $L_2$ such that:
$$P^{\tau\sigma^{-b}}=L_2^{\sigma}\circ X^kU(X)^{\delta}\circ L_2^{-1},$$
$$B_0=L_2\circ X^{\delta}\circ L_1^{-1},$$
$$Q^{\sigma^{d+1}}=L_1^{\sigma}\circ X^kU(X^{\delta})\circ L_1^{-1},$$
where $\delta=\deg(B_0)$, $U(X)$ is a non-constant polynomial such that $U(0)\neq 0$ and $k\in\mathbb{N}$ with $\gcd(k,\delta)=1$. Now since $P$ is $C_q\circ N$, we have that $\delta\leq 2$. Hence $\deg(B_0)\leq 2$. Similarly $\deg(A_0)\leq 2$.
\end{remark}

\begin{lem}\label{lem:L1CdL2=xU(x)^2}
Let $d\geq 3$ be an odd integer. Suppose $L_1(x)$, $L_2(x)$, and $U(x)$ are polynomials in $K[x]$
such that $\deg(L_1)=\deg(L_2)=1$, $U(0)\neq 0$, and 
$$L_1\circ C_d\circ L_2=xU(x)^2.$$
Then one of the following cases holds:
\begin{itemize}
\item [(i)] $L_1(x)= a(x+2)$ and $L_2(x)=bx-2$ for some $a,b\in K^*$.
\item [(ii)] $L_1(x)=a(x-2)$ and $L_2(x)=bx+2$ for some $a,b\in K^*$.
\end{itemize}
\end{lem}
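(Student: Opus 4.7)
The plan is to reduce the identity to a question about the multiplicity structure of roots of $C_d \mp 2$, and then exploit the classical factorization of these polynomials when $d$ is odd.

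First I would record the key factorization. Starting from the defining functional equation $C_d(X + X^{-1}) = X^d + X^{-d}$, one has $C_d(y)^2 - 4 = (X^d - X^{-d})^2$ after setting $y = X + X^{-1}$, and since $(X^d - X^{-d})/(X - X^{-1})$ is a symmetric Laurent polynomial in $X$, it expresses as a polynomial in $y$. This yields a factorization
$$C_d(y)^2 - 4 = (y - 2)(y + 2) V(y)^2$$
for some $V \in K[y]$. Because $d$ is odd, $C_d(2) = 2$ and $C_d(-2) = -2$, so $y = 2$ occurs only in $C_d(y) - 2$ and $y = -2$ only in $C_d(y) + 2$. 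Splitting $V$ accordingly gives
$$C_d(y) - 2 = (y - 2) V_+(y)^2, \qquad C_d(y) + 2 = (y + 2) V_-(y)^2$$
with $V_\pm(\pm 2) \neq 0$. In particular, the only critical values of $C_d$ are $\pm 2$, and for every critical value the corresponding critical points have local multiplicity exactly two.

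Next I would write $L_1(y) = a(y - v)$ and $L_2(x) = bx + c$ with $a, b \in K^*$ and analyze the multiplicity pattern. Since $U(0) \neq 0$, the right-hand side $xU(x)^2$ has one simple root (at $x = 0$) and all other roots of multiplicity exactly two. On the left, $L_1 \circ C_d \circ L_2(x) = a(C_d(bx + c) - v)$ has its roots among the preimages of $v$ under $C_d$, with multiplicities equal to the corresponding multiplicities of roots of $C_d(y) - v$. If $v \notin \{2, -2\}$, then $C_d(y) - v$ has only simple roots, so the left-hand side would have $d \geq 3$ simple roots, contradicting the right-hand side. Therefore $v \in \{2, -2\}$.

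Finally, I would use the factorization above to pin down $c$. If $v = 2$, then $L_1(y) = a(y - 2)$ and $C_d(bx + c) - 2 = (bx + c - 2) V_+(bx + c)^2$, whose unique simple root in $x$ is $(2 - c)/b$; matching the simple root $x = 0$ of $xU(x)^2$ forces $c = 2$, giving case~(ii). The case $v = -2$ is symmetric and yields $L_1(y) = a(y + 2)$ and $c = -2$, which is case~(i). The only nontrivial input is the factorization of $C_d \mp 2$ for odd $d$, so there is no real obstacle; the remainder is bookkeeping about multiplicities.
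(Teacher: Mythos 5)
Your proof is correct and rests on the same two facts as the paper's argument: that $\pm 2$ are the only critical values of $C_d$ (forcing $L_1(\pm2)=0$), and that $x=0$ is a simple, non-critical root of $xU(x)^2$ (forcing $L_2(0)=\mp 2$). You package this as root-multiplicity bookkeeping via the explicit factorizations $C_d(y)\mp 2=(y\mp 2)V_\pm(y)^2$, whereas the paper phrases it directly in terms of critical points and critical values, but the argument is essentially identical.
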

\begin{proof}
Since $0$ is a critical value of $xU(x)^2$ while $\pm 2$ are the only critical values of $C_d(x)$, we have that $L_1(2)=0$ or $L_1(-2)=0$.

First, we consider the case $L_1(-2)=0$. From $L_1(C_d(L_2(0)))=0$ and $L_1(-2)=0$, we have $C_d(L_2(0))=-2$. Henc $L_2(0)$ is either $-2$ or one of the remaining $(d-1)/2$ critical points lying above $-2$. But $0$ is not a critical point of $xU(x)^2$, we must have $L_2(0)=-2$. This yields case (i). The case $L_1(2)=0$ is similar and we finish the proof.
\end{proof}

\begin{lem}\label{lem:2 cases for L, N, Q}
Let $d\geq 3$ be an odd integer and let $P_2(x)=x^2$. Suppose that $N$, $L$, and $Q$ are polynomials in $K[x]$ such that
$\deg(N)=\deg(L)=1$, $L$ is a translation, and:
\begin{equation} \label{eq: lem 2 cases for L, N, Q}
N\circ C_d\circ L\circ P_2=L^{\sigma}\circ P_2\circ Q.
\end{equation}
Then one of the following cases holds:
\begin{itemize}
	\item [(i)] $L(x)=x-2$, $N(x)=\lambda(x+2)-2$, and $Q(x)=\pm\sqrt{\lambda} C_q(x)$ for some
	$\lambda\in K^*$.
	
	\item [(ii)] $L(x)=x+2$, $N(x)=\lambda(x-2)+2$, and $Q(x)=\pm\sqrt{\lambda}C_q(x)$ for some $\lambda\in K^*$. 
\end{itemize}
\end{lem}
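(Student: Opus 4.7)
The plan is to reduce the problem to Lemma~\ref{lem:L1CdL2=xU(x)^2} via a parity argument on $Q$, then read off $Q$ from the Chebyshev composition identity $C_{2d} = C_d \circ C_2 = C_2 \circ C_d$.

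Writing $L(x) = x + a$, the given identity rearranges to $N(C_d(x^2 + a)) - \sigma(a) = Q(x)^2$. The left side is a polynomial in $x^2$, so $Q(x)^2$ is an even function of $x$ and hence $Q$ is either even or odd; since $d = \deg(Q)$ is odd, $Q$ must itself be odd. Set $Q(x) = x Q_o(x^2)$ and $y = x^2$; with $\widetilde N(z) := N(z) - \sigma(a)$ a linear polynomial, the equation becomes
\[
\widetilde N\bigl(C_d(L(y))\bigr) = y\, Q_o(y)^2.
\]
Lemma~\ref{lem:L1CdL2=xU(x)^2} applies to this identity with $L_1 = \widetilde N$, $L_2 = L$, and $U = Q_o$, provided $Q_o(0) \neq 0$. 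To see that $Q_o(0) \neq 0$, observe that if $Q_o(0) = 0$ the right side would have a zero of multiplicity $\geq 3$ at $y = 0$, whereas the left side $\widetilde N \circ C_d \circ L$ has multiplicity at most $2$ at any root of itself, because $\widetilde N$ is linear and every critical point of $C_d$ is simple (the only critical values being $\pm 2$).

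Invoking Lemma~\ref{lem:L1CdL2=xU(x)^2} and using that $L$ is a translation (forcing its scaling factor $b$ to equal $1$), I obtain either $L(x) = x - 2$ with $\widetilde N(x) = \lambda(x + 2)$, or $L(x) = x + 2$ with $\widetilde N(x) = \lambda(x - 2)$. Reversing the substitution $\widetilde N(z) = N(z) - \sigma(a)$, and noting that $\sigma$ fixes $\pm 2$, recovers the stated formulas for $N$. To identify $Q$ in case (i), substitute $y = x^2$ back into the identity $\lambda(C_d(y - 2) + 2) = y\, Q_o(y)^2$ to obtain $\lambda(C_d(x^2 - 2) + 2) = Q(x)^2$. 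The Chebyshev composition identity $C_d(x^2 - 2) + 2 = C_d(x)^2$ (which is immediate from $C_{2d} = C_2 \circ C_d = C_d \circ C_2$) then yields $Q(x)^2 = \lambda\, C_d(x)^2$, so that $Q(x) = \pm\sqrt{\lambda}\, C_d(x)$. Case (ii) is handled by the parallel computation starting from $\lambda(C_d(x^2 + 2) - 2) = Q(x)^2$.

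The main obstacle is verifying $Q_o(0) \neq 0$; this is what legitimizes the reduction to Lemma~\ref{lem:L1CdL2=xU(x)^2}, and it hinges on the simplicity of the critical points of $C_d$. Once this point is secured, everything else is algebraic bookkeeping.
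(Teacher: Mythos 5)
Your proof is correct, and while it rests on the same two pillars as the paper's argument --- Lemma~\ref{lem:L1CdL2=xU(x)^2} and the semiconjugacy $C_d\circ C_2=C_2\circ C_d$ --- it reaches the first pillar by a different and arguably cleaner route. The paper invokes Ritt's theorem on the identity \eqref{eq: lem 2 cases for L, N, Q} to produce the four linear-equivalence equations \eqref{eq:lem 2 cases for L, N, Q eq1}--\eqref{eq:lem 2 cases for L, N, Q eq4}, applies Lemma~\ref{lem:L1CdL2=xU(x)^2} to the first of them, and then must chase the auxiliary linears $\beta,\gamma$ through the second equation to recover $\ell=\mp 2$. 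You bypass Ritt entirely: rearranging to $\widetilde N(C_d(L(x^2)))=Q(x)^2$, observing that $Q$ must be odd (its square is even and $\deg Q=d$ is odd), and substituting $y=x^2$ puts the equation directly in the form $\widetilde N\circ C_d\circ L=y\,Q_o(y)^2$ with $L$ itself as the inner linear factor, so the conclusion of Lemma~\ref{lem:L1CdL2=xU(x)^2} plus the hypothesis that $L$ is a translation immediately pins down $L=x\mp 2$. The price is that you must verify $Q_o(0)\neq 0$ by hand, and your multiplicity argument (simplicity of the critical points of $C_d$ caps the order of any zero of $\widetilde N\circ C_d\circ L$ at $2$, while $y\,Q_o(y)^2$ would vanish to order at least $3$ at $0$ if $Q_o(0)=0$) does this correctly. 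Ritt's theorem hands the paper that non-vanishing for free, since it is built into the normal form of the third basic Ritt identity; your computation buys self-containedness.

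One caveat, which you inherit from the paper rather than introduce: in case (ii) the ``parallel computation'' is not quite parallel. The identity $C_d(x^2-2)=C_d(x)^2-2$ is the genuine semiconjugacy, but $C_d(x^2+2)-2$ equals $\widehat C_d(x)^2$ where $\widehat C_d(x)=i^{-d}C_d(ix)$ (for instance $x^3+3x$ when $d=3$), and $\widehat C_d\neq \pm C_d$. So case (ii) actually yields $Q=\pm\sqrt{\lambda}\,\widehat C_d$, a scaling of a polynomial linearly related to $C_d$ rather than of $C_d$ itself. The paper's own proof asserts ``$C_d(x^2\pm 2)=C_d(x)^2\pm 2$'' and dismisses case (ii) as settled ``by similar arguments,'' so the imprecision originates in the source and is harmless downstream; but if you write out case (ii) you should record the correct form of $Q$.
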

\begin{proof}
Write $L(x)=x+\ell$. By Ritt's theorem, there exist linear polynomials $\alpha,\beta,\gamma,\delta$ and polynomial
$U(x)$ in $K[x]$ such that:
\begin{align}
\alpha\circ N\circ C_d\circ\beta^{-1} &=xU(x)^2,\label{eq:lem 2 cases for L, N, Q eq1}\\
\beta\circ L\circ P_2\circ\gamma &=x^2,\label{eq:lem 2 cases for L, N, Q eq2}\\
\alpha\circ L^\sigma\circ P_2\circ\delta^{-1} &=x^2, \label{eq:lem 2 cases for L, N, Q eq3}\\
\delta\circ Q\circ P_2\circ\gamma &=xU(x^2) \label{eq:lem 2 cases for L, N, Q eq4}
\end{align}

By Lemma~\ref{lem:L1CdL2=xU(x)^2} for \eqref{eq:lem 2 cases for L, N, Q eq1}, we have two cases. The first case is that:
\begin{equation}\label{eq:alpha o N and beta^-1}
\alpha\circ N=a(x+2) \text{and}\ \beta^{-1}=bx-2\ \text{for some $a,b\in K^*$}.
\end{equation}

Since $P_2(x)=x^2$ and $L(x)=x+\ell$, from \eqref{eq:lem 2 cases for L, N, Q eq2}, we have:
\begin{equation}\label{eq:gamma and beta}
\gamma(x)=cx,\ \beta\circ L=\frac{1}{c^2}x,\ \beta=\frac{1}{c^2}(x-\ell)\ \text{for some $c\in K^*$}.
\end{equation}

From \eqref{eq:alpha o N and beta^-1} and \eqref{eq:gamma and beta}, we have:
$$\frac{x+2}{b}=\frac{1}{c^2}(x-\ell).$$
This gives $b=c^2$ and $\ell=-2$. We now have $L(x)=x-2$ and the given \eqref{eq: lem 2 cases for L, N, Q}
becomes:
$$N(C_d(x^2-2))=Q(x)^2-2.$$
Combining this with the identity $C_d(x^2\pm 2)=C_d(x)^2\pm 2$ and expressing $N(x)=\lambda x+d$, we have:
$$\lambda C_d(x)^2-2\lambda + d=Q(x)^2-2.$$
This implies $\lambda C_d(x)^2=Q(x)^2$ and $-2\lambda+d=-2$. This yields case (i).

The remaining case when applying Lemma~\ref{lem:L1CdL2=xU(x)^2} for \eqref{eq:lem 2 cases for L, N, Q eq1} is that:
\begin{equation}
\alpha\circ N=a(x-2) \text{and}\ \beta^{-1}=bx+2\ \text{for some $a,b\in K^*$}.
\end{equation}
This can be settled by similar arguments.
\end{proof}

\begin{proof}[Proof of Proposition 2.22]
We apply the reduction and observation in Remark~\ref{rem:before proof type C}: $P=C_q\circ N$ where $q$ is an odd prime and $N\neq \pm \id$ is linear, let $Q,A,B,\ldots$ be the data in the conclusion of Lemma~\ref{lem:data QABA_0B_0} and we have $\deg(A_0),\deg(B_0)\leq 2$. 
We prove by contradiction that the case $(\deg(A_0),\deg(B_0))\neq (1,1)$ cannot happen then 
Lemma~\ref{lem:linear A0,B0} finishes the proof.

\textbf{Case 1:} $\deg(A_0)=\deg(B_0)=2$. With $P_2(x)=x^2$, we can express $A_0=L\circ P_2\circ M$ and $B_0=L_1\circ P_2\circ M_1$ where $L,L_1,M,M_1$ are linear and $L$ and $L_1$ are translation. Then property (vi) in Lemma~\ref{lem:data QABA_0B_0} together with $P=N\circ C_q$ yield:
\begin{align}
N^{\sigma^{-a}}\circ C_q\circ L\circ P_2 &=L^{\sigma}\circ P_2\circ (M^\sigma\circ Q^{\sigma^{c+1}}\circ M^{-1}),\label{eq:Case 2 first eq in the pair}\\
N^{\tau\sigma^{-b}}\circ C_q\circ L_1\circ P_2 &=L_1^{\sigma}\circ P_2\circ (M_1^\sigma\circ Q^{\sigma^{d+1}}\circ M_1^{-1}).\label{eq:Case 2 second eq in the pair}
\end{align}
We apply Lemma~\ref{lem:2 cases for L, N, Q} to the above pair of equations.

\textbf{Case 1.1:} both data $(N^{\sigma^{-a}},L,M^\sigma\circ Q^{\sigma^{c+1}}\circ M^{-1})$ and 
$(N^{\tau\sigma^{-b}},L_1,M_1^\sigma\circ Q^{\sigma^{d+1}}\circ M_1^{-1})$ satisfy the conclusion in case (i) of Lemma~\ref{lem:2 cases for L, N, Q} or both data satisfy the conclusion in case (ii). This gives
$L=L_1=x-2$ or $L=L_1=x+2$ and there exist $\alpha,\alpha_1\in K^*$ such that:
$$M^\sigma\circ Q^{\sigma^{c+1}}\circ M^{-1}=\alpha C_q\ \text{and}\ M_1^\sigma\circ Q^{\sigma^{d+1}}\circ M_1^{-1}=\alpha_1 C_q.$$
This implies:
$$Q^{\sigma^{c+1}}=(M^{-1})^\sigma\circ(\cdot\alpha)\circ C_q\circ M\ \text{and}\ Q^{\sigma^{d+1}}=(M_1^{-1})^\sigma\circ(\cdot\alpha_1)\circ C_q\circ M_1.$$
Together with the fact that if $\ell_1\circ C_q\circ \ell_2=C_q$ for some linear polynomials $\ell_1$ and $\ell_2$
then $\ell_1=\ell_2=\pm x$, we have:
$$M^{\sigma^{-(c+1)}}=\pm M_1^{\sigma^{-(d+1)}}.$$
We now have:
$$A_0^{\sigma^{-(c+1)}}=L\circ P_2\circ M^{\sigma^{-(c+1)}}=L_1\circ P_2\circ M_1^{\sigma^{-(d+1)}}=B_0^{\sigma^{-(d+1)}}.$$
This contradicts property (vii) in Lemma~\ref{lem:data QABA_0B_0}.

\textbf{Case 1.2:} between the data $(N^{\sigma^{-a}},L,M^\sigma\circ Q^{\sigma^{c+1}}\circ M^{-1})$ and
$(N^{\tau\sigma^{-b}},L_1,M_1^\sigma\circ Q^{\sigma^{d+1}}\circ M_1^{-1})$, one satisfies (i) while the other one satisfies (ii) in Lemma~\ref{lem:2 cases for L, N, Q}. Without loss of generality, assume
the first data satisfies (i) and the second satisfies (ii). Hence there exist $\alpha,\beta\in K^*$ such that:
$$N^{\sigma^{-a}}=\alpha x+2\alpha-2\ \text{and}\ N^{\tau\sigma^{-b}}=\beta x-2\beta+2.$$
This gives $\beta=\alpha^{\tau\sigma^{a-b}}$ and $-2\beta+2=2\alpha^{\tau\sigma^{a-b}}-2=2\beta-2$. Hence $\beta=1$ and $N(x)=x$, contradicting the condition that $N\neq\pm \id$.

\textbf{Case 2:} without loss of generality, the remaining case is that $\deg(A_0)=2$ and $\deg(B_0)=1$.
From property (vi) in Lemma~\ref{lem:data QABA_0B_0}, we have:
$$Q^{\sigma^{d+1}}=(B_0^{-1})^\sigma\circ P^{\tau\sigma^{-b}}\circ B_0=(B_0^{-1})^\sigma\circ N^{\tau\sigma^{-b}}\circ C_q\circ B_0.$$
Applying $\sigma^{c-d}$ both sides gives:
\begin{equation}\label{eq: Case 3 first eq}
Q^{\sigma^{c+1}}=(B_0^{-1})^{\sigma^{c-d+1}}\circ N^{\tau\sigma^{c-d-b}}\circ C_q\circ B_0^{\sigma^{c-d}}.
\end{equation}

As in the earlier Case 1, we express $A_0=L\circ P_2\circ M$ where $M$ is linear and $L$ is a translation. Then we get
\begin{align}\label{eq: Case 3 but same eq as before}
N^{\sigma^{-a}}\circ C_q\circ L\circ P_2 &=L^{\sigma}\circ P_2\circ (M^\sigma\circ Q^{\sigma^{c+1}}\circ M^{-1}),
\end{align}
from property (vi) in Lemma~\ref{lem:data QABA_0B_0} as before. Applying Lemma~\ref{lem:2 cases for L, N, Q} to \eqref{eq: Case 3 but same eq as before}, there exists $\alpha\in K^*$ such that:
\begin{equation}\label{eq: Case 3 eq with alpha}
M^\sigma\circ Q^{\sigma^{c+1}}\circ M^{-1}=\alpha C_q.
\end{equation}
Together with \eqref{eq: Case 3 first eq}, we have:
\begin{equation}\label{eq: Case 3 after combining equations}
(M\circ B_0^{-1})^{\sigma}\circ N^{\tau\sigma^{c-d-b}}\circ C_d\circ B_0\circ M^{-1}=\alpha C_q.
\end{equation}
This implies $B_0\circ M^{-1}=\pm x$, then $(M\circ B_0^{-1})^{\sigma}=\pm x$, and hence
\begin{equation}\label{eq: Case 3 N has no constant term}
N^{\tau\sigma^{c-d-b}}=\pm\alpha x.
\end{equation}
Applying Lemma~\ref{lem:2 cases for L, N, Q} to \eqref{eq: Case 3 but same eq as before} again, there is $\beta\in K^*$ such that $N^{\sigma^{-a}}=\beta x+2\beta-2$ or $N^{\sigma^{-a}}=\beta x-2\beta+2$. But in either case, \eqref{eq: Case 3 N has no constant term} implies that $\beta=1$ and hence $N(x)=x$ contradicting the condition $N\neq \pm \id$.
\end{proof}

\subsubsection{Partial group actions}

We introduce partial actions and rewriting systems to view the result of sequences of these partial
operations on $\kleq$ as partial group actions.
From the intended actions of these generators of the groups to get a (usual) group action, we would have to show that the relations given by
a presentation of the relevant group are respected. However, this is not the case here.
To get these desired partial group actions, asymmetric term-rewriting systems replace the relations in a presentation.

All of the following definitions are standard, with the exception of the very simple and natural notion of actionability.
A standard reference for rewriting systems is~\cite{BoOt} and for partial group actions is~\cite{Exel}.  However, we use somewhat different
notation from these other authors.

\begin{Def}

For a set $X$, let $M_X$ be the monoid of partial injective functions on $X$.
That is, elements of $M_X$ are injections $m: Y \rightarrow X$ for some $Y \subseteq X$, and the operation is composition, with maximal possible domain.

For a set $T$, let $T^*$ be the free monoid generated by $T$, also known as the set of words on $T$.
For a function $\alpha: T \rightarrow M_X$, let $\alpha^*: T^* \rightarrow M_X$ be the induced monoid homomorphism.

For a monoid $G$ and a set $X$, a \emph{partial action} of $G$ on $X$ is a function $\beta: G \rightarrow M_X$ such that
for any $a,b \in G$,  $$\beta(b) \circ \beta(a) \subseteq \beta(ba).$$
Here, the subset relation is in the sense of the graphs of functions. In more functional notation, $m_1 \subseteq m_2$ means that $m_1$ is the restriction of $m_2$ to a possibly smaller set. When $\beta$ is clear, we write $g \star x$ instead of $\beta(g) (x)$ for $g \in G$ and $x \in X$.

\end{Def}

The purpose of a rewriting system $R$ is that whenever $(u,v) \in R$, we are allowed to replace a substring $u$ in some big string $aub$ by the string $v$ to get $avb$. This is the asymmetric analog of the relations in a group presentation.

\begin{Def}(See~\cite[Definition 2.1.1 and 2.1.2]{BoOt})
A \emph{string rewriting system} on $T$ is a subset $R$ of $T^* \times T^*$.
The \emph{reduction relation} $\reduct{R}$ induced by $R$ is the reflexive transitive closure of the relation
$$\{ (aub, avb) \in T^* \times T^*) : a, b \in T^* \text{ and } (u,v) \in R \}.$$

The equivalence relation generated by $\reduct{R}$ is denoted by $\congru{R}$. It is the monoid congruence generated by $R$. The quotient monoid and the quotient map are denoted by $G_R$ and $\pi_R: T^* \rightarrow G_R$, respectively.

The string rewriting system $R$ is \emph{confluent} if whenever $(v,w) \in \congru{R}$, there is another word $u$ with both $(v,u) \in \reduct{R}$ and $(w,u) \in \reduct{R}$.
 That is, whenever two words are $R$-equivalent, they are $R$-reducible to the same word.

A word $w \in T^*$ is \emph{$\congru{R}$-reduced} if it is not $\congru{R}$-equivalent to any shorter word.

\end{Def}

\begin{Def} \label{def:actionable}
Fix sets $X$ and $T$, a function $\alpha: T \rightarrow M_X$, and a confluent rewriting system $R$ on $T$.
The pair $(\alpha, R)$ is \emph{actionable} if $\alpha^*(u) \subseteq \alpha^*(v)$ for all $u, v \in T^*$ with $(u, v) \in R$.
\end{Def}

The next proposition demonstrates that the technical notion of ``actionable'' from Definitions~\ref{def:actionable} exactly
corresponds to the idea that an action defined on generators is really ``actionable'' in that it may be made into a partial action.

\begin{prop} \label{prop:actionable}
Fix sets $X$ and $T$, a function $\alpha: T \rightarrow M_X$, and a confluent rewriting system $R$ on $T$.
If the pair $(\alpha, R)$ is actionable, then
 $$\bar{\alpha}(g) := \cup \{ \alpha^*(w) : w \in T^* \text{ and } \pi_R(w) = g \}$$
defines a partial action $\star$ of $G_R$ on $X$.

In functional notation, this union means that for $g \in G$ and $x,y \in X$,
 $$g \star x = y \text{ if and only if } \alpha^*(w) (x) = y \text{ for some } w \in T^* \text{ with } \pi_{R}(w) = g \text{ .}$$
\end{prop}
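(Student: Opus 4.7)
The plan is to verify, in turn, the three ingredients needed for a partial action: that each $\bar{\alpha}(g)$ is a well-defined partial function on $X$, that it is injective (so lands in $M_X$), and that the family satisfies the defining inclusion $\bar{\alpha}(b) \circ \bar{\alpha}(a) \subseteq \bar{\alpha}(ba)$ for all $a, b \in G_R$.

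Before anything else, I would upgrade the actionable hypothesis from the pair relation $R$ to the full reduction relation $\reduct{R}$: if $(u,v) \in R$, then for any $a,b \in T^*$ we have $\alpha^*(aub) = \alpha^*(a) \circ \alpha^*(u) \circ \alpha^*(b)$ and $\alpha^*(avb) = \alpha^*(a) \circ \alpha^*(v) \circ \alpha^*(b)$ because $\alpha^*$ is a monoid homomorphism. Since composition in $M_X$ is monotone under the graph inclusion order (a one-line check: if $f_1 \subseteq f_2$ and $g_1 \subseteq g_2$, then $g_1 \circ f_1 \subseteq g_2 \circ f_2$), the hypothesis $\alpha^*(u) \subseteq \alpha^*(v)$ gives $\alpha^*(aub) \subseteq \alpha^*(avb)$. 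Induction on the number of single-step reductions then yields $\alpha^*(w) \subseteq \alpha^*(w')$ whenever $w \reduct{R} w'$.

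For well-definedness of $\bar{\alpha}(g)$, suppose $w_1, w_2 \in T^*$ with $\pi_R(w_1) = \pi_R(w_2) = g$, so $w_1 \congru{R} w_2$. By confluence there is a common reduct $u \in T^*$ with $w_1 \reduct{R} u$ and $w_2 \reduct{R} u$; by the previous paragraph, $\alpha^*(w_1), \alpha^*(w_2) \subseteq \alpha^*(u)$. Hence at any $x \in X$ where both $\alpha^*(w_1)(x)$ and $\alpha^*(w_2)(x)$ are defined, they both equal $\alpha^*(u)(x)$, so the union in the definition of $\bar{\alpha}(g)$ is a single-valued partial function. Injectivity follows by the same confluence trick: if $\bar{\alpha}(g)(x_1) = \bar{\alpha}(g)(x_2) = y$ via witnesses $w_1, w_2$, then picking a common reduct $u$ gives $\alpha^*(u)(x_1) = y = \alpha^*(u)(x_2)$, and since $\alpha^*(u) \in M_X$ is injective, $x_1 = x_2$.

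Finally, for the partial action inclusion, take $x \in X$ with $(\bar{\alpha}(b) \circ \bar{\alpha}(a))(x) = z$, so $\bar{\alpha}(a)(x) = y$ and $\bar{\alpha}(b)(y) = z$ for some $y$. Pick witnessing words $w_a, w_b \in T^*$ with $\pi_R(w_a) = a$, $\pi_R(w_b) = b$, $\alpha^*(w_a)(x) = y$, and $\alpha^*(w_b)(y) = z$. Since $\pi_R$ is a monoid homomorphism, $\pi_R(w_b w_a) = ba$, and since $\alpha^*$ is a monoid homomorphism into $(M_X, \circ)$, $\alpha^*(w_b w_a)(x) = \alpha^*(w_b)(\alpha^*(w_a)(x)) = z$. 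Thus $\bar{\alpha}(ba)(x) = z$, establishing $\bar{\alpha}(b) \circ \bar{\alpha}(a) \subseteq \bar{\alpha}(ba)$. I expect the only subtle point to be the preliminary step bootstrapping the actionable hypothesis from $R$ to $\reduct{R}$; everything else is then a direct application of confluence together with the fact that $\alpha^*$ is a homomorphism.
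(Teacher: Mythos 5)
Your proof is correct and follows essentially the same route as the paper: bootstrap the actionable condition from $R$ to $\reduct{R}$, use confluence to find a common reduct establishing single-valuedness, and use the homomorphism property of $\pi_R$ and $\alpha^*$ on the concatenation $w_b w_a$ for the partial-action inclusion. Your explicit verification that $\bar{\alpha}(g)$ is injective (so that it genuinely lands in $M_X$) is a small point the paper's proof leaves implicit, but otherwise the arguments coincide.
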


\begin{proof}
It is clear that whenever $(R, \alpha)$ is actionable, so is $(\reduct{R}, \alpha)$.

We first show that the union $\bar{\alpha}(g)$ of functions is itself a function.
 Suppose that $g \star x = y$ and $g \star x = y'$; that is, there are words $w, w' \in T^*$
 with $\pi_R(w) = \pi_R(w') = g$ and $\alpha^*(w) (x) = y$ and $\alpha^*(w') (x) = y'$. Since $\pi_R(w) = \pi_R(w') = g$, we must have
 $(w,w') \in \congru{R}$. Since $R$ is confluent, there is a word $u \in T^*$ with both $(w,u)$ and $(w', u)$ in $\reduct{R}$.
Since $(\reduct{R}, \alpha)$ is actionable, $(w,u) \in \reduct{R}$ and $\alpha^*(w) (x) = y$ implies that $\alpha^*(u) (x) = y$.
The same reasoning with $w'$ and $y'$ shows that $\alpha^*(u) (x) = y'$.  So $y = y'$ as wanted.

We now show that $\bar{\alpha}$ satisfies the definition of partial action.
Fix $a,b \in G_R$ and $x,y,z \in X$, and suppose that $a \star x = y$ and $b \star y = z$.
Then there are words $u, v \in T^*$ with $\pi_{R}(u) = a$ and $\pi_{R}(v) = b$ and
$\alpha^*(u) (x) = y$ and $\alpha^*(v) (y) = z$. Let $w := vu$ be the concatenation of the two words.
Then $\pi_{R}(w) = \pi_{R}(vu) = ba$ and $\alpha^*(w) (x) = z$. So $(ba) \star x = z$.
\end{proof}

The generators we are interested in are $\phi$ and its inverse; the $\epsilon_p$ for prime $p$, and their inverses; and the $t_i$ for $i = 1, 2, \ldots k$.

\begin{Def}
\label{def:rewrite-groups}
Foe $k \geq 3$, here are the four groups we are interested in and their rewriting systems.
\begin{itemize}
\item $\Symk$ is generated by $T_{\Symk} := \{t_1, t_2, \ldots t_{k-1}\}$ and defined by the rewriting system $$R_{\Symk} := \{ (t_i t_i, \id) \} \cup \{ (t_i t_j, t_j t_i) : j \neq i \pm 1 \}
    \cup \{ (t_i t_j t_i, t_j t_i t_j) : j = i \pm 1 \} \text{ .}$$

\item $\Affk$ is generated by $T_{\Affk} := \{t_1, t_2, \ldots t_{k-1}, t_k\}$ and defined by the rewriting system
    $$R_{\Affk} := \{ (t_i t_i, \id) \} \cup \{ (t_i t_j, t_j t_i) : j \not \equiv i \pm 1 \pmod{k} \} $$
   $$ \cup \{ (t_i t_j t_i, t_j t_i t_j) : j \equiv i \pm 1 \pmod{k} \}$$
    where addition of indices is modulo $k$, so including $(t_kt_1t_k, t_1t_kt_1)$ and excluding $(t_1t_k, t_k t_1)$.

\item $\STk$ is generated by $T_{\STk} := T_{\Affk} \cup \{ \phi, \phi^{-1} \}$ and defined by the rewriting system
    $$R_{\STk} := R_{\Affk}
        \cup \{ (\phi\phi^{-1}, \id), (\phi^{-1}\phi, \id), (\id, \phi^{-1}\phi), (\id, \phi\phi^{-1}) \} \cup$$
        $$\cup \{ (\phi t_i, t_{i+1} \phi) : 1 \leq i \leq k \}
        \cup \{ (t_{i+1} \phi, \phi t_i) : 1 \leq i \leq k \},$$
    where addition of indices is again modulo $k$.

\item $\STpk$ is generated by $T_{\STpk} := T_{\STk} \cup \{ \epsilon_p, \epsilon_p^{-1} : \mbox{ prime } p \}$ and defined by
 the rewriting system $$R_{\STpk} := R_{\STk} \cup \{ (\epsilon_p \omega, \omega \epsilon_p),
 (\omega \epsilon_p, \epsilon_p \omega)  ~:~ p \text{ prime }, \omega \in T_{\STpk} \}$$
 $$\cup \{  (\epsilon_p \epsilon_p^{-1}, \id), (\epsilon_p^{-1} \epsilon_p, \id) ~:~ p \text{ prime }\} \text{ .}$$
That is, $\epsilon_p$ commutes with everything and $\epsilon_p^{-1}$ is its inverse.

\end{itemize}

When $k = 2$, we drop the braid relation $t_1 t_2 t_1 = t_2 t_1 t_2$.  So, $\operatorname{ASym}_2$ is the infinite dihedral
group.  We do not define these groups when $k=1$.
\end{Def}

\begin{Rk} \label{rk:meet-the-groups-rk}
 Interpreting $t_i$ as the transposition switching $i$ and $i+1$ in the set $\{1, 2, \ldots, k\}$ identifies $\Symk$ with the usual symmetric group. For $k \geq 3$, the next group $\Affk$ is well-known under many names 
 including ``affine symmetric group.''
 It has several copies of $\Symk$ inside it, generated by $T_j := \{ t_i: i \neq j, 1 \leq i \leq k \}$ for any fixed
 $j \leq k$. Even when $k = 2$, the group
 $\Affk$ admits a surjective homomorphism to $\Symk$ sending $t_i$ to the transposition switching $i$ and $i+1$ for $i \neq k$ and sending $t_k$ to the one switching $1$ and $k$. This homomorphism restricts to an isomorphism on each copy of $\Symk$ inside $\Affk$. The kernel of this homormophism is a copy of $\ZZ$. In particular, $\Affk$ is infinite. It is clear from the rewriting system that $\STk$ is the semidirect product of $\Affk \unlhd \STk$ and the infinite cyclic group generated by $\phi$.
 It can be useful to think of $\STk$ and $\Affk$ as subgroups of permutations of $\ZZ$, where $t_i$ is the permutation that switches $i+rk$ with $i+1 + rk$ for all $r \in \ZZ$ and $\phi$ is the shift by $1$. We write $\pi:\Affk \to \operatorname{Sym}(\ZZ)$ for this group homomorphism.
\end{Rk}

\begin{Rk}
\label{rk:semidirect}
The generating sets of Definition~\ref{def:rewrite-groups} form a chain,
and the groups are a chain of groups $\Symk < \Affk < \STk < \STpk$.
We have presented $\STk$ as a semidirect product of
$\ZZ \cong \langle \phi \rangle$ with $\Affk$.
The action of conjugation by $\phi$ on
$\Affk$ is given by the automorphism
$\eta:\Affk \to \Affk$ induced by
$t_i \mapsto t_{i+1}$.
Likewise, we have presented
$\STpk$ as a product of $\STk$ with
$\bigoplus_{i=0}^\infty \ZZ  \cong
\langle \epsilon_p ~:~ p \text{ a prime } \rangle$.
Thus, any element of $\STpk$ may be written in the form
$\prod_p \epsilon^{m_p}  \phi^n  w$ where $m_p \in \ZZ$, all
but finitely many of which are zero, $n \in \ZZ$, and $w \in \Affk$.
\end{Rk}

The first two groups, $\Symk$ and $\Affk$ are Coxeter groups (see~\cite{Brown}) with their usual Coxeter rewriting systems. The first part of each, relations of the form $(t_i t_i, \id)$, shorten the word they act on and are asymmetric; while the rest are symmetric and do not change the length of the word they act on.  Matsumoto's Theorem applies to $\Symk$ and $\Affk$, with $R_{\text{cox}}$ consisting of the second and third sets of rewriting rules,
that is, not including the pairs $(t_i t_i, \id)$, and $R_{\text{cox}+}$ being the whole rewriting system.

\begin{fact}[Matsumoto's Theorem~\cite{Matsumoto}]
 Any two $R_{\text{cox}+}$-equivalent, $R_{\text{cox}+}$-reduced words are also $R_{\text{cox}}$-equivalent; and every word is $R_{\text{cox}+}$-reducible to an $R_{\text{cox}+}$-reduced one.
Since $R_{\text{cox}}$ is symmetric, any two $R_{\text{cox}}$-equivalent words are $R_{\text{cox}}$-reducible to each other.
 In particular, it follows that $R_{\text{cox}+}$ is confluent.
\end{fact}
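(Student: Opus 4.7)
The plan is to establish the first clause (the substantive one) and then derive the remaining assertions from it. The second clause is easy: given any word $w \in T^*$, repeatedly apply a rewriting rule of the form $(t_i t_i, \id)$ whenever $w$ contains two equal consecutive generators. Each such application strictly decreases length, so the process terminates in an $R_{\text{cox}+}$-reduced word. Since the rules in $R_{\text{cox}}$ preserve length, no word with two equal consecutive letters can be $R_{\text{cox}+}$-reduced, while if no such pair exists, a single $R_{\text{cox}+}$-rewrite from $R_{\text{cox}}$ preserves length, so reducedness is stable under $R_{\text{cox}}$-equivalence.

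For the first clause, the standard route is through the Exchange Condition for Coxeter groups. One proves by induction on the common length $\ell$ that if $u = t_{i_1} \cdots t_{i_\ell}$ and $v = t_{j_1} \cdots t_{j_\ell}$ are both reduced and define the same element of $G_R$, then $u$ and $v$ are $R_{\text{cox}}$-equivalent. If $i_1 = j_1$, strip the first letter and apply induction. Otherwise, the Exchange Condition produces a reduced expression for the common element that begins with $t_{j_1}$ and has $t_{i_1}$ as its second letter (or vice versa), and iterating this exchange within the subgroup generated by $t_{i_1}$ and $t_{j_1}$ realizes the passage as a sequence of braid/commutation moves, which are exactly the rules of $R_{\text{cox}}$. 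This is the content of Matsumoto's original argument in \cite{Matsumoto}; it uses genuine structural input about Coxeter groups (typically either the reflection representation of $\operatorname{Sym}_k$ and $\operatorname{ASym}_k$, or a direct combinatorial argument on root systems), and this is the main obstacle.

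Once the first two clauses are in hand, the rest is formal. Because every rule of $R_{\text{cox}}$ has its reverse also in $R_{\text{cox}}$, the relation $\reduct{R_{\text{cox}}}$ coincides with $\congru{R_{\text{cox}}}$, so any two $R_{\text{cox}}$-equivalent words are mutually $R_{\text{cox}}$-reducible. For confluence of $R_{\text{cox}+}$, suppose $(v, w) \in \congru{R_{\text{cox}+}}$. By the second clause, reduce $v$ and $w$ along $\reduct{R_{\text{cox}+}}$ to $R_{\text{cox}+}$-reduced words $v'$ and $w'$; these remain $R_{\text{cox}+}$-equivalent, so by the first clause they are $R_{\text{cox}}$-equivalent, hence mutually $\reduct{R_{\text{cox}}}$-reducible, hence $\reduct{R_{\text{cox}+}}$-reducible to a common word $u$. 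Then $(v, u)$ and $(w, u)$ lie in $\reduct{R_{\text{cox}+}}$, giving the required confluence.

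Since this statement is labeled a fact and attributed to \cite{Matsumoto}, the intended reading is that the Exchange-Condition step is imported from the literature on Coxeter groups; the only work done inside the paper is to observe that confluence and the reducibility assertion follow from it by the formal manipulations above.
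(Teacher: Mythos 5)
The paper gives no proof of this statement at all: it is labeled a Fact and imported wholesale from Matsumoto (and Tits), so your closing remark about the intended reading is correct, and your formal derivation of confluence from the two displayed clauses is sound and is exactly the role the Fact plays in the paper. The problem is your claim that the second clause is "easy."

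Your argument for ``every word is $\reduct{R_{\text{cox}+}}$-reducible to an $R_{\text{cox}+}$-reduced one'' is to greedily delete substrings $t_i t_i$ until none remain; since each deletion shortens the word, the process terminates. It does terminate, but not necessarily at a reduced word: a word can fail to be reduced without containing two equal consecutive letters. With $k=3$ and generators $t_1, t_2$ of $\operatorname{Sym}_3$, the word $t_1 t_2 t_1 t_2$ contains no substring $t_i t_i$, yet a braid move turns it into $t_2 t_1 t_2 t_2$, which deletes to the shorter word $t_2 t_1$; so $t_1 t_2 t_1 t_2$ is not reduced, and your greedy procedure halts on it immediately without reducing it. Your accompanying observation that reducedness is stable under $R_{\text{cox}}$-moves is true but does not repair this, since it does not show that a word free of adjacent equal letters is reduced. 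What the clause actually requires is the deletion condition for Coxeter groups: any non-reduced word can be transformed by length-preserving moves from $R_{\text{cox}}$ alone into one containing a substring $t_i t_i$. That condition is derived from the exchange condition, i.e., from the same structural input you correctly identify as the content of the first clause. So both clauses, not just the first, are part of what must be imported from Matsumoto; the only genuinely formal steps are the symmetry of $R_{\text{cox}}$ and the short derivation of confluence at the end, which you do carry out correctly.
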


Recall that a word $w \in T^*$ is \emph{$\congru{R}$-reduced} if it is not $\congru{R}$-equivalent to any shorter word.
The next corollary follows easily from the hard result, Theorem~\ref{thm:braidhtm} below, analogous to~\cite[Theorem 2.52]{MS}  , that the braid relations $(t_i t_{i+1} t_i, t_{i+1} t_i t_{i+1})$ hold for Ritt swaps.

\begin{cor}  
\label{cor:group-action-corr}
Each of the four confluent rewriting systems of Definition~\ref{def:rewrite-groups} together with the actions of
its generators defined Definitions~\ref{def:Ritt-swap},~\ref{def:phi-action}, and~\ref{def:epsilon}  is actionable, and
the partial group action $\bar{\alpha}$ is also given by
 $$g \star x = y \text{ if and only if } \alpha^*(w) (x) = y \text{ for every reduced } w \in T^* \text{ with } \pi_{R}(w) = g.$$
\end{cor}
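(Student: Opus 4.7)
My plan is to verify the hypotheses of Proposition~\ref{prop:actionable} for each of the four rewriting systems, deduce the existence of the partial action $\bar\alpha$, and then promote the ``some word'' characterization to the claimed ``every reduced word'' version.

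First, confluence of each rewriting system needs to be handled. For $\Symk$ and $\Affk$ this is Matsumoto's theorem. For $\STk$ the semidirect product structure of Remark~\ref{rk:semidirect} shows that every word can be rewritten into the normal form $\phi^n w$ with $w \in T^*_{\Affk}$ using $(\phi t_i, t_{i+1}\phi)$, $(t_{i+1}\phi,\phi t_i)$, and the cancellations $(\phi\phi^{-1},\id)$, $(\phi^{-1}\phi,\id)$; confluence then reduces to that of $\Affk$. For $\STpk$ one further moves all $\epsilon_p^{\pm 1}$ to one side via the central commutation rules.

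Next I would verify actionability, that is, $\alpha^*(u) \subseteq \alpha^*(v)$ for each rule $(u,v) \in R$. The involution rules $(t_i t_i, \id)$ reflect that a strict Ritt swap is undone by swapping back with the inverse linear witnesses. The distant commutation rules $(t_i t_j, t_j t_i)$ with $j \not\equiv i\pm 1 \pmod{k}$ act on disjoint pairs of factors of the long decomposition and hence commute trivially. The braid relations $(t_it_jt_i, t_jt_it_j)$ with $j \equiv i\pm 1 \pmod{k}$ are exactly the content of Theorem~\ref{thm:braidhtm}, and this is the crux. The $\phi$-conjugation rules $(\phi t_i, t_{i+1}\phi)$ follow immediately from skew-periodicity and the definitions of the $t_i$ and $\phi$ actions. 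That $\epsilon_p$ commutes with all other generators comes from unpacking Definition~\ref{def:epsilon}: the replacement $x^r u(x^p) \mapsto x^r u(x)^p$ is performed index-by-index and is compatible both with the shift $\phi$ and with a Ritt swap at any index, since such a swap rewrites a neighbouring pair $f_{i+1} \circ f_i$ in a way that does not interfere with a factor of $p$-th-power form at an unrelated index. The inverse rules for $\phi^{\pm 1}$ and $\epsilon_p^{\pm 1}$ are immediate. Applying Proposition~\ref{prop:actionable} then yields the partial action $\bar\alpha$.

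For the reduced-word characterization, suppose $g \star x = y$ via some word $w$ with $\pi_R(w) = g$ and let $w'$ be any reduced word with $\pi_R(w') = g$. First reduce $w$ to a reduced word $w''$ using only the shortening rules $(t_i t_i,\id)$, $(\phi \phi^{-1},\id)$, $(\phi^{-1}\phi,\id)$, and the corresponding cancellations for $\epsilon_p^{\pm 1}$; each is actionable, so $\alpha^*(w'')(x) = y$. Now $w''$ and $w'$ are both reduced and $R$-equivalent, and the Matsumoto argument, extended to $\STk$ and $\STpk$ via the semidirect and central structure above, implies that they differ only by a sequence of symmetric length-preserving rules. Each such rule is actionable and symmetric, hence gives a genuine equality of partial functions, and therefore $\alpha^*(w')(x) = \alpha^*(w'')(x) = y$. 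The main obstacle is Theorem~\ref{thm:braidhtm} on braid relations for Ritt swaps; once that is in hand, everything else is bookkeeping from the definitions and Matsumoto's theorem.
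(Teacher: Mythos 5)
Your proposal is correct and follows essentially the same route as the paper: confluence via Matsumoto's theorem (extended to $\STk$ and $\STpk$ through the semidirect/central structure of Remark~\ref{rk:semidirect}), actionability of the braid relations via Theorem~\ref{thm:braidhtm} with all other rules immediate from the definitions, and the reduced-word characterization from the strong form of Matsumoto's theorem. You simply spell out in more detail the steps the paper dismisses as immediate.
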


\begin{proof}
For $k \geq 3$, confluence of the first two rewriting systems $R_{\Symk}$ and $R_{\Affk}$ is a special case of Matsumoto's Theorem.  
Confluence in the case of $k = 2$ is triival.
 By Remark~\ref{rk:semidirect}, confluence of $R_{\STk}$ and of $R_{\STpk}$ follows.

For $k < 3$, braid relations are irrelevant. For $k \geq 3$, Theorem~\ref{thm:braidhtm} verifies the braid relations that
$$t_i \star (t_{i+1}  \star (t_i  \star [f] )) =  t_{i+1} \star  (t_i \star (t_{i+1} \star  [f] ))$$
for any $i = 1, 2, \ldots k$ and any long decomposition $f$. That is, they
are equal when defined, and one is defined if and only if the other is.
All other instances of the definition of ``actionable'' are immediate.

We can replace ``some $w \in T^*$ with $\pi_{R}(w) = g$'' in the functional version of
 Proposition~\ref{prop:actionable} with ``any reduced $w \in T^*$ with $\pi_{R}(w) = g$'' because the
 conclusion of Matsumoto's theorem is this strong version of confluence.
\end{proof}

\begin{Rk}
In~\cite{MS} we worked with the language of monoid actions
rather than partial group actions and the space on
which those monoids acted was the set of equivalence classes
up to skew-linear equivalence of decompositions of length
$k$. The actions of $\Symk$, $\STk$, and $\STpk$ given here correspond to the actions of the Ritt Monoid $\operatorname{RM}_{k}$, the skew-twist monoid $\operatorname{ST}_{k}$, and the augmented skew-twist monoid $\operatorname{ST}^+_{k}$ there.
Our $\STk$ is generated as a monoid
by $t_1$, \ldots, $t_{k-1}$, $\phi$, and $\phi^{-1}$
because $t_k = \phi t_1 \phi^{-1}$. The subgroup of $\STk$ generated by $t_1, t_2, \ldots t_{k-2}$ along with
$\psi := t_{k-1} \phi$ and $\gamma := \phi^{-1} t_{k-1}$ would be the analog of the border guard monoid $\operatorname{BG}_{k}$ from~\cite{MS}; but the new intermediate group $\Affk$ turns out to be much more convenient.
\end{Rk}

The two canonical forms for words in $\Symk$ from~\cite{MS} fit nicely with Matsumoto's theorem. We shall use them here, not only with the standard copy of $\Symk$ in $\Affk$, but also with the other copies of Remark~\ref{rk:meet-the-groups-rk}. The following are intuitive explanations of analogues of~\cite[Definitions 5.6, 5.8, and 5.14]{MS}.

\begin{Def}
\label{def:transit}
For $b+k > a > b$, the word
$t_{a-1} t_{a-2} \ldots t_b$ in $T_{\Symk}^*$ is called a \emph{right-to-left transit}.
In the action of $\Symk$ on long decompositions, this drags the factor $f_b$ left from its original position at $b$ to its new position at $a$, across all the intermediate factors.  A
left-to-right transit is defined symmetrically.
A \emph{transit} is one of the two.
\end{Def}

\begin{Rk}
\label{rk:canonical-form}
A sequence of Ritt swaps in the right-to-left
first canonical form is a sequence of
right-to-left transits whose action resembles an 
insert-sort: having arranged the factors
$f_k$ through $f_{i+1}$ in the correct order, this
sequence \emph{inserts} the next factor $f_i$ in the
required place among $f_k$ through $f_{i+1}$; and then
proceeds to deal with $f_{i-1}$, and so on, until all
factors are arranged as wanted.  We define left-to-right
first canonical form analogously.

Sometimes, there is a natural way to break a long decomposition into chunks. Rooms of Section~\ref{sec:walls} and clusters in the technical 
Section~\ref{sec:clusterings} are two examples. Words in second canonical form (with respect to such a chunkification) first shuffle factors within 
each chunk as much as necessary and only then move factors between chunks. This is closely related, but not identical, to a merge-sort.

Given integers $c_0 + k = c_r > c_{r-1} > \ldots > c_1 > c_0$, a word $T_{\Symk}^*$ is in \emph{second canonical form with respect to $\bar{c}$} if it is of the form $v w_r w_{r-1} \ldots w_1$ where \begin{itemize}
 \item each $w_j$ is a word in first canonical form, using only $t_i$ with $c_{i+1} > i > c_i$; and
 \item $v$ is a word in first canonical form that does not change the order of factors originating within the same chunk. \end{itemize}
\end{Rk}

We restate the key Propositions 5.11 and 5.15 from~\cite{MS} in the language of rewriting systems and observe that they are immediate consequences of Matsumoto's theorem

\begin{prop}
Every word in $T_{\Symk}^*$ is $R_{\Symk}$-reducible to a word in first canonical form.
For every $\bar{c}$, every word in $T_{\Symk}^*$ is $R_{\Symk}$-reducible to a word in second canonical form with respect to $\bar{c}$.
\end{prop}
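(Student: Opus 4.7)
The plan is to derive both statements as direct consequences of Matsumoto's theorem combined with the classical insertion-sort normal form on the symmetric group. Matsumoto applied to $\Symk$ reduces every word $w \in T_{\Symk}^*$ via $R_{\Symk}$ to a minimal-length, hence reduced, word representing the same element $\pi \in \Symk$, and guarantees that any two reduced representatives of $\pi$ are connected by the length-preserving rules in $R_{\Symk}$. Thus it suffices, for each $\pi \in \Symk$, to exhibit one reduced expression in the desired canonical form; every other word representing $\pi$ then $R_{\Symk}$-reduces to it.

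For the first statement I would proceed by insertion-sort. Given $\pi \in \Symk$, one inductively builds a word by taking, for each $i$ running from $k$ down to $1$, a (possibly empty) right-to-left transit that carries the factor originally at position $i$ into its correct position under $\pi$ among the already-placed factors at positions $i+1,\ldots,k$. The concatenation of these transits is in first canonical form in the sense of Remark~\ref{rk:canonical-form}, and its length equals the number of inversions of $\pi$, which is the Coxeter length $\ell(\pi)$; so the expression is reduced.

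For the second statement, the subset $J := \{t_i ~:~ c_j < i < c_{j+1} \text{ for some } 0 \leq j < r\}$ of generators produces a standard parabolic subgroup $W_J \leq \Symk$, namely the product of the symmetric groups on the chunks $(c_j, c_{j+1}]$. By the standard theory of parabolic quotients in Coxeter groups, every $\pi \in \Symk$ factors uniquely as $\pi = v \cdot u$ with $u \in W_J$, $v$ the minimum-length representative of the coset $\pi W_J$, and $\ell(\pi) = \ell(v) + \ell(u)$. The characterization that $\ell(v t_i) > \ell(v)$ for all $t_i \in J$ is exactly the condition that $v$ does not swap any two factors originating in a common chunk. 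Applying the first-canonical-form result separately on each chunk writes $u = w_r \cdots w_1$ with each $w_j$ using only the relevant intra-chunk generators, and applying it to $v$ gives a first-canonical-form prefix; the concatenation $v w_r \cdots w_1$ is then in second canonical form with respect to $\bar{c}$, and it is reduced because its length equals $\ell(v) + \ell(u) = \ell(\pi)$.

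The only real work is the bookkeeping required to verify that the minimum coset representative condition on $v$ coincides with the intra-chunk order-preserving condition demanded by the definition of second canonical form, and that the insertion-sort construction genuinely produces a length-$\ell(\pi)$ expression in first canonical form. Both are routine consequences of standard Coxeter group properties, after which Matsumoto's theorem together with the confluence of $R_{\Symk}$ already established in this section completes the argument.
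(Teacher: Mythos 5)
Your proof is correct and uses the same overall strategy as the paper: for each permutation $\pi \in \Symk$, exhibit a reduced expression in the desired canonical form, then conclude by Matsumoto's theorem that any word representing $\pi$ is $R_{\Symk}$-reducible to it. The difference is in how existence of such reduced canonical-form expressions is established. The paper cites \cite[Propositions 5.11 and 5.15]{MS} together with the inversion-count argument for reducedness, whereas you give self-contained constructions: insertion-sort for the first canonical form (essentially what underlies the cited result) and, for the second, the standard parabolic-quotient factorization $\pi = v u$ with $u$ in the parabolic subgroup $W_J$ generated by the intra-chunk transpositions, $v$ the minimal-length left coset representative, and length additive across the factorization. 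The parabolic-quotient route is a clean shortcut: additivity of length immediately yields that the concatenated expression is reduced without recounting inversions, and the characterization of minimal coset representatives as those with no right descent in $J$ matches the ``does not change intra-chunk order'' condition. The bookkeeping you flag at the end is genuinely routine; the one point worth spelling out is that the definition of second canonical form requires $v$ itself to be written as an insertion-sort word, so one takes the first-canonical-form expression of the minimal coset representative rather than an arbitrary reduced word for it, which is harmless since the descent condition is a property of $v$ as a group element.
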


\begin{proof}
This proof is really a template that can likely work for other canonical forms.

As should be clear, and is proven in detail in~\cite[Propositions 5.11 and 5.15]{MS},
every permutation in $\Symk$ is represented by a word in $T_{\Symk}^*$ in each of our canonical forms.
It is clear that words in these canonical forms are reduced: to write a permutation $\sigma$ as a product of adjacent transpositions requires at least one such transposition for each pair $a<b$ with $\sigma(b) < \sigma(a)$, and these canonical forms use exactly this many generators. So, for each word $w$ in $T_{\Symk}^*$, there is a reduced word $v$ in this canonical form, representing the same permutation as $w$. That is, $v$ and $w$ are $R_{\Symk}$-equivalent, and $v$ is $R_{\Symk}$-reduced; so by Matsumoto's theorem, $w$ is $R_{\Symk}$-reducible to $v$.
\end{proof}

\begin{Rk}
\label{rk:canonical-reduced}
Words in first or second canonical form are
themselves reduced since we have shown that every word is
not only equivalent to, but reducible to such a word. 	
\end{Rk}

The following refined version of the second canonical form, needed very soon, showcases how long decompositions
make it easy to talk about a period that is not $(f_k, f_{k-1}, \ldots f_1)$. Instead, we focus on  the behavior of $w \star$ on
the substring $\left( f_{k+i}, f_{k+i-1}, \ldots, f_{k+1}, f_k, \ldots f_{i+1} \right)$ separated into two chunks
$\left( f_{k+i}, f_{k+i-1}, \ldots, f_{k+1} \right)$ and $\left( f_k, \ldots f_{i+1} \right)$.

\begin{lem}
\label{lem:improved-2nd-canonical}
Fix some $i$ with $0 < i < k$.
Suppose $w \star f$ is defined for some long decomposition $f$ and some word $w$ in $\{ t_j ~:~ j \neq i \}$.
Then there are words $u_L^{  +}$, $u_R^{  +}$, $v$, $u_L^{  -}$, and $u_L^{  +}$ so that
\begin{itemize}
\item $u_L^{  +} u_R^{  +} v u_L^{  -} u_R^{  -} \star f
= w \star f$,
\item $u_L^{  +}$ and $u_L^{  -}$ are words in
$\{t_j ~:~ 1 \leq j < i \}$,
\item $u_R^{  +}$ and $u_R^{  -}$ are words in
$\{ t_j ~:~ i < j < k \}$,
\item for some $m \leq  \min \{i, k-i\}$, $v \star$
has the effect of swapping the block
$\left( f_k,\ldots, f_{k-m+1} \right)$ with
$\left( f_{k+m}, \ldots, f_{k+1}\right)$.   In terms of our
generators, $v$ takes the form $v = v_{k+1-m}
\cdots v_k$ where where
$v_j = t_{m+j-1} \cdots t_{j}$.     	
\end{itemize}
\end{lem}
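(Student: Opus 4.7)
The plan is to translate the problem into one about permutations via the injective homomorphism $\pi: \Affk \hookrightarrow \operatorname{Sym}(\mathbb{Z})$ of Remark~\ref{rk:meet-the-groups-rk}, construct the desired factorization at the permutation level, and then transfer it back to the partial action using Proposition~\ref{prop:actionable} and Corollary~\ref{cor:group-action-corr}.

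Since $w$ avoids $t_i$, the periodic permutation $\pi(w)$ setwise preserves each window $\{i+rk+1, \ldots, i+(r+1)k\}$; by $k$-periodicity it suffices to analyze its action on $W := \{i+1, \ldots, i+k\}$. Split $W = R \cup L$ with $R := \{i+1, \ldots, k\}$ of size $k-i$ and $L := \{k+1, \ldots, k+i\}$ of size $i$. The subset $\{t_j : 1 \leq j < i\}$ realizes all of $\operatorname{Sym}(L)$ (fixing $R$), the subset $\{t_j : i < j < k\}$ realizes all of $\operatorname{Sym}(R)$ (fixing $L$), and among the generators appearing in $w$ only $t_k$ swaps positions across the $R/L$ boundary inside $W$. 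Set $\pi := \pi(w)$, $A := \pi^{-1}(L) \cap R$, $B := \pi^{-1}(R) \cap L$, and $m := |A| = |B|$, so $m \leq \min(k-i, i)$.

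I would then construct the five pieces as follows. Take $u_R^-$ to be any word in $\{t_j : i < j < k\}$ representing a permutation of $R$ that carries $A$ onto $\{k-m+1, \ldots, k\}$, and similarly $u_L^-$ a word in $\{t_j : 1 \leq j < i\}$ carrying $B$ onto $\{k+1, \ldots, k+m\}$. A short induction on $m$ (or a direct position trace) shows that the prescribed $v = v_{k+1-m} \cdots v_k$ with $v_j = t_{m+j-1} \cdots t_j$ realizes the block-swap that exchanges $\{k-m+1, \ldots, k\}$ with $\{k+1, \ldots, k+m\}$ via the pairing $k-j \leftrightarrow k+m-j$, matching the effect required by the lemma; the constraint $m \leq \min(i, k-i)$ ensures the transposition indices appearing in $v$ lie in $\{k-m+1, \ldots, k\} \cup \{1, \ldots, m-1\}$ (modulo $k$) and therefore avoid $i$.

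The crucial combinatorial observation is that after applying $u_R^-$, then $u_L^-$, then $v$, every element of $W$ now sits in the half that $\pi$ ultimately requires: the elements of $A$ have been ferried into $L$, those of $B$ into $R$, while $R \setminus A$ and $L \setminus B$ never moved. Hence the composition of $\pi$ with the inverse of this preparatory sequence preserves each of $R$ and $L$ setwise and thus factors as a commuting product $\sigma_L \circ \sigma_R$ with $\sigma_L \in \operatorname{Sym}(L)$ and $\sigma_R \in \operatorname{Sym}(R)$; choosing $u_L^+$ and $u_R^+$ to be words realizing $\sigma_L$ and $\sigma_R$ in the appropriate generators yields the identity $\pi(u_L^+ u_R^+ v u_L^- u_R^-) = \pi(w)$. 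Injectivity of $\pi$ on $\Affk$ converts this to equality of group elements, after which Proposition~\ref{prop:actionable} gives $u_L^+ u_R^+ v u_L^- u_R^- \star f = w \star f$ as partial-action values. The only non-routine step is the position trace verifying that the preparatory sequence correctly sorts each element into its target half; once that is in hand, the remainder of the argument is formal.
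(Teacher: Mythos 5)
Your permutation-level combinatorics is correct: since $w$ omits $t_i$, the permutation $\pi(w)$ preserves each window $\{i+rk+1,\dots,i+(r+1)k\}$ setwise; the crossing sets $A$ and $B$ have a common cardinality $m\leq\min\{i,k-i\}$; the prescribed $v$ does realize the stated block swap with indices avoiding $i$; and the residual permutation after the preparatory moves does split as a commuting product on the two halves. This is a genuinely different route from the paper, which instead invokes the second canonical form of \cite[Proposition 5.15]{MS} to obtain $\hat{w}\,u_L^{-}u_R^{-}\star f=w\star f$ with $\hat{w}$ a product of nonoverlapping transits across $k$, and then extracts $v$, $u_L^{+}$, $u_R^{+}$ from $\hat{w}$ purely by commutation moves inside the rewriting system.

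The gap is in the final transfer step. The equality $\pi_R(u_L^{+}u_R^{+}vu_L^{-}u_R^{-})=\pi_R(w)$ in $\Affk$, combined with Proposition~\ref{prop:actionable}, only tells you that the \emph{group element} $g:=\pi_R(w)$ satisfies $g\star f=w\star f$; it does not tell you that the \emph{word} $u_L^{+}u_R^{+}vu_L^{-}u_R^{-}$ has a defined action on $f$, i.e.\ that each generator can actually be applied to the intermediate decomposition it meets. The action is genuinely partial, and a word representing $g$ can fail to act even when $g\star f$ is defined (for instance $t_1t_1$ represents the identity but acts only where $t_1$ does). Corollary~\ref{cor:group-action-corr} closes this loophole only for \emph{reduced} words, and your construction does not guarantee reducedness: you allow ``any word'' carrying $A$ onto $\{k-m+1,\dots,k\}$, and even with minimal-length choices for each of the five pieces you would still need to check that their lengths sum to $\ell(\pi_R(w))$ so that the concatenation is reduced. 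This matters because Proposition~\ref{prop:warm-up} uses the intermediate decompositions $u_L^{-}u_R^{-}\star f$ and $v\star(u_L^{-}u_R^{-}\star f)$ explicitly, so definedness of the word action is exactly what the lemma must deliver. The fix is to choose each piece reduced and verify additivity of length (equivalently, exhibit the new word as a canonical form of $w$), which is what the paper's appeal to the second canonical form accomplishes in one stroke.
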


\begin{proof}
Indeed, using the second canonical form
of~\cite[Proposition 5.15]{MS}, we may find words
$u_L^{  -}$, $u_R^{  -}$, and $\hat{w}$ so that
\begin{itemize}
\item $\hat{w} u_L^{  -} u_R^{  -} \star f =
w \star f$,
\item $u_L^{  -}$ is a word in $\{t_j ~:~ 1 \leq j < i \}$,
\item $u_R^{  -}$ is a word in $\{t_j ~:~ i < j < k \}$, and
\item $\hat{w}$ is composed of nonoverlapping
right-to-left transits across $k$:  there are  numbers
$k+i - 1 \geq \ell_k > \ell_{k-1} > \cdots > \ell_i \geq i$ so that we may
write $\hat{w} = \hat{w}_i \cdots \hat{w}_k$ with
$\hat{w}_j = t_{\ell_j} t_{\ell_j - 1} \cdots t_j$
if $\ell_j \geq j$ and $\hat{w}_j$ is empty
and $\ell_j = j - 1$ otherwise.
\end{itemize}

We now further decompose $\hat{w}$.
Let $n := \min \{ j ~:~  k >  j \geq i
~\&~ \ell_{j+1} \geq k \}$.

  If no such $n$ exists, then $\hat{w}$ is
  the empty word, so that we may take
  $u_R^{  +}$, $u_L^{  +}$, and $v$ to all
  be the empty word.

Otherwise, let $u_R^{  +} := \hat{w}_n \cdots \hat{w}_i$.
From the construction,
$u_R^{  +}$ is a word in $\{ t_j ~:~ i < j < k \}$.
If $n = k$, then $\hat{w} = u_R^{  +}$ and we finish
by taking $v$ and $u_L^{  +}$ to be the empty word.
So, we may assume that $n < k$.  Let $m := k - n$.
 Consider the
expanded product
\begin{eqnarray*}
\hat{w}_{n+1} \cdots \hat{w}_k & = &
(t_{\ell_{n+1}} \cdots t_{n+1})
\cdots (t_{\ell_{k-1}} \cdots
t_{k-1}) (t_{\ell_{k}} \cdots
t_k )\\
 & = & (t_{\ell_{n+1}} \cdots t_{k+1} t_k \cdots t_{n+1})
 	\cdots  \\
 	&& (t_{\ell_{k-1}} \cdots t_{k+m}t_{k-1+m}
 	  \cdots t_{k-1})(t_{\ell_k}
 		\cdots t_{k+m+1} t_{k+m} \cdots t_k)
\end{eqnarray*}

The blocks $t_{k-1+m} \cdots t_{k-1}$
and $t_{\ell_k} \cdots t_{k+m+1}$ commute.
Hence, the word $\hat{w}$ is equivalent to
$$(t_{\ell_{n+1}} \cdots
t_{k+1} t_k \cdots t_{n+1})
\cdots (t_{\ell_{k-2}}
\cdots t_{k-1+m} t_{k-2+m}
\cdots t_{k-2})[t_{\ell_{k-1}} \cdots $$
$$t_{k+m} ][t_{\ell_k} \cdots t_{k+m+1}]
[t_{k+m-1} \cdots t_{k-1}][t_{k+m}
\cdots t_k]$$

Continuing in this way, if we
set $$u_L^{  +} := [t_{\ell_{n+1}}
\cdots t_{k+1} ][t_{\ell_{n+2}}
\cdots t_{k+2}] \cdots [t_{\ell_k}
\cdots t_{k+m+1}] \text{ ,}$$
then we obtain the requisite decomposition.
\end{proof}

The following strong version of the braid relations
will be useful in analyzing wandering quadratics (see for example, the proof of Lemma~\ref{lem:wqua-one-transit}).

\begin{lem}
\label{lem:transit-twist-commute}
Fix $b >a > b-k$. Suppose that $s = t_{(b,a]}$ is a transit from $b$ to $a$, and $w$ is a word in $t_i$ for $b > i > a$, and $\widehat{w}$ is obtained from $w$ by replacing every $t_i$ by $t_{i-1}$. Then $sw = \widehat{w}s$.
\end{lem}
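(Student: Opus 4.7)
The plan is to reduce the statement to the single-generator identity $s\,t_i = t_{i-1}\,s$ for each $i$ with $a < i < b$, and then iterate it along $w$ by induction on length. Writing $w = t_i w'$ with $|w'| < |w|$, the inductive step gives $s w = s t_i w' = t_{i-1} s w' = t_{i-1} \widehat{w'} s = \widehat{w}\, s$, since $\widehat{w} = t_{i-1} \widehat{w'}$ by construction of the hat operation. The empty-$w$ base case is immediate.

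To establish the single-generator identity, I plan to write the transit explicitly as $s = t_{b-1} t_{b-2} \cdots t_a$ and manipulate $s\,t_i$ using only the Coxeter relations of $\Symk$ recorded in $R_{\Symk}$. The idea is to slide the rightmost $t_i$ leftward past the block $t_{i-2}\,t_{i-3}\cdots t_a$, each factor of which commutes with $t_i$ since the indices differ by at least two; then apply the braid relation $t_i\,t_{i-1}\,t_i = t_{i-1}\,t_i\,t_{i-1}$ in the central window that appears; and finally slide the newly produced leftmost $t_{i-1}$ past the block $t_{b-1}\,t_{b-2}\cdots t_{i+1}$, each factor of which commutes with $t_{i-1}$ for the same numerical reason. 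What is left is exactly $t_{i-1}\,s$.

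I do not anticipate a real obstacle. The hypothesis $b > a$ together with the strict inequality $a < i < b$ is precisely what is needed: it guarantees that $t_{i-1}$ actually sits adjacent to the $t_i$ occurring inside $s$ on the correct side, so that the braid relation produces $t_{i-1}$ rather than $t_{i+1}$ and thereby matches the definition of $\widehat{w}$. The only delicate aspect is bookkeeping of indices to verify that no generator in the two commuting blocks lies within distance one of $i$ or of $i-1$, respectively, but this follows at once from $a < i < b$.
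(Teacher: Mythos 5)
Your proof is correct, and it takes a more explicit route than the paper. The paper disposes of this lemma in one line by appealing to the first canonical form (i.e.\ to Matsumoto's theorem): both sides represent the same element of the relevant copy of $\Symk$, and rewriting $sw$ into the canonical form in which the transit is performed last produces $\widehat{w}s$. You instead reduce to the conjugation identity $s\,t_i = t_{i-1}\,s$ for $a<i<b$ and verify it by hand inside the Coxeter presentation: slide the appended $t_i$ left through $t_{i-2}\cdots t_a$ by the distance-two commutations, apply the single braid relation in the window $t_i\,t_{i-1}\,t_i = t_{i-1}\,t_i\,t_{i-1}$, and slide the resulting $t_{i-1}$ left through $t_{b-1}\cdots t_{i+1}$; the index bookkeeping is exactly as you say, and the hypothesis $a>b-k$ is what keeps all the relevant index differences strictly between $0$ and $k$ so that the commutations are legitimate modulo $k$. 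Your argument buys self-containedness and makes visible that $s$ conjugates $t_i$ to $t_{i-1}$ as a \emph{group} identity (so the partial-action version follows from actionability), at the cost of the explicit induction on the length of $w$; the paper's version is shorter but leans on the canonical-form machinery and leaves the reader to check that the two reduced words represent the same permutation. One marginal remark: when $k=2$ the braid relation is dropped from the presentation, but then $b>a>b-2$ forces $a=b-1$, so there is no admissible $i$, $w$ is empty, and the statement is vacuous — so your use of the braid relation is never an issue.
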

\begin{proof}
Apply the left-to-right first canonical form to maybe $(sw)^{-1}$ (see Remark~\ref{rk:canonical-form}).
\end{proof}

An identity of the form $w \star [\vec{f}] = [\vec{g}]$
with $w \in \STpk$ and long decompositions
$\vec{f}$ and $\vec{g}$ of $f$ and $g$,
respectively, encodes an
$(f,g)$-skew-invariant curve~\cite[Definition 6.3]{MS}.  Moreover,
every such $(f,g)$-skew-invariant curve is
encoded by such an identity~\cite[Proposition 6.19]{MS}.
Let us explain how this relates our new formalism
from Remark~\ref{rk:witness-Ritt-swap}.

\begin{Rk}
\label{rk:curve-encoded}
When $k \neq 2$, Remark~\ref{rk:witness-Ritt-swap}
permits a more elegant characterization of an encoded
curve.

If $f$ is a \kld
and $w = w_n \cdots w_1 \in \STpk$ where each $w_j$ is a generator not equal to $t_k$, then
a \emph{witnessing sequence} for
$w \star [f]$ consists of a sequence $\left( f^0, \ldots, f^n \right)$ of
\klds and a sequence $\left( L^1, \ldots, L^n \right)$ of \kspls so that
\begin{itemize}
\item $f = f^0$,
\item if $w_j \neq t_k$, then $w_j \bullet (\phi \bullet L^j)^{-1} f^{j-1} L^j = f^j$, and
\item if $w_j = \phi$ or $w_j = \phi^{-1}$, then every component of $L^j$ is the polynomial $x$.	
\end{itemize}

We say that $\mathcal{A}$ is a correspondence encoded by this witnessing sequence  if
$\mathcal{A}$ is a composition of curves
$\mathcal{B}_n \circ \cdots \circ \mathcal{B}_1$ where for each $j$
\begin{itemize}
\item if $w_j = t_i$, then $\mathcal{B}_j$ is the curve defined by $L_1^j(x_2) = x_1$,
\item if $w_j = \epsilon_p$, then $\mathcal{B}_j$ is the curve defined by $x_2 = (L_1^j)^{-1}(x_1)^p$,
\item if $w_j = \epsilon_p^{-1}$, then $\mathcal{B}_j$ is the curve defined by $x_1 = (L_1^j)^{-1}(x_2)^p$,
\item if $w_j = \phi$, then $\mathcal{B}_j$ is the curve defined by $x_2 = f_1^j(x_1)$, and
\item if $w_j = \phi^{-1}$, then $\mathcal{B}_j$ is the curve defined by $x_2 = f_1^{j+1}(x_1)$.
\end{itemize}	
\end{Rk}

\begin{Rk}(See~\cite[Lemma 2.61(3)]{MS})
The correspondence $\mathcal{A}$ encoded by an identity of the form
$w \star [f] = [g]$ is well-defined up to pre- and post-composition with
linear functions corresponding to the choices of representative of the
classes $[f]$ and of $[g]$. Even though
$\mathcal{A}$ is defined in terms of sequences
of generators, it is, in fact, well-defined for
elements of the group.  The verification of this fact
depends on knowing that Ritt swaps satisfy the
braid relations and will be proven with 
Theorem~\ref{thm:braidhtm}.
\end{Rk}

\begin{Rk} \label{rk:encoding}(See~\cite[Proposition 6.19]{MS})
For any pair of non-exceptional polynomials $f$ and $g$,
for any $(f,g)$-skew-invariant curve $\mathcal{A}$
and all long decompositions $\vec{f}$ and
$\vec{g}$ of $f$ and $g$, respectively, there is a
word $w \in \STpk$ so that $\mathcal{A}$ is an
irreducible component
of a curve encoded by the identity $w \star [\vec{f}] =
[\vec{g}]$.	 Recall from Remark~\ref{rk:semidirect}
that we may assume that $w$ has the form
$\prod_p \epsilon_p^{m_p}  \phi^n  u$ where $m_p \in \ZZ$
and all
but finitely many are zero, $n \in \ZZ$, and $u \in \Affk$.

If $\mathcal{A}$ is a skew-twist, then $w$ may be taken to be
$\phi^N u$ for some $N \in \ZZ$ and $u \in \Symk$.  Conversely,
any curve so encoded is a skew-twist.

Indeed, any curve encoded by $v \phi^N u \star \vec{f} = \vec{g}$ with
$v, u \in \Symk$ and $N \in \ZZ$ is  a skew-twist because $v^{-1} \star \vec{g}$ is another
long decomposition of $g$ and $u \star \vec{f}$ is another long
decomposition of $f$.
\end{Rk}

\subsubsection{Walls}
\label{sec:walls}

Let us recall that we wish to characterize
$(f,f^\tau)$-skew-invariant curves where
$f$ is a non-exceptional polynomial and $\tau$
is another field automorphism which commutes with
$\sigma$.   We know by Remark~\ref{rk:encoding}
that any such curve will be a component of a
curve encoded by an identity of the form
$w \star [f] = [f^\tau]$, where we have written
$f$ for a long decomposition of the
polynomial $f$.
 
 Before we take on some of the combinatorial complexity
of the theory of decompositions, let us pause
to work out a simple case of
Theorem~\ref{thm:skew-inv} where one of the
$t_i$'s cannot act.

\begin{Def}
\label{def:wall}	
For $f$ a \kld and an integer $i$
we say that $f$
has a wall at $i$ if
\begin{itemize}
\item for all words $w \in \Affk$, the
action $t_i w \star f$ is not defined
 and
\item for every prime number $p$ neither
$\epsilon_p \star f$ nor $\epsilon_p^{-1} \star f$  is
defined. 	
\end{itemize}
\end{Def}

The easiest way to have a wall in a decomposition is
to have an unswappable factor in the sense of
Definition~\ref{def:Ritt-poly}.

\begin{lem}
\label{lem:unswappable-to-wall}
If $f$ is \kld, $i \in \mathbb{Z}$, and $f_i$ is
unswappable, then $f$ has walls at $i - 1$ and at $i$.	
\end{lem}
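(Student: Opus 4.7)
The plan is to use that unswappability of $f_i$ is a property of the linear equivalence class of the factor at position $i$, and that any Ritt swap at a position outside $\{i-1, i\}$ modulo $k$ leaves this class untouched; therefore $f_i$ remains an unswappable factor no matter what word in $\Affk$ we act by, and neither $t_{i-1}$ nor $t_i$ can ever execute.

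First I would record the baseline observation. By Definition~\ref{def:Ritt-swap}, for $t_j \star [f]$ to be defined there must exist linear polynomials $L, M, N$ making $L^{-1} \circ f_{j+1} \circ M$ and $M^{-1} \circ f_j \circ N$ into Ritt polynomials that appear in a basic Ritt identity; in particular each of $f_j$ and $f_{j+1}$ must be linearly conjugate to a Ritt polynomial. Specializing to $j \in \{i-1, i\}$ modulo $k$, this forces $f_i$ to be linearly conjugate to a Ritt polynomial, contradicting the hypothesis that $f_i$ is unswappable. Similarly, by Definition~\ref{def:epsilon}, the action of $\epsilon_p^{\pm 1}$ on $[f]$ requires a representative $\tilde{f}$ in which every $\tilde{f}_j$ has the special Ritt-polynomial form $x^{r_j} u_j(x^p)$ or its companion form, which in particular forces $\tilde{f}_i$ to be a linear conjugate of a Ritt polynomial, again impossible. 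This already yields the second bullet of Definition~\ref{def:wall}.

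For the first bullet, I would extend the baseline through arbitrary $w \in \Affk$. Write $w = w_n \cdots w_1$ as a word in the generators $t_j$ and assume for contradiction that $t_{i-1}(w \star [f])$ or $t_i(w \star [f])$ is defined. Let $r$ be the least index for which $w_r \in \{t_{i-1}, t_i\}$ modulo $k$, if any, and otherwise set $r = n+1$. For each $s < r$, the generator $w_s = t_\ell$ has $\ell \not\equiv i-1, i \pmod{k}$, and by Definition~\ref{def:Ritt-swap} such a swap alters only the factors at positions $\ell$ and $\ell+1$ modulo $k$. A straightforward induction on $s$ then shows that the factor at position $i$ of any representative of $w_s \cdots w_1 \star [f]$ lies in the linear equivalence class of $f_i$, hence is still unswappable. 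If $r \le n$, the baseline observation then says that $w_r$ cannot act on $w_{r-1} \cdots w_1 \star [f]$, contradicting that $w \star [f]$ is defined; so in fact $r = n+1$, the factor at position $i$ of $w \star [f]$ is unswappable, and the baseline again blocks the action of $t_{i-1}$ and $t_i$. This gives the first bullet of Definition~\ref{def:wall} at both $i-1$ and $i$. The only subtle point is the bookkeeping of representatives as one advances through $w$, but since Definition~\ref{def:Ritt-swap} is explicit that positions outside $\{\ell, \ell+1\}$ modulo $k$ are literally unchanged, this reduces to tracking one linear equivalence class and presents no real obstacle.
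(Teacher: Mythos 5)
Your proof is correct and follows essentially the same route as the paper: an induction along the word showing that every generator $t_\ell$ with $\ell \not\equiv i-1, i \pmod{k}$ leaves the factor at position $i$ in its linear-equivalence class (hence unswappable), so no $t_{i-1}$ or $t_i$ can ever fire, together with the observation that $\epsilon_p^{\pm 1} \star f$ requires every factor to be swappable. The paper's proof is just a more compressed version of the same induction.
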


\begin{proof}
Let us check that for no $w \in \STk$ is
$t_i w \star f$ defined.  We do this by showing
by induction on the length of a word in the
generators representing $w$ that, if defined,
$w \star f$ has an
unswappable factor at $i$.  This is trivial
for the empty word.  By induction, it
suffices to consider the case that $w = t_j$
for some $j$.  If $j \equiv i$ or $i - 1
\pmod{k}$, then $t_j \star f$ is not defined because
$f_i$ is unswappable.  Otherwise, the $i^\text{th}$
factor of $t_j \star f$ is $f_i$.   From this
we see that neither $t_i w \star f$ nor
$t_{i-1} w \star f$ is defined.

For the second part of the definition of
a wall, observe that
whenever
$\epsilon_p^{\pm} \star f$ is defined, then every
factor of $f$ must be swappable (see Definition~\ref{def:epsilon}), which is not
the case for $f_i$.
\end{proof}

\begin{Rk}
In Subsection~\ref{sec:clusterings}, we will work
with other classes of \kld's with walls.	
\end{Rk}

\begin{Def}
\label{def:wall-set}
A \emph{wall set} for a \kld $f$ is a translate
${\mathcal{W}}$ of $M \ZZ$ for an integer $M$ dividing
$k$ such that $f$ has a wall at $i$ for all
$i \in {\mathcal{W}}$.

A \emph{room} of $({\mathcal{W}}, f)$ consists of the $M$ factors of $f$ between consecutive elements of ${\mathcal{W}}$.
For example, if $k=16$ and $M=4$ and ${\mathcal{W}} = 3 + 4 \ZZ$, one room of $({\mathcal{W}}, f)$ is $(f_7, f_6, f_5, f_4)$.

Two rooms are \emph{different} if they are different
modulo $k$. There are $\frac{k}{M}$ different rooms.
Continuing the above example,
$(f_{23}, f_{22}, f_{21}, f_{20})$ is not a
different room from $(f_7,f_6,f_5,f_4)$,
but $(f_{27}, f_{26}, f_{25}, f_{24})$ is.
\end{Def}

\begin{prop}
\label{prop:separate-room-action}
If ${\mathcal{W}}$ is a wall set for a \kld $f$ with
$R$ rooms and $w \in \Affk$
with $w \star f$ defined, then
$w$ may be represented as $w_R \cdots w_2 w_1$ where
each $w_i$ acts on factors in a different
$i^\text{th}$ room of $({\mathcal{W}}, f)$.
Moreover, $\mathcal{W}$ is still a wall set for
$w \star f$.
\end{prop}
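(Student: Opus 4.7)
The plan has two parts: first, proving that the wall-set property is preserved under partial actions of elements of $\Affk$; second, using the commutation relations to sort any word representing $w$ into room-local blocks.

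For preservation, fix $i \in \mathcal{W}$ and $w \in \Affk$ with $w \star f$ defined. The first clause of the wall condition at $i$ for $w \star f$ follows from the partial-action identity $t_i u \star (w \star f) = (t_i u w) \star f$: since $uw \in \Affk$, the right-hand side is undefined by the wall condition at $i$ for $f$. For the $\epsilon_p$ clause, one uses that $\epsilon_p$ commutes with every element of $\Affk$ in $\STpk$, so $\epsilon_p w = w \epsilon_p$ as group elements. If $\epsilon_p \star (w \star f)$ were defined, then $(w \epsilon_p) \star f$ would be defined; appending $\epsilon_p$ on the right to a reduced word for $w$ in the $t_j$'s produces a reduced representative of $w \epsilon_p$, so by Corollary~\ref{cor:group-action-corr} its sequential action on $f$ is defined, forcing the rightmost step $\epsilon_p \star f$ to be defined, contradicting the wall condition on $f$. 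The case of $\epsilon_p^{-1}$ is symmetric.

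Next, choose a reduced word $v = t_{j_n} \cdots t_{j_1}$ representing $w$; by Corollary~\ref{cor:group-action-corr}, its sequential action on $f$ is defined. Applying the preservation claim inductively shows that $\mathcal{W}$ remains a wall set for each intermediate decomposition $f^{(m)} := t_{j_m} \cdots t_{j_1} \star f$. The definability of $t_{j_{m+1}} \star f^{(m)}$ then forces $j_{m+1} \not\equiv a' \pmod{k}$ for every $a' \in \mathcal{W}$, so each $j_m$ lies strictly inside a single room.

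For the sorting step, if $M = 1$ every index is a wall and $w$ is the identity; otherwise, writing $\mathcal{W} = a + M\ZZ$, the valid generator indices within room $s$ form the interval $\{a+sM+1, \ldots, a+(s+1)M-1\}$, so indices in distinct rooms modulo $R = k/M$ are at distance at least $2$ modulo $k$ (including the wrap-around case). Hence they are non-adjacent and commute via the rule $(t_j t_{j'}, t_{j'} t_j) \in R_{\Affk}$. Repeatedly applying these commutations to $v$ regroups its letters by room, yielding $w = w_R \cdots w_2 w_1$ with each $w_s$ supported in the $s$-th room. I expect the main obstacle to be the $\epsilon_p$ clause of the preservation step: it looks routine but really does require the full partial-group-action formalism of Corollary~\ref{cor:group-action-corr} and the commutativity of $\epsilon_p$ in $\STpk$; the remainder is straightforward combinatorics on $\Affk$.
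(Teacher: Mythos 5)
Your proof is correct and follows essentially the same route as the paper's: generators defined on $f$ (and on intermediate decompositions) must have indices off the wall set and hence act within a single room, non-adjacent indices in distinct rooms commute, and regrouping gives $w_R\cdots w_1$, with preservation of $\mathcal{W}$ following from the definition of a wall. You simply spell out in more detail the two points the paper leaves implicit — that the $\epsilon_p$ clause persists via the central position of $\epsilon_p$ in $\STpk$ together with Corollary~\ref{cor:group-action-corr}, and that every letter of a reduced word for $w$ (not just the first) avoids $\mathcal{W}$ — both of which are sound.
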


\begin{proof}
If ${\mathcal{W}}$ is a wall set for $f$ and
the Ritt swap $t_i \star f$ is defined,
the two factors swapped are in the same room.

If two Ritt swaps $t_i$ and $t_{j}$ act on factors in
different rooms, the indices $i$ and $j$ cannot be
adjacent modulo $k$, so the Ritt swaps commute.

To complete the proof, present $w$ as a product of
$t_i$'s, use commutation to
collect all of the $t_i$'s with $i$ in the
$j^\text{th}$ room in order to form $w_j$.

That $\mathcal{W}$ is a wall set for $w \star f$
follows from the definition of a wall.
\end{proof}

Recall from Remark~\ref{rk:semidirect}
that conjugation by $\phi$ induces
the automorphism $\eta$ of $\Affk$ given by
$t_i \mapsto t_{i+1}$.

If $i_0+M\ZZ$ is a wall set of $f$ and $w_i$ acts on the $i$th room of $f$,
 then $\eta^{jM}(w_i)$ acts on the $(i+j)^\text{th}$ room.

\begin{prop}
\label{prop:warm-up}
Suppose that $P$ is a polynomial
admitting the decomposition $P = P_k
\circ \cdots \circ P_1$ with associated
long decomposition $\vec{P}$ and that $\vec{P}$
has a wall at $i$.
Then any $(P,P^\tau)$-skew invariant curve is a
skew-twist.	
\end{prop}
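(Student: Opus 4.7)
The plan is to leverage Remark~\ref{rk:encoding}: the curve $\mathcal{A}$ is an irreducible component of some curve encoded by an identity $w \star [\vec{P}] = [\vec{P}^\tau]$ with $w \in \STpk$, and by Remark~\ref{rk:semidirect} we may write $w$ as $\prod_p \epsilon_p^{m_p} \phi^n u$ with $m_p \in \mathbb{Z}$ (all but finitely many zero), $n \in \mathbb{Z}$, and $u \in \Affk$. The strategy is to use the two clauses of the wall condition at $i$ to force every $m_p = 0$ and to force $u$ into the copy of $\Symk$ generated by $\{t_j : j \not\equiv i \pmod{k}\}$; the resulting form will then be immediately recognized as a skew-twist encoding.

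For the $\epsilon_p$'s, the key is that $\epsilon_p$ is central in $\STpk$: every generator $\omega$ satisfies both $(\epsilon_p \omega, \omega \epsilon_p)$ and $(\omega \epsilon_p, \epsilon_p \omega)$ in $R_{\STpk}$, so commutation of $\epsilon_p$ past any generator is a symmetric, length-preserving rewrite. Starting from any reduced word representing $w$, one obtains another reduced word in which every $\epsilon_p^{\pm}$ has been moved to the rightmost end, to be applied first to $[\vec{P}]$. If some $m_p \neq 0$, the first step of this evaluation would be $\epsilon_p^{\pm} \star [\vec{P}]$, which is undefined by the second clause of the wall. Corollary~\ref{cor:group-action-corr} however guarantees that every reduced word for $w$ computes the same value of $w \star [\vec{P}]$, so $w \star [\vec{P}]$ would be undefined, contradicting the encoding. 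Hence $m_p = 0$ for every $p$ and $w = \phi^n u$. The same confluence argument, applied to a reduced word of the form $\phi^n$ concatenated with a reduced word for $u$, shows that $u \star [\vec{P}]$ must itself be defined. The wall at $i$ makes $\mathcal{W} = i + k\mathbb{Z}$ a wall set with exactly one room per period, and Proposition~\ref{prop:separate-room-action} then forces $u$ into the subgroup $\langle t_j : j \not\equiv i \pmod{k} \rangle$, a copy of $\Symk$ sitting inside $\Affk$.

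Finally, set $\vec{P}' := u \star \vec{P}$; since Ritt swaps do not change the underlying polynomial, $\vec{P}'$ is another long decomposition of $P$, and the original identity rewrites as $\phi^n \star [\vec{P}'] = [\vec{P}^\tau]$, which is of the form $\phi^N v$ with $v = \id \in \Symk$ applied to a genuine long decomposition $\vec{P}'$ of $P$. By the last sentence of Remark~\ref{rk:encoding}, any curve so encoded is a skew-twist, so $\mathcal{A}$ is a skew-twist. The principal technical obstacle is the first step: the wall directly forbids $\epsilon_p$ acting only on $[\vec{P}]$ itself, and it is the combination of the centrality of the $\epsilon_p$'s in $R_{\STpk}$ with the strong confluence from Matsumoto's theorem that lets us propagate this local obstruction into a global constraint ruling out $\epsilon_p$'s anywhere in $w$.
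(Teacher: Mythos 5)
Your handling of the $\epsilon_p$'s is fine and matches the paper's one-line dismissal, and your use of Proposition~\ref{prop:separate-room-action} with the one-room wall set correctly places $u$ in the subgroup generated by $\{ t_j : j \not\equiv i \pmod{k} \}$. The fatal step is the last one. That subgroup is an abstract copy of $\Symk$, but it is \emph{not} the standard copy $\langle t_1, \ldots, t_{k-1} \rangle$ to which the last sentence of Remark~\ref{rk:encoding} applies: unless $i \equiv k \pmod{k}$, it contains $t_k$, and a Ritt swap at $k$ exchanges $P_k$ with $P_{k+1} = P_1^\sigma$ across the period boundary. Such a swap changes the composite $f_k \circ \cdots \circ f_1$, so ``Ritt swaps do not change the underlying polynomial'' is false for $t_k$, and $\vec{P}' := u \star \vec{P}$ need not be a long decomposition of $P$ at all. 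Consequently the identity $\phi^n \star [\vec{P}'] = [\vec{P}^\tau]$ does not certify a skew-twist, and your argument only works in the trivial case $i = k$, which the paper disposes of in one sentence before beginning the real work.

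Everything hard in the paper's proof is devoted to exactly the occurrences of $t_k$ that your reduction leaves untouched. When $N = N'k + L$ with $0 < L < k$, the paper first shows that the single wall at $i$ propagates to a wall set $i + M\ZZ$ with $M = \gcd(k,L)$ (a nontrivial step you would also need), splits $w$ into room-words via Proposition~\ref{prop:separate-room-action}, and conjugates the one room-word containing $t_k$ across $\phi^L$ so that it lands on the other side as a word omitting $t_k$. When $k \mid N$, the paper invokes Lemma~\ref{lem:improved-2nd-canonical} to isolate a block-swap $v$ across the $k$/$(k+1)$ boundary and kills it by a degree-counting argument on the composite $P_k \circ \cdots \circ P_i$. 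Neither mechanism is present in your proposal, so there is a genuine gap rather than an alternative route.
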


\begin{proof}
From Remark~\ref{rk:encoding}
and the fact that no $\epsilon_p^{\pm 1} \star \vec{P}$ is defined,
every
$(P,P^\tau)$-skew-invariant curve is encoded by
an identity of the form $\phi^N w \star \vec{P} =
\vec{P}^\tau$ with $w \in \Affk$ and
$N \in \mathbb{Z}$.  In fact,
because we $\vec{P}$ has a wall at $i$,
$w$ may be expressed as a reduced word
in $\{ t_j ~:~ j \not \equiv i \pmod{k} \}$.

If $i = k$, then curve encoded by this identity is
already a skew-twist and we are done.  So, from now on,
we take $1 \leq i < k$.

We break into two cases depending on whether
$k$ divides $N$ or not.

\vspace{.2in}
\noindent
{\bf Case $N$ is not divisible by $k$:}
Write $N = N' k + L$ with $0 < L < k$.
Let $M := \gcd(k,L)$, let $R := \frac{k}{M}$, and set $\mu := \tau \sigma^{-N'}$. We
work instead with the identity $\phi^L w \star
\vec{P} = \vec{P}^\mu$.      From the
hypothesis that we have a wall at $i$,
it follows that $i + M \mathbb{Z}$ is a wall set for
$\vec{P}$.  Without loss of generality,
$0 < i < M$.  By Proposition~\ref{prop:separate-room-action} we may express $w$ as
a product $w_R \cdots w_1$
where each $w_s$ acts on the $s^\text{th}$ room.
Setting $\tilde{w}_R := \eta^L(w_R)$,
 
we have
$\phi^L w_R \cdots w_1 \star \vec{P} =
\tilde{w}_R \phi^L w_{R-1} \cdots w_1 \star \vec{P}
= \vec{P}^\mu$.   Taking $u$ to be the reverse word
of $\tilde{w}_R$, we have
$\phi^L w_{R-1} \cdots w_1 \star \vec{P} =
u \vec{P}^\mu$ where now
$u$, $w_1$, \ldots, $w_{R-1}$ are all words
omitting $t_k$.
Such an equation encodes a skew-twist curve, as
required.

\vspace{.2in}
\noindent {\bf Case $N$ is divisible by $k$:}
We will show in this case that
$w$ may be expressed without $t_k$. Write $N = k N'$.  Then we have
$\phi^{kN'} w \star \vec{P} = \vec{P}^\tau$
from which we conclude that $w \star \vec{P}
= \phi^{-kN'} \vec{P}^\tau = \vec{P}^{\tau \sigma^{-N'}}$. Write $\mu = \tau \sigma^{-N'}$, we would
then have $w \star \vec{P} = \vec{P}^\mu$.

Let us focus on the behavior of $w \star$ on
the substring $\left( P_{i-1}^\sigma, \ldots, P_1^\sigma, P_k, \ldots P_{i+1}, P_i \right)$.

By Lemma~\ref{lem:improved-2nd-canonical},
we may
find words $u_L^{  +}$, $u_R^{  +}$, $v$, $u_L^{  -}$,
and $u_L^{  +}$ so that
\begin{itemize}
\item $u_L^{  +} u_R^{  +} v u_L^{  -} u_R^{  -} \star \vec{P}
= w \star \vec{P}$,
\item $u_L^{  +}$ and $u_L^{  -}$ are words in
$\{t_j ~:~ 1 \leq j < i \}$,
\item $u_R^{  +}$ and $u_R^{  -}$ are words in
$\{ t_j ~:~ i < j < k \}$,
\item for some $m \leq  \min \{i, k-i\}$, $v \star$
has the effect of swapping the block $\left( P_k,
\ldots, P_{k-m+1} \right)$ with $\left( P^\sigma_{k+m},
\ldots, P_{k+1}^\sigma \right)$.   In terms of our
generators, $v$ takes the form $v = v_{k+1-m}
\cdots v_k$ where where
$v_j = t_{m+j-1} \cdots t_{j}$.     	
\end{itemize}

We will show that $v$ must be trivial so that
$w$ may be expressed without $t_k$.

Observe that because $t_k$ does not appear in
$u_L^{  +} u_L^{  -}$, the result of applying this
word to $\vec{P}$ is another long decomposition of
$P$, which we will temporarily call $\vec{Q}$.
Unless $v$ is trivial, for the action $v \star\vec{Q}$ to be defined,
it is necessary that (possibly replacing
$\vec{Q}$ by a linearly equivalent long decomposition)
either
\begin{itemize}
\item the polynomial $Q_{k+m} \circ
Q_{k+m-1} \circ \cdots \circ Q_{k-m+1}$ is an odd degree
Chebyshev polynomial and for $k-m < j < k < s \leq k+m$
$\deg(Q_j)$ and $\deg(Q_s)$ are distinct prime numbers
or
\item the polynomial $Q_{k+m} \circ \cdots
\circ  Q_{k+1}$ is
a monomial of degree prime to every monomial appearing
amongst the set $\{ Q_j ~:~ k-m < j \leq k \}$ or
\item the polynomial $Q_k \circ \cdots \circ Q_{k-m+1}$
is a monomial of degree prime to every monomial
appearing amongst the set $\{ Q_j ~:~ k < j \leq k+m\}$.
\end{itemize}

Each of these possibilities leads to a contradiction.
Let now $\vec{R} := v \star \vec{Q}$.

In the first case, the degree of the
polynomial $R_k \circ \cdots \circ R_i$ is
different from that of the polynomial
$Q_k \circ \cdots \circ Q_i$.  However,
$\deg (Q_k \circ \cdots \circ Q_i) =
\deg (P_k \circ \cdots \circ P_i)$
and because $\vec{P}^\mu = w \star \vec{P} =
u_L^{  +} u_R^{  +} \star R$ and $u_R^{  +}$ being
a word in $\{ t_j ~:~ i < j < k \}$ does not
affect the resulting polynomial obtained by
composing the terms from $i$ to $k$,
$\deg (R_k \circ \cdots \circ R_i) =
\deg (P_k^\mu \circ \cdots \circ P_i^\mu) =
\deg (P_k \circ \cdots \circ P_i)$.

The other two cases are handled similarly noting
that the number of instances of a given monomial
appearing in the block from $i$ to $k$ is also
invariant.  With this, we end the proof of the
proposition in the case that
$k$ divides $N$.  Notice that in this case,
the plain skew-twist is given by a skew-compositional
power of $P$.

\end{proof}

\begin{cor}
\label{cor:warm-up}
If $P$ is a polynomial having an unswappable
compositional factor, then Theorem~\ref{thm:skew-inv}
holds for $P$.	
\end{cor}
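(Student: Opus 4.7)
The plan is to combine Lemma~\ref{lem:unswappable-to-wall} with Proposition~\ref{prop:warm-up} in an almost mechanical way. Since $P$ has an unswappable compositional factor, and unswappability as introduced in Definition~\ref{def:Ritt-poly} is a property of indecomposable polynomials, I can fix a decomposition $P = P_k \circ \cdots \circ P_1$ into indecomposables in which some particular $P_i$ is unswappable (if necessary, refining a given coarser decomposition that exhibits the unswappable factor by further decomposing the remaining indecomposables). If $k = 1$ then $P$ itself is indecomposable and Theorem~\ref{thm:skew-inv} is already covered by Proposition~\ref{prop:warmup-indecomposable}, so from now on assume $k \geq 2$, and extend $(P_k,\ldots,P_1)$ by the skew-periodicity $P_{j+k} := P_j^\sigma$ to obtain the associated $k$-long decomposition $\vec{P}$.

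By Lemma~\ref{lem:unswappable-to-wall}, because $P_i$ is unswappable, $\vec{P}$ has walls at $i-1$ and at $i$. Proposition~\ref{prop:warm-up} then applies directly to $\vec{P}$ with its wall at $i$, yielding that every $(P, P^\tau)$-skew-invariant curve is a skew-twist curve. Since skew-twist curves (and also horizontal and vertical lines) are among the allowed conclusions of Theorem~\ref{thm:skew-inv}, this establishes Theorem~\ref{thm:skew-inv} for $P$.

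I do not anticipate a genuine obstacle: the corollary is essentially bookkeeping that assembles two already-proven statements. The only minor point worth checking is that the hypothesis ``unswappable compositional factor'' really does produce a decomposition of $P$ into indecomposables in which \emph{some} factor is unswappable, but this is immediate from the way ``unswappable'' is defined only on indecomposables in Definition~\ref{def:Ritt-poly}, so any presentation of $P$ in which the unswappable factor appears can be refined into an indecomposable decomposition retaining that factor.
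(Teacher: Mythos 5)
Your proof is correct and follows essentially the same route as the paper, which simply combines Lemma~\ref{lem:unswappable-to-wall} with Proposition~\ref{prop:warm-up}. Your extra care in routing the $k=1$ (indecomposable) case through Proposition~\ref{prop:warmup-indecomposable} is a reasonable precaution, since the partial-group-action formalism underlying Proposition~\ref{prop:warm-up} is only set up for $k \geq 2$.
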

\begin{proof}
By Lemma~\ref{lem:unswappable-to-wall} any decomposition
of $P$ has a wall.  By Proposition~\ref{prop:warm-up}
any $(P,P^\tau)$-skew-invariant curve is a
skew-twist.	
\end{proof}

\subsection{in/out degrees}\label{sec:inout}

In this section, we focus on the use of the in-degree and 
out-degree functions as a way to study compositional 
identities.  A highlight of this section is the proof of Theorem~\ref{thm:skew-inv} for the case when the polynomial $P$ has a clean $\mathsf{C}$-free long decomposition $f$.  

\begin{Def}\label{def:clean-c-free}
A long decomposition $f$ is a \emph{clean $\mathsf{C}$-free long decomposition} if all factors $f_i$ are monomials or Ritt polynomials, at least one factor is not a monomial, and all of factors are
$\mathsf{C}$-free. (See Definition~\ref{def:Ritt-poly}.)
\end{Def}

Let us record a easy and useful fact about
linear equivalence between clean $\mathsf{C}$-free
long decompositions.

\begin{lem}\label{lem:clean-scaling}
Linear equivalence between two clean $\mathsf{C}$-free long decompositions is always witnessed by scalings.

\end{lem}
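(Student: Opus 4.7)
The plan is to reduce the statement to a purely local rigidity claim about conjugating a single clean $\mathsf{C}$-free factor: I would show that if $P$ and $P'$ are each either a monomial of degree at least two or a $\mathsf{C}$-free Ritt polynomial (of the form $x^k u(x^\ell)^n$ with $u(0)\neq 0$ and the usual cleanness/coprimality conditions), and if $L_2^{-1}\circ P\circ L_1 = P'$ for linear polynomials $L_1,L_2$, then $L_1$ and $L_2$ are scalings. Once this is established, applying it to each index $i$ immediately gives the lemma, since $g_i = L_{i+1}^{-1}\circ f_i\circ L_i$ with both $f_i$ and $g_i$ clean $\mathsf{C}$-free will force every $L_i$ to be a scaling.

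To prove the local claim I would split on whether $P$ is a monomial or a proper Ritt polynomial. In the monomial case $P(x)=x^n$, writing $L_1(x)=ax+b$ and $L_2(x)=cx+d$, direct expansion of $L_2^{-1}(P(L_1(x))) = ((ax+b)^n-d)/c$ shows that the coefficient of $x^{n-1}$ equals $na^{n-1}b/c$. Since every clean $\mathsf{C}$-free factor has no $x^{\deg P-1}$ term (monomials trivially, and Ritt polynomials $x^k u(x^\ell)^n$ because the second-highest exponent is $\deg P - \ell$ with $\ell \geq 2$), this coefficient must vanish, forcing $b=0$; the constant term $-d/c$ must then vanish as well, forcing $d=0$. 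In the Ritt case, I would invoke \cite[Corollary~5.24]{MS} (which is already used repeatedly in the previous lemmas) to write both $P$ and $P'$ in their canonical forms, observe that in-degree, out-degree, and the multiplier structure are invariant, and then compare the lowest-order and next-lowest-order terms of $L_2^{-1}\circ P\circ L_1$ with those of $P'$ to conclude $L_1(0)=0$ and $L_2(0)=0$.

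The one case that still needs to be ruled out is mixed: $f_i$ a monomial and $g_i$ a proper Ritt polynomial, or vice versa. I would handle this by a critical-point count: a monomial $x^n$ has a unique finite critical value ($0$), while a $\mathsf{C}$-free Ritt polynomial $x^k u(x^\ell)^n$ with $u$ nonconstant has at least two distinct finite critical values (one at $0$ coming from the factors $x^k$ and $u(x^\ell)^n$, and at least one additional value coming from a root of the derivative in the ``$u$-part''). Linear conjugation preserves the multiset of critical values, so the mixed case cannot occur.

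The main obstacle is the Ritt case of the local claim, where one must verify that the rigid normal form $x^k u(x^\ell)^n$ is actually pinned down up to scalings; but this is essentially already contained in the structure theory of Ritt polynomials in \cite{MS}, so the argument is bookkeeping rather than genuinely new content. Once the local rigidity is in hand, the lemma follows with no further work because the equations $g_i=L_{i+1}^{-1}\circ f_i\circ L_i$ are already indexed by $i$ and each one independently forces its two $L$'s to be scalings.
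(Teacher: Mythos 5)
Your first step—reducing to the per-factor rigidity statement that a linear relation $g_i=L_{i+1}^{-1}\circ f_i\circ L_i$ between two clean $\mathsf{C}$-free factors forces the linears to be scalings—is exactly what the paper does; its entire proof is then a single citation of \cite[Theorem 3.15]{MS} applied at each index $i$ (concluding that each outer factor $L_{i+1}$ is a scaling, hence all of them are). The gap in your proposal is that the elementary arguments you substitute for that citation fail at precisely the delicate points that \cite[Theorem 3.15]{MS} is designed to handle. First, your parenthetical claim that a $\mathsf{C}$-free Ritt polynomial $x^ku(x^\ell)^n$ has no $x^{\deg P-1}$ term ``because $\ell\geq 2$'' is false: the in-degree $\ell$ equals $1$ for every type $\mathsf{B}$ factor $x^ku(x)^n$, e.g.\ $x(x+1)^2=x^3+2x^2+x$, so clean factors need not be centered. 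Second, and more seriously, the critical-value count you use to exclude the mixed monomial/Ritt case is wrong: the polynomials $x^2-2$ and $(x-2)^2$ are indecomposable, $\mathsf{C}$-free (the paper deliberately declares $C_2$ to be $\mathsf{C}$-free), and appear in the basic Ritt identity $(x-2)^2\circ x^2=x^2\circ(x^2-2)$, yet each has a single finite critical value, and $x^2-2$ is a \emph{translate} of the monomial $x^2$. More generally $ax^p+b$ and $u(x)^p$ with $u$ linear are Ritt-form polynomials with only one finite critical value. So the mixed case is not ruled out by your count; it is exactly the locus of the non-scaling linear relations of \cite[Remark 3.14]{MS} that drive the whole gate/cluster analysis later in the paper, and it is the case the cited rigidity theorem exists to control.

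Third, in the genuine Ritt-versus-Ritt case you defer to \cite[Corollary~5.24]{MS} plus a comparison of low-order coefficients, but that corollary extracts Ritt normal forms from functional equations and says nothing about rigidity of linear conjugations; the correct tool is \cite[Theorem 3.15]{MS}, which is the entire content of the lemma. As written, your argument assumes the hard part (calling it ``bookkeeping'') while the parts you do spell out rest on the two false claims above. To repair the proof you would either have to quote \cite[Theorem 3.15]{MS} as the paper does, or carry out the full case analysis of linear relations between Ritt polynomials, including the degenerate quadratic and near-monomial cases, which is substantially more than coefficient comparison.
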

\begin{proof}
Suppose that $f$ is a clean $\mathsf{C}$-free
long decomposition and $L$ is a \kspl and $g$ is
a clean $\mathsf{C}$-free long decomposition linearly
related to $f$ via $L$. By definition of linear equivalence, we have $g_i = L_{i+1}^{-1} \circ f_i \circ L_i$ for all $i$. By~\cite[Theorem 3.15]{MS}, this forces $L_{i+1}$ to be a scaling.
\end{proof}

As in the other cases of Theorem 1.3, a $(P, P^\tau)$ skew-invariant curve $\mathcal{C}$ is encoded by 
$u \star f = f^\tau$ for some $u \in \STpk$.  
To show that $\mathcal{C}$ is a  skew-twist curve, we 
show in Lemma~\ref{lem:clean-no-epsilons}
that we may take $u \in \STk$ so that 
$\phi^N w \star f = f^\tau$ for some $w \in \Affk$. 
We will show that the curve $\mathcal{C}$ is also encoded by $u_1 \phi^N u_2 \star f = f^\tau$ for some $u_1, u_2 \in \Symk$.

The key idea of the proof in this case is to associate
certain combinatorial data, which we call
\emph{nomodata} to $f$ and then
to find another long decomposition $g$ of the same
polynomial for which this nomodata is $e$-
periodic for a certain special $e$.

\begin{Def}\label{def:magic-e}
Let $f$ be a clean $\mathsf{C}$-free long decomposition, and let $w \in \STk$ be such that $w \star f = f^\tau$ encodes a curve $\mathcal{C}$ parametrized by $(A(t), B(t))$ for some polynomials $A$ and $B$.

Let $n_A$, $n_B$, and $n_f$ be the number of non-monomial factors in $A$, $B$, and one
$k$-period of $f$, respectively.

The \emph{echo of $w \star f = f^\tau$} is $e := \gcd(n_f, (n_B - n_A))$.
\end{Def}

In Section~\ref{sec:scaffolding}, we set up and analyze the nomodata that is used throughout this section.
In Section~\ref{sec:coho}, we find some $v \in \Affk$ for which $v \star f$ has $e$-periodic nomodata.
Then in Section~\ref{sec:removing-tk}, we show that the same can be accomplished with some $u \in \Symk$.
Then $g := u \star f$ is the desired other long decomposition of the same polynomial, with $e$-periodic nomodata.
The skew-invariant curve $\mathcal{C}$ is now also encoded by $\phi^N w' \star g = g^\tau$ for some $w' \in \Affk$.
We observe (Lemma~\ref{lem:is-traceless}) that $w'$ cannot change the nomodata of $g$, and it will follow that all it can do is permute adjacent monomials. The proof ends just like the proof in the ``walls'' case, with ``puddles'' of monomials between non-monomials replacing the ``rooms'' between walls.

\subsubsection{Scaffolding}\label{sec:scaffolding}

In this subsection we identify and analyze the combinatorial data of a clean $\mathsf{C}$-free long decomposition $f$ in full generality, that is, without assuming that $f$ has anything to do with the Mahler problem.  

With the following definition, we recall the definitions
of in-degree and out-degree from 
Definition~\ref{def:inoutdeg-1st} and then decompose these
into their prime components.

\begin{Def}\label{def:inoutdeg}
Fix a polynomial $u$ with no initial nor terminal, compositional nor multiplicative monomial factors. Recall that the in-degree and out-degree of a Ritt polynomial $g(x) = x^k u(x^\ell)^n$ are
 $$\operatorname{in-deg}(g) := \ell \mbox{ and } \operatorname{out-deg}(g) := n.$$

We decompose the in- and out-degrees into their prime components by the formulae
$$\operatorname{in-deg}(g) = \prod_{p \in \mathcal{P}} p^{\indeg(g,p)}$$
and $$\operatorname{out-deg}(g) = \prod_{p \in \mathcal{P}} p^{\outdeg(g,p)}$$
where the products are taken over the set $\mathcal{P}$ all of the prime numbers.
\end{Def}

The condition on $u$ just means that $u$ has a non-zero constant coefficient, is not a power of another polynomial, and cannot be written as $u(x) = v(x^p)$ for any polynomial $v$ and any integer $p \geq 2$.

Each non-monomial Ritt polynomial has a unique representation of this form, so in-degree and out-degree are indeed properties of the polynomial itself.

These integers $k, \ell, n$ are the same for two scaling-related Ritt polynomials. 
Thus, by Lemma~\ref{lem:clean-scaling} the ``in-degree of $f_i$'' (respectively, ``out-degree of $f_i$'') is a property of the linear-equivalence class of a clean $\mathsf{C}$-free long decomposition $f$.

\begin{Def}\label{def:nomolist}
An increasing function $\alpha: \mathbb{Z} \to \mathbb{Z}$ is a \emph{non-monomial listing function}
for a clean $\mathsf{C}$-free long decomposition $f$ if
\begin{itemize}
 \item for each $j \in \mathbb{Z}$, $f_{\alpha(j)}$ is \emph{not} a monomial, and
 \item for each $i \in \mathbb{Z}$, if $f_i$ is not a monomial, then $i$ is in the range of $\alpha$.
\end{itemize}

A \emph{nomodata} for a clean $\mathsf{C}$-free long decomposition $f$ is a quadruple $(\alpha,\ins,\outs,\pud)$
where \begin{itemize}
 \item $\alpha$ is a non-monomial listing function for $f$;
 \item $\ins : \mathbb{Z} \times \mathcal{P} \rightarrow \mathbb{N}$ is defined by $\ins(j,p) := \indeg(f_{\alpha(j)},p)$;
 \item $\outs : \mathbb{Z} \times \mathcal{P} \rightarrow \mathbb{N}$ is defined by $\outs(j,p) := \outdeg(f_{\alpha(j)},p)$; and
 \item $\pud : \mathbb{Z} \times \mathcal{P} \rightarrow \mathbb{N}$ is defined by
$$\pud(j,p) := \# \{ i \in (\alpha(j-1), \alpha(j)) ~:~ f_i = x^p \}.$$
\end{itemize}
\end{Def}

\begin{Rk}\label{rk:puddles}
The last part $\pud$ of the nomodata of $f$ keeps track of the monomials $f_i$ with $\alpha(j-1) < i < \alpha(j)$:
$$\prod_{i = \alpha(j-1) +1}^{\alpha(j)-1} \deg(f_i) =
 \prod_{p \in \mathcal{P}} p^{\pud(j,p)}.$$
We informally say that these monomials are the $j$th puddle, viewing the long decomposition $f$ as a sequence of alternating non-monomials and puddles. These puddles may be empty.
\end{Rk}

\begin{Rk}
The components $\ins$, $\outs$, and $\pud$ of
a nomodata are two-variable functions of $j$, an
integer, and $p$ a prime.  Sometimes, we will drop
$p$ from the notation regarding them simply as
functions of the variable $j$. 	
\end{Rk}

\begin{Rk}\label{rk:nomodata-shift-unique}
If $(\alpha(j),\ins(j,p),\outs(j,p),\pud(j,p))$ is a nomodata for $f$, then
 $(\alpha(j+m),\ins(j+m, p),\outs(j+m,p),\pud(j+m,p))$
  is another nomodata for the same $f$ for any $m \in \ZZ$.  Conversely,
 any two non-monomial listing functions $\alpha$
 and $\beta$ for $f$ are
 related by a shift: $\beta(j) = \alpha(j+m)$ for some
 $m \in \ZZ$. The nomodata for $f$ is
 determined by the non-monomial listing function.

If there are exactly $m$ non-monomials in one $k$-period of $f$ (that is, amongst $f_1, \ldots, f_k$), then $\alpha(j+m) = \alpha(j) + k$ for all $j$; and the three degree functions are $m$-periodic.

\end{Rk}

\begin{Rk} \label{rk:phi-nomo}
If $(\alpha, \ins, \outs, \pud)$ is a nomodata for
 $h$, then $(\alpha-N, \ins, \outs, \pud)$ is a nomodata for $\phi^N \star h$.
\end{Rk}

\begin{Rk}
If $\tau$ is a field automorphism, then $f$ and $f^\tau$ obtained from $f$ by applying $\tau$ to all the coefficients of $f$ have the same nomodata.
\end{Rk}

\begin{Rk}
The index $j$ of a non-monomial factor $f_{\alpha(j)}$ is often the more reliable counter than the index $i$ of the factor $f_i$. For example, the periodicity of nomodata is better stated in terms of $j$.
\end{Rk}

The key feature of the nomodata is that it completely determines whether $t_i \star f$ is defined and the nomodata of $t_i \star f$ can be read off from the nomodata of $f$. The following trace function keeps track of this. 

\begin{Def} \label{def:trace-nomodata}
Suppose that $(\alpha, \ins, \outs, \pud)$ is a nomodata for a long decomposition $f$, and suppose that $t_i \star f$ is defined. The \emph{trace of $t_i$ on $(f,\alpha)$}, $\trace_{t_i,f,\alpha}$, is the following function $\lambda: \ZZ \times \mathcal{P} \rightarrow \ZZ$.
\begin{itemize}
 \item If $i$ and $i+1$ are not in the range of $\alpha$,\\ the trace $\lambda$ is the constant function zero.
 \item If $i = \alpha(j)$ and $f_{i+1}$ is the monomial $x^p$,\\
    then $\lambda(x,p) = 1$ for $x \equiv j (\mod m)$, and zero for all other inputs.
 \item If $i+1 = \alpha(j)$ and $f_{i}$ is the monomial $x^p$,\\
    then $\lambda(x,p) = -1$ for $x \equiv j (\mod m)$, and zero for all other inputs.
\end{itemize}
If $w$ is a word in $t_i$ with $w \star f$ defined, the \emph{trace of $w$ on $(f,\alpha)$} is the recursive sum:
 $$\trace_{u t_i, f, \alpha} := \trace_{u, t_i \star f, \beta} + \trace_{t_i, f, \alpha} \text{, }$$
 where $\beta = \alpha + \sum_{p \in \mathcal{P}}\trace_{t_i,f,\alpha}$.

In general, we write $w \star (f,\alpha) = (g,\beta)$
when $g = w \star f$ and $\beta = \alpha + \sum_{p \in \mathcal{P}}\trace_{w,f,\alpha}$.  This
notation is justified by Lemma~\ref{lem:starnomodata}
below.
\end{Def}

\begin{Rk}
It is easy to see that the trace of $w$ on $(f,\alpha)$
 is well-defined for $w \in \Affk$, independent of its
 representation as a product of generators.
\end{Rk}

\begin{lem} \label{lem:starnomodata}
Suppose that $(\alpha, \ins_{f, \alpha}, \outs_{f, \alpha}, \pud_{f, \alpha})$ is a nomodata for a \kld $f$;
 that $w \in \Affk$ with $w \star f$ defined;
 that $\lambda = \trace_{w,f,\alpha}$ is the trace of $w$ on $(f,\alpha)$.
Then a nomodata for $g := w \star f$ can be computed as follows.
 The function $$\beta(j) := \alpha(j) + \sum_p \lambda(j,p)$$ identifies the non-monomials of $g$.

$$\ins_{g, \beta}(j,p) = \ins_{f, \alpha}(j,p) - \lambda(j,p) \text{, }$$
$$\outs_{g, \beta}(j,p) = \outs_{f, \alpha}(j,p) + \lambda(j,p) \text{, and}$$
$$\pud_{g, \beta}(j,p) = \pud_{f, \alpha}(j,p) + \lambda(j,p) - \lambda(j-1,p) \text{ .}$$
\end{lem}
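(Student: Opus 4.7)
The plan is to induct on the length of a word in $\{t_1, \ldots, t_{k}\}$ representing $w$. By Corollary~\ref{cor:group-action-corr}, the hypothesis that $w \star f$ is defined means that every prefix of a reduced representative of $w$ produces a well-defined intermediate long decomposition, so one can apply the result one generator at a time. Each of the four output formulas is linear in $\lambda$, and $\trace_{ut_i, f, \alpha}$ is built in Definition~\ref{def:trace-nomodata} to add the two contributions; hence, given the base case, if one sets $g_1 := t_i \star f$ and $\alpha_1 := \alpha + \sum_{p} \trace_{t_i, f, \alpha}(\cdot, p)$, then the induction hypothesis applied to $u$ acting on the intermediate $(g_1, \alpha_1)$ yields the lemma for $w = u t_i$. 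Thus the substantive content reduces entirely to the base case.

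For the base case $w = t_i$, the swap is governed by one of the basic Ritt identities in Definition~\ref{def:Ritt-identity}. The $\mathsf{C}$-free hypothesis excludes the Chebyshev commutation, leaving only the monomial commutation $x^p \circ x^q = x^q \circ x^p$ for distinct primes and the third identity $x^\kappa u(x)^p \circ x^p = x^p \circ x^\kappa u(x^p)$ together with its reverse. By Lemma~\ref{lem:clean-scaling} all the linear witnesses of the swap are scalings, which preserve in-degree, out-degree, and whether a factor is a monomial; hence up to linear equivalence one may assume the consecutive pair $(f_{i+1}, f_i)$ agrees with one side of the relevant identity. The verification then splits on whether $i$ and $i+1$ lie in the range of $\alpha$. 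If neither does, both factors are monomials within a single puddle, the swap merely permutes two distinct prime monomials there, $\alpha$ remains a valid non-monomial listing function for $g$, and every formula holds with $\lambda$ identically zero.

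The substantive subcases are those in which exactly one of $i, i+1$ lies in the range of $\alpha$. If $i = \alpha(j)$ with $f_{i+1} = x^p$ (so $f_i = x^\kappa u(x^p)$ up to a scaling), the third Ritt identity produces $g_i = x^p$ and $g_{i+1} = x^\kappa u(x)^p$, so the non-monomial shifts to position $i+1 = \alpha(j) + 1$, its in-degree at $p$ drops by one, its out-degree at $p$ rises by one, and the monomial $x^p$ migrates from the puddle at index $j+1$ into the puddle at index $j$; comparing with $\lambda(j,p) = 1$ and $\lambda(j,q) = 0$ otherwise confirms each of the four formulas. The mirror case $i+1 = \alpha(j)$ with $f_i = x^p$ is symmetric and produces $\lambda(j,p) = -1$. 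The fourth logical case, where both $i$ and $i+1$ are in the range of $\alpha$, is vacuous: a basic Ritt swap of two non-monomial indecomposables is only a Chebyshev commutation, which is forbidden.

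The main place to be careful is the $\pud$ formula, because a single non-monomial migration triggers \emph{two} puddle changes: the monomial moving across it leaves one puddle and enters the adjacent one, which is exactly what the signed combination $\lambda(j,p) - \lambda(j-1,p)$ encodes once one tracks the $j$-versus-$j+1$ indexing correctly in both orientations of the third Ritt identity. Beyond this bookkeeping, the argument is mechanical, and as a by-product it shows that $\trace_{w,f,\alpha}$ is well-defined on $\Affk$ independently of the chosen representative word, since its value at each $(j,p)$ is forced by invariants of $g = w \star f$ relative to $\alpha$.
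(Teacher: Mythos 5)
Your proposal is correct and follows exactly the route the paper intends: the paper's own proof of this lemma is the single line ``this easy induction on the length of a word representing $w$ is left to the reader,'' and you have supplied precisely that induction, with the right reduction to the single-generator base case and a correct case analysis of the basic Ritt identities (including the correct double bookkeeping in the $\pud$ formula). The only cosmetic quibble is the citation of Lemma~\ref{lem:clean-scaling}, which concerns linear equivalence of whole decompositions rather than the witnesses of a single swap; the fact you actually need --- that the linear witnesses are scalings and scalings preserve in-degree, out-degree, and monomiality --- follows from \cite[Theorem 3.15]{MS} as used throughout the paper, so this does not affect the argument.
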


\begin{proof}
 This easy induction on the length of a word
representing $w$ is left to the reader.
  \end{proof}
 
\begin{Def} \label{def:admissi}
The \emph{admissibility conditions} for an $m$-periodic function $\lambda: \ZZ \times \mathcal{P} \rightarrow \ZZ$ to be a trace of a word acting on a \kld $f$ with $m$ non-monomials per $k$-period and with nomodata $(\alpha, in, out, pud)$ are\begin{enumerate}
\item $-\outs(j,p) \leq \lambda(j,p) \leq in(j, p)$ for all $j$ and $p$;
\item $\pud(j,p) + \lambda(j,p) - \lambda(x-1,p) \geq 0$ for all $j$ and $p$;
\item for each $p$, if $\pud(j,p) = 0$ for all $j$, then $\lambda(j,p) = 0$ for all $j$.
\end{enumerate}
\end{Def}

The first two admissibility conditions are necessary because the functions $\ins_{f,\alpha} - \lambda$,
$\outs_{f,\alpha} + \lambda$, and $\pud_{f,\alpha}(j,p) + \lambda(j,p) - \lambda(j-1,p)$ are
parts of the nomodata for $w \star (f, \alpha)$, so they must be non-negative. The last is also clearly necessary: if a monomial $x^p$ is not available anywhere in $f$,
 actions by $\Affk$ cannot change the $p$-parts of in- and out-degrees of non-monomials in $f$. The main result of this subsection is Theorem~\ref{thm:shinythm} showing that these admissibility conditions are also sufficient: given $f$ with nomodata $(\alpha, \ins, \outs, \pud)$ and a function $\lambda$ satisfying the admissibility conditions, we shall produce a word $w$ with $w \star f$ defined and with trace $\lambda$. We will build this word out of long-swaps, each of which moves its monomial through a puddle and then swaps it with a non-monomial.

\begin{Def}
\label{def:puddle-swap}
If $i$ and $i+1$ are not in the range of $\alpha$, then $t_i$ is a \emph{puddle-swap} for $(f, \alpha)$.
A \emph{long-swap} for $(f, \alpha)$ is a word of the form
 $$t_{\alpha(j)} t_{\alpha(j)+1} \ldots t_{i-1} t_{i}\text{ for }i \leq \alpha(j+1) \text{; or }
 t_{\alpha(j)-1} t_{\alpha(j)-2} \ldots t_i\text{ for }i > \alpha(j-1) \text{ .}$$
The first option above is a left-to-right transit (see Definition~\ref{def:transit}) that brings a monomial through the puddle to the left of $\alpha(j)$, followed by the Ritt swap of that monomial with the non-monomial $f_{\alpha(j)}$.

If $x^p$ is a monomial that swaps with the non-monomial when a long-swap $w$ acts, we call $w$ a \emph{$p$-long-swap}.
\end{Def}

\begin{Rk}  \label{rk:puddle-swap}
\begin{enumerate}
\item  The concepts in Definition~\ref{def:puddle-swap}
 only depend on the range of $\alpha$.  Thus,
they are the same for different nomodata for the same
$f$.
\item Puddle-swaps in different puddles commute.
\item Suppose that $w$ is a word in puddle-swaps for $(f,\alpha)$, and that $f$ has $r$ puddles per period.
 Then $w \approx w_r \ldots w_1$ where each $w_j$ is a word in puddle-swaps in the $j$th puddle; and these $w_j$ commute with each other.
 \item If $w$ is a word in puddle-swaps for $(f,\alpha)$, then $\trace_{w,f,\alpha}$ is identically zero.
Lemma~\ref{lem:easy-step-1} provides a converse.
\end{enumerate}
\end{Rk}

\begin{lem}
\label{lem:easy-step-1}
Suppose that $w$ is a reduced word for which
$w \star f$ is defined and $\trace_{w,f,\alpha}$ is identically zero.  Then
$w$ is a product of puddleswaps for
$(f,\alpha)$.
\end{lem}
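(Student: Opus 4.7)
The plan is to exploit the Coxeter-group structure of $\Affk$ via the standard theory of parabolic subgroups. Let $T := \{t_i : i,\, i+1 \notin \operatorname{range}(\alpha)\}$ be the set of puddle-swap generators for $(f,\alpha)$, and set $H := \langle T \rangle \leq \Affk$. Since $T$ is a subset of the simple Coxeter generators $\{t_1,\dots,t_k\}$ of $\Affk$, the subgroup $H$ is a standard parabolic subgroup and is itself a Coxeter group with simple generating set $T$.

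The first step is to show that $w$, regarded as an element of $\Affk$, lies in $H$. Using the injective permutation representation $\pi:\Affk \hookrightarrow \operatorname{Sym}(\ZZ)$ of Remark~\ref{rk:meet-the-groups-rk}, it suffices to prove that $\pi(w)$ fixes every non-monomial position of $f$ and maps each puddle to itself. The decisive observation is that in the clean $\mathsf{C}$-free setting no basic Ritt identity can be applied to two non-monomial factors: first-type identities $x^p\circ x^q$ require two monomials, second-type identities $C_p\circ C_q$ are excluded by $\mathsf{C}$-freeness, and third-type identities involve exactly one monomial factor. Consequently, in any sequence of Ritt swaps realizing $w \star f$, no two non-monomials are ever directly swapped, so the relative order of the non-monomials is preserved, and the $j$-th non-monomial of $w \star f$ descends from the $j$-th non-monomial of $f$. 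Combining this with the trace-zero hypothesis and Lemma~\ref{lem:starnomodata}, which gives $\beta(j)=\alpha(j)$, we conclude that the $j$-th non-monomial returns to its original position $\alpha(j)$. Hence $\pi(w)$ fixes every non-monomial position individually, and each puddle, being a maximal run of monomial positions between two fixed endpoints, is necessarily mapped to itself. Therefore $\pi(w) \in \pi(H)$, and by injectivity of $\pi$ we deduce $w \in H$.

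Finally, I would invoke the standard Coxeter-group fact that for any parabolic subgroup $W_{T} = \langle T \rangle$ of a Coxeter group $W$ and any $v \in W_{T}$, every reduced expression for $v$ over the full set of simple generators of $W$ uses only letters from $T$. This follows from Matsumoto's theorem together with the observation that the set of simple generators appearing in a reduced expression is invariant under commutation and braid moves, combined with the fact that the length functions on $W_{T}$ and on $W$ agree on $W_{T}$, so at least one reduced expression for $v$ consists entirely of letters from $T$. Applied to our reduced word $w \in H \leq \Affk$, this shows that every letter of $w$ is a puddle-swap generator, completing the proof. The main point requiring care is the non-crossing claim for non-monomials, which is precisely where the clean $\mathsf{C}$-free hypothesis is used in an essential way.
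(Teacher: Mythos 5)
Your route through the parabolic subgroup $H=\langle T\rangle$ and the support of reduced words is genuinely different from the paper's argument (which fixes one non-monomial, tracks its in-degree through the intermediate decompositions, and shortens $w$ whenever that in-degree moves, contradicting reducedness). The Coxeter-theoretic half of your proof --- that an element of a standard parabolic subgroup of $\Affk$ has every one of its reduced expressions supported on $T$ --- is correct and would cleanly finish the job once $w\in H$ is known. The gap is in how you get $w\in H$. You correctly show that $\pi(w)$ fixes every $\alpha(j)$: in a clean $\mathsf{C}$-free decomposition no basic Ritt identity swaps two non-monomials, so the $j$-th non-monomial of $w\star f$ descends from the $j$-th non-monomial of $f$, and trace zero puts it back at $\alpha(j)$. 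But the next inference --- that each puddle ``is necessarily mapped to itself'' because it is a maximal run between two fixed endpoints --- is not valid. A $k$-periodic permutation of $\ZZ$ can fix $\operatorname{range}(\alpha)$ pointwise while exchanging monomials between different puddles, and $\pi(H)$ consists precisely of those permutations that in addition stabilize each puddle; so this is exactly the point that has to be proved, not read off from the fixed endpoints.

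Nor does trace zero alone rule it out: $\lambda(j,p)=0$ only says that the \emph{net} flow of degree-$p$ monomials across the $j$-th non-monomial vanishes, and a priori one $x^p$ could migrate past that non-monomial in one direction while a second $x^p$ migrates past it in the other, leaving every $\lambda(j,p)$ equal to zero but placing both monomials in new puddles. To close the gap you need the second non-crossing fact, symmetric to the one you did isolate: two monomials of the same degree can never Ritt-swap, because tautological identities are excluded from the basic Ritt identities, so the degree-$p$ monomials also preserve their relative order under the evaluation of $w$. Since the $j$-th non-monomial starts and ends at $\alpha(j)$, two degree-$p$ monomials that end up on opposite sides of it from where they began would have to cross each other, which is impossible; hence every degree-$p$ monomial ends on the same side of every non-monomial as it started, every monomial returns to its original puddle, and $\pi(w)\in\pi(H)$ as you wanted. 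With that supplement your proof is complete.
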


\begin{proof}
Fix a non-monomial $f_{\alpha(j)}$. If suffices to show that $t_{\alpha(j)}$ and $t_{(\alpha(j)+1)}$ do not occur in $w$.

Let $w_x$ be the initial segment of $w$ of length $x$, so $w_x \star (f, \alpha)$ is an intermediate decomposition that occurs as $w \star f$ is carried out generator by generator. Let $I(x)$ be in-degree of the $j^\text{th}$ non-monomial of $w_x \star (f, \alpha)$. If $I(x)$ is constant in $x$, we are done. Otherwise, there is some prime $p$ for which the $p$-part of $I$ changes at some points. We work out the case that it increases at some point; the
other case is symmetric. Let $y$ and $z$ be such that the $p$-part of $I(x)$ attains its maximal value for $y < x <z$ but not at $y$ or $z$; and let $u := w_{[z,y]}$ be a middle subword of $w$.

Let us pause and regroup. We now work with $(g, \beta) := w_{y-1} \star (f, \alpha)$ and the word $u$, focusing on the non-monomial factor $g_{\beta(j)}$ and the monomial $x^p$. Since the first thing that happens as $u \star g$ is evaluated is the $p$-part of the in-degree of $g_{\beta(j)}$ increasing, it follows that $w_y$ (the first letter of $u$) is $t_{\alpha(j)-1}$ and $g_{\alpha(j)-1}$ is the monomial of degree $p$.

For $x := y, y+1, \ldots, z$, let $N(x)$ and $M(x)$ be the positions of the $j^\text{th}$ nonmonomial and this particular degree-$p$ monomial in $u_x \star f$, where $u_x = w_{[x,y]}$. Because the $p$-part of the the in-degree of the non-monomial does not change again, for $x = y+1, \ldots, z-1$ we always have $M(x) > N(x)$, and there are no degree-$p$ monomials between these. Because at the end, the $p$-part of the the in-degree of the non-monomial decreases, we must have a degree-$p$ monomial immediately to the left of this non-monomial at the end, so $M(z-1) = N(z-1)+1$.

Case 1: If this degree-$p$ monomial does not cross any other non-monomials as $u \star g$ is carried out, then it stays in the $(j+1)^\text{st}$ puddle of all intermediate decompositions. We first rewrite $u$, keeping the same length, so that it stays at the right edge of that puddle, except to let through other-degree monomials who cross our non-monomial. We then rewrite $u$, decreasing its length by $2$, so that this degree-$p$ monomial stays on the right, rather than on the left, of the non-monomial throughout. The decrease in length is a contradiction.

Case 2: If this degree-$p$ monomial does cross other non-monomials, it must in particular cross $g_{\alpha(j+1)}$. Consider the middle subword $v$ of $u$ which begins with this degree-$p$ monomial crossing $g_{\alpha(j+1)}$ right-to-left and ends with it crossing back. This $v$ is shorter than $u$, and satisfies the same hypotheses.

\end{proof}

\begin{lem}
\label{lem:get-longswap}
Suppose that
 $\lambda: \ZZ \times \mathcal{P} \rightarrow \ZZ$
looks like the trace of one Ritt swap in the sense
that there is exactly one pair $(j,p)$ with
$\lambda(j,p) = \pm 1$ and $\lambda(j',p') = 0$ for
all other pairs.
Suppose that $\lambda$ satisfies the admissibility
conditions for some $(f,\alpha)$.
Then there is a long-swap $v$ with trace $\lambda$ on $(f, \alpha)$.	
\end{lem}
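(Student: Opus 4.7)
The plan is to split into two cases based on the sign of the unique nonzero value of $\lambda$ at the pair $(j_0,p_0)$, and to construct in each case an explicit long-swap whose trace hits exactly that $\pm 1$. The guiding idea is that any admissible $\lambda$ of this form points to a specific monomial $x^{p_0}$ sitting in an adjacent puddle of $(f,\alpha)$; transporting that monomial across the puddle by a string of puddle-swaps and then performing a single Ritt swap with the non-monomial $f_{\alpha(j_0)}$ produces a long-swap with the prescribed trace.

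In the case $\lambda(j_0,p_0) = +1$, the admissibility conditions of Definition~\ref{def:admissi} immediately give $\ins(j_0,p_0) \geq 1$ (from (1)) and $\pud(j_0+1,p_0) \geq 1$ (from (2) at $j=j_0+1$). The first, together with Lemma~\ref{lem:clean-scaling}, lets me assume $f_{\alpha(j_0)} = x^m u(x^{p_0})$ up to a scaling, so the third basic Ritt identity of Definition~\ref{def:Ritt-identity} applies with the monomial $x^{p_0}$. The second yields a position $i \in (\alpha(j_0),\alpha(j_0+1))$ with $f_i = x^{p_0}$, and I choose $i_0$ to be the \emph{least} such $i$. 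The proposed long-swap is
\[
v \;:=\; t_{\alpha(j_0)}\, t_{\alpha(j_0)+1} \cdots t_{i_0-1},
\]
a long-swap of the first form in Definition~\ref{def:puddle-swap}. Read right-to-left, each of the letters $t_{i_0-1},\ldots,t_{\alpha(j_0)+1}$ is a puddle-swap that exchanges $x^{p_0}$ with a monomial $x^q$ for some prime $q$; the minimality of $i_0$ is exactly what ensures $q \neq p_0$ at every intermediate step, so the first basic Ritt identity applies throughout. The final letter $t_{\alpha(j_0)}$ then executes the Ritt identity $x^{p_0} \circ x^m u(x^{p_0}) = x^m u(x)^{p_0} \circ x^{p_0}$. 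By Remark~\ref{rk:puddle-swap}(4) each puddle-swap contributes $0$ to the trace, and by the second clause of Definition~\ref{def:trace-nomodata} the final Ritt swap contributes $+1$ at $(j_0,p_0)$; unwinding the recursive definition of trace, $\trace_{v,f,\alpha} = \lambda$.

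The case $\lambda(j_0,p_0) = -1$ is strictly symmetric: admissibility now gives $\outs(j_0,p_0) \geq 1$ and $\pud(j_0,p_0) \geq 1$, I pick $i_0$ to be the \emph{largest} $i \in (\alpha(j_0-1),\alpha(j_0))$ with $f_i = x^{p_0}$, and I take $v := t_{\alpha(j_0)-1}\, t_{\alpha(j_0)-2} \cdots t_{i_0}$, a long-swap of the second form in Definition~\ref{def:puddle-swap}. The only subtle point in either case is ensuring that every intermediate puddle-swap is a genuine Ritt swap $x^{p_0} \circ x^q = x^q \circ x^{p_0}$ with $q \neq p_0$; the extremal choice of $i_0$ is precisely designed to supply this distinct-prime condition, and everything else reduces to a direct unwinding of the definitions together with the trace formula of Lemma~\ref{lem:starnomodata}.
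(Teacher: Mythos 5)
Your proof is correct and follows essentially the same route as the paper's: use the second admissibility condition to locate a monomial $x^{p_0}$ in the adjacent puddle, take the extremal such position so the intervening puddle-swaps are genuine (non-tautological) Ritt swaps, transit it to the non-monomial, and invoke the first admissibility condition to justify the final swap. The only difference is that you spell out the $-1$ case and the role of minimality explicitly, which the paper leaves to symmetry and a one-word remark.
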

\begin{proof}
We work out the case that $\lambda(j,p) = 1$.  The
other case is handled symmetrically.

By the second admissibility condition, there is a
monomial $x^p$ in the puddle to the left of $f_{\alpha(j)}$, say $x_p = f_i$ with $\alpha(j+1) > i > \alpha(j)$, and $i$ is minimal such.
The transit $w =    t_{\alpha(j)+1} \cdots t_{i-1}$
is a word in puddle-swaps for $(f,\alpha)$.  By
the first admissibility condition,
$\indeg(f_{\alpha(j)},p) \geq \lambda(j,p) = 1$.
Thus, $t_{\alpha(j)} w \star f$ is defined and
has trace $\lambda$ on $(f,\alpha)$.
\end{proof}

\begin{lem} \label{lem:glue-longswaps}
Suppose that $w_i \star f$ is defined for $i = 1,2$; that $w_i$ is a $p_i$-longswap for $f$; and that $p_1 \neq p_2$.
Then there is a word $v_2$ such that $v_2 w_1 \star f$ is defined, and $v_2$ is a $p_2$-longswap for $w_1 \star f$, and the trace of $v_2$ on $w_1 \star f$ is the same as the trace of $w_2$ on $f$. \end{lem}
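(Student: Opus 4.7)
The plan is to apply Lemma~\ref{lem:get-longswap} on the long decomposition $w_1 \star f$ with target trace $\lambda_2 := \trace_{w_2, f, \alpha}$; this will directly produce $v_2$. Set $\lambda_1 := \trace_{w_1, f, \alpha}$. Because each $w_i$ is a $p_i$-long-swap, its trace is assembled from the traces of its letters, and by Remark~\ref{rk:puddle-swap}(4) the puddle-swap letters all contribute zero, leaving only the terminal Ritt swap against the non-monomial to contribute $\pm 1$ at a pair whose second coordinate is $p_i$. Thus each $\lambda_i$ is supported entirely at the prime $p_i$, and $\lambda_2$ has exactly the shape required by the hypothesis of Lemma~\ref{lem:get-longswap}.

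Next I would read off a nomodata for $w_1 \star f$ via Lemma~\ref{lem:starnomodata}: relative to the listing function $\beta_1(j) := \alpha(j) + \sum_p \lambda_1(j,p)$, the new triple $(\ins', \outs', \pud')$ differs from $(\ins, \outs, \pud)$ only through contributions involving $\lambda_1$. Since $\lambda_1$ is supported at the prime $p_1$, these contributions vanish at every prime $p \neq p_1$; in particular the $p_2$-slices of the nomodata of $(w_1 \star f, \beta_1)$ coincide with those of $(f, \alpha)$.

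The main step is then to verify that $\lambda_2$ satisfies the three admissibility conditions of Definition~\ref{def:admissi} against this new nomodata. Each of the three conditions is imposed pointwise in $j$ and $p$ and, at a given prime $p$, involves only the $p$-components of $\lambda_2$ and of the nomodata. Since $\lambda_2$ vanishes at every prime other than $p_2$, only the conditions at $p = p_2$ are nontrivial, and there the relevant nomodata is the same as for $(f, \alpha)$, against which $\lambda_2$ is admissible automatically because $w_2 \star f$ is defined. Lemma~\ref{lem:get-longswap} now yields a long-swap $v_2$ on $(w_1 \star f, \beta_1)$ with $\trace_{v_2, w_1 \star f, \beta_1} = \lambda_2$; the support of this trace at $p_2$ forces $v_2$ to swap an $x^{p_2}$-monomial with a non-monomial, so $v_2$ is a $p_2$-long-swap with the desired trace.

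I do not anticipate any real obstacle: the entire content of the lemma is the prime-by-prime decoupling of the admissibility conditions, so a $p_1$-long-swap cannot disturb the data relevant to a $p_2$-long-swap. The only minor bookkeeping worth noting is that the non-monomial targeted by $v_2$ occupies the same $j$-indexed slot as the one targeted by $w_2$, which is immediate from the definition of $\beta_1$ furnished by Lemma~\ref{lem:starnomodata}.
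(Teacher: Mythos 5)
Your proof is correct, but it takes a genuinely different route from the paper's. The paper argues by direct word surgery: it cases out how $w_1$ and $w_2$ can interact (the factors they move are disjoint; they affect the same puddle from opposite ends; they target the same non-monomial) and in each case modifies the word $w_2$ by hand --- e.g.\ dropping a generator $t_b$ --- to produce $v_2$, dismissing the remaining cases as ``equally tedious and straightforward.'' You instead route everything through the nomodata formalism: since $\trace_{w_1,f,\alpha}$ is supported only at the prime $p_1$, Lemma~\ref{lem:starnomodata} shows the $p_2$-slice of the nomodata is untouched by $w_1$; the admissibility conditions of Definition~\ref{def:admissi} are imposed prime-by-prime; and their necessity (noted immediately after Definition~\ref{def:admissi}), applied to the defined action $w_2 \star f$, gives admissibility of $\lambda_2$ at $p_2$, so Lemma~\ref{lem:get-longswap} manufactures $v_2$. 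This is cleaner, avoids the case analysis entirely, and costs nothing beyond machinery already in place; it is also closer in spirit to how the lemma is consumed in the proof of Theorem~\ref{thm:shinythm}. Two minor caveats: (i) the hypothesis of Lemma~\ref{lem:get-longswap} that $\lambda$ is nonzero at ``exactly one pair $(j,p)$'' must be read modulo the $m$-periodicity of traces, which your $\lambda_2$ satisfies since it is literally the trace of a single long-swap; and (ii) the $v_2$ you produce need not coincide with the paper's syntactic modification of $w_2$, but the lemma only asserts existence, so this is harmless.
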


\begin{proof}
If the factors moved by $w_1$ and $w_2$ are disjoint,
then $w_1$ and $w_2$ already commute.  Let us
consider a case where $w_1$ and
$w_2$ affect the same puddle but the nonmonomials on
opposite ends of that puddle.  Here we treat the
subcase where $w_1$ is right-to-left and $w_2$ is
left-to-right.  The other subcase is handled similarly.
So, $w_1 = t_{\alpha(j)-1} \cdots t_{a+1} t_a$ and
$w_2 = t_{\alpha(j-1)} \cdots t_{b-1} t_{b}$ for some
$j$, and $\alpha(j) > b+1 > a > \alpha(j-1)$.
Set $v_2 := t_{\alpha(j-1)} \cdots t_{b-1}$ ; so that $w_2 = v_2 t_b$.

The other cases where $w_1$ and $w_2$ affect the same non-monomial are equally tedious and straightforward.
\end{proof}

\begin{thm} \label{thm:shinythm}
Fix $(f, \alpha)$ with $m$ non-monomials per period, and suppose that $\lambda : \ZZ \times \mathcal{P} \rightarrow \ZZ$ is an $m$-periodic function satisfying the admissibility conditions (see Definition~\ref{def:admissi}).
Then there is some $w \in \Affk$  such that $w \star (f, \alpha)$ is defined, and the trace of $w$ on $(f, \alpha)$ is $\lambda$.
\end{thm}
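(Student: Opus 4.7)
The plan is to build $w$ inductively as a product of long-swaps by repeatedly invoking Lemma~\ref{lem:get-longswap}, inducting on the finite nonnegative integer $|\lambda| := \sum_{p}\sum_{j=0}^{m-1} |\lambda(j,p)|$. This is finite because admissibility~(1) forces $\lambda(j,p) = 0$ unless $p$ divides some $\operatorname{in-deg}(f_{\alpha(j)})$ or $\operatorname{out-deg}(f_{\alpha(j)})$, of which only finitely many exist. The base case $|\lambda| = 0$ is the empty word. For the inductive step, I plan to peel off a single atom from $\lambda$, realize it by a long-swap $v_0$ via Lemma~\ref{lem:get-longswap}, and then check that the remainder is admissible for $v_0 \star (f, \alpha)$ so that induction applies.

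The atom will be an $m$-periodic function supported on a single residue class at a single prime $p$, with value $\pm 1$. To produce it, I fix $p$ and $j_0$ with $\lambda(j_0, p) \neq 0$; WLOG assume $\lambda(j_0, p) > 0$, the opposite sign being symmetric via the other form of long-swap in Definition~\ref{def:puddle-swap}. I claim there exists $r \geq 0$ such that $\pud(j_0 + r + 1, p) \geq 1$ and $\lambda(j_0 + r, p) > 0$: iterating admissibility condition~(2), so long as the puddle index $j_0 + s + 1$ is $p$-empty, the inequality $\pud(j,p) + \lambda(j, p) - \lambda(j-1, p) \geq 0$ forces $\lambda(j_0 + s + 1, p) \geq \lambda(j_0 + s, p)$; by $m$-periodicity of $\pud$, this chain cannot run for a full period without making $\pud(\cdot, p) \equiv 0$, which would contradict admissibility~(3) since $\lambda(j_0, p) \neq 0$. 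Taking $j_1 := j_0 + r$ and $\lambda_0$ to be the $+1$ atom at $\{j_1 + sm : s \in \ZZ\} \times \{p\}$, the admissibility conditions for $\lambda_0$ on $(f, \alpha)$ are immediate from the above choices (with $\ins(j_1, p) \geq \lambda(j_1, p) > 0$ and $\pud(j_1 + 1, p) \geq 1$), so Lemma~\ref{lem:get-longswap} yields the desired long-swap $v_0$ with $\trace_{v_0, f, \alpha} = \lambda_0$.

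It remains to verify that $\lambda' := \lambda - \lambda_0$ is admissible for $(g, \beta) := v_0 \star (f, \alpha)$. Conditions~(1) and~(2) for $\lambda'$ on $(g, \beta)$ reduce algebraically, via the formulas of Lemma~\ref{lem:starnomodata} for $\ins_g$, $\outs_g$, and $\pud_g$, to the same conditions for $\lambda$ on $(f, \alpha)$, hence are automatic. For condition~(3) I will use the global observation that each basic Ritt identity of Definition~\ref{def:Ritt-identity} preserves the multiplicity of every monomial $x^q$ among its two factors, so the total count $\sum_{j=0}^{m-1} \pud(j, q)$ is invariant under the $\Affk$-action; if it is positive at some prime $q$ initially, it remains positive after $v_0$, and condition~(3) is therefore preserved. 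The induction hypothesis then yields a word $w'$ with $\trace_{w', g, \beta} = \lambda'$, and $w := w' v_0$ satisfies $\trace_{w, f, \alpha} = \lambda' + \lambda_0 = \lambda$ by the recursive definition in Definition~\ref{def:trace-nomodata}.

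The main obstacle in this plan is the atom-existence argument: finding an index at which a single admissible atomic long-swap can be peeled off requires combining the local flow constraint~(2) with the global nonvanishing constraint~(3). Condition~(3) is exactly the global ingredient needed to rule out an unbroken chain of forced positive $\lambda$-values propagating around the period with no puddle to drain into, and monomial-count conservation makes (3) self-preserving, so the induction closes cleanly.
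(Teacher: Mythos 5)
Your proposal is correct and follows the same core strategy as the paper: induct on the $\ell^1$-size of $\lambda$ and peel off one long-swap at a time via Lemma~\ref{lem:get-longswap}. There are two genuine organizational differences worth noting. First, the paper splits $\lambda$ by prime, builds a word $w_p$ for each prime separately, and then interleaves these words using Lemma~\ref{lem:glue-longswaps}; you run a single global induction on $\sum_{p}\sum_{j}|\lambda(j,p)|$ and never need the gluing lemma, because you instead verify directly (via Lemma~\ref{lem:starnomodata}) that the residual function $\lambda-\lambda_0$ remains admissible for $v_0\star(f,\alpha)$ --- a step the paper's proof leaves implicit. Second, where the paper locates the next long-swap by a case split on the sign pattern of $\lambda(\cdot,p)$ (a sign-change index handled by condition~(2) alone versus constant sign handled by condition~(3)), your ``walk left until the first non-empty puddle'' argument treats these uniformly: condition~(2) propagates positivity of $\lambda$ across $p$-empty puddles, and condition~(3) plus $m$-periodicity guarantees the walk terminates within one period. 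Both your atom-admissibility check (needing $\ins(j_1,p)\geq 1$ and $\pud(j_1+1,p)\geq 1$) and your preservation of condition~(3) via conservation of monomial multiplicities under Ritt swaps are sound. The net effect is a somewhat cleaner write-up of the same proof, trading the paper's per-prime decomposition and Lemma~\ref{lem:glue-longswaps} for an explicit admissibility-preservation computation.
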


\begin{proof}
The third condition in Definition~\ref{def:admissi} implies that $\lambda$ is supported on finitely many primes $p$: those with $f_i = x^p$ for some $i$. We first produce words $w_p$ for each such prime $p$, and then put them together, via Lemma~\ref{lem:glue-longswaps}.
 
Let $N := \sum_{j=1}^m |\lambda(j,p)|$. This is the total number of monomials $x^p$ that need to cross non-monomials, per period. The word $w_p$ that will be a sequence of $N$ $p$-longswaps. We induct on $N$ to build the word $w_p$, adding a longswap to decrement $N$. More precisely, we shall find a word $u$ which is a $p$-longswap for $f$, with trace $\mu$ on $(f, \alpha)$, such that $\lambda - \mu$ has a smaller $N$. The only hard part is finding a starting point where the monomial $x^p$ is available in the appropriate puddle.

\textit{Case 0.} If $\lambda(j,p) = 0$ for all $j$, then $N = 0$ and the induction is over.

\textit{Case 1.} If for some $j$ we have $\lambda(j,p) \leq 0$ and $\lambda(j-1, p) \geq 0$ and they are not both zero, then $\pud_{f, \alpha}(j,p) + \lambda(j,p) - \lambda(j-1, p) \geq 0$ gives
 $\pud_{f, \alpha}(j,p) \geq - \lambda(j,p) + \lambda(j-1, p) > 0$.
The hypothesis of this Case says that the $j^\text{th}$ puddle is not supposed to gain any monomials $x^p$ on either side, and is supposed to lose at least one monomial $x^p$ on at least one side; and we just showed that there is at least one monomial $x^p$ in this $j$th puddle. Lemma~\ref{lem:get-longswap} now gives the desired longswap $u$.

\textit{Case 2 [resp. 3].} If $\lambda(j,p) > 0$ for all $j$ [resp., $\lambda(j,p) < 0$ for all $j$ ], fix $j$ such that the $j^\text{th}$ puddle contains at least one monomial $x^p$; this is possible by the assumption of Step 2. Use Lemma~\ref{lem:get-longswap} to get a longswap $u$ that takes this monomial out of this puddle on the right, decreasing the positive $\lambda(j+1, p)$ [resp.,on the left, inreasing the negative $\lambda(j, p)$].

That covers all possible cases, by periodicity: $\lambda$ cannot force us to only remove monomials $x^p$ from every puddle.
\end{proof}

\subsubsection{Building the trace function} \label{sec:coho}

We now return to analyzing a clean $\mathsf{C}$-free long decomposition $f$ that fits into the Mahler problem.
Our goal for this section, accomplished in Theorem~\ref{thm:cohothm} at the end, is to find some $v \in \Affk$ for which $v \star f$ is defined and has $e$-periodic nomodata. With Theorem~\ref{thm:shinythm}, it suffices to find the trace $\lambda$ of such a word, accomplished in Proposition~\ref{prop:cohoprop} right before the main theorem.

We first need to connect the combinatorics from Section~\ref{sec:scaffolding} to the echo $e$.
Suppose that $w \star f = f^\tau$ for some $w \in \STk$, that $\alpha$ is a non-monomial listing function for $f$, and that $\lambda$ is the trace of $w$ on $(f,\alpha)$. By Lemma~\ref{lem:starnomodata}, $\beta(j) := \alpha(j) + \sum_p \lambda(j,p)$ is a nomodata for $f^\tau$. Since $\alpha$ is also a
nonmonomial listing function  for $f^\tau$, we must have $\beta(j) = \alpha(j+r)$ for some integer $r$. The next lemma identifies this $r$ explicitly.  A
 proof of Lemma~\ref{lem:starnomodata} could follow
 this template.

\begin{lem}
\label{lem:na-nb-ih}
Let $f$ be a clean
$\mathsf{C}$-free long decomposition, and let $w \in \STpk$ be such that $w \star f = g$ encodes a curve parametrized by $\{ (A(t), B(t)) \}$ for some polynomials $A$ and $B$.
Let $n_A$ and $n_B$ be the number of non-monomial factors in $A$ and $B$, respectively.
Let $\alpha$ and $\beta$ be increasing listings of the indices on non-monomial factors of $f$ and $g$, respectively. Suppose that $\alpha(0)$ and $\beta(0)$ are the least positive indices of non-monomials in the $f$ and $g$, respectively.
If the factor $f^\tau_b$ corresponds to $f_{\alpha(j)}$ via $w \star f  = g$,
 then $b = \beta(j+ (n_B - n_A) )$.
 \end{lem}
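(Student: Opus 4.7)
The plan is to proceed by induction on the length of a reduced word $w$ over the generators $\{t_i, \phi^{\pm 1}, \epsilon_p^{\pm 1}\}$ of $\STpk$. The encoded correspondence $\mathcal{A} = \mathcal{B}_n \circ \cdots \circ \mathcal{B}_1$ from Remark~\ref{rk:curve-encoded} decomposes along these generators, so the parametrization $(A(t), B(t))$ of the whole curve is naturally built up one letter at a time. This reduces the argument to checking the case that $w$ is a single generator and then chaining the result via composition of correspondences.

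In the base case I would check each generator type using Remark~\ref{rk:curve-encoded}. For $w = t_i$ the curve $\mathcal{B}$ is linear, so up to linear equivalence $A$ and $B$ have the same compositional structure and $n_A = n_B$; in a third-kind Ritt swap the non-monomial's index shifts by $\pm 1$, but this is exactly the shift from $\alpha$ to $\beta$ prescribed by $\trace_{t_i,f,\alpha}$ via Lemma~\ref{lem:starnomodata}. For $w = \phi$, Remark~\ref{rk:curve-encoded} gives $B = f_1 \circ A$, so $n_B - n_A = 1$ if $f_1$ is non-monomial and $0$ otherwise; combined with $g_i = f_{i+1}$ one checks the identity $b = \beta(j + (n_B - n_A))$ under the normalization that $\beta(0)$ is the least positive index of a non-monomial in $g$. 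The cases $w = \phi^{-1}$ and $w = \epsilon_p^{\pm 1}$ are analogous, with $\epsilon_p^{\pm 1}$ contributing $0$ to $n_B - n_A$ and $0$ to the index shift since they only introduce a monomial factor.

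For the induction step, write $w = v w_0$ with $w_0$ a generator. Let $h := w_0 \star f$ with encoded parametrization $(A_0, B_0)$ and non-monomial listing $\gamma$, normalized so that $\gamma(0)$ is the least positive index of a non-monomial in $h$. Composition of encoded correspondences ensures that when $v \star h = g$ is re-expressed, its $A$-side is linearly equivalent to $B_0$, so the non-monomial count on that side equals $n_{B_0}$. The base case applied to $w_0 \star f = h$ then gives that $f_{\alpha(j)}$ corresponds to $h_{\gamma(j+(n_{B_0}-n_A))}$, and the inductive hypothesis applied to $v \star h = g$ gives that $h_{\gamma(j')}$ corresponds to $g_{\beta(j'+(n_B-n_{B_0}))}$. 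Chaining these two statements with $j' = j + (n_{B_0}-n_A)$ telescopes the shifts to $b = \beta(j + (n_B - n_A))$, as required.

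The main obstacle I anticipate is the Ritt-swap case with a third-kind identity, where the non-monomial's position and its in- or out-degree both change yet $n_B - n_A$ is unaffected since only a linear coordinate change has been inserted into the parametrization. Verifying this requires careful alignment of the least-positive-index normalizations for $\alpha$ and $\beta$ together with the explicit formula of Lemma~\ref{lem:starnomodata} to confirm that the trace of $t_i$ shifts $\beta$ relative to $\alpha$ in exactly the right way to keep the correspondence between non-monomial factors of $f$ and $g$ aligned.
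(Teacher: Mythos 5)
Your overall strategy---induction on the length of a word representing $w$, a case check over the generators, and additivity of the index shift under composition---is exactly the paper's proof. However, there is a concrete gap in your base case: you never treat the generator $t_k$, and your blanket claim that ``for $w = t_i$ the curve $\mathcal{B}$ is linear, so $n_A = n_B$'' is false precisely for $i \equiv 0 \pmod{k}$. Remark~\ref{rk:curve-encoded} describes the encoded curve only for generators \emph{other than} $t_k$; a Ritt swap at $k$ moves a factor across the period boundary, and the curve it encodes is parametrized by monomials or by $f_1$ itself, not by linear maps. In particular, when $f_1$ (equivalently $f_{k+1}$) is not a monomial, $t_k$ produces $A$ a monomial and $B$ a non-monomial, so $n_B - n_A = 1$, and simultaneously the normalized listing $\beta$ shifts by one relative to $\alpha$ because $g_1$ becomes a monomial while $\alpha(0)=1$ was a non-monomial index. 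This is not a peripheral case: together with the $\phi^{\pm 1}$ cases it is the \emph{only} mechanism by which $n_B - n_A$ becomes nonzero for $w \in \Affk$, so omitting it collapses the content of the lemma for all words involving $t_k$. Your treatment of the remaining generators ($t_i$ with $i \not\equiv 0$, $\phi^{\pm 1}$ split according to whether the boundary factor is a monomial, and $\epsilon_p^{\pm 1}$) matches the paper, and your telescoping of the shifts in the induction step is a correct, if more verbose, version of the paper's ``the conclusion is obviously additive.''

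A smaller point: in the third-kind Ritt swap at $i \not\equiv 0 \pmod{k}$, what changes is the in/out-degree of the non-monomial and its \emph{position} (it trades places with an adjacent monomial); the correct bookkeeping is that $\beta = \pi\circ\alpha$ for the permutation $\pi$ associated to $t_i$, with $n_B - n_A = 0$ since $A$ and $B$ are both the identity. Invoking $\trace_{t_i,f,\alpha}$ here is consistent with that but is not needed for the statement being proved, which tracks only the listing functions and not the degree data.
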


\begin{proof}
We argue by induction on the length of a word
representing $w$. As is our wont, we will identify
$w$ with this word.

 The base case where $w$ is empty is trivial. The induction step where $w$ is a single generator is verified below. The conclusion is obviously additive, which concludes the proof.
We organize the long case-out in order
of increasing complexity of the argument.

Let us start with the cases where $\beta = \alpha$
works. If $w = \epsilon_p$ or $w = \epsilon_p^{-1}$, one of $A$ and $B$ is identity and the other is a monomial, so $n_B - n_A =0$.
If $w = t_k$ and both $f_k$ and $f_{k+1}$ are monomials, both $A$ and $B$ are monomials, so $n_B - n_A = 0$.
In both cases, $\beta := \alpha$ works for $g$, and $g_{\alpha(j)}$ comes from $f_{\alpha(j)}$ via $w \star f = g$ for every $j$.

We next consider the cases where the non-monomials
do move, but do not cross between $k$ and $k+1$.
If $w = t_i$ for some $i \neq k$, both $A$ and $B$ are identity, so $n_B - n_A =0$. Now $\beta := \pi \circ \alpha$ is the desired non-monomial listing function for $g$, where $\pi$ is the permutation of $\mathbb{Z}$ associated with $t_i$, that is, swapping $i+kn$ with $i+1 +kn$ for all $n \in \mathbb{Z}$.
If $w = \phi$ and $f_1$ is a monomial, then $A$ is identity and $B$ a monomial, so $n_B - n_A =0$.
Now $\beta(j) := \alpha(j)-1$ is a non-monomial listing function for $g$, and $\alpha(0)-1 > 0$ since $\alpha(0) \neq 1$ since $f_1$ is a monomial. The argument for $\phi^{-1}$ and monomial
$f_k$ is symmetric.

Finally, we consider the cases when a non-monomial
crosses between $k$ and $k+1$. If $w = \phi$ and $f_1$ is not a monomial, then $A$ is identity and $B$ not a monomial, so $n_B - n_A = 1$.
Now $\alpha(0) = 1$, so while $\alpha(j)-1$ is a non-monomial listing function for $g$, it does not satisfy our extra requirement; but $\beta(j) := \alpha(j+1)-1$ does.  The argument for $\phi^{-1}$ is symmetric.
If $w = t_k$ and $f_1$ is not a monomial, then $A$ is a monomial and $B$ not a monomial, so $n_B - n_A = 1$.
Now $\alpha(0) = 1$, and $g_1$ (which comes from $f_0$) is a monomial, so $\beta(0)$ must be greater than $1$. In particular, $g_{\beta(0)}$ does not correspond to $f_{\alpha(0)}$, but to the next non-monomial to the left of it, namely $f_\alpha(1)$, as wanted. The case where $f_k$ rather than $f_1$ is not a monomial is symmetric.

\end{proof}

The special case of Lemma~\ref{lem:na-nb-ih}
where $g = f^\tau$ is a key step in
solving the Mahler problem.

\begin{cor} \label{cor:na-nb}
Let $f$ be a clean $\mathsf{C}$-free long decomposition, and let $w \in \STpk$ be such that $w \star f = f^\tau$ encodes a curve parametrized by $(A(t), B(t))$ for some polynomials $A$ and $B$.
Let $n_A$ and $n_B$ be the number of non-monomial factors in $A$ and $B$, respectively.
Let $\alpha$ be non-monomial listing function for $f$.

If the factor $f^\tau_b$ corresponds to $f_{\alpha(j)}$ via $w \star f  = f^\tau$,
 then $b = \alpha(j+ (n_B - n_A) )$.
\end{cor}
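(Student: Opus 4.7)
The plan is that Corollary~\ref{cor:na-nb} is essentially a direct specialization of Lemma~\ref{lem:na-nb-ih} with $g = f^\tau$, so the proof reduces to (a) matching up the nomodata of $f$ and $f^\tau$, and (b) checking that the conclusion does not depend on the normalization choice for $\alpha$.

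First I would observe that because $\tau$ is a field automorphism, it acts coefficient-wise and preserves every $f_i$'s degree and its decomposition as $x^k u(x^\ell)^n$. Consequently $f^\tau_i$ is a monomial exactly when $f_i$ is, and has the same in-degree and out-degree. Therefore any non-monomial listing function $\alpha$ for $f$ is also a non-monomial listing function for $f^\tau$; the nomodata of $f^\tau$ computed with respect to $\alpha$ coincides with that of $f$.

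Next, observe that by Remark~\ref{rk:nomodata-shift-unique}, any two non-monomial listing functions for $f$ differ by an integer shift, and shifting $\alpha$ by $m$ simply relabels the $j$-indices by $j \mapsto j - m$ on both sides of the claimed equality $b = \alpha(j + (n_B - n_A))$. Hence the statement of the corollary is invariant under the choice of $\alpha$, and it suffices to prove it for one particular $\alpha$. Choose $\alpha$ so that $\alpha(0)$ is the least positive index of a non-monomial factor of $f$, and set $\beta := \alpha$; by step one, $\beta$ satisfies the hypotheses of Lemma~\ref{lem:na-nb-ih} applied to $g := f^\tau$.

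Applying Lemma~\ref{lem:na-nb-ih} directly yields $b = \beta(j + (n_B - n_A)) = \alpha(j + (n_B - n_A))$, which is exactly the desired conclusion. There is no substantive obstacle here; the only content beyond citing the lemma is the bookkeeping observation that $f$ and $f^\tau$ carry identical nomodata because $\tau$ preserves degrees, which lets us use a single non-monomial listing function on both sides.
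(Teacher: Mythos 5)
Your proof is correct and follows the paper's approach: the paper gives no separate argument for Corollary~\ref{cor:na-nb}, treating it purely as the $g = f^\tau$ specialization of Lemma~\ref{lem:na-nb-ih}, and your two bookkeeping observations (that $\tau$ preserves the nomodata so $\beta$ may be taken equal to $\alpha$, and that the equality $b = \alpha(j + (n_B - n_A))$ is invariant under the shift freedom in choosing $\alpha$) are exactly what is needed to pass from the lemma's normalized listing functions to the corollary's arbitrary one.
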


\begin{prop} \label{prop:khoa-mah-1}
Suppose that $(\alpha, \ins, \outs, \pud)$ is a nomodata for $f$, that
 $w \in \Affk$ has trace $\mu = \trace_{w,f,\alpha}$,
 and that
 $\phi^N w \star f = f^\tau$ encodes a curve parametrized by $(A(t), B(t))$ for some polynomials $A$ and $B$.
Let $n_A$ and $n_B$ be the number of non-monomial factors in $A$ and $B$, respectively.
Set $r := n_A - n_B$.

Then for all $j$,
$$\ins(j) - \mu(j) = \ins(j+r)$$
$$\outs(j) + \mu(j) = \outs(j+r)$$
$$\pud(j) + \mu(j) - \mu(j-1) = \pud(j+r) \text{ .}$$
\end{prop}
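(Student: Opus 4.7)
The plan is to compute the nomodata of $f^\tau$ in two different ways and compare them at each index. On the one hand, $f^\tau$ admits $(\alpha,\ins,\outs,\pud)$ as a nomodata (the same as $f$), since applying $\tau$ to coefficients preserves both which factors are monomials and all of the relevant prime-exponent data. On the other hand, $f^\tau$ equals $\phi^N\star g$ where $g:=w\star f$, so its nomodata can be read off from the action.

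More precisely, I would first apply Lemma~\ref{lem:starnomodata} to $w\star f=g$: taking $\beta(j):=\alpha(j)+\sum_p\mu(j,p)$ as a non-monomial listing for $g$, the nomodata of $g$ is
\[
\ins_{g,\beta}(j)=\ins(j)-\mu(j),\qquad \outs_{g,\beta}(j)=\outs(j)+\mu(j),\qquad \pud_{g,\beta}(j)=\pud(j)+\mu(j)-\mu(j-1).
\]
Then I would invoke Remark~\ref{rk:phi-nomo}: since $f^\tau=\phi^N\star g$, the listing $\beta-N$ is a non-monomial listing for $f^\tau$, and the three degree functions relative to $\beta-N$ are identical to those displayed above for $g$.

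By Remark~\ref{rk:nomodata-shift-unique}, any two non-monomial listings for $f^\tau$ differ by a shift in the index, and the corresponding degree functions shift accordingly. So there is some $m\in\mathbb{Z}$ with $\beta(j)-N=\alpha(j+m)$ for all $j$, and consequently the nomodata of $f^\tau$ computed via $\beta-N$ equals the $(\cdot+m)$-shift of $(\ins,\outs,\pud)$. Equating this to the formulas above yields
\[
\ins(j)-\mu(j)=\ins(j+m),\quad \outs(j)+\mu(j)=\outs(j+m),\quad \pud(j)+\mu(j)-\mu(j-1)=\pud(j+m).
\]

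The one substantive step is identifying $m$ with $r=n_A-n_B$. This is where Corollary~\ref{cor:na-nb} enters: applied to the word $\phi^N w\in\STpk$, it asserts that the image of the $j^{\text{th}}$ non-monomial $f_{\alpha(j)}$ lands at position $\alpha(j+(n_B-n_A))$ in $f^\tau$. Since $\beta(j)-N$ is by construction the position in $f^\tau$ of the image of $f_{\alpha(j)}$, this forces $\beta(j)-N=\alpha(j+(n_B-n_A))$, fixing $m$ and completing the proof. The main thing to watch out for is the sign bookkeeping: one must keep track that $\phi$ shifts indices in the $(-N)$-direction while $\tau$ acts trivially on the combinatorial listing, so as to land correctly on $j+r$ versus $j-r$ in the final identities.
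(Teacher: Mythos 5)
Your proof follows exactly the paper's argument: compute the nomodata of $w\star(f,\alpha)$ via Lemma~\ref{lem:starnomodata}, shift by $N$ via Remark~\ref{rk:phi-nomo}, observe that the result is another nomodata for $f$ and hence a shift of the original by Remark~\ref{rk:nomodata-shift-unique}, and pin down the shift with Corollary~\ref{cor:na-nb}. This is correct and essentially identical to the paper's proof (including the sign bookkeeping you flag, which the paper handles no more explicitly than you do).
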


\begin{proof}
Lemma~\ref{lem:starnomodata} gives a nomodata for $(g, \beta) := w \star (f, \alpha)$ as
$$\beta(j) := \alpha(j) + \sum_p \mu(j,p) \text{, }$$
$$\ins_{g, \beta} = \ins_{f, \alpha} - \mu \text{, }$$
$$\outs_{g, \beta} = \outs_{f, \alpha} + \mu,$$
$$\pud_{g, \beta}(j) = \pud_{f, \alpha}(j) + \mu(j) - \mu(j-1) \text{ .}$$

 By Remark~\ref{rk:phi-nomo}, a nomodata for $\phi^N w \star f$  is given by $\beta - N$ with the same $\ins$, $\outs$, and $\pud$.
Since $\phi^N w \star f = f^\tau$, this is also a nomodata for $f^\tau$, and so it is another nomodata for $f$. By Remark~\ref{rk:nomodata-shift-unique}, we get some $s$ with
$$\alpha(j) + \sum_p \mu(j,p) - N = \alpha(j+s)$$
$$\ins(j) - \mu(j) = \ins(j+s)$$
$$\outs(j) + \mu(j) = \outs(j+s)$$
$$\pud(j) + \mu(j) - \mu(j-1) = \pud(j+s)$$

Corollary~\ref{cor:na-nb} shows that $s = r$.
\end{proof}

\begin{Rk}
\label{rk:sum-periodic}
With the setup in Proposition~\ref{prop:khoa-mah-1}, it follows that the sum $\ins(j) + \outs(j)$ is $r$-periodic:
$$\ins(j+r) + \outs(j+r) = \ins(j) - \mu(j) + \outs(j) + \mu(j) = \ins(j) + \outs(j) \text{ .}$$

For those primes $p$ whose monomials $x^p$ do not appear in $f$, we must have $\mu(j, p) = 0$ for all $j$, so $\ins(j, p)$ and $\outs(j, p)$ are also $r$-periodic.

Suppose that $f$ has $n_f$ nonmonomials per $k$-period.
Anything that is both $n_f$-periodic and $r$-periodic
is also $e$-periodic, where $e$ is the echo
$\gcd(n_f,r)$.
\end{Rk}

\begin{lem} \label{lem:lambda-quest}
Fix some $f$ with $m$ non-monomials per $k$-period.
Suppose that for some integer $r$, a nomodata $(\alpha, \ins, \outs, \pud)$ for $f$ satisfies
$\ins(j+r) + \outs(j+r) = \ins(j) + \outs(j)$ for all $j$.  Suppose that the trace of $u \star (f, \alpha) =: (g, \beta)$ is $\lambda := \trace_{u, f, \alpha}$ for some $u \in \Affk$.
 Then the following are equivalent:\begin{enumerate}
\item $\ins_{g, \beta}(j+r,p) = \ins_{g, \beta}(j,p)$ for all $j$ and prime $p$, that is, $\ins_{g, \beta}$ is $r$-periodic in $j$;
\item $\outs_{g, \beta}(j+r,p) = \outs_{g, \beta}(j,p)$ for all $j$ and prime $p$, that is, $\outs_{g, \beta}$ is $r$-periodic in $j$;
\item for each prime $p$ there are constants $C_i(p)$ for $i = 1, 2, \ldots d := \gcd(m,r)$ such that
 $\lambda(j,p) = \ins(j,p) + C_i(p)$ for all $j \equiv i \pmod{d}$.
\end{enumerate} \end{lem}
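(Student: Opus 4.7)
The plan is to reduce all three conditions to a single periodicity statement about the auxiliary function $h(j,p) := \lambda(j,p) - \ins(j,p)$. The key inputs are Lemma~\ref{lem:starnomodata} (which controls how nomodata transforms under $\star$) together with two basic periodicity observations: first, that $\ins$ and $\outs$ are themselves $m$-periodic in $j$ (because $\alpha(j+m) = \alpha(j)+k$ and $f_{\alpha(j)+k} = f_{\alpha(j)}^\sigma$ has the same in- and out-degrees as $f_{\alpha(j)}$), and second, that $\lambda$ is $m$-periodic in $j$ (since the trace of each generator $t_i$ is $m$-periodic by Definition~\ref{def:trace-nomodata}, and traces add).

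I would open by writing out the formulas from Lemma~\ref{lem:starnomodata}:
\[
\ins_{g,\beta}(j,p) = \ins(j,p) - \lambda(j,p), \qquad \outs_{g,\beta}(j,p) = \outs(j,p) + \lambda(j,p).
\]
From this, condition (1) is equivalent to $\lambda(j+r,p) - \lambda(j,p) = \ins(j+r,p) - \ins(j,p)$, and condition (2) is equivalent to $\lambda(j+r,p) - \lambda(j,p) = -\bigl(\outs(j+r,p) - \outs(j,p)\bigr)$. The hypothesis $\ins(j+r,p) + \outs(j+r,p) = \ins(j,p) + \outs(j,p)$ says exactly that these two right-hand sides agree, so $(1) \Leftrightarrow (2)$ is immediate.

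For the equivalence with (3), set $h(j,p) := \lambda(j,p) - \ins(j,p)$. By the rewriting above, condition (1) says precisely that $h$ is $r$-periodic in $j$. Since both $\lambda$ and $\ins$ are $m$-periodic in $j$, so is $h$. A function on $\mathbb{Z}$ that is simultaneously $m$-periodic and $r$-periodic is $\gcd(m,r)$-periodic, i.e.\ $d$-periodic; this is the only nontrivial arithmetic input, and it is essentially B\'ezout. So assuming (1), for each prime $p$ the value $h(j,p)$ depends only on $j \bmod d$, which is precisely the statement $\lambda(j,p) = \ins(j,p) + C_i(p)$ for $j \equiv i \pmod{d}$ of condition (3). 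Conversely, if (3) holds, then $h$ is $d$-periodic, hence $r$-periodic, recovering (1).

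I expect no real obstacle: the lemma is a clean algebraic consequence of Lemma~\ref{lem:starnomodata} together with the built-in $m$-periodicity of $\ins$, $\outs$, and $\lambda$. The only point worth stating carefully is the $m$-periodicity of $\lambda$, which I would justify by a one-sentence reduction to Definition~\ref{def:trace-nomodata} and the $k$-skew-periodicity of $f$.
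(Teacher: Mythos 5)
Your proposal is correct and follows essentially the same route as the paper: both equivalences rest on the formulas $\ins_{g,\beta}=\ins-\lambda$, $\outs_{g,\beta}=\outs+\lambda$ from Lemma~\ref{lem:starnomodata}, with $(1)\Leftrightarrow(2)$ coming from the $r$-periodicity of the sum $\ins+\outs$ and $(1)\Leftrightarrow(3)$ from the fact that the $m$-periodic function $\lambda-\ins$ is $r$-periodic iff it is $\gcd(m,r)$-periodic. The only cosmetic difference is that the paper phrases the first equivalence via ``Ritt swaps preserve $\ins+\outs$'' rather than subtracting the two displayed identities, which amounts to the same computation.
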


\begin{proof}
Ritt swaps do not change the sum of in- and out-degree, so these sums are still $r$-periodic. That is,
$$\ins_{g, \beta}(j+r) + \outs_{g, \beta}(j+r) = \ins_{g, \beta}(j) + \outs_{g, \beta}(j)$$
for all $j$. It follows that (1) and (2) are equivalent.

By Lemma~\ref{lem:starnomodata}, $\ins_{g, \beta} = \ins - \lambda$, so (1) is equivalent to
$\ins - \lambda$ being $r$-periodic. Since it is already $m$-periodic, it is $r$-periodic if and only if it is $d$-periodic. The constants $C_i(p)$ in (3) are the values of $\ins(j,p) - \lambda(j,p)$ for $j \in i + d \ZZ$.
\end{proof}

\begin{lem}\label{lem:cheap-periodic}
Let $f$ be a clean $\mathsf{C}$-free long decomposition with nomodata $(\alpha, \ins, \outs, \pud)$.
Let $e$ be the echo of $w \star f = f^\tau$ for some $w \in \STk$.	
Let $p$ be a prime for which the monomial $x^p$ does not appear in $f$.
Then the $p$-part of the nomodata for $f$ is $e$-periodic.
That is, $\ins(j,p)$, $\outs(j,p)$, and $\pud(j,p)$ are $e$-periodic in $j$.
\end{lem}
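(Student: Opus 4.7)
The plan is to use Proposition~\ref{prop:khoa-mah-1} together with the third admissibility condition from Definition~\ref{def:admissi}.

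First I would unpack the hypothesis $w \in \STk$. By Remark~\ref{rk:semidirect}, $\STk$ is the semidirect product of $\langle \phi \rangle$ with $\Affk$, so we may write $w = \phi^N w'$ for a unique $N \in \ZZ$ and $w' \in \Affk$. Let $\mu := \trace_{w',f,\alpha}$ be the associated trace, and let $(A,B)$ be polynomials parametrizing the encoded curve, with $r := n_A - n_B$. By the definition of echo, $e = \gcd(n_f, n_B - n_A) = \gcd(n_f, r)$.

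The key input is Proposition~\ref{prop:khoa-mah-1} applied to $\phi^N w' \star f = f^\tau$. It yields the identities
\[ \ins(j,p) - \mu(j,p) = \ins(j+r,p), \quad \outs(j,p) + \mu(j,p) = \outs(j+r,p), \]
\[ \pud(j,p) + \mu(j,p) - \mu(j-1,p) = \pud(j+r,p) \]
for every $j$ and every prime $p$. Now I invoke the hypothesis that the monomial $x^p$ does not appear as any factor of $f$: this says exactly that $\pud(j,p) = 0$ for all $j$, and therefore the third admissibility condition (which is necessarily satisfied by the trace of any admissible word, see Definition~\ref{def:admissi}) forces $\mu(j,p) = 0$ for all $j$. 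Substituting this back, we obtain $\ins(j,p) = \ins(j+r,p)$ and $\outs(j,p) = \outs(j+r,p)$ for all $j$, so both functions are $r$-periodic in $j$.

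Finally, since the nomodata of $f$ is $n_f$-periodic in $j$ (Remark~\ref{rk:nomodata-shift-unique}), the functions $\ins(\cdot,p)$ and $\outs(\cdot,p)$ are periodic with periods both $r$ and $n_f$, hence with period $\gcd(n_f, r) = e$. The puddle count $\pud(\cdot, p)$ is identically zero and so $e$-periodic trivially. Since there is essentially no obstacle — the third admissibility condition is exactly the cohomological obstruction that vanishes for the missing prime $p$ — I expect the proof to be just a few lines, the only subtlety being the careful bookkeeping of the decomposition $w = \phi^N w'$ so that Proposition~\ref{prop:khoa-mah-1} applies verbatim.
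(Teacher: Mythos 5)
Your proof is correct and follows essentially the same route as the paper: both arguments rest on the single observation that when $x^p$ is absent from $f$ the $p$-part of the in- and out-degrees cannot change under the action (you phrase this as the necessity of the third admissibility condition forcing $\mu(\cdot,p)\equiv 0$; the paper states it directly), yielding $(n_B-n_A)$-periodicity, which combines with $n_f$-periodicity to give $e$-periodicity. The only cosmetic difference is that you route the periodicity through Proposition~\ref{prop:khoa-mah-1}, whereas the paper applies Corollary~\ref{cor:na-nb} directly; since Proposition~\ref{prop:khoa-mah-1} is itself a consequence of that corollary, the two arguments coincide in substance.
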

\begin{proof}
By Corollary~\ref{cor:na-nb}, a non-monomial $f_{\alpha(j)}$ corresponds to $f^\tau_{\alpha(j+(n_B -n_A))}$ via $w \star f = f^\tau$. Since $x^p$ does not occur in $f$, the $p$-part of the in-degree of this non-monomial does not change as $w \star f$ is carried out. So
$\ins(j,p) = \indeg(f_{\alpha(j)},p) = \indeg(f^\tau_{\alpha(j+(n_B -n_A))},p) =
\indeg(f_{\alpha(j+(n_B -n_A))},p) = \ins(j+(n_B -n_A), p)$.
That is, the function $\ins(j,p)$ is $(n_B -n_A)$-periodic in $j$. Of course, it is also $n_f$-periodic in $j$, so it is $e$-periodic.
The exact same argument works for the out-degree. Since $x^p$ does not appear in $f$, we get $\pud(j,p) = 0$ for all $j$, and constant functions are as periodic as it gets.\end{proof}

\begin{lem}
\label{lem:coho-step-1}
Let $f$ be a clean $\mathsf{C}$-free long decomposition with $m$ non-monomials per $k$-period and with nomodata $(\alpha, \ins, \outs, \pud)$. Let $e$ be the echo of $w \star f = f^\tau$ for some $w \in \STk$.	
Suppose that we have numbers $C_i(p)$ for $i \in \ZZ / e \ZZ$ and $p$ prime satisfying
$$-(\outs(j,p) + \ins(j,p)) \leq  C_i(p) \leq 0$$
and
$$C_i(p) + \ins(j+zr,p) \geq  C_{i-1}(p) + \ins(j-1+zr,p)$$
for some $r$ and for all primes $p$, all $i$, and all $j \equiv i \pmod{e}$ and
some $z$ which may depend on $j$.
Suppose further that for primes $p$ for which the
monomials $x^p$ do not appear in $f$, we have
$C_i(p) = -\ins(j,p)$ for all $i$ and all $j \equiv i \pmod{e}$.

Then $\lambda$ defined by $\lambda(j,p) = \ins(j,p) + C_{i}(p)$ (for $j \equiv i \pmod{e}$)
satisfies the admissibility conditions for $(f,\alpha)$ and all of the following functions are $e$-periodic:

$$\alpha(j) + \sum_p \lambda(j,p),\,\,\, \ins - \lambda,\,\,\, \outs +\lambda,\,\,\, \pud(j) + \lambda(j) - \lambda(j-1) \text{ .}$$

\end{lem}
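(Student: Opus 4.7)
The proof is a direct, condition-by-condition verification of the admissibility conditions from Definition~\ref{def:admissi} and the four $e$-periodicity claims. Admissibility condition~(1), $-\outs(j,p) \leq \lambda(j,p) \leq \ins(j,p)$, is immediate after substituting $\lambda(j,p) = \ins(j,p) + C_i(p)$, reducing to the first hypothesis $-(\outs(j,p)+\ins(j,p)) \leq C_i(p) \leq 0$; self-consistency across $j \equiv i \pmod e$ uses the $e$-periodicity of $\outs + \ins$ from Remark~\ref{rk:sum-periodic}. Admissibility condition~(3) follows from the supplementary hypothesis: when $x^p$ is absent from $f$, Lemma~\ref{lem:cheap-periodic} forces $\ins(j,p)$ to be $e$-periodic, so $C_i(p) = -\ins(j,p)$ is well-defined on $\ZZ/e\ZZ$ and yields $\lambda(j,p) = 0$ for all $j$, matching $\pud(j,p) \equiv 0$.

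The heart of the proof is condition~(2), $\pud(j,p) + \lambda(j,p) - \lambda(j-1,p) \geq 0$. Substituting, and using $j \equiv i$ and $j-1 \equiv i-1 \pmod e$, this rewrites as
\[
C_i(p) - C_{i-1}(p) \;\geq\; \ins(j-1,p) - \ins(j,p) - \pud(j,p).
\]
The hypothesis $C_i(p) + \ins(j+zr,p) \geq C_{i-1}(p) + \ins(j-1+zr,p)$ supplies exactly this bound at the shifted index $j+zr \equiv j \pmod e$. I will transfer it to $j$ itself by exploiting the freedom to choose $z$ depending on $j$, together with the $m$-periodicity of $\ins$ and $\pud$, which reduces the claim to a finite verification within one period where the $\pud$-term absorbs the discrepancy between $\ins$-values at $j$ and at $j+zr$.

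For the $e$-periodicity claims, $\ins - \lambda = -C_i$ depends only on $i \pmod e$, and $\outs + \lambda = (\outs + \ins) + C_i$ is $e$-periodic since $\outs + \ins$ is $e$-periodic by Remark~\ref{rk:sum-periodic}. For the remaining two, the cleanest route is to apply Theorem~\ref{thm:shinythm} once admissibility is established, producing a word $v \in \Affk$ with $v \star (f,\alpha) = (g,\beta)$, where $g = v \star f$ and $\beta(j) = \alpha(j) + \sum_p \lambda(j,p)$; Lemma~\ref{lem:starnomodata} then identifies $\pud(j) + \lambda(j) - \lambda(j-1)$ with $\pud_{g,\beta}(j)$, whose $e$-periodicity together with that of $\beta$ follows from the already-verified $e$-periodicities of $\ins_{g,\beta}$ and $\outs_{g,\beta}$ via the combinatorial constraint $\beta(j) - \beta(j-1) = 1 + \sum_p \pud_{g,\beta}(j,p)$.

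The main obstacle is admissibility condition~(2): the hypothesis provides the pivotal inequality only at a shifted index, and we must extract a bound at every $j$. The resolution requires careful bookkeeping of the $\ins$- and $\pud$-values across a single period, leaning on the non-negativity of $\pud$ and the freedom to select the shift $z = z(j)$.
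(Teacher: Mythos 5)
Your verification of admissibility conditions (1) and (3) matches the paper, and your direct observations that $\ins-\lambda=-C_i$ and $\outs+\lambda=(\outs+\ins)+C_i$ are $e$-periodic are fine (the second via Remark~\ref{rk:sum-periodic}). But the two hard steps — admissibility condition (2) and the $e$-periodicity of $\pud(j)+\lambda(j)-\lambda(j-1)$ — are exactly where your sketch has a genuine gap, and in both cases the missing ingredient is the same: Proposition~\ref{prop:khoa-mah-1}, i.e.\ the consequence of the Mahler equation $\phi^N w\star f=f^\tau$ itself. Eliminating the trace $\mu$ from the conclusions of that proposition yields the identity
$$[\ins(j,p)-\ins(j+r,p)]-[\ins(j-1,p)-\ins(j-1+r,p)]=\pud(j+r,p)-\pud(j,p),$$
and telescoping it $z$ times shows that the ``discrepancy'' $[\ins(j-1+zr,p)-\ins(j+zr,p)]-[\ins(j-1,p)-\ins(j,p)]$ equals exactly $\pud(j+zr,p)-\pud(j,p)$. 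Combined with $\pud(j+zr,p)\geq 0$ and the hypothesized inequality at the shifted index, this gives condition (2). Your proposed substitute — ``$m$-periodicity of $\ins$ and $\pud$'' plus ``the freedom to select the shift $z=z(j)$'' — does not work: the hypothesis hands you \emph{some} $z$ for which the inequality holds (you do not get to choose it, e.g.\ so that $m\mid zr$), and when $m\nmid zr$ the $m$-periodicity of $\ins$ tells you nothing about $\ins(j+zr,p)-\ins(j,p)$. Without the identity above there is no reason the puddle term should absorb the discrepancy; that absorption \emph{is} the identity.

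The same issue recurs in your final step. Knowing that $\ins_{g,\beta}$ and $\outs_{g,\beta}$ are $e$-periodic does not determine $\pud_{g,\beta}$, and the constraint $\beta(j)-\beta(j-1)=1+\sum_p\pud_{g,\beta}(j,p)$ only controls the sum over $p$ and runs in the wrong direction (it deduces periodicity of $\beta$ \emph{from} that of $\pud_{g,\beta}$, not conversely). The paper instead notes that $\pud_{g,\beta}(j,p)=\pud(j,p)+\ins(j,p)-\ins(j-1,p)+C_i(p)-C_{i-1}(p)$, and the displayed identity shows $\pud(j,p)+\ins(j,p)-\ins(j-1,p)$ is $r$-periodic, hence (being also $m$-periodic) $e$-periodic; only then does one sum over $p$ to get $e$-periodicity of $\beta$. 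So the skeleton of your argument is right, but both load-bearing steps require importing Proposition~\ref{prop:khoa-mah-1}, which your proposal never invokes.
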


\begin{proof}
The only interaction between different primes is the conclusion that $\alpha(j) + \sum_p \lambda(j,p)$ is $e$-periodic.  We return to this conclusion after
proving the rest of the lemma separately for each prime.

For primes $p$ for which the monomials $x^p$ do not appear in $f$, we get $\lambda(j,p) = 0$ for all $j$, immediately satisfying the admissibility conditions. The last three functions in the conclusion of this lemma are just the $p$-parts of the nomodata of $f$, shown to be $e$-periodic in Lemma~\ref{lem:cheap-periodic}. The third admissibility condition only refers to these primes, so it is now verified for all primes.

Proposition~\ref{prop:khoa-mah-1} is our main tool for verifying the second admissibility condition.
Let $u \in \Affk$ be such that $w = \phi^N u$ for some integer $N$ and let $\mu$ be the trace of $u$ on $(f, \alpha)$.
Solving one of the conclusions of Proposition~\ref{prop:khoa-mah-1} for $\mu$ in terms of
the function $\ins(j,p)$ and substituting into the last conclusion of Proposition~\ref{prop:khoa-mah-1} gives
\begin{equation}
\label{eq:Z} 	 [\ins(j, p) - \ins(j+r, p)] - [\ins(j-1, p) - \ins(j-1+r, p)] = \pud(j+r, p) - \pud(j, p)  \text{ .}
\end{equation}

Iterating Equation~\ref{eq:Z} $z$-times and adding the telescoping sums, we obtain

\begin{equation}
\label{eq:Z-z} 	 [\ins(j, p) - \ins(j+zr, p)] - [\ins(j-1, p) - \ins(j-1+zr, p)] = \pud(j+zr, p) - \pud(j, p)  \text{ .}
\end{equation}

Rearranging gives
\begin{equation}
\label{eq:A}
 \pud(j, p) + \ins(j, p) - \ins(j-1, p) + (\ins(j-1+zr, p) - \ins(j+zr, p))
 = \pud(j+zr,p) \geq 0\text{ .}
\end{equation}

When $j \equiv i \pmod{e}$
\begin{equation} \label{eq:hyp-z}
C_i(p) + \ins(j+zr,p) \geq  C_{i-1}(p) + \ins(j-1+zr,p)
\end{equation}
is one of our hypotheses for some $z$.

Equivalently,
\begin{equation}
\label{eq:B}	
C_i(p) - C_{i-1}(p) \geq \ins(j-1+zr,p) - \ins(j+zr,p)
\end{equation}

Combining Equation~\ref{eq:A} which we know for all $z$
and Equation~\ref{eq:B} which we know for some $z$ gives
\begin{equation} \label{eq:C}  \pud(j, p) + \ins(j, p) - \ins(j-1, p) + (C_i(p) - C_{i-1}(p)) \geq 0\text{ .} \end{equation}

Rearranging Equation~\ref{eq:C} and recalling that $\lambda(j,p) = \ins(j,p) + C_{i}(p)$ for $j \equiv i \pmod{e}$ produces

$$ \pud(j, p) + \lambda(j,p) -\lambda(j-1,p) \geq 0\text{ ,}$$
which is precisely the second admissibility condition for $\lambda$.

The first admissibility condition $-\outs(j,p) \leq \lambda(j,p) \leq \ins(j, p)$
follows immediately from the hypothesis that $-(\outs(j,p) + \ins(j,p)) \leq  C_i(p) \leq 0$.

Since $\lambda$ satisfies the admissibility conditions for $(f, \alpha)$, by Theorem~\ref{thm:shinythm} there exists some $v \in \Affk$ with trace $\lambda$ on $(f, \alpha)$. Let $(g, \beta) := v \star (f, \alpha)$.
The four functions in the conclusion of this lemma are then a nomodata for for $(g, \beta)$.

By Remark~\ref{rk:sum-periodic}, $(\ins + \outs)$ is $e$-periodic,
so Lemma~\ref{lem:lambda-quest} applies, and condition (3) there is satisfied. From the equivalent conditions (1) and (2) in that lemma, we get that $\ins - \lambda = \ins_{g, \beta}$ and $\outs +\lambda = \outs_{g, \beta}$ are $e$-periodic in $j$. With
Equation~\ref{eq:Z}, this implies that $\pud(j) + \lambda(j) - \lambda(j-1) = \pud_{g,\beta}$ is $e$-periodic.

We know that $\pud_{g,\beta}$ is $e$-periodic for all $p$,
so their sum $\sum_{p \in \mathcal{P}} \pud_{g,\beta}$ is $e$-periodic. This sum  is exactly the total number of monomials (of all degrees) in the $j$th puddle! That is, the function $(\beta(j) - \beta(j-1))$ is $e$-periodic; and it follows that $\beta$ itself is, too.
\end{proof}

\begin{lem}
\label{lem:magical-column-adjusting}
Fix positive integers $e$ and $k$ with $k$ being a multiple of $e$.
For every function $\varpi: \ZZ/k\ZZ \rightarrow \mathbb{N}$
there is a non-positive $e$-periodic function $\zeta: \ZZ/k\ZZ \rightarrow \ZZ$
so that their sum $\lambda := \varpi + \zeta$  takes values in $\mathbb{N}$
and satisfies that for every $j \in \ZZ / k\ZZ$ there is some $z \in \ZZ$ such that
\begin{equation}
\label{eq:desired-lambda}	
\lambda(j+ze) \geq  \lambda(j+ze-1)
\end{equation}

\end{lem}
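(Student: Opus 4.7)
The plan is to define $\zeta$ in the most naive possible way: subtract from $\varpi$ the minimum of $\varpi$ on each residue class modulo $e$. More precisely, for each $i \in \mathbb{Z}/e\mathbb{Z}$, set
\[
m_i \ := \ \min\{\varpi(j) : j \in \mathbb{Z}/k\mathbb{Z}, \ j \equiv i \!\pmod{e}\},
\]
which is a well-defined non-negative integer since the cosets of $e\mathbb{Z}$ in $\mathbb{Z}/k\mathbb{Z}$ are finite and non-empty (using $e \mid k$), and $\varpi$ takes values in $\mathbb{N}$. Then define $\zeta(j) := -m_{j \bmod e}$. This function is $e$-periodic by construction and non-positive since each $m_i \geq 0$. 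The sum $\lambda(j) := \varpi(j) - m_{j \bmod e}$ takes values in $\mathbb{N}$ directly from the definition of $m_i$.

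The main thing to check is that for every $j \in \mathbb{Z}/k\mathbb{Z}$ there is some $z \in \mathbb{Z}$ with $\lambda(j + ze) \geq \lambda(j + ze - 1)$. As $z$ ranges over $\mathbb{Z}$, the element $j + ze$ ranges over the entire coset $j + e\mathbb{Z} \subseteq \mathbb{Z}/k\mathbb{Z}$, so writing $i := j \bmod e$ it suffices to produce some $J \equiv i \pmod{e}$ with $\lambda(J) \geq \lambda(J - 1)$. Choose $J$ so that $J - 1 \equiv i - 1 \pmod{e}$ achieves $\varpi(J - 1) = m_{i - 1}$. Then $\lambda(J - 1) = 0$, whereas $\lambda(J) \geq 0$, and the inequality holds.

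The plausible obstacle in finding this argument is the temptation to set up a general system of constants $c_i$ (indexed by $\mathbb{Z}/e\mathbb{Z}$) subject to constraints of the form $c_i - c_{i-1} \geq \min_{j \equiv i}[\varpi(j-1) - \varpi(j)]$ and to solve it via a cycle-feasibility or shortest-path argument. That route works but is awkward; the payoff of the direct choice $c_i = -m_i$ is that verifying the key inequality $m_{i-1} \geq m_i + \min_{j \equiv i}[\varpi(j-1) - \varpi(j)]$ is immediate from the definition of the minimum, and one never has to think about shifting constants or balancing cycle sums.
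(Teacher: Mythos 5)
Your proof is correct, and it takes a genuinely different (and substantially simpler) route than the paper. You define $\zeta$ as minus the coset-wise minimum of $\varpi$ modulo $e$, so that $\lambda$ is non-negative and vanishes somewhere in every residue class mod $e$; the required inequality then holds for free at $J = J'+1$ whenever $J'$ is a minimizer in the class of $j-1$, since $\lambda(J')=0$ while $\lambda(J)\geq 0$. The paper instead runs a recursion of length $e$ starting from a \emph{global} minimizer $s$ of $\varpi$, setting $\zeta(s)=0$ and at each step taking $\zeta(s-x-1)$ to be either $0$ or the least negative value $\max_z\bigl(\varpi(s-x+ze)-\varpi(s-x+ze-1)+\zeta(s-x)\bigr)$ that makes the constraint satisfiable, while propagating the invariant $\lambda(\cdot)\geq\lambda(s)$. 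What that heavier construction buys is only the extremal property noted in the remark after Proposition~\ref{prop:cohoprop} (their $\lambda$ is the pointwise largest admissible solution), which is never used in any subsequent proof. Your $\zeta$ still satisfies everything actually needed when the lemma is fed into Lemma~\ref{lem:coho-step-1}: one has $-\ins(j,p)\leq \zeta(i)\leq 0$ for all $j\equiv i\pmod e$ because $m_i$ is a minimum over the coset, and $\zeta(i)=-\ins(j,p)$ automatically for those primes where $\ins(\cdot,p)$ is already $e$-periodic. So your argument is a clean, correct substitute for the paper's recursion.
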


\begin{proof}

The desired Inequality~\ref{eq:desired-lambda} unwraps to
\begin{equation}
\label{eq:bob}	
\varpi(j +ze) + \zeta(j +ze) \geq \varpi(j +ze-1) + \zeta(j +ze-1).
\end{equation}

Since $\zeta$ will be $e$-periodic, Inequality~\ref{eq:bob} is equivalent to

\begin{equation} \label{eq:AB}
\varpi(j +ze) + \zeta(j) \geq \varpi(j +ze-1) + \zeta(j-1)
\end{equation}

Take $s \in \ZZ/k\ZZ$ so that $\varpi$ attains its minimal value at $s$.
We construct $\zeta(s-x)$ by recursion on $x = 0, 1, \ldots e-1$; and then extend by $e$-periodicity.  Throughout this construction we verify that Inequality~\ref{eq:AB} holds
for $j \equiv s-x \pmod{e}$ and will maintain an additional property
that
\begin{equation}
\label{eq:MIH} \tag{$\spadesuit$}	
\lambda(s  - x + ze) \geq \lambda(s) \text{ for all } z
\end{equation}

We start by setting $\zeta(s) := 0$.  Condition~\ref{eq:MIH} holds in this case by minimality of
$\varpi(s)$. 
For the recursive step, we are given $\zeta(s-x)$ and we need to choose $\zeta(s-x-1)$.
We set $\zeta(s-x -1) :=0$ if
\begin{equation}
\label{eq:0-case}
\varpi(s-x +ze) + \zeta(s-x ) \geq \varpi(s-x +ze-1)
\end{equation}
is satisfied for some $z$ for in so doing Inequality~\ref{eq:AB} holds
for this choice of $\zeta(s-x-1)$.
Condition~\ref{eq:MIH} is satisfied in this case because for $j \equiv s-x-1 \pmod{e}$, we get
$$\lambda(j) = \varpi(j) + \zeta (s-x-1) = \varpi(j) + 0  = \varpi(j) \geq \varpi(s) = \varpi(s) + 0 = \lambda(s).$$

Otherwise,
$$\varpi(s-x +ze) - \varpi(s-x +ze-1) + \zeta(s-x) <0$$
for all $z$.
In this case, we let $\zeta(s-x -1)$ be the largest (i.e. the least negative) of all these
possible values:
$$\zeta(s-x -1) := \max_z ( \varpi(s-x +ze) - \varpi(s-x +ze-1) + \zeta(s-x) ).$$
In particular, for some $z$ we have
$$\zeta(s-x -1) = \varpi(s-x +ze) - \varpi(s-x +ze-1) + \zeta(s-x)$$
so
$$\varpi(s-x +ze-1) + \zeta(s-x -1) = \varpi(s-x +ze) + \zeta(s-x).$$
In particular, we conclude
$$\varpi(s-x +ze) + \zeta(s-x) \geq \varpi(s-x +ze-1) + \zeta(s-x-1)$$
which verifies Inequality~\ref{eq:AB} for this value of $z$.

To obtain Condition~\ref{eq:MIH}, unwrap the maximum function to see that
$$\zeta(s-x -1) \geq \varpi(s-x +ze) - \varpi(s-x +ze-1) + \zeta(s-x)$$
for all $z$. Rearranging and unwrapping the definition of $\lambda$ gives
$$\lambda(s-x+ze-1) = \varpi(s-x +ze-1) + \zeta(s-x -1) \geq \varpi(s-x +ze) + \zeta(s-x) = \lambda(s-x+ze)$$
for all $z$. By the truth of Condition~\ref{eq:MIH} for $s-x$,
$$\lambda(s-x+ze) \geq \lambda(s).$$

The recursive step and, therefore, the whole recursive construction, is now complete.
To verify the last remaining instance of the desired inequality, note that
$$\lambda(s+1) \geq  \lambda(s)$$
is an instance of Condition~\ref{eq:MIH}.

Since $\lambda(s) = \varpi(s) \geq 0$, Condition~\ref{eq:MIH}
also implies that $\lambda$ is nonnegative.

\end{proof}

\begin{prop}\label{prop:cohoprop}
Let $f$ be a clean $\mathsf{C}$-free long decomposition with $m$ non-monomials per $k$-period and with nomodata $(\alpha, in, out, pud)$. Let $e$ be the echo of $w \star f = f^\tau$ for some $w \in \STk$.
 Then there exists an $m$-periodic function $\lambda$ satisfying the Admissibility Conditions for $f$ such that all of the following functions are $e$-periodic:
 $$\alpha(j) + \sum_p \lambda(j,p),\,\,\, \ins - \lambda,\,\,\, \outs + \lambda,\,\,\, \pud(j) + \lambda(j) - \lambda(j-1).$$
\end{prop}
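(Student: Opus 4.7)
The plan is to reduce the problem to Lemma~\ref{lem:coho-step-1}, which produces the desired $\lambda$ as soon as one can construct $e$-periodic constants $C_i(p)$ (for $i \in \ZZ/e\ZZ$ and $p$ prime) satisfying: (a) $-(\outs(j,p)+\ins(j,p)) \leq C_i(p) \leq 0$; (b) for every $j \equiv i \pmod{e}$ there exists $z \in \ZZ$ with $C_i(p) + \ins(j+zr,p) \geq C_{i-1}(p) + \ins(j-1+zr,p)$, where $r := n_B - n_A$; and (c) $C_i(p) = -\ins(j,p)$ whenever the monomial $x^p$ does not appear as a factor of $f$. Recall that $e = \gcd(m,r)$, so in particular $e \mid m$ and $e \mid r$.

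For each prime $p$ whose monomial $x^p$ does not occur as a factor of $f$, Lemma~\ref{lem:cheap-periodic} tells us that $\ins(\cdot,p)$ is already $e$-periodic, so setting $C_i(p) := -\ins(j,p)$ for any $j \equiv i \pmod{e}$ is well-defined; condition (c) holds by construction, (a) is immediate, and (b) is trivial because $\ins(j+zr,p) = \ins(j,p)$ for every $z$. For each prime $p$ with $x^p$ appearing as a factor of $f$, I would apply Lemma~\ref{lem:magical-column-adjusting} (taking its ``$k$'' to be our $m$) to the $m$-periodic function $\varpi_p := \ins(\cdot,p) \colon \ZZ/m\ZZ \to \NN$, obtaining a non-positive, $e$-periodic function $\zeta_p$ with $\varpi_p + \zeta_p \geq 0$, and then set $C_i(p) := \zeta_p(i)$. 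Then $C_i(p) \leq 0$, and for $j \equiv i \pmod{e}$ we have $C_i(p) \geq -\varpi_p(j) = -\ins(j,p) \geq -(\outs(j,p)+\ins(j,p))$, giving (a).

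The main obstacle is verifying (b) for these primes, because Lemma~\ref{lem:magical-column-adjusting} delivers its key inequality with a shift by $z'e$ while (b) asks for a shift by $zr$. Concretely, the lemma provides for every $j$ some $z' \in \ZZ$ with $(\varpi_p + \zeta_p)(j + z'e) \geq (\varpi_p + \zeta_p)(j + z'e - 1)$; using $e$-periodicity of $\zeta_p$ this rearranges to $\ins(j+z'e,p) + \zeta_p(j) \geq \ins(j+z'e-1,p) + \zeta_p(j-1)$, which for $j \equiv i \pmod{e}$ is precisely $\ins(j+z'e,p) + C_i(p) \geq \ins(j+z'e-1,p) + C_{i-1}(p)$. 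To bridge to the $zr$-shift, I use that the cyclic subgroups $\langle r \rangle$ and $\langle e \rangle$ of $\ZZ/m\ZZ$ coincide (both equal $\gcd(m,r)\cdot\ZZ/m\ZZ = e\cdot\ZZ/m\ZZ$), so one may pick $z \in \ZZ$ with $zr \equiv z'e \pmod{m}$; the $m$-periodicity of $\ins(\cdot,p)$ then transfers the inequality to the form required in (b). With (a), (b), and (c) all verified, Lemma~\ref{lem:coho-step-1} delivers the admissibility-satisfying $\lambda(j,p) := \ins(j,p) + C_i(p)$ (for $j \equiv i \pmod{e}$), which is $m$-periodic since $\ins$ is $m$-periodic and $C_i(p)$ is $e$-periodic with $e \mid m$; and it asserts that the four functions listed in the conclusion are $e$-periodic, finishing the proof.
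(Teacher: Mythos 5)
Your proposal is correct and follows essentially the same route as the paper: apply Lemma~\ref{lem:magical-column-adjusting} to $\varpi := \ins(\cdot,p)$ (viewed as $m$-periodic), set $C_i(p) := \zeta(i)$, and feed these constants into Lemma~\ref{lem:coho-step-1}. The paper's two-line proof leaves implicit the two points you spell out --- the separate treatment of primes whose monomials do not occur in $f$ (via Lemma~\ref{lem:cheap-periodic}) and the passage from the $z'e$-shift of Lemma~\ref{lem:magical-column-adjusting} to the $zr$-shift required by Lemma~\ref{lem:coho-step-1}, using $\langle r\rangle = \langle e\rangle$ in $\ZZ/m\ZZ$ --- and both are handled correctly.
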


\begin{proof}
Applying Lemma~\ref{lem:magical-column-adjusting} to the function $\varpi(j) := \ins(l,p)$
and set $C_i(p) := \zeta(i)$.
These values satisfy the hypotheses of Lemma~\ref{lem:coho-step-1}, thus completing
the proof.

\end{proof}

\begin{Rk} The $\lambda$ we obtained in~\ref{prop:cohoprop}
using Lemma~\ref{lem:magical-column-adjusting} is necessarily non-negative, but there may
be (probably are) other functions satisfying the conclusion of that proposition.
The one we built is an upper bound on solutions of $-\outs(j) \leq \lambda(j) \leq \ins(j)$ for all $j$; and for all $j$ there is some $z$ such that $\lambda(j+zr) \geq \lambda(j-1+zr)$.
The same reasoning starting with $-\outs$ and adding positive constants on cosets to obtain the desired inequality would produce a non-positive solution, which would be a lower bound on other solutions.
The trace of $u \in \Symk$ obtained in Theorem~\ref{thm:expungetk} will be one
of these other solutions.
\end{Rk}

The main result of this section is now an immediate consequence of Proposition~\ref{prop:cohoprop} and Theorem~\ref{thm:shinythm}.

\begin{thm}\label{thm:cohothm}
Let $f$ be a clean $\mathsf{C}$-free long decomposition with $m$ non-monomials per $k$-period and with nomodata $(\alpha, in, out, pud)$. Let $e$ be the echo of $w \star f = f^\tau$ for some $w \in \STk$.
Then there exists some $v \in \Affk$ for which $v \star f$ is defined and has $e$-periodic nomodata.
\end{thm}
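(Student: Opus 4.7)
The plan is to derive this theorem as a direct combination of the two main technical results of this subsection, namely Proposition~\ref{prop:cohoprop} and Theorem~\ref{thm:shinythm}. The bookkeeping has already been arranged so that the four functions whose $e$-periodicity is asserted in Proposition~\ref{prop:cohoprop} are precisely the four ingredients of the nomodata of $v \star (f,\alpha)$, as computed by Lemma~\ref{lem:starnomodata}. So once we produce a trace function with the right periodicity properties, the corresponding word will automatically deliver a long decomposition with $e$-periodic nomodata.

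Concretely, I would first invoke Proposition~\ref{prop:cohoprop} to obtain an $m$-periodic function $\lambda:\ZZ \times \mathcal{P} \to \ZZ$ satisfying the admissibility conditions (Definition~\ref{def:admissi}) for $(f,\alpha)$, and such that each of
\[
\alpha(j) + \sum_p \lambda(j,p),\qquad \ins - \lambda,\qquad \outs + \lambda,\qquad \pud(j) + \lambda(j) - \lambda(j-1)
\]
is $e$-periodic in $j$. Next, since $\lambda$ satisfies the admissibility conditions and is $m$-periodic, Theorem~\ref{thm:shinythm} produces some $v \in \Affk$ such that $v \star (f,\alpha)$ is defined and the trace of $v$ on $(f,\alpha)$ equals $\lambda$.

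Finally, writing $(g,\beta) := v \star (f,\alpha)$, Lemma~\ref{lem:starnomodata} identifies the nomodata of $(g,\beta)$ as
\[
\beta(j) = \alpha(j) + \sum_p \lambda(j,p),\quad \ins_{g,\beta} = \ins - \lambda,\quad \outs_{g,\beta} = \outs + \lambda,
\]
\[
\pud_{g,\beta}(j) = \pud(j) + \lambda(j) - \lambda(j-1).
\]
These are exactly the four functions guaranteed to be $e$-periodic in the previous step, so $(g,\beta)$ is a nomodata for $g = v \star f$ that is $e$-periodic, as required.

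There is no real obstacle at this stage, since all the work has been front-loaded: the admissibility analysis was carried out in Theorem~\ref{thm:shinythm}, and the delicate combinatorial construction of $\lambda$, which uses the Mahler hypothesis $\phi^N w \star f = f^\tau$ via Proposition~\ref{prop:khoa-mah-1} together with the recursive column-adjustment of Lemma~\ref{lem:magical-column-adjusting}, was done inside Proposition~\ref{prop:cohoprop}. The only thing to check here is the verbatim compatibility of the four conclusions of Proposition~\ref{prop:cohoprop} with the four components of the nomodata as computed in Lemma~\ref{lem:starnomodata}, which is immediate.
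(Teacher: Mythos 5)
Your proposal is correct and is essentially the paper's own argument: the paper likewise obtains $\lambda$ from Proposition~\ref{prop:cohoprop}, feeds it to Theorem~\ref{thm:shinythm} to produce $v$, and observes via Lemma~\ref{lem:starnomodata} that the four $e$-periodic functions are exactly a nomodata for $v \star f$. Nothing is missing.
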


\begin{proof}
Applying Proposition~\ref{prop:cohoprop} we obtain a function $\lambda$ satisfying
the Admissibility conditions for $(f,\alpha)$.  By Theorem~\ref{thm:shinythm} there
is some $v \in \Affk$ for which $v \star f$ is defined and has trace
$\lambda$ on $(f,\alpha)$.
The $e$-periodic functions in the conclusion of Proposition~\ref{prop:cohoprop} are exactly a nomodata for $v \star f$.
\end{proof}

\subsubsection{From $\Affk$ to $\Symk$}
\label{sec:removing-tk}

The main result of this Subsection~\ref{sec:removing-tk} is Theorem~\ref{thm:expungetk} in
which Theorem~\ref{thm:cohothm} is upgraded replacing $\Affk$ by $\Symk$.

\begin{thm} \label{thm:expungetk}
Let $N$ be an integer dividing $k$, and let $f$ be a clean $\mathsf{C}$-free long decomposition, and suppose that $v \star f$ has $N$-periodic nomodata for some $v \in \Affk$. Then $u \star f$ has $N$-periodic nomodata for some $u \in \Symk$.
\end{thm}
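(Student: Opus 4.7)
The plan is to construct $u \in \Symk$ with the same trace as $v$ on $(f, \alpha)$; by Lemma~\ref{lem:starnomodata}, this $u$ will automatically yield $u \star f$ with the same nomodata as $v \star f$, in particular $N$-periodic. Setting $\lambda := \trace_{v, f, \alpha}$, the trace $\lambda$ is $m$-periodic and admissible for $(f, \alpha)$ by construction, while the hypothesis translates via Lemma~\ref{lem:starnomodata} into $N$-periodicity of the functions $\ins - \lambda$, $\outs + \lambda$, and $j \mapsto \pud(j) + \lambda(j) - \lambda(j-1)$. The case $N = k$ is vacuous, since the nomodata of any long decomposition is automatically $k$-periodic; so I assume $N < k$ throughout.

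I would revisit the constructive proof of Theorem~\ref{thm:shinythm}, but with the added restriction that all longswaps use only generators $t_i$ with $i \in \{1, \ldots, k-1\}$. That construction realizes $\lambda$ prime by prime, decrementing $\sum_j |\lambda(j, p)|$ at each inductive step by performing a single $p$-longswap supplied by Lemma~\ref{lem:get-longswap}. My key claim is that every $p$-longswap the construction needs has a \emph{within-period} variant avoiding $t_k$: because $N < k$ and the $k$-period of the output contains $k/N \geq 2$ identical $N$-blocks of adjusted nomodata, a longswap that would cross the $k/k+1$ boundary admits a structurally identical counterpart crossing an interior $rN/(rN+1)$ boundary for some $1 \leq r < k/N$. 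The gluing of these restricted longswaps into a coherent word $u \in \Symk$ then follows from Lemma~\ref{lem:glue-longswaps}.

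The principal obstacle is that the positions of monomials in the input $f$ need not themselves be $N$-periodic; only the adjusted data $\pud(j) + \lambda(j) - \lambda(j-1)$ is. Handling this requires careful bookkeeping at the inductive step: when Case~2 or Case~3 of the proof of Theorem~\ref{thm:shinythm} applies and demands a puddle containing a monomial $x^p$, one must verify that such a puddle exists in the interior $\{2, \ldots, k-1\}$ of the period, not only straddling the boundary. The $N$-periodicity of the adjusted puddle data, combined with $N < k$, supplies this interior option whenever a boundary option exists; but confirming that the interior choice does not exhaust the monomial budget prematurely at later steps requires sustaining the $N$-periodicity of the remaining admissibility constraints throughout the induction, which is the technical heart of the argument.
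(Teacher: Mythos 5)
There is a genuine gap at the very first step of your plan: it is not in general possible to find $u \in \Symk$ whose trace on $(f,\alpha)$ equals $\trace_{v,f,\alpha}$. An element of $\Symk$ (the copy generated by $t_1,\ldots,t_{k-1}$) induces a permutation of $\ZZ$ preserving each window $\{rk+1,\ldots,(r+1)k\}$; since non-monomials cannot Ritt-swap with each other in a clean $\mathsf{C}$-free decomposition, the non-monomial $f_{\alpha(j)}$ ends at position $\beta(j)=\alpha(j)+\sum_p\lambda(j,p)$, so any $u\in\Symk$ with trace $\lambda$ must satisfy $1\leq\alpha(j)+\sum_p\lambda(j,p)\leq k$ whenever $1\leq\alpha(j)\leq k$. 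This window constraint is not implied by the admissibility conditions, and the trace of $v$ need not satisfy it. Concretely, take $k=4$, $p=3$, $f_1=f_3=x^3$, $f_2=f_4=x(x^3+1)$, and $v=t_4t_2$: then $v\star f$ has $2$-periodic nomodata, but $\lambda(j,3)=1$ for all $j$, so the non-monomial at position $4=k$ would have to end at position $5$, which no $u\in\Symk$ can achieve. (Here $u=\id$ proves the theorem, but its trace is $0$, not $\lambda$.) The conclusion only asks that $u\star f$ have \emph{some} $N$-periodic nomodata, not the specific one that Lemma~\ref{lem:starnomodata} computes from $\lambda$; insisting on reproducing $\lambda$ makes the statement you are actually trying to prove false. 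The paper's argument therefore does something structurally different: it isolates each occurrence of $t_k$ (Propositions~\ref{prop:reduction-tk-right} and~\ref{prop:reduction-transit-tk}), shows via Proposition~\ref{prop:coralf1fk} that the boundary-crossing transits stay within one $N$-period, and then replaces the boundary operations by the ``same'' operations performed at the $k/N-1$ interior $N$-boundaries (Propositions~\ref{prop:reduction-drop-vr}, \ref{prop:reduction-drop-vl}, and~\ref{prop:drop-tk}), accepting a different but still $N$-periodic outcome.

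Your closing paragraph correctly identifies the second obstruction --- that $f$ itself need not be $N$-periodic, so an ``interior counterpart'' of a boundary longswap need not be defined on $f$ --- but the proposal does not overcome it. The paper sidesteps this by always performing the substitute operations on a decomposition already known to have $N$-periodic nomodata (namely $g=v_rv_\ell t_k\star f$ and its intermediates, read backwards as inverse operations), which is exactly what guarantees the interior copies of those operations are defined. Any repair of your approach would have to both relax ``same trace'' to ``some trace yielding $N$-periodic nomodata'' and characterize which such traces satisfy the window constraint above; at that point one is essentially reconstructing the paper's chain of reductions.
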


\begin{Rk}
Theorem~\ref{thm:expungetk} holds trivially when $N = k$.  For the remainder of
this subsection we take $N < k$.  	
\end{Rk}

We prove Theorem~\ref{thm:expungetk} through a series of reductions.

\begin{prop}
\label{prop:reduction-tk-right}
Theorem~\ref{thm:expungetk} follows from the special case where $v = v_0 t_k$ for some
$v_0 \in \Symk$.
\end{prop}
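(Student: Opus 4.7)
The plan is to prove the reduction by induction on the number $n$ of occurrences of $t_k$ in a fixed word representative of $v \in \Affk$. In the base case $n = 0$, the chosen word for $v$ involves only $t_1, \ldots, t_{k-1}$, so $v \in \Symk$ and we may take $u := v$.

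For the inductive step, assume $n \geq 1$ and factor $v = v_* \, t_k \, v''$, where $v''$ is the maximal suffix of our fixed word consisting of generators drawn from $\{t_1, \ldots, t_{k-1}\}$ (so $v'' \in \Symk$) and the distinguished $t_k$ is the rightmost occurrence of $t_k$ in the word. Then $v_* \in \Affk$ has exactly $n - 1$ occurrences of $t_k$. Set $f' := v'' \star f$ and $g := t_k \star f'$; both actions are defined because $v \star f$ is, and both $f'$ and $g$ remain clean $\mathsf{C}$-free long decompositions since each basic Ritt identity swaps either two monomials, two odd-degree Chebyshev polynomials, or a monomial with a Ritt polynomial, preserving the clean $\mathsf{C}$-free structure. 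We then have $v \star f = v_* \star g$, which has $N$-periodic nomodata by hypothesis.

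By the inductive hypothesis applied to $v_*$ and $g$ there exists $\tilde u \in \Symk$ such that $\tilde u \star g$ has $N$-periodic nomodata. Rewriting gives $\tilde u \star g = (\tilde u \, t_k) \star f'$, and the element $\tilde u \, t_k$ has precisely the special form $v_0 t_k$ with $v_0 = \tilde u \in \Symk$. The assumed special case of Theorem~\ref{thm:expungetk}, applied to $\tilde u \, t_k$ and $f'$, yields some $u' \in \Symk$ for which $u' \star f'$ has $N$-periodic nomodata. Setting $u := u' v'' \in \Symk$, we obtain $u \star f = u' \star f'$, completing the induction.

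The argument is essentially just a clean induction, so there is no serious obstacle. The only point requiring care is verifying that the intermediate long decompositions $f'$ and $g$ remain clean $\mathsf{C}$-free so that both the inductive hypothesis and the special case genuinely apply; this is immediate from the structure of basic Ritt identities. The key observation that makes the induction work is that stripping off the rightmost $t_k$ and its $\Symk$-suffix $v''$ strictly reduces the $t_k$-count, and the tail $v''$ recombines cleanly with the $\Symk$-element produced by the special case to yield the final $u \in \Symk$.
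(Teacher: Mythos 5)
Your proof is correct and follows essentially the same strategy as the paper's: induction on the number of occurrences of $t_k$, peeling off one occurrence per step and invoking the assumed special case once per step. The only (cosmetic) difference is that you strip the \emph{rightmost} $t_k$ and therefore apply the induction hypothesis before the special case, whereas the paper strips the leftmost $t_k$ and applies them in the opposite order.
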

\begin{proof}
We induct on the number $M$ of instances of $t_k$ in some presentation of $v$
as a word in $\{ t_1, \ldots, t_k \}$.
When $M = 0$ and there are no instances of $t_k$ in $v$, let $u = v$.
For the induction step, take a word $v = v_0 t_k \widehat{v}$ where $\widehat{v}$ has $M$ instances of $t_k$.
Let $g := \widehat{v} \star f$. Now $v_0 t_k \star g$ has periodic nomodata, so by the
special case, there is some word $\widehat{u}$ with no $t_k$'s
for which $\widehat{u} \star g$
has periodic nomodata. But then $\widehat{u} \widehat{v} \star f = \widehat{u} \star g$ has
periodic nomodata. Now $\widehat{u} \widehat{v}$ has only $M$ instances of $t_k$ in it, so by
the induction hypothesis, there is some  $u \in \left(
t_1, \ldots, t_{k-1} \right)$
 such that $u \star f$ has periodic nomodata. \end{proof}

Let us consider how the expression $v = v_0 t_k$ in Proposition~\ref{prop:reduction-tk-right}
acts on $f$.  First, $t_k$ swaps the boundary factors and then $v_0$ mixes everything
else using Ritt swaps.  The next lemma expresses the sense in which it would have
been possible to first perform this kind of mixing before performing a swap
at the boundary.  That is, one can arrange all of the factors in the
correct order first, allow $t_k$ to swap the boundard factors, and then bring the
newly changed boundary factors into the middle of the decomposition.

\begin{lem}
\label{lem:transit-tk-swap}
For any $v_0 \in \Symk$
there is some  $\widehat{v} \in \langle t_2, \ldots, t_{k-2} \rangle$ such that
$v_0 t_k = v_r v_\ell t_k \widehat{v}$ where
$v_\ell$ is a left-to-right transit $t_a t_{a+1} \ldots t_{k-1}$
and $v_r$ is a right-to-left transit $t_b t_{b-1} \ldots t_1$.
\end{lem}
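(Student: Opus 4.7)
The plan is to reduce the identity to an equation in $\Symk \cong S_k$ and then to construct $v_\ell$, $v_r$, and $\widehat{v}$ explicitly from the values $v_0(1)$ and $v_0(k)$. The reduction rests on the observation that $t_j$ commutes with $t_k$ in $\Affk$ for every $2 \leq j \leq k-2$, since the pair $(t_j t_k, t_k t_j)$ belongs to the rewriting system $R_{\Affk}$ whenever $j \not\equiv \pm 1 \pmod{k}$. Consequently any $\widehat{v} \in \langle t_2, \ldots, t_{k-2}\rangle$ commutes with $t_k$, and so the identity $v_0 t_k = v_r v_\ell t_k \widehat{v}$ is equivalent, after cancellation using $t_k^2 = e$, to the equation $v_0 = v_r v_\ell \widehat{v}$ in $\Symk$.

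To produce the factorization, I would view $v_0$ as a permutation of $\{1, \ldots, k\}$ and set $b := v_0(1) - 1$ together with
\[
a := \begin{cases} v_0(k) & \text{if } v_0(k) > v_0(1), \\ v_0(k) + 1 & \text{if } v_0(k) < v_0(1). \end{cases}
\]
Since $v_0$ is a bijection, $v_0(1) \neq v_0(k)$, so both cases are covered and in either case $2 \leq a \leq k$ and $0 \leq b \leq k-1$. Put $v_\ell := t_a t_{a+1}\cdots t_{k-1}$ (interpreted as the empty word when $a = k$), $v_r := t_b t_{b-1}\cdots t_1$ (empty when $b = 0$), and $\widehat{v} := v_\ell^{-1}v_r^{-1}v_0$. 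A direct unwinding shows that, as permutations, $v_\ell$ is the cyclic shift $a \mapsto a{+}1 \mapsto \cdots \mapsto k \mapsto a$ fixing $\{1,\ldots,a-1\}$, and $v_r$ is the cyclic shift $1 \mapsto 2 \mapsto \cdots \mapsto b{+}1 \mapsto 1$ fixing $\{b{+}2,\ldots,k\}$.

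The main step is then to verify that $v_r v_\ell$ sends $1 \mapsto v_0(1)$ and $k \mapsto v_0(k)$; once this is established, $\widehat{v}$ fixes both $1$ and $k$, so as a permutation of $\{1,\ldots,k\}$ it restricts to a permutation of $\{2,\ldots,k-1\}$ and therefore lies in $\operatorname{Sym}(\{2,\ldots,k-1\}) = \langle t_2, \ldots, t_{k-2}\rangle$. When $v_0(k) > v_0(1)$ the supports of $v_\ell$ and $v_r$ are disjoint and the verification is immediate. The hard part is the overlap case $v_0(k) < v_0(1)$: there $v_\ell$ sends $k$ to $v_0(k)+1$, which lands inside the support $\{1,\ldots,b+1\}$ of $v_r$, and the cyclic shift by $v_r$ then carries $v_0(k)+1$ to $v_0(k)$; the choice $a = v_0(k)+1$ (rather than $v_0(k)$) is precisely the correction needed to absorb this extra shift.
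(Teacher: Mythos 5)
Your proof is correct, though it takes a genuinely different route from the paper's. The paper obtains the factorization $v_0 = v_r v_\ell \widehat{v}$ abstractly: it writes $v_0$ in right-to-left first canonical form as $v_r v'$ with $v' \in \langle t_2,\ldots,t_{k-1}\rangle$ (the insert-sort for $v_0$ performs the insertion of the rightmost factor last, as the transit $v_r = t_b\cdots t_1$, and the remaining insertions never touch position $1$), and then writes $v'$ in left-to-right first canonical form as $v_\ell \widehat{v}$ with $\widehat{v} \in \langle t_2,\ldots,t_{k-2}\rangle$; commuting $\widehat{v}$ past $t_k$ then finishes. You instead produce $a$ and $b$ as explicit numerical functions of $v_0(1)$ and $v_0(k)$ and verify by hand that $v_r v_\ell$ agrees with $v_0$ on $\{1,k\}$, so that $\widehat{v}:=v_\ell^{-1}v_r^{-1}v_0$ fixes $1$ and $k$ and hence lies in $\operatorname{Sym}(\{2,\ldots,k-1\}) = \langle t_2,\ldots,t_{k-2}\rangle$. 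Your version is more concrete and yields the values of $a$ and $b$ explicitly; the paper's is shorter and reuses the canonical-form machinery already in play throughout the section.

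One small correction: your parenthetical description of $v_r$ as "the cyclic shift $1 \mapsto 2 \mapsto \cdots \mapsto b{+}1 \mapsto 1$" has the arrows reversed. With the convention in force throughout the paper (the rightmost letter of a word acts first, so that $\alpha^*(vu)(x) = \alpha^*(v)(\alpha^*(u)(x))$), $v_r = t_b t_{b-1}\cdots t_1$ is the cycle sending $1 \mapsto b{+}1$ and $i \mapsto i{-}1$ for $2 \le i \le b{+}1$ — the inverse of what you wrote. Your subsequent computations, namely $v_r(1) = b{+}1 = v_0(1)$ and $v_r(v_0(k){+}1) = v_0(k)$, correctly use this version (otherwise the choice $b = v_0(1)-1$ would be wrong), so the argument itself is unaffected; only the descriptive sentence needs fixing.
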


\begin{proof}
First, put $v_0$ into first canonical (See Remark~\ref{rk:canonical-form}):
 $$v_0 = v_r v'$$
where $v_r$ is as above, and $v' \in \langle t_2, \ldots, t_{k-1} \rangle$.
Then put $v'$ into reverse first canonical form:
 $$v' = v_\ell \widehat{v}$$
where $v_{\ell}$ and $\widehat{v}$ are as required.
So,
 $$v_0 t_k = v_r v_\ell \widehat{v} t_k \text{ .}$$
Since non-adjacent Ritt swaps commute,
$$v_0 t_k = v_r v_\ell t_k \widehat{v} \text{, }$$
as wanted.
\end{proof}

\begin{prop}
\label{prop:reduction-transit-tk}
Theorem~\ref{thm:expungetk} follows from the special case where $v = v_r v_{\ell} t_k$ where
$v_\ell$ is a left-to-right transit $t_a t_{a+1} \ldots t_{k-1}$ bringing the leftmost factor into the middle
 and $v_r$ is a right-to-left transit $t_b t_{b-1} \ldots t_1$ bringing the rightmost factor into the middle.
\end{prop}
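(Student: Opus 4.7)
The plan is to chain together Proposition~\ref{prop:reduction-tk-right} with Lemma~\ref{lem:transit-tk-swap} and to absorb the leftover interior word into the long decomposition itself. By Proposition~\ref{prop:reduction-tk-right}, I may assume from the outset that $v = v_0 t_k$ with $v_0 \in \Symk$. Lemma~\ref{lem:transit-tk-swap} then rewrites this as
\[
v \;=\; v_r\, v_\ell\, t_k\, \widehat{v},
\]
where $v_r$ and $v_\ell$ have the right-to-left and left-to-right transit forms required by the statement, and $\widehat{v} \in \langle t_2,\ldots, t_{k-2}\rangle \subseteq \Symk$. The key observation is that $\widehat{v}$ already lies in $\Symk$, so the offending $t_k$ appears only once in the rewritten word, sandwiched between the two transits.

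Now set $g := \widehat{v} \star f$. Since $\widehat{v}$ is a word of Ritt swaps and the basic Ritt identities only produce Ritt polynomials or monomials and never introduce Chebyshev factors, $g$ is again a clean $\mathsf{C}$-free long decomposition. Moreover
\[
(v_r v_\ell t_k) \star g \;=\; v_r v_\ell t_k \widehat{v} \star f \;=\; v \star f,
\]
so $(v_r v_\ell t_k)\star g$ has $N$-periodic nomodata by hypothesis. Invoking the assumed special case of Theorem~\ref{thm:expungetk}, applied to the clean $\mathsf{C}$-free long decomposition $g$ and the word $v_r v_\ell t_k$, yields some $u' \in \Symk$ for which $u' \star g$ has $N$-periodic nomodata. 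Setting $u := u' \widehat{v} \in \Symk$ gives
\[
u \star f \;=\; u' \widehat{v} \star f \;=\; u' \star g,
\]
which has $N$-periodic nomodata, completing the reduction. There is really no obstacle here: the whole argument is a bookkeeping chain of reductions whose point is that Lemma~\ref{lem:transit-tk-swap} isolates a single $t_k$, and the residual factor $\widehat{v}$ can be moved from the word side to the long-decomposition side precisely because it already belongs to $\Symk$.
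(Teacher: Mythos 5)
Your proposal is correct and follows essentially the same route as the paper: reduce via Proposition~\ref{prop:reduction-tk-right} to $v = v_0 t_k$, apply Lemma~\ref{lem:transit-tk-swap} to write $v_0 t_k = v_r v_\ell t_k \widehat{v}$, set $g := \widehat{v} \star f$, invoke the special case on $g$, and compose to get $u = u'\widehat{v} \in \Symk$. Your explicit remark that $g$ remains a clean $\mathsf{C}$-free long decomposition is a detail the paper leaves implicit, but otherwise the arguments coincide.
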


\begin{proof}
From the Proposition~\ref{prop:reduction-tk-right},
we know it suffices to prove the special case where $v = v_0 t_k$ and
$v_0 \in \Symk$.
Lemma~\ref{lem:transit-tk-swap} gives us $v_r$ and $v_\ell$ as in the
statement of this proposition and $\widehat{v} \in \Symk$
such that
$v_0 t_k = v_r v_\ell t_k \widehat{v}$ in $\Affk$.
As in the proof of Proposition~\ref{prop:reduction-tk-right}, let $g := \widehat{v} \star f$.
Now $v_r v_\ell t_k \star g$ has periodic nomodata, so for some $\widehat{u} \in
\Symk$ we have that $\widehat{u} \star g$ has periodic nomodata.
Thus, $\widehat{u} \widehat{v} \star f$ has periodic nomodata.  That is,  $u := \widehat{u} \widehat{v}$ works.
\end{proof}

The next proposition says that the altered boundary factors cannot go very far into the middle: they must stay within the first/last period of the periodic nomodata of  $v_r v_\ell t_k \star f$.

\begin{prop} \label{prop:coralf1fk}
Fix a long decomposition $f$ and a natural number $N < k$.
Suppose that $v_r v_{\ell} t_k \star f$ has $N$-periodic nomodata for some
$v_\ell = t_a t_{a+1} \ldots t_{k-1}$ and $v_r = t_b t_{b-1} \ldots t_1$.
Then $b+1 \leq N$ and $a \geq k-N$.
\end{prop}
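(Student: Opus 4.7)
The plan is to argue symmetrically: $a \geq k-N$ follows from reflecting the argument that proves $b + 1 \leq N$, so I will focus on the latter. I begin by making the action $v_r v_\ell t_k$ explicit on the long decomposition. The $t_k$ part performs a basic Ritt swap at every boundary $\equiv (0,1)\pmod{k}$, producing two modified factors $\tilde G_1$ and $\tilde G_k$ at positions $\equiv 1$ and $\equiv k$ respectively; in the clean $\mathsf{C}$-free setting this is forced to be type 1 or type 3, so the pair $(\tilde G_1, \tilde G_k)$ consists of one monomial and one Ritt factor (or both monomials), with degrees dictated by $f_0, f_1$ (equivalently $f_k, f_{k+1}$). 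The left-to-right transit $v_\ell = t_a t_{a+1}\cdots t_{k-1}$ then drags $\tilde G_k$ from position $k$ to position $a$, shifting $f_a,\ldots,f_{k-1}$ up by one, and $v_r = t_b t_{b-1}\cdots t_1$ drags $\tilde G_1$ from position $1$ to position $b+1$, shifting $f_2,\ldots,f_{b+1}$ down by one. So in the first $k$-period of $h := v_r v_\ell t_k \star f$, the factor nomodata at position $i$ equals that of $f_{i+1}$ for $i\in[1,b]$, equals $\tilde G_1$ at $i=b+1$, equals $f_i$ for $i\in[b+2,a-1]$, equals $\tilde G_k$ at $i=a$, and equals $f_{i-1}$ for $i\in[a+1,k]$.

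Assume for contradiction that $b + 1 > N$. Then the position $b+1 - N$ lies in $[1,b]$, the left-shift region, so $h_{b+1-N}$ has the nomodata of $f_{b+2-N}$, while $h_{b+1}$ has the nomodata of $\tilde G_1$. By the assumed $N$-periodicity of the nomodata of $h$, these two factors must have identical in- and out-degrees, and the puddle data on each side must agree. Iterating this comparison (at positions $b+1-2N, b+1-3N,\ldots$ as long as these stay in $[1,b]$) would force $f_{b+2-rN}$ to coincide in nomodata with $\tilde G_1$ for every $r$ with $rN \leq b$.

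To turn these coincidences into a contradiction, I would use the trace formalism. Writing $\lambda = \trace_{v_r v_\ell t_k, f, \alpha}$, Lemma~\ref{lem:starnomodata} expresses $\ins_h, \outs_h, \pud_h$ in terms of $\ins_f, \outs_f, \pud_f$ and $\lambda$. Because each individual Ritt swap in $v_r$ and $v_\ell$ only moves a single monomial across a single non-monomial, $\lambda$ is supported on the non-monomial indices $j$ with $\alpha(j) \in [1,b+1]\cup[a,k]$; crucially, the non-zero part of $\lambda$ coming from the $t_k$-step is tied specifically to the $p$-prime of the basic Ritt identity at the $0/1$-boundary, while the rest of $\lambda$ is controlled by the already-existing $x^p$ monomials in the ranges $[2,b+1]$ and $[a,k-1]$ of $f$. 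The $M$-periodicity of $\ins_h$ (with $M = mN/k$) forces $\lambda(j+M) - \lambda(j) = \ins(j+M) - \ins(j)$, and an analogous identity for $\pud_h$. Under the assumption $b+1 > N$, comparing the puddle counts on either side of position $b+1$ in $h$ to those on either side of position $b+1-N$ exhibits an extra contribution from the monomial $\tilde G_1$ that has no counterpart in the purely $v_r$-shifted region, which violates the $M$-periodicity of $\pud_h$.

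The main obstacle will be making this puddle-count comparison uniformly across both sub-cases ($\tilde G_1$ a monomial vs.\ a non-monomial) and handling the boundary sub-case in which $b+1 - N$ might be exactly $1$ or wrap to the previous period; the case analysis is also complicated by the fact that $t_k$ acts at every $(rk,rk+1)$-boundary simultaneously, so the support of $\lambda$ contains pairs of $j$-indices that are coupled by skew-periodicity. The symmetric argument for $a \geq k - N$, obtained by swapping the roles of the right and left transits, concludes the proof.
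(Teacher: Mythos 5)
Your setup (tracking where $t_k$, $v_\ell$, and $v_r$ send each factor, and looking for a periodicity contradiction around position $b+1$) points in the right direction, but the step that is supposed to deliver the contradiction is missing, and the idea that actually makes the proof work does not appear in your proposal.

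The paper's argument is much shorter and rests on a single observation you never use: for the transit $v_r = t_b t_{b-1}\cdots t_1$ to be \emph{defined}, the dragged factor $h_1$ must perform a basic Ritt swap with each of $h_2,\ldots,h_{b+1}$ in turn, and tautological swaps are excluded. Hence if $h_1$ is a non-monomial, every factor it crosses is a monomial; if $h_1$ is a monomial $x^p$, nothing it crosses is a monomial of degree $p$. Since Ritt swaps preserve the monomial/non-monomial status and the degrees of monomials, the image of $h_1$ (sitting at position $b+1$ of $v_r\star h$) has a shape different from the shape of the image of every one of $h_2,\ldots,h_{b+1}$ (sitting at positions $1,\ldots,b$). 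Now $N$-periodic nomodata determines the multiset of shapes occurring in any $N$ consecutive positions, so every shape present in the decomposition occurs among positions $1,\ldots,N$. If $b+1>N$, those positions are all occupied by images of $h_2,\ldots,h_{N+1}$, yet the shape at position $b+1$ must recur there --- contradiction. Two sentences, no trace computation.

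Your substitute for this --- comparing position $b+1$ with position $b+1-N$ and asserting they "must have identical in- and out-degrees, and the puddle data on each side must agree" --- is not justified in the case where the dragged factor is a monomial: the nomodata records only the function $\pud(j,p)$, i.e.\ how many monomials of each degree lie in each puddle, not where they sit inside the puddle, so $N$-periodicity does not force the factor at position $i+N$ to have the same shape as the factor at position $i$. The paper's phrasing ("the same shape as \emph{one of them}", quantified over a full period) is precisely what sidesteps this. The subsequent trace-and-puddle computation that you propose to "turn these coincidences into a contradiction" is left entirely as a sketch, with the case analysis explicitly deferred, so the core of the proof is absent. (Your bookkeeping "$h_i$ has the nomodata of $f_{i+1}$ for $i\in[1,b]$" is also not literally correct when the dragged factor is a monomial crossing non-monomials, since type-3 swaps alter in/out degrees.) Finally, the claim that $a\ge k-N$ follows "by reflecting" is not quite a pure symmetry in the paper: the argument for $a$ uses the already-established $b+1\le N$ together with $N\le k/2$ to know that $v_r$ does not disturb the leftmost $N$-period before the periodicity comparison is made there.
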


\begin{proof}
We first show that $b+1 \leq N$. Leting $h := v_{\ell} t_k \star f$, we get that
$t_b t_{b-1} \ldots t_1 \star h$ has $N$-periodic nomodata.
In evaluating $t_b t_{b-1} \ldots t_1 \star h$, the rightmost factor $h_1$ of $h$ has to swap with $h_2$ and $h_3$ and so on until $h_{b+1}$. If $h_1$ is not a monomial, this means that all of $h_2, h_3, \ldots, h_{b+1}$ are monomials. If $h_1$ is a monomial $x^p$, this means none of $h_2, h_3, \ldots, h_{b+1}$ are monomials of the same degree $p$. In particular, whatever $h_1$ becomes after all these swaps cannot be the same shape as any of the things that $h_2, h_3, \ldots, h_{b+1}$ become after these swaps (recall Definition~\ref{def:def-becomes} for
the notion of ``becoming'') . However, if $b+1 > N$, this list $h_2, h_3, \ldots, h_{b+1}$ contains a full $N$-period of $t_b t_{b-1} \ldots t_1 \star h$, so $h_1$ has to have the same shape as one of them!

So we now know that $v_r$ does not touch the factors $h_k, \ldots, h_{N+2}, h_{N+1}$.
Thus, these are also the factors of $v_r \star h$ which has $N$-periodic nomodata.
Because $N$ is a proper divisor of $k$ it is at most $k/2$.
Thus, $v_r$  leaves fixed at least half of the factors and, in particular,
leaves fixed the whole left-most period. 

Exactly the same reasoning applies to $v_{\ell}$. Let $g := t_k \star f$, so that
$v_{\ell} \star g = h$. If $a < k-N$, then $g_k$ swaps across at least $N$ factors of $g$ which then become the leftmost $N$ factors $h_k, h_{k-1}, \ldots , h_{k-N+1}$ of $h$, which are then untouched by $v_r$ and become the leftmost period of $v_r \star h$ which has $N$-periodic nomodata. Whatever $g_k$ has become after all these swaps has to have something like the shape of one of these factors $h_k, h_{k-1}, \ldots , h_{k-N+1}$ that this very $g_k$ swapped across, giving the same contradiction.
\end{proof}

With the next proposition we extend the reduction of Proposition~\ref{prop:reduction-transit-tk} arguing that we may drop $v_r$.
 The idea here is that $v_r$ acts on the rightmost $N$-period, and we can instead undo its action on all other $N$-periods. Since these periods have the same nomodata, and nomodata completely determine whether Ritt swaps are defined, this action will necessarily be defined.

\begin{prop}
\label{prop:reduction-drop-vr}
Let $f$ be a long decomposition and $N$ a proper divisor of $k$.
Suppose that $g := v_r v_{\ell} t_k \star f$ has $N$-periodic nomodata for some
$v_\ell = t_a t_{a+1} \cdots t_{k-1}$ and $v_r = t_b t_{b-1} \ldots t_1$.
Then $w_{\ell} t_k u \star f$ has $N$-periodic nomodata for some left-to-right
transit $w_\ell = t_{a'} t_{a'+1} \cdots t_{k-1}$  and some
$u \in \Symk$.
\end{prop}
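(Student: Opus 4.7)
Let $h := v_\ell t_k \star f$, so that $g = v_r \star h$ has $N$-periodic nomodata. By Proposition~\ref{prop:coralf1fk}, $b+1 \leq N$ and $a \geq k-N$; in particular $v_r = t_b \cdots t_1$ touches only positions $\{1,\dots,b+1\}$, i.e., the rightmost $N$-block of $h$. The central idea, per the remark preceding the proposition, is to compensate the action of $v_r$ by ``undoing'' it uniformly in every $N$-block of $g$, producing a word whose action preserves $N$-periodicity and which can then be put into the required form.

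Set $R := k/N$. For each $j \in \{0, 1, \ldots, R-1\}$ the translate $\eta^{jN}(v_r^{-1}) = t_{jN+1} t_{jN+2} \cdots t_{jN+b}$ acts only within positions $\{jN+1,\dots,jN+b+1\}$; since $b+1 \leq N$ these ranges are pairwise disjoint, so the translates commute. Define
\[ U' := \prod_{j=0}^{R-1} \eta^{jN}(v_r^{-1}). \]
Because $g$ has $N$-periodic nomodata and $v_r^{-1}$ is defined on $g$'s rightmost $N$-block (producing $h$ there), Lemma~\ref{lem:starnomodata} together with the admissibility conditions (Definition~\ref{def:admissi}) show that each $\eta^{jN}(v_r^{-1})$ is defined on $g$ in its own block. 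Applying the same swap sequence uniformly to every $N$-block preserves $N$-periodicity, so $U' \star g$ is defined and $N$-periodic.

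In $\Affk$, the $j = 0$ factor of $U'$ cancels with $v_r$ on the right, giving $U := U' v_r = \prod_{j=1}^{R-1} \eta^{jN}(v_r^{-1}) \in \Symk$. Thus $U v_\ell t_k \star f = U' \star g$ has $N$-periodic nomodata. It remains to rewrite the group element $U v_\ell t_k \in \Affk$ as $w_\ell t_k u$, with $w_\ell = t_{a'} t_{a'+1} \cdots t_{k-1}$ a left-to-right transit and $u \in \Symk$; by Corollary~\ref{cor:group-action-corr}, the two words then act identically on $f$, so $w_\ell t_k u \star f$ also has $N$-periodic nomodata. The generators of $U$ all lie in $\{N+1,\dots,k-1\}$, so they interact with $t_k$ only through the possible presence of $t_{k-1}$ (which appears in $U$ only when $b = N-1$) and with $v_\ell = t_a\cdots t_{k-1}$ only in a controlled band of indices near the top. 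Commutation and braid relations then let the bulk of $U$ slide past $v_\ell t_k$ to form the factor $u$ to the right of $t_k$, while the top piece of $U$ (when nonempty) merges with $v_\ell$ via braid identities to extend it into a longer left-to-right transit $w_\ell$ ending at $t_{k-1}$.

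The main obstacle is this last rewriting step: verifying that $U v_\ell t_k$ admits the required decomposition in $\Affk$. This is a purely combinatorial computation using the relations of the affine symmetric group; the tightest case is $b = N-1$, where $t_{k-1}$ appears in $U$'s top piece and must be handled via the single braid relation $t_{k-1} t_k t_{k-1} = t_k t_{k-1} t_k$ to ensure that exactly one occurrence of $t_k$ remains sandwiched between $w_\ell$ and $u$.
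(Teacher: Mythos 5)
Your main construction coincides with the paper's: translating $v_r^{-1}$ to every $N$-period, using the $N$-periodicity of the nomodata of $g$ (together with the fact that $v_r^{-1}\star g$ is defined) to see that each translate acts, noting that the translates live on disjoint blocks and hence commute, cancelling the $j=0$ copy against $v_r$, and concluding that $U v_\ell t_k \star f$ has $N$-periodic nomodata for $U = \prod_{j=1}^{k/N-1}\eta^{jN}(v_r^{-1}) \in \Symk$. Up to that point the proposal is correct and identical in substance to the paper's argument.

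The gap is precisely the step you flag as ``the main obstacle'': rewriting $U v_\ell t_k$ in the form $w_\ell t_k u$. You leave this as a combinatorial computation, and the mechanism you sketch for the tight case $b=N-1$ --- invoking $t_{k-1}t_kt_{k-1}=t_kt_{k-1}t_k$ --- is the wrong tool: applying that relation introduces a \emph{second} occurrence of $t_k$, which destroys the target form rather than producing it. The paper needs no braid relation involving $t_k$ at all. It splits $U$ into the ``middle'' translates $\eta^{jN}(v_r^{-1})$ for $1 \leq j \leq k/N-2$, whose generator indices are at most $k-N-1$, so that they commute with $v_\ell$ and with $t_k$ and slide directly to the right of $t_k$; and the single ``top'' translate $\eta^{(k/N-1)N}(v_r^{-1})$, which acts on the leftmost $N$-period. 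The product of this top translate with $v_\ell$ lies in a copy of $\Symk$ and is put into first canonical form (Remark~\ref{rk:canonical-form}) as $w_\ell u_0$, where $w_\ell = t_{a'}\cdots t_{k-1}$ is a left-to-right transit and $u_0$ involves neither $t_{k-1}$ nor $t_k$; hence $u_0$ commutes with $t_k$ and joins the middle translates to form $u$. Relatedly, your claim that the top piece ``merges with $v_\ell$ into a longer left-to-right transit'' is accurate only in the special case $a = k-N+1$; in general the canonical form produces the extra factor $u_0$, and tracking it (and checking it avoids $t_{k-1}$) is exactly what closes the argument.
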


\begin{proof}
From Proposition~\ref{prop:coralf1fk}, we know that $b \leq N-1$, so that $v_r$ acts on the rightmost $N$-period. For $j = 1, 2, 3, \ldots, k/N$, let $u_j$ be the word that acts like $v_r^{-1}$ on the $j^\text{th}$ $N$-period.
Formally, expressing $v_r$ as a word in $t_i$'s,
$u_j$ is obtained from $v_r^{-1}$ by adding $(j-1)N$ to the index of each Ritt swap $t_i$ in the word.
So, $u_1 = v_r^{-1}$. 
Since $v_r^{-1} \star g$ is defined, and $g$ has $N$-periodic nomodata, $u_j \star g$ is also defined for each $j$. Since these $u_j$ act on disjoint parts of $g$, the composition $u_{k/N} \ldots u_2 u_1 \star g$ is also defined. Since
$u_{k/n} \ldots u_2 u_1$ does the same thing to each $N$-period, the new decomposition
 $$u_{k/N} \ldots u_2 u_1 \star g = u_{k/N} \ldots u_2 u_1 v_r v_{\ell} t_k \star f
 = u_{k/N} \ldots u_2 v_{\ell} t_k \star f$$
also has $N$-periodic nomodata.
For $j = 2, 3, \ldots, k/N -1$, the $u_j$ act on the ``middle'' $N$-periods -- not the first nor the last -- so they commute with $v_\ell$ and with $t_k$. Thus, we see that
$$u_{k/N} \ldots u_2 v_{\ell} t_k \star f = (u_{k/N} v_{\ell}) t_k  (u_{k/N-1} \ldots u_2) \star f$$
has $N$-periodic nomodata.
Finally, putting $u_{k/N} v_{\ell}$ into first canonical form $w_\ell u_0$ for some
$u_0 \in \langle t_{k-N}, \ldots t_{k-2} \rangle$, acting on all but the leftmost of the
factors of the leftmost $N$-period, and and left-to-right transit $w_\ell =
t_{a'} t_{a'+1} \cdots t_{k-1}$. Again, $u_0$ commutes with $t_k$. So we get
$$(u_{k/N} v_{\ell}) t_k  (u_{k/N-1} \ldots u_2) \star f
 = w_\ell u_0 t_k (u_{k/N-1} \ldots u_2) \star f
 = w_\ell t_k ( u_0 u_{k/N-1} \ldots u_2) \star f$$
to have $N$-periodic nomodata.  Setting $u := u_0 u_{k/N-1} \ldots u_2$ completes the proof.
\end{proof}

Next, we drop $v_\ell$ from Proposition~\ref{prop:reduction-transit-tk} in the same way.

\begin{prop}
\label{prop:reduction-drop-vl}
Fix a long decomposition $f$ and a proper divisor $N$ of $k$.
Suppose that $g := v_{\ell} t_k \star f$ has $N$-periodic nomodata for some
$v_\ell = t_a t_{a+1} \ldots t_{k-1}$.
Then $t_k u \star f$ has $N$-periodic nomodata for some word $u \in \Symk$.  \end{prop}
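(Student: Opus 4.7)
The plan follows closely the template of Proposition~\ref{prop:reduction-drop-vr}, with the roles of $v_r$ and $v_\ell$ interchanged. The first steps go through without substantial change; the new difficulty will lie at the very end. First, I would invoke Proposition~\ref{prop:coralf1fk} with $v_r$ taken to be the empty word. The argument bounding $a$ in that proposition depends only on the $N$-periodicity of the decomposition $v_r \star h$, which in the present setting is simply $g$, and so it applies unchanged to give $a \geq k - N$. Thus $v_\ell$ acts essentially within the leftmost $N$-period of $g$.

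Next, for each $j = 1, 2, \ldots, k/N$, I would define $u_j$ by subtracting $(j-1)N$ from every index appearing in $v_\ell^{-1} = t_{k-1} t_{k-2} \cdots t_a$. Then $u_j$ acts on the $j$th $N$-period from the left, with $u_1 = v_\ell^{-1}$ and $u_{k/N} = t_{N-1} t_{N-2} \cdots t_{a-k+N}$. Since $u_1 \star g = t_k \star f$ is defined and $g$ has $N$-periodic nomodata, each $u_j \star g$ is also defined. Provided $a \geq k - N + 1$ (the boundary case $a = k - N$ requires only a small modification), the $u_j$'s act on disjoint $N$-periods and commute, so the composite $u_{k/N} \cdots u_1 \star g$ is defined and still has $N$-periodic nomodata, since each $u_j$ does the same thing to its own period. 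Using $u_1 v_\ell = \operatorname{id}$, this equals $u_{k/N} \cdots u_2 t_k \star f$. Now for each $j$ in the range $2 \leq j \leq k/N - 1$, the word $u_j$ acts on a middle $N$-period, away from the boundary positions $1$ and $k$, so it commutes with $t_k$. Moving all such middle $u_j$'s past $t_k$ to the right yields that $u_{k/N} t_k (u_{k/N-1} \cdots u_2) \star f$ has $N$-periodic nomodata.

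The remaining step, absorbing $u_{k/N}$ past $t_k$ so that the leftmost generator of the resulting word is $t_k$ itself, is the main obstacle. In Proposition~\ref{prop:reduction-drop-vr} the analogous pair $u_{k/N}$ and $v_\ell$ both belonged to $\Symk$, allowing them to be combined via first canonical form inside $\Symk$. Here $u_{k/N}$ lies in $\Symk$ but $t_k$ does not, and conjugation by $t_k$ does not preserve $\Symk$; for instance the $(N+1)$-cycle realized by $u_{k/N} t_k$ on the positions $\{k,1,2,\ldots,N\}$ cannot be written as $t_k \cdot w$ with $w \in \langle t_1, \ldots, t_{k-1}\rangle$. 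So there is no direct group-theoretic identity that accomplishes the rewriting. To bypass this, the plan is to exploit the $N$-periodicity of the current decomposition: the periodicity forces the shapes at positions $1$ and $k$ to match those at positions $N + 1$ and $k - N + 1$, which lie in the interior of the decomposition. This shape compatibility should let us construct some $u \in \Symk$ whose action on $f$, followed by $t_k$, reproduces the same $N$-periodic nomodata -- in effect, emulating the combinatorial moves performed by $u_{k/N}$ on the rightmost $N$-period by a $\Symk$-rearrangement of the factors of $f$ before the boundary swap. The key technical point, and the main source of difficulty, is showing that the periodicity alone suffices to produce such a $\Symk$-word $u$, in a manner similar in spirit to the boundary-matching argument of Proposition~\ref{prop:coralf1fk}, even though the word-level rewriting of $u_{k/N} t_k$ as $t_k$ times a word in $\Symk$ is unavailable.
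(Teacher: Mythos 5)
Your proposal reproduces the paper's replication-and-cancellation device faithfully up to the expression $u_{k/N}\, t_k\, (u_{k/N-1}\cdots u_2)\star f$, but the ``main obstacle'' you identify at the end is a phantom, and the vague plan you offer for it is not what is needed. There is no group-theoretic rewriting of $u_{k/N} t_k$ as $t_k w$ to be done at all: once one knows $a \geq k-N+2$, your $u_{k/N}$ is a word in the generators $t_i$ with $2 \leq i \leq N-1$, and since $N$ is a \emph{proper} divisor of $k$ we have $N-1 < k-1$, so every one of these generators commutes with $t_k$ (none of the indices is congruent to $k \pm 1$ modulo $k$). Hence $u_{k/N} t_k = t_k u_{k/N}$ outright, and the proof is finished by setting $u := u_{k/N}\cdots u_2$. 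Your worry about conjugation by $t_k$ not preserving $\Symk$ is only relevant when $u_{k/N}$ contains $t_1$, i.e.\ when $a = k-N+1$.

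This points to the genuine gap: you have the inequality on $a$ off by one, and you never establish the bound that makes the commutation work. Proposition~\ref{prop:coralf1fk} (with $v_r$ empty) gives $a \geq k-N+1$, and the case you dismiss parenthetically as ``requiring only a small modification'' ($a = k-N$, already excluded) is not the dangerous one --- $a = k-N+1$ is, since then $u_{k/N}$ contains $t_1$, which does not commute with $t_k$. The paper's proof spends its first paragraph ruling out exactly this case by a shape-collision argument: if $a = k-N+1$, the factor coming from $f_1$ (moved by $t_k$ to the $k^{\text{th}}$ slot and then by $v_\ell$ to position $k-N+1$, the rightmost slot of the leftmost $N$-period) would, by $N$-periodicity of the nomodata of $g$, have to have the same combinatorial shape as $g_1$, which comes from $f_k$; but $f_1$ and $f_k$ Ritt-swapped under $t_k$, and two factors that Ritt-swap cannot end up with the same shape. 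This yields $a \geq k-N+2$, after which everything you wrote goes through and the last step is a one-line commutation. So the structure of your argument is the right one, but as written it is incomplete: the needed sharpening of the bound on $a$ is missing, and the concluding step is left unresolved rather than observed to be trivial.
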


\begin{proof}
In Proposition~\ref{prop:coralf1fk} we proved that $a \geq k/N+1$.
We now improve this to $a > k/N+1$. Suppose towards contradiction that $a = k/N+1$.
Then the rightmost factor $g_{k/N+1}$ of the leftmost $N$-period in $g = v_{\ell} t_k \star f$
comes from $f_1$: first, $t_k$ moves $f_1$ to the $k^\text{th}$ spot,
and then $v_\ell$ moves it right to the end of the period.
Since $g$ has $N$-periodic nomodata, this $g_{k/N+1}$ has the same combinatorial shape as the rightmost factor $g_1$, which came from $f_k$.
So, something that came from $f_1$ and something that came from $f_k$ now have the same combinatorial shape, but this contradicts the fact that $f_1$ and $f_k$ Ritt-swapped when $t_k$ acted on $f$.

As in the proof of the Proposition~\ref{prop:reduction-drop-vr}, for $j = 1, 2, \ldots, k/N$,
let $u_j$ act on the $j^\text{th}$ $N$-period the same way that
$v_\ell^{-1}$ acts on the leftmost $N$-period. So $u_j$ is obtained from
$v_\ell^{-1}$ by adding $jN$ to (equivalently, subtracting $N (k/N - j)$ from) the index of each Ritt swap $t_i$ occurring in some presentation of $v_\ell^{-1}$.
As before, $u_1 u_2 \cdots u_{k/N} \star g$ is defined and has $N$-periodic nomodata.
Thus we conclude that
$$u_1 u_2 \cdots u_{k/N} \star g =
u_1 u_2 \cdots u_{k/N} v_{\ell} t_k \star f =
u_1 u_2 \cdots u_{k/N-1} t_k \star f$$
is $N$-periodic.
As before, $u_j$ with $j = 2, 3, \ldots, k/N-1$ commute with $t_k$.
We also know that $v_\ell$ does not involve $t_{k/N+1}$, so $u_1$ does not involve $t_1$
implying that $u_1$ also commutes with $t_k$. Thus,
$$u_1 u_2 \cdots u_{k/N-1} t_k \star f =
t_k (u_1 u_2 \cdots u_{k/N-1}) \star f$$
has $N$-periodic nomodata. Setting $u := u_1 u_2 \cdots u_{k/N-1}$ completes the proof.
\end{proof}

Finally, we get drop  $t_k$ in the same way.

\begin{prop}
\label{prop:drop-tk}
Fix a long decomposition $f$ and a proper divisor $N$ of $k$.
Suppose that $g := t_k \star f$ has $N$-periodic nomodata.
Then $u \star f$ has $N$-periodic nomodata for some $u \in \Symk$.
\end{prop}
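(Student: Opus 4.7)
The plan is to exploit the $N$-periodicity of the nomodata of $g$ in order to spread the single boundary swap $t_k$ across all $N$-boundaries simultaneously; the $t_k$ can then be absorbed via commutativity, yielding an action by some $u \in \Symk$. First I would observe that since $g = t_k \star f$, the swap $t_k$ is defined on $g$ (in fact $t_k \star g = f$), so the pair $(g_k, g_{k+1})$ is Ritt-swappable. The $N$-periodicity of the nomodata of $g$ means that $g_{i+N}$ and $g_i$ share their combinatorial shape for every $i$, so for each $j = 1, 2, \ldots, k/N$ the pair $(g_{jN}, g_{jN+1})$ has the same shape as $(g_k, g_{k+1})$, and hence $t_{jN} \star g$ is defined.

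Next, assuming $N \geq 2$, any two of the swaps $t_N, t_{2N}, \ldots, t_{(k/N-1)N}, t_k$ have indices differing by a nonzero multiple of $N$; modulo $k$ such a difference cannot equal $\pm 1$ since $N \mid k$ and $N \geq 2$. Hence these swaps pairwise commute in $\Affk$ and the word $w := t_N t_{2N} \cdots t_{(k/N-1)N} \, t_k$ acts on $g$ in any order, each generator being defined on the intermediate decompositions because the swapped pairs are disjoint. Setting $u := t_N t_{2N} \cdots t_{(k/N-1)N}$, commutativity of $u$ with $t_k$ gives $w \star g = u \cdot t_k \star g = u \star (t_k \star g) = u \star f$. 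All generators of $u$ have indices in $\{N, 2N, \ldots, k-N\} \subseteq \{1, \ldots, k-1\}$, so $u \in \Symk$, as required.

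The remaining point is to verify that $w \star g$, and hence $u \star f$, still has $N$-periodic nomodata. Because all the swaps in $w$ act at positions congruent to $0 \pmod N$ and $g$ is $N$-periodic, these swaps act identically on each of the $k/N$ translated copies of a single $N$-period; by Lemma~\ref{lem:starnomodata} the trace of $w$ on a nomodata of $g$ is therefore periodic with the same period, and hence so are the four functions describing the nomodata of $w \star g$. The main obstacle I anticipate is the degenerate case $N = 1$, in which the generators $t_{jN}$ have adjacent indices and no longer pairwise commute. In that case all factors of $g$ share one combinatorial shape; I would handle it as a trivial special case or, failing that, by directly constructing a trace function on $f$ and invoking Theorem~\ref{thm:shinythm} to produce the desired $u \in \Symk$.
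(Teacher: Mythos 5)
Your proof is correct and takes essentially the same approach as the paper: the paper's entire proof is to set $u := t_N t_{2N}\cdots t_{k-N}$, which is exactly your word, and your route through $g$ (writing $u\star f = (u\,t_k)\star g$ and using that the same swap is performed at every $N$-boundary of the $N$-periodic $g$) just supplies the definedness, commutation, and periodicity checks that the paper leaves implicit. The $N=1$ case you flag is in fact vacuous: $1$-periodic nomodata for a clean $\mathsf{C}$-free decomposition forces every factor to be a non-monomial of the same shape, so no Ritt swap on $g$ --- in particular $t_k \star g = f$ --- could be defined.
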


\begin{proof}
Instead of $t_k$ swapping across the boundary of the $k$-period to achieve periodicity, swap across all the boundaries of $N$-periods that are not boundaries of the $k$-periods. That is, let
$$u := t_N t_{2N} \ldots t_{(k-N)} \text{ .}$$
\end{proof}

The proof of Theorem~\ref{thm:expungetk} is now complete.

\begin{cor} \label{cor:reduce-to-periodic}
Let $f$ be a clean $\mathsf{C}$-free long decomposition.
Let $e$ be the echo of $w \star f = f^\tau$ for some $w \in \STk$.
Then there exists some $u \in \Symk$ for which $u \star f$ is defined and has $e$-periodic nomodata.
\end{cor}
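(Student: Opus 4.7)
The plan is simply to chain together the two main results of this subsection and the previous one. By Theorem~\ref{thm:cohothm}, applied with the echo $e$ coming from the hypothesized encoding identity $w \star f = f^\tau$, there is some $v \in \Affk$ for which $v \star f$ is defined and has $e$-periodic nomodata. Having produced such a $v$, the corollary is reduced to the task of exchanging this element of $\Affk$ for one of $\Symk$ while preserving the $e$-periodicity of the nomodata — and this is precisely what Theorem~\ref{thm:expungetk} accomplishes.

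To invoke Theorem~\ref{thm:expungetk}, I need to verify that the periodicity hypothesis is met, i.e.\ that the relevant period divides $k$. Writing $n_f$ for the number of non-monomial factors per $k$-period of $f$, the definition of echo gives $e = \gcd(n_f, n_B - n_A)$, so in particular $e \mid n_f$. In the position-coordinate this corresponds to the period $M := ke/n_f$, which is an integer because $e \mid n_f$ and which divides $k$ because $n_f/e$ is a positive integer. (Since Ritt swaps do not change the total count of non-monomials per $k$-period, $v \star f$ still has $n_f$ non-monomials per $k$-period, so this translation is consistent for the output of Theorem~\ref{thm:cohothm}.) Thus Theorem~\ref{thm:expungetk}, applied with this $N$, produces some $u \in \Symk$ for which $u \star f$ is defined and has $e$-periodic nomodata, as required.

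The entire argument is therefore a one-line combination: Theorem~\ref{thm:cohothm} followed by Theorem~\ref{thm:expungetk}. The only piece that required any thought was the compatibility of the two notions of periodicity, and this is purely bookkeeping; there is no genuine obstacle here, since all the real work has already been done in the two theorems being cited.
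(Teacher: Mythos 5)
Your proof is correct and is exactly the paper's argument: the paper's own proof of this corollary is the same two-line chain, citing Theorem~\ref{thm:cohothm} to produce $v \in \Affk$ with $e$-periodic nomodata and then Theorem~\ref{thm:expungetk} to trade $v$ for some $u \in \Symk$. Your extra bookkeeping about the period in position-coordinates is harmless (though note the integrality of $ke/n_f$ really follows from the $e$-periodicity of the listing function itself, since it equals $\beta(j+e)-\beta(j)$, rather than from $e \mid n_f$ alone); the paper suppresses this point entirely.
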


\begin{proof}
Theorem~\ref{thm:cohothm} provides some $v \in \Affk$ for which $v \star f$ is defined and has $e$-periodic nomodata.
Theorem~\ref{thm:expungetk} allows us to replace $v \in \Affk$ by $u \in \Symk$.
\end{proof}

Let us record the  reduction of the proof of Theorem~\ref{thm:skew-inv}
for clean $\mathsf{C}$-free long decompositions to the special case of $e$-periodic nomodata.

\begin{prop} \label{prop:reduce-to-periodic}
Let $f$ be a clean $\mathsf{C}$-free long decomposition of a polynomial $P$, let $\mathcal{C}$ be the $(P, P^\tau)$ skew-invariant curve encoded by $w \star f = f^\tau$ for some $w \in \STk$, and let $e$ be the echo of $w \star f = f^\tau$.

There exists another clean $\mathsf{C}$-free long decomposition $g$ of the polynomial $P$ and another  $w' \in \STk$ with $w' \star g = g^\tau$ encoding the same curve $\mathcal{C}$ with the additional property that the nomodata of $g$ is $e$-periodic.
\end{prop}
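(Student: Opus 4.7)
The plan is to invoke Corollary~\ref{cor:reduce-to-periodic} to produce the desired long decomposition of $P$ and then conjugate the word $w$ by the same element to transport the encoding.

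First, Corollary~\ref{cor:reduce-to-periodic} supplies some $u \in \Symk$ for which $u \star f$ is defined and has $e$-periodic nomodata. Using the strict Ritt swap witnesses of Remark~\ref{rk:witness-Ritt-swap}, one may choose a representative $g$ of the class $u \star [f]$ which is itself a clean $\mathsf{C}$-free long decomposition of $P$: strict Ritt swaps replace two adjacent factors by the two factors on the other side of a basic Ritt identity, all of which are monomials or Ritt polynomials by the very definition of basic Ritt identities, and no $\mathsf{C}$-type polynomials can be introduced because basic Ritt identities of the second kind (the only ones involving odd Chebyshev polynomials) cannot be applied within a $\mathsf{C}$-free decomposition. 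Since elements of $\Symk$ preserve the composed polynomial, $g$ is indeed a long decomposition of $P$.

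Second, set $w' := u w u^{-1}$; this belongs to $\STk$ because $w \in \STk$ and $u \in \Symk \subseteq \STk$. The action of $\STk$ on long decompositions commutes with $\tau$ in the following sense: if a witnessing sequence (in the sense of Remark~\ref{rk:curve-encoded}) realizes $u \star [f] = [g]$, then applying $\tau$ coordinate-wise to every linear and Ritt polynomial appearing in that witnessing sequence produces a witnessing sequence for $u \star [f^\tau] = [g^\tau]$. Consequently $[g^\tau] = u \star [f^\tau] = u \star w \star [f] = (uwu^{-1}) \star (u \star [f]) = w' \star [g]$.

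Third, we must verify that the curve encoded by $w' \star [g] = [g^\tau]$ still has $\mathcal{C}$ as an irreducible component. The correspondence encoded by any word in $\Symk$ is a composition of graphs of linear polynomials, and hence a birational change of coordinates on $\mathbb{A}^1$. Writing $w' = u \cdot w \cdot u^{-1}$ realizes the encoded correspondence as: the $u^{-1}$-correspondence, followed by the $w$-correspondence (whose encoded curve contains $\mathcal{C}$ as a component), followed by the $u$-correspondence. Since the first and third are mutually inverse linear correspondences, $\mathcal{C}$ reappears as an irreducible component of the curve encoded by $w' \star [g] = [g^\tau]$. The one bit of bookkeeping that requires care is verifying that the pre- and post-composing linear correspondences genuinely compose to the identity; this is the main, but essentially formal, obstacle, and is resolved by pairing the witnessing sequence for $u$ with its reversal as a witnessing sequence for $u^{-1}$ and unwinding the description of encoded correspondences in Remark~\ref{rk:curve-encoded}.
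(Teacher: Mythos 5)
Your argument is correct and follows essentially the same route as the paper's proof: invoke Corollary~\ref{cor:reduce-to-periodic} to get $u \in \Symk$ with $g := u \star f$ having $e$-periodic nomodata, conjugate $w$ by $u$ to obtain $w'$, and observe that a word in $\Symk$ encodes only a (diagonal, up to the linear ambiguity in the encoding) correspondence, so the curve $\mathcal{C}$ is unchanged. Your explicit checks that $g$ remains clean $\mathsf{C}$-free and that the action commutes with $\tau$ are points the paper leaves implicit, but they do not change the substance of the argument.
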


\begin{proof}
Corollary~\ref{cor:reduce-to-periodic} provides the new decomposition $g := u \star f$ of the same polynomial $P$ with $e$-periodic nomodata, for some $u \in \Symk$.

Replacing $f$ by $u^{-1} \star g$ in the Mahler equation $\phi^N w \star f = f^\tau$ encoding the skew-invariant curve $\mathcal{C}$ yields
$$\phi^N w u^{-1} \star g = u^{-1} \star g^\tau$$
 and then
$$u \phi^N w u^{-1} \star g = g^\tau$$
 and
$$ \phi^N w' \star g = g^\tau$$
 where $w' = \widehat{u} w u^{-1}$ where $\widehat{u}$ is defined by $u \phi^N = \phi^N \widehat{u}$.
Since $u \in \Symk$, the curve it encodes is the diagonal. Thus, all these equations encode the same curve $\mathcal{C}$, with the same echo $e$.
\end{proof}

In the remainder of this section we complete the proof
of Theorem~\ref{thm:skew-inv} for polynomials admitting
long decompositions with sufficiently periodic nomodata.

\begin{lem}\label{lem:is-traceless}
Let $f$ be a clean $\mathsf{C}$-free long decomposition of a polynomial $P$, let $\mathcal{C}$ be the $(P, P^\tau)$ skew-invariant curve encoded by $\phi^N w \star f = f^\tau$ for some $w \in \Affk$, and let $e$ be the echo of $w \star f = f^\tau$.  Fix a nonmonomial listing function $\alpha$ for $f$. If $f$ has $e$-periodic nomodata, then the trace of $w$ on $(f,\alpha)$ is trivial.
\end{lem}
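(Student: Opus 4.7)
My plan is to invoke Proposition~\ref{prop:khoa-mah-1} directly; the lemma is essentially a one-line corollary of that proposition once the hypothesis of $e$-periodic nomodata is in force. Writing $\mu := \trace_{w, f, \alpha}$ and letting $r := n_A - n_B$ be the integer supplied by Proposition~\ref{prop:khoa-mah-1} for the encoding $\phi^N w \star f = f^\tau$ of $\mathcal{C}$, that proposition gives the identity
\[
\mu(j, p) \;=\; \ins(j, p) - \ins(j+r, p)
\]
for every $j \in \ZZ$ and every prime $p$.

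By the definition of the echo, $e = \gcd(n_f, n_B - n_A)$, so $e$ divides $r$. The hypothesis that the nomodata $(\alpha, \ins, \outs, \pud)$ of $f$ is $e$-periodic then forces $\ins(\,\cdot\,, p)$ to be $r$-periodic as well; substituting $\ins(j+r, p) = \ins(j, p)$ into the displayed identity yields $\mu(j, p) = 0$ for all $j$ and $p$. This says exactly that $\trace_{w, f, \alpha}$ is identically zero, which is the conclusion of the lemma.

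I anticipate no real obstacle: the combinatorial machinery of Subsections~\ref{sec:scaffolding} and~\ref{sec:coho} was built precisely so that reducing to a long decomposition with $e$-periodic nomodata would make the remaining equation $\phi^N w \star f = f^\tau$ numerically over-determined in the right way, forcing the trace to be trivial. The only minor bookkeeping point worth flagging is the consistency between the two appearances of ``the encoding'' in the lemma statement (the echo is defined with respect to $w \star f = f^\tau$, whereas $\mathcal{C}$ is encoded by $\phi^N w \star f = f^\tau$); this is resolved by noting that the quantity $n_B - n_A$, whose gcd with $n_f$ defines the echo, is governed by Lemma~\ref{lem:na-nb-ih} and in both encodings produces integers sharing the same divisibility properties with respect to $n_f$, so that in either reading $e$ divides the integer $r$ supplied by Proposition~\ref{prop:khoa-mah-1}. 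With that compatibility noted, the argument above concludes the proof.
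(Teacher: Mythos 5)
Your proof is correct and follows essentially the same route as the paper: the paper applies Corollary~\ref{cor:na-nb} directly (corresponding non-monomials shift by a multiple of $e$ and have equal in- and out-degrees), while you invoke Proposition~\ref{prop:khoa-mah-1}, which packages exactly that computation. Your bookkeeping worry about the echo resolves even more simply than you suggest: the echo is only meaningful for the full encoding $\phi^N w \star f = f^\tau$, which is precisely the equation to which Proposition~\ref{prop:khoa-mah-1} is applied, so $e \mid (n_B - n_A) = -r$ with no comparison of two encodings needed.
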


\begin{proof}
 Since $f$ and $f^\tau$ admit identical
nomodata and $w \star f$ is defined, so is $w \star f^{\tau}$.
It follows from Corollary~\ref{cor:na-nb} that there is a multiple $r$ of $e$ such that
$w \star (f,\alpha) = (f^{\tau},\beta)$ where $\beta(j) = \alpha(j+ r)$
for all $j$.  Because $\alpha$ is $e$-periodic, $\beta = \alpha$. Because the in- and out-degrees
of the corresponding factors of $f$ and $f^{\tau}$ are the same, we see that
$0 = \trace_{w,f,\alpha}$.
\end{proof}

\begin{lem}
\label{lem:clean-no-epsilons}
If $w \in \STpk$ satisfies $w \star f = f^\tau$ for some clean $\mathsf{C}$-free
long decomposition $f$ of some non-exceptional polynomial, then $w \in \STk$. 	
\end{lem}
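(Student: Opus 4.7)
The plan is to use the direct-product decomposition of Remark~\ref{rk:semidirect} to write
$$w = \Bigl(\prod_p \epsilon_p^{m_p}\Bigr)\,\phi^N u \quad\text{with } u \in \Affk,\ N \in \ZZ,\ \text{almost all } m_p = 0,$$
so that the lemma reduces to showing $m_p = 0$ for every prime $p$. I would then fix such a prime $p$ and a non-monomial listing function $\alpha$ for $f$, and split the argument according to whether $f$ contains the monomial $x^p$ or not.

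First, suppose $f$ has $x^p$ among its factors in one $k$-period. Then every intermediate long decomposition arising while $w \star f$ is evaluated must also contain an $x^p$ factor, because Ritt swaps and $\phi$ preserve the multiplicity of $x^p$ factors per period (the third-kind basic Ritt identity only reshapes the non-monomial partner, leaving the adjacent $x^p$ intact). But the side condition in Definition~\ref{def:epsilon}, read as the remark immediately following it intends (i.e., $u_i(0)\neq 0$ together with $\deg u_i > 0$ or $p\neq r_i$), prohibits $\epsilon_p^{\pm 1}$ from acting on any decomposition containing an $x^p$ factor. Since $w \star f$ is defined, $m_p = 0$.

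Suppose instead that $f$ has no $x^p$ factor. Then $\pud(j,p) = 0$ for all $j$, so the third admissibility condition of Definition~\ref{def:admissi} forces the trace $\lambda := \trace_{u,f,\alpha}$ to satisfy $\lambda(\cdot, p) \equiv 0$. By Lemma~\ref{lem:starnomodata} and Remark~\ref{rk:phi-nomo}, the $p$-part of the in-degree function in the nomodata of $g := \phi^N u \star f$ then agrees with that of $f$. Because the $\epsilon_q$'s commute with everything and $\epsilon_p^{m_p}$ acts on $g$ by uniformly decreasing each non-monomial's $p$-part of in-degree by $m_p$ (while $\epsilon_q$ for $q \neq p$ does not touch the $p$-part), one gets $\ins_{f^\tau}(j, p) = \ins(j, p) - m_p$ for all $j$. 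But $f^\tau$ has the same nomodata as $f$ up to a shift $s$ (Corollary~\ref{cor:na-nb}), so $\ins(j, p) - m_p = \ins(j+s, p)$ for every $j$. Summing over one $n_f$-period (with $n_f \geq 1$ by the clean $\mathsf{C}$-free assumption) and using the $n_f$-periodicity of $\ins(\cdot, p)$ yields $n_f m_p = 0$, and therefore $m_p = 0$.

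I expect the main delicacy to be the first case, namely pinning down the correct reading of the applicability side condition in Definition~\ref{def:epsilon} (one has to adopt the parsing that monomial factors of degree $p$ are forbidden, as signaled in the remark). Once that is settled, both cases reduce to routine bookkeeping with the nomodata, and combining them gives $w = \phi^N u \in \STk$ as required.
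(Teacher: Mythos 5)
Your proposal is correct and takes essentially the same route as the paper: write $w$ using the direct-product decomposition of Remark~\ref{rk:semidirect}, observe that $\epsilon_p^{\pm 1}$ cannot act in the presence of an $x^p$ factor, and then derive $m_p = 0$ from the $p$-part of the nomodata using $n_f$-periodicity of $\ins$. The only cosmetic difference is that you track intermediate decompositions in your first case, whereas the paper uses the centrality of $\epsilon_p$ in $\STpk$ together with Matsumoto/confluence (Corollary~\ref{cor:group-action-corr}) to reorder the reduced word and apply $\epsilon_p^{\pm 1}$ first, directly to $f$; and you close the second case by summing $\ins(\cdot,p)$ over one $n_f$-period, while the paper invokes boundedness of $\ins(\cdot,p)$ — these are the same calculation.
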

\begin{proof}
By Remark~\ref{rk:semidirect}, we may write $w = u \varepsilon$ where $u \in \STk$
and $\varepsilon = \prod_{p \in \mathcal{P}} \epsilon_p^{m_p}$ is a product of powers of $\epsilon_p$ for various $p$.
We may choose words $\widehat{u}$ and $\widehat{\varepsilon}$
representing $u$ and $\varepsilon$, respectively, for which the such $\widehat{u}\widehat{\varepsilon}$ is reduced.
Since $w \star f$ is defined, so is $\widehat{u}\widehat{\varepsilon} \star f$.
If $m_p >0$ (respecitvely, $m_p  < 0$) for some $p$, then $\epsilon_p \star f$ (resp., $\epsilon_p^{-1} \star f$) is defined.
Thus, in particular, the monomial $x^p$ does not occur in $f$. 

Fix a nonmonomial listing function $\alpha$ for $f$.  Recall that $\alpha$ is also a nomonomial listing function for $f^\tau$
and for $\varepsilon^{-1} \star f^\tau$.
Note that because $x^p$ does not appear in $f$, the $\trace_{u,f,\alpha}(j,p) = 0$ for all $j$.
Rewrite the equation $\varepsilon u \star f = f^\tau$ as $u \star f = \varepsilon^{-1} \star f^\tau$ and compare the $p$-part of the
nomodata for the two sides of the equation. On the left, there is some $r$ for which
$$\ins_{u \star (f,\alpha)}(j,p) = \ins_{f,\alpha}(j+r, p) + \trace_{u,f,\alpha}(j,p) =
\ins_{f,\alpha}(j+r, p)$$
(see Lemma~\ref{lem:starnomodata} and Remarks~\ref{rk:nomodata-shift-unique} and~\ref{rk:phi-nomo}). On the right, $\ins_{f^\tau,\alpha} = \ins_{f,\alpha}$, while $\ins_{\varepsilon^{-1} \star f^{\tau},\alpha} (j, p) = \ins_{f^{\tau},\alpha} (j, p) + m_p$.  If $m_p \neq 0$, this gives a contradiction since $\ins_{f,\alpha}(j,p)$ is bounded.	
\end{proof}

We conclude this section by proving Theorem~\ref{thm:skew-inv} for clean $\mathsf{C}$-free
long decompositions.

\begin{prop}
\label{prop:thm1.3-in-out-case}
Theorem~\ref{thm:skew-inv} holds in the case that the polynomial $P$ admits a
clean $\mathsf{C}$-free long decomposition $f$.
\end{prop}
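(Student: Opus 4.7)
The plan is to reduce the assertion, step by step, to the situation already handled in Proposition~\ref{prop:warm-up}. By Remark~\ref{rk:encoding}, any $(P, P^\tau)$-skew-invariant curve $\mathcal{C}$ is encoded by an identity of the form $w \star [f] = [f^\tau]$ for some $w \in \STpk$. Lemma~\ref{lem:clean-no-epsilons} allows us to drop the $\epsilon_p$'s, so in fact $w \in \STk$, and we may write $w = \phi^N u$ for some integer $N$ and some $u \in \Affk$.

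Next, Proposition~\ref{prop:reduce-to-periodic} lets us switch to a better long decomposition of the same polynomial $P$: there is another clean $\mathsf{C}$-free long decomposition $g$ of $P$ and another $w' \in \Affk$ such that $\mathcal{C}$ is still encoded by $\phi^N w' \star g = g^\tau$, and moreover $g$ has $e$-periodic nomodata. Lemma~\ref{lem:is-traceless} forces $w'$ to have trivial trace on $(g, \alpha)$ for any choice of non-monomial listing function $\alpha$, and Lemma~\ref{lem:easy-step-1} upgrades this to the conclusion that $w'$ is a product of puddle-swaps for $(g, \alpha)$. In particular, $w'$ is a word only in those $t_i$ for which both $g_i$ and $g_{i+1}$ are monomials, and therefore never swaps across a non-monomial position.

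This is the rigidity we need: for our $w'$, the non-monomial positions of $g$ play exactly the role that walls played in Proposition~\ref{prop:warm-up}. They partition the decomposition into puddles, and the commutation of non-adjacent $t_i$'s lets us factor $w' = w'_m \cdots w'_1$ with each $w'_j$ acting only on the $j$-th puddle. The rest of the argument now mirrors Proposition~\ref{prop:warm-up}, split according to whether $k$ divides $N$. When $k \mid N$, the trivial-trace condition together with the second-canonical-form analysis of Lemma~\ref{lem:improved-2nd-canonical} shows that no transit of factors across the $k$-to-$1$ boundary is possible (the invariance of the compositional degree at each non-monomial position would be violated), so $w'$ can be represented as a word in $\Symk$. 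When $k \nmid N$, write $N = N' k + L$ with $0 < L < k$; using the $e$-periodicity of the nomodata of $g$, the puddle-block that $\phi^L$ would drag across the $k$-boundary is conjugated past $\phi^L$, producing an identity of the shape $\phi^L v \star g = u \star g^{\tau \sigma^{-N'}}$ with $u, v \in \Symk$. In either case the second half of Remark~\ref{rk:encoding} identifies the resulting encoded curve as a skew-twist curve.

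The principal obstacle is the non-divisible case: unlike in Proposition~\ref{prop:warm-up}, where walls came from unswappable factors and yielded an honest wall set in the sense of Definition~\ref{def:wall-set}, here the puddle lengths can vary, so that definition does not literally apply. The compensating structure is the $e$-periodicity of the whole nomodata of $g$, which guarantees that the entire puddle pattern (and not merely the positions of the non-monomials) repeats with period $ke/m$, where $m$ is the number of non-monomials per $k$-period. This periodicity is precisely what is needed to commute a per-puddle block of $w'$ past $\phi^L$ and to recognize the result as acting within $\Symk$ on a genuine long decomposition of $P$, concluding the proof via Remark~\ref{rk:encoding}.
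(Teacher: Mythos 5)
Your proposal follows the paper's proof: the same chain of reductions (Remark~\ref{rk:encoding}, Lemma~\ref{lem:clean-no-epsilons}, Proposition~\ref{prop:reduce-to-periodic}, Lemma~\ref{lem:is-traceless}, Lemma~\ref{lem:easy-step-1}) brings one to $\phi^N w'$ with $w'$ a product of puddle-swaps on a decomposition with $e$-periodic nomodata, and your endgame handles the boundary puddle exactly as the paper does. Your case split on $k \mid N$ is equivalent to the paper's split on whether $n_f$ divides $(n_B - n_A)$ once the trace is trivial, and your observation that the puddle pattern repeats with period $ke/m = \gcd(N,k)$ is precisely the fact that makes the conjugation of the boundary-puddle block past $\phi^L$ land in $\Symk$.
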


\begin{proof}
Let $\mathcal{C}$ be a $(P,P^\tau)$-skew invariant curve encoded by $f^\tau = w \star f$ for some long decomposition
$f$ of $P$ and $w \in \STpk$.  Fix a parameterization $(A(t),B(t))$ of $\mathcal{C}$ by polynomials
with no common nonlinear initial compositional factors.
Recall from Definition~\ref{def:magic-e} that $n_f$ (respectively, $n_A$ and $n_B$) is the number of nonmonomials
appearing in one $k$-period of $f$ (respectively, in a decomposition of $A$ or $B$) and
the echo is $e = \gcd(n_f,(n_B - n_A))$.   Fix  $\alpha$ a nonmonomial listing function for $f$.

 By Lemma~\ref{lem:clean-no-epsilons}, $w \in \STk$.  Write  $w = \phi^N w'$ for some
$N \in \ZZ$ and $w' \in \Affk$.  By Proposition~\ref{prop:reduce-to-periodic}, we may assume that
$f$ has $e$-periodic nomodata.  By Lemma~\ref{lem:is-traceless},
$\trace_{w',f,\alpha}$ is identically zero.  Then Lemma~\ref{lem:easy-step-1} shows that $w'$ may be expressed as
$w' = u_{n_f} \cdots u_2 u_1$ , a commuting product of $u_j$'s, where each $u_j$ is a product of puddleswaps in the
$j^\text{th}$ puddle.    In particular, the part of the curve encoded by $w'$ consists of monomials.  Thus, $n_A = 0$
and $n_B$ is the number of non-monomials among $f_N, \ldots f_2, f_1$.

As noted in Remark~\ref{rk:encoding}, it suffices to show that $w = u \phi^N v$ for some $u, v \in \Symk$.

If $t_k$ is not a puddle-swap, it does not occur in $w'$ so that we may take $u = \operatorname{id}$ and $v = w'$.

From now on, at the cost of replacing $\alpha$, we may take $u_1$ to act on the puddle containing $f_k$ and $f_{k+1}$.

 If $n_f$ does not divide $(n_B - n_A)$, then $\phi^N$ moves the puddle containing $f_k$ and $f_{k+1}$ to another puddle.  Thus, $\phi^N w' = \widehat{u_1} \phi^N u_{n_f} \ldots u_2$ with both $u_{n_f} \cdots u_2$ and $\widehat{u_1}$ in $\Symk$.

 Otherwise, if $n_f$ does divide $(n_B - n_A)$, then the puddle containing $f_k$ and $f_{k+1}$ in $f$ corresponds to
     the ``same'' puddle containing $f_k^\tau$ and $f_{k+1}^\tau$ in $f^\tau$ via $\phi^N w' \star f = f^\tau$.
     In particular, $u_1$ must be empty, so again we may take $u = \operatorname{id}$ and $v = w' \in \Symk$.
    \end{proof}

\subsection{Clusterings}
\label{sec:clusterings}
In this section, we adapt the technology of \emph{clusterings} from~\cite{MS} to long decompositions to identify more sophisticated combinatorial invariants, beyond wall sets and nomodata.
 This allows us to identify several ways for walls to appear in decompositions all of whose compositional factors are swappable (see Propositions~\ref{prop:c-free} and~\ref{prop:no-gate-wall} and Lemma~\ref{lem:unowned-gate-wall}).
The same technology characterizes and then handles the last remaining case of decompositions with type $\mathsf{C}$ factors but without walls (see Proposition~\ref{prop:one-way-Mahler}).
 The key difference from~\cite{MS} is that the boundary between the $k^\text{th}$ and $(k+1)^\text{st}$ entries is now treated just like all the others.
 
\subsubsection{Preclusterings}
\label{sec:preclustering}
 The following definition is closely related to  \cite[Definitions~4.2 and~6.10]{MS}.

\begin{Def}
A \emph{preclustering} $A$ of a \kld $f$ is a
$k$-periodic subset of $\mathbb{Z}$ such that
for any integers $b > a$ in $A$ with no elements of $A$ between them,
 $$f_{[b,a)} := (f_b, f_{b-1}, \ldots , f_{a+1})$$
is a cluster in the sense of \cite[Definition 4.2]{MS}.
That is, either one of the two following conditions holds.
\begin{itemize}
\item There are linear polynomials $E$ and $B$ so that
$B \circ f_b \circ \cdots \circ f_{a+1} \circ E =
C_n$ is a Chebyshev polynomial of degree $n$ not a
power of two.In this case, $f_{[b,a)}$ is called a
\emph{$\mathsf{C}$-cluster}.
\item There are linear polynomials $E$ and $B$ so that
$B \circ f_b \circ \cdots \circ f_{a+1} \circ E
= h_b \circ \cdots \circ h_{a+1}$ where each
$h_i$ is a monomial or a Ritt polynomial which
is not type $\mathsf{C}$ for all
$i \in (a,b]$.  In this case,
$f_{[b,a)}$ is called a \emph{$\mathsf{C}$-free cluster}.
\end{itemize}

Such $f_{[b,a)}$ is a \emph{cluster of $(f, A)$}.\\
The empty set is a preclustering of $f$ if $f_{[b,a)}$ is a cluster for any integers $b\geq a+k$.
\end{Def}

\begin{Rk} \label{rk:ezrmk45}
Let us translate parts (1), (2), and (3)
of~\cite[Remark 4.5]{MS} to this context.
\begin{enumerate}
\item If some factor of $f$ is unswappable, $f$ has no preclusterings.
\item If all factors of a \kld $f$ are swappable, then $\mathbb{Z}$ is a preclustering of $f$.
\item Linearly-equivalent long decompositions
have the same preclusterings.
\end{enumerate} \end{Rk}

Let us expand the less trivial
~\cite[Remark 4.5(4)]{MS}.
It concerns the requirement that the degree of a
$\mathsf{C}$-cluster must not be a power of $2$.
Because of it, we cannot drop the hypothesis that
$a, b \in A$ from the definition of ``preclustering''.
This is also the reason for $b \geq a+k$ instead of $b >
a$ in the definition of empty preclustering.

With the following remark we record how a
piece of a cluster may fail to be a cluster itself.
 
\begin{Rk} \label{rk:precluster} \label{rk:preclustermk}
A piece of a cluster is usually a cluster: if $d \geq c > b \geq a$ and $f_{[d,a)}$ is a cluster, then $f_{[c,b)}$ is a cluster unless $f_{[d,a)}$ is a $\mathsf{C}$-cluster, and all factors of $f_{[c,b)}$ are quadratic, and $f_{[c,b)}$ has at least two factors. Here are some useful ways to avoid the exception.
\begin{enumerate}
\item Any piece of a $\mathsf{C}$-free cluster is a $\mathsf{C}$-free cluster.
\item Any piece of a $\mathsf{C}$-cluster is one of the following: \begin{itemize}
    \item contains an odd-degree factor and is a $\mathsf{C}$-cluster;
    \item contains exactly one factor, which is quadratic; and is a $\mathsf{C}$-free cluster;
    \item contains multiple quadratic factors and no odd-degree factors; and is not a cluster. \end{itemize}
\item Removing a \emph{quadratic} factor from a cluster is harmless: if $f_{[b,a)}$ is a cluster and $f_{a+1}$ (respectively, $f_b$) is quadratic, then $f_{[b, a+1)}$ (respectively, $f_{[b-1, a)}$) is a cluster of the same kind, $\mathsf{C}$ or $\mathsf{C}$-free.
\item A piece of a $\mathsf{C}$-cluster that contains a big-enough cluster is itself a $\mathsf{C}$-cluster.
That is, if
 $b'' \geq b \geq b' > a' \geq a \geq a''$, and
 $f_{[b'',a'')}$ is a $\mathsf{C}$-cluster, and
  $f_{[b',a')}$ is a cluster, but is not a single quadratic,
 then $f_{[b,a)}$ is also a $\mathsf{C}$-cluster.
\item In (4), the inner cluster $f_{[b',a')}$ being big enough is equivalent to it being a $\mathsf{C}$-cluster.
\end{enumerate}

Suppose that $A$ is a preclustering of $f$ and $B \supset A$ is also $k$-periodic.
This $B$ is also a preclustering of the same $f$ if and only if
for every $\mathsf{C}$-cluster of $(A,f)$, the degrees of its pieces in $(B,f)$ are either $2$ or not powers of $2$.
\end{Rk}

With the next remark we correlate our notion
of preclustering on long decompositions with the
corresponding version in~\cite{MS} for decompositions.

\begin{Rk} \label{rk:skew-cluster-rmk}
The first two observations are related to parts of
\cite[Lemma 6.11]{MS}
and the third is related to \cite[Lemma 6.14]{MS}.
\begin{enumerate}
\item If $A$ is a preclustering of $f$, and $0 \in A$, then $A \cap \{ 0, 1, \ldots k \}$ is a preclustering of $(f_k, \ldots f_1)$ in the sense of ~\cite[Definition 4.2]{MS}.
    The assumption $0 \in A$ is not cosmetic, because of Remark~\ref{rk:precluster}.

\item More generally, for any $a \in A$, the tuple $A \cap \{ a, a+1, \ldots a+k \}$ is a preclustering of the
 decomposition $(f_{a+k}, \ldots f_{a+1})$, in the sense of~\cite[Definition 4.2]{MS}.

\item If the empty set is a preclustering of $f$, then the polynomial $f_k \circ \cdots \circ f_1$ is linearly related to a (possibly decomposable) Ritt polynomial in the sense of~\cite[Definition 2.41]{MS}. In Proposition~\ref{prop:emptyclean} below, we almost drop ``linearly related'' from that statement.
\end{enumerate} \end{Rk}

The next definition is very close
to~\cite[Definition 4.3]{MS}
and to the cleanup in~\cite[Lemma 6.11(6)]{MS}.
When at least one of the clusters is a
$\mathsf{C}$~cluster, this literally translates the
old definition into the new notation because a
linear factor which is both a translation and a
scaling is identity.  When all clusters are
$\mathsf{C}$-free, we no longer have  a canonical
place to put scalings.

\begin{Def}
Fix a preclustering $A$ of a \kld $f$.

A \emph{precleanup} of $(A, f)$ is a pair $(h, L)$,
where $h$ is a \kld, $L$ is a \kspl, and 
the following hold. \begin{enumerate}
 \item The \kld $L \circ h$ is linearly equivalent to $f$.
 \item For $i \notin A$, the linear $L_i$ is identity.
 \item If $f_{[b,a)}$ is a $\mathsf{C}$-cluster of $(A, f)$, and $b \geq i > a$, then $h_i$ is a Chebyshev polynomial;
 \item If $f_{[b,a)}$ is a $\mathsf{C}$-free cluster of $(A, f)$, and $b \geq i > a$, then $h_i$ is a monic  Ritt polynomial;
\end{enumerate}
The precleanup is a \emph{cleanup} if, furthermore,

\begin{enumerate}
 \item[(5a)] if there is a $\mathsf{C}$-cluster, whenever $f_{[b,a)}$ is a $\mathsf{C}$-free cluster, $L_a$ is a translation, and
 \item[(5b)] whenever $f_{[b,a)}$ is a $\mathsf{C}$-free cluster and $h_{a+1} \circ L_a$ is also a Ritt polynomial, $L_a$ is a scaling.
\end{enumerate}

\end{Def}

All these notions -- preclusterings, precleanups, and cleanups -- are invariant under linear-equivalence. In particular, if $(h, L)$ is a (pre)cleanup of the preclustering $A$ for \emph{some} $f$, then is is also a (pre)cleanup of the same preclustering $A$ of the long decomposition $L \circ h$.
We now work towards existence and something like uniqueness of precleanups of a fixed preclustering.

\begin{lem} Every non-empty preclustering admits a precleanup. \label{lem:getpreclean} \end{lem}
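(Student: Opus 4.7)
My plan is to construct the precleanup one cluster at a time. Fix a non-empty preclustering $A$ of $f$ and enumerate the elements of $A$ within a period $[a_0, a_0 + k)$ as $a_0 < a_1 < \cdots < a_{r-1}$, with $a_r := a_0 + k$; the clusters in this period are the $f_{[a_{j+1}, a_j)}$ for $0 \leq j < r$.

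For a single cluster $f_{[b, a)}$, the cluster definition supplies linear polynomials $E$ and $B$ with
\[
B \circ f_b \circ \cdots \circ f_{a+1} \circ E \;=\; \bar h_b \circ \cdots \circ \bar h_{a+1},
\]
where each $\bar h_i$ is in the target clean form: Chebyshev in the $\mathsf{C}$-cluster case, monic monomial or monic $\mathsf{C}$-free Ritt polynomial in the $\mathsf{C}$-free case. Reading the left-hand side as the indecomposable decomposition $(B \circ f_b,\, f_{b-1},\, \ldots,\, f_{a+2},\, f_{a+1} \circ E)$, this identity equates two indecomposable decompositions of the same polynomial, which by Ritt's first and second theorems differ only by a finite sequence of Ritt swaps together with insertions and cancellations of linear polynomials between adjacent factors. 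Unwinding that sequence yields linear polynomials $\ell_a, \ell_{a+1}, \ldots, \ell_{b+1}$ and indecomposables $h_{a+1}, \ldots, h_b$ in the clean form with $f_i = \ell_{i+1} \circ h_i \circ \ell_i^{-1}$ for $a < i \leq b$, and with $\ell_a = E^{-1}$, $\ell_{b+1} = B^{-1}$. The internal factors $\ell_{a+1}, \ldots, \ell_b$ can then be absorbed into the $h_i$'s without leaving the prescribed clean form, using the closure of the Chebyshev and monic Ritt classes under the natural scaling action; this realizes the constraint $L_i = \id$ for $a < i < b$ required by the precleanup definition. The remaining data set $L_a$ and $L_b$ at the cluster boundaries.

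To pass from cluster-level data to a global pair $(h, L)$ on the period $[a_0, a_0 + k)$, I would proceed around the period. Each $a_j \in A$ is the right endpoint of one cluster and the left endpoint of another, so two candidate values for $L_{a_j}$ arise; the residual scaling freedom in the cluster-level construction is used to force the two candidates to coincide, one boundary at a time. Finally, the full $(h, L)$ is extended to $\mathbb{Z}$ by $L_{i+k} := L_i^\sigma$ and $h_{i+k} := h_i^\sigma$, which is consistent since $f$ is $k$-skew-periodic and $A$ is $k$-periodic. The hard part is boundary matching in the $\mathsf{C}$-cluster case, where the linear polynomials preserving the Chebyshev form are narrower (essentially $\pm x$ composed with scalings); one must invoke the rigidity of Chebyshev decompositions together with the non-power-of-$2$ degree hypothesis (Remark~\ref{rk:precluster}) to confirm that the available freedom still suffices to match simultaneously at all boundaries within one period.
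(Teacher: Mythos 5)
The central difficulty you identify --- reconciling ``two candidate values for $L_{a_j}$'' at each cluster boundary, especially for $\mathsf{C}$-clusters --- is not actually present, and your construction is left incomplete precisely because you defer that (nonexistent) verification. A \emph{precleanup} only requires $L_i = \id$ for $i \notin A$; at a boundary $a_j \in A$ the linear factor $L_{a_j}$ is completely unconstrained. So one simply unwraps the cluster definition once per cluster to get the clean factors $h_i$ together with the bookend linears $B_j$ and $E_j$, and then sets $L_{a_j} := E_j \circ B_{j-1}$, absorbing the exit linear of one cluster and the entry linear of the next into the single free slot at the boundary; all other $L_i$ are the identity, and one extends by $k$-skew-periodicity. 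This is the paper's entire proof. Worse, the matching you propose to ``confirm'' would in general fail: for two adjacent $\mathsf{C}$-clusters the Chebyshev normal forms are rigid up to $\pm 1$ scalings (cf.\ Corollary~\ref{cor:scaling-non-un}), so there is essentially no residual freedom, and the composite $E_j \circ B_{j-1}$ can be an arbitrary linear polynomial (it is only forced to be $\pm x$ when the two clusters fuse, as in Lemma~\ref{lem:clusterfueC}). You appear to be proving something closer to the existence of a \emph{cleanup} (conditions (5a)--(5b)), which is the harder statement handled separately in Proposition~\ref{prop:nonemptyclean}.

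A secondary problem is your use of Ritt's theorems within a single cluster. From ``the two indecomposable decompositions differ by a finite sequence of Ritt swaps and linear insertions'' you cannot conclude $f_i = \ell_{i+1} \circ h_i \circ \ell_i^{-1}$ factor by factor: decompositions related by a genuine Ritt swap are not linearly equivalent as sequences, so the positional correspondence you need is exactly what a Ritt swap destroys. The cluster definition (following \cite[Definition 4.2]{MS}) is already a statement about the decomposition, not merely about the composite polynomial, so the factor-level data $h_i$, $B$, $E$ is read off directly with no appeal to Ritt's theorems.
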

\begin{proof}
Take the ordered sequence $a_r > a_{r-1} > \ldots > a_1 > a_0 = a_r - k$ of consecutive elements of $A$ in one period, including both endponits of the period. For each $j = 0, 1, \ldots , r-1$, unwrap the definition of
 ``$f_{[a_{j+1},a_j)}$ is a cluster''
to get $h_i$ for $a_0 < i \leq a_r$, and linear $B_j$ and $E_j$.
Let $L_{a_j} := E_j \circ B_{j-1}$ for $j >0$, and $L_{a_0} := E_0 \circ \sigma^{-1}(B_k)$.
For all other $i$ with $a_0 < i < a_r$, define $L_i$ to be identity.
These $k$-tuples $\vec{h}$ and $\vec{L}$ extend uniquely to a \kld $h$ and a \kspl $L$, and the pair $(h, L)$ is a precleanup of $f$.
\end{proof}

\subsubsection{$\mathsf{C}$-free decompositions}

As long as odd-degree Chebyshev polynomials do not enter the picture, the preclusterings suffice for the Mahler problem. We now work towards Proposition~\ref{prop:c-free} which shows that in this
situation we may either reduce to an analysis of walls
(and thus Proposition~\ref{prop:warm-up})
or to an analysis of in-out-degrees (and thus
Proposition~\ref{prop:thm1.3-in-out-case}).

\begin{Def}\label{def:c-free-ksplode}
A long decomposition $f$ is \emph{$\mathsf{C}$-free} if all factors $f_i$ are swappable and $\mathsf{C}$-free.
\end{Def}

As noted Remark \ref{rk:ezrmk45}, $\mathbb{Z}$ is a preclustering of any $\mathsf{C}$-free long decomposition.
The following is an easy special case of Proposition~\ref{prop:nonemptyclean} below.

\begin{lem} \label{lem:c-free-nonemptyclean}
The preclustering $\mathbb{Z}$ of a $\mathsf{C}$-free long decomposition admits a cleanup.
\end{lem}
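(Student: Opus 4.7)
The plan is to first invoke Lemma~\ref{lem:getpreclean} for a precleanup $(h, L)$ of $\mathbb{Z}$, which is a preclustering of $f$ by Remark~\ref{rk:ezrmk45}(2) (since all factors are swappable) and non-empty. The precleanup produces each $h_i$ as a monic Ritt polynomial, together with linear polynomials $B_i, E_i$ at each single-factor cluster $\{f_i\}$ satisfying $B_i \circ f_i \circ E_i = h_i$ and $L_{i-1} = E_i \circ B_{i-1}$.

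Condition (5a) is vacuous because a $\mathsf{C}$-free long decomposition has no $\mathsf{C}$-clusters. For condition (5b), at each boundary $a$ one needs: if $h_{a+1} \circ L_a$ is a Ritt polynomial then $L_a$ is a scaling. By \cite[Theorem~3.15]{MS}, when $h_{a+1}$ is a non-monomial Ritt polynomial, $h_{a+1} \circ L_a$ being a Ritt polynomial already forces $L_a$ to be a scaling, so (5b) is automatic at such boundaries. The only potential failure is at boundaries where $h_{a+1} = x^p$ is a monomial, since then $h_{a+1} \circ L_a = L_a^p$ is always a Ritt polynomial of the form $u^p$, and we must arrange $L_a$ to be a scaling.

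At each such monomial boundary I would modify the precleanup locally by replacing $E_{a+1}$ with $E'_{a+1} := E_{a+1} \circ T_\gamma$, where $T_\gamma(x) := x + \gamma$ is the unique translation chosen so that $L'_a := E'_{a+1} \circ B_a$ is a scaling (namely, $\gamma := E_{a+1}^{-1}(0) - B_a(0)$). This replaces $h_{a+1} = x^p$ by $h'_{a+1} := (x+\gamma)^p$, which is still a monic Ritt polynomial (now of the non-monomial form $u^p$, since $\gamma \neq 0$ precisely in the case needing modification), and leaves all other $B_j, E_j$ untouched. Consequently all $h_i$ for $i \neq a+1$ and all $L_i$ for $i \neq a$ are unchanged, the modifications at different monomial boundaries are independent, and applying the identical modification at every $k$-shift of $a$ with $\gamma$ replaced by its $\sigma$-transform preserves skew-periodicity.

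The hard part will be verifying that this local modification genuinely produces a valid precleanup satisfying (5b) everywhere. Since only $E_{a+1}$ and $h_{a+1}$ change, (5b) at other boundaries is untouched; the new $h'_{a+1}$ is a non-monomial Ritt polynomial, so \cite[Theorem~3.15]{MS} makes (5b) automatic at boundary $a$ as well; and that $(h', L')$ remains linearly equivalent to $f$ reduces to the identity $B_{a+1} \circ f_{a+1} \circ E'_{a+1} = h'_{a+1}$, which follows immediately from $B_{a+1} \circ f_{a+1} \circ E_{a+1} = x^p$ and the definition of $T_\gamma$.
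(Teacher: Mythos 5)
Your overall strategy coincides with the paper's: start from the precleanup supplied by Lemma~\ref{lem:getpreclean}, note that (5a) is vacuous in the $\mathsf{C}$-free case, and repair (5b) by pushing the translation part of $L_a$ into $h_{a+1}$. The local modification you describe is exactly the right move; the gap is in your claim that it is only needed at \emph{monomial} boundaries. You assert that when $h_{a+1}$ is a non-monomial Ritt polynomial, \cite[Theorem~3.15]{MS} forces $L_a$ to be a scaling as soon as $h_{a+1}\circ L_a$ is a Ritt polynomial. That theorem constrains the \emph{outer} linear factor in a linear relation between $\mathsf{C}$-free Ritt polynomials, not the inner one; this is how it is used everywhere else in the paper (see the proofs of Lemmas~\ref{lem:Claim2} and~\ref{lem:Claim3}, where the conclusion that the \emph{inner} factor is a scaling is extracted from condition (5b) itself, precisely because Theorem~3.15 does not give it). For the inner factor your claim fails for type~$\mathsf{A}$ Ritt polynomials: for instance $h_{a+1}(x)=(x+b)^p$ with $b\neq 0$ is a monic, indecomposable, $\mathsf{C}$-free Ritt polynomial that is not a monomial in the paper's sense, and $h_{a+1}\circ T$ is again a Ritt polynomial for \emph{every} translation $T$. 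A precleanup can output such an $h_{a+1}$ together with a non-scaling $L_a$, in which case (5b) fails at $a$ and your procedure leaves that boundary untouched. (Compare the proof of Proposition~\ref{prop:one-swap-and-clusters}, which explicitly invokes the \emph{non}-type-$\mathsf{A}$ hypothesis, i.e.\ nontrivial in-degree, to rule out absorbing a nontrivial translation.)

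The fix is simply to run your local modification at every boundary where $h_{a+1}\circ(\text{translation part of }L_a)$ is still a Ritt polynomial; the correct dichotomy is trivial versus nontrivial in-degree, not monomial versus non-monomial. This is exactly the paper's one-line argument (``absorb the translation part of $L_a$ into $h_{a+1}$ whenever possible''): after the absorption, (5b) holds by construction at the boundaries where absorption occurred and vacuously at the remaining ones, since there $h_{a+1}\circ L_a$ fails to be a Ritt polynomial whenever $L_a$ is not a scaling.
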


\begin{proof}
Start with a precleanup $(h, L)$ of $(\mathbb{Z},f)$ obtained in Lemma~\ref{lem:getpreclean}.

To satisfy requirement (5b) from the definition of cleanup, just absorb the translation part of $L_a$ into the Ritt polynomial $h_{a+1}$ whenever possible.

Because a $\mathsf{C}$-free long decomposition has no $\mathsf{C}$-clusters, the remaining requirement (5a) is vacuously satisfied.
\end{proof}

\begin{lem}\label{lem:Claim2}
Suppose that $(g, M)$ and $(h, L)$ are both cleanups of the same preclustering $\mathbb{Z}$ of the same $\mathsf{C}$-free long decomposition. Then for each $i$, $M_i$ is a scaling if and only if $L_i$ is a scaling.
\end{lem}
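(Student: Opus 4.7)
The plan is to funnel both cleanups through a single linear equivalence between $h$ and $g$, and then to argue that this equivalence must be realized componentwise by scalings because $\mathsf{C}$-free monic Ritt polynomials are rigid under linear conjugation.

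First, I would combine the two given linear equivalences. Since $(g,M)$ and $(h,L)$ are both cleanups of $(\mathbb{Z},f)$, condition~(1) of the cleanup definition gives that both $L\circ h$ and $M\circ g$ are linearly equivalent to $f$, and hence to each other by transitivity. Unwinding the definition of linear equivalence, this produces a \kspl\ $N$ with $N_i:=L_i^{-1}\circ M_i$ such that
\[
g_i=N_{i+1}^{-1}\circ h_i\circ N_i\quad\text{for every }i\in\mathbb{Z}.
\]

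The main step is to show that each $N_i$ is a scaling. Because the preclustering is all of $\mathbb{Z}$, every cluster of $(\mathbb{Z},f)$ consists of a single factor, and because $f$ is $\mathsf{C}$-free none of these is a $\mathsf{C}$-cluster. Conditions~(3) and~(4) of the cleanup definition therefore force each $h_i$ and each $g_i$ to be a monic Ritt polynomial that is not of type~$\mathsf{C}$. I would then apply \cite[Theorem~3.15]{MS} to the identity $g_i=N_{i+1}^{-1}\circ h_i\circ N_i$, in exactly the same pointwise fashion as in the proof of Lemma~\ref{lem:clean-scaling}, to conclude that each $N_i$ is a scaling. The corner case in which $h_i$ and $g_i$ are both monomials of some degree $d$ falls just outside the hypothesis of Lemma~\ref{lem:clean-scaling} (which requires at least one non-monomial factor), but it is immediate: from $N_{i+1}(x^d)=(N_i(x))^d$ one reads off by comparing coefficients that the additive part of $N_i$ vanishes.

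Once each $N_i$ is known to be a scaling, the conclusion is immediate: $M_i=L_i\circ N_i$, and composing a linear polynomial with a scaling on either side produces a scaling if and only if the linear factor was already a scaling. Thus $M_i$ is a scaling if and only if $L_i$ is. The main obstacle I anticipate is the careful bookkeeping of the conventions used for linear equivalence, together with explicitly verifying that the all-monomial corner case (which is not literally subsumed by Lemma~\ref{lem:clean-scaling}) behaves as expected; everything else is a direct application of results already in the paper.
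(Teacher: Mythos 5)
There is a genuine gap, and it occurs at the very first step. Condition~(1) of the cleanup definition says that $L\circ h$ and $M\circ g$ are each linearly equivalent to $f$ via some \emph{unspecified} \kspls; composing these gives a linear equivalence between the long decompositions with factors $L_i\circ h_i$ and $M_i\circ g_i$, witnessed by some \kspl\ that you have no control over. It does not give the identity $g_i=N_{i+1}^{-1}\circ h_i\circ N_i$ with $N_i:=L_i^{-1}\circ M_i$, and in fact $h$ and $g$ need not be linearly equivalent to each other at all in the sense of Definition~\ref{def:lin-eq}: the linear factors $L_i$ and $M_i$ sit \emph{between} consecutive Ritt factors, so they cannot simply be peeled off componentwise. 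The paper handles this by regrouping the factors as $h_i\circ L_{i-1}$ versus $g_i\circ M_{i-1}$ and letting $N$ be an arbitrary witness of the linear equivalence of these regrouped decompositions, which yields $h_i\circ L_{i-1}=N_{i+1}^{-1}\circ g_i\circ M_{i-1}\circ N_i$; Theorem~3.15 of \cite{MS} then shows each $N_i$ is a scaling, but this equation still entangles $L_{i-1}$ with $h_i$.

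The deeper problem is that your argument never invokes conditions (5a)/(5b) of the cleanup definition, yet the statement is \emph{false} for mere precleanups: if $h_{a+1}$ is a type~$\mathsf{A}$ Ritt polynomial, one can replace $h_{a+1}$ by $h_{a+1}\circ T$ and $L_a$ by $T^{-1}\circ L_a$ for a nontrivial translation $T$ and obtain another precleanup in which $L_a$ is no longer a scaling. Any proof that uses only condition~(1) must therefore be wrong. The missing step is exactly where the paper uses (5b): once the $N_i$ are known to be scalings and $M_{i-1}$ is assumed to be a scaling, the right-hand side $N_{i+1}^{-1}\circ g_i\circ M_{i-1}\circ N_i$ is a (not necessarily monic) Ritt polynomial, so $h_i\circ L_{i-1}$ is a Ritt polynomial, and condition (5b) of the cleanup $(h,L)$ is what forces $L_{i-1}$ to be a scaling rather than a translation absorbed into $h_i$. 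Your final step ``$M_i=L_i\circ N_i$'' has no analogue in a correct argument, because the relation between $L_{i-1}$ and $M_{i-1}$ is mediated through the Ritt factors and does not decouple for free.
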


\begin{proof}
Here, we pay the price of lining up the indices of linear factors with the indices of Ritt swaps, but not with the indices of the factors they naturally go with.
First, replace the long decomposition $L \circ h$ (respectively, $M \circ g$) by the one whose factors are
 $h_i \circ L_{i-1}$ (respectively, $g_i \circ M_{i-1}$). Each new one is linearly equivalent to the corresponding
 old one, via $L$ and $M$, respectively. Thus, if the old ones were linearly related, then so are the new ones. Let $N$ witness the linear equivalence of the new ones.

We first show that all $N_i$ are scalings. From the definition of linearly equivalence,
$$h_i \circ L_{i-1} = N_{i+1}^{-1}  \circ g_i \circ M_{i-1} \circ N_i \text{ .}$$
Since $g_i$ and $h_i$ are $\mathsf{C}$-free Ritt polynomials,
 $N_{i+1}^{-1}$ must be a scaling by~\cite[Theorem 3.15]{MS}.

If $M_{i-1}$ is a scaling, the right-hand side of the displayed equation is a (not necessarily monic) Ritt polynomial.
Condition (5b) in the definition of cleanup then forces $L_{i-1}$ to also be a scaling.
\end{proof}

The ``clean $\mathsf{C}$-free long decompositions'' of Section~\ref{sec:inout} are precisely those $\mathsf{C}$-free long decompositions that admit a cleanup where all linear factors are scalings. We now work towards showing that every other $\mathsf{C}$-free long decomposition has a wall.

A $\mathsf{C}$-free long decomposition $f$ has a \emph{prewall at $i$} if $L_i$ is not a scaling
 in some/every cleanup $(h, L)$ of $(\mathbb{Z},f)$.

This terminology is only used in the next three lemmas, where we show that a prewall is in fact a wall.

\begin{lem}\label{lem:Claim3}
If a $\mathsf{C}$-free long decomposition $f$ has a prewall at $i$, 
then $t_i \star f$ is not defined.
\end{lem}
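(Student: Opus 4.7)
The plan is to argue by contrapositive. Assuming $t_i \star f$ is defined, I would produce another cleanup $(h', L')$ of $(\mathbb{Z}, f)$ in which the linear at position $i$ is a scaling, and then invoke Lemma~\ref{lem:Claim2} to transfer this property to the given cleanup $(h, L)$, contradicting the prewall hypothesis.

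First, I would extract a witness for the Ritt swap via Remark~\ref{rk:witness-Ritt-swap} (treating $k = 2$ directly from Definition~\ref{def:Ritt-swap}): there exists a long decomposition $\tilde f$ linearly equivalent to $f$ such that $\tilde f_{i+1} \circ \tilde f_i$ is literally a basic Ritt identity. Since linear equivalence preserves preclusterings (Remark~\ref{rk:ezrmk45}), the decomposition $\tilde f$ is again $\mathsf{C}$-free, so the Ritt identity in question is of the first or third kind in Definition~\ref{def:Ritt-identity}. In particular, $\tilde f_i$ and $\tilde f_{i+1}$ are monic Ritt polynomials composing directly, with no intervening linear factor.

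Next, I would build $(h', L')$ from $\tilde f$ along the lines of the construction in Lemma~\ref{lem:getpreclean}. For each $j$, use the fact that $\tilde f_j$ is swappable and $\mathsf{C}$-free to factor $\tilde f_j = \alpha_j \circ g_j \circ \beta_j$ with $g_j$ a monic Ritt polynomial and $\alpha_j, \beta_j$ linear, and set $h'_j := g_j$. At $j = i$ and $j = i+1$, take $\alpha_j = \beta_j = \id$; this is permitted precisely because $\tilde f_j$ is already a monic Ritt polynomial. Assemble the corresponding \kspl\ $L'$ so that $L'_i$ is built only from $\beta_{i+1}$ and $\alpha_i$; then $L'_i = \id$, a scaling. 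Upgrade this precleanup to an actual cleanup by absorbing translations into adjacent Ritt factors where necessary to satisfy condition (5b); this adjustment only affects $L'_j$ at positions $j \neq i, i+1$ and so does not disturb the identity value of $L'_i$.

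Finally, applying Lemma~\ref{lem:Claim2} to the two cleanups $(h, L)$ and $(h', L')$ of $(\mathbb{Z}, f)$ forces $L_i$ to be a scaling precisely when $L'_i$ is; hence $L_i$ is a scaling, contradicting the prewall hypothesis at $i$. The hard part will be verifying that consistent choices of the triples $(\alpha_j, \beta_j, g_j)$ can be made across all $j$ so that the assembled $L'$ is a genuine skew-periodic sequence, and that the absorption step at neighboring positions truly leaves $L'_i$ unchanged. Both verifications reduce to routine bookkeeping, using Theorem~3.15 of \cite{MS} to control the linear ambiguities in the $\mathsf{C}$-free setting.
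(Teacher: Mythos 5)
Your proof is correct but takes a genuinely different route from the paper. The paper argues directly with the given cleanup $(h,L)$: assuming $t_i \star f$ is defined, it extracts linears $A$, $B$, $E$ so that $(A \circ h_{i+1} \circ L_i \circ B^{-1}, \, B \circ h_i \circ E)$ is one side of a basic Ritt identity, forces $A$ and $B$ to be scalings via \cite[Theorem~3.15]{MS}, concludes that $h_{i+1} \circ L_i$ is a Ritt polynomial, and then reads off from condition~(5b) of the cleanup definition that $L_i$ must be a scaling. You instead construct a second cleanup $(h',L')$ of $(\mathbb{Z},f)$ from a Ritt-swap witness with $L'_i$ a scaling by construction, and transfer this to $(h,L)$ via Lemma~\ref{lem:Claim2}. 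Your route is longer --- Lemma~\ref{lem:Claim2} internally runs essentially the same argument via \cite[Theorem~3.15]{MS} and condition~(5b) --- but it has the virtue of making explicit that the claim is really the well-definedness of prewalls across cleanups. One small inaccuracy: $\tilde f_i$ and $\tilde f_{i+1}$ from the witness are Ritt polynomials in the sense of Definition~\ref{def:Ritt-poly}, but they need not be monic (this paper does not impose monicity on Ritt polynomials), so for $j=i,i+1$ you should take $\alpha_j$ to be a scaling rather than $\id$; the conclusion that $L'_i$ is a scaling still follows, which is all you need. The ``routine bookkeeping'' you flag is genuinely routine: the factorization $\tilde f_j = A_j \circ g_j \circ B_j$ with $g_j$ monic and $L'_j := B_{j+1} \circ A_j$ is exactly the construction used in the proof of Proposition~\ref{prop:c-free}, and your observation that the translation-absorption step for condition~(5b) never disturbs $L'_i$ (absorption at $a \neq i$ only touches $h'_{a+1}$ and $L'_a$, and at $a=i$ there is nothing to absorb since $L'_i$ is already a scaling) is correct.
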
 \begin{proof}
Suppose toward contradiction that $(h,L)$ is a
cleanup of the clustering $\mathbb{Z}$ of $f$,
 $t_i \star (L \circ h) $ is defined, and
 $L_i$ is not a scaling.  Then there must be linear polynomial $A$, $B$, and $E$ for which $( A \circ h_{i+1} \circ L_i \circ B^{-1}, B \circ h_i \circ E)$ is one side of a basic Ritt identity. Since both $h_i$ and $B \circ h_i \circ E$ are $\mathsf{C}$-free Ritt polynomials, $B$ is a scaling by~\cite[Theorem 3.15]{MS}. Since both $h_{i+1}$ and $A \circ h_{i+1} \circ L_i \circ B^{-1}$ are $\mathsf{C}$-free Ritt polynomials $A$ is a scaling by~\cite[Theorem 3.15]{MS}. So now $h_{i+1} \circ L_i$ is a \texttt{non-monic Ritt polynomial}, and by definition of ``cleanup'', $L_i$ must be a scaling. \end{proof}

\begin{lem}\label{lem:Claim4}
Suppose that $t_i \star f$ is defined for a $\mathsf{C}$-free long decomposition $f$.
Then $t_i \star f$ has a prewall at $j$ if and only if $f$ does.
\end{lem}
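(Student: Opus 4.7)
The plan is to show that any Ritt swap defined on $f$ can be used to explicitly convert a cleanup of $f$ into a cleanup of $g := t_i \star f$ in which the scaling-versus-non-scaling status of every linear factor $L_j$ is unchanged. By Lemma~\ref{lem:Claim2}, this scaling/non-scaling dichotomy is intrinsic to the $\mathsf{C}$-free long decomposition (independent of the cleanup chosen), so such a construction will yield both directions of the iff.

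Starting from any cleanup $(h,L)$ of $(\mathbb{Z},f)$, the hypothesis that $t_i \star f$ is defined forces, via Lemma~\ref{lem:Claim3}, that $L_i$ must be a scaling, say $L_i(x) = \alpha x$. Since $h_{i+1}$ is a monic Ritt polynomial, one can write $h_{i+1} \circ L_i = \beta \circ \hat h_{i+1}$, where $\beta$ is the scaling by $\alpha^{\deg h_{i+1}}$ and $\hat h_{i+1}$ is another monic $\mathsf{C}$-free Ritt polynomial of the same shape as $h_{i+1}$ (same exponents $k$, $\ell$, $n$, with $u$ replaced by a rescaling). Then $\hat h_{i+1} \circ h_i$ is a composition of two monic Ritt polynomials to which one of the two $\mathsf{C}$-free basic Ritt identities applies, producing monic Ritt polynomials $\hat h'_{i+1}, \hat h'_i$ with $\hat h_{i+1} \circ h_i = \hat h'_{i+1} \circ \hat h'_i$. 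I then propose to define a cleanup $(h',L')$ of $g$ by keeping $h'_j := h_j$ and $L'_j := L_j$ for $j \notin \{i,i+1\}$, setting $h'_i := \hat h'_i$ and $h'_{i+1} := \hat h'_{i+1}$, and taking $L'_i := \id$ and $L'_{i+1} := L_{i+1} \circ \beta$. A term-by-term comparison of scaling status is then immediate: nothing changes outside $\{i,i+1\}$; $L'_i = \id$ is a scaling and so was $L_i$; and $L'_{i+1} = L_{i+1} \circ \beta$ is a scaling iff $L_{i+1}$ is, because composing with a scaling does not alter the constant term that distinguishes scalings from general affine maps.

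The main obstacle will be verifying that $(h',L')$ genuinely satisfies condition~(5b) of the cleanup definition at the two altered indices $j = i-1$ and $j = i+1$. At $j = i+1$ this should be straightforward: $h_{i+2} \circ L'_{i+1} = h_{i+2} \circ L_{i+1} \circ \beta$ is a Ritt polynomial iff $h_{i+2} \circ L_{i+1}$ is, so (5b) transfers directly from the cleanup of $f$. At $j = i-1$ the Ritt factor at position $i$ has changed from $h_i$ to $\hat h'_i$, so the argument will have to use the explicit shape of the two basic $\mathsf{C}$-free Ritt identities ($x^p \circ x^q = x^q \circ x^p$ and $x^k u(x)^p \circ x^p = x^p \circ x^k u(x^p)$) to show that $\hat h'_i \circ L_{i-1}$ is a Ritt polynomial iff $L_{i-1}$ is a scaling, thereby reducing to the analogous condition in the original cleanup. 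Once $(h',L')$ is confirmed to be a cleanup of $g$, the scaling comparison above combined with Lemma~\ref{lem:Claim2} will deliver one direction of the iff; the other direction follows by running the construction symmetrically starting from $g$, noting that $t_i \star g = f$ since $t_i$ is an involution.
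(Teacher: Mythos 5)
Your proposal is correct and follows essentially the same route as the paper's proof: absorb the scaling $L_i$ into the Ritt factors (the paper pre-normalizes so that $L_i=\id$, you carry the scaling $\beta$ into $L'_{i+1}$, which is the same maneuver), realize the swap by a genuine basic Ritt identity between monic Ritt polynomials, and observe that the only nontrivial cleanup condition is (5b) at $i-1$. The two steps you defer are exactly the two citations the paper uses: that definedness of the swap up to linear factors yields an on-the-nose basic Ritt identity between the monic Ritt polynomials is \cite[Lemma 3.37]{MS}, and the (5b) verification at $i-1$ is \cite[Remark 3.33]{MS}.
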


\begin{proof}
Lemma~\ref{lem:Claim3} shows that neither $f$ nor $t_i \star f$ has a prewall at $i$.
All cleanups in this proof are for the preclustering $\mathbb{Z}$.

We first show that $f$ admits a cleanup $(h, L)$ with $L_i = \id$. By Lemma~\ref{lem:Claim3}, $L_i$ is a scaling in any cleanup. Replacing $L_i(x) =: \lambda x$ by identity, $h_{i+1}$ by $\lambda \ast h_{i+1}$, and $L_{i+1}(x)$ by $L_{i+1} (\lambda^{-\deg(h_{i+1})} x)$ produces the desired cleanup.

Since $t_i \star f$ is defined, there are linear polynomials $A$, $B$, and $E$ for which
$( A \circ h_{i+1} \circ B^{-1}, B \circ h_i \circ E)$ is one side of a basic Ritt identity.
By~\cite[Lemma 3.37]{MS}, there are Ritt polynomials $g_{i+1}$ and $g_i$ for which
$(h_{i+1} \circ h_i =  g_{i+1} \circ g_{i})$ is a basic Ritt identity.

We finally show that setting $g_{\ell} := h_{\ell}$ for all other indices $\ell$ produces a cleanup $(g, L)$ of $t_i \star f$, which obviously has prewalls in exactly the same places as $(h, L)$. The only part of the definition of ``cleanup'' that does not obviously hold is part (5b) at $(i-1)$: could it be that $L_{i-1}$ is not a scaling, but $g_i \circ L_{i-1}$ is a Ritt polynomial? We noted in~\cite[Remark 3.33]{MS} that this is not possible.
\end{proof}

\begin{lem}\label{lem:Claim5}
If a $\mathsf{C}$-free long decomposition $f$ has a prewall at $i$,
then $\epsilon_p \star f$ and $\epsilon_p^{-1} \star f$ are not defined for any $p$.
\end{lem}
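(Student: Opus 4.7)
The plan is to prove the contrapositive: whenever $\epsilon_p^{\pm 1}\star f$ is defined, $f$ has no prewall at any index, contradicting the hypothesis of the lemma. The strategy is to use the witness for the $\epsilon_p^{\pm 1}$-action to construct one specific cleanup of $(\mathbb{Z},f)$ in which every $L_i$ is a scaling, and then to invoke Lemma~\ref{lem:Claim2} to spread this property to every cleanup of $(\mathbb{Z},f)$.

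Assume first that $\epsilon_p\star f$ is defined. Unwrapping Definition~\ref{def:epsilon}, the class $[f]$ contains some $\tilde f$ whose factors all have the form $\tilde f_i = x^{r_i}u_i(x^p)$, where either $u_i(0)\neq 0$ with $\deg u_i>0$, or $u_i$ is a nonzero constant with $r_i\neq p$. Since $\tilde f_i$ is indecomposable, in the first case it is a $\mathsf{C}$-free Ritt polynomial of in-degree $p$ (from the third basic Ritt identity), and in the second $r_i$ is prime and $\tilde f_i = c_i\cdot x^{r_i}$ is a scaled monomial; both are $\mathsf{C}$-free. Writing $u_i = c_i\hat u_i$ with $\hat u_i$ monic, set $h_i := x^{r_i}\hat u_i(x^p)$ (or $h_i := x^{r_i}$ in the monomial case), so that $h_i$ is a monic $\mathsf{C}$-free Ritt polynomial and $\tilde f_i = (c_i X)\circ h_i$.

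I will then mimic the construction in the proof of Lemma~\ref{lem:getpreclean}, applied to the preclustering $\mathbb{Z}$ of $\tilde f$ whose clusters are all singletons, taking $E_i = \mathrm{id}$ and $B_i$ equal to the scaling by $c_i^{-1}$ for each $i$. With these choices, the formula $L_i := E_i\circ B_{i-1}$ from that proof produces a \kspl $L$ whose every component is a scaling, and the resulting pair $(h,L)$ is a precleanup of $(\mathbb{Z},f)$ by construction. Because $f$ is $\mathsf{C}$-free, no $\mathsf{C}$-cluster is present, so cleanup condition~(5a) is vacuous, and (5b) is immediate because each $L_i$ is already a scaling. Hence $(h,L)$ is a cleanup with every $L_i$ a scaling. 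Lemma~\ref{lem:Claim2} now forces the analogous property for every cleanup of $(\mathbb{Z},f)$, so $f$ has no prewall anywhere, contradicting the assumed prewall at $i$.

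The case $\epsilon_p^{-1}\star f$ defined is symmetric: the witness has the dual form $\tilde f_i = x^{r_i} u_i(x)^p$, each factor is again a $\mathsf{C}$-free Ritt polynomial (now of out-degree $p$) or a scaled monomial, and the same leading-coefficient normalization gives a cleanup with every $L_i$ a scaling. The only delicate point in either case is to check that the required $k$-skew-periodicity $L_{i+k}=L_i^\sigma$ is maintained when assembling $L$ from the scalars $c_i$; I expect this to be entirely routine, because $\hat u_i$ is monic, so each $c_i$ is the leading coefficient of $\tilde f_i$, and the normalization is thereby determined uniformly and $\sigma$-equivariantly by $\tilde f$ itself.
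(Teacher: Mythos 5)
Your proposal is correct and follows essentially the same route as the paper: the paper's (very terse) proof likewise observes that the witness $\widetilde{f}$ from Definition~\ref{def:epsilon} yields a cleanup of $(\mathbb{Z},f)$ whose linear factors are all scalings, so by Lemma~\ref{lem:Claim2} no prewall can exist. You have simply spelled out the normalization of leading coefficients and the skew-periodicity check that the paper leaves implicit.
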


\begin{proof}
The hypothesis on $\widetilde{f}$ in the
definition of the action $\epsilon_p^{\pm 1} \star f$
(Definition~\ref{def:epsilon})
produces a cleanup of $f$ with only scaling linear
factors.  Hence, for $\epsilon_p^{\pm} \star f$ to
be defined, $f$ can have no prewalls.
\end{proof}

\begin{prop}\label{prop:c-free}
Every $\mathsf{C}$-free long decomposition $f$ either has a wall, or admits a cleanup $(h,L)$ where all $L_i$ are scalings.
\end{prop}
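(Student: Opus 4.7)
The strategy is a dichotomy on whether $f$ carries any prewalls. Recall that, by Lemma~\ref{lem:Claim2}, the property ``$L_i$ is a scaling'' at a given index $i$ in a cleanup $(h,L)$ of $(\mathbb{Z},f)$ is intrinsic to $f$, so the notion of a prewall at $i$ is well-defined. If $f$ has no prewalls whatsoever, then in the cleanup produced by Lemma~\ref{lem:c-free-nonemptyclean} every $L_i$ is already a scaling, and the second alternative of the conclusion holds.

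The interesting case is when $f$ admits a prewall at some index $i$; here I would prove that $f$ actually has a wall at $i$ in the sense of Definition~\ref{def:wall}. The second bullet of that definition, that no $\epsilon_p^{\pm1}\star f$ is defined, is delivered immediately by Lemma~\ref{lem:Claim5}. For the first bullet, I must show that for every $w\in\Affk$ with $w\star f$ defined, $t_i w\star f$ is not defined.

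The key auxiliary observation is that $\mathsf{C}$-freeness is preserved by Ritt swaps within $\Affk$: among the three families of basic Ritt identities, only the Chebyshev identity $C_p\circ C_q=C_q\circ C_p$ introduces type $\mathsf{C}$ factors, and it demands type $\mathsf{C}$ inputs to begin with; the monomial-monomial and mixed monomial identities keep all factors monomial or $\mathsf{C}$-free Ritt polynomials. So if $w=t_{j_n}\cdots t_{j_1}$ and $w\star f$ is defined, then every intermediate decomposition along the way is a $\mathsf{C}$-free long decomposition. I would then induct on $n$: applying Lemma~\ref{lem:Claim3}, each $j_\ell$ must differ from the current prewall position, which by Lemma~\ref{lem:Claim4} remains at $i$ throughout. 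Hence $w\star f$ still has a prewall at $i$, and one more application of Lemma~\ref{lem:Claim3} gives that $t_i\star(w\star f)=(t_iw)\star f$ is undefined, as required.

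The main technical work — proving that prewalls are a linear-equivalence invariant (Lemma~\ref{lem:Claim2}), that they genuinely block Ritt swaps at their own position (Lemma~\ref{lem:Claim3}), that they are carried along unchanged by Ritt swaps elsewhere (Lemma~\ref{lem:Claim4}), and that they block monomial twists (Lemma~\ref{lem:Claim5}) — has been absorbed into the preceding lemmas. Given those, the proposition itself is just an assembly: a case split on the existence of a prewall, plus the short inductive argument above using the $\mathsf{C}$-free invariance of Ritt swaps.
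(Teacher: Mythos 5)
Your proof is correct and follows essentially the same route as the paper: produce a cleanup of the preclustering $\mathbb{Z}$ (the paper re-derives Lemma~\ref{lem:c-free-nonemptyclean} inline), and then either all linear factors are scalings or some $L_i$ is not, in which case Lemmas~\ref{lem:Claim3}, \ref{lem:Claim4}, and~\ref{lem:Claim5} upgrade the prewall at $i$ to a wall. The only difference is that you spell out the induction on word length (and the preservation of $\mathsf{C}$-freeness under Ritt swaps needed to keep those lemmas applicable) that the paper leaves implicit in its one-line citation of the three lemmas.
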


\begin{proof}
Since $f$ is $\mathsf{C}$-free, each $f_i$ is of the
form $A_i \circ h_i \circ B_i$ where $h_i$ is a monic
$\mathsf{C}$-free Ritt polynomial and $A_i$ and $B_i$
are linear.  Let $L_i := B_{i+1} \circ A_i$.  Then
$(h,L)$ is a precleanup of the preclustering
$\mathbb{Z}$ of $f$.  To satisfy requirement (5b) of
the definition of cleanup, just absorb the translation
part of $L_i$ into the Ritt polynomial $h_{i+1}$
whenever possible, obtaining a cleanup $(g,M)$ of
$f$.

If all $M_i$ are scalings, we are done. Otherwise, some $M_i$ is not a scaling, and Lemmas~\ref{lem:Claim3}, \ref{lem:Claim4}, and~\ref{lem:Claim5} show that $f$ has a wall at $i$. \end{proof}

This completes the proof of Theorem~\ref{thm:skew-inv} for polynomials admitting $\mathsf{C}$-free long decompositions: the two cases identified in Proposition~\ref{prop:c-free} are handled by Propositions~\ref{prop:warm-up} and~\ref{prop:thm1.3-in-out-case}.

\subsubsection{Clustering with type $\mathsf{C}$}
\label{sec:cluster-C}
In this subsection, we analyze the relations
between different cleanups of the same long
decomposition with a $\mathsf{C}$~cluster
since we already have a complete solution
to the Mahler problem for $\mathsf{C}$~free
long decompositions and the assumption that
we have a $\mathsf{C}$~cluster permits us to
to match our present definition of cleanup
and what appears in~\cite{MS}.

We first work with different
cleanups of the same preclustering and then
analyze the effect of removing unnecessary
boundaries.

The operation on cleanups in the following definition is the main way to obtain one cleanup from another.

\begin{Def}
\label{def:lambdaast}
Given a \kld $f$, its \emph{degree sequence}
$\deg(f)$ is the $k$-periodic $\mathbb{Z}$-indexed
sequence $(\deg(f_i))_{i \in \mathbb{Z}}$.

By a \emph{$\ksca$} we mean a $k$-skew-periodic
$\mathbb{Z}$-indexed sequence $\lambda = (\lambda_i)_{i
\in \mathbb{Z}}$.

For a \kld $f$ and a $\ksca$ $\lambda$ we define
$\lambda \ast f$ to be the \kld whose
$i^\text{th}$ component is
$$(\lambda \ast f)_i := \lambda_i^{-\deg f_i} f_i (\lambda_i x) \text{ .}$$

For a $k$-periodic $\mathbb{Z}$-indexed sequence
$d = (d_i)_{i \in \mathbb{Z}}$ of positive integers
and a $\ksca$ $\lambda$, we define the \emph{residue
of $\lambda$ at $d$} to be the $\ksca$ $\mu$ whose
$i^\text{th}$ component is given by
$$\mu_i := \lambda_{i+1}^{-1} \lambda_i^{d_i} \text{ .}$$ 	
\end{Def}

\begin{Rk}
\label{rk:ast-for-cleanup}
An easy computation verifies that for any
\kspl $L$, $\ksca$ $\lambda$, and
\kld $f$ with degree sequence $d = \deg(f)$,
$$(\lambda \ast (h \circ L)) = (\lambda \ast h)
\circ (\lambda \ast L)$$
and
$$(\lambda \ast (L \circ h)) = (\lambda^d \ast L)
\circ (\lambda \ast h) \text{ .}$$	
\end{Rk}

\begin{Rk}
\label{rk:ast-lin-eq}
If $\mu$ is the residue of the $\ksca$ $\lambda$
at the degree sequence $d = \deg(f)$ of the
\kld $f$, then $\mu \cdot (\lambda \ast f)$ is
linearly equivalent to $f$ via the \kspl $L$ of
scalings by $\lambda$ (\emph{i.e.}, $L_i(x) =
\lambda_i x$).
\end{Rk}

\begin{Rk}
We may use these operations to get new (pre)cleanups of the same $f$. Indeed, \begin{itemize}
\item for a translation $L_i$, the resulting $\lambda_i \ast L_i$ is also a translation;
\item for a monic Ritt polynomial $h_i$, the resulting $\lambda_i \ast h_i$ is also a monic Ritt polynomial; and
\item for a Chebyshev polynomial $h_i$, the resulting $\lambda_i \ast h_i$ is the same Chebyshev polynomial if and only if $\lambda_i = \pm 1$.
\end{itemize}

\end{Rk}

Our next Lemma~\ref{lem:cleanupRmk} on the relation
between $(h,L)$ and $\lambda \ast (h,L)$ produces a
cleanup from a precleanup.
Corollary~\ref{cor:scaling-non-un} then shows how
this operation could connect two cleanups.
Proposition~\ref{prop:cleanunique}
proves that any
cleanup can be obtained (almost) from any
other by this operation.

\begin{lem} \label{lem:cleanupRmk}
Suppose that $(h, L)$ is a precleanup of a preclustering $A$ of $f := L \circ h$.
We assume that $(f,A)$ has a $\mathsf{C}$~cluster.

Fix a $\ksca$ $\lambda$ and let $\mu$ be the residue of $\lambda$ at the degree sequence $d$ of $h$.\\
Let $\lambda^d$ be the $\ksca$ defined by $(\lambda^d)_i := \lambda_i^{d_i}$; let $g := \lambda \ast h$; and let $M := \mu \cdot (\lambda^d \ast L)$.

Then $(g, M)$ is also a precleanup of $f$ if and only if $\mu_i = 1$ for $i \not\in A$ and
$\lambda_i = \pm 1$ for $i$ inside $\mathsf{C}$~clusters (that is, when $f_{[b,a)}$ is a $\mathsf{C}$~cluster of $(f, A)$, and $b \geq i > a$).

\vspace{.1in}
When $(A,f)$ has a $\mathsf{C}$~cluster, then
condition (5a) in the definition of cleanup is satisfied by $(g, M)$ if and only if $L_a(x) = \mu_a^{-1}x + C$ whenever $f_{[b,a)}$ is a $\mathsf{C}$-free cluster of $(f, A)$.\\
In particular, $\mu_a = 1$ for all such $a$ if both $(h, L)$ and $(g, M)$ satisfy (5a).
\vspace{.1in}

Condition (5b) is satisfied by $(h,L)$
if and only if that condition is satisfied by $(g,M)$.

\end{lem}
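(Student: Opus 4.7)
The plan is to verify the three conclusions of the lemma by direct computation, using the two Remarks~\ref{rk:ast-for-cleanup} and~\ref{rk:ast-lin-eq} already established about how $\lambda \ast$ interacts with composition and linear equivalence.

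First I would establish that $M \circ g$ is linearly equivalent to $f$, so that condition~(1) of the definition of precleanup holds. By Remark~\ref{rk:ast-for-cleanup}, $(\lambda^d \ast L) \circ (\lambda \ast h) = \lambda \ast (L \circ h)= \lambda \ast f$, so $M \circ g = \mu \cdot (\lambda \ast f)$, which is linearly equivalent to $f$ via scalings by $\lambda$ by Remark~\ref{rk:ast-lin-eq}. Then I would check each of the remaining precleanup conditions in order. For condition~(2), the hypothesis that $L_i = \id$ for $i \notin A$ gives $(\lambda^d \ast L)_i = \id$, so $M_i$ is scaling by $\mu_i$, which is identity exactly when $\mu_i = 1$. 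For condition~(3), the preceding remark tells us that $\lambda_i \ast h_i$ is Chebyshev iff $\lambda_i = \pm 1$. Condition~(4) is automatic because $\lambda_i \ast h_i$ is always a monic Ritt polynomial when $h_i$ is.

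For the analysis of condition (5a) I would compute $M_a$ explicitly. Writing $L_a(x) = a_a x + b_a$ and using $\mu_a = \lambda_{a+1}^{-1} \lambda_a^{d_a}$, one obtains
\[
M_a(x) \;=\; \mu_a \lambda_a^{-d_a} L_a(\lambda_a^{d_a} x) \;=\; \mu_a a_a\, x \,+\, \mu_a \lambda_a^{-d_a} b_a \;=\; \lambda_{a+1}^{-1} L_a(\lambda_a^{d_a} x).
\]
Thus $M_a$ is a translation iff $\mu_a a_a = 1$, i.e.\ iff $a_a = \mu_a^{-1}$, which is the stated form $L_a(x) = \mu_a^{-1}x + C$. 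The remark that $\mu_a = 1$ when both $(h,L)$ and $(g,M)$ satisfy (5a) follows because $L_a$ being a translation means $a_a = 1$ while $M_a$ being a translation means $a_a = \mu_a^{-1}$.

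Finally for (5b) I would use the explicit formula above to note that
\[
g_{a+1} \circ M_a(x) \;=\; \lambda_{a+1}^{-d_{a+1}} (h_{a+1} \circ L_a)(\lambda_a^{d_a} x),
\]
so $g_{a+1} \circ M_a$ differs from $h_{a+1} \circ L_a$ only by pre- and post-composition with scalings; since the class of Ritt polynomials is closed under such operations, one is a Ritt polynomial iff the other is. At the same time $M_a$ is a scaling iff $b_a = 0$ iff $L_a$ is a scaling. Hence the premise and the conclusion of condition~(5b) hold for $(g,M)$ at $a$ iff they hold for $(h,L)$ at $a$. The only obstacle worth flagging is bookkeeping: getting the indices in the residue formula and the exponents $\lambda_a^{d_a}$, $\lambda_{a+1}^{d_{a+1}}$ right, so that the key cancellation $\mu_a \lambda_a^{-d_a} \cdot \lambda_{a+1} = 1$ produces the clean identity $g_{a+1}\circ M_a = \lambda_{a+1}^{-d_{a+1}}(h_{a+1}\circ L_a)(\lambda_a^{d_a}x)$ used in the verification of (5b).
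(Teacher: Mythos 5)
Your proposal is correct and follows essentially the same route as the paper's proof: verifying conditions (1)--(4) of the definition of precleanup in order (with (1) via Remarks~\ref{rk:ast-for-cleanup} and~\ref{rk:ast-lin-eq} and (4) automatic), then computing $M_a$ explicitly for (5a), and using the residue cancellation $\lambda_{a+1}\mu_a\lambda_a^{-d_a}=1$ to get the identity $(g_{a+1}\circ M_a)(x)=\lambda_{a+1}^{-d_{a+1}}(h_{a+1}\circ L_a)(\lambda_a^{d_a}x)$ for (5b). Your handling of (5b) is, if anything, slightly cleaner than the paper's, since you observe directly that both the premise and the conclusion of (5b) transfer between $(h,L)$ and $(g,M)$.
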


\begin{proof}
Let us work through the definition of a precleanup of $(L \circ h)$.
\begin{enumerate}
\item  Unwrapping definitions, we get
 $$M \circ g = \mu \cdot (\lambda^d \ast L) \circ (\lambda \ast h) = \mu \cdot \lambda \ast (L \circ h),$$
which is linearly equivalent to $(L \circ h)$ by
Remark~\ref{rk:ast-lin-eq}.
\item For $i \notin A$, we know that $L_i$ is identity, and we need $M_i = \mu_i \cdot (\lambda_i^{d_i}) \ast L_i$ to be identity. Since $\mu_i \cdot (\alpha \ast \id) = \mu_i \cdot \id$ for any $\alpha$, we must have $\mu_i = 1$.
\item For $i$ inside $\mathsf{C}$-clusters, we know that $h_i$ is a Chebyshev polynomial, and we need $g_i := \lambda_i \ast h_i$ to be a Chebyshev polynomial. This forces $\lambda_i = \pm 1$.
\item For $i$ inside $\mathsf{C}$-free clusters, we get no extra requirements since $\alpha \ast P$ is a monic Ritt polynomial whenever $P$ is.
\end{enumerate}

We up the stakes asking for $(g, M)$ to be a cleanup, without requiring $(h, L)$ to be one.
\begin{enumerate}
\item[(5a)]  When $a$ is the right boundary of a $\mathsf{C}$-free cluster, we need $M_a = \mu_a \cdot (\lambda_a \ast L_a)$ to be a translation. If $L_a(x) = Bx + C$, then $(\lambda_a \ast L_a)(x) = Bx +C'$, so we need $\mu_a = B^{-1}$.
If $(h, L)$ also satisfies (5a) so that $L_a$ is a translation, this forces $\mu_a = 1$.
\item[(5b)] Still at the right boundary $a$ of a
$\mathsf{C}$-free cluster,
 suppose that $g_{a+1} \circ M_a$ is a Ritt polynomial. Unwrapping the defintions of $g$ and $M$ gives
$$g_{a+1} \circ M_a = ( \lambda_{a+1} \ast h_{a+1} ) \circ \mu_a \cdot (\lambda_a^{d_a} \ast L_a)$$
$$g_{a+1} \circ M_a =
( \cdot \lambda_{a+1}^{- d_{a+1}} ) \circ
h_{a+1} \circ
( \cdot \lambda_{a+1} ) \circ
( \cdot \mu_a ) \circ
( \cdot \lambda_a^{- d_a} ) \circ
L_a \circ
(\cdot \lambda_a^{d_a})$$
By definition of residue, the whole middle scaling by $(\lambda_{a+1} \mu_a \lambda_a^{- d_a})$ is identity, so
$$( g_{a+1} \circ M_a) (x) = \lambda_{a+1}^{- d_{a+1}}  \cdot (h_{a+1} \circ L_a) (\lambda_a^{d_a} x).$$
Now $h_{a+1} \circ L_a$  is linearly-related via scalings to the Ritt polynomial $g_{a+1} \circ M_a$, so $h_{a+1} \circ L_a$ is itself a Ritt polynomial.

On the other hand, in order to have $M_a = \mu_a \cdot (\lambda_a^{d_a} \ast L_a)$ be a scaling, we need $L_a$ to be a scaling. So, if $(h, L)$ is a cleanup, and $(g, M)$ satisfies (5a), then $(g, M)$ automatically satisfies (5b).
And if $(h, L)$ satisfies (5a) but not (5b), and $(g, M)$ satisfies (5a), then $(g, M)$ cannot satisfy (5b).
\end{enumerate}
\end{proof}

The following corollary is a precise version
of~\cite[Remark 4.9]{MS}.
When all clusters are $\mathsf{C}$-free,
Corollary~\ref{cor:scaling-non-un} does not apply.
This non-uniqueness is the price of working with
skew-linear-equivalence rather than linear equivalence
(in the sense of~\cite[Definitions 2.39 and 2.58]{MS}).
This issue comes up informally in the proof
of~\cite[Lemma 6.11(6)]{MS}.

\begin{cor} \label{cor:scaling-non-un}
With the setup in Lemma \ref{lem:cleanupRmk},
suppose further that both $(h, L)$ and $(g, M)$ are cleanups of $(A, f)$.
 Then, $\lambda_i = \pm 1$ for all $i$, and $\mu_i = 1$ unless $f_{[j,i)}$ is a
$\mathsf{C}$~cluster of $(A,f)$ for some
$j \in A$.
\end{cor}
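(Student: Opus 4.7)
The $\mu$-part of the conclusion I expect to fall out directly from Lemma~\ref{lem:cleanupRmk}. That lemma already gives $\mu_i = 1$ for $i \notin A$ as part of the precleanup characterization, and its clause about condition (5a), applied to the hypothesis that both $(h,L)$ and $(g,M)$ are cleanups, gives $\mu_a = 1$ at every right boundary $a$ of a $\mathsf{C}$-free cluster. The only indices where $\mu_i$ is left unconstrained are then exactly those $i \in A$ for which the cluster $f_{[j,i)}$ immediately above $i$ is a $\mathsf{C}$-cluster, which is precisely the exception permitted in the statement.

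For the $\lambda$-part, Lemma~\ref{lem:cleanupRmk} only gives $\lambda_i = \pm 1$ for indices $i$ inside a $\mathsf{C}$-cluster, so the plan is to bootstrap this up to all $i$ by propagating forward through the residue relation $\lambda_{i+1} = \mu_i^{-1}\,\lambda_i^{d_i}$. Since by hypothesis $(A,f)$ has a $\mathsf{C}$-cluster, I fix an anchor index $i_0$ inside one of them where $\lambda_{i_0} = \pm 1$, and then induct on $i \geq i_0$.

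The inductive step splits into three cases according to where $i$ sits. If $i \notin A$, then $\mu_i = 1$ and so $\lambda_{i+1} = \lambda_i^{d_i} \in \{\pm 1\}$. If $i \in A$ is the right boundary of a $\mathsf{C}$-free cluster, the $\mu$-part just proved gives $\mu_i = 1$ again and the same computation works. Finally, if $i \in A$ is the right boundary of a $\mathsf{C}$-cluster $f_{[b,i)}$, then regardless of what $\mu_i$ happens to be, the index $i+1$ lies in $(i,b]$, hence inside that $\mathsf{C}$-cluster, and Lemma~\ref{lem:cleanupRmk} gives $\lambda_{i+1} = \pm 1$ for free. After running the induction over a full $k$-length stretch, $k$-skew-periodicity together with the fact that $\sigma$ preserves $\{\pm 1\}$ extends the conclusion to all $i \in \mathbb{Z}$. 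I do not see any substantive obstacle here; the only mild subtlety is that the residue relation is naturally solved forward rather than backward, which is why I invoke skew-periodicity at the end rather than trying to propagate in both directions.
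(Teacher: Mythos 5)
Your proof is correct and uses essentially the same mechanism as the paper's: anchor $\lambda=\pm 1$ inside a $\mathsf{C}$-cluster and propagate forward via the residue relation $\lambda_{i+1}=\mu_i^{-1}\lambda_i^{d_i}$, with the $\mu$-part read off directly from the precleanup and (5a) clauses of Lemma~\ref{lem:cleanupRmk}. The only difference is organizational: the paper argues by contradiction and anchors at the nearest $\mathsf{C}$-cluster below a hypothetical bad index so that every intermediate $\mu_i$ equals $1$, whereas you run the induction over a full period, re-anchor at each $\mathsf{C}$-cluster right boundary, and finish by skew-periodicity; both are sound.
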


\begin{proof}
 
Suppose toward contradiction that $\lambda_{j+1} \neq \pm 1$.
From condition (3) of the definition of
preclustering, it follows that $f_{j+1}$ is in a
$\mathsf{C}$-free cluster of $(A,f)$.
Take the largest $b \in A$ with $b < j$ such that $f_{[b,a)}$ is a $\mathsf{C}$-cluster of $(A,f)$
for some $a \in A$.
Now, for every $i = b, b+1, \ldots j$, either $i \notin A$ or $i \in A$ is the right boundary of a $\mathsf{C}$-free cluster; either way, $\mu_i = 1$ for all these $i$.
To show that $\lambda_i = \pm 1$ for $i = b, b+1, \ldots j+1$, induct on $i-b$.
The base case $i = b$ follows from condition
(3) because $f_b$ is inside a $\mathsf{C}$-cluster.
For the induction step, assume that $\lambda_i = \pm 1$; recall that
$\lambda_{i+1} = \lambda_i^{d_i} \mu_i^{-1}$ by definition of ``residue'';
and that we have already showen that $\mu_i = 1$ for $i = b, b+1, \ldots j$.
So, $\lambda_{i+1} = (\pm 1)^{d_i} (1)^{-1} = \pm 1$.
The last induction step we can carry out is from $j$ to $j+1$, obtaining the desired contradiction.
\end{proof}

\begin{prop} Every non-empty preclustering admits a cleanup. \label{prop:nonemptyclean}
\end{prop}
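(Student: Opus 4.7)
Start with a precleanup $(h, L)$ of $A$ supplied by Lemma~\ref{lem:getpreclean}; the plan is to modify it into a cleanup using the parametrized family of alternative precleanups $(g, M) := (\lambda \ast h, \mu \cdot (\lambda^d \ast L))$ provided by Lemma~\ref{lem:cleanupRmk}, where $\lambda$ is a $\ksca$ and $\mu$ its residue. We will choose $\lambda$ so that $(g, M)$ satisfies the additional conditions (5a) and (5b) of the cleanup definition.

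If $(A, f)$ has no $\mathsf{C}$-cluster, condition (5a) is vacuous and we argue exactly as in Lemma~\ref{lem:c-free-nonemptyclean}: absorb the translation part of $L_a$ into $h_{a+1}$ whenever $h_{a+1} \circ L_a$ is a Ritt polynomial, then complete the proof with a scaling adjustment. Assume instead that $(A, f)$ contains at least one $\mathsf{C}$-cluster, and write $L_a(x) = B_a x + C_a$ at each right boundary $a$ of a $\mathsf{C}$-free cluster. Lemma~\ref{lem:cleanupRmk} shows that $(g, M)$ satisfies (5a) if and only if $\lambda_i = \pm 1$ at every index $i$ inside a $\mathsf{C}$-cluster and $\mu_a = B_a^{-1}$ at every right boundary $a$ of a $\mathsf{C}$-free cluster (the requirement $\mu_i = 1$ at $i \notin A$ being enforced automatically by the recursion). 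Using $\mu_i = \lambda_{i+1}^{-1} \lambda_i^{d_i}$, these constraints determine $\lambda$ uniquely within each $\mathsf{C}$-free cluster once one boundary value is fixed, via $\lambda_{a+1} = \lambda_a^{d_a} B_a$ at the right boundary and $\lambda_{i+1} = \lambda_i^{d_i}$ at interior indices; we set $\lambda \equiv 1$ throughout every $\mathsf{C}$-cluster and propagate outward, extracting the necessary $d_i$-th roots in an algebraic closure of $K$ as needed. The $k$-skew-periodicity of $\lambda$ is inherited from that of $L$.

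Once (5a) is arranged, (5b) follows by a direct case analysis: $M_a$ is a translation by (5a), and the explicit forms $x^n$ or $x^k u(x^p)$ of a $\mathsf{C}$-free monic Ritt polynomial $g_{a+1}$ show that $g_{a+1} \circ M_a$ can be a Ritt polynomial only when $M_a$ is identity (hence trivially a scaling); otherwise the hypothesis of (5b) is vacuous. The main technical obstacle is ensuring the consistency of the $\lambda$-propagation at boundaries shared by two $\mathsf{C}$-free clusters with no intervening $\mathsf{C}$-cluster, where the recursion must agree from both sides — this requires tracking the compositional structure across consecutive clusters carefully, but the $\pm 1$ rigidity inside $\mathsf{C}$-clusters combined with the one-parameter freedom inside each $\mathsf{C}$-free cluster is enough to resolve the system.
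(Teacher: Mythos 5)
Your strategy is the paper's: start from the precleanup of Lemma~\ref{lem:getpreclean} and use the family $(g,M)=\lambda\ast(h,L)$ of Lemma~\ref{lem:cleanupRmk} to enforce (5a), reading off the constraints $\lambda_i=\pm 1$ inside $\mathsf{C}$~clusters, $\mu_i=1$ for $i\notin A$, and $\mu_a=B_a^{-1}$ at right boundaries of $\mathsf{C}$-free clusters. But the step you yourself flag as ``the main technical obstacle'' is a genuine gap, and your proposed resolution does not work. If you pin $\lambda\equiv 1$ on \emph{every} $\mathsf{C}$~cluster and propagate outward, then on each maximal arc of $\mathsf{C}$-free clusters between two $\mathsf{C}$~clusters the recursion $\lambda_{i+1}=\lambda_i^{d_i}\mu_i^{-1}$, together with the constraints on $\mu$, determines $\lambda$ completely from either endpoint; the ``one-parameter freedom inside each $\mathsf{C}$-free cluster'' is exactly the endpoint value, which you have already spent. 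The two propagations meeting in the middle of an arc will generically disagree, and the system is over-determined. The observation you are missing --- and it is the whole content of the paper's proof --- is that Lemma~\ref{lem:cleanupRmk} imposes \emph{no} constraint on $\mu_a$ when $a$ is the right boundary of a $\mathsf{C}$~cluster. One therefore propagates in a single direction only: upon reaching the right boundary $a$ of a $\mathsf{C}$~cluster, the free residue $\mu_a$ absorbs whatever value the recursion has accumulated in $\lambda_a$, one resets $\lambda:=1$ throughout that cluster, and restarts at its left boundary. This also settles the skew-periodicity of $\lambda$ (close the loop at such a boundary; this is where non-emptiness, together with the presence of a $\mathsf{C}$~cluster, is used) and avoids extracting $d_i$-th roots, which your two-sided propagation needs in one direction and which would take you outside $K$ when $K$ is not algebraically closed --- Theorem~\ref{thm:skew-inv} does not assume it is.

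Your treatment of (5b) also rests on a false assertion: it is not true that $g_{a+1}\circ M_a$ can be a Ritt polynomial only when the translation $M_a$ is the identity. For example $g(x)=x^2(x-1)^3$ is a Ritt polynomial and $g(x+1)=x^3(x+1)^2$ is again a Ritt polynomial. This is precisely why condition (5b) is part of the definition and why Proposition~\ref{prop:cleanunique} must allow insertion of translations to the right of type~$\mathsf{A}$ Ritt polynomials. The correct order is the paper's: secure (5b) first, by absorbing the translation part of $L_a$ into $h_{a+1}$ wherever possible, and then note that the final clause of Lemma~\ref{lem:cleanupRmk} guarantees that the subsequent $\lambda\ast$ adjustment preserves (5b).
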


The following proof translates the
explanation from~\cite[Remark 4.4]{MS} and
the details from the proofs of~\cite[Lemmas 4.6, 4.8, and 6.11(6)]{MS}  to our new setting with
long decompositions.

\begin{proof}

Fix a non-empty preclustering $A$ of a \kld $f$.
Start with a precleanup $(h, L)$ of $(A,f)$ obtained in Lemma~\ref{lem:getpreclean}.
To satisfy requirement (5b) from the definition of
cleanup, just absorb the translation part of
$L_a$ into the Ritt polynomial $h_{a+1}$ whenever
possible.

Because the remaining requirement (5a) is relevant only
when $(A,f)$ has a $\mathsf{C}$-cluster,
for the rest of this proof, we now assume that it
does. We will show how to
use Lemma~\ref{lem:cleanupRmk} to satisfy this
requirement.

Here are all the conditions on $\mu$ and $\lambda$ that we need.
\begin{itemize}
\item For $i \not\in A$, we need $\mu_i = 1$.
\item For $i$ inside $\mathsf{C}$-clusters, we also need $\lambda_i = \pm 1$.
\item When $a$ is the right boundary of a $\mathsf{C}$-free cluster and $L_a(x) = Bx + C$, we need $\mu_a = B^{-1}$.
\end{itemize}

For each $\mathsf{C}$-cluster $f_{[b,a)}$, we have very tight requirements on $\lambda_i$ for $a <i \leq b$, but no requirements on $\mu_a$. So we can let $\mu_a$ absorb the unwanted scalings coming from $\lambda_a$, set all those tightly controlled $\lambda_i := 1$, and restart the process at $b$. This is where the ``non-empty'' assumption is used.

\end{proof}

Empty preclusterings are important as clusterings of
long decompositions of certain polynomials.
However, these polynomials do not require the
clustering analysis for the solution of our
Mahler problem.  The following proposition shows
such polynomials are either subject
to Proposition~\ref{prop:c-free} or essentially
Chebyshev
polynomials and thus exceptional.

\begin{prop} \label{prop:emptyclean}
If the empty set is a preclustering of $f$, then
one of the following alternatives holds.
\begin{itemize}
\item $f$ is linearly equivalent to a \kld all of
whose factors are Ritt polynomials or
\item  $f$ is linearly equivalent to a \kld all of whose factors are plus or minus Chebyshev polynomials.
\end{itemize}
\end{prop}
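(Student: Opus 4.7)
I plan to split the proof into two cases depending on whether some window $f_{[b,a)}$ with $b - a \geq k$ is a $\mathsf{C}$-cluster. By Remark~\ref{rk:preclustermk}(4), if any sufficiently large window is a $\mathsf{C}$-cluster then all sufficiently large windows are $\mathsf{C}$-clusters, so the dichotomy ``every large window is $\mathsf{C}$-free'' versus ``some large window is a $\mathsf{C}$-cluster'' is exhaustive.

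In the first case, Remark~\ref{rk:preclustermk}(1) implies that each singleton $\{f_i\}$ is itself a $\mathsf{C}$-free cluster, so each $f_i$ is linearly equivalent to a monomial or a $\mathsf{C}$-free Ritt polynomial. Hence $f$ is a $\mathsf{C}$-free long decomposition in the sense of Definition~\ref{def:c-free-ksplode}, and I would apply Proposition~\ref{prop:c-free}: either $f$ admits a cleanup $(h,L)$ with all $L_i$ scalings, in which case each factor $L_i \circ h_i$ of $L \circ h$ is a Ritt polynomial (a scalar multiple of a monic Ritt polynomial) and we obtain the first alternative of the proposition, or $f$ has a wall at some position. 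In the wall subcase, the cluster identity $B \circ f_k \circ \cdots \circ f_1 \circ E = h_k \circ \cdots \circ h_1$ from the hypothesis directly provides a decomposition of $f_k \circ \cdots \circ f_1$ into Ritt polynomials, and via Ritt's theorem this decomposition can be converted into a linear equivalence of $f$ with an all-Ritt long decomposition by carefully choosing the \kspl to absorb the linear conjugators $B, E$ into the outermost Ritt factors while maintaining skew-periodicity.

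In the second case, pick a $\mathsf{C}$-cluster window $f_{[b,a)}$ with $b - a$ a large multiple of $k$, so that $B \circ f_b \circ \cdots \circ f_{a+1} \circ E = C_n$ for some Chebyshev polynomial $C_n$ whose degree is not a power of two. Ritt's theorem for Chebyshev decompositions tells us each indecomposable factor is linearly equivalent to $\pm C_p$ for some prime $p$, so each $f_i$ is linearly equivalent to a $\pm$ Chebyshev polynomial. To assemble a single \kspl $L$ realizing this linear equivalence consistently across a skew-period, I would exploit the overlap of the cluster decompositions of $f_{[k,0)}$ and its $\sigma$-shift $f_{[2k,k)}$, both $\mathsf{C}$-clusters by Remark~\ref{rk:preclustermk}(4), whose agreement on their common interior pins down $L$ on one full skew-period. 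The main obstacles will be the wall subcase of the first case --- where the \kspl must be constructed directly from the cluster identity rather than read off from a scaling-only cleanup --- and the sign and ordering bookkeeping in the second case, complicated by the special role of $C_2$, which does not commute freely with odd-prime Chebyshev factors when $L$ is being assembled.
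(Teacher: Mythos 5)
Your Case 1 has a genuine gap in its wall subcase. If $f$ is $\mathsf{C}$-free and has a (pre)wall, then by Lemma~\ref{lem:Claim2} \emph{no} cleanup of $(\mathbb{Z},f)$ has all linear factors scalings, and consequently $f$ is \emph{not} linearly equivalent to a long decomposition all of whose factors are Ritt polynomials: such an equivalence would itself yield an all-scaling cleanup. So in that subcase the first alternative is false (and the second is excluded since all factors are $\mathsf{C}$-free); what you must actually show is that the subcase cannot occur under the empty-preclustering hypothesis. Instead you try to prove the first alternative there, and your proposed mechanism --- absorbing $B$ and $E$ into the outermost Ritt factors ``while maintaining skew-periodicity'' --- is exactly where the difficulty lives. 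The cluster identity for a single $k$-window controls only the outermost linear conjugators of that window; it says nothing about the intermediate linear factors between consecutive Ritt factors, nor about the compatibility of the linear factor at position $k$ with $\sigma$ applied to the one at position $0$. The missing input is that the concatenation of \emph{two} periods is also a cluster (this is where the empty-preclustering hypothesis has content beyond ``every factor is swappable''), which via \cite[Lemma 4.13]{MS} forces the boundary linear factor to be a scaling and rules the wall out. Your Case 2 assembly has the same shape of gap: ``the overlap pins down $L$ on one full skew-period'' is the statement to be proved, not an argument, and the sign bookkeeping you flag is resolved only by actually controlling that boundary factor.

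For comparison, the paper's proof avoids your case split and the detour through Proposition~\ref{prop:c-free} entirely. Since the empty set is a preclustering, so is $k\mathbb{Z}$, and Proposition~\ref{prop:nonemptyclean} produces a cleanup $(h,L)$ that is skew-periodic by construction and has $L_i=\operatorname{id}$ except at multiples of $k$. The two-period window being a cluster then forces $L_0$ to be a scaling in the $\mathsf{C}$-free case (\cite[Lemma 4.13]{MS}) or $\pm x$ in the $\mathsf{C}$ case (\cite[Lemma 4.14]{MS}), and absorbing that single factor yields the two alternatives. If you want to salvage your outline, this two-period argument is the ingredient you need in both of your problematic subcases.
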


\begin{proof}
If the empty set is a preclustering of $f$, then so is
$A := k \mathbb{Z}$. Applying Proposition~\ref{prop:nonemptyclean} to it 
produces $h$ and $L$ with no non-trivial linear factors except for $L_{kn}$. Since the empty set is a preclustering of $f$, the concatenation of two periods is also a cluster. For a $\mathsf{C}$-free cluster, \cite[Lemma 4.13]{MS}
shows that $L_0$ is a scaling, and we are done.
For a $\mathsf{C}$-cluster, \cite[Lemma 4.14]{MS}
 shows that $L_0$ is identity or the scaling by $-1$.
\end{proof}

\begin{Rk}
\label{rk:empty-cheb}
According to Proposition~\ref{prop:emptyclean}
if $(\varnothing,f)$ has a $\mathsf{C}$~cluster,
then $f$ is a long decomposition of an
exceptional polynomial.	
\end{Rk}

\begin{prop} \label{prop:cleanunique}
Suppose that two pairs of sequences, $(h, L)$ and
$(g, M)$, are both cleanups of the same non-empty
preclustering $A$ of the same \kld $f$.
As is standard in this subsection, we
assume that $(A,f)$ has a $\mathsf{C}$~cluster.
Then $(h,L)$ may be obtained from $(g,M)$
by first forming $\lambda \ast (g,M)$ for
some $\ksca$ $\lambda$ and then
inserting translations to the
right of type~$\mathsf{A}$ Ritt polynomials in
$\lambda \ast (g,M)$.

More precisely, there is a $\ksca$ $\lambda$ such that
$(h,L)$ comes from $(\widetilde{g}, \widetilde{M}) := \lambda \ast (g,M)$ in
the following way. For each $a \in A$ where $\widetilde{g}_{a+1}$ is a type A Ritt polynomial (see Definition 3.8 in \cite{MS}) and $\widetilde{M}_a$ is not a scaling, there is a translation $T_a$ such that $h_{a+1} = \widetilde{g}_{a+1} \circ T_a$ and $L_a = T_a^{-1} \widetilde{M}_a$. Since both $\widetilde{g}_{a+1}$ and $h_{a+1}$ are Ritt polynomials, there are only finitely many such translations for each fixed $a$.

 Furthermore, $\lambda$ satisfies the conclusion of Corollary~\ref{cor:scaling-non-un}: $\lambda_i = \pm 1$ for all $i$ and
$\mu_i = 1$ inside clusters of $A$
 and to the right of $\mathsf{C}$-free clusters.
 Here, $\mu$ is the residue of $\lambda$ at the degree sequence $h$.
\end{prop}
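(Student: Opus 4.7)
The plan is to reduce the comparison of two cleanups to a single linear equivalence between $L \circ h$ and $M \circ g$ and then to exploit the rigidity of Chebyshev and monic Ritt polynomials to extract the $k$-scalar $\lambda$. Since both $(h, L)$ and $(g, M)$ represent $f$ up to linear equivalence, transitivity yields a $k$-skew-periodic sequence $N$ of linear polynomials witnessing the linear equivalence of $L \circ h$ with $M \circ g$. For $i \notin A$, the linear factors $L_i$ and $M_i$ are the identity, so $N$ at such positions is constrained purely by the indecomposable factors $h_i$ and $g_i$.

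I would next pin down the components of $N$ inside the clusters. In a $\mathsf{C}$-cluster, both $h_i$ and $g_i$ are Chebyshev polynomials of the same degree, and the rigidity of Chebyshev polynomials used in \cite[Lemma 4.14]{MS} forces the $N_i$ at those positions to be $\pm 1$-scalings. In a $\mathsf{C}$-free cluster, both $h_i$ and $g_i$ are monic Ritt polynomials, so \cite[Theorem 3.15]{MS} forces $N_i$ to be a scaling, and monicity links the scaling constants at adjacent positions by exactly the residue formula $\mu_i = \lambda_{i+1}^{-1} \lambda_i^{d_i}$. Defining $\lambda_i$ to be the scaling constant of $N_i$ (appropriately normalized at boundaries) yields a $k$-scalar $\lambda$, and setting $(\widetilde{g}, \widetilde{M}) := \lambda \ast (g, M)$ produces, by Remark~\ref{rk:ast-lin-eq} and Lemma~\ref{lem:cleanupRmk}, another cleanup of $(A, f)$ whose indecomposable factors satisfy $\widetilde{g}_i = h_i$ for all $i$ strictly inside clusters.

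The third step compares $(h, L)$ with $(\widetilde{g}, \widetilde{M})$ at the boundaries $a \in A$. From the linear equivalence, the composite $h_{a+1} \circ L_a$ equals $\widetilde{g}_{a+1} \circ \widetilde{M}_a$ once the scalings already absorbed into $\lambda$ are accounted for. When $\widetilde{g}_{a+1}$ is not of type $\mathsf{A}$, condition (5b) of the cleanup definition together with \cite[Theorem 3.15]{MS} forces $L_a = \widetilde{M}_a$ and $h_{a+1} = \widetilde{g}_{a+1}$; so $T_a$ may be taken to be the identity. When $\widetilde{g}_{a+1}$ is type $\mathsf{A}$, a translation can be absorbed on the right without disturbing the Ritt form, producing the translations $T_a$ with $h_{a+1} = \widetilde{g}_{a+1} \circ T_a$ and $L_a = T_a^{-1} \circ \widetilde{M}_a$; finiteness of the possible $T_a$ for fixed $\widetilde{g}_{a+1}$ and $h_{a+1}$ follows from the fact that $h_{a+1}$ determines $T_a$ up to the finite group of self-symmetries of a Ritt polynomial. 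The additional conclusion on $\lambda$ and its residue $\mu$ then follows from Corollary~\ref{cor:scaling-non-un} applied to the pair of cleanups $(h, L)$ and $(\widetilde{g}, \widetilde{M})$.

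The hard part will be the bookkeeping at boundaries between adjacent clusters, and in particular ensuring that the $\pm 1$ scalings forced inside a $\mathsf{C}$-cluster match coherently with the freer scalings inside an adjacent $\mathsf{C}$-free cluster. This is exactly where the assumption that $(A, f)$ has a $\mathsf{C}$-cluster is indispensable: the rigidity inside $\mathsf{C}$-clusters anchors $\lambda_i = \pm 1$ globally by propagating the residue constraint $\mu_i = \lambda_{i+1}^{-1} \lambda_i^{d_i}$ outward, and the resulting conclusion on $\mu_a = 1$ at right boundaries of $\mathsf{C}$-free clusters is precisely what Corollary~\ref{cor:scaling-non-un} provides.
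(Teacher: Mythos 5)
Your overall strategy is sound and reaches the right conclusion, but it takes a different route from the paper. The paper's proof also starts from a witness $N$ of the linear equivalence of $M \circ g$ with $L \circ h$, but it then cuts the long decomposition into a single $k$-period whose endpoint is the right boundary $a$ of a $\mathsf{C}$-cluster, observes that for such a period the two restrictions are honest cleanups in the finite sense of \cite[Definition 4.3]{MS} (this is exactly where the hypothesis that $(A,f)$ has a $\mathsf{C}$-cluster is used: the endpoint carries no extra normalization condition), and then quotes the finite uniqueness result \cite[Lemma 4.10]{MS} wholesale; the periods glue because $N_{a+1}$, not being to the right of a $\mathsf{C}$-free cluster, is forced to be $\pm x$, whence $N_{a+k+1} = \sigma(N_{a+1})$ is too. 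You instead re-derive the content of \cite[Lemma 4.10]{MS} directly on the long decomposition, extracting $\lambda$ from $N$ via \cite[Theorem 3.15]{MS} and Chebyshev rigidity and propagating the residue relation $\mu_i = \lambda_{i+1}^{-1}\lambda_i^{d_i}$. Your version is self-contained and makes the anchoring role of the $\mathsf{C}$-cluster more transparent; the paper's is shorter and pushes all the rigidity bookkeeping into the cited lemma.

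Two soft spots you should repair. First, inside a $\mathsf{C}$-cluster the rigidity is not position-by-position: a lone quadratic Chebyshev factor $x^2-2$ only forces $N_i$ to be a scaling with $N_{i+1}(x) = \alpha^2 x + 2\alpha^2 - 2$, not $\pm x$; the $\pm 1$ normalization must be propagated along the cluster from an odd-degree factor, which exists because the cluster's degree is not a power of two. You acknowledge propagation at cluster boundaries but it is equally needed inside $\mathsf{C}$-clusters. Second, your appeal to Corollary~\ref{cor:scaling-non-un} at the end is mildly circular: that corollary presupposes that both decompositions are cleanups related by $\lambda \ast$, whereas you need $\mu_a = 1$ at right boundaries of $\mathsf{C}$-free clusters in order to know (via Lemma~\ref{lem:cleanupRmk}) that $\lambda \ast (g,M)$ satisfies condition (5a) and hence is a cleanup in the first place. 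The fix is to establish $\mu_a = 1$ at those boundaries directly from the boundary relation $h_{a+1} \circ L_a = N_{a+2}^{-1} \circ g_{a+1} \circ M_a \circ N_{a+1}$, using that both $L_a$ and $M_a$ are translations there and that the $N$'s are scalings; this is routine but must be said. Neither issue invalidates the approach.
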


\begin{proof}

Since $f$ only appears in these definitions up to linear-equivalence, we may assume that $f= L \circ h$.
Fix $b, a \in A$ such that $f_{[b,a)}$ is a $\mathsf{C}$~cluster of $(A, f)$.

Since $(g, M)$ is also a cleanup of $L \circ h$, some \kspl $N$ witnesses that $M \circ g$ is linearly equivalent to $L \circ h$, and then
$$(L_{a+k} \circ h_{a+k},  L_{a+k-1} \circ h_{a+k-1}, \ldots , L_{a+2} \circ h_{a+2}, L_{a+1} \circ h_{a+1})$$
and
$$(N_{a+k+1}^{-1} \circ M_{a+k} \circ g_{a+k}, M_{a+k-1} \circ g_{a+k-1}, \ldots , M_{a+2} \circ g_{a+2}, M_{a+1} \circ g_{a+1} \circ N_{a+1})$$
are linearly equivalent in the sense of~\cite[Definition 2.49]{MS}, and we have two almost-cleanups of them, again in the sense of~\cite[Definition 4.3]{MS}:
$$(h_{a+k}, \ldots h_{a+1};  L_{a+k}, \ldots, L_{a+1}, \id))$$
and
$$(g_{a+k}, \ldots g_{a+1}; (N_{a+k+1}^{-1} \circ M_{a+k}), M_{a+k-1}, \ldots, M_{a+2}, M_{a+1}, N_{a+1}).$$

Since $f_{[b,a)}$ is a $\mathsf{C}$~cluster, there are no further requirements on the linear factors at $k$ and at $0$ in \cite[Definition 4.3]{MS}, so these are actual cleanups in the sense of~\cite[Definition 4.3]{MS}. By~\cite[Lemma 4.10]{MS}, these can
 only differ by scalings of $\pm 1$ and by translations to the right of $\mathsf{C}$-free clusters. In particular, since $N_{a+1}$ is not to the right of a $\mathsf{C}$-free cluster, it must be a scaling by $\pm 1$; and so $N_{a+k+1} = \sigma(N_{a+1})$ is also a scaling by $\pm 1$. So, the original long cleanups $(h, L)$ and $(g, M)$ can only differ by scalings of $\pm 1$ and by translations to the right of $\mathsf{C}$-free clusters, as required.
 \end{proof}

We now move on to comparing cleanups of different preclusterings.
Lemma~\ref{lem:refinery} builds on Remark ~\ref{rk:precluster} tracking what happens to cleanups when a preclustering is refined to have more, smaller clusters per period. The following two lemmas are partial converses of Lemma~\ref{lem:refinery}, used to recognize when a given preclustering is a refinement of another.
Lemma~\ref{lem:clusterfueCfree} is essentially
\cite[Lemma 4.13]{MS} about fusing two adjacent $\mathsf{C}$-free clusters.
Lemma \ref{lem:clusterfueC} is a better version of \cite[Lemma 4.14]{MS} made possible by
our Corollary~\ref{cor:scaling-non-un} which is a
better version of \cite[Remark 4.9]{MS}.

\begin{lem} \label{lem:refinery}
Suppose that $(h, L)$ is a precleanup of a preclustering $A$ of a \kld $f$, and $B \supset A$ is also a preclustering of the same $f$.
Then $(h, L)$ is \emph{not}
a precleanup of $(B,f)$ if and only if
\begin{itemize}
\item[$(\dag)$] there are some $b \geq i > a$ for which
 $f_{[b,a)}$ is a $\mathsf{C}$~cluster of $(A, f)$,
  and $f_i$ is quadratic, and both $i$ and  $(i-1)$ are in $B$.
\end{itemize}

If $(h, L)$ is a cleanup of $(A,f)$ and $(\dag)$ does not hold, then $(h, L)$ is also a cleanup of $(B,f)$.
\end{lem}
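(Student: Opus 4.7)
The plan is to work condition-by-condition through the definition of precleanup (and then of cleanup), exploiting the fact, codified in Remark~\ref{rk:precluster}, that every cluster of $(B,f)$ is a piece of some cluster of $(A,f)$.  Conditions (1) and (2) transfer immediately: (1) (linear equivalence of $L \circ h$ with $f$) is independent of the clustering, and (2) ($L_i = \id$ for $i \notin B$) follows from $B \supset A$ together with the same condition for $(A,f)$.  Condition (3) also transfers, because any $\mathsf{C}$-cluster of $(B,f)$ must be a piece of a $\mathsf{C}$-cluster of $(A,f)$: by Remark~\ref{rk:precluster}(1) a piece of a $\mathsf{C}$-free cluster is $\mathsf{C}$-free, and a piece with multiple quadratics but no odd-degree factor would prevent $B$ from being a preclustering; the $h_i$'s inside such a piece remain Chebyshev.

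The heart of the argument is condition (4).  Using the trichotomy in Remark~\ref{rk:precluster}(2), each $\mathsf{C}$-free cluster of $(B,f)$ is either (a) a piece of a $\mathsf{C}$-free cluster of $(A,f)$, in which case its $h_i$'s are already monic Ritt polynomials by (4) for $(A,f)$, or (b) a single quadratic $f_i$ carved out of a $\mathsf{C}$-cluster of $(A,f)$.  In case (b) the precleanup data forces $h_i$ to be the even Chebyshev polynomial $C_2$, which is not a monic Ritt polynomial in the sense required by (4).  Hence condition (4) fails for $(B,f)$ exactly when case (b) occurs, which is precisely $(\dag)$; this yields both directions of the equivalence.

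For the cleanup part, assume that $(h,L)$ is a cleanup of $(A,f)$ and that $(\dag)$ fails; the first part gives that $(h,L)$ is already a precleanup of $(B,f)$, so only (5a) and (5b) remain to verify.  Fix a $\mathsf{C}$-free cluster $f_{[b,a)}$ of $(B,f)$.  If $a \in B \setminus A$, then $L_a = \id$ is simultaneously a translation and a scaling, so (5a) and (5b) are trivial.  If $a \in A$, I examine the $(A,f)$-cluster lying just above $a$: if it is $\mathsf{C}$-free, then (5a) and (5b) for $(A,f)$ give the required property of $L_a$; if it is a $\mathsf{C}$-cluster, then the $(B,f)$-cluster immediately above $a$ is a strict piece inside it, and being $\mathsf{C}$-free it must be a single quadratic, which is exactly the configuration ruled out by $\neg(\dag)$.

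The main obstacle I anticipate is making this last case analysis fully airtight, since it requires tracking the two possible types of $(A,f)$-clusters adjacent to each boundary $a \in A$ and the subtlety that the $(B,f)$-cluster above $a$ may properly refine the $(A,f)$-cluster above $a$.  The unifying observation is that $(\dag)$ captures exactly the one configuration in which a boundary of $A$ that abuts a $\mathsf{C}$-cluster of $(A,f)$ acquires, in $(B,f)$, the status of a boundary of a $\mathsf{C}$-free cluster carrying the forbidden value $h_i = C_2$; once that configuration is excluded, every other case reduces to a condition that was already imposed on $(A,f)$ or holds vacuously because $L_i = \id$.
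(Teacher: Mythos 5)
Your proof is correct and follows essentially the same route as the paper's: conditions (1)--(2) transfer trivially, conditions (3)--(4) reduce via Remark~\ref{rk:precluster}(2) to the observation that the only way a factor can change cluster type is for $B$ to rip a single quadratic off a $\mathsf{C}$-cluster of $A$ (which is exactly $(\dag)$), and (5a)--(5b) hold because every right boundary of a $\mathsf{C}$-free cluster of $(B,f)$ is either a new boundary with $L_a=\id$ or an old boundary already governed by (5a)--(5b) for $(A,f)$. If anything, your explicit treatment of the case $a\in B\smallsetminus A$ in the cleanup step is slightly more careful than the paper's one-line dismissal.
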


\begin{proof}
Part (1) of the definition cleanup only depends on $h$, $L$, and $f$, not on $B$.
Part (2) of that definition is strictly easier to satisfy for a bigger $B$.
For parts (3) and (4), it is necessary and sufficient to have each factor $f_i$ in the same type of cluster in $(A,f)$ and $(B,f)$. By part (2) of Remark \ref{rk:preclustermk}, this fails if and only if $B$ rips a single quadratic factor off a $\mathsf{C}$~cluster of $A$, which is exactly the situation that $(\dag)$ describes more precisely.
Parts (5a) and (5b) of the definition of cleanup are again strictly easier: without $(\dag)$, every ``cluster boundary of $(B,f)$ to the right of a $\mathsf{C}$-free cluster'' is also a ``cluster boundary of $(A,f)$ to the right of a $\mathsf{C}$-free cluster.''
\end{proof}

\begin{lem} \label{lem:clusterfueCfree}
Suppose that $(h, L)$ is a cleanup of a non-empty preclustering $A$ of $f := L \circ h$
with a $\mathsf{C}$~cluster.
Suppose that $c > b > a$ are consecutive elements of $A$; that $f_{[c,b)}$ and $f_{[b,a)}$ are $\mathsf{C}$-free clusters; and that their concatenation $f_{[c,a)}$ is also a cluster. Then $L_b$ is identity, and $B := A \smallsetminus \{b+kn : n \in \mathbb{Z} \}$ is also a preclustering of $L \circ h$, and $(h, L)$ is also a cleanup of $(B, L \circ h)$.
\end{lem}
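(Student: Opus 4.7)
The plan is to reduce the lemma to its one-period analogue \cite[Lemma 4.13]{MS} applied to the fused cluster $f_{[c,a)}$.

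\textbf{Step 1: $f_{[c,a)}$ must be $\mathsf{C}$-free.} I would first rule out the possibility that $f_{[c,a)}$ is a $\mathsf{C}$-cluster. By Remark \ref{rk:preclustermk}(2), any piece of a $\mathsf{C}$-cluster that is itself a $\mathsf{C}$-free cluster must consist of a single quadratic factor. Applied to both $f_{[c,b)}$ and $f_{[b,a)}$, this would force each to be a single quadratic, so $\deg(f_{[c,a)}) = 4$. But a $\mathsf{C}$-cluster has degree not a power of $2$, a contradiction. Hence $f_{[c,a)}$ is $\mathsf{C}$-free.

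\textbf{Step 2: $L_b$ is identity.} Since $L_i = \mathrm{id}$ for every $i \in (a,c) \smallsetminus \{b\}$ by condition (2) of cleanup, one obtains
\begin{equation*}
f_c \circ \cdots \circ f_{a+1} \;=\; L_c \circ h_c \circ \cdots \circ h_{b+1} \circ L_b \circ h_b \circ \cdots \circ h_{a+1} \circ L_a.
\end{equation*}
Restricting the long cleanup $(h,L)$ to the indices $c \geq i > a$ yields a cleanup, in the one-period sense of \cite[Definition 4.3]{MS}, of $f_{[c,a)}$ viewed as a concatenation of two $\mathsf{C}$-free clusters with interior boundary at $b$. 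On the other hand, by Step 1, $f_{[c,a)}$ is itself a $\mathsf{C}$-free cluster and therefore admits a cleanup (in the same one-period sense) as a single cluster. Comparing these two cleanups of the same polynomial $f_c \circ \cdots \circ f_{a+1}$ via \cite[Lemma 4.13]{MS} forces $L_b$ to be the identity, together with any accompanying adjustments to $L_a$ and $L_c$ that the cleanup condition (5b) already rules out.

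\textbf{Step 3: $B$ is a preclustering and $(h,L)$ is a cleanup of $(B,f)$.} With $L_b = \mathrm{id}$, the set $B = A \smallsetminus \{b+kn : n \in \ZZ\}$ is a preclustering because its only new cluster is $f_{[c,a)}$, which is a cluster by hypothesis, and all other cluster pieces of $(B,f)$ coincide with cluster pieces of $(A,f)$. For the five conditions defining a cleanup of $(B,f)$: conditions (1), (3), and (4) are inherited directly, since every $h_i$ for $c \geq i > a$ is already a monic Ritt polynomial (from the original $\mathsf{C}$-free clusters of $(A,f)$) and the concatenation is again a $\mathsf{C}$-free cluster; condition (2) continues to hold because the only index that changed status is $b$, and $L_b = \mathrm{id}$; and conditions (5a) and (5b) are preserved because the boundaries retained in $B$ (namely $a$ and $c$, and all other elements of $A$) are exactly the boundaries at which they were already verified for $(A, f)$.

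The main obstacle is Step 2, which amounts to the careful bookkeeping needed to transfer \cite[Lemma 4.13]{MS} from a single period to the long-decomposition setting. The only subtlety is ensuring that the two cleanups being compared there agree on their treatment of the endpoint linear factors $L_a$ and $L_c$; but since $a$ is a right boundary of a $\mathsf{C}$-free cluster in both $(A,f)$ and $(B,f)$, and likewise $c$ is a left boundary of a $\mathsf{C}$-free cluster in both, the cleanup conditions (5a) and (5b) impose compatible constraints on these endpoints, so the reduction goes through cleanly.
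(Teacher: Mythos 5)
Your proof is correct and follows essentially the same route as the paper's: both reduce the claim that $L_b$ is the identity to \cite[Lemma 4.13]{MS} applied to a finite chunk containing $a$, $b$, and $c$ (the paper pads the chunk out to two full $k$-periods, you use the span $(a,c]$), and then verify the preclustering and cleanup conditions for $B$ directly. The only point you leave implicit is that $B$ is non-empty --- needed for the cleanup claim to make sense --- but this is immediate since $(A,f)$ also has a $\mathsf{C}$~cluster and hence at least three cluster boundaries per period.
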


\begin{proof}
To show that $L_b$ is identity, we
apply~\cite[Lemma 4.13]{MS}
 to a suitable finite chunk of these infinite objects. One period is not a long enough chunk, as it might not contain all three of $a$, $b$, and $c$. Since $A$ is $k$-periodic, $b \leq a+k$ and $c \leq b+k$, so $c \leq a+2k$, so two periods is enough.

Let $A' := \{ x-a : x \in A \mbox{ and } a \leq x \leq a+2k \}$.
Now $( (h_{a+2k}, \ldots , h_{a+1}), (L_{a+2k}, \ldots , L_{a}) )$ is a cleanup in the sense of \cite[Definition 4.3]{MS}
of the preclustering, again in the sense of \cite{MS}, $A'$ of the decomposition $(f_{a+2k}, \ldots , f_{a+1})$.

Applying \cite[Lemma 4.13]{MS} from  to these shows that $L_b$ is identity. Since $A$ is $k$-periodic, it follows that $L_{b+kn}$ are all identity. The rest of the lemma now follows easily.
Note that $B$ is non-empty because $(A,f)$ has
both a $\mathsf{C}$~cluster and a $\mathsf{C}$-free
cluster.   Writing out the definition verifies that it is a preclustering.
\end{proof}

\begin{lem} \label{lem:clusterfueC}
Suppose that $(h, L)$ is a cleanup of a non-empty preclustering $A$ of $f := L \circ h$.
Suppose that $c > b > a$ are consecutive elements of $A$; that $f_{[c,b)}$ and $f_{[b,a)}$ are $\mathsf{C}$~clusters; and that their concatenation $f_{[c,a)}$ is also a cluster.

Then $L_b(x) = \pm x$ and $B := A \smallsetminus \{b+kn : n \in \mathbb{Z} \}$ is also a preclustering of $L \circ h$.
Now $(h,L)$ is also a cleanup of $(B,f)$ if and only if $L_b$ is identity.

\end{lem}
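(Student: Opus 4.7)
The plan is to closely mirror the proof of Lemma~\ref{lem:clusterfueCfree}, substituting an appeal to \cite[Lemma 4.14]{MS} for the appeal to \cite[Lemma 4.13]{MS}. The fundamental new feature, which is the source of the ``if and only if'' at the end of the statement, is that \cite[Lemma 4.14]{MS} only forces the intervening linear factor to be $\pm x$ rather than identity; so where Lemma~\ref{lem:clusterfueCfree} could collapse everything into a single clean statement, here the sign ambiguity must be tracked through.

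First, I would cut down to a two-period chunk in order to apply the finite-dimensional input. Let $A' := \{x - a : x \in A \text{ and } a \leq x \leq a+2k\}$. Because $A$ is $k$-periodic and $a < b < c \leq a+2k$, both $b$ and $c$ lie in this window. The restriction $((h_{a+2k}, \ldots, h_{a+1}), (L_{a+2k}, \ldots, L_a))$ is then a cleanup in the sense of \cite[Definition 4.3]{MS} of the finite preclustering $A'$ of the decomposition $(f_{a+2k}, \ldots, f_{a+1})$, and the hypothesis that the two adjacent $\mathsf{C}$-clusters fuse into a cluster translates directly into this finite setting. Applying \cite[Lemma 4.14]{MS} yields $L_b(x) = \pm x$; since $L$ is a \kspl{} and $\sigma(\pm 1) = \pm 1$, this propagates to $L_{b+kn}$ for every $n \in \mathbb{Z}$.

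Next, I would check that $B$ is a preclustering of $L \circ h$. The only clusters of $(B, f)$ that differ from those of $(A, f)$ are the fusions $f_{[c+kn, a+kn)}$, which are clusters by hypothesis together with $k$-periodicity. Each such fusion must be a $\mathsf{C}$-cluster, because it contains the $\mathsf{C}$-cluster $f_{[b+kn, a+kn)}$ and therefore cannot itself be $\mathsf{C}$-free by Remark~\ref{rk:preclustermk}(1). Since every $h_i$ with $c \geq i > a$ is already a Chebyshev polynomial (from condition (3) of the cleanup for $A$), the new fused clusters automatically satisfy condition (3) for $B$.

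For the final ``if and only if,'' the ``only if'' direction is immediate: condition (2) of the definition of precleanup applied at $i = b \notin B$ forces $L_b = \id$. For the ``if'' direction, assume $L_b = \id$: condition (1) does not depend on the preclustering; condition (2) now holds for $B$ because it held for $A$ and we have just added $L_b = \id$; condition (3) was just verified for the fused clusters and persists unchanged for the rest; condition (4) is unaffected since no $\mathsf{C}$-free cluster has changed; and conditions (5a) and (5b) concern only right-boundaries of $\mathsf{C}$-free clusters, which are identical in $(A, f)$ and $(B, f)$. The main obstacle of the whole argument is thus packaged in \cite[Lemma 4.14]{MS}; once $L_b \in \{\pm x\}$ is in hand, the rest is a careful but routine walk through the clauses of the definition of cleanup.
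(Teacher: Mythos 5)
Your argument is the paper's argument for the generic case, and up to the point where you verify that $B$ is a preclustering it is fine \emph{provided $B$ is non-empty}. The genuine gap is the case $B = \varnothing$, which occurs exactly when $A$ has a single cluster boundary per period (so $c = b+k$, $a = b-k$), and which the paper treats as a separate case precisely because the definition of preclustering changes there. Your key sentence --- ``the only clusters of $(B,f)$ that differ from those of $(A,f)$ are the fusions $f_{[c+kn,a+kn)}$, which are clusters by hypothesis'' --- silently assumes that $B$ still has consecutive elements. When $B$ is empty, being a preclustering means that $f_{[e,d)}$ is a cluster for \emph{every} pair $e \geq d+k$, i.e.\ for chunks of arbitrary length starting anywhere, and the hypothesis only hands you the single two-period fusion $f_{[b+k,b-k)}$. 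This case is not vacuous and cannot be discarded: the lemma is invoked in exactly this situation (e.g.\ in Lemma~\ref{lem:removefake}, case (1)).

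The paper closes this gap with a sub-case analysis that your proposal would need to reproduce. If $L_b = \id$, then all linear factors are identity and all $h_i$ are Chebyshev, so every $f_{[e,d)}$ with $e \geq d+k$ is literally a Chebyshev polynomial containing an odd-degree factor, hence a $\mathsf{C}$-cluster. If $L_b(x) = -x$, one must still establish the first assertion of the conclusion (that $\varnothing$ is a preclustering) even though $(h,L)$ is not a cleanup of it: when some factor is quadratic, Lemma~\ref{lem:cleanupRmk} with a suitable $\ksca$ produces an alternative cleanup with all linear factors identity, reducing to the previous sub-case; when all factors have odd degree, one argues directly that every $f_{[e,d)}$ is linearly related to an odd-degree Chebyshev (and here $\varnothing$ is the exceptional preclustering of Proposition~\ref{prop:emptyclean} admitting no cleanup at all, which is consistent with, but not implied by, your reading of the ``if and only if''). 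Your treatment of the non-empty case, including the reduction to two periods for \cite[Lemma~4.14]{MS} and the clause-by-clause check of the cleanup conditions, matches the paper and is correct.
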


In any case, any cleanup of $(B,f)$ is also a cleanup of $(A,f)$; and one can be obtained from $(h,L)$ via Lemma~\ref{lem:cleanupRmk} as long as $(B,f)$ admits any cleanups.

\begin{proof}
Exactly as in the proof of the last lemma, we apply \cite[Lemma 4.14]{MS}  to two periods of $(A,f)$ to see that $L_b(x) = \pm x$.

\textit{Case 1: Suppose that $B$ is empty.}

If $L_b$ is identity, we have that all $h_i$ are Chebyshevs with at least one of odd degree, and all $L_i$ identity, so for any $e \geq d+k$, we have $f_{[e,d)} = h_{[e,d)}$ is a Chebyshev polynomial whose degree  is not a power of two. So $B$ is a preclustering and $(h,L)$ is a cleanup of it.

If $L_b$ is not identity and some $f_i$ has degree 2, let $d > b$ be the least such $i$.
Let $\lambda_i := -1$ for $i \equiv b, b+1, \ldots d \pmod{k}$ and $\lambda_i = 1$ for other $i$. Use this $\lambda$ to get a cleanup $(g, M)$ of $(A,f)$ via Lemma~\ref{lem:cleanupRmk}, with $M_b$ identity, so all $M_i$ identity. Again, $B$ is a preclustering and $(g, M)$ is a cleanup of it.

If $L_b$ is not identity and all $f_i$ have odd degree, it is easy to see that $f_{[e,d)}$ is linearly related to an odd-degree Chebyshev polynomial for any $e > d$, so $B$ is a preclustering, but it is the exception in Proposition~\ref{prop:emptyclean} admitting no cleanup.

\textit{Case 2: Suppose that $B$ is not empty.}

If $L_b$ is identity, writing out definitions verifies that $B$ is a preclustering and $(h,L)$ is a cleanup of it.

Otherwise, Lemma~\ref{lem:cleanupRmk} with an appropriate $\ksca$ $\lambda$ will produce another cleanup $(g,M)$ of $(A,f)$ with $M_b$ identity, reducing to the previous sentence.

More precisely, let $d$ be the degree sequence of $h$ and define $\lambda$ by starting with $\lambda_b = -1$, proceeding inductively with $\lambda_{i+1} = \lambda_i^{d_i}$ until $\lambda_{a+k}$ is defined, and filling the rest of the period with $\lambda_i = 1$. This makes sense because $b < a+k$ because $B$ is non-empty. The residue of this $\lambda$ at the degree sequence of $h$ is $-1$ at $b+nk$ and $\pm 1$ at $a+k+nk$ and $1$ everywhere else. The first assures that $M_b$ is identity, and the second is ok because $a+nk$ are right boundaries of $\mathsf{C}$~clusters.
\end{proof}

\subsubsection{Gates and clusterings}

Cluster boundaries give rise to the combinatorial invariants we use to solve the Mahler problem for polynomials all of whose factors are swappable but not all of whose factors are $\mathsf{C}$-free.
However, cluster boundaries of a preclustering are very far from an invariant, as we can introduce extra boundaries almost at will. In this subsection, we show that a preclustering with a minimal number of clusters, called a \emph{clustering}, is almost an invariant of the long decompositions. 

In Lemma~\ref{lem:uniqclusttech} we show that two such clusterings only differ by assigning
what we call ``wandering quadratic'' factors (see Definition~\ref{def:loiter-def}) to different clusters. With that result, the boundary set (Definition~\ref{def:def-bdry}) becomes the desired combinatorial invariant (Theorem~\ref{thm:unique-brdyset}).

 The intuitive definition of clustering in terms of the minimal number of clusters turns out to be equivalent to two more useful but less intuitive definitions, one in terms of the linear factors in a cleanup and the other in terms of two particular ways to reduce the number of clusters. We take the first one as our definition (Definition~\ref{def:fakedef}), and obtain the second (Lemma~\ref{lem:clusterfue}) as a consequence.
 In Lemmas~\ref{lem:lem421} and~\ref{lem:lem432}, we describe the only way to obtain one clustering from another by moving a wandering quadratic; we then define the combinatorial invariant (Definition~\ref{def:def-bdry}) that we  use in our proofs. The rest of the section consists of the tedious verification that it is indeed invariant: that any two clusterings come from each other via the operations described in Lemmas~\ref{lem:lem421} and~\ref{lem:lem432}.

 Much of the technical work of this section is closely related to~\cite[Section 4]{MS}, but the overall approach is substantially improved. With long decompositions, we avoid the artificial cluster boundary at $k$ that unnecessarily complicated \cite{MS}. With Definition~\ref{def:def-bdry}, we explicitly identify the combinatorial invariant that remained implicit in \cite{MS}.

\begin{Def} \label{def:gatesdef}
Fix a cleanup $(h, L)$ of a non-empty preclustering $A$ of $f$, and fix consecutive elements $c > b > a$ of $A$. For the duration of this definition,
\begin{itemize}
\item[-] the \emph{left cluster} is $f_{[c,b)}$; and
\item[-] the \emph{right cluster} is $f_{[b,a)}$; and
\item[-] the \emph{linear factor $L$} is $L_b$.\end{itemize}
We define ``$(h, L, A)$ has a \underline{\hspace{.5in}} gate at $b$'' exactly as in~\cite[Definition 4.15]{MS}, where the blank \underline{\hspace{.5in}} is one of ``left-to-right'', ``right-to-left'', ``one-way'', or  ``two-way''.
\vspace{.2cm}

More formally, $( (h_c, \ldots,  h_{a+1}), (\id, L_{c-1}, \ldots,  L_a) )$ is a cleanup in the sense of~\cite[Definition 4.2]{MS} of the preclustering $(c-a, b-a, a-a)$ in the sense of~\cite{MS} of the decomposition $(h_c \circ L_{c-1}, \ldots,  h_{a+1} \circ L_a)$. Whenever this has a left-to-right (respectively, right-to-left, one-way, two-way) gate at $1$ in the sense of \cite[Definition 4.15]{MS}, we now say that $(h, L, A)$ has a
left-to-right (respectively, right-to-left, one-way, two-way) gate at $b$.

\end{Def}

The next lemma justifies the terminology ``$(A,f)$ has a gate at $b$'', explains the purpose of gates, and notes that two-way gates are superfluous.

\begin{lem} \label{lem:purposeofgates}
 Two cleanups of the same preclustering of the same decomposition have the same kinds of gates in the same places. (Compare with~\cite[Lemma 4.17(1)]{MS}.)

 The concatenation of a cluster $f_{[b,a)}$ of $(A, f)$ and an adjacent quadratic factor $f_a$ (respectively, $f_{b+1}$) is a cluster if and only if $(A,f)$ has a right-to-left gate at $a$ (respectively, left-to-right gate at $b$). The clusters $f_{[b,a)}$ and $f_{[b,a-1)}$ (respectively, $f_{[b,a)}$ and $f_{[b+1,a)}$) are of the same kind, $\mathsf{C}$ or $\mathsf{C}$-free.
 (Compare with \cite[Remark 4.20]{MS}.)

 If $c > b > a$ are consecutive elements of $A$ and $(A,f)$ has a two-way gate at $b$,
then the two clusters $f_{[c,b)}$ and $f_{[b,a)}$ are of the same kind (both $\mathsf{C}$ or both $\mathsf{C}$-free), and their concatenation $f_{[c,a)}$ is also a cluster. (Compare with \cite[Lemma 4.16]{MS}.)
\end{lem}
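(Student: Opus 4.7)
The plan is to reduce all three parts of the lemma to the corresponding finite statements in~\cite{MS}, just as was done in the proofs of Lemmas~\ref{lem:clusterfueCfree} and~\ref{lem:clusterfueC}. The key observation is that since the preclustering $A$ is $k$-periodic, consecutive elements $c > b > a$ of $A$ satisfy $c - a \leq 2k$, so a window of two consecutive periods of $f$ contains all the data relevant to a single gate at $b$. Explicitly, restricting to indices in $[a, a+2k]$ turns the infinite objects $(h, L, A)$ into a finite decomposition equipped with a preclustering and a cleanup in the sense of~\cite[Definitions 4.2 and 4.3]{MS}, which is exactly the setup for the gate definition in~\cite[Definition 4.15]{MS} and the results cited.

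For part (1), suppose $(h,L)$ and $(g,M)$ are both cleanups of $(A, f)$ and fix consecutive $c > b > a$ in $A$. By Proposition~\ref{prop:cleanunique} (or by a direct application of~\cite[Lemma 4.10]{MS} in the two-period window together with Lemma~\ref{lem:cleanupRmk} and Corollary~\ref{cor:scaling-non-un}), these two cleanups of the same finite decomposition only differ by scalings and by translations absorbed to the right of type~$\mathsf{A}$ Ritt polynomials inside $\mathsf{C}$-free clusters. Such modifications do not change the kind of gate by~\cite[Lemma 4.17(1)]{MS}, so the gate type at $b$ is determined by $(A, f)$ alone. Thus the terminology ``$(A, f)$ has a gate at $b$'' is well-defined.

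For part (2), I would use Remark~\ref{rk:preclustermk}(3): a quadratic factor may be removed from a cluster to obtain a cluster of the same kind, and conversely. So $f_{[b, a-1)}$ is a cluster iff we can extend a cleanup of $f_{[b,a)}$ by prepending the quadratic $f_a$, and by~\cite[Definition 4.15]{MS} this is exactly the condition that there is a right-to-left gate at $a$, with the resulting cluster being of the same $\mathsf{C}$/$\mathsf{C}$-free type. The argument on the left side of a cluster is symmetric.

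For part (3), take a two-way gate at $b$ between $f_{[c,b)}$ and $f_{[b,a)}$. Translate into the finite setting on $[a, a+2k]$ and apply~\cite[Lemma 4.16]{MS} directly: that lemma already states that in the presence of a two-way gate, the two adjacent clusters are of the same kind and their concatenation is itself a cluster. The only mild subtlety is verifying that the degree condition in the definition of $\mathsf{C}$-cluster (degree not a power of $2$) survives the concatenation, but this is automatic since each individual cluster already satisfies it and concatenation only increases the degree. The main obstacle in the whole lemma is ensuring that the reduction to~\cite{MS} genuinely applies even when $b$ (or $a$, or $c$) happens to be congruent to a previously distinguished boundary index such as $k$; this is precisely the issue that the long-decomposition formalism was designed to remove, so we merely need to note that the window $[a, a+2k]$ always accommodates the three consecutive indices regardless of where $b$ sits modulo $k$.
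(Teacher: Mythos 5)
Your proposal is correct and follows essentially the same route as the paper: the paper's proof is a two-line reduction citing \cite[Remark 3.14]{MS} (the underlying fact behind \cite[Lemma 4.16 and Remark 4.20]{MS} that you invoke) together with Proposition~\ref{prop:cleanunique}, and the two-period window you describe is already built into Definition~\ref{def:gatesdef}. Your version merely spells out the details the paper leaves implicit.
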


\begin{proof} The first statement follows immediately from comparing~\cite[Remark 3.14]{MS} with Proposition~\ref{prop:cleanunique} here. The second and third statements are immediate consequences of~\cite[Remark 3.14]{MS}. \end{proof}

Beware that when the preclustering has only one cluster per period,
the concatenations in the last lemma are not contained in one period,
and so are not clusters of any nonempty preclustering of $f$.

With the following definition we introduce the
notion a fake gate.  Morally, these identify
cluster boundaries which can be removed.
Parts (2) and (3) below are closely related to the
fake wandering quadratics from~\cite[Definition 4.19]{MS}.

\begin{Def} \label{def:fakedef}
Fix a non-empty preclustering $A$ of $f$, and fix consecutive elements $c > b > a$ of $A$.
The gate of $(A, f)$ at $b$ is \emph{fake} if one of the following holds:\begin{enumerate}
\item The gate is two-way.
\item The gate is left-to-right, $c = b+1$, and $f_{b+1}$ is quadratic.
\item The gate is right-to-left, $b = a+1$, and $f_{a+1}$ is quadratic.
\item The gate is left-to-right, $c = b+2$, both $f_{b+2}$ and $f_{b+1}$ are quadratic,
 and $(A, f)$ has a right-to-left gate at $c$.
\item The gate is right-to-left, $b = a+2$, both $f_{a+2}$ and $f_{a+1}$ are quadratic
 and $(A, f)$ has a left-to-right gate at $a$.
\end{enumerate}

A \emph{clustering} is a preclustering with no fake gates.
\end{Def}

Fakeness of gates is defined entirely in terms of the degrees of the factors $f_i$ and gates.
The degrees of factors are a property of the \kld alone, independent of the preclustering $A$ and cleanup.
The gates are a property of the \kld and the preclustering, still independent of cleanup, by the first part of Lemma~\ref{lem:purposeofgates}. So the fakeness of gates again only depends on $f$ and $A$ and does not depend on the choice of cleanup; and so being a clustering is a property of $(A,f)$, independent of cleanup.

\begin{lem} \label{lem:removefake}
 If a preclustering $A$ of $f$ has a fake gate, then $f$ admits another preclustering $B$ with fewer cluster boundaries per period, that is,\\ with $ | B \cap \{1, 2, \ldots, k \} | < | A \cap \{1, 2, \ldots, k \} | $. \end{lem}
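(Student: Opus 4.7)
The plan is to handle each of the five clauses of Definition~\ref{def:fakedef} (two-way gate; left-to-right with $c=b+1$ and $f_{b+1}$ quadratic; right-to-left with $b=a+1$ and $f_{a+1}$ quadratic; left-to-right with $c=b+2$ and two quadratics, paired with a right-to-left gate at $c$; and its mirror image) by explicitly writing down the desired preclustering $B$ and then invoking Lemma~\ref{lem:purposeofgates} to certify that the strings between consecutive elements of $B$ are still clusters. In every case $B$ is obtained from $A$ by deleting one or two $k$-cosets and possibly adding back a single one, so the cardinality bookkeeping is immediate.

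For Case (1), the two-way gate at $b$, Lemma~\ref{lem:purposeofgates} already guarantees that $f_{[c,b)}$ and $f_{[b,a)}$ have the same kind and that their concatenation $f_{[c,a)}$ is a cluster, so one of Lemma~\ref{lem:clusterfueCfree} or Lemma~\ref{lem:clusterfueC} applies and $B:=A\smallsetminus\{b+kn:n\in\ZZ\}$ is the desired preclustering. For Cases (2) and (3), one of the two adjacent clusters is a single quadratic, and the direction of the gate is exactly what Lemma~\ref{lem:purposeofgates} needs to conclude that the merged string $f_{[c,a)}$ is itself a cluster of the same kind as the longer of the two pieces. Removing $b$ from $A$ therefore produces $B:=A\smallsetminus\{b+kn:n\in\ZZ\}$, whose only ``new'' cluster is the already-verified $f_{[c,a)}$.

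Cases (4) and (5) are the substantive ones. In Case (4), with $c=b+2$ and both $f_{b+2},f_{b+1}$ quadratic, simply removing $b$ does not obviously give a preclustering, because the merged string $f_{[c,a)}$ is obtained by attaching \emph{two} quadratics to $f_{[b,a)}$, not just one. Instead, letting $d$ denote the element of $A$ just above $c$, I would exploit both gate hypotheses in parallel: the right-to-left gate at $c$ yields (via Lemma~\ref{lem:purposeofgates}) that $f_{[d,b+1)}$ is a cluster, and the left-to-right gate at $b$ yields that $f_{[b+1,a)}$ is a cluster. Then
\[
B:=\bigl(A\smallsetminus\{b+kn,\,c+kn:n\in\ZZ\}\bigr)\cup\{(b+1)+kn:n\in\ZZ\}
\]
is $k$-periodic, has exactly one fewer element per period than $A$ (note that $b+1\notin A$, since $c=b+2$ is the element of $A$ immediately above $b$), and its consecutive-element clusters near the relevant region are precisely $f_{[d,b+1)}$ and $f_{[b+1,a)}$, both just certified. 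Case (5) is handled by the mirror construction, replacing $\{a,b\}$ in $A$ by $\{a+1\}$ and using the right-to-left gate at $b$ together with the left-to-right gate at $a$ to certify $f_{[c,a+1)}$ and $f_{[a+1,a')}$ as clusters, where $a'\in A$ is the element just below $a$.

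The main obstacle will be the verification in Cases (4) and (5) that the explicitly-written $B$ really is a preclustering and not merely a $k$-periodic subset. Two checks are needed: that no unintended ``new'' cluster appears (all clusters of $(B,f)$ outside the modified region coincide with clusters of $(A,f)$, since only the one $k$-coset worth of boundaries around $b$, $c$ (or $a$, $b$) was altered), and that the degree condition forbidding powers of $2$ for $\mathsf{C}$-clusters is inherited, which it is, because Lemma~\ref{lem:purposeofgates} ensures that each extended cluster has the same kind ($\mathsf{C}$ or $\mathsf{C}$-free) as the cluster it extends. Edge cases in which $A$ has only one element per period are either absorbed by Lemmas~\ref{lem:clusterfueCfree} and~\ref{lem:clusterfueC} in Case (1) or ruled out by the relative position constraints $c=b+1,\ b=a+1,\ c=b+2,\ b=a+2$ required in Cases (2)--(5).
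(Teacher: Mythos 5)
Your constructions of $B$ in all five cases coincide with the ones in the paper's proof, and the certifications via Lemma~\ref{lem:purposeofgates} (plus Lemmas~\ref{lem:clusterfueCfree} and~\ref{lem:clusterfueC} in Case (1)) are exactly the intended argument. The gap is in your final sentence, where you claim that the degenerate situations in Cases (2)--(5) are ``ruled out by the relative position constraints.'' They are not. In Case (2), $c=b+1$ together with $A$ having one boundary per period forces $k=1$, which is consistent, and then your $B$ is empty; the empty set is a preclustering only if \emph{arbitrarily long} blocks $f_{[b,a)}$ with $b\geq a+k$ are clusters, which is a genuine condition that must be verified, not a triviality. Worse, in Case (4), $c=b+2$ with one boundary per period forces $k=2$, which certainly can occur; there the cosets of $b$ and $c$ coincide, so your $B=(A\smallsetminus[\,b\text{-coset}\cup c\text{-coset}\,])\cup(b+1)\text{-coset}$ removes one coset and adds one back, and your cardinality claim ``exactly one fewer element per period'' is false --- $|B\cap\{1,2\}|=|A\cap\{1,2\}|=1$, so the strict inequality in the statement fails for that $B$.

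The paper closes these cases by a separate argument: in each degenerate situation every factor of $f$ is quadratic, so every cluster of $(A,f)$ is $\mathsf{C}$-free, so the gate in question is forced to be two-way, and then $f$ is linearly equivalent to a long decomposition all of whose factors are monic monomials, whence the \emph{empty} preclustering works (zero boundaries per period, which is strictly fewer). You need some version of this argument, or an explicit standing hypothesis excluding $k\leq 2$, to complete Cases (2)--(5).
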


\begin{proof}
Suppose $c > b > a$ are consecutive elements of $A$ and $(A,f)$ has a fake gate at $b$.

For (1), by the last part of Lemma~\ref{lem:purposeofgates} and one of
Lemmas~\ref{lem:clusterfueCfree} or~\ref{lem:clusterfueC},
 $B := A \smallsetminus \{ b+nk : n \in \mathbb{Z} \}$ is another preclustering of $f$.

For (2) and (3), by the second part of Lemma~\ref{lem:purposeofgates}, the same $B$ obviously works as long as it is not empty. If $B$ is empty, then $k=1$ and $A = \mathbb{Z}$ and all factors of $f$ are quadratic. Then all clusters of $(A,f)$ are $\mathsf{C}$-free, so a gate between two of them must be a two-way gate, so $f$ is linearly equivalent to $h$ where every $h_i(x) = u_i x^2$ for some scalar $u_i$. The empty set is obviously a preclustering of $h$, and so also of $f$.

For (4), consider $$B := (A
\smallsetminus [\{ b+nk : n \in \mathbb{Z} \} \cup \{ c+nk : n \in \mathbb{Z} \}]) \cup \{ b+1+nk : n \in \mathbb{Z} \}.$$
 By the second part of Lemma~\ref{lem:purposeofgates}, this $B$ is another preclustering of $f$. As long as $b$ and $c = b+2$ are not the same modulo $k$, this $B$ has fewer cluster boundaries per period than $A$: two were removed and one added back. Otherwise, $k=2$ and each cluster of $(A,f)$ consists of two quadriatic factors, so all clusters of $(A,f)$ are $\mathsf{C}$-free clusters, so the gate at $b$ must be a two-way gate, and just as for (2) and (3) above, the  empty set is a preclustering of $f$.

The proof for (5) is identical to the one for (4).
\end{proof}

The next Corollary is the analogue of~\cite[Lemma 6.11(4)]{MS}.

\begin{cor}
\label{cor:clustering-exists}
 If all factors of a \kld $f$ are swappable, then $f$ admits a clustering. \end{cor}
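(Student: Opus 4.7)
The plan is a straightforward descent argument on the number of cluster boundaries in one period. Since all factors of $f$ are swappable, Remark~\ref{rk:ezrmk45}(2) guarantees that $A_0 := \mathbb{Z}$ is a preclustering of $f$, so the collection of preclusterings of $f$ is non-empty. For any preclustering $A$ of $f$, define $b(A) := |A \cap \{1,\ldots,k\}|$, a non-negative integer. I would choose $A$ to be a preclustering of $f$ that minimizes $b(A)$; such a minimizer exists because the set of possible values of $b$ is a non-empty subset of $\mathbb{N}$.

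I claim that this minimizing $A$ has no fake gates, and hence is a clustering. Indeed, if $(A,f)$ had a fake gate, then Lemma~\ref{lem:removefake} would produce another preclustering $B$ of $f$ with $b(B) < b(A)$, contradicting the minimality of $b(A)$. (Note that the empty set, when it arises in the proof of Lemma~\ref{lem:removefake}, is still counted as a preclustering with $b = 0$, which is consistent with our convention; and an empty preclustering vacuously has no gates at all, hence no fake gates.) Therefore $A$ is a preclustering with no fake gates, which by Definition~\ref{def:fakedef} means that $A$ is a clustering of $f$.

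The only subtle point is making sure the definition of ``fake gate'' is cluster-boundary-removing rather than just rearranging boundaries: this is exactly what Lemma~\ref{lem:removefake} provides in all five cases of Definition~\ref{def:fakedef}, including cases (4) and (5) where two boundaries are removed and one is added back (still a net decrease, except in the degenerate small-$k$ situations which the proof of Lemma~\ref{lem:removefake} handles by passing to the empty preclustering). Given that Lemma~\ref{lem:removefake} is already established, there is no real obstacle here — the argument is just well-founded induction on $b(A)$.
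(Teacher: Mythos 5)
Your proof is correct and is essentially the paper's own argument: the paper also starts from the preclustering $\mathbb{Z}$ supplied by Remark~\ref{rk:ezrmk45} and descends on the number of cluster boundaries per period via Lemma~\ref{lem:removefake} until no fake gates remain. Phrasing it as choosing a minimizer rather than as an explicit induction is an immaterial difference.
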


\begin{proof} Start with the preclustering $\mathbb{Z}$ of $f$ from Remark~\ref{rk:ezrmk45}, and induct on the number of cluster boundaries per period, using Lemma~\ref{lem:removefake} to decrease it as long as there are fake gates. \end{proof}

We can now complete what we started with Lemmas~\ref{lem:clusterfueCfree} and~\ref{lem:clusterfueC}.

\begin{lem} \label{lem:clusterfue}
The concatenation of two adjacent clusters of a clustering is never a cluster.

More precisely, fix consecutive elements $c > b > a$ of a preclustering $A$ of $f$, and suppose that the concatenation $f_{[c,a)}$ of the two adjacent clusters $f_{[c,b)}$ and $f_{[b,a)}$ is itself a cluster. Then $A$ has a gate at $b$, and this gate satisfies one of Conditions (1), (2), or
(3) in Definition~\ref{def:fakedef} of ``fake gates''.
 \end{lem}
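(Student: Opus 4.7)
My plan is to split into three cases based on the types ($\mathsf{C}$ or $\mathsf{C}$-free) of the two adjacent clusters $f_{[c,b)}$ and $f_{[b,a)}$, combining the earlier Lemmas~\ref{lem:clusterfueCfree} and~\ref{lem:clusterfueC} with the characterization of gates in Lemma~\ref{lem:purposeofgates}. After fixing a cleanup $(h,L)$ of $(A,f)$ via Proposition~\ref{prop:nonemptyclean}, I use that $c \leq a+2k$ by $k$-periodicity of $A$ so that everything takes place within two consecutive periods. Via Remark~\ref{rk:skew-cluster-rmk}(2) this is precisely the setting of \cite[Lemma 4.21]{MS}, whose proof template carries through.

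In Case 1, where both clusters are $\mathsf{C}$-free, Lemma~\ref{lem:clusterfueCfree} forces $L_b = \id$, yielding a two-way gate at $b$, so condition (1) of Definition~\ref{def:fakedef} holds. In Case 2, where both are $\mathsf{C}$-clusters, Lemma~\ref{lem:clusterfueC} gives $L_b(x) = \pm x$; when $L_b = \id$ the two-way gate is immediate, and when $L_b(x) = -x$ I would apply Lemma~\ref{lem:cleanupRmk} with a suitable $\ksca$ $\lambda$ (constructed as in the proof of Lemma~\ref{lem:clusterfueC}) to pass to an alternate cleanup $(g,M)$ with $M_b = \id$. Since gate types do not depend on the choice of cleanup by Lemma~\ref{lem:purposeofgates}, this again produces a two-way gate and hence condition (1).

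In Case 3, without loss of generality $f_{[c,b)}$ is $\mathsf{C}$-free and $f_{[b,a)}$ is a $\mathsf{C}$-cluster. The concatenation must be a $\mathsf{C}$-cluster, because a type-$\mathsf{C}$ odd-degree Chebyshev factor inside the right cluster cannot appear inside any $\mathsf{C}$-free cluster. It follows that each factor on the $\mathsf{C}$-free side must be linearly equivalent to a Chebyshev factor appearing in the Ritt decomposition of the resulting Chebyshev polynomial, and by \cite[Theorem 3.15]{MS} this forces each such factor to be quadratic. A chain of two or more indecomposable quadratics inside a $\mathsf{C}$-free cluster composes to $x^{2^m}$ up to linear equivalence, which is not linearly equivalent to $C_{2^m}$ for $m \geq 2$, again by \cite[Theorem 3.15]{MS}. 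Hence the $\mathsf{C}$-free side reduces to the single factor $f_{b+1}$ which is quadratic, so $c = b+1$. Lemma~\ref{lem:purposeofgates} then produces a left-to-right gate at $b$, giving condition (2); the symmetric analysis with the $\mathsf{C}$-cluster on the left gives condition (3). Conditions (4) and (5) of Definition~\ref{def:fakedef} do not appear, since they would require additional gates beyond the interval $[a,c]$ which are not forced by the hypothesis.

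The main obstacle is the exclusion of a chain of multiple quadratics in Case 3: it depends essentially on Ritt's uniqueness together with the fact that $x^n$ and $C_n$ are not linearly equivalent for $n \geq 3$. The rest is careful bookkeeping of cleanups, transferring the finite statement across the two-period window.
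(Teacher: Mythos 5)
Your proposal is correct and follows essentially the same route as the paper: the same case split on the kinds of the two clusters, with the same-kind cases dispatched by Lemmas~\ref{lem:clusterfueCfree} and~\ref{lem:clusterfueC} (your extra cleanup-switch for $L_b(x)=-x$ is harmless but unnecessary, since $L_b=\pm x$ between Chebyshev clusters is already a two-way gate), and the mixed-kind case forcing the $\mathsf{C}$-free cluster to be a single quadratic before invoking the second part of Lemma~\ref{lem:purposeofgates}. The only difference is that in Case 3 you re-derive by hand, via Ritt's theorem and \cite[Theorem~3.15]{MS}, what the paper simply cites as Remark~\ref{rk:preclustermk}(2).
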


\begin{proof}
If the clusters $f_{[c,b)}$ and $f_{[b,a)}$ are of the same kind ($\mathsf{C}$ or $\mathsf{C}$-free), then Lemmas~\ref{lem:clusterfueCfree}
and~\ref{lem:clusterfueC} show that $A$ has a two-way gate at $b$.

Consider now the case that these clusters are of different kinds.
Since one of $f_{[c,b)}$ and $f_{[b,a)}$ is a $\mathsf{C}$~cluster, one of the factors $f_i$ is an odd-degree Chebyshev polynomial, so the whole $f_{[c,a)}$ must be a $\mathsf{C}$~cluster. By Remark~\ref{rk:preclustermk}, the other cluster must consist of a single quadratic factor. The second part of Lemma~\ref{lem:purposeofgates} now verifies condition (2) or (3) in the definition of ``fake gates''.
\end{proof}

We can now connect our clusterings with those in \cite{MS}. The following looks much like Remark~\ref{rk:skew-cluster-rmk}, except that ``pre'' is gone from ``preclusterings''.

\begin{Rk} \label{rk:clustering-def-match}
These observations are again related to parts
of~\cite[Lemma 6.11]{MS}.
\begin{enumerate}
\item If $A$ is a clustering of $f$, and $0 \in A$, then $A \cap \{ 0, 1, \ldots k \}$ is a clustering of $(f_k, \ldots f_1)$ in the sense
of~\cite[Definition 4.26]{MS}. Lemma~\ref{lem:clusterfue} verifies the first part of the definition; and~\cite[Remark 4.27]{MS}  connects the second part of the definition to parts (4) and (5) in our definition of ``fake gates''.
\item More generally, for any $a \in A$, the tuple $A \cap \{ a, a+1, \ldots a+k \}$ is a clustering of the
 decomposition $(f_{a+k}, \ldots f_{a+1})$, in the sense of~\cite[Definition 4.26]{MS}.
\item Converses of these may fail: the cluster boundary at $0$ and $k$ required in~\cite[Definition 4.26]{MS} may not be right for the long decomposition.
\end{enumerate} \end{Rk}

So far in this section we have identified two ways of getting a new preclustering from an old one: fusing two clusters (in each period) and removing the boundary between them from $A$; and moving a quadratic factor from one cluster to the next, through a gate in the correct direction. The next two results examine this second operation. Each time, we have to separately consider the special case of one cluster boundary per period. Empty preclusterings are irrelevant to this story. The extreme case $k=1$ is also irrelevant to this story.

The next two results characterize the essential non-uniqueness of clusterings: a quadratic factor can cross through a gate from one cluster to another. The rest of this section is devoted to showing that this is all that can happen, and that this can only happen once at any particular gate.

\begin{lem} \label{lem:lem421}
Fix a non-empty preclustering $A$ of $f$ and consecutive elements $c > b > a$ of $A$.

\textbf{Part 1.} Suppose that $b+1 \notin A$, and $(A,f)$ has a one-way left-to-right gate at $b$, and $f_{b+1}$ is quadratic. Let $\alpha: A \rightarrow \ZZ$ be defined by
 $$\alpha(i) := \begin{cases}
    i+1 &\mbox{if } i \equiv b \pmod{k}\\
    i &\mbox{otherwise. }\end{cases}$$
Then $B := \operatorname{range} (\alpha)$ is another preclustering of $f$.

Furthermore, for all adjacent $d>e$ in $A$, the cluster $f_{[d,e)}$ of $(A,f)$ and the cluster $f_{[\alpha(d),\alpha(e))}$ of $(B,f)$ are of the same kind ($\mathsf{C}$ or $\mathsf{C}$-free).

Further yet, for all $e \in A$ with $e \not\equiv b \pmod{k}$, the gate of $(A,f)$ at $e$ and the gate of $(B,f)$ at $\alpha(e)$ are of the same kind (none, one-way left-to-right, one-way right-to-left, or two-way).

Finally, the direction of the moved gate is changed: $B$ has a one-way right-to-left gate at $b+1 = \alpha(b)$.

\vspace{.5cm}
\textbf{Part 2.}
The reverse is also true: If $b-1 \notin A$, and $(A,f)$ has a one-way right-to-left gate at $b$, and $f_{b}$ is quadratic; then $\alpha: A \rightarrow \ZZ$ defined by
 $$\alpha(i) := \begin{cases}
    i-1 &\mbox{if } i \equiv b \pmod{k} \\
    i &\mbox{otherwise }\end{cases}$$
produces another preclustering $B := \operatorname{range} (\alpha)$ of $f$.

Again, the direction of the moved gate is changed: $B$ has a one-way left-to-right gate at $b-1 = \alpha(b)$.

Again, all other gates, and all cluster types are unchanged, exactly as in the two ``furthermore''s in Part 1.
\end{lem}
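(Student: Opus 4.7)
The plan is to reduce every assertion to the operational characterizations of clusters and gates provided by Lemma~\ref{lem:purposeofgates} and Remark~\ref{rk:preclustermk}. Intuitively, the map $\alpha$ slides the cluster boundary at $b$ to $b+1$ by detaching the quadratic $f_{b+1}$ from the bottom of the left cluster $f_{[c,b)}$ and attaching it to the top of the right cluster $f_{[b,a)}$; Part~2 is the reverse move, and I will prove Part~1 in detail, obtaining Part~2 symmetrically (or formally by observing that Part~2's $\alpha$, applied to $B$, undoes Part~1's $\alpha$).

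For the preclustering and cluster-kind claims: the only pairs of consecutive elements of $B$ differing from the corresponding pairs of $A$ are $(c, b+1)$ and $(b+1, a)$, up to translation by $k\mathbb{Z}$. By the second part of Lemma~\ref{lem:purposeofgates}, the hypothesis of a left-to-right gate at $b$ in $(A,f)$ together with $f_{b+1}$ quadratic makes $f_{[b+1,a)}$ a cluster of the same kind as $f_{[b,a)}$. By Remark~\ref{rk:preclustermk}(3), removing the bottom quadratic factor $f_{b+1}$ from the cluster $f_{[c,b)}$ yields a cluster $f_{[c,b+1)}$ of the same kind; the assumptions $b+1 \notin A$ and $c > b$ force $c \geq b+2$, so this new cluster is nonempty. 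Pairs of consecutive elements of $A$ not involving $b$ correspond under $\alpha$ to literally identical clusters, so the assertion that $B$ is a preclustering and the cluster-kind half of the ``furthermore'' are both established.

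For the gate claims: by Lemma~\ref{lem:purposeofgates}, the gate type at $e \in A$ is determined by the degrees of the adjacent factors $f_e, f_{e+1}$ together with whether certain one-step extensions of the two clusters adjacent to $e$ are themselves clusters. The adjacent factors are literally unchanged under $\alpha$. When $e \not\equiv a, b, c \pmod{k}$, the adjacent clusters are literally unchanged; when $e \equiv a$ (resp.\ $e \equiv c$) modulo $k$, one adjacent cluster is replaced by a cluster of the same kind sharing the extreme factor that appears in the operational gate criterion, so again the gate at $\alpha(e) = e$ in $(B,f)$ matches the gate at $e$ in $(A,f)$. Finally, at the new boundary $b+1$ of $(B,f)$: Lemma~\ref{lem:purposeofgates} yields a right-to-left gate because $f_{b+1}$ is quadratic and $f_{[c,b+1)} \cup \{f_{b+1}\} = f_{[c,b)}$ is a cluster of $(A,f)$. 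The main obstacle is the \emph{one-way} claim, ruling out a simultaneous left-to-right component at $b+1$, which by Lemma~\ref{lem:purposeofgates} would require both $f_{b+2}$ quadratic and $f_{[b+2,a)}$ a cluster. I plan to handle this by transferring to a two-period window around $b$ and invoking the sharper analysis of gates from \cite[Definition~4.15 and Lemma~4.16]{MS}, where it is shown that in a fixed cluster configuration the direction at a gate is governed by a dichotomy: the hypothetical extra left-to-right component at $b+1$ in $(B,f)$ would combine with the data already established to force the original gate at $b$ in $(A,f)$ to admit a right-to-left component as well, contradicting the one-way hypothesis.
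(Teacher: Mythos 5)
Your treatment of the preclustering and cluster-kind claims is essentially the paper's: the second statement of Lemma~\ref{lem:purposeofgates} handles $f_{[b+1,a)}$ and part (3) of Remark~\ref{rk:preclustermk} handles $f_{[c,b+1)}$. (You silently skip the degenerate case where $A$ has only one cluster boundary per period, so that $c = b+k \notin B$ and the single new cluster per period is $f_{[b+k+1,\,a+1)}$ rather than a concatenation of your two pieces; the paper treats this case separately, though the same two tools dispose of it.)

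The genuine gap is in the gate claims. You assert that the gate type at a boundary is ``determined by the degrees of the adjacent factors together with whether certain one-step extensions of the two adjacent clusters are themselves clusters.'' That operational characterization is only available when the relevant adjacent factor is quadratic: the second statement of Lemma~\ref{lem:purposeofgates} says nothing about gates at a boundary whose neighbouring factors are not quadratic, and such gates certainly occur --- for instance, between two $\mathsf{C}$~clusters the presence and direction of a gate is governed by whether the linear factor $L_e$ in a cleanup is $\pm x$, some $A_\lambda$ with $\lambda \neq \pm 1$, or neither, data invisible to degrees and cluster extensions. Consequently your argument that the gates at $\alpha(c)=c$ and $\alpha(a)=a$ are unchanged does not go through, and your reduction of a possible left-to-right component at $b+1$ to ``$f_{b+2}$ quadratic and $f_{[b+2,a)}$ a cluster'' is false: a two-way gate needs no adjacent quadratic. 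The paper instead \emph{constructs} a cleanup $(g,M)$ of $(B,f)$ from a cleanup $(h,L)$ of $(A,f)$, altering only $h_{b+1}$, $L_{b+1}$, and $L_b$, via a case analysis on the kinds of the two clusters meeting at $b$; this simultaneously shows that every other linear factor, hence every other gate, is literally unchanged, and reads off the direction and one-way-ness of the new gate at $b+1$ (e.g., in the $\mathsf{C}$/$\mathsf{C}$ case $M_{b+1}=A_\lambda$, and two-way-ness would force $\lambda=\pm1$ and hence a two-way gate at $b$ in $(A,f)$). Your fallback for one-way-ness --- deducing a two-way gate at $b$ in $(A,f)$ from a hypothetical two-way gate at $b+1$ in $(B,f)$ --- can in fact be made rigorous using the third part of Lemma~\ref{lem:purposeofgates} together with Lemma~\ref{lem:clusterfue}, but as written it rests on the same invalid criterion, and nothing in your plan repairs the ``all other gates unchanged'' claim. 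You need the explicit cleanup construction.
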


Our proof of this lemma is rather long, but routine.

\begin{proof}
We prove Part 1; the proof of Part 2 is identical and left to the reader.

Since $b+1 \notin A$, this $\alpha$ respects order and $k$-periodicity.
Since $b \in A$ and $b+1 \notin A$ and $A$ is $k$-periodic, it follows that $k \neq 1$.
Setting
$$C := A \smallsetminus ( b + k\mathbb{Z})$$
we get
$$ A = C \cup ( b + k\mathbb{Z})\mbox{ and }B =
C \cup (b+1+k\mathbb{Z}).$$

To show that $B$ is a preclustering and that $\alpha$ respects cluster types, we only need to consider alleged clusters of $B$ with the new boundary $b+1$.

\textit{Case 1.}
If $A$ has more than one cluster boundary per period, these new clusters are $f_{[c,b+1)}$ and $f_{[b+1, a)}$, corresponding to the clusters $f_{[c,b)}$ and $f_{[b, a)}$ of $A$.
Since $c \in A$ and $b+1 \notin A$, we know that $c \neq b+1$, so $f_{[c,b+1)}$ is not empty.
By the second statement in Lemma~\ref{lem:purposeofgates}, $f_{[b+1, a)}$ is a cluster.
Now $f_{[c,b+1)}$ is the result of removing the \emph{quadratic} factor $f_{b+1}$ from the cluster $f_{[c,b)}$, so by part (3) of Remark~\ref{rk:preclustermk} it is a cluster, of the same kind as $f_{[c,b)}$.
By the same part (3) of Remark~\ref{rk:preclustermk}, the cluster $f_{[b, a)}$ is of the same kind as $f_{[b+1, a)}$.

\textit{Case 2.}
If $A$ has one cluster boundary per period, $A  = b + k\mathbb{Z}$ and $a = b-k$ and $B = b + 1 + k\mathbb{Z}$.
By the second statement in Lemma~\ref{lem:purposeofgates}, $f_{[b+1, a)}$ is a cluster; by part (3) of Remark~\ref{rk:preclustermk} applied at its left end $b$, it is of the same kind as $f_{[b, a)}$. By part (3) of
Remark~\ref{rk:preclustermk} applied at the right end $a$ of $f_{[b+1, a)}$, we get that  $f_{[b+1, a+1)}$ is a cluster of the same kind as $f_{[b+1, a)}$. So $f_{[b+1, a+1)}$, the only alleged cluster per period of $B$, is indeed a cluster, of the same kind as  $f_{[b, a)}$, the only cluster per period of $A$.

It remains to show that $\alpha$ respects the gates as described. We fix a cleanup $(h, L)$ of $(A,f)$ and build a cleanup $(g, M)$ of $(B,f)$ with
$$g_i = h_i \text{ for } i \not\equiv b \pmod{k} \text{ and } M_i = L_i \text{ for } i, i+1 \not\equiv b \pmod{k} \text{ .}$$
We verify that the new gate at $(b+1)$ is one-way right-to-left as we build the cleanup.
The types of all other gates remain the same: the linear factors $M_i = L_i$ in the cleanups are the same, and we already showed that the types of clusters are the same.

To lighten notation, we talk about adjusting $L_{b+1}$ and $L_b$ and $h_{b+1}$; corresponding entries in other periods are correspondingly adjusted. Since $k \neq 1$, the linear $L_{b+1}$ and $L_b$ are not corresponding factors from different periods, and can be adjusted independently.
Since $b+1 \notin A$, the old linear factor $L_{b+1}$ is identity.
Since $b \notin B$, the new linear factor $M_b$ must be identity.

\textit{Case 1.} If both $f_{[c,b)}$ and $f_{[b, a)}$ are $\mathsf{C}$-free clusters, the gate at $b$ cannot be one-way, so the hypotheses of this Lemma cannot be satisfied.

\textit{Case 2.}
If both $f_{[c,b)}$ and $f_{[b, a)}$ are $\mathsf{C}$~clusters, then $h_{b+1}(x) = x^2 -2$ and $L_b(x) = \lambda^{-1} x$.
We can keep $h_{b+1}$, but need a new linear $M_{b+1}$ such that $M_{b+1} \circ h_{b+1} = h_{b+1} \circ L_b$.
This is exactly the defining property of $M_{b+1} = A_\lambda$ from~\cite[Remark 3.14]{MS}
So get $M$ from $L$ by replacing $L_{b+1} = \id$ by $M_{b+1} = A_\lambda$, and replacing $L_b(x) = \lambda^{-1} x$ by $M_b = \id$; and propagating through $k$-periodicity.
Now $(h, M)$ is a cleanup of $(B,f)$ with a right-to-left gate at $b+1$, as wanted.
If this gate is two-way, $M_{b+1} = A_\lambda$ must also be a scaling, forcing $\lambda = \pm 1$, forcing the original gate of $(A,f)$ at $b$ to be two-way, which we assumed it is not.

\textit{Case 3.}
Suppose that $f_{[c,b)}$ is a $\mathsf{C}$-free cluster, so $h_{b+1}(x) = x^2$, and $f_{[b, a)}$ is a $\mathsf{C}$~cluster.
For $(A,f)$ to have a left-to-right gate at $b$, we must have $L_b = \id$.
For the new cleanup, we need $g_{b+1}(x) = x^2 -2$. Let the new linear factor $M_{b+1}(x) := x+2$.
Clearly, $M_{b+1} \circ g_{b+1} = h_{b+1} \circ L_b$.
Now $(g, M)$, obtained from $(h, L)$ by replacing $h_{b+1}$ by $g_{b+1}$ and $L_{b+1}$ by $M_{b+1}$,
is a cleanup of $(B,f)$. It has a right-to-left gate at $b+1$ as long as $h_{b+2} \circ M_{b+1} \circ (-2)$ is a Ritt polynomial; which it is, since $M_{b+1} \circ (-2) = \id$ and $h_{b+2}$ is the rightmost of the non-empty $\mathsf{C}$-free cluster $f_{[c,b+1)}$ of $(B,f)$. Since this gate is between clusters of different kinds, it must be one-way by the third part of Lemma~\ref{lem:purposeofgates}.

\textit{Case 4.}
Suppose that $f_{[c,b)}$ is a $\mathsf{C}$~cluster, so $h_{b+1}(x) = x^2-2$, and $f_{[b, a)}$ is a $\mathsf{C}$-free cluster.
For $(A,f)$ to have a left-to-right gate at $b$, we must have $L_b(x) = \lambda^{-1} x$.
For the new cleanup, we need $g_{b+1}(x) = x^2$ and a new linear $M_{b+1}$ such that
 $M_{b+1} \circ g_{b+1} = h_{b+1} \circ L_b$,
which is exactly the defining property of $M_{b+1} = B_\lambda$.
Now $(g, M)$, obtained from $(h, L)$ by replacing $h_{b+1}$ by $g_{b+1}$ and $L_{b+1}=\id$ by $M_{b+1}$ and $L_b$ by $M_b = \id$, is a cleanup of $(B,f)$, with a right-to-left gate at $b+1$, as desired. Again, since this gate is between clusters of different kinds, it must be one-way by the third part of Lemma~\ref{lem:purposeofgates}.
\end{proof}

We now verify that when a clustering is fed to Lemma~\ref{lem:lem421}, a clustering is produced.

\begin{lem} \label{lem:lem432} (Compare with~\cite[Lemma 4.32]{MS})\\
Fix a non-empty clustering $A$ of $f$ and consecutive elements $c > b > a$ of $A$.
Suppose that $(A,f)$ has a one-way left-to-right (respectively, right-to-left) gate at $b \in A$ and that $f_{b+1}$ (resp., $f_b$) is quadratic. 
Let
 $$B := (A \smallsetminus b + k\mathbb{Z}) \cup  b+1+k\mathbb{Z}$$
 $$\text{ (respectively, } B:= (A \smallsetminus \ b+k \mathbb{Z} ) \cup \{ b-1+nk : n \in \mathbb{Z} \} \text{ ).}$$
Then $B$ is also a clustering of $f$.
\end{lem}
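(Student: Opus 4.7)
The plan is to invoke Lemma~\ref{lem:lem421} as the starting point. Its applicability requires $b+1 \notin A$, which holds automatically here: if $b+1 \in A$, then $c = b+1$ would make the one-way left-to-right gate of $(A,f)$ at $b$ fake of type~2 (since $f_{b+1}$ is quadratic), contradicting that $A$ is a clustering. Lemma~\ref{lem:lem421} then gives that $B$ is a preclustering of $f$, that cluster types and gate kinds at every element of $A$ not congruent to $b$ modulo $k$ are preserved under $\alpha$, and that $(B,f)$ carries a new one-way right-to-left gate at $b+1$.

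It remains to show that $(B,f)$ has no fake gates. Suppose toward contradiction that $(B,f)$ has a fake gate at some $b^{\ast} \in B$, with consecutive $B$-neighbors $c^{\ast} > b^{\ast} > a^{\ast}$. By $k$-periodicity we may take $b^{\ast} \in \{b+1,\, c,\, a\}$, or $b^{\ast}$ congruent to none of $b,c,a$ modulo $k$; in the last case the $B$-neighbors of $b^{\ast}$ coincide with its $A$-neighbors, the gate kind at $b^{\ast}$ agrees in $(A,f)$ and $(B,f)$, and the sizes and factor degrees of the adjacent clusters are the same, so $(A,f)$ would also have the same fake gate, a contradiction.

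The three remaining cases are handled by matching fake types against what Lemma~\ref{lem:lem421} permits. For $b^{\ast} = b+1$ the gate is one-way right-to-left, so only fake types~3 and~5 apply: type~3 forces $b = a$, impossible, and type~5 forces $b = a+1$ with $f_b$ quadratic and a left-to-right gate of $(B,f)$ (hence of $(A,f)$) at $a$, producing a fake type~2 gate of $(A,f)$ at $a$ because the left cluster of $(A,f)$ at $a$ is then the single quadratic $(f_b)$. For $b^{\ast} = c$ the left side of $c$ is unchanged and only the right-sided fake types~3 and~5 can newly arise; type~3 forces $c = b+2$ with $f_{b+2}$ quadratic, and together with the hypotheses that $f_{b+1}$ is quadratic and the gate at $b$ is one-way left-to-right this produces a fake type~5 gate of $(A,f)$ at $c$, while type~5 at $c$ in $(B,f)$ would require a left-to-right gate of $(B,f)$ at $b+1$, contradicting that this gate is right-to-left. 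For $b^{\ast} = a$, symmetrically the right side of $a$ is unchanged and only the left-sided fake types~2 and~4 can newly arise; type~2 again forces $b = a$, and type~4 forces $b = a+1$ with $f_b$ quadratic and a left-to-right gate of $(B,f)$ at $a$, which pulls back via Lemma~\ref{lem:lem421} to a fake type~2 gate of $(A,f)$ at $a$ since the left cluster of $(A,f)$ at $a$ is the single quadratic $(f_b)$.

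The main obstacle is the careful bookkeeping of which of the five fake types can still apply once one fixes the direction of the new gate and tracks the changes in cluster sizes at the two old $A$-neighbors $c$ and $a$. In each of the subcases $b^{\ast} \in \{b+1, c, a\}$ where a cluster changes size by~$1$, the resulting constraint on $f$ pulls back along $\alpha$ to a fake gate of $(A,f)$, contradicting that $A$ is a clustering. The right-to-left half of the lemma follows by the symmetric argument.
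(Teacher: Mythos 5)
Your proof is correct and follows essentially the same route as the paper: invoke Lemma~\ref{lem:lem421} (after noting $b+1\notin A$ since otherwise $(A,f)$ would have a fake type-2 gate at $b$), then rule out fake gates of $(B,f)$ by pulling each hypothetical one back to a fake gate of $(A,f)$. The only difference is organizational — you case out by the location of the putative fake gate while the paper cases out by its type — and the resulting bookkeeping matches the paper's in substance.
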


\begin{proof}
Again, we leave the ``respectively'' part to the reader.

Since $A$ is a clustering, the gate at $b$ must fail parts (2) and (3) of Definition~\ref{def:fakedef} of ``fake gate'', so $b+1$ is not in $A$. Now all hypotheses of Lemma~\ref{lem:lem421} are satisfied, so we get its conclusions: $B$ is another preclustering of $f$, with the same number of clusters per period as $A$, and with a one-way right-to-left gate at $b+1$. Corresponding clusters of $B$ and $A$ are of the same kind, $\mathsf{C}$ or $\mathsf{C}$-free; and all the ``other'' cluster boundaries have gates in the same direction(s).
As in that lemma, let
$$C := A \smallsetminus b + k\mathbb{Z} \text{ to get }
A = C \cup b + k\mathbb{Z} \text{ and } B = C \cup b+1+k\mathbb{Z} \text{ .}$$

Let us verify that $B$ has no fake gates.\begin{enumerate}
\item Since $A$ is a clustering, it has no two-way gates. The new gate of $B$ at $b+1$ is one way, and all other gates of $B$ are in the same direction(s) as the corresponding gates of $A$. So $B$ also has no two-way gates.
\item Where could $B$ have a left-to-right gate at $d$ with a quadratic $f_{d+1}$ and a cluster boundary at $d+1$?
 If $d, d+1 \in C$, then this contradicts $A$ being a clustering.
 Since $B$ has a one-way gate in the other direction at $b+1$, we cannot have $d \equiv b+1 \pmod{k}$.
 So $d+1 \equiv b+1 \pmod{k}$; without loss of generality, $d+1 = b+1$.
 But then $b=d$ is a cluster boundary of $B$, a contradiction.
\item Where could $B$ have a right-to-left gate at $d$ with a quadratic $f_{d}$ and a cluster boundary at $d-1$?
 Again, $d, d-1 \in C$ contradicts $A$ being a clustering. Again, $(d-1, d) \equiv (b, b+1)
 \pmod{k}$ contradicts $b \notin B$.
 So we must have $d-1 \equiv b+1 \pmod{k}$.\\

 Then $d \equiv b+2 \pmod{k}$ is in $B$; since $k \neq 1$, this means that $b+2 \in C$, and $A$ also has a right-to-left gate at $b+2 \equiv d \pmod{k}$. Together with $A$'s original left-to-right gate at $b$ and quadratics $f_{b+1}$ and $f_{d} = f_{b+2}$, this makes $A$'s gates at $b$ and $b+2$ fake by parts (4) and (5) of Definition~\ref{def:fakedef}.
\item[(4,5)] Finally, where could $(B,f)$ have quadratic factors $f_d$ and $f_{d+1}$ with a right-to-left gate at $d+2$ and a left-to-right gate at $d$?
 Again, $d, d+2 \in C$ immediately  contradicts $A$ being a clustering.
 Again, since $B$ has a one-way right-to-left gate at $b+1$, we cannot have $d \equiv b+1 \pmod{k}$.
 So we must have $d+2 \equiv b+1 \pmod{k}$.
 Now $d \equiv b-1  \pmod{k}$ is in $C$, so $A$ also has a left-to-right gate at $d \equiv b-1 \pmod{k}$.
 However, $b \equiv d+1 \pmod{k}$ is also a cluster boundary of $A$, and $f_{d+1}$ is a quadratic, making the $A$'s gate at $d$ fake by part (2) of Definition~\ref{def:fakedef}; contradicting $A$ being a clustering.
\end{enumerate}
\end{proof}

We introduce the boundary set, a slight variant
of a clustering, replacing  each ambiguous boundary, at $b$ or $b+1$, with the half-integer $b+\frac{1}{2}$ lying between the two options for the boundary.

\begin{Def} \label{def:def-bdry}
For a clustering $A$ of a \kld $f$, the \emph{boundary set} is the subset $B$ of $\frac{1}{2}\ZZ$ such that, for any integer $i$,
\begin{enumerate}
\item $i - \frac{1}{2} \in B$ if and only if the factor $f_i$ is quadratic, and Lemma~\ref{lem:lem421} applies:\\
 either $i \in A$ and $(A,f)$ has a right-to-left gate at $i$;\\
 or  $i-1 \in A$ and $(A,f)$ has a left-to-right gate at $i$.
\item $i \in B$ if and only if $i \in A$ and
Lemma~\ref{lem:lem421} does not apply.
\end{enumerate}

A \emph{boundary list} $\alpha$ of a \kld $f$ is an increasing bijection from $\ZZ$ to a boundary set $B \subset \frac{1}{2}\ZZ$ of some clustering
$(A,f)$. \end{Def}

\begin{Rk}
\label{rk:boudary-list-gate-swap}
 When a new clustering is obtained from
another one by moving a quadratic factor from one
cluster to another as in Lemma~\ref{lem:lem421}, the
original clustering and the new one have the same
boundary set.  We will show with
Theorem~\ref{thm:unique-brdyset} that this only
way to produce new clusterings from old so that
ultimately the boundary set is an invariant of the
long decomposition independent of a choice of clustering.
\end{Rk}

\begin{Rk}
\label{rk:boundary-list-ST}
The boundary list will help us to keep track of
how the gates move as $\STk$ acts on long decompositions.  Annoyingly, this introduces a new non-uniqueness: the starting point $\alpha(0)$.	
\end{Rk}

To prove our Theorem~\ref{thm:unique-brdyset} that
the boundary set is an invariant of a long decomposition
we require several technical lemmas describing
how clusters may overlap.  These results are
analogous to the work in~\cite[Lemmas 4.23, 4.24, and 4.25]{MS}.

The next lemma says that is a cluster can bite off a factor from an adjacent cluster, either the two clusters can fuse (so this cannot happen in a clustering), or Lemma~\ref{lem:lem421} applies.

\begin{lem} \label{lem:bite-one}
Fix consecutive elements $c>b>a$ of a preclustering $A$ of a \kld $f$.
Suppose that the concatenation $f_{[b+1,a)}$ of this cluster $f_{[b,a)}$ with an adjacent factor $f_{b+1}$ is a cluster.
Then exactly one of the following holds\begin{enumerate}
\item $(A,f)$ has a two-way gate at $b$;
\item $c=b+1$ and $f_{b+1}$ is quadratic;
\item $b+1 \notin A$ and $f_{b+1}$ is quadratic and $(A,f)$ has a one-way left-to-right gate at $b$.
\end{enumerate}
The same is true on the other side: if $f_{[b,a-1)}$ is a cluster, $(A,f)$ has a two-way gate at $a$; or $f_a$ is a quadratic and a single cluster; or $f_a$ is quadratic and $a-1 \not\in A$ and $(A,f)$ has a right-to-left gate at $a$.
\end{lem}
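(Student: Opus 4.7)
The plan is to use the cleanup machinery from Proposition~\ref{prop:nonemptyclean} together with the gate classification in Lemma~\ref{lem:purposeofgates} to translate the hypothesis that $f_{[b+1,a)}$ is a cluster into information about the gate of $(A,f)$ at $b$ and about the factor $f_{b+1}$. The main tool is the second part of Lemma~\ref{lem:purposeofgates}: whenever the concatenation of a cluster with exactly one adjacent factor is again a cluster, that adjacent factor must be quadratic and the gate in the appropriate direction must exist.

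First I would split on whether $c = b+1$. If $c = b+1$, then the left cluster $f_{[c,b)}$ consists of the single factor $f_{b+1}$, and the hypothesis says precisely that this single-factor cluster can be concatenated with $f_{[b,a)}$ to form a cluster. The second part of Lemma~\ref{lem:purposeofgates} then forces $f_{b+1}$ to be quadratic, giving conclusion~(2). If instead $c > b+1$, then $b+1 \notin A$ (since no element of $A$ lies strictly between $b$ and $c$), and $f_{b+1}$ sits as the rightmost factor strictly inside the left cluster $f_{[c,b)}$.

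In the sub-case $c > b+1$ with $f_{b+1}$ quadratic, part~(3) of Remark~\ref{rk:preclustermk} shows that removing $f_{b+1}$ from the right end of $f_{[c,b)}$ leaves a cluster $f_{[c,b+1)}$ of the same type; combined with the hypothesis that $f_{[b+1,a)}$ is a cluster, a second application of Lemma~\ref{lem:purposeofgates} produces a left-to-right gate at $b$, yielding conclusion~(3), or conclusion~(1) if that gate happens to be two-way.

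The main obstacle is the remaining sub-case $c > b+1$ with $f_{b+1}$ non-quadratic. Here I would argue that the concatenation $f_{[b+1,a)}$ being a cluster forces the types of $f_{[c,b)}$ and $f_{[b,a)}$ to be compatible, because part~(3) of Remark~\ref{rk:preclustermk} reveals that a non-quadratic factor cannot be harmlessly detached across a cluster boundary of the wrong type. Using the concrete form of the cleanup $(h,L)$, the compositional identity encoding $f_{[b+1,a)}$ as a cluster translates into a Ritt-theoretic constraint on the linear factor $L_b$; the classification of basic Ritt identities in Definition~\ref{def:Ritt-identity}, together with the uniqueness-up-to-scaling results for cleanups (Proposition~\ref{prop:cleanunique} and Lemma~\ref{lem:cleanupRmk}), force $L_b$ to be of the scaling form that characterizes a two-way gate at $b$, which is conclusion~(1). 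The symmetric statement for $f_{[b,a-1)}$ is then immediate by reversing left and right throughout the above argument.
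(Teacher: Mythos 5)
There is a genuine gap, and it stems from a misreading of the second part of Lemma~\ref{lem:purposeofgates}. That statement takes ``quadratic'' as a \emph{hypothesis} on the adjacent factor: \emph{for a quadratic} $f_{b+1}$, the concatenation is a cluster if and only if there is a left-to-right gate at $b$. It does not say that a factor which can be absorbed into the adjacent cluster must be quadratic --- indeed, conclusion~(1) of the lemma you are proving exists precisely to cover the non-quadratic case. This misreading breaks your first case: if $c=b+1$ and the single-factor cluster $f_{b+1}$ is \emph{not} quadratic, you cannot ``force'' it to be quadratic and land in conclusion~(2); the correct outcome is conclusion~(1), a two-way gate at $b$ (and (3) is unavailable since $b+1\in A$). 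More generally, you have no argument for the non-quadratic situation when $c=b+1$, and your treatment of it when $c>b+1$ is too vague to check.

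That non-quadratic case is where the real work lies, and the tools you name are not the right ones: Proposition~\ref{prop:cleanunique} and Lemma~\ref{lem:cleanupRmk} compare two cleanups of the \emph{same} preclustering, whereas what is needed is to extract a constraint on the single linear factor $L_b$ from the hypothesis that $f_{[b+1,a)}$ is a cluster. The paper first observes that when $f_{b+1}$ is not quadratic the two clusters $f_{[c,b)}$ and $f_{[b,a)}$ must be of the same kind, and then splits: in the $\mathsf{C}$-free case it refines the preclustering by inserting a boundary at $b+1$ (Lemma~\ref{lem:refinery}) and applies the fusion Lemma~\ref{lem:clusterfueCfree} to the adjacent clusters $f_{[b+1,b)}$ and $f_{[b,a)}$ to conclude $L_b=\id$; in the type-$\mathsf{C}$ case it uses the rigidity of Chebyshev polynomials under linear relations to force $L_b(x)=\pm x$. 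Either way one gets a two-way gate at $b$, i.e.\ conclusion~(1). Your handling of the quadratic case is essentially the paper's (a direct application of the ``only if'' direction of Lemma~\ref{lem:purposeofgates}, followed by sorting into conclusions (1), (2), (3) according to whether the gate is two-way and whether $b+1\in A$); the detour through Remark~\ref{rk:preclustermk}(3) there is harmless but unnecessary.
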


\begin{proof}
Again, we only prove one side.
\textit{Case 1.} If $f_{b+1}$ is not quadratic, then the two clusters $f_{[c,b)}$ and $f_{[b,a)}$ are of the same kind, $\mathsf{C}$~or $\mathsf{C}$-free.

\textit{Case 1.1.} If $f_{[c,b)}$ is a $\mathsf{C}$-free cluster, then (by Lemma~\ref{lem:refinery}) $A' := A \cup (b+1 + k\ZZ)$ is also a preclustering of $f$, and any cleanup of $(A,f)$ is also a cleanup of $(A',f)$. Now Lemma~\ref{lem:clusterfueCfree} applies to the concatenation $f_{[b+1,a)}$ of the clusters $f_{[b+1,b)}$ and $f_{[b,a)}$ of $(A', f)$. So $(A', f)$ has a two-way gate at $b$, and so $(A,f)$ has a two-way gate at $b$.

\textit{Case 1.2.} If $f_{[c,b)}$ is a $\mathsf{C}$~cluster, and $(L,h)$ is a cleanup of $(A,f)$, then all $h_i$ with $c \geq i > a$ are Chebyshev polynomials. In order to have linear $M$ and $N$ for which
 $$(M \circ h_c, h_{c-1}, \ldots, h_{b+1}, L_b \circ h_b \circ N)$$
is linearly equivalent to
 $$(h_c, h_{c-1}, \ldots, h_{b+1}, \circ h_b) \text{, }$$
we must have $L_b(x) = \pm x$, since Chebyshev polynomials are not otherwise linearly related to themselves (see \cite[Section 3]{MS} for details
of this well-known fact). So again, $(A,f)$ has a two-way gate at $b$.

\textit{Case 2.} If $f_{b+1}$ is quadratic, by the second part of Lemma~\ref{lem:purposeofgates}, $(A,f)$ has a left-to-right gate at $b$.
 If this gate is two-way, we get conclusion (1) of the lemma.
 If $b+1 \in A$, we get conclusion (2) in the lemma.
 If neither of these holds, we get conclusion (3) of the lemma.
\end{proof}

It follows that a cluster cannot bite off more than one factor from an adjacent cluster, unless the two clusters fuse. In particular, this cannot happen in a clustering.

\begin{lem} \label{lem:bite-two}
Fix consecutive elements $c>b>a$ of a preclustering $A$ of a \kld $f$, and suppose that $c \geq b+2$.
Suppose that the concatenation $f_{[b+2,a)}$ of this cluster $f_{[b,a)}$ with two adjacent factors $f_{b+2}$ and $f_{b+1}$ is a cluster. Then the whole $f_{[c,a)}$ is a cluster. Furthermore, $(A,f)$ has a two-way gate at $b$, or $f_{[c,b)}$ is a $\mathsf{C}$~cluster and $b = a+1$ and $f_b$ is quadratic.

The same is true on the other side: if $b-2 \geq a$ and $f_{[b,a-2)}$ is a cluster, and $d < a$ is the next element of $A$, then the concatenation $f_{[b,d)}$ of two clusters is a cluster; and $(A,f)$ has a two-way gate at $b$, unless
$f_{[a,d)}$ is a $\mathsf{C}$~cluster, and $b = a+1$ and $f_b$ is quadratic.
\end{lem}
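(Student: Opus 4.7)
The plan is to reduce to Lemma~\ref{lem:bite-one} applied twice, tracking how the preclustering transforms as we shift boundaries to accommodate single-quadratic bites. I will only write out the left-hand version of the statement; the right-hand version is symmetric. The first claim, that $f_{[c,a)}$ is a cluster, will be a byproduct of the case analysis.

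First I would consider whether $f_{[b+1,a)}$ is itself a cluster. As a piece of the cluster $f_{[b+2,a)}$, Remark~\ref{rk:preclustermk} tells us it is, unless $f_{[b+2,a)}$ is a $\mathsf{C}$-cluster and all factors of $f_{[b+1,a)}$ are quadratic with at least two such factors. In the generic sub-case where $f_{[b+1,a)}$ is a cluster, I apply Lemma~\ref{lem:bite-one} to the adjacent clusters $f_{[c,b)}$ and $f_{[b,a)}$: either (i) $(A,f)$ has a two-way gate at $b$, whence Lemma~\ref{lem:purposeofgates} delivers that $f_{[c,a)}$ is a cluster and the first alternative of the conclusion is satisfied; or (ii) $c = b+1$, which is excluded by hypothesis; or (iii) $b+1 \notin A$, $f_{b+1}$ is quadratic, and $(A,f)$ has a one-way left-to-right gate at $b$.

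In case (iii), I would invoke Lemma~\ref{lem:lem421} to produce a new clustering $B$ of $f$ by moving the boundary from $b$ to $b+1$; the clusters of $B$ are $f_{[c, b+1)}$ and $f_{[b+1, a)}$, and the gate of $B$ at $b+1$ is one-way right-to-left. Applying Lemma~\ref{lem:bite-one} once more, this time to $B$ with the bite producing $f_{[b+2, a)}$, the left-to-right option is unavailable since the gate is strictly right-to-left, so we must have a two-way gate at $b+1$ in $B$. Combined with the original left-to-right gate of $A$ at $b$, this pins down the cluster types and sizes: $f_{b+1}$ and $f_{b+2}$ are both quadratic, $f_{[c,b+1)}$ and $f_{[b+1,a)}$ fuse as clusters of the same kind, and a straightforward parity/degree check on the fused cluster $f_{[c,a)}$ forces $f_{[c,b)}$ to be a $\mathsf{C}$-cluster with the right cluster $f_{[b,a)}$ reducing to a single quadratic factor, i.e.\ $b = a+1$. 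The composite identity $C_n \circ C_2 = C_{2n}$ (for odd $n$) shows that the concatenated $f_{[c,a)}$ remains linearly related to a Chebyshev polynomial of non-two-power degree, hence is a $\mathsf{C}$-cluster.

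The exceptional sub-case where $f_{[b+1,a)}$ fails to be a cluster must be handled directly: the only obstruction from Remark~\ref{rk:preclustermk} is that $f_{[b+2,a)}$ is a $\mathsf{C}$-cluster whose only odd-degree factor is $f_{b+2}$. Then Remark~\ref{rk:preclustermk}(4) forces $f_{[c,b)}$ to also be a $\mathsf{C}$-cluster, since it contains $f_{b+2}$ along with the rest of $f_{[c,b+2)}$; and the factors $f_{b+1}, f_b, \ldots, f_{a+1}$ are all quadratic. A parity count on the degree of $f_{[b+2,a)}$, using that an odd-degree Chebyshev composite tolerates at most one quadratic on each flank before producing a $2$-power-degree factor that cannot sit inside a $\mathsf{C}$-cluster (cf.\ Remark~\ref{rk:preclustermk}(2)), forces $b - a = 1$. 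The main obstacle I expect is this delicate bookkeeping in the all-quadratic regime: cluster types cannot be read off degrees alone, so I must carefully distinguish the scaling data from a cleanup, and keep straight which preclustering ($A$ or $B$) each gate belongs to. Once these technicalities are managed, the verification that $f_{[c,a)}$ is a cluster in the exceptional case follows the same $C_n \circ C_2 = C_{2n}$ computation as before.
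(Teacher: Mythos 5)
Your overall architecture matches the paper's: split on whether $f_{[b+1,a)}$ is a cluster, use Lemma~\ref{lem:bite-one} in the first case and Remark~\ref{rk:preclustermk} in the second. But your handling of case (iii) of the first split is wrong. After invoking Lemma~\ref{lem:lem421}, that lemma tells you explicitly that the new preclustering $B$ has a \emph{one-way} right-to-left gate at $b+1$. You then apply Lemma~\ref{lem:bite-one} to $B$ and conclude ``we must have a two-way gate at $b+1$ in $B$'' --- but a two-way gate flatly contradicts the one-way gate you just established, as does the left-to-right gate required (via the second part of Lemma~\ref{lem:purposeofgates}) for the other two conclusions of Lemma~\ref{lem:bite-one}. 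The correct reading is that case (iii) is \emph{impossible}, so the generic sub-case always lands on the two-way gate at $b$. Your attempt instead to ``pin down'' that $f_{[c,b)}$ is a $\mathsf{C}$-cluster with $b=a+1$ from this inconsistent configuration is not a derivation: in case (iii) nothing whatsoever forces $f_{[b,a)}$ to be a single quadratic, and you are reasoning from premises that cannot simultaneously hold.

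In the exceptional sub-case two further problems remain. First, the claim that ``an odd-degree Chebyshev composite tolerates at most one quadratic on each flank'' is false ($C_3\circ C_2\circ C_2$ is a perfectly good Chebyshev composite of degree $12$); the actual reason $b=a+1$ is that $f_{[b,a)}$ is required to be a cluster of $A$, and by Remark~\ref{rk:preclustermk}(2) an all-quadratic piece of a $\mathsf{C}$-cluster with more than one factor is not a cluster. Second, and more seriously, the final verification that $f_{[c,a)}$ is a cluster is not carried out. The identity $C_n\circ C_2=C_{2n}$ controls only the monic Chebyshev composites; the crux is the linear factor $L_b$ sitting at the boundary between the cluster $f_{[c,b)}$ and the single quadratic $f_b$ in a cleanup. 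One must show that the hypothesis ``$f_{[b+2,a)}$ is a cluster'' forces $L_b$ to be exactly of the form that defines a right-to-left gate at $b$ (the paper does this via the classification of linear relations between $x^2$ and $x^2-2$ in \cite[Remark 3.14]{MS}), and only then does Lemma~\ref{lem:purposeofgates} deliver that $f_{[c,a)}$ is a cluster. You flag this ``delicate bookkeeping'' as the expected obstacle but do not resolve it, so the proof is incomplete precisely at the point where the content lies.
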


\begin{proof}
\textit{Case 1.} If $f_{[b+1,a)}$ is also a cluster, then Lemma~\ref{lem:bite-one} applies, and conclusion (2) cannot hold, and conclusion (1) is what we want. Suppose toward contradiction that conclusion (3) holds. Now Lemma~\ref{lem:lem421} applies, providing us with a new preclustering $B$ of $f$, with a cluster boundary at $(b+1)$ instead of $b$, with a one-way gate the wrong way at that gate. Since $f_{[b+2,a)}$ is a cluster, Lemma~\ref{lem:bite-one} also applies to this new preclustering $B$. But conclusion (1) needs a two-way gate where we have a one-way gate; and, by the second part of Lemma~\ref{lem:purposeofgates}, conclusions (2) and (3) need a one-way gate in the other direction at $(b+1)$.

\textit{Case 2.} If $f_{[b+1,a)}$ is not a cluster, then Remark~\ref{rk:preclustermk} applies, since $f_{[b+1,a)}$ is contained in the cluster $f_{[b+2,a)}$, and contains the cluster $f_{[b,a)}$. This means that $f_{[b+2,a)}$ must be a $\mathsf{C}$~cluster, and all factors of $f_{[b+1,a)}$ must be quadratic, and $f_{[b,a)}$ must be a $\mathsf{C}$-free cluster consisting of a single quadratic factor.

So we have: $b = a+1$, and $f_b$ and $f_{b+1}$ are quadratic, and $f_{b+2}$ is linearly related to an odd-degree Chebyshev polynomial. In particular, $f_{[c,b)}$ is a $\mathsf{C}$~cluster. Fix a cleanup $(h,L)$ of $(A,f)$. Then $h_{b+2}$ is an odd-degree Chebyshev polynomial, $h_{b+1}(x) = x^2-2$ is the Chebyshev polynomial of degree 2, and $h_b(x) = x^2$. We are interested in the linear factor $L_b$. For $f_{[b+2,a)}$ to be a cluster, we must have linear $M$ and $N$ with
 $$(M \circ h_{b+2}, x^2-2, L_b \circ x^2 \circ N)$$
is linearly equivalent to
 $$(h_{b+2}, x^2-2, x^2-2).$$
By \cite[Remark 3.14]{MS}  about the linear relations between $x^2$ and $x^2 -2$, this exactly corresponds to the definition of ``$(A,f)$ has a right-to-left gate at $b$''. Now the second part of Lemma~\ref{lem:purposeofgates} shows that the concatenation of the cluster $f{[c,b)}$ and the adjacent quadratic factor $f_b$ is a cluster. Since the whole cluster $f_{[b,a)}$ consists of this one factor, we have shown that $f_{[c,a)}$ is a cluster.
\end{proof}
 
With the next lemma we show that two overlapping
$\mathsf{C}$-clusters for which the intersection
contains all the odd degree factors fuse into a
single $\mathsf{C}$-cluster.  This result provides
some necessary details for the proof
of~\cite[Lemma 4.25]{MS}.

\begin{lem} \label{lem:eat-many-quad}
Fix a \kld $f$ and integers $d > c > b > a$.
Suppose that $f_{[d,b)}$ and $f_{[c,a)}$ are both $\mathsf{C}$~clusters,
and all factors of both $f_{[d,c)}$ and $f_{[b,a)}$ are quadratic.
Then the whole $f_{[d,a)}$ is a $\mathsf{C}$~cluster.
\end{lem}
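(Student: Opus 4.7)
Write $R := f_d \circ \cdots \circ f_{c+1}$, $P := f_c \circ \cdots \circ f_{b+1}$, and $Q := f_b \circ \cdots \circ f_{a+1}$, so that $f_{[d,a)} = R \circ P \circ Q$. Since $f_{[d,c)}$ and $f_{[b,a)}$ are compositions of quadratics, $\deg R = 2^{d-c}$ and $\deg Q = 2^{b-a}$ are powers of $2$. Writing $N := \deg(R \circ P)$, the hypothesis that $f_{[d,b)}$ is a $\mathsf{C}$-cluster gives that $N$ is not a power of $2$, so $\deg P = N/2^{d-c}$ is also not a power of $2$. The goal is to show that $R \circ P \circ Q$ is linearly equivalent to $C_{N\cdot 2^{b-a}}$, whose degree is not a power of $2$.

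The central step is that the overlap $P$ acquires two Chebyshev presentations which must be compatible. Telescoping the linear conjugations from the $\mathsf{C}$-cluster structure of $f_{[d,b)}$ gives linear polynomials $L_b, L_c$ with $P = L_c \circ C_{\deg P} \circ L_b^{-1}$; similarly, from $f_{[c,a)}$ we get $P = L'_c \circ C_{\deg P} \circ L'_b{}^{-1}$. Setting $M := L_b^{-1}\circ L'_b$, we combine these to deduce $(L'_c{}^{-1}\circ L_c) \circ C_{\deg P} = C_{\deg P}\circ M$. Since $\deg P$ is not a power of $2$, the rigidity of Chebyshev polynomials under linear conjugation (\cite[Theorem~3.15]{MS} combined with the identity $C_n(-x) = (-1)^n C_n(x)$) forces $M \in \{x,-x\}$ and $L'_c{}^{-1}\circ L_c \in \{x,(-1)^{\deg P}x\}$, with matching signs.

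Finally, we assemble the pieces: from $f_{[d,b)}$ we have $R \circ P = L_d \circ C_N \circ L_b^{-1}$, and from (the Chebyshev presentation of) $f_{[c,a)}$ we have $Q = L'_b \circ C_{2^{b-a}}\circ \delta$ for some linear $\delta$. Thus
\[ R \circ P \circ Q \;=\; L_d \circ C_N \circ M \circ C_{2^{b-a}} \circ \delta. \]
If $M = x$ this is immediately $L_d \circ C_{N\cdot 2^{b-a}} \circ \delta$; if $M(x) = -x$, then $C_N \circ (-x) = ((-1)^N x)\circ C_N$ absorbs the sign into the outer linear factor, again giving linear equivalence with $C_{N\cdot 2^{b-a}}$. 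In both cases, $f_{[d,a)}$ is linearly equivalent to a Chebyshev polynomial of non-power-of-$2$ degree, so $f_{[d,a)}$ is a $\mathsf{C}$-cluster. The main obstacle is establishing the compatibility of the two Chebyshev presentations of $P$ on the overlap — specifically verifying via the Chebyshev rigidity results from \cite{MS} that the ``gluing'' linear factor $M$ is forced to be $\pm x$; the rest is bookkeeping.
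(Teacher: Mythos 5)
Your proof is correct, but it takes a genuinely different route from the paper's. The paper proves this lemma by induction on $b-a$, peeling off one quadratic at a time: it sets up three auxiliary preclusterings (isolating $f_{[c,a)}$, $f_{[c,b)}$, and $f_{[d,b)}$ respectively), builds explicit cleanups of the middle one, and uses the invariance of gates across cleanups (Lemma~\ref{lem:purposeofgates}) to transfer a right-to-left gate at $b$ to the cleanup of the $f_{[d,b)}$-preclustering, which extends that $\mathsf{C}$~cluster by one factor. You instead glue the two Chebyshev presentations directly on the overlap $P=f_{[c,b)}$, using that $\deg P$ is not a power of $2$ (since $\deg f_{[d,b)}$ is not and $\deg R$ is) so that Chebyshev rigidity forces the transition linear $M$ to be $\pm x$, and then compose. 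Both arguments ultimately rest on the same facts from \cite[Section~3]{MS}; yours is shorter and avoids the induction, but it silently uses that a $\mathsf{C}$~cluster admits a presentation in which \emph{every} factor is a Chebyshev polynomial with telescoping (indeed trivial) internal linear factors --- in the paper's framework this is exactly the content of the precleanup construction (Lemma~\ref{lem:getpreclean}), so you should cite it or Ritt's uniqueness of Chebyshev decompositions explicitly. The paper's longer route has the side benefit of staying entirely inside the gate formalism it reuses later. Two harmless slips: your $M$ should be $(L'_b)^{-1}\circ L_b$ rather than $L_b^{-1}\circ L'_b$ (immaterial once $M=\pm x$), and you should note that $P$ does contain an odd-degree factor (all odd-degree factors of $f_{[d,b)}$ lie in $f_{[c,b)}$ since $f_{[d,c)}$ is all quadratic), which is what licenses the Chebyshev presentation of $P$ of non-power-of-two degree.
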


\begin{proof}
We first replace $k$ by a sufficiently big multiple to ensure that $k \gg d-a$.

The proof proceeds by induction on $(b-a)$. In the base case $b-a =0$,
 the desired cluster $f_{[d,a)}$ in the conclusion is
 one of the clusters $f_{[d,b)}$ in the hypothesis.

We consider three preclusterings $A$, $B$, and $D$ of $f$.
All clusters of $A$, $B$, and $D$ consist of one factor each, except for $f_{[c,a)}$ for $A$,
$f_{[c,b)}$ for $B$, and $f_{[d,b)}$ for $D$.

We build two cleanups $(h,L)$ and $(g, M)$ of $(B, f)$. 

To build $(h, L)$, we start with a cleanup $(\hat{h}, \hat{L})$ of $(A,f)$.
So, $\hat{h}_i$ are Chebyshev polynomials when $c \geq i > a$; and $\hat{L}_i = \id$ when $c > i > a$.
Let $h_i(x) := x^2$ and $L_i(x) := x-2$ for $i$ with $b \geq i > a$; and let $h_i = \hat{h}_i$ and $L_i = \hat{L}_i$ for other (modulo $k$) $i$.

Similarly, to build $(g, M)$, start with a cleanup $(\hat{g}, \hat{M})$ of $(D,f)$ and
take $M_i(x) = x -2$ and $g_i(x) = x^2$ for $i$ where $d \geq i > c$, as in the previous
case. Because we do not know anything about the factor $f_{d+1}$,
it may be necessary to further adjust $g_{d+1}$ and $M_d$ to satisfy part (5b) of the definition of ``cleanup''.

We know $(h, L)$ has a one-way right-to-left gate at $b$.
By part (1) of Lemma~\ref{lem:purposeofgates}, it follows that so does $(g, M)$.
Since $M_b = \hat{M}_b$, it also follows that the cleanup $(\hat{g}, \hat{M})$ of the preclustering $D$ of $f$ has a one-way right-to-left gate at $b$.
By part (2) of Lemma~\ref{lem:purposeofgates}, we know that $f_{[d, b-1)}$ is a cluster.
The induction step is now complete.
\end{proof}

We come now to the main technical lemma explaining
exactly under what conditions two clusters may overlap.
We will
use this result repeatedly to compare clusterings of the same
\kld and then employ it later to understand the action
of Ritt swaps.

\begin{lem} \label{lem:clusteroverlap}
Fix a \kld $f$ and integers $d > c > b > a$. Suppose that $f_{[d,b)}$ and $f_{[c,a)}$ are both clusters. Then either the whole $f_{[d,a)}$ is a cluster; or $c = b+1$ and $f_{b+1}$ is quadratic and at least one of $f_{[d,b)}$ and $f_{[c,a)}$ is a $\mathsf{C}$~cluster. \end{lem}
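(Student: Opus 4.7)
The plan is to case-analyze based on the types ($\mathsf{C}$ or $\mathsf{C}$-free) of the two clusters $f_{[d,b)}$ and $f_{[c,a)}$ together with the structure of their overlap $f_{[c,b)}$. The key tools are Remark~\ref{rk:preclustermk}, which classifies the pieces of a cluster, together with the rigidity of linear equivalences: between two monic $\mathsf{C}$-free Ritt polynomials of the same shape they must be scalings (\cite[Theorem 3.15]{MS}), and between two Chebyshev polynomials of the same degree they must be $\pm\operatorname{id}$. First I would dispose of the mixed-type case. Suppose without loss of generality that $f_{[c,a)}$ is $\mathsf{C}$-free while $f_{[d,b)}$ is a $\mathsf{C}$-cluster. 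By Remark~\ref{rk:preclustermk}(1) the overlap is $\mathsf{C}$-free, so it contains no type $\mathsf{C}$ factor. Viewing the same overlap as a piece of the $\mathsf{C}$-cluster $f_{[d,b)}$, Remark~\ref{rk:preclustermk}(2) leaves only one possibility: the overlap is a single quadratic factor, so $c = b+1$ and $f_{b+1}$ is quadratic, which is precisely the second alternative.

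When both clusters are $\mathsf{C}$-free, I would take cleanups $(h^{L}, L^{L})$ of $f_{[d,b)}$ and $(h^{R}, L^{R})$ of $f_{[c,a)}$. On the overlap, both cleanups display each factor $f_{i}$ (for $c \geq i > b$) as a monic $\mathsf{C}$-free Ritt polynomial sandwiched between linear factors; by \cite[Theorem 3.15]{MS} these two presentations differ only by scalings, which I can absorb into the adjacent linear factors to align the two cleanups on the overlap. Concatenating them then yields a cleanup of $f_{[d,a)}$ presenting it as a $\mathsf{C}$-free cluster.

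When both clusters are $\mathsf{C}$-clusters, I would split on whether the overlap contains an odd-degree factor. If it does, Remark~\ref{rk:preclustermk}(4) ensures the overlap is itself a $\mathsf{C}$-cluster; choosing cleanups that present both big clusters as Chebyshev compositions and invoking Chebyshev rigidity on the overlap allows one to align and glue them to produce a $\mathsf{C}$-cluster cleanup of $f_{[d,a)}$ (whose degree, a product of two non-powers of two, is also not a power of two). If the overlap has no odd-degree factor, its factors are all quadratic. The case $c = b+1$ again gives the second alternative, so assume $c \geq b+2$; each of the two $\mathsf{C}$-clusters then has an odd-degree anchoring factor in its non-overlap portion ($f_{[d,c)}$ and $f_{[b,a)}$ respectively), and I would adapt the proof of Lemma~\ref{lem:eat-many-quad} to glue, using those anchors to pin down the Chebyshev presentation. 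This last sub-case is the main obstacle: over an all-quadratic overlap the Chebyshev $C_{2} = x^{2} - 2$ is linearly equivalent to the $\mathsf{C}$-free monomial $x^{2}$, so Chebyshev rigidity alone does not dictate the alignment, and one must track the effect of the distant odd-degree anchors through the entire cleanup.
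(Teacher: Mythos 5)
Your case division (by the types of the two clusters, rather than by which of the three pieces $f_{[d,c)}$, $f_{[c,b)}$, $f_{[b,a)}$ are themselves clusters, which is how the paper organizes its proof) is legitimate, and your mixed-type case is correct and essentially immediate from Remark~\ref{rk:preclustermk}, as you say. Your gluing arguments for the two sub-cases where rigidity does hold on the overlap (both clusters $\mathsf{C}$-free, or both of type $\mathsf{C}$ with an odd-degree factor inside the overlap) are plausible, but note that you are re-deriving from scratch the content of Lemmas~\ref{lem:clusterfueCfree} and~\ref{lem:clusterfueC} (equivalently \cite[Lemmas 4.13 and 4.14]{MS}); the paper instead passes to a local preclustering with $d>c>b>a$ consecutive and invokes Lemma~\ref{lem:clusterfue} at the two boundaries, so that this bookkeeping is done once inside the gate formalism.

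The genuine gap is exactly the sub-case you flag at the end: both clusters are $\mathsf{C}$~clusters, the overlap $f_{[c,b)}$ consists of at least two quadratics, and the odd-degree factors of each cluster lie outside the overlap. Here the second alternative is unavailable (since $c\geq b+2$), so you must prove that $f_{[d,a)}$ is a cluster, and your own diagnosis explains why your method cannot do it: over an all-quadratic stretch the linear factors relating two Chebyshev presentations are not confined to $\pm\id$ (the translations $x\pm 2$ relate $x^2$ and $x^2-2$, and \cite[Remark 3.14]{MS} gives a whole family $A_\lambda$, $B_\lambda$ of such linears), so the two cleanups cannot be aligned by factor-by-factor comparison on the overlap. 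Moreover, ``adapting Lemma~\ref{lem:eat-many-quad}'' points at the wrong configuration: that lemma treats the dual situation in which the odd-degree factors lie \emph{inside} the overlap and the all-quadratic stretches are the outer flanks. The configuration you actually face is Case 2 of the paper's proof and is handled by Lemma~\ref{lem:bite-two}, whose proof runs through Lemma~\ref{lem:bite-one}, Lemma~\ref{lem:lem421}, and the gate machinery: the direction of the one-way gate at a cluster boundary is precisely the invariant that records the translation-versus-scaling ambiguity you identify, and the content of Lemmas~\ref{lem:bite-one} and~\ref{lem:bite-two} is that a $\mathsf{C}$~cluster can absorb at most one quadratic through such a gate before the gate's direction reverses. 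Without that argument, or an equivalent explicit computation propagating the linear seams from the distant odd-degree anchors across the entire overlap, the proof is incomplete.
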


\begin{proof}
The three small pieces $f_{[d,c)}$ and $f_{[c,b)}$ and $f_{[b,a)}$, are almost always clusters themselves.
We first deal with this general case, and then use Remark~\ref{rk:preclustermk} to analyze situations where some of the pieces are not clusters.

\textit{Case 1.} All three $f_{[d,c)}$ and $f_{[c,b)}$ and $f_{[b,a)}$ are clusters.

 To apply our previous Lemmas~\ref{lem:bite-one} and~\ref{lem:bite-two}, we need a preclustering $A$ of $f$ for which these $d > c > b > a$ are consecutive elements of $A$. We can always do this, possibly at the expense of replacing $k$ by a suitably big multiple: the conclusion of this lemma is completely local, so this is not a problem.

 Now Lemma~\ref{lem:clusterfue} applies at each of the two boundaries $c$ and $b$. If both gates are two-way, the whole $f_{[d,a)}$ is a cluster. If the outer clusters are single quadratics that join the middle cluster via one-way gates, they can both join, again making the whole $f_{[d,a)}$ a cluster. The only other option is that the middle cluster is the single quadratic, and at least one of the gates is one-way. Since there are no one-way gates between $\mathsf{C}$-free clusters, at least one of the clusters must be a $\mathsf{C}$~cluster.

\textit{Case 2.}
If the middle piece $f_{[c,b)}$ is not a cluster, then both $f_{[d,b)}$ and $f_{[c,a)}$ are $\mathsf{C}$~clusters and the overlap $f_{[c,b)}$ consists of quadratic factors, so the outer pieces $f_{[d,c)}$ and $f_{[b,a)}$ still have odd-degree factors, and so they are clusters. In fact, $f_{[d,i)}$ and $f_{[i,a)}$ are clusters for any $i$ with $c \geq i \geq b$, since they are contained in a cluster and contain an odd-degree factor. If $c = b+1$, we are done. Otherwise, we can use Lemma~\ref{lem:bite-two}, as soon as we have a preclustering $A$ of $f$ for which these $d > b > a$ are consecutive elements. As in Case 1, we can always find such $A$.
We now have two adjacent clusters $f_{[d,b)}$ and $f_{[b,a)}$, and the $f_{[b+2,a)}$ is also a cluster. Since we know that $f_{[b,a)}$ contains an odd-degree factor, the whole $f_{[d,a)}$ must be a cluster by Lemma~\ref{lem:bite-two}.

\textit{Case 3.}
If one of the outer pieces, $f_{[d,c)}$ or $f_{[b,a)}$,
is not a cluster, it means that the corresponding big cluster, $f_{[d,b)}$ or $f_{[c,a)}$, respectively,
is a $\mathsf{C}$~cluster whose odd-degree factors are inside the middle piece $f_{[c,b)}$. Then both $f_{[d,b)}$ and $f_{[c,a)}$ are $\mathsf{C}$~clusters.

If $f_{[b,a)}$ is a cluster, we can apply the reasoning in Case 2 again: $f_{[d,b)}$ and $f_{[b,a)}$ are adjacent clusters, and $f_{[i,a)}$ is a cluster for any $i$ with $b \geq i \geq a$. Applying
Lemmas~\ref{lem:bite-one} and~\ref{lem:bite-two} again, and noting that the middle piece $f_{[c,b)}$ is not a single quadratic, we get that the whole $f_{[d,a)}$ must be a cluster.
The same reasoning applies if $f_{[d,c)}$ is a cluster.

The last remaining possibility that both $f_{[d,c)}$ and $f_{[b,a)}$ consist of nothing but quadratics
is handled by Lemma~\ref{lem:eat-many-quad}.

\end{proof}

The following (surprisingly useful) technical lemma shows that a one-quadratic cluster of a clustering cannot be contained in a $\mathsf{C}$~cluster.

\begin{lem} \label{lem:drown-quad}
If \begin{itemize}
\item $A$ is a preclustering of a \kld $f$; and
\item $a, a-1 \in A$ and $f_a$ is quadratic; and
\item $f_{[e,d)}$ is a $\mathsf{C}$~cluster for some $e, d$ with $e \geq a$ and $a-1 \geq d$;
\end{itemize}
then $A$ is not a clustering. \end{lem}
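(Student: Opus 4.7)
The plan is to assume that $A$ is a clustering and to derive a contradiction by forcing one of the forbidden ``fake'' configurations of Definition~\ref{def:fakedef}. Let $a'$ denote the next element of $A$ above $a$ and $a''$ the next element below $a-1$. Because $f_{[a, a-1)} = (f_a)$ is a single-quadratic $A$-cluster, parts~(2) and~(3) of Definition~\ref{def:fakedef} together with Lemma~\ref{lem:purposeofgates} force that neither $f_{[a', a-1)}$ nor $f_{[a, a'')}$ can be a cluster under our standing assumption -- otherwise we would have a fake right-to-left gate at $a$, respectively a fake left-to-right gate at $a-1$. The aim is to produce one of these concatenations as a cluster, the desired contradiction.

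Since $f_{[e,d)}$ is a $\mathsf{C}$-cluster, its total degree is not a power of $2$; since each $f_i$ is indecomposable and so corresponds to a prime-degree Chebyshev factor in a cleanup, some factor $f_{i_0}$ with $i_0 \neq a$ has odd prime degree. By the horizontal symmetry of the hypotheses (exchanging the roles of ``above $f_a$'' and ``below $f_a$''), I assume $i_0 > a$. The argument proceeds by induction on $e - a$, with the case split depending on whether $i_0$ lies inside the $A$-cluster $f_{[a', a)}$ immediately above $f_a$.

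Case~1 is $i_0 \leq a'$. The piece $f_{[\min(e, a'),\, a-1)}$ of $f_{[e, d)}$ contains the odd-degree $f_{i_0}$ and the quadratic $f_a$, so by Remark~\ref{rk:precluster}(2) it is itself a $\mathsf{C}$-cluster. If $e \geq a'$, this piece is exactly $f_{[a', a-1)}$, a cluster -- the sought contradiction. Otherwise $e < a'$, and I apply Lemma~\ref{lem:clusteroverlap} to the clusters $f_{[a', a)}$ and $f_{[e, a-1)}$ with $a' > e > a > a-1$: either $f_{[a', a-1)}$ is a cluster (the same contradiction) or the exceptional conclusion forces $e = a+1$ and $f_{a+1}$ quadratic, incompatible with $f_{i_0} = f_{a+1}$ having odd degree.

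Case~2 is $i_0 > a'$. Minimality of $i_0$ forces every factor $f_i$ of $f_{[e,d)}$ with $a < i \leq a'$ to be an even-prime-degree Chebyshev, i.e., quadratic. If $a' > a+1$, then $f_{[a', a)}$ is a piece of the $\mathsf{C}$-cluster $f_{[e,d)}$ with multiple quadratic factors and no odd-degree factor, which by Remark~\ref{rk:precluster}(2) is \emph{not} a cluster -- contradicting that it is an $A$-cluster. Hence $a' = a+1$ and $f_{a'}$ is quadratic; then the hypotheses of the lemma hold for the consecutive pair $a, a' \in A$ in place of $a-1, a$, with the same $\mathsf{C}$-cluster $f_{[e,d)}$. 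Since $e - a' < e - a$, the inductive hypothesis gives that $A$ is not a clustering, the final contradiction. The main obstacle is Case~1 with $e < a'$, where the newly built $\mathsf{C}$-cluster $f_{[e, a-1)}$ overlaps the $A$-cluster $f_{[a', a)}$ only partially from above; Lemma~\ref{lem:clusteroverlap} is tailored precisely to this overlap, and its exceptional case is ruled out immediately by the odd degree of $f_{i_0}$.
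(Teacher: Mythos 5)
Your proof is correct and uses essentially the same ingredients as the paper's: locate the odd-degree factor of $f_{[e,d)}$ nearest to the quadratic, use Remark~\ref{rk:precluster}(2) to rule out multi-quadratic pieces, invoke Lemma~\ref{lem:clusteroverlap} when the $\mathsf{C}$-cluster only partially overlaps the relevant $A$-cluster, and contradict Lemma~\ref{lem:clusterfue} (equivalently, produce a fake gate). The only difference is organizational --- you step cluster-by-cluster by induction where the paper jumps directly to the $A$-cluster containing the nearest odd-degree factor --- and you should state the induction measure so it is compatible with the symmetry reduction (e.g.\ induct on $i_0-a$ after fixing the upward direction, noting Case~2 stays in that direction), but this is cosmetic.
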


\begin{proof}
Let $z$ be the index of an odd-degree factor $f_z$ in the $\mathsf{C}$~cluster $f_{[e,d)}$, closest to $a$ on the left or to $a-1$ on the right. We write out the details of the first case; the second is identical.
Let $y>x$ be the boundaries of the cluster $f_{[y,x)}$ of $(A,f)$ that contains the factor $f_z$. So now
 $$y \geq z > x \geq a \text{ and } e \geq z \text{ and } a-1 \geq d; \text{ implying that } x-1 \geq d \text{ .}$$

Since we assumed that $f_z$ is the closest odd-degree factor, all $f_i$ with $x \geq i \geq a$ are quadratic. Remark~\ref{rk:preclustermk} then forces every such $i$ to be a cluster boundary of $A$.

We will show that the concatenation $f_{[y,x-1)}$ of adjacent clusters $f_{[y,x)}$ and $f_{[x,x-1)}$ of $(A,f)$ is itself a cluster, contradicting
Lemma~\ref{lem:clusterfue}. Since $f_{[y,x-1)}$ contains the odd-degree factor $f_z$, by Remark~\ref{rk:preclustermk} it suffices to show that $f_{[y,x-1)}$ is contained in a cluster.

If $e \geq y$, then $f_{[y,x-1)}$ is contained in $f_{[e,d)}$. If $y > e$, then the clusters $f_{[y,x)}$ and $f_{[e,d)}$ overlap, and the overlap includes the odd-degree factor $f_z$. So by
Lemma~\ref{lem:clusteroverlap}, the whole $f_{[y,d)}$ is a cluster, again containing $f_{[y,x-1)}$.
\end{proof}

We now return to proving Theorem~\ref{thm:unique-brdyset}  that the boundary set of a \kld (see Definition~\ref{def:def-bdry}) is unique.
It is easy to see that whenever the empty set is a preclustering of $f$, it is the unique clustering of $f$.
So, we compare two a priori unrelated non-empty clusterings $A$ and $D$ of the same \kld $f$.

The first Lemma~\ref{lem:uniqclusttech} in this series deals with one mismatched cluster boundary. Options (2) and (3) in its conclusion do not produce a mismatch of boundary sets; option (1) is sorted out in the
following Lemma~\ref{lem:no-screwy-quads}.

\begin{lem} \label{lem:uniqclusttech}
Suppose that $A$ and $D$ are non-empty clusterings of the same \kld $f$ and $b \in A \smallsetminus D$.
Then one of the following is true.\begin{enumerate}
\item Both $f_b$ and $f_{b+1}$ are quadratic; and $b+1$ and $b-1$ are both in $D$.
\item $A$ has a one-way left-to-right gate at $b$, and $f_{b+1}$ is quadratic,\\ and $b+1 \in D$, and
Lemma~\ref{lem:lem432} gives another clustering $$B = (A \smallsetminus b+k\mathbb{Z}) \cup b+1+k\mathbb{Z}$$ of $f$ which shares the cluster boundary $b+1$ with $D$.
\item $A$ has a one-way right-to-left gate at $b$, and $f_{b}$ is quadratic, $b-1 \in D$, so
Lemma~\ref{lem:lem432} gives another clustering $B = (A \smallsetminus b+k\mathbb{Z}) \cup b-1+k\mathbb{Z}$ of $f$ which shares the cluster boundary $b-1$ with $D$.
\end{enumerate} \end{lem}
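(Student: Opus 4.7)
The plan is to compare the cluster structures of $A$ and $D$ in a neighborhood of $b$ using the overlap analysis, and then exhaust cases based on which of $f_b$ and $f_{b+1}$ are quadratic.

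Let $b_1 > b > b_2$ be consecutive elements of $A$, so that $f_{[b_1,b)}$ and $f_{[b,b_2)}$ are the two $A$-clusters meeting at $b$. Since $b \notin D$, both $f_b$ and $f_{b+1}$ belong to a common $D$-cluster $f_{[d_1,d_2)}$ with $d_1 \geq b+1$ and $d_2 \leq b-1$. The key ingredients will be Lemma~\ref{lem:clusteroverlap}, the fact that the clustering $A$ has no fake gates so that concatenations of adjacent $A$-clusters are never clusters (Lemma~\ref{lem:clusterfue}), and Lemma~\ref{lem:drown-quad}.

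The first step is to show that at least one of $f_b, f_{b+1}$ is quadratic. Suppose toward contradiction that neither is; then both have odd prime degree, since they are indecomposable and swappable. Apply Lemma~\ref{lem:clusteroverlap} to the pair $\bigl(f_{[b_1,b)},\, f_{[d_1,d_2)}\bigr)$: the overlap contains the odd-degree factor $f_{b+1}$, so the exceptional alternative in that lemma (which would demand $f_{b+1}$ quadratic) is ruled out, and the two clusters fuse to a larger cluster. A parallel overlap on the left of $b$, involving $f_{[b,b_2)}$ and using the odd-degree $f_b$, then yields, via Remark~\ref{rk:preclustermk} applied to the odd-degree factor, that the concatenation $f_{[b_1,b_2)}$ is itself a cluster. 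This contradicts Lemma~\ref{lem:clusterfue} since $A$, being a clustering, has no fake gates.

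Knowing at least one of $f_b, f_{b+1}$ is quadratic, the three cases of the lemma correspond to which of them are quadratic. If only $f_{b+1}$ is quadratic, the $A$-gate at $b$ must be left-to-right by the second part of Lemma~\ref{lem:purposeofgates} (a right-to-left gate at $b$ would force $f_b$ quadratic), and it must be one-way since two-way gates are fake by Definition~\ref{def:fakedef}(1). Lemma~\ref{lem:lem432} then provides the clustering $B$ of case~(2). The case where only $f_b$ is quadratic is symmetric, yielding case~(3); if both are quadratic we are in case~(1).

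The remaining task, and the main obstacle, is verifying $b+1 \in D$ in case~(2), $b-1 \in D$ in case~(3), and both in case~(1). I would argue by contradiction: if, say, $b+1 \notin D$ in case~(2), then the $D$-cluster containing $f_b$ and $f_{b+1}$ also contains $f_{b+2}$, giving three consecutive factors in one $D$-cluster of which the middle $f_{b+1}$ is quadratic. If that $D$-cluster is $\mathsf{C}$-free, Remark~\ref{rk:preclustermk} extracts a sub-cluster straddling the $A$-boundary at $b$, contradicting Lemma~\ref{lem:clusterfue}; if it is a $\mathsf{C}$-cluster, Lemma~\ref{lem:drown-quad} gives the contradiction directly. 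The other assertions follow symmetrically. Carefully tracking the $\mathsf{C}$-type of the enlarged $D$-cluster and the exact $A$-boundary positions near $b \pm 1$ is where the real work lies.
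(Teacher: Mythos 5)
Your opening move is fine: the overlap argument showing that at least one of $f_b$, $f_{b+1}$ is quadratic is correct (when both have odd degree, every overlap between the $D$-cluster straddling $b$ and the two adjacent $A$-clusters contains an odd-degree factor, so Lemma~\ref{lem:clusteroverlap} never falls into its exceptional alternative, the clusters fuse, and $f_{[b_1,b_2)}$ becomes a cluster contradicting Lemma~\ref{lem:clusterfue}). Your handling of the ``exactly one quadratic'' configurations is also essentially recoverable, although the parenthetical justification of the gate direction is not what Lemma~\ref{lem:purposeofgates} says: a right-to-left gate at $b$ does not force $f_b$ to be quadratic. What you actually need is to \emph{produce} the gate, by using the $D$-cluster to show that $f_{[b+1,b_2)}$ is a cluster (the overlap with the right $A$-cluster contains the odd-degree $f_b$, so Lemma~\ref{lem:clusteroverlap} fuses them) and then invoking the second part of Lemma~\ref{lem:purposeofgates}. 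Similarly, Lemma~\ref{lem:drown-quad} only applies when the $A$-cluster containing the relevant quadratic is a \emph{singleton} (it requires two adjacent elements of $A$); when it is not, you must fall back on the fusion argument, so your $\mathsf{C}$ versus $\mathsf{C}$-free dichotomy is not the right one for choosing between the two tools.

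The genuine gap is the claim that ``if both are quadratic we are in case~(1),'' together with the ensuing obligation to prove $b+1\in D$ \emph{and} $b-1\in D$ in that situation. The three conclusions of the lemma are an overlapping disjunction, not a partition indexed by which of $f_b,f_{b+1}$ is quadratic: conclusions (2) and (3) are perfectly compatible with both factors being quadratic. Concretely, the $D$-cluster containing $f_b$ and $f_{b+1}$ may extend past $b+1$ on the left while stopping at $b-1$ on the right (this is the paper's Case~2 and Subcase~4.2, where $e\geq b+2$ and $d=b-1$); then $b+1\notin D$, conclusion~(1) is false, and the correct output is conclusion~(3). In your scheme you would be trying to derive a contradiction from ``both quadratic and $b+1\notin D$,'' and no such contradiction exists --- the scenario is consistent. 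Moreover, your argument for $b+1\in D$ in case~(2) leans on $f_b$ having odd degree (so that the single-factor overlap $\{f_b\}$ escapes the exceptional clause of Lemma~\ref{lem:clusteroverlap}); when $f_b$ is also quadratic that escape route closes and the fusion can legitimately fail. The fix is to restructure: after establishing that at least one of the two factors is quadratic, you must case on the position of the $D$-boundaries $e>b>d$ relative to $b\pm1$ and to the adjacent $A$-boundaries (as the paper does), landing in conclusion~(1) only when $e=b+1$ and $d=b-1$, and in conclusion~(2) or~(3) otherwise.
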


\begin{proof}
Fix consecutive elements $c > b > a$ of $A$.\\
Since $b \notin D$, there are  consecutive elements
$e > d$ of $D$ with $e > b > d$.\\
Comparing $e$ to $c$ and $d$ to $a$ breaks this proof into four cases.
In each case, we get a contradiction (via
 Remark~\ref{rk:preclustermk},
 Lemma~\ref{lem:drown-quad},
 Lemma~\ref{lem:clusteroverlap},
 and Lemma~\ref{lem:clusterfue})
 or one of the desired conclusions
 (via Lemma~\ref{lem:purposeofgates} and
 Lemma~\ref{lem:lem432}).

\textit{Case 1.} If $e \geq c$ and $a \geq d$, then the concatenation $f_{[c,a)}$ of two clusters of $(A,f)$ is contained in the cluster $f_{[e,d)}$ of $(D, f)$.
By Lemma~\ref{lem:clusterfue}, this concatenation
$f_{[c,a)}$ cannot be a cluster itself.
Remark~\ref{rk:preclustermk} then
shows that $c = b+1 = a+2$, and $f_b$ and $f_{b+1}$ are
quadratic, and $f_{[e,d)}$ is a $\mathsf{C}$~cluster,
and Lemma~\ref{lem:drown-quad}
provides the contradiction.

\textit{Case 2.} If $e \geq c$ and $b > d > a$, then
Lemma~\ref{lem:clusteroverlap} applies to the clusters
$f_{[e,d)}$ and $f_{[b, a)}$.

If the whole $f_{[e,a)}$ is a cluster, then $f_{[c,a)}$
is contained in a cluster, and contains the cluster
$f_{[b,a)}$ with at least two factors: $b > d > a$
implies that $b \geq a+2$. Remark~\ref{rk:preclustermk}
then shows that $f_{[c,a)}$ is a cluster, contradicting
Lemma~\ref{lem:clusterfue}.

Thus $b = d+1$ and $f_{b}$ is quadratic. We want to show that the concatenation $f_{[c,b-1)}$ of the cluster $f_{[c,b)}$ of $(A,f)$ and the adjacent quadratic $f_b$ is a cluster. This $f_{[c,b-1)}$ is contained in the cluster $f_{[e,d)} = f_{[e,b-1)}$, and contains the cluster $f_{[c,b)}$.
Remark~\ref{rk:preclustermk} gives what we want, unless $c = b+1$ and $f_c$ is quadratic and $f_{[e,d)}$ is a $\mathsf{C}$~cluster. In that case,
Lemma~\ref{lem:drown-quad} provides the contradiction.

\textit{Case 3.} The situation where $c > e$ but $a \geq d$ works exactly like Case 2.

\textit{Case 4.} Finally, suppose that $c > e > b > d > a$.
Lemma~\ref{lem:clusteroverlap} now applies to $f_{[e,d)}$ with $f_{[b, a)}$, and to $f_{[c,b)}$ with $f_{[e, d)}$.
Each of these two instances of
Lemma~\ref{lem:clusteroverlap} produces one of two conclusions, breaking this case into four subcases.

\textit{Subcase 4.1.} Suppose that both concatenations $f_{[e,a)}$ and $f_{[c,d)}$ are clusters. Their overlap $f_{[e,d)}$ contains at least two factors, so another application of
Lemma~\ref{lem:clusteroverlap} shows that the whole $f_{[c,a)}$ is a cluster, contradicting
Lemma~\ref{lem:clusterfue}.

\textit{Subcase 4.2.} Suppose that $b = d+1$ and $f_b$ is quadratic, but
 the overlap $f_{[e,b)}$ of $f_{[c,b)}$ with $f_{[e, d)}$ is not a single quadratic and $f_{[c,d)}$ is a cluster.
 Since the concatenation $f_{[c,d)}$ of the cluster $f_{[c,b)}$ and the adjacent quadratic $f_b$ is a cluster,
Lemma~\ref{lem:purposeofgates} shows that $(A,f)$ has a right-to-left gate at $b$.
As in Case 2 above, Lemma~\ref{lem:lem432} now gives us another clustering $B = (A \smallsetminus b+k\mathbb{Z}) \cup b-1+k\mathbb{Z}$ of $f$ which shares the cluster boundary $d = b-1$ with $D$.

\textit{Subcase 4.3.} The reasoning is exactly the same for the symmetric situation
 where $e = b+1$ and $f_{b+1}$ is a quadratic,
 but the overlap $f_{[b,d)}$ of $f_{[e,d)}$ and $f_{[b,a)}$ is not a single quadratic.

\textit{Subcase 4.4.} Finally, we arrive at the first option in the conclusion of this lemma: $f_b$ and $f_{b+1}$ are both quadratic, and $e = b+1$ and $d = b-1$.
\end{proof}

The last lemma was the induction step for the proof of Theorem~\ref{thm:unique-brdyset}, and the next lemma is the base case, sorting out option (1) in the conclusion of the last lemma.

\begin{lem} \label{lem:no-screwy-quads}
Fix two clusterings $A$ and $D$ of the same \kld $f$. Suppose that
 for every $b \in A \smallsetminus D$, both $f_b$ and $f_{b+1}$ are quadratic; and $b+1$ and $b-1$ are both in $D$.
Suppose symmetrically that
 for every $b \in D \smallsetminus A$, both $f_b$ and $f_{b+1}$ are quadratic; and $b+1$ and $b-1$ are both in $A$.
Then $A = D$.
\end{lem}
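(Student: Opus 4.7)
\medskip
\noindent\textbf{Proof plan.} Suppose for contradiction that $A \neq D$; by the symmetry of the hypotheses, pick some $b \in A \smallsetminus D$. The hypothesis then forces both $f_b$ and $f_{b+1}$ to be quadratic and $b-1, b+1 \in D$. Let $c > b$ and $a < b$ be the elements of $A$ consecutive to $b$. The plan is to show that $(A,f)$ must have a two-way gate at $b$, which is fake by condition~(1) of Definition~\ref{def:fakedef}, contradicting that $A$ is a clustering.

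First I would narrow the possibilities for $c$ and $a$: I claim $c \in \{b+1, b+2\}$ and, symmetrically, $a \in \{b-1, b-2\}$. Indeed, if $c > b+1$, then $b+1 \in D \smallsetminus A$, and the symmetric hypothesis applied to $b+1$ forces $b+2 \in A$, hence $c = b+2$. This splits the argument into four sub-cases. In every one of them, the two-factor sequence $(f_{b+1}, f_b) = f_{[b+1,b-1)}$ is a cluster, since $b-1$ and $b+1$ are consecutive elements of $D$.

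Next, in the sub-cases where $c = b+2$, I would apply Lemma~\ref{lem:clusteroverlap} to the overlapping clusters $f_{[b+2,b)}$ of $A$ and $f_{[b+1,b-1)}$ of $D$. Both consist of two quadratic factors and hence are $\mathsf{C}$-free, so the exceptional conclusion of Lemma~\ref{lem:clusteroverlap} (which requires at least one of the two overlapping clusters to be a $\mathsf{C}$-cluster) is ruled out, forcing $f_{[b+2,b-1)} = (f_{b+2}, f_{b+1}, f_b)$ to be a cluster. Symmetrically, when $a = b-2$, the triple $(f_{b+1}, f_b, f_{b-1}) = f_{[b+1,b-2)}$ is a cluster. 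Combining these observations with the trivial case $c = b+1$ (resp.\ $a = b-1$), in every sub-case the cluster of $A$ immediately to the right of $b$, concatenated with the adjacent quadratic $f_b$, is still a cluster, and symmetrically on the other side with $f_{b+1}$.

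By the second part of Lemma~\ref{lem:purposeofgates}, the first concatenation being a cluster gives $(A,f)$ a right-to-left gate at $b$ and the second gives it a left-to-right gate at $b$, so $(A,f)$ has a two-way (hence fake) gate at $b$, the desired contradiction. The main obstacle I anticipate is cleanly verifying that the exceptional case in Lemma~\ref{lem:clusteroverlap} is ruled out in both overlap configurations; fortunately this reduces to the uniform observation that every cluster appearing in the four sub-cases is built entirely out of quadratic factors and therefore cannot be a $\mathsf{C}$-cluster.
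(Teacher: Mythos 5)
Your proof is correct, but it takes a genuinely different route from the paper's. The paper fixes cleanups $(g,M)$ of $(A,f)$ and $(h,L)$ of $(D,f)$, observes that each cluster of $D$ split by some $b\in A\smallsetminus D$ is a degree-$4$ $\mathsf{C}$-free cluster so that (by Lemma~\ref{lem:refinery}) $A\cup D$ is a preclustering admitting both $(g,M)$ and $(h,L)$ as cleanups, and then transfers the identity linear factor $M_d=\id$ at $d\in D\smallsetminus A$ through the ``same gates'' statement of Lemma~\ref{lem:purposeofgates}(1) to force $L_d=\id$, giving $D$ a fake two-way gate at $d$. You instead stay entirely at the level of clusters: you first use the symmetric hypothesis to pin the $A$-neighbours of $b$ to $c\in\{b+1,b+2\}$ and $a\in\{b-1,b-2\}$, then fuse the overlapping degree-$4$ clusters via Lemma~\ref{lem:clusteroverlap} (the exceptional case there is correctly excluded since a two-quadratic cluster has degree $4$, a power of two, hence is $\mathsf{C}$-free), and conclude via Lemma~\ref{lem:purposeofgates}(2) that $A$ has a fake two-way gate at $b$. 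Your approach avoids cleanups and the refinement $A\cup D$ altogether at the cost of a small case analysis; the paper's approach avoids the case analysis but leans on the machinery of cleanups and Lemma~\ref{lem:refinery}. One cosmetic slip: in your summary sentence the quadratics are attached to the wrong sides --- the cluster of $A$ to the \emph{right} of the boundary $b$ (namely $f_{[b,a)}$, which already contains $f_b$) is extended by $f_{b+1}$ to witness the left-to-right gate, while the cluster to the \emph{left} ($f_{[c,b)}$) is extended by $f_b$ to witness the right-to-left gate; your detailed computations in the preceding paragraph have this right, and the two-way conclusion is unaffected.
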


\begin{proof}
Fix cleanups $(g, M)$ of $(A,f)$, and $(h, L)$ of $(D,f)$.

Observe first that for every $b \in A \smallsetminus D$, the cluster $f_{[b+1, b-1)}$ of $D$ broken by $b$ is a $\mathsf{C}$-free cluster, since it has degree $4$. By Lemma~\ref{lem:refinery}, this means that $A \cup D$ is another preclustering of $f$, and $(g, M)$ is a cleanup of it. Symmetrically, $(h, L)$ is also a cleanup of $(A \cup D, f)$.

For any $d \in D \smallsetminus A$, the cleanup $(g, M)$ of $(A,f)$ must have $M_d = \id$.
So the cleanup $(g, M)$ of $(A \cup D, f)$ has a two-way gate at such $d$.
So every cleanup of $(A \cup D, f)$, including $(h, L)$, has a two-way gate at such $d$.
So $L_d = \id$. So the cleanup $(h, L)$ of $(D,f)$ has a two-way gate at $d$.
But this fake gate of $(D,f)$ contradicts the definition of ``clustering''.

The same contradiction arises symmetrically from any $b \in A \smallsetminus D$.
\end{proof}

We come now to our Theorem~\ref{thm:unique-brdyset}
showing that the boundary set is an invariant of
a long decomposition, as are the types of
clusters and the gates appearing in a clustering.
We prove the theorem by showing that any two
clusterings may be converted one to the
other through a finite sequence of applications of
Lemma~\ref{lem:lem432}.

\begin{thm} \label{thm:unique-brdyset}
Any two non-empty clusterings $A$ and $D$ of the same \kld $f$ give rise to the same boundary set $B$ for $f$.

Corresponding clusters of $(A,f)$ and $(D,f)$ are of the same kind ($\mathsf{C}$ or $\mathsf{C}$-free).

For each $b \in A \cap D \supseteq B \cap \mathbb{Z}$, the gates of $(A,f)$ at $b$ and $(D,f)$ at $b$ are of the same kind (none, one-way left-to-right, one-way right-to-left).

For each half-integer $b -\frac{1}{2} \in B$ with $b \in A$ and $b-1 \in D$ or vice versa, $A$ and $D$ have one-way gates in opposite direction at this cluster boundary.
\end{thm}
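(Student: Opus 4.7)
The plan is to induct on the mismatch count
$N(A,D) := \#\left((A \triangle D) \cap \{1, 2, \ldots, k\}\right)$,
i.e.\ the number of cluster boundaries in one period on which $A$ and $D$ disagree.
When $N(A,D) = 0$ we have $A = D$ and all four conclusions hold trivially, since the boundary set, cluster kinds, and gate directions at a fixed cluster boundary depend only on the pair $(A,f)$ by the first part of Lemma~\ref{lem:purposeofgates} and Definition~\ref{def:def-bdry}.

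For the inductive step, assume $N(A,D) > 0$. The key dichotomy is Lemma~\ref{lem:uniqclusttech}: each $b \in A \setminus D$ falls into one of three alternatives. If every $b \in A \setminus D$ and (by the symmetric version) every $b' \in D \setminus A$ fell into alternative~(1), then Lemma~\ref{lem:no-screwy-quads} would force $A = D$, contradicting $N(A,D) > 0$. Hence, possibly after swapping $A$ and $D$, there is some $b \in A \setminus D$ in alternative~(2) or~(3). Either option lets us apply Lemma~\ref{lem:lem432} to produce a new clustering $A'$ of $f$ in which the boundary at $b$ has been shifted by one index to $b \pm 1$. The hypothesis of alternative~(2) or~(3) guarantees that this target already lies in $D \setminus A$, so the mismatch count drops by at least two: $N(A', D) \leq N(A,D) - 2$.

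Applying the inductive hypothesis to $(A', D)$ yields all four conclusions between $A'$ and $D$. To transfer them back to $(A,D)$, the crucial fact -- noted in Remark~\ref{rk:boudary-list-gate-swap} -- is that $A$ and $A'$ share the same boundary set $B$: the integer boundary of $A$ at $b$ and the integer boundary of $A'$ at $b \pm 1$ both collapse to the same \emph{half-integer} entry $b \pm \tfrac{1}{2} \in B$ arising from the movable quadratic factor, while Lemma~\ref{lem:lem421} guarantees that all remaining boundaries, cluster kinds, and gate directions of $A'$ match those of $A$. That same lemma's clause that the direction of the moved gate is reversed is exactly the content of the opposite-direction assertion in the theorem at this shared half-integer boundary. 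Composing across the induction delivers all four statements for $(A,D)$.

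I expect the principal obstacle to be the fine bookkeeping needed to line up Definition~\ref{def:def-bdry} with the three alternatives of Lemma~\ref{lem:uniqclusttech}: in particular, one must verify that an alternative-(1) mismatch $b \in A \setminus D$ (where $f_b$ and $f_{b+1}$ are both quadratic with $b-1, b+1 \in D$) contributes exactly the same half-integer entries to the boundary sets of $A$ and of $D$, so that the base-case collapse $A = D$ via Lemma~\ref{lem:no-screwy-quads} is consistent with the equality of boundary sets asserted by the theorem, and so that after resolving all alternative-(2) and alternative-(3) mismatches by Lemma~\ref{lem:lem432}, any residual alternative-(1) mismatches truly vanish rather than persist with matching boundary sets.
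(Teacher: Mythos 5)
Your proposal is correct and follows essentially the same route as the paper: induct on the number of boundary disputes per period, use Lemma~\ref{lem:uniqclusttech} to locate a boundary in alternative (2) or (3) (with the all-alternative-(1) case excluded by Lemma~\ref{lem:no-screwy-quads}), shift it via Lemma~\ref{lem:lem432}, and transfer the conclusions back through the invariance statements of Lemma~\ref{lem:lem421}. Your observation that each such move resolves two disputes (rather than one) is a minor sharpening that does not change the argument.
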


\begin{proof}
We build a finite sequence $(A_i, D_i)$ for $i = 0, 1, 2, \ldots, n$ of pairs of clusterings of $f$ such that
 $A_0 = A$ and $D_0 = D$ and $A_n = D_n$; and for each $i$, either $A_{i+1}$ is obtained from $A_i$ via Lemma~\ref{lem:lem432} and $D_{i+1} = D_i$; or vice versa.
All the conclusions of this theorem follow from the corresponding conclusions of Lemma~\ref{lem:lem421}.
For example, at each step, $A_i$ and $A_{i+1}$ will give rise to the same the boundary set; and $D_i$ and $D_{i+1}$ will also give rise to the same the boundary set. Thus, $A = A_0$ will
have the same boundary set as $A_n = D_n$ which
has the same boundary set as $D = D_0$. 

To build the sequence, we induct on
 $$| ( A_i \bigtriangleup D_i ) \cap \{1, 2, \ldots , k\}| \text{, }$$
the number of cluster boundary disputes between $D_i$ and $A_i$, in one period.

When this number is zero,  $A_i = D_i$ and we are done.

Otherwise, if some $b \in A_i \smallsetminus D_i$ satisfies condition (2) or (3) in
Lemma~\ref{lem:uniqclusttech} with $A = A_i$ and $D=D_i$, that lemma provides $B =: A_{i+1}$ that has one fewer disputes with $D_{i+1} := D_i$.

Otherwise, if some $b \in D_i \smallsetminus A_i$ satisfies condition (2) or (3) in
Lemma~\ref{lem:uniqclusttech} with $A = D_i$ and $D=A_i$, that lemma provides $B =: D_{i+1}$ that has one fewer disputes with $A_{i+1} := A_i$.

Finally, the case that every $b \in A_i \bigtriangleup D_i$ satisfies condition (1) in
Lemma~\ref{lem:uniqclusttech}, is ruled out by
Lemma~\ref{lem:no-screwy-quads}.
\end{proof}

The point of this theorem is that now the number of clusters per period and the kinds ($\mathsf{C}$ or $\mathsf{C}$-free) of corresponding clusters are properties of the \kld alone, independent of the clustering. The existence of one-way gates between corresponding clusters is also a property of the \kld; and if we distinguish three kinds of such gates instead of two, separating out the gates occupied by quadratic that correspond to half-integers in the boundary set, the kinds of one-way gates are also a property of the \kld, independent of the clustering.

\subsubsection{Clusterings and the action of $\STk$}
\label{sec:cluster-action}
In this subsection we focus on the interaction between 
the action of $\STk$ and clusterings.  Our key result
will be that while clusterings themselves are not invariant
under this partial group action, the number of 
clusters, the kind (i.e. type $\mathsf{C}$ or 
$\mathsf{C}$-free), and the presence or absence (but not
the direction) of one-way gates between the are all
invariant under the action of $\Affk$.

 We now collect some easy results from~\cite{MS} about the interactions between the partial functions $t_i \star$ and clusters,
explaining how they fit with our new context of
long decompositions.

\begin{prop} \label{prop:one-swap-and-clusters}
(Compare with \cite[Remark 4.5(5), Proposition 4.11, Lemma 4.17(2), and Lemma 4.31]{MS})
Suppose that $t_i \star [f] = [g]$. That is,
$t_i \star [f]$ is defined and $[f]$ and $[g]$ are Ritt-swap-related at $i$. \begin{enumerate}
\item Then $(f_{i+1}, f_i)$ is a cluster.
\item Moreover, $f$ admits a clustering $A$ with $ i \notin A$.
\item Any (pre)clustering $A$ of $f$ with $ i\notin A$ is also a (pre)clustering of $g$, with the same gates in the same places.
 \end{enumerate}
\end{prop}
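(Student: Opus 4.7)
My plan is to treat the three parts in order. For part~(1), unfolding Definition~\ref{def:Ritt-swap} gives linear polynomials $L, M, N$ such that $(L^{-1}\circ f_{i+1}\circ M)\circ(M^{-1}\circ f_i\circ N) = R\circ S$ is a basic Ritt identity; cancelling $M\circ M^{-1}$ shows that $L^{-1}\circ f_{i+1}\circ f_i\circ N$ equals the single polynomial $R\circ S$. According to Definition~\ref{def:Ritt-identity}, this polynomial is either an odd-composite-degree Chebyshev (making $(f_{i+1}, f_i)$ a $\mathsf{C}$-cluster) or a monomial $x^{pq}$ or a third-kind polynomial (making it a $\mathsf{C}$-free cluster).

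For part~(3), the key observation is that a Ritt swap preserves the compositional product exactly: from Definition~\ref{def:Ritt-swap},
\[
g_{i+1}\circ g_i \;=\; L\circ R\circ S\circ N^{-1} \;=\; L\circ L^{-1}\circ f_{i+1}\circ f_i\circ N\circ N^{-1} \;=\; f_{i+1}\circ f_i.
\]
Thus if $A$ is a preclustering of $f$ with $i\notin A$, then both $f_i$ and $f_{i+1}$ lie in one cluster $f_{[b,a)}$ of $A$, and $g_{[b,a)}$ composes to the identical polynomial, so it is a cluster of the same $\mathsf{C}$ or $\mathsf{C}$-free kind. All clusters of $A$ disjoint from $\{i,i+1\}$ agree factor-by-factor between $f$ and $g$. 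For the gates, any cleanup $(h,L)$ of $(A,f)$ can be adjusted locally inside the swapped cluster (modifying only $h_i, h_{i+1}$ and the interior linear factors) to produce a cleanup $(h',L')$ of $(A,g)$ with $L'_j = L_j$ at every $j\in A$, preserving all boundary gate types.

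Part~(2) is the most technical and is where I expect the main obstacle. I would start from any clustering $A_0$ of $f$, which exists by Corollary~\ref{cor:clustering-exists}. If $i\notin A_0$, take $A=A_0$. Otherwise let $c>i>a$ be consecutive in $A_0$ and combine the cluster $(f_{i+1},f_i)$ from part~(1) with the adjacent clusters $f_{[c,i)}$ and $f_{[i,a)}$ via the overlap analysis of Lemma~\ref{lem:clusteroverlap}. Case-splitting on whether $c=i+1$ or $c\geq i+2$ (and symmetrically for $a$) leads, via Lemma~\ref{lem:clusterfue}, either to the concatenation $f_{[c,a)}$ being a cluster and hence producing a gate at $i$ of type~(1), (2), or~(3) from Definition~\ref{def:fakedef}---type~(1) contradicts $A_0$ being a clustering, while types~(2) and~(3) exhibit a quadratic $f_i$ or $f_{i+1}$ with a one-way gate that Lemma~\ref{lem:lem432} uses to shift the boundary from $i$ to $i\pm 1$---or to the second alternative of Lemma~\ref{lem:clusteroverlap}, which again provides a quadratic adjacent to $i$ through which Lemma~\ref{lem:lem432} moves the boundary off $i$. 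The principal obstacle is the bookkeeping needed to verify that this case analysis is exhaustive and that in every sub-case the hypotheses of Lemma~\ref{lem:lem432} are met.
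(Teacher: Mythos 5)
Your part (1) is the same as the paper's (which treats it as ``immediate from the definitions''), and your key identity $g_{i+1}\circ g_i=f_{i+1}\circ f_i$ in part (3) is the right engine, though the paper spells out the $\mathsf{C}$-free case via~\cite[Lemma~3.37]{MS} rather than leaving it at ``can be adjusted locally.'' One small inaccuracy in part (1): a basic Ritt identity of the third kind does not automatically yield a $\mathsf{C}$-\emph{free} cluster; for instance with $p=2$ one factor can be linearly conjugate to an odd-degree Chebyshev, in which case $(f_{i+1},f_i)$ is a $\mathsf{C}$-cluster. Since you only need ``cluster,'' this is harmless, but the type label you assign is not always correct.

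The real gap is in part (2), and it is precisely what you flagged as the obstacle: your case analysis on $c=i+1$ vs.\ $c\geq i+2$ is not organized around the observation that actually closes the argument, namely that tautological Ritt swaps are disallowed, so \emph{at least one of $f_i,f_{i+1}$ is not quadratic}. Once you fix (WLOG) $f_i$ not quadratic, Lemma~\ref{lem:clusteroverlap} applied to the two clusters $(f_{i+1},f_i)$ (from part~(1)) and $f_{[i,a)}$ has its second alternative ruled out outright (it would force $f_i$ quadratic), so $f_{[i+1,a)}$ is a cluster with no further casework; then Lemma~\ref{lem:purposeofgates} forces $f_{i+1}$ to be quadratic with a left-to-right gate at $i$, and Lemma~\ref{lem:lem432} shifts the boundary off $i$. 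By contrast, your route asks whether the concatenation $f_{[c,a)}$ of the two $A_0$-clusters is itself a cluster; but by Lemma~\ref{lem:clusterfue} \emph{any} of Conditions (1)--(3) of Definition~\ref{def:fakedef} makes the gate fake and hence contradicts $A_0$ being a clustering, so that branch never ``exhibits a quadratic through which Lemma~\ref{lem:lem432} moves the boundary''---it is always a dead end. You would still have to extract the quadratic from the second alternative of Lemma~\ref{lem:clusteroverlap}, and to justify that Lemma~\ref{lem:lem432}'s hypotheses (one-way gate, quadratic adjacent in the correct direction) hold, which is exactly what the non-quadratic-factor observation makes automatic. Without it, the bookkeeping you anticipated does not close.
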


\begin{proof}
The first statement is an immediate consequence of the definitions.

For the second statement, fix a clustering $A$ of $f$.
If $i \notin A$, we are done; so suppose that $i \in A$, and let $b > i > a$ be consecutive elements of $A$.
Since tautological Ritt swaps are not allowed, at least one of $f_{i+1}$ and $f_i$ is not quadratic; without loss of generality, suppose that $f_{i}$ is not quadratic. Then by Lemma~\ref{lem:clusteroverlap}, $f_{[i+1, a)}$ is a cluster.
By Lemma~\ref{lem:purposeofgates}, it follows that $f_{i+1}$ is quadratic and $A$ has a left-to-right gate at $i$. By Lemma~\ref{lem:lem432}, $B := (A \smallsetminus  i + k\mathbb{Z}) \cup i+1 + k\mathbb{Z}$ is another clustering of $f$, and $i \notin B$.

To prove (3), we start by showing that a preclustering $A$ of $f$ with $ i \notin A$ is also a
preclustering of $t_i \star [f]$. Fix a cleanup $(h,L)$ of $(A,f)$.
Since $i \notin A$, $f_i$ and $f_{i+1}$ are in the same cluster.

If this is a $\mathsf{C}$~cluster, then $h_i$ and $h_{i+1}$ are Chebyshev polynomials and $h_i \circ h_{i+1} = h_{i+1} \circ h_i$ is a basic Ritt swap. Let
 $$\tilde{h}_i := h_{i+1} \text{ and } \tilde{h}_{i+1} := h_i \text{ and } \tilde{h}_j := h_j \text{ for } j \not\equiv i, i+1 \pmod{k} \text{ .}$$
It is straightforward to verify that $(\tilde{h},L)$ is
a cleanup of $\tilde{g} := L \circ \tilde{h}$. This
$\tilde{g}$
is clearly Ritt-swap-related to $f$ at $i$, so it is linearly equivalent to $g$. Thus, $(\tilde{h}, L)$ is also a cleanup of $g$, and $A$ is also a preclustering of $g$. This argument works exactly the same way when both $h_i$ and $h_{i+1}$ are monomials inside a $\mathsf{C}$-free cluster.

In general, inside a $\mathsf{C}$-free cluster we
need~\cite[Lemma 3.37]{MS} to conclude that there are polynomials $\tilde{h}_i$ and $\tilde{h}_{i+1}$ such that $h_i \circ h_{i+1} = \tilde{h}_{i+1} \circ \tilde{h}_i$ is a basic Ritt swap. It is now slightly less trivial to verify that $(\tilde{h}, L)$ is also a cleanup of $g$: if $(i-1) \in A$ and $\tilde{h}_i$ is not a monomial, one might worry about condition [5b] in the definition of cleanup. This is actually not an issue because the Ritt swap forces $\tilde{h}_i$ to have non-trivial in-degree (see Definition~\ref{def:inoutdeg}. Thus, $\tilde{h}_i$ cannot be a
type $\mathsf{A}$ Ritt polynomial; so $\tilde{h}_i$ and $\tilde{h}_i \circ L_{i-1}$ cannot both be Ritt polynomials for a non-trivial translation $L_{i-1}$.

Since the types of clusters and the linear factors did not change, the two cleanups have the same gates in the same places. Thus, one is a clustering if and only if the other is.

\end{proof}

The next two lemmas are used in the proof of
Theorem~\ref{thm:braidhtm}, an analogue of~\cite[Theorem 2.52]{MS}, that the action of
the generators $t_i$ of $\Affk$ on linear equivalence
classes of \klds satisfy the braid relations.

\begin{lem} \label{lem:lem438} (Compare
with~\cite[Lemma 4.38]{MS})
If $t_i \star [f]$ and $t_{i+1} \star [f]$ are both defined, then $(f_{i+2}, f_{i+1}, f_i)$ is a cluster or $f_{i+1}$ is quadratic.\end{lem}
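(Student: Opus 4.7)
The plan is to apply Lemma~\ref{lem:clusteroverlap} (the overlap lemma) to two clusters of length two that share the common factor $f_{i+1}$.

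First, from Proposition~\ref{prop:one-swap-and-clusters}(1), the hypothesis that $t_i \star [f]$ is defined immediately gives that $(f_{i+1}, f_i) = f_{[i+1,i-1)}$ is a cluster. Symmetrically, the hypothesis that $t_{i+1} \star [f]$ is defined gives that $(f_{i+2}, f_{i+1}) = f_{[i+2,i)}$ is a cluster. So we have two length-two clusters whose intervals overlap in the single factor $f_{i+1}$.

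Next, I would apply Lemma~\ref{lem:clusteroverlap} with the choice $d = i+2,\ c = i+1,\ b = i,\ a = i-1$, which satisfies the required strict inequalities $d > c > b > a$. The lemma's conclusion produces exactly the dichotomy we want: either $f_{[d,a)} = f_{[i+2,i-1)} = (f_{i+2}, f_{i+1}, f_i)$ is a cluster, or else $c = b+1$ (automatic here) together with $f_{b+1} = f_{i+1}$ being quadratic. In the latter case, $f_{i+1}$ is quadratic, as claimed.

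There really is no obstacle beyond invoking the right lemma; the heavy lifting has already been done inside Lemma~\ref{lem:clusteroverlap}. The only subtlety worth double-checking is that the hypotheses of Proposition~\ref{prop:one-swap-and-clusters}(1) apply without the tautological-swap loophole (we never need $(f_{i+1}, f_i) = (f_i, f_{i+1})$ with both quadratic to be excluded here, since the dichotomy we are proving already allows $f_{i+1}$ to be quadratic). Thus the proof is essentially a one-line deduction from Proposition~\ref{prop:one-swap-and-clusters} and Lemma~\ref{lem:clusteroverlap}.
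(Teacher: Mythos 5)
Your proof is correct and is essentially identical to the paper's: both invoke Proposition~\ref{prop:one-swap-and-clusters} to get the two overlapping length-two clusters and then apply Lemma~\ref{lem:clusteroverlap} with $d = i+2$, $c = i+1$, $b = i$, $a = i-1$. The extra remark about tautological swaps is a harmless aside.
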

\begin{proof}
By Proposition~\ref{prop:one-swap-and-clusters},
$(f_{i+1}, f_i)$ and $(f_{i+2}, f_{i+1})$ are both clusters.
Lemma~\ref{lem:clusteroverlap} with $d = i+2$ and $c = i+1$ and $b = i$ and $a = i-1$ finishes the proof. \end{proof}

\begin{lem} \label{lem:lem439} (Compare with~\cite[Lemma 4.39]{MS})
If one of $t_i \star (t_{i+1} \star (t_i \star [f] ))$
or $t_{i+1} \star (t_i \star  (t_{i+1} \star  [f] ))$ is defined, then $(f_{i+2}, f_{i+1}, f_i)$ is a cluster.\end{lem}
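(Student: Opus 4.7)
By the symmetry of the two hypothesized sequences of swaps under reversal, it suffices to treat the case where $t_i \star (t_{i+1} \star (t_i \star [f] ))$ is defined. Set $g := t_i \star [f]$ and $h := t_{i+1} \star [g]$; by Proposition~\ref{prop:one-swap-and-clusters}(1), each of the pairs $(f_{i+1}, f_i)$, $(g_{i+2}, g_{i+1})$, and $(h_{i+1}, h_i)$ is a cluster. Since each Ritt swap preserves its two-factor composition up to linear equivalence, the triples $(f_{i+2}, f_{i+1}, f_i)$, $(g_{i+2}, g_{i+1}, g_i)$, and $(h_{i+2}, h_{i+1}, h_i)$ have pairwise linearly equivalent compositions, so it suffices to show that any one of them is a cluster.

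Working in $g$, both $(g_{i+2}, g_{i+1})$ (given) and $(g_{i+1}, g_i)$ (its composition agrees with that of $(f_{i+1}, f_i)$ up to linear equivalence) are clusters. Applying Lemma~\ref{lem:clusteroverlap} with $d=i+2$, $c=i+1$, $b=i$, $a=i-1$, either $(g_{i+2}, g_{i+1}, g_i)$ is a cluster and we are done, or $g_{i+1}$ is quadratic and at least one of the two pair-clusters is a $\mathsf{C}$-cluster. Running the identical argument in $h$ on the clusters $(h_{i+2}, h_{i+1})$ and $(h_{i+1}, h_i)$ reduces us further to the case where $h_{i+1}$ is also quadratic.

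In this residual configuration, since Ritt swaps interchange the degrees of the swapped factors, $g_{i+1}$ quadratic forces $f_i$ quadratic, and $h_{i+1}$ quadratic forces $f_{i+2} = g_{i+2}$ quadratic. Now $(g_{i+2}, g_{i+1})$ is a cluster of two quadratics, of composite degree $4$ which is a power of $2$, so it cannot be a $\mathsf{C}$-cluster. The $\mathsf{C}$-cluster assertion in Lemma~\ref{lem:clusteroverlap} must therefore hold for $(g_{i+1}, g_i)$: its composition is linearly related to $C_{2p}$ for the odd prime $p = \deg g_i = \deg f_{i+1}$, and the essentially unique factorization $C_{2p} = C_2 \circ C_p$ forces $f_{i+1} \sim C_p$.

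Finally, every indecomposable quadratic polynomial is linearly related to $x^2$ and hence to $C_2$, so $f_i \sim C_2$ and $f_{i+2} \sim C_2$. Consequently $f_{i+2} \circ f_{i+1} \circ f_i$ is linearly related to $C_2 \circ C_p \circ C_2 = C_{4p}$, which has degree $4p$, not a power of $2$ since $p$ is odd. Thus $(f_{i+2}, f_{i+1}, f_i)$ composes to a $\mathsf{C}$-cluster. The main obstacle is the residual quadratic case: one must chain the two applications of Lemma~\ref{lem:clusteroverlap} (one each in $g$ and $h$) together with the Chebyshev composition identity $C_{mn} = C_m \circ C_n$ to show that the configuration is tightly enough constrained to force the triple to be a $\mathsf{C}$-cluster, rather than leaving a stubborn residual case.
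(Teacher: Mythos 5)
Your setup and reduction are sound and essentially follow the paper's route: the two applications of Lemma~\ref{lem:clusteroverlap} (one in $g$, one in $h$) are exactly Lemma~\ref{lem:lem438} applied to $g$ and to $h$, and transferring cluster-hood among the three triples via their linearly related compositions is a legitimate substitute for the paper's transfer via Proposition~\ref{prop:one-swap-and-clusters}(3). The problem is your residual case, in two respects. First, you miss that it is vacuous. A Ritt swap exchanges the degrees of the two factors it moves, so $\deg h_{i+1}=\deg g_{i+2}$; hence if both $g_{i+1}$ and $h_{i+1}$ were quadratic, then $g_{i+1}$ and $g_{i+2}$ would both be quadratic and $t_{i+1}\star[g]$ would be a Ritt swap of two quadratic factors, which is not permitted (two quadratics cannot Ritt-swap; this exclusion of tautological swaps is invoked repeatedly, e.g.\ in the proofs of Proposition~\ref{prop:one-swap-and-clusters} and Lemma~\ref{lem:waqualem}). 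This one-line observation --- ``at most one of $g_{i+1}$ and $h_{i+1}$ can be quadratic'' --- is precisely how the paper finishes, and it makes your entire residual analysis unnecessary.

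Second, the argument you substitute for that observation has a genuine gap. From $f_i\sim C_2$, $f_{i+1}\sim C_p$ and $f_{i+2}\sim C_2$ \emph{individually}, you cannot conclude that $f_{i+2}\circ f_{i+1}\circ f_i$ is linearly related to $C_2\circ C_p\circ C_2=C_{4p}$. What you actually know is $f_{i+1}\circ f_i=A_1\circ C_p\circ C_2\circ B_1$ and $f_{i+2}=A_2\circ C_2\circ B_2$ for some linear polynomials, so the triple composes to $A_2\circ C_2\circ(B_2\circ A_1)\circ C_p\circ C_2\circ B_1$, and nothing forces the sandwiched linear $B_2\circ A_1$ to be one of the $\pm x$ that commute appropriately with Chebyshev polynomials. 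Controlling exactly these intermediate linear factors is the whole point of the cleanup/gate machinery of Section~\ref{sec:clusterings}; being a cluster is a property of the composition up to \emph{outer} linear factors only, so it is not determined by the linear-equivalence classes of the individual factors. Your closing sentence in effect concedes that this step is not established. As written the proof is therefore incomplete; it is repaired by the degree observation above.
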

\begin{proof}
Suppose that $t_i \star (t_{i+1} \star  (t_i  \star [f] ))$ is defined; the other case is completely analogous.
Let $g$ be a representative of $t_i \star [f]$, and let $h$ be a representative of $t_{i+1} \star (t_i \star [f] )$.
Since $t_{i+1} \star [g] = [h]$ and tautological Ritt swaps are not permitted, at most one of $g_{i+1}$ and $h_{i+1}$ can be quadratic. Lemma~\ref{lem:lem438} applies to both $g$ and $h$, so at least one of
$(g_{i+2}, g_{i+1}, g_i)$ and $(h_{i+2}, h_{i+1}, h_i)$ is a cluster. So
 $$A := (\mathbb{Z} \smallsetminus  i + k\mathbb{Z} ) \cup i+1 + k\mathbb{Z}  $$
is a preclustering of $g$ or $h$.
Therefore, by Proposition~\ref{prop:one-swap-and-clusters} applied once or twice, $A$ is also a preclustering of $f$, and, in particular, $(f_{i+2}, f_{i+1}, f_i)$ is a cluster.
\end{proof}

This next theorem is the key step in proving
Corollary~\ref{cor:group-action-corr} that our
partial action of the generators of $\Affk$ on $\kleq$
is actionable, thus giving a partial group action
of $\Affk$ itself on $\kleq$.

\begin{thm} \label{thm:braidhtm}
Suppose that $k \geq 3$ and $f$ is a \kld and $i$ is an integer.
Then
 $$t_i \star (t_{i+1} \star (t_i \star [f] )) =  t_{i+1} \star (t_i \star (t_{i+1} \star  [f] )) \text{ .}$$
That is, they are equal when defined, and one is defined if and only if the other is. \end{thm}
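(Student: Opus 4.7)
The plan is to reduce the braid identity to a purely local statement about three consecutive compositional factors $f_{i+2}, f_{i+1}, f_i$, and then to dispatch that local statement by case analysis on the kind of cluster containing these three factors. The key reduction is Lemma~\ref{lem:lem439}: if either triple composition is defined, then $(f_{i+2}, f_{i+1}, f_i)$ is already a cluster of $f$. Together with Proposition~\ref{prop:one-swap-and-clusters}(2) and the refinement arguments from Subsection~\ref{sec:preclustering} (using Lemma~\ref{lem:refinery}), this lets me produce a clustering $A$ of $f$ with $i, i+1 \notin A$ so that these three factors sit in a single cluster of $(A, f)$. Fixing a cleanup $(h, L)$ of $(A, f)$ and replacing $f$ by the linearly equivalent $L \circ h$, I may work with a long decomposition whose $i$-th, $(i+1)$-th, and $(i+2)$-th entries are clean representatives inside the cluster.

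The cluster is either of type $\mathsf{C}$ or $\mathsf{C}$-free. In the type-$\mathsf{C}$ case, the three entries $h_{i+2}, h_{i+1}, h_i$ are Chebyshev polynomials, and since $C_m \circ C_n = C_{mn} = C_n \circ C_m$ for any $m, n$, each Ritt swap in the cluster is a literal transposition of two adjacent factors without altering them. Both braid words then effect the same permutation of the triple $(h_{i+2}, h_{i+1}, h_i)$, so the two outcomes agree, and each individual swap along the way is clearly defined. In the $\mathsf{C}$-free case, the entries are monic monomials and $\mathsf{C}$-free Ritt polynomials, and the braid identity reduces to the three-factor braid relation established in \cite[Theorem 2.52]{MS}. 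I will cite that result, observing that the decomposition-level proof there depends only on the local cluster structure, which the cleanup reproduces here.

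To handle definedness symmetrically, I will invoke Proposition~\ref{prop:one-swap-and-clusters}(3) at each step: a Ritt swap at $i$ or $i+1$ inside the cluster preserves the clustering $A$ (since $i, i+1 \notin A$), so after each intermediate step the new decomposition still has its first three entries forming the same kind of cluster, and the next required swap can be performed. Thus definedness of either braid word implies that all three individual Ritt swaps comprising the other braid word are also defined, and their composition yields the same class.

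The main obstacle will be the $\mathsf{C}$-free case, where I must verify that the cleanup's auxiliary linear factors $L_{i-1}, L_i, L_{i+1}, L_{i+2}$ are compatible with each Ritt swap in the cleanup—i.e.\ that no Ritt swap inside the cluster violates the cleanup conditions, in particular condition (5b). This was addressed in the proof of Proposition~\ref{prop:one-swap-and-clusters}(3) by noting that Ritt-swappable factors inside a $\mathsf{C}$-free cluster always have nontrivial in-degree, ruling out the type $\mathsf{A}$ scenario in condition (5b); the same observation lets me iterate the three swaps within the cluster without disturbing the cleanup structure, so that the proof from \cite[Theorem 2.52]{MS} transfers verbatim to the long-decomposition setting.
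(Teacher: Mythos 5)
Your approach matches the paper's: reduce via Lemma~\ref{lem:lem439} to the case that $(f_{i+2}, f_{i+1}, f_i)$ forms a single cluster, pass to a cleanup, and handle the $\mathsf{C}$ and $\mathsf{C}$-free cases separately (Chebyshevs commute; in the $\mathsf{C}$-free case at most one of the three factors is non-monomial, and~\cite[Lemma 3.37]{MS} does the local computation). One citation is off: Lemma~\ref{lem:refinery} \emph{refines} a preclustering by adding boundaries, which is the opposite of what you want here — the paper simply takes the preclustering $\mathbb{Z} \smallsetminus \bigl((i + k\mathbb{Z}) \cup (i+1 + k\mathbb{Z})\bigr)$ directly, which is a preclustering precisely because Lemma~\ref{lem:lem439} certifies that the triple is a cluster.
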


\begin{proof}
By Lemma~\ref{lem:lem439}, $(f_{i+2}, f_{i+1}, f_i)$ is a cluster, so
 $$A := (\mathbb{Z} \smallsetminus  i +  k\mathbb{Z} ) \cup i+1 + k\mathbb{Z}$$
is a preclustering of $f$. Let $(h,L)$ be a cleanup of $(f, A)$.

The rest of the proof is almost a repeat of the proof of
Proposition~\ref{prop:one-swap-and-clusters}.

If this is happening inside a $\mathsf{C}$~cluster, all three of $h_{i+2}$, $h_{i+1}$, and $h_i$ are Chebyshev polynomials, which commute with
each other.  Setting
$$\tilde{h}_i := h_{i+2} \text{ and } \tilde{h}_{i+2} := h_i \text{ and } \tilde{h}_j := h_j \text{ for } j \not\equiv i, i+2 \pmod{k}$$
gives a cleanup $(\tilde{h}, L)$ of both $t_i \star (t_{i+1} \star (t_i \star [f] ))$
and $t_{i+1} \star (t_i \star (t_{i+1} \star  [f] ))$.
This argument works exactly the same way when all three of $h_i$, $h_{i+1}$, and $h_{i+2}$ are monomials inside a $\mathsf{C}$-free cluster.

Since each of the three factors $h_i$, $h_{i+1}$, and $h_{i+2}$ swaps with each of the others in evaluating either
$t_i \star (t_{i+1} \star (t_i \star [f] ))$ or $ t_{i+1} \star (t_i  \star (t_{i+1} \star [f] ))$, at most one of them can be non-monomial.
Applying~\cite[Lemma 3.37]{MS} repeatedly, as in the proof of Proposition~\ref{prop:one-swap-and-clusters}, finishes this proof.
\end{proof}

Another consequence of
Proposition~\ref{prop:one-swap-and-clusters}
is an analogue of Theorem~\ref{thm:unique-brdyset}:
the number of clusters per period, the kind of clusters,
and the presence of gates at cluster boundaries are
invariant under the partial action of $\Affk$.
The exact location of the gate changes if and only if
the direction of the gate changes.  Action
by powers of $\phi$ shifts the whole picture.

\begin{prop} \label{prop:asf-and-gates}
Fix  $f \in \kleq$ and $w \in \Affk$ with
$w \star f =: g$ defined. 
Then there are boundary lists $\alpha$ and $\beta$ for $f$ and $g$, respectively, such that for all $j$, \begin{enumerate}
\item $\{ \alpha(j), \beta(j) \} \subseteq \{ i+1, i+\frac{1}{2},  i \}$ for some $i$;
\item the $j^\text{th}$ clusters of $f$ and $g$ are of the same kind, $\mathsf{C}$ or $\mathsf{C}$-free;
\item $f$ has a gate at $\alpha(j)$ if and only if $g$ has a gate at $\beta(j)$;
\item if $\alpha(j) = i+1$ and $\beta(j) = i$, then $f$ has a right-to-left gate at $\alpha(j)$ and $g$ has a right-to-left gate at $\beta(j)$; and vice versa.
\end{enumerate}

If $h = \phi^n \star f$ for some $n \in \mathbb{Z}$, there are boundary lists $\alpha$ and $\beta$ for $f$ and $h$, respectively, such that $\beta(j) = \alpha(j+n)$ for all $j$; and corresponding clusters and cluster boundaries are exactly the same.
\end{prop}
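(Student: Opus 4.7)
The second assertion (for $h = \phi^n \star f$) is immediate: $\phi$ reindexes every factor by one, so any clustering $A$ of $f$ yields the clustering $A - n$ of $h$ with identical gate types at identically shifted locations. Any boundary list $\alpha$ for $f$ then gives a boundary list $\beta$ for $h$ defined by $\beta(j) := \alpha(j+n)$, with corresponding clusters and cluster boundaries being literally the same data up to reindexing.

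For the main assertion, proceed by induction on the length of a reduced word representing $w \in \Affk$ in the generators $t_1, \ldots, t_k$. The base case $w = t_i$ is the heart of the argument. By Proposition~\ref{prop:one-swap-and-clusters}(2,3) there is a clustering $A$ of $f$ with $i \notin A$, and $A$ is simultaneously a clustering of $g := t_i \star f$ with identical cluster types and identical gates in identical locations. Using $A$ as a common index set, choose boundary lists $\alpha, \beta$ for $f, g$ with a common starting point. The only positions at which the boundary sets $B_f$ and $B_g$ can differ are the half-integers $i - \tfrac{1}{2}$, $i + \tfrac{1}{2}$, $i + \tfrac{3}{2}$, since by Definition~\ref{def:def-bdry} the membership of $j - \tfrac{1}{2}$ in the boundary set depends on whether $f_j$ is quadratic, and only the degrees at positions $i, i+1$ change under a Ritt swap at $i$. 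A short case analysis on which of $f_i, f_{i+1}$ (and hence $g_{i+1}, g_i$) are quadratic verifies properties (1)--(3); property (4) then follows from Lemma~\ref{lem:lem432}, which says that moving a quadratic across a one-way gate reverses the direction of that gate.

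For the inductive step $w = w' t_j$, let $f' := t_j \star f$, apply the base case to obtain boundary lists $\alpha, \alpha'$ for $f, f'$, and apply the inductive hypothesis to obtain boundary lists $\alpha'', \beta$ for $f', g := w' \star f'$. By Theorem~\ref{thm:unique-brdyset} the boundary set of $f'$ is an intrinsic invariant, so $\alpha'$ and $\alpha''$ are two listings of the same set and differ by a shift of the starting point; realign by shifting $\beta$ correspondingly so that $\alpha' = \alpha''$. The main obstacle will be verifying that the discrepancy $|\alpha(j) - \beta(j)|$ still fits inside a single three-element set $\{i+1, i+\tfrac{1}{2}, i\}$ at every $j$ after composition. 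This reduces to the observation that each base-case application perturbs the boundary list only at the three half-integers adjacent to its swap index, so perturbations at disjoint swap positions combine trivially; at positions affected by both $t_j$ and $w'$, the overlapping perturbations are reconciled using Lemma~\ref{lem:lem432} together with the gate-direction bookkeeping recorded in part~(4) of the base case.
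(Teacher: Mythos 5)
Your proof follows essentially the same route as the paper's: induction on the length of a word representing $w$, with Proposition~\ref{prop:one-swap-and-clusters} supplying the base case (via a clustering avoiding the swap index) and Theorem~\ref{thm:unique-brdyset} realigning boundary lists across the inductive step, the drift being bounded by the gate-direction reversal of Lemmas~\ref{lem:lem421}--\ref{lem:lem432}. Your write-up is in fact somewhat more explicit than the paper's two-sentence argument; the only quibble is that your list of positions where $B_f$ and $B_g$ can differ under $t_i$ should be the half-integers $i-\frac{1}{2}$ and $i+\frac{1}{2}$ together with the integer boundaries $i-1$ and $i$ (whose status as integer versus half-integer can change), rather than $i+\frac{3}{2}$, which depends only on $f_{i+2}$ and is untouched by $t_i$.
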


\begin{proof}
To prove the first part, we apply Theorem~\ref{thm:unique-brdyset} and Proposition~\ref{prop:one-swap-and-clusters}, and induct on the length of a word in the generators representing $w$.  The way the direction of
gates changes in Theorem~\ref{thm:unique-brdyset} and
Proposition~\ref{prop:one-swap-and-clusters}
prevents the cluster boundaries from drifting by more
than $1$.

The second part about
$\phi^n \star f$ is immediate. \end{proof}

\begin{Def} \label{def:gate-becomes}
With the setup in Proposition~\ref{prop:asf-and-gates}, we say that
 \emph{the cluster boundary $\beta(j)$ comes from the cluster boundary $\alpha(j)$ via $w \star f$.}
When $f$ and $w$ are clear from context, we simply say that $\alpha(j)$ \emph{becomes} $\beta(j)$, or that
$\alpha(j)$ and $\beta(j)$ are \emph{corresponding boundaries}. \end{Def}

\begin{Rk}
\label{rk:weird-gate-becomes}
It is possible for the same $i$ to be a cluster boundary of both $f$ and $g = w \star f$, but not correspond to itself: if $f$ has a one-factor cluster $f_i$ with gates in the same direction on both sides, after a quadratic passes through these gates, both boundaries $\alpha(j) = i$ and $\alpha(j+1) = i+1$ move one step to, say,
$\beta(j) = i+1$ and $\beta(j+1) = i+2$; but $\alpha(j+1) = i+1$ and $\beta(j) = i+1$ are not corresponding boundaries!
\end{Rk}

\begin{prop}
\label{prop:no-gate-wall}
In a clustering, a cluster boundary without a gate is a wall.	
\end{prop}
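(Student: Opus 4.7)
The plan is to verify the two conditions of Definition~\ref{def:wall} separately, each time reducing to a combinatorial invariance argument from Section~\ref{sec:clusterings}. The overall strategy rests on two key rigidity results: the invariance of the boundary set (Theorem~\ref{thm:unique-brdyset}) and the invariance of gate data under the partial action of $\Affk$ (Proposition~\ref{prop:asf-and-gates}).

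For the first condition (that no $t_b w \star f$ is defined for $w \in \Affk$), the plan is to propagate the no-gate boundary $b$ through the action of $w$ and then use uniqueness. Since $b \in A$ has no gate, it is an integer of the boundary set by Definition~\ref{def:def-bdry}(2). For $g := w \star f$, Proposition~\ref{prop:asf-and-gates} produces a corresponding boundary $\beta(j_0) \in \{b-1, b-\tfrac12, b, b+\tfrac12, b+1\}$ with matching gate structure. Half-integer boundaries always carry gates (Definition~\ref{def:def-bdry}(1)), and property~(4) forces gate-free boundaries to remain stationary, so $\beta(j_0)=b$ and $g$ still has a no-gate integer boundary at $b$. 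If $t_b \star g$ were defined, Proposition~\ref{prop:one-swap-and-clusters}(2) would supply another clustering $A'$ of $g$ with $b \notin A'$; but by Theorem~\ref{thm:unique-brdyset}, $A'$ gives the same boundary set, and an integer in that set must lie in $A'$ itself (Definition~\ref{def:def-bdry}(2)). This contradicts $b \notin A'$.

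For the second condition (that $\epsilon_p^{\pm 1} \star f$ is undefined for every prime $p$), I would argue by contradiction through the $\mathsf{C}$-free cleanup machinery. If some $\epsilon_p^{\pm 1} \star f$ were defined, a representative $\tilde f$ of $[f]$ would have every factor of the form $x^{r_i} u_i(x^p)$ — each a monomial or a Ritt polynomial, but never a Chebyshev — so $\tilde f$ would be $\mathsf{C}$-free in the sense of Definition~\ref{def:c-free-ksplode} and every cluster of $(A,f)$ would be $\mathsf{C}$-free. Monicizing the factors of $\tilde f$ would exhibit a cleanup of $(\mathbb{Z}, f)$ whose every linear factor is a scaling, so by Lemma~\ref{lem:Claim2} every cleanup of $(\mathbb{Z}, f)$ would have only scalings. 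Because the exception $(\dag)$ of Lemma~\ref{lem:refinery} cannot occur without a $\mathsf{C}$-cluster, any cleanup $(h,L)$ of $(A,f)$ would also be a cleanup of $(\mathbb{Z}, f)$, forcing $L_b$ to be a scaling. I would then apply Lemma~\ref{lem:cleanupRmk} with a carefully chosen \ksca~$\lambda$ to produce a second cleanup $(g, M)$ of $(A,f)$ with $M_b = \id$; Lemma~\ref{lem:clusterfueCfree} and Lemma~\ref{lem:purposeofgates} would then yield a two-way gate at $b$, contradicting the no-gate hypothesis.

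The main obstacle is the last step: engineering a \ksca~$\lambda$ whose residue $\mu$, defined by $\mu_i = \lambda_{i+1}^{-1} \lambda_i^{d_i}$, simultaneously inverts the scaling $L_b$ at position $b$ and equals $1$ at every $i \notin A$, while respecting the skew-periodicity required of a \ksca. This amounts to propagating a consistent choice of $\lambda$ across one period, reconciling the constraint at $b$ with the cluster-internal constraints, and this propagation is the only place where one must argue beyond invoking the lemmas directly. Everything else — the transport of a no-gate boundary under the $\Affk$-action and the identification of $\tilde f$ as the source of a scaling-only cleanup — is an immediate application of the invariance results already established.
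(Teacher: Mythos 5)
Your first half is correct. You transport the no-gate boundary through $w \star f$ via Proposition~\ref{prop:asf-and-gates}, then close with the invariance of boundary sets (Theorem~\ref{thm:unique-brdyset}) together with Proposition~\ref{prop:one-swap-and-clusters}(2); the paper's own proof uses the same transport step and then invokes Lemma~\ref{lem:purposeofgates} directly, but both routes reach the same conclusion.

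Your second half has a genuine gap. You assert that if $\epsilon_p^{\pm 1} \star f$ is defined, then every factor $\tilde{f}_i = x^{r_i}u_i(x^p)$ is ``never a Chebyshev,'' so $\tilde{f}$ is $\mathsf{C}$-free. That is false when $p = 2$: every odd-degree Chebyshev polynomial $C_n$ (odd $n \geq 3$) has exactly this form, $C_n(x) = x \cdot v(x^2)$ with $v(0) \neq 0$ and $\deg v \geq 1$ (for instance $C_3(x) = x(x^2-3)$), and is therefore a swappable type-$\mathsf{C}$ polynomial on which $\epsilon_2^{\pm 1}$ can act. Consequently the $\mathsf{C}$-free reduction---and the downstream chain through Lemma~\ref{lem:Claim2}, Lemma~\ref{lem:refinery}, and Lemma~\ref{lem:clusterfueCfree}, all of which are specific to $\mathsf{C}$-free decompositions---is unavailable in precisely the case where $f$ has $\mathsf{C}$-clusters. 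The paper treats this case separately: the presence of type-$\mathsf{C}$ factors forces $p = 2$, the factors $x^{r_i}u_i(x^2)$ are type-$\mathsf{C}$ Ritt polynomials but not literal Chebyshev polynomials, and the linear factors one must insert to conjugate them into Chebyshev form when building a cleanup are exactly the linear factors that produce one-way gates at cluster boundaries; thus every boundary acquires a gate, contradicting the hypothesis. Without this additional argument for the $p = 2$, type-$\mathsf{C}$ case, your proof is incomplete.
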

\begin{proof}
Proposition~\ref{prop:asf-and-gates} shows that
if $f$ has a cluster boundary at $i$ with no gate,
then for every $w \in \Affk$, if defined, $w \star f$
has a cluster boundary at $i$ with no gate.
By Lemma~\ref{lem:purposeofgates}, $t_i \star
(w \star f)$ is never defined.

To finish verifying that $f$ has a wall at $i$, we must show that $\epsilon_p \star f$ and $\epsilon^{-1}_p \star f$ are not defined (see
Definition~\ref{def:epsilon}).

 Suppose toward contradiction that $\epsilon_p \star f$
(respectively, $\epsilon^{-1}_p \star f$) is defined, so, up to linear equivalence, we may assume that each $f_i$ is a Ritt polynomial with in-degree (respectively, out-degree) at least $p$.

 If none of the $f_i$ are type $\mathsf{C}$,
 this gives a cleanup of the empty (pre)clustering of
 $f$. In particular, $f$ has no cluster boundaries.

 Otherwise, $p$ must be $2$, because type~$\mathsf{C}$ Ritt polynomials do not have other in-degrees or out-degrees. To build a cleanup for $f$, we must convert those $f_i$ which are type~$\mathsf{C}$ into actual Chebyshev polynomials. The linear factors used for this are exactly the ones giving one-way gates at cluster boundaries. For more details see \cite[Remark 3.14 and Theorem 3.15]{MS}.
\end{proof}

It follows from Propositions~\ref{prop:warm-up}
and~\ref{prop:no-gate-wall} that Theorem~\ref{thm:skew-inv} holds when there is gateless cluster
boundary.

\begin{cor}
\label{cor:nogate-Mahler}
Theorem~\ref{thm:skew-inv} hold for any polynomial $P$
whose long decomposition has a cluster boundary
with no gate. 	
\end{cor}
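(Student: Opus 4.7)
The plan is to deduce this corollary immediately from the two preceding results just cited in the statement: Proposition~\ref{prop:no-gate-wall} and Proposition~\ref{prop:warm-up}. First, I would fix a clustering $A$ of the given long decomposition $\vec{P}$ in which some cluster boundary, say at index $i$, has no gate. (Such a clustering exists by Corollary~\ref{cor:clustering-exists} applied to $\vec{P}$, since the hypothesis implicitly assumes that all compositional factors of $\vec{P}$ are swappable; otherwise Corollary~\ref{cor:warm-up} would already give the conclusion via an unswappable factor and Lemma~\ref{lem:unswappable-to-wall}.)

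Next, I would invoke Proposition~\ref{prop:no-gate-wall} to conclude that $\vec{P}$ has a wall at $i$ in the sense of Definition~\ref{def:wall}. With a wall in hand, I would then apply Proposition~\ref{prop:warm-up} directly to $\vec{P}$ and the wall at $i$: this proposition asserts that any $(P,P^\tau)$-skew-invariant curve is a skew-twist. Since the conclusion of Theorem~\ref{thm:skew-inv} asks for each skew-invariant curve to be a horizontal line, a vertical line, or a skew-twist, the stronger statement from Proposition~\ref{prop:warm-up} suffices.

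There is essentially no obstacle here; the real work has been done already in setting up the clustering/wall dictionary and in the wall case of Theorem~\ref{thm:skew-inv}. The only thing to verify carefully is that ``long decomposition has a cluster boundary with no gate'' refers to some (equivalently any, by Theorem~\ref{thm:unique-brdyset}) clustering of $\vec{P}$, so that the hypotheses of Proposition~\ref{prop:no-gate-wall} are genuinely met.
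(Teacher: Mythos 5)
Your proposal is correct and is exactly the paper's argument: the corollary is obtained by combining Proposition~\ref{prop:no-gate-wall} (a gateless cluster boundary is a wall) with Proposition~\ref{prop:warm-up} (a wall forces every $(P,P^\tau)$-skew-invariant curve to be a skew-twist). The extra remarks about existence of a clustering and invariance of the boundary set are sensible but not needed beyond what the cited propositions already supply.
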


\subsubsection{Wandering quadratics}
We have already shown that if a long decomposition
has no walls, then every cluster boundary has
a one-way gate.  In this subsection we will show
that each of these one-way gates is  ``owned by''
a unique ``wandering quadratic''.  This precise
characterization will give us enough control to
prove Theorem~\ref{thm:skew-inv} in the absence of
walls.

We now return to a more detailed analysis of the interaction of the action of $\Affk$ with boundaries and gates.
We begin by keeping track of corresponding factors of $f$ and $w \star f$ the same way that Definition \ref{def:gate-becomes} keeps track of corresponding gates.

\begin{Def}
\label{def:def-becomes} Suppose that
$w \star f  = g$ for some \klds $f$ and $g$ and some
$w \in \Affk$. We say that \emph{a factor
$g_b$ comes from the factor $f_a$ via $w \star f = g$}
if the permutation $\pi(w)$ of
Remark~\ref{rk:meet-the-groups-rk} takes the integer
$a$ to  $b$.	
 Alternatively, we say that the factor $f_a$
\emph{becomes} $g_b$ via $w \star f$, or that
$f_a$ and $g_b$ are \emph{corresponding factors}.
\end{Def}

A computationally useful equivalent to
Definition~\ref{def:def-becomes} may be given
inductively.

\begin{Rk}
\label{rk:def-becomes-generators}
For \klds $f$ and $g = t_i \star f$,
a factor $g_b$ comes from the
factor $f_a$ via $t_i \star f$ if
$b = a \notin \{ i, i+1 \}$ or $\{ a, b \} = \{ i, i+1 \}$.

For an element $w \in \Affk$ for which $w \star f = g$,
a factor $g_b$ come from the factor $f_a$ via
$w \star f = g$ if there are
\begin{itemize}
\item a sequences $\left( w_i
\right)_{i=1}^n$ of generators,  $\{ t_j ~:~ 1 \leq j \leq k \}$,
\item a sequence of
\klds $\left( H(i) \right)_{i = 0}^n$, and
\item  a
sequence of factors $\left( h(i) \right)_{i=0}^n$
\end{itemize}
so
that
\begin{itemize}
\item $w = w_n \cdots w_2 w_1$,
\item $H(0) = f$,
\item $H(m+1) = w_{m+1} \star H(m)$,
\item $h(0) = f_a$,
\item $h(m+1)$ is the factor of $H(m+1)$ which
comes from $h(m)$ via $w_{m+1} \star
H(m) = H(m+1)$, and
\item $h(n) = g_b$.
\end{itemize}
\end{Rk}

With the next definition we introduce
wandering and loitering quadratics, the key notions
of this section. The word
``wandering quadratic'' is used in~\cite{MS} for
a related, though quite different, notion.

\begin{Def} \label{def:loiter-def}
Fix a \kld $f$ with a non-empty clustering, and let $B$ be the boundary set of $f$.
A quadratic factor $f_a$ is a \emph{loitering quadratic of $f$} if $a - \frac{1}{2} \in B$.

A quadratic factor $f_a$ is a \emph{wandering quadratic of $f$} if $f_a$ becomes a loitering quadratic
$g_b$ via $w \star f = g$ for some $w \in \Affk$.

Let $b'$ be the boundary of $f$ corresponding to the boundary $b-\frac{1}{2}$ of $g$ via $w \star f$ in the sense of Definition~\ref{def:gate-becomes}. We say that the wandering quadratic $f_a$ \emph{owns} the gate that $f$ has at $b'$.
\end{Def}

\begin{lem}
\label{lem:unowned-gate-wall}
Let $f$ be a long decomposition with at least one quadratic factor and at least one type-$\mathsf{C}$ factor.  If some
some clustering of $f$ has an unowned gate at $a$, then $f$ has a wall at $a$.	
\end{lem}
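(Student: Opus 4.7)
The plan is to verify both clauses of Definition~\ref{def:wall} separately, using the two hypotheses (quadratic factor and type~$\mathsf{C}$ factor) for clause~(2) and using unowned-ness of the gate for clause~(1).

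For clause~(2), I would argue that no $\epsilon_p^{\pm 1}\star f$ can be defined. If $\epsilon_p\star f$ were defined, then up to linear equivalence each factor $f_i$ would have the form $x^{r_i}u_i(x^p)$ satisfying the conditions of Definition~\ref{def:epsilon}. The type~$\mathsf{C}$ factor is, up to linear conjugacy, an odd Chebyshev $C_q=x\cdot u(x^2)$ with $u(0)\neq 0$; since the nonzero coefficients of $C_q$ appear at every odd exponent $1,3,\ldots,q$, the only prime $p$ for which $C_q$ admits a representation $x^{r}u(x^p)$ satisfying either admissibility clause is $p=2$. On the other hand, a quadratic factor $x^2$ forces $r_i=2, u_i=1$, and hence $p\neq r_i=2$. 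These two requirements contradict each other, ruling out $\epsilon_p$; the same analysis applied to out-degrees rules out $\epsilon_p^{-1}$.

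For clause~(1), the first step is to note that if $a\pm\tfrac12\in B_f$ then the loitering quadratic $f_a$ or $f_{a+1}$ is, trivially with $w=\mathrm{id}$ in Definition~\ref{def:loiter-def}, a wandering quadratic that owns the gate at $a$. Since the gate at $a$ is unowned, this forces $a\in B_f\cap\mathbb Z$; by Definition~\ref{def:def-bdry} every clustering of $f$ therefore has $a$ as a boundary, and the gate at $a$ is the same in every clustering. Next I would prove an invariance claim: for every $w\in\Affk$ with $g:=w\star f$ defined, the boundary of $g$ corresponding to $a$ (via Proposition~\ref{prop:asf-and-gates}) is again an unowned gate. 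Indeed, if some wandering quadratic $g_c$ of $g$ owned this corresponding gate in $g$, there would exist $u\in\Affk$ making $g_c$ become a loitering quadratic $h_b$ in $h=u\star g=(uw)\star f$; the factor $f_d$ of $f$ which becomes $g_c$ via $w$ then becomes $h_b$ via $uw$, so $f_d$ is a wandering quadratic of $f$, and chasing the correspondence of boundaries through $w$ and $u$ shows that $f_d$ owns the gate at $a$ in $f$, contradicting the hypothesis.

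The main step is then an induction on the length of a word representing $w$, showing that $a$ remains a fixed integer boundary of $w\star f$ with an unowned gate. For the inductive step, write $w=t_i\,w'$ and assume by induction that $H:=w'\star f$ still has $a$ as a fixed integer boundary with unowned gate. The case $i=a$ cannot arise, because $t_a\star H$ would require a clustering of $H$ without $a$, contradicting fixedness. For $i\neq a$, by Proposition~\ref{prop:one-swap-and-clusters}(2) choose a clustering of $H$ with $i\notin A$; since $a$ is fixed, $a\in A$ automatically. By Proposition~\ref{prop:one-swap-and-clusters}(3), $A$ is also a clustering of $t_i\star H$ with the same gates in the same places, so $a$ is still a boundary with the same gate there, and unowned-ness is preserved by the invariance claim above. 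The harder point in this step is showing that the resulting gate is still \emph{fixed}, but this again follows from the fact that any adjacent loitering quadratic in $t_i\star H$ would give an owned gate. By induction, $a$ is a fixed integer boundary of every $w\star f$, and therefore $t_a(w\star f)$ is never defined. Combined with clause~(2), this proves that $f$ has a wall at $a$.
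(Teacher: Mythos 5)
Your proposal is correct and follows essentially the same route as the paper: rule out $\epsilon_p^{\pm1}$ using the quadratic factor (forcing $p\neq 2$) and the type-$\mathsf{C}$ factor (forcing $p=2$), and show that the unowned gate at $a$ persists as a fixed integer boundary under the action of $\Affk$ so that $t_a$ can never act. The paper's own proof is just a terser version of this, deferring the persistence argument to the first paragraph of the proof of Proposition~\ref{prop:no-gate-wall}; your explicit induction and the transport-of-ownership claim are exactly the details that reference is gesturing at.
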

\begin{proof}
Since $f$ has a quadratic factor, $\epsilon^{\pm 1}_2 \star f$ is not defined.  Likewise, because $f$ has a type-$\mathsf{C}$ factor,
$\epsilon_p^{\pm 1} \star f$ is not defined for odd $p$.  Because $f$ has a cluster boundary with an unowned gate at $a$,
$t_a \star f$ is not defined. (See the first paragraph of the proof of Proposition~\ref{prop:no-gate-wall} for more details.)
That is, $f$ has a wall at $a$.	
\end{proof}

\begin{Rk}
When $f_a$ is a loitering quadratic of $f$, that
cluster boundary of $f$ is ambiguous:
some clusterings of $f$ have a one-way gate at $a$ and
others at $(a-1)$, so this quadratic could join either
of the two clusters.	
\end{Rk}

We shall soon vindicate the last notion by showing that each gate is owned by at most one wandering quadratic.

\begin{lem} \label{lem:waqualem}
Fix a \kld $f$ and a word $w$ in the affine permutation group such that $w \star f = g$ is defined.
Suppose that $g_a$ is a loitering quadratic of $g$, and that it comes from $f_b$ via $w \star f$.
Then $f_i$ is not quadratic for any $i$ between $a$ and $b$, including $a$.
\end{lem}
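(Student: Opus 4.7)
The plan is to prove this by induction on the length of $w$ as a reduced word in the generators $\{t_j\}$ of $\Affk$. The base case of the empty word forces $a = b$ and renders the statement vacuous. For the inductive step, I write $w = t_j \cdot v$ with $v$ shorter and set $h := v \star f$ so that $g = t_j \star h$. I locate the position $c$ such that $g_a$ comes from $h_c$ via $t_j$: by the description in Remark~\ref{rk:def-becomes-generators}, $c = a$ if $a \notin \{j, j+1\}$, and otherwise $\{a, c\} = \{j, j+1\}$. In every case $h_c$ is quadratic, since degrees are preserved by Ritt swaps.

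The heart of the argument is to verify that $h_c$ is a loitering quadratic of $h$, so that the inductive hypothesis applied to $v$ on $f$ gives that no $f_i$ for $i$ between $c$ and $b$ (inclusive of $c$) is quadratic. To upgrade this to the desired conclusion for $w$, I use that $|a - c| \leq 1$ together with the non-tautology of the Ritt swap $t_j$. When $c = a$, the ranges coincide and we are done. When $\{a, c\} = \{j, j+1\}$ with, say, $c = j+1$ and $a = j$, the range for $w$ contains the one extra endpoint $i = a = j$; the non-tautological Ritt swap forces $h_j$ to be non-quadratic, and one then transfers this to $f_j$ using the invariance of the cyclic order of quadratics under $\Affk$ (since two quadratics cannot swap tautologically past one another), so the quadratic content at position $j$ of $h$ equals that of $f$.

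To verify that $h_c$ is loitering in $h$, I use Proposition~\ref{prop:asf-and-gates} applied to the single swap $t_j$. Outside a small window around $j, j+1$, cluster boundaries of $h$ are preserved verbatim in $g$, so when $a$ lies outside this window the loitering half-integer boundary $a - \tfrac{1}{2}$ of $g$ corresponds directly to a half-integer boundary of $h$ at $a - \tfrac{1}{2}$, making $h_a = h_c$ loitering. For the remaining cases where the swap lies next to $a$, Lemma~\ref{lem:lem421} describes precisely how a quadratic moves across a gate while the boundary set is preserved: the loitering boundary $a - \tfrac{1}{2}$ of $g$ comes from the half-integer boundary $c - \tfrac{1}{2}$ of $h$ in a clustering in which $h_c$ is loitering, at the cost of reassigning the boundary representative from $a$ to $c$.

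The main obstacle is the case analysis at the swap site, specifically when $t_j$ moves the loitering quadratic and simultaneously shifts its owning gate by $\tfrac{1}{2}$. I expect the delicate step to be the application of Lemma~\ref{lem:lem421} to reconcile clusterings of $h$ and $g$ in a way that preserves the loitering condition under the correspondence of Proposition~\ref{prop:asf-and-gates}, together with the transfer of non-quadraticity from $h_j$ to $f_j$ via the cyclic-order invariance of quadratics.
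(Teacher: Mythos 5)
The central difficulty is in your claim that $h_c$ is a loitering quadratic of $h := v \star f$; this is false in general, and the inductive step collapses as a result. Consider a long decomposition $h$ in which a $\mathsf{C}$-cluster $h_{[e,j-1)}$ has a left-to-right gate at $j-1$, where $h_j$ is an odd-degree Chebyshev and $h_{j+1}$ is the degree-$2$ Chebyshev (a quadratic). Then $t_j \star h$ is defined (Chebyshevs swap) and $g := t_j \star h$ has $g_j$ a loitering quadratic, since $g_j$ is quadratic, $j-1$ is a cluster boundary, and the gate there is left-to-right, so $j-\tfrac{1}{2}$ is in the boundary set of $g$. But $h_{j+1}$ is \emph{not} loitering in $h$: neither $j$ nor $j+1$ is a cluster boundary of the clustering with $j-1$ a boundary (they lie in the interior of a cluster), so $j+\tfrac{1}{2}$ is not in the boundary set of $h$. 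The corresponding boundaries of $g$ and $h$ here are $j-\tfrac{1}{2}$ and the integer $j-1$, not $j+\tfrac{1}{2} = c - \tfrac{1}{2}$ as your argument requires. In short: the final swap $t_j$ can bring a wandering quadratic into loitering position from a non-loitering position, so the premise of the inductive hypothesis fails at $h$ and your induction cannot get off the ground.

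A second, smaller issue: you propose to transfer the non-quadraticity of $h_j$ to $f_j$ via ``the invariance of the cyclic order of quadratics.'' That invariance says corresponding factors through $v \star f = h$ are the same, not that the factor at a fixed integer position is the same; $h_j$ and $f_j$ are generally different factors, so the transfer as stated is unjustified.

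The paper sidesteps both problems by avoiding a one-generator-at-a-time induction altogether. It instead runs a minimal-counterexample argument that simultaneously tracks the positions $B(s)$ and $C(s)$ of the two putative quadratics across all intermediate decompositions, uses the non-crossing of quadratics to control these trajectories, and then carries out a backward induction (Claim~4 in the paper's proof) that maintains as its invariant not ``the quadratic is loitering'' but ``there is a one-way left-to-right gate at $a-1$,'' a condition about the cluster/gate structure rather than about the quadratic's location. Your proposal would need to replace the loitering-preservation step with some such gate invariant to close the gap.
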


The intuitive idea of the proof is quite simple: one quadratic is near a gate, and the other is further. As the far quadratic moves to the gate,
it will bump into the near quadratic and stop (since
tautological Ritt swaps are not allowed).
The only possible exception would be when the near
quadratic crosses the gate and stops being between
the gate and the far quadratic.
However, in that case, when the far quadratic arrives at
the gate, the gate will be one-way the wrong way and
the quadratic will not be able to enter the gate.

\begin{proof}
What we need to show that if $b > a$, $f_i$ is not quadratic for $i = b-1, \ldots a+1, a$;
 and if $b < a$, $f_i$ is not quadratic for $i = b+1, \ldots a-1, a$.
The proofs of these two statements are identical; we prove the first.

Toward contradiction, let $(f, w, b > c \geq a)$ be a counterexample with $w$ expressed as
a product of generators $w= w_r \ldots w_1$
with the least possible $r$. That is, $f_b$ and $f_c$ are quadratic, and the loitering quadratic $g_a$ of $g$ comes from $f_b$ via $w \star f$.

For $s = 0, 1, \ldots r$, let $B(s)$ and $C(s)$ be the places where $f_b$ and $f_c$ land via $w_s \ldots w_1 \star f$. So $B(0) = b$ and $C(0) = c$ and $B(r) = a$.
Two quadratic factors cannot Ritt-swap with each other and $B(0) > C(0)$, so $B(s) > C(s)$ for all $s$.
In particular, $a = B(r) > C(r)$ while $C(0) = c \geq a$. Thus, $C(s) = a$ for some $s < r$.

Let $f' := w_s \ldots w_1 \star f$, let $w' := w_r \ldots w_{s+1}$, and let $b' := B(s)$.
Now $w' \star f' = w \star f = g$ and the loitering quadratic $g_a$ of $g$ comes from $f'_{b'}$ via $w' \star f'$; and $b' = B(s) > C(s) = a$. So $(f', w', a, b')$ is another counterexample. So the presentation of $w'$ in this counterexample cannot be shorter than that of $w$, so we must have $s=0$ and $w' = w$ and $c=a$.  Thus, we have

 $$b = B(0) > C(0) = a = B(r) > C(r) \text{ .}$$

\begin{claim}\label{Claim1} The factors $f_d$ are not quadratic for any $d = b-1, \ldots a+1$.
\end{claim}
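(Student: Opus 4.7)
The plan is to prove Claim~\ref{Claim1} by re-running, essentially verbatim, the minimality argument that was just used to establish $c = a$, but applied now to a hypothetical quadratic factor strictly between positions $a$ and $b$.

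Concretely, I would argue by contradiction: suppose $f_d$ is quadratic for some $d$ with $a < d < b$, and track the position $D(s)$ of $f_d$ in $w_s \cdots w_1 \star f$ across $s = 0, 1, \ldots, r$, exactly as was done for $f_b$ and $f_c$. Since two quadratic factors cannot Ritt-swap with each other and $B(0) = b > d = D(0)$, the ordering $B(s) > D(s)$ is preserved by each generator, hence holds for every $s$. In particular $a = B(r) > D(r)$, so $D(r) \leq a-1 < a < d = D(0)$; because $D$ changes by at most $1$ per step, there is some $s^* \geq 1$ with $D(s^*) = a$ (strict positivity of $s^*$ uses $D(0) = d > a$).

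Setting $f' := w_{s^*} \cdots w_1 \star f$, $w' := w_r \cdots w_{s^*+1}$, and $b' := B(s^*)$, the tuple $(f', w', b', a)$ is again a counterexample of the type considered: $g = w' \star f'$, the loitering quadratic $g_a$ of $g$ still comes from $f'_{b'}$ (the image of $f_b$) via $w' \star f'$, and $f'_a$ is the quadratic image of $f_d$, with $b' > a$ guaranteed by $B(s^*) > D(s^*) = a$. But then $w'$ has length $r - s^* < r$, contradicting the minimality of $r$.

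The only thing I expect to need to check is that the counterexample framework is symmetric with respect to the choice of witness quadratic, and this is immediate from the formulation ``$(f, w, b > c \geq a)$'' which allows any quadratic $f_c$ with $c$ in the specified range. There is no real obstacle here beyond the bookkeeping already developed in the paragraphs preceding the claim.
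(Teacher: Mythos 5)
Your proposal is correct and follows essentially the same route as the paper: track the position $D(s)$ of the hypothetical quadratic $f_d$, use that two quadratics cannot Ritt-swap to preserve $B(s) > D(s)$, locate an intermediate time $s^*$ with $D(s^*)=a$ (necessarily with $0 < s^* < r$), and extract a counterexample with a strictly shorter word, contradicting minimality. The one point worth making explicit — that $s^* \geq 1$ is what forces the strict length decrease — you have correctly identified.
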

\begin{pfc}
Suppose not, and let $D(s)$ be the place where $f_d$ lands via $w_s \ldots w_1 \star f$.
So $D(0) = d$, so $B(0) > D(0) > C(0) = a = B(r) > D(r)$.
Thus, for some $s \neq 0, r$, we have $B(s) > D(s) = a$.
Again, let $f' := w_s \ldots w_1 \star f$, let $w' := w_r \ldots w_{s+1}$, and let $b' := B(s)$.
As before, $(f', w', b', a)$ is another counterexample.
But since $s \neq 0$, the presentation of $w'$ is strictly shorter than that of $w$, which is a contradiction.
\end{pfc}

\begin{claim}\label{Claim2} For all $s > 0$, we have $a > C(s)$; for all $s < r$, we have $B(s) > a$.
\end{claim}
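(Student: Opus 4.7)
The plan is to prove Claim~2 by exploiting the minimality of the counterexample $(f, w, b, c = a)$ chosen at the top of the proof: any failure of either inequality will yield a strictly shorter counterexample to Lemma~\ref{lem:waqualem}, contradicting minimality.

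For the first half, $a > C(s)$ when $s > 0$, I will suppose $C(s_1) \geq a$ for some $s_1 > 0$ and take the largest such $s_1$. Since $C$ changes by at most one per step and $C(r) < a$, this forces $C(s_1) = a$ and $s_1 < r$. Put $f'' := w_{s_1} \cdots w_1 \star f$: then $f''_a = f_a$ is quadratic, $f''_{B(s_1)} = f_b$ is quadratic, and $B(s_1) > a$ because $B(s) > C(s)$ throughout the trajectory (two quadratic factors cannot Ritt-swap with each other). The tuple $(f'', w_r \cdots w_{s_1 + 1}, B(s_1), a)$ is then a counterexample to Lemma~\ref{lem:waqualem}: $g_a$ is a loitering quadratic of $g$ that comes from $f''_{B(s_1)}$, and the quadratic $f''_a$ contradicts the conclusion. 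Its word length $r - s_1 < r$ contradicts minimality of $r$.

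For the second half, $B(s) > a$ when $s < r$, the symmetric suffix approach fails because after placing $f_b$ at position $a$ there is no room for a second quadratic strictly between the starting and ending positions. Instead I will use a prefix approach: suppose $B(s_0) \leq a$ for some $s_0 < r$ and take the smallest such $s_0$, so that $B(s_0) = a$ and $s_0 \geq 1$. Let $f' := w_{s_0} \cdots w_1 \star f$, so $f'_a = f_b$ is quadratic. It will suffice to show that $f'_a$ is in fact a loitering quadratic of $f'$: granted this, $(f, w_{s_0} \cdots w_1, b, a)$ is a counterexample of strictly smaller word length, again contradicting minimality.

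To verify that $f'_a$ is loitering in $f'$, I will apply Proposition~\ref{prop:asf-and-gates} to the sub-action $u := w_r \cdots w_{s_0 + 1}$ carrying $f'$ to $g$: the cluster boundary $\beta(j) = a - \tfrac{1}{2}$ of $g$ (which exists because $g_a$ is loitering) must correspond to some $\alpha'(j) \in \{a-1,\, a - \tfrac{1}{2},\, a\}$ in the boundary list of $f'$. If $\alpha'(j) = a$, then $f'$ has an integer boundary at $a$ carrying a gate (Proposition~\ref{prop:asf-and-gates}(3)); matching the leftward boundary movement $a \mapsto a - \tfrac{1}{2}$ against the single-Ritt-swap effect recorded by Lemma~\ref{lem:lem421}~part~2 forces that gate to be right-to-left, which combined with $f'_a$ being quadratic already places $a - \tfrac{1}{2}$ in the boundary set of $f'$ via Definition~\ref{def:def-bdry}, contradicting the supposed integer boundary $\alpha'(j) = a$. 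The case $\alpha'(j) = a - 1$ is handled symmetrically via Lemma~\ref{lem:lem421}~part~1 and the other clause of Definition~\ref{def:def-bdry}. The only remaining possibility, $\alpha'(j) = a - \tfrac{1}{2}$, is exactly the desired conclusion. The main obstacle will be this gate-direction case analysis, which requires extending Proposition~\ref{prop:asf-and-gates}(4) from integer-to-integer pairings to the mixed integer/half-integer pairings arising here; I will justify the extension by carefully unwrapping how a single Ritt swap at a cluster boundary alters the boundary set, as encoded by Lemma~\ref{lem:lem421}.
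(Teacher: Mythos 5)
Your first inequality is proved exactly as in the paper: truncating to the suffix $w_r\cdots w_{s_1+1}$ leaves the target $g$ and its loitering quadratic $g_a$ unchanged, so the loitering hypothesis comes for free and minimality of $r$ finishes the job. Your second inequality also follows the paper's route (truncate to the prefix $w_{s_0}\cdots w_1$ at the first time $B$ reaches $a$, then invoke minimality); the paper simply declares $(f,u,b,a)$ to be another counterexample, while you rightly notice that this requires $(u\star f)_a$ to be \emph{loitering}, not merely wandering, and set out to check it.

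That check is where your argument is incomplete. Beyond the extension of Proposition~\ref{prop:asf-and-gates}(4) to half-integer boundaries that you flag yourself (it can be supplied: applying Proposition~\ref{prop:asf-and-gates} to every prefix and suffix of $w_r\cdots w_{s_0+1}$ confines this boundary to $\{a-1,\,a-\tfrac{1}{2},\,a\}$ throughout, so its first move is $a\mapsto a-\tfrac{1}{2}$, and Proposition~\ref{prop:one-swap-and-clusters}(3) keeps ``the same gates in the same places'' up to that move, forcing the gate at $a$ to be right-to-left already in $f'$), there is a case your analysis does not see. Definition~\ref{def:def-bdry} places $a-\tfrac{1}{2}$ in the boundary set only when Lemma~\ref{lem:lem421} \emph{applies}, and part 2 of that lemma carries the hypothesis $a-1\notin A$. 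So ``right-to-left gate at $a$ plus quadratic $f'_a$'' does not yet put $a-\tfrac{1}{2}$ into the boundary set of $f'$ when $a-1$ is itself a cluster boundary, i.e.\ when $f'_a$ is a one-factor cluster; in that configuration both $a$ and $a-1$ are integer boundaries and no contradiction with $\alpha'(j)=a$ arises from Definition~\ref{def:def-bdry} alone. You must exclude this separately, for instance: if $a$ and $a-1$ are both integer boundaries of $f'$, then Lemma~\ref{lem:lem421} is blocked on both sides of the one-factor cluster, every clustering of every decomposition reachable from $f'$ retains boundaries at $a$ and $a-1$, so by Proposition~\ref{prop:one-swap-and-clusters} neither $t_a$ nor $t_{a-1}$ ever acts and the boundary at $a$ can never become $a-\tfrac{1}{2}$, contradicting $\beta(j)=a-\tfrac{1}{2}$. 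The symmetric case $\alpha'(j)=a-1$ needs the same supplement. With those repairs your proof is complete and is, in substance, a more careful version of the paper's own argument.
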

\begin{pfc}
Since $a > C(r)$, if $C(s) \geq a$ for some $s > 0$, then $C(s') = a$ for some $s'$ with $s < s' < r$.
Again, $f' := w_{s'} \ldots w_1 \star f$ and $w' := w_r \ldots w_{s'+1}$ and $b' := B(s')$ give a counterexample, with $w'$ having a shorter
presentation than that of $w$, a contradiction.

Similarly, if $a \geq B(s)$ for some $s < r$, then $a = B(s')$ for some $s' < s$ because $B(0) > a$.
Now $(f, u, b, a)$ is another counterexample, with $u = w_{s'} \ldots w_1$ having a strictly shorter presentation than that of $w$, which is a contradiction.
\end{pfc}

Claim~\ref{Claim2} implies that $w_1 = t_{a-1}$ and $w_r = t_a$.

\begin{claim}\label{Claim3} For all $s \neq 0, r$, the factor of $h_a$ of $h := w_s \ldots w_1 \star f$ is not quadratic.
\end{claim}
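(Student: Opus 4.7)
The plan is to derive a contradiction from the minimality assumption on $r$ set up at the beginning of the proof of Lemma~\ref{lem:waqualem}: if some intermediate position $a$ were occupied by a quadratic factor, I could split $w$ at that intermediate step to extract a strictly shorter counterexample.

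Concretely, suppose toward contradiction that for some $s$ with $0 < s < r$, the factor $h_a$ of $h := w_s \ldots w_1 \star f$ is quadratic. Set $w' := w_r \ldots w_{s+1}$ and $b' := B(s)$, so that $w' \star h = g$. Since $f_b$ is tracked by the function $B$, the factor $h_{b'}$ coincides with $f_b$ and is therefore quadratic, and the loitering quadratic $g_a$ of $g$ comes from $h_{b'}$ via $w' \star h$. By Claim~2, since $s < r$, we have $b' = B(s) > a$. Taken together, $(h, w', b', a)$ is a counterexample to Lemma~\ref{lem:waqualem} of precisely the same reduced form obtained after Claim~2 (with the roles of $b$ and $c$ now played by $b'$ and $a$, respectively), while its word $w'$ has length $r - s < r$, contradicting the minimality of $r$.

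The main subtlety is verifying that this really is a counterexample of the same shape. Two things need to be checked: that $h_{b'}$ and $h_a$ are both quadratic, and that the loitering quadratic $g_a$ still arises as the image of $h_{b'}$ under $w'$. The first holds because $f_b$ is quadratic and Ritt swaps never alter the degree of a factor, combined with the supposition we are trying to contradict. The second is simply the composition tracking of $f_b$ already recorded by the function $B$. Once these are in place, the shortened-word contradiction is immediate, and no further trajectory analysis is needed.
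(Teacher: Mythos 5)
Your proof is correct, but it takes a different route from the paper's. The paper pulls the hypothetical quadratic $h_a$ \emph{backward}: it lets $f_d$ be the factor of $f$ that becomes $h_a$ via $w_s\cdots w_1\star f$, uses Claim~2 together with the fact that two quadratics never Ritt-swap past each other to place $d$ strictly between $a$ and $b$, and then contradicts Claim~1. You instead push \emph{forward}: you split $w$ at step $s$ and observe that the tail $w' = w_r\cdots w_{s+1}$ acting on $h$, with the quadratics $h_{B(s)}$ and $h_a$ and the same loitering target $g_a$, is itself a counterexample to the lemma with word length $r-s<r$, contradicting the minimality of $r$. The checks you flag are the right ones and they go through: degrees of tracked factors are preserved under all three kinds of basic Ritt identities, the ``becomes'' relation composes, and $B(s)>a$ for $0<s<r$ is exactly Claim~2, so $(h,w',B(s),a)$ really has the required shape $b'>c'\geq a'$ with $c'=a'=a$. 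Your argument is the same ``extract a strictly shorter counterexample'' device already used in Claims~1 and~2 and in the pre-claim reduction, so it is stylistically uniform and independent of Claim~1; the paper's version instead reuses Claim~1 and avoids invoking minimality a third time. Either is acceptable.
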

\begin{pfc}
From Claim~\ref{Claim2}, we have $B(s) > a > C(s)$.
Let $f_d$ be the factor of $f$ that becomes $h_a$ via $w_s \ldots w_1 \star f$.
Then $B(0) > d > C(0)$, contradicting Claim~\ref{Claim1}.
\end{pfc}

\begin{claim} \label{Claim4} For all $s$ with $0 < s < r$,
 the \kld $h: = w_s \ldots w_1 \star f = w_{s+1} \ldots w_r \star g$ admits a clustering with a left-to-right gate at $(a-1)$.
\end{claim}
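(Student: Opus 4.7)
My plan is to prove this by downward induction on $s$, starting from $s = r-1$ and decreasing to $s = 1$. The main tools will be Proposition~\ref{prop:one-swap-and-clusters} (which handles how clusterings behave under individual Ritt swaps) and Theorem~\ref{thm:unique-brdyset} (which asserts that the boundary set, and the gate directions at integer boundaries of that set, are invariants of $h$, independent of the choice of clustering). The strategy is to begin with a clustering of $g = h_r$ witnessing the loitering quadratic $g_a$, then transport the left-to-right gate at $a-1$ backwards through each of the Ritt swaps $w_r, w_{r-1}, \ldots, w_{s+1}$.

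For the base case $s = r-1$, I would use that $w_r = t_a$, so Proposition~\ref{prop:one-swap-and-clusters}(2) yields a clustering $A$ of $h_{r-1}$ with $a \notin A$, and part (3) of the same proposition makes $A$ simultaneously a clustering of $g$ with identical gates. Since $g_a$ is a loitering quadratic, $a - \tfrac{1}{2}$ lies in the boundary set of $g$, and since $a \notin A$, this half-integer boundary must be represented in $A$ by the integer $a-1$ with a one-way left-to-right gate (per Definition~\ref{def:def-bdry}). So $A$ is the desired clustering of $h_{r-1}$.

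The inductive step is the heart of the proof, and here is where I expect the main obstacle. Assume $h_{s+1}$ admits a clustering $D_{s+1}$ with $a-1 \in D_{s+1}$ and a left-to-right gate at $a-1$. The critical observation will be that by Claim~\ref{Claim3}, $h_{s+1,a}$ is not quadratic, so by Definition~\ref{def:def-bdry} the half-integer $a - \tfrac{1}{2}$ cannot belong to the boundary set of $h_{s+1}$ and Lemma~\ref{lem:lem421} fails to apply at $a-1$; consequently $a-1$ is a \emph{rigid} integer boundary, lying in every clustering of $h_{s+1}$, with the left-to-right gate direction fixed by Theorem~\ref{thm:unique-brdyset}. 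This rigidity, via the contrapositive of Proposition~\ref{prop:one-swap-and-clusters}(2), forces $t_{a-1}$ to be undefined on $h_{s+1}$, so necessarily $w_{s+1} = t_j$ for some $j \neq a-1$.

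Once $j \neq a-1$ is ensured, the remainder is routine: Proposition~\ref{prop:one-swap-and-clusters}(2) supplies a clustering $A'$ of $h_{s+1}$ with $j \notin A'$, Theorem~\ref{thm:unique-brdyset} guarantees that $a-1$ remains an integer boundary of $A'$ with a left-to-right gate, and part (3) of Proposition~\ref{prop:one-swap-and-clusters} finally transports $A'$ to a clustering of $h_s = t_j \star h_{s+1}$ with the same gates. The hard part, as already flagged, is the excluded case $j = a-1$: naively one might try to shift that boundary to $a$ via Lemma~\ref{lem:lem432}, but that would flip the gate direction to right-to-left and destroy the inductive hypothesis. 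Ruling out this case via the rigidity afforded by Claim~\ref{Claim3} is the essential trick.
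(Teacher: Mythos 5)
Your proof is correct and follows essentially the same route as the paper's: downward induction starting at $s = r-1$, with the base case handled by observing the loitering quadratic produces a clustering with $a \notin A$ and a left-to-right gate at $a-1$, and the inductive step hinging exactly on Claim~\ref{Claim3} to rule out $h_a$ being quadratic, which makes the boundary at $a-1$ rigid (forcing the Ritt swap to be at some index other than $a-1$) before transporting the clustering via Proposition~\ref{prop:one-swap-and-clusters}. The only difference is a harmless re-indexing of which step the inductive hypothesis is stated for.
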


\begin{pfc}
We induct backwards on $s$, from $r-1$ to $1$. 
For the base case  
$s = r-1$, we are looking at $t_a \star g$ and $g_a$
is a loitering quadratic of $g$. There is a clustering
of $g$ with a cluster boundary at $(a-1)$ and a
left-to-right gate at that boundary. For that
clustering, $t_a \star g$ is an intra-cluster swap that
does not change the locations and directions of gates.

For the inductive step, we are looking at $w_s \star h$
for $h = w_{s+1} \ldots w_r \star g$. By the inductive hypothesis, some clustering of $h$ has a cluster boundary at $(a-1)$ and a left-to-right gate at that boundary.
By Claim~\ref{Claim3}, $h_a$ is not quadratic; so every clustering of $h$ has a cluster boundary at $(a-1)$ and a left-to-right gate at that boundary.
By Proposition~\ref{prop:one-swap-and-clusters}, this means that $t_{a-1} \star h$ is not defined, so $w_s = t_i$ for some $i \neq a$. By the same Proposition~\ref{prop:one-swap-and-clusters}, it follows that $w_s \star h$ also admits a clustering with a left-to-right gate at $(a-1)$.

Claim~\ref{Claim4} is now proved. \end{pfc}

So for $s=1$ we have that $h = t_{a-1} \star f$ has a left-to-right gate at $(a-1)$ and no quadratic at $a$. But then $t_{a-1} \star h$ is not defined, contradicting $t_{a-1} \star h = f$.

\end{proof}

\begin{cor} Each gate is owned by at most one wandering quadratic. \end{cor}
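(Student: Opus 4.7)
My plan is to argue by contradiction using Lemma~\ref{lem:waqualem} applied to the composite word $u := w_2 w_1^{-1}$, after first reducing to the case where the two loitering positions coincide.

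Suppose $f_{a_1}$ and $f_{a_2}$ are distinct wandering quadratics (so $a_1 \not\equiv a_2 \pmod{k}$) both owning the gate at $b'$ of $f$.  Unpacking Definition~\ref{def:loiter-def}, there are $w_1, w_2 \in \Affk$ and integer positions $\beta_1, \beta_2$ with $g := w_1 \star f$ having loitering quadratic $g_{\beta_1}$ coming from $f_{a_1}$, and $h := w_2 \star f$ having loitering quadratic $h_{\beta_2}$ coming from $f_{a_2}$, where both $\beta_1 - \tfrac12$ and $\beta_2 - \tfrac12$ correspond to $b'$ of $f$.  Set $u := w_2 w_1^{-1} \in \Affk$ so $u \star g = h$.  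Composing boundary correspondences through $b'$, the boundary $\beta_1 - \tfrac12$ of $g$ corresponds to $\beta_2 - \tfrac12$ of $h$ under $u$, and Proposition~\ref{prop:asf-and-gates}(1) yields $|\beta_1 - \beta_2| \leq 1$.

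I first dispose of the case $\beta_1 = \beta_2 =: \beta$.  The factor $g_\beta$ (from $f_{a_1}$ under $w_1$) is carried by $u$ to some $h_c$.  If $c = \beta$, then $f_{a_1}$ and $f_{a_2}$ are both sent to $h_\beta$ by $w_2$, forcing $a_1 \equiv a_2 \pmod{k}$, the desired contradiction.  If $c \neq \beta$, then the loitering $h_\beta$ is the image under $u$ of some $g_e$ with $e \neq \beta$; applying Lemma~\ref{lem:waqualem} to $(g, u, h)$ forces $g_i$ to be non-quadratic for every $i$ in the integer interval between $\beta$ and $e$ and including $\beta$, which directly contradicts $g_\beta$ being the loitering quadratic.

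The remaining cases $\beta_2 = \beta_1 \pm 1$ will be reduced to the equal case via a one-step shift of the loitering position in $g$.  For $\beta_2 = \beta_1 + 1$ (the case $\beta_2 = \beta_1 - 1$ being symmetric with $t_{\beta_1 - 1}$), Lemma~\ref{lem:waqualem} applied to $(g, u, h)$ shows that $g_{\beta_1 + 1}$ is non-quadratic.  Using the clustering of $g$ that interprets the half-integer boundary $\beta_1 - \tfrac12$ as a one-way left-to-right gate at the integer boundary $\beta_1 - 1$, both $g_{\beta_1}$ and $g_{\beta_1 + 1}$ lie in a common cluster, so by Proposition~\ref{prop:one-swap-and-clusters} the Ritt swap $t_{\beta_1} \star [g]$ is defined.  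Setting $w_1' := t_{\beta_1} w_1$ and $g' := t_{\beta_1} \star g$ produces a new witness triple $(w_1', g', \beta_1 + 1)$ for $f_{a_1}$ owning $b'$, since $g'_{\beta_1 + 1}$ is loitering and comes from $f_{a_1}$, and the shift by $+1$ of the half-integer boundary under $t_{\beta_1}$ preserves the correspondence to $b'$.  Now $\beta_1' = \beta_1 + 1 = \beta_2$, bringing us into the equal case.  The main obstacle is the clustering verification that makes $t_{\beta_1}$ well-defined: one must identify $(g_{\beta_1+1}, g_{\beta_1})$ as an adjacent Ritt-swappable pair inside a common cluster, which succeeds in both the $\textsf{C}$ and $\textsf{C}$-free settings because $g_{\beta_1+1}$ is non-quadratic while $g_{\beta_1}$ is precisely the quadratic adjacent to the one-way gate.
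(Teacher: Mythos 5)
Your treatment of the case $\beta_1=\beta_2$ is exactly the paper's argument: form $u=w_2w_1^{-1}$, note the occupied boundary must correspond to itself, and use Lemma~\ref{lem:waqualem} to force the two preimages to coincide. That part is fine. The problem is the branch $\beta_2=\beta_1\pm 1$, which you created by underusing Proposition~\ref{prop:asf-and-gates}. Part (1) of that proposition places the two corresponding boundaries $\beta_1-\tfrac12$ and $\beta_2-\tfrac12$ inside a single triple $\{i,\,i+\tfrac12,\,i+1\}$; since both are \emph{half-integers} (both gates are occupied by loitering quadratics, so both boundaries lie in $\tfrac12\ZZ\smallsetminus\ZZ$), and the only half-integer in that triple is $i+\tfrac12$, you get $\beta_1=\beta_2$ outright. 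This is precisely how the paper disposes of the issue (``both gates are occupied, so Proposition~\ref{prop:asf-and-gates} tells us that $a'=b''$''). Your weaker conclusion $|\beta_1-\beta_2|\leq 1$ is not wrong, but it manufactures cases that do not exist.

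The way you then handle those cases does not work, so as written the proof has a gap. First, the claim that Lemma~\ref{lem:waqualem} makes $g_{\beta_1+1}$ non-quadratic silently assumes the loitering quadratic $h_{\beta_2}$ does not come from $g_{\beta_2}$ itself; if it does, the lemma says nothing and $g_{\beta_1+1}$ is in fact quadratic (corresponding factors have equal degree). Second, and more seriously, Proposition~\ref{prop:one-swap-and-clusters} states that \emph{if} $t_i\star[f]$ is defined \emph{then} $(f_{i+1},f_i)$ is a cluster — not the converse. Two adjacent factors in a common cluster need not Ritt-swap: if the gate at $\beta_1-\tfrac12$ bounds a $\mathsf{C}$~cluster, then up to linear equivalence $g_{\beta_1}$ is $C_2$ and $g_{\beta_1+1}$ is an odd-degree Chebyshev, and the paper explicitly declines to treat $C_2\circ C_p=C_p\circ C_2$ as a basic Ritt identity, so $t_{\beta_1}\star g$ is undefined; in the $\mathsf{C}$-free case the swap requires the adjacent non-monomial to have even in-degree, which is not automatic. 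So the reduction via $w_1':=t_{\beta_1}w_1$ cannot be carried out in general. The repair is simply to delete this branch by reading Proposition~\ref{prop:asf-and-gates}(1) as forcing $\beta_1=\beta_2$.
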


\begin{proof}
Suppose that quadratic factors $f_{a}$ and $f_{b}$ of $f$ both own the gate at $c$.
So there is some $u \in \Affk$
 and a gate at $a'$ of $g := u \star f$ corresponding to
  the gate of $f$ at $c$ such that $g_{a'}$ comes from $f_a$ via $u \star f$.
Similarly, there is some $v \in \Affk$ and a gate at $b''$ of $h := v \star f$ corresponding to the gate of $f$ at $c$ such that $h_{b''}$ comes from $f_a$ via $v \star f$.

Now for $w := v u^{-1}$ we have $w \star g = h$ and the gate of $h$ at $b''$ corresponds to the gate of $g$ at $a'$ via $w \star g$. Both gates are occupied, so Proposition~\ref{prop:asf-and-gates} tells us that $a' = b''$.

Let $b'$ be the index for which $g_{b'}$ comes from $f_b$ via $u \star f$.
Now $h_{b''}$ comes from $g_{b'}$ via $w \star g$, but $g_{b''} = g_{a'}$ is quadratic.
This contradicts Lemma~\ref{lem:waqualem} unless $a' = b'$; in which case $a =b$, and the two owners of the same gate turn out to be the same person as wanted. \end{proof}

\begin{cor} If a wandering quadratic $f_a$ owns the gate at $b$, then $|b-a| \leq k$. \end{cor}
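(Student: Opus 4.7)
The plan is to unpack the ownership relation of Definition~\ref{def:loiter-def} and bound the two constituent displacements separately. By hypothesis there exist $w \in \Affk$, a long decomposition $g := w \star f$, and an integer $c$ such that $f_a$ becomes the loitering quadratic $g_c$ via $w \star f$, and such that $b$ is the boundary of $f$ that corresponds, in the sense of Definition~\ref{def:gate-becomes}, to the half-integer boundary $c - \frac{1}{2}$ of $g$. Writing $|b-a| \leq |b-c| + |c-a|$, I would bound each summand: the first via the boundary-drift control in Proposition~\ref{prop:asf-and-gates}, and the second via Lemma~\ref{lem:waqualem} together with the $k$-skew-periodicity of $f$.

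For the bound on $|c - a|$, Lemma~\ref{lem:waqualem} applied with $g_c$ as the loitering quadratic coming from $f_a$ says that $f_i$ is non-quadratic for every index $i$ strictly between $a$ and $c$ and, crucially, also at $i = c$. Since $f$ is $k$-skew-periodic and field automorphisms preserve degrees, both $f_{a+k}$ and $f_{a-k}$ are again quadratic. If $|c - a| \geq k$, then whichever of $a \pm k$ lies between $a$ and $c$ falls in the forbidden range of Lemma~\ref{lem:waqualem}, a contradiction. This forces $|c - a| \leq k - 1$.

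For the bound on $|b - c|$, I would invoke Proposition~\ref{prop:asf-and-gates} to choose boundary lists $\alpha$ for $f$ and $\beta$ for $g$ realizing the correspondence of Definition~\ref{def:gate-becomes} with the property that $\{\alpha(j), \beta(j)\} \subseteq \{i, i+\frac{1}{2}, i+1\}$ for some $i$ at every index $j$. For the index $j_0$ at which $\alpha(j_0) = b$ and $\beta(j_0) = c - \frac{1}{2}$, the element $c - \frac{1}{2}$ is a half-integer, so the only element of the triple $\{i, i+\frac{1}{2}, i+1\}$ it can equal is $i + \frac{1}{2}$, forcing $i = c - 1$. This leaves $b \in \{c - 1, c - \frac{1}{2}, c\}$, hence $|b - c| \leq 1$.

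Combining via the triangle inequality gives $|b - a| \leq 1 + (k - 1) = k$, as desired. I do not expect a substantive obstacle: both ingredients are immediate applications of results proved earlier in this section. The only place requiring care is the parity bookkeeping in the second estimate, where the explicit integer-vs.-half-integer structure of the triple $\{i, i+\frac{1}{2}, i+1\}$ is what tightens the a priori bound $|b - c| \leq \frac{3}{2}$ to $|b - c| \leq 1$, thereby yielding the sharp bound $k$ rather than $k + \frac{1}{2}$.
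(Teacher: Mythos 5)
Your proof is correct and follows essentially the same route as the paper's one-line argument: Lemma~\ref{lem:waqualem} forbids a second quadratic between the wandering quadratic's start and landing positions, and $k$-skew-periodicity plants one at $a\pm k$. Your extra step separating the gate location $b$ from the landing position $c$ via Proposition~\ref{prop:asf-and-gates} is bookkeeping the paper elides, and it is handled correctly.
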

\begin{proof} If $b > a+k$, the factor $f_{a+k}$, which is quadratic because of the $k$-periodicity of $f$, contradicts Lemma~\ref{lem:waqualem}. \end{proof}

We next show that a wandering quadratic can always
move to a gate it owns as directly as possible, that
is, via a single transit in the sense of Definition~\ref{def:transit}.

\begin{lem}
\label{lem:wqua-one-transit}	
Suppose that a wandering quadratic $f_a$ of a long
decomposition $f$  owns the gate at $b$.
Then there is a transit $u$ with $g_b$ a loitering
quadratic
coming from $f_a$ via $u \star f = g$. \end{lem}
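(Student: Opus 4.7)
\emph{Proof plan.} We may assume $b > a$; the case $b < a$ is handled symmetrically, using a left-to-right transit in place of a right-to-left one. Let $w \in \Affk$ be a witness so that $w \star f = g$ with $g_b$ a loitering quadratic coming from $f_a$. The target transit is $u := t_{b-1} t_{b-2} \cdots t_a$, and we induct on the distance $b - a$. When $b = a$, the transit is empty and $f_a = g_a$ is itself loitering in $f$, so there is nothing to prove.

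For the inductive step, the heart of the argument is to show that $t_a \star f$ is defined, i.e., that $(f_{a+1}, f_a)$ forms a cluster. Lemma~\ref{lem:waqualem} guarantees that $f_{a+1}$ is non-quadratic, so the proposed swap is non-tautological. The strategy is to produce a clustering $A$ of $f$ in which $a$ is not a boundary, so that $f_a$ and $f_{a+1}$ lie in the same cluster of $(A, f)$; since this cluster contains the non-quadratic $f_{a+1}$, Remark~\ref{rk:preclustermk} then implies that the two-factor piece $(f_{a+1}, f_a)$ is itself a cluster (avoiding the single exceptional case of an all-quadratic piece of a $\mathsf{C}$-cluster).

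To construct $A$: start with a clustering $A_0$ of $f$ containing a gate at $b$, as guaranteed by the ownership hypothesis. If $a \notin A_0$, take $A := A_0$. Otherwise, the boundary of $A_0$ at $a$ must carry a one-way gate --- were it gateless, Proposition~\ref{prop:no-gate-wall} would make it a wall, forbidding the witness $w$ from moving $f_a$ across it. Because Lemma~\ref{lem:waqualem} forbids a quadratic strictly between $a$ and $b$, the only candidate for the owner of the gate at $a$ immediately adjacent to $f_a$ on its right is $f_a$ itself (the other neighbor $f_{a+1}$ being non-quadratic). Applying Lemma~\ref{lem:lem421} to shift this gate across the quadratic $f_a$, the cluster boundary at $a$ moves to $a-1$ (or to the half-integer $a - \tfrac12$), producing the desired clustering $A$ in which $a$ is no longer a boundary.

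Once $t_a \star f$ is defined, set $f^{(1)} := t_a \star f$. The quadratic formerly at position $a$ in $f$ now sits at position $a+1$ in $f^{(1)}$, and the composed word $w t_a$ (noting that $t_a$ is its own inverse in $\Affk$) witnesses its wandering to the loitering quadratic $g_b$. By Proposition~\ref{prop:asf-and-gates}, the quadratic still owns a gate corresponding to the one at $b$ (whose position is possibly shifted by at most $\tfrac12$) in $f^{(1)}$. The inductive hypothesis applied to $f^{(1)}$ with the smaller distance $b - (a+1)$ then yields a transit $u' = t_{b-1} t_{b-2} \cdots t_{a+1}$ with $(u' \star f^{(1)})_b$ loitering and coming from $f_a$, and $u := u' \cdot t_a = t_{b-1} \cdots t_a$ is the required transit on $f$. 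The principal obstacle is the construction of the clustering $A$ with $a$ removed from the boundary set: this relies on the interplay between the uniqueness of gate ownership, Lemma~\ref{lem:waqualem} (ruling out hidden intermediate quadratic owners), and the gate-movement Lemma~\ref{lem:lem421}; the subtle point is handling the possibility that several nested boundaries in the range $[a, b]$ must be simplified in turn before $a$ itself can be pushed out.
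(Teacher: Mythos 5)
Your induction is organized around the claim that once $a$ is removed from the boundary set of some clustering, $(f_{a+1},f_a)$ is a cluster and therefore $t_a\star f$ is defined. That implication is false, and it is exactly where the difficulty of the lemma lives. Proposition~\ref{prop:one-swap-and-clusters}(1) only gives the converse: if $t_a\star[f]$ is defined then $(f_{a+1},f_a)$ is a cluster. For the forward direction, membership in a common $\mathsf{C}$-free cluster only says that, after a cleanup, $f_{a+1}$ and $f_a$ are Ritt polynomials; it does not make $f_{a+1}\circ f_a$ one side of a basic Ritt identity. Concretely, take $f_a=x^2$ and $f_{a+1}=x\,v(x^2)^3$ (in-degree $2$, out-degree $3$): the pair is a perfectly good $\mathsf{C}$-free cluster with $f_{a+1}$ non-quadratic, but no basic Ritt identity applies, since the third-kind identity with the monomial on the right requires the $2$-part of $\outdeg(f_{a+1})$ to be positive. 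In such a situation the wandering quadratic can still reach the gate at $b$, but only after other swaps (e.g.\ bringing a monomial of another degree down to position $a+1$) have rearranged the factors; so the very first step $t_a\star f$ of your proposed transit is not obviously defined, and no purely cluster-theoretic argument will make it so.

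This is why the paper's proof takes a completely different route: it fixes a \emph{shortest} reduced word $w$ witnessing ownership, shows $t_b$ cannot occur in $w$, puts $w$ into second canonical form inside the copy of $\Symk$ generated by $\{t_i: i\neq b\}$, and then uses minimality together with Lemma~\ref{lem:transit-twist-commute} to strip $w$ down to the single transit $s$. The fact that the transit is then defined \emph{on $f$ itself} comes from the strong form of confluence in Corollary~\ref{cor:group-action-corr} (via Matsumoto's theorem): every reduced word representing the same element of $\Affk$ acts, so once $w$ is shown to equal $s$ in the group, $s\star f$ is defined. Your proposal has no substitute for this word-combinatorial step. Two smaller issues: your construction of a clustering avoiding $a$ tacitly needs the gate at $a$ to point in the direction from $a$ to $b$, which is Corollary~\ref{cor:wqua-gate-direction} --- a consequence of this very lemma, so invoking it would be circular; and the case $a-1\in A_0$ (a one-quadratic cluster at $a$) is not addressed.
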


\begin{proof}
We work out the case where $a < b$; the case where
$a  > b$ is identical; and the empty word $u$ works for
the case $a=b$.

Let $w$ be a reduced word
witnessing that $f_a$ owns the gate at $b$, shortest among such words.
That is, $w \star f = h$ is defined and $h_b$ is a loitering quadratic of $h$ that comes from $f_a$ via $w \star f$;
 and whenever the same happens for some other $w'$ and $h'$ (with the same $b$!), $w'$ is no shorter than $w$.

We first show that $t_b$ does not occur in $w$.
Suppose toward contradiction that $w = v t_b w'$ for some words $v$ and $w'$ with $w'$ not containing $t_b$.
Let $h' := w' \star f$. Now $t_b \star h'$ is defined, so $h'_b$ must be a quadratic.  Since only one
quadratic owns this gate at $b$, $h'_b$ corresponds
to $f_a$ via $w' \star f = h'$ contradicting minimality
of the length of $w$.

The subgroup of $\Affk$ generated by
$\{t_i ~:~ i \neq b \}$ is an isomorphic
copy of $\Symk$.
Thus, we may present $w$ in second canonical form with respect to $b > a+1 > a > b-k$ (see Remark~\ref{rk:canonical-form}) as $w = u s w_3 w_2 w_1$ where \begin{itemize}
 \item $w_1$ is a reduced word in $t_i$ with $b-k < i < a-1$, permuting factors in the ``right chunk'' $f_{(a, b-k)}$;
 \item $w_2$ is empty: it permutes factors in the one-factor middle chunk;
 \item $w_3$   is a reduced word in $t_i$ with $a < i < b$, permuting factors in the ``left chunk'' $f_{[b, a)}$;
 \item $s = t_{(c,a]}$  is the transit moving the wandering quadratic as we want; and
 \item $u$ is the rest of the word $v$ in the second canonical form, moving factors from the right chunk leftward.
\end{itemize}

 Since $u$ does not move the wandering quadratic, $s$
 must get it all the way to the gate at $b$ where it
 ends up in $w \star f$; so we have $c=b$.
 Now $s w_3 w_2 w_1$ already moves the wandering quadratic $f_a$ to the gate at $b$. Thus, by the minimality of the length of $w$, the word
 $u$ is empty.
 So now we have $w = s w_3 w_1$. Since
 $s w_3$ and $w_1$ act on disjoint chunks of factors,
they  commute. So, $w = w_1 s w_3$. By the minimality of the length of $w$, the word $w_1$ is empty and
$w = s w_3$.
 Using Lemma~\ref{lem:transit-twist-commute},
 we get $w = \widehat{w_3} s$,
 which, by the minimality of the
 length of $w$, gives $w =s$ as wanted.
\end{proof}

We get the following immediate corollary.

\begin{cor}
\label{cor:wqua-gate-direction}
  If a wandering quadratic $f_a$ owns the gate at $b$,
  then all gates of $f$ between $a$ and $b$ are are in
  the direction from $a$ to $b$. \end{cor}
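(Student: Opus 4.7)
The proof rests on combining Lemma~\ref{lem:wqua-one-transit} with Proposition~\ref{prop:asf-and-gates}. Invoke Lemma~\ref{lem:wqua-one-transit} to produce a transit $u$ with $u \star f = g$ defined and $g_b$ a loitering quadratic coming from $f_a$. Without loss of generality assume $a < b$, so $u = t_{b-1} t_{b-2} \cdots t_a$; the case $a > b$ is entirely symmetric. Set $h^{(j)} := t_{a+j-1} \cdots t_a \star f$ for $0 \leq j \leq b-a$, with $h^{(0)} = f$ and $h^{(b-a)} = g$. The quadratic originally at position $a$ sits at position $a+j$ in $h^{(j)}$. By Lemma~\ref{lem:waqualem}, none of $f_{a+1}, \ldots, f_b$ are quadratic, so no loitering quadratics of $f$ lie in the interval $(a,b]$; in particular every boundary of $f$ in the open interval $(a,b)$ lies at an integer position.

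Fix such a boundary of $f$ at integer $c \in (a,b)$; I claim the gate there is a one-way right-to-left gate, which is precisely the direction from $a$ toward $b$ (since higher index means leftward). At stage $j = c-a$ of the transit, the quadratic sits at $h^{(c-a)}_c$ while $h^{(c-a)}_{c+1} = f_{c+1}$ is unchanged by the earlier swaps (which all act on positions strictly below $c+1$) and hence is not quadratic. The swap $t_c$ must be defined on $h^{(c-a)}$, so by Proposition~\ref{prop:one-swap-and-clusters}(2) there is a clustering of $h^{(c-a)}$ with $c \notin A$. I would then track the boundary of $f$ at $c$ through the first $c-a$ steps of the transit using Proposition~\ref{prop:asf-and-gates}: since all prior swaps occur strictly to the right of $c$ and the factors at positions $\geq c+1$ in every intermediate $h^{(j)}$ coincide with those of $f$, the corresponding boundary of $h^{(c-a)}$ must sit in $\{c-\tfrac{1}{2}, c\}$ (it cannot drift above $c$ without creating a nonexistent loitering quadratic there, and it cannot drift below $c$ past another boundary of $f$ since there are none between $a$ and $c$ by choice of $c$).

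In either case, the only way to shift this boundary off of $c$ is to appeal to Lemma~\ref{lem:lem421}. Part 1 of that lemma demands that the factor immediately to the left of the boundary be quadratic, but $h^{(c-a)}_{c+1} = f_{c+1}$ is not; so only Part 2 can apply, and it requires a one-way right-to-left gate at $c$, with the wandering quadratic $h^{(c-a)}_c$ as the loitering partner. Translating this conclusion back to $f$ via Proposition~\ref{prop:asf-and-gates}(3) and (4), which preserves the kind of each gate under the correspondence, forces the gate of $f$ at $c$ to be right-to-left as well.

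The main technical obstacle will be the rigorous bookkeeping in the middle paragraph: I must verify that the unique boundary of $h^{(c-a)}$ descending from the $c$-boundary of $f$ really lies in $\{c-\tfrac12, c\}$ and not at $c-1$ or further left. This reduces to checking that no prior Ritt swap in the transit could have nudged this boundary past another boundary of $f$, which follows because the transit affects only positions $\leq c$ and, within those positions, by the minimality encoded in the structure of transits (see Lemma~\ref{lem:transit-twist-commute} and the proof of Lemma~\ref{lem:wqua-one-transit}), no intervening gate can be repositioned. Once that accounting is in place, Lemma~\ref{lem:lem421} delivers the direction of the gate mechanically, and the proof is complete.
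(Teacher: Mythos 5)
Your argument is correct and is essentially the paper's (unwritten) argument: the paper derives this corollary directly from Lemma~\ref{lem:wqua-one-transit}, the point being that the single transit must carry the quadratic across every boundary in $(a,b)$, and Proposition~\ref{prop:asf-and-gates} together with Lemma~\ref{lem:lem421} shows a one-way gate can only be crossed in its own direction. One small repair: your parenthetical ``it cannot drift below $c$ past another boundary of $f$ since there are none between $a$ and $c$ by choice of $c$'' is not justified as written ($c$ was arbitrary, not the boundary nearest $a$), but this is harmless because Proposition~\ref{prop:asf-and-gates}(1) already bounds the drift to one step, and a drift to $c-1$ would by part (4) of that proposition directly force the gate of $f$ at $c$ to be right-to-left, which is the desired conclusion anyway.
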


\begin{lem}
\label{lem:one-stuck-loitering}
Fix a reduced word $u$ and long decompositions $f$ and $g$ with $u \star f = g$.
If both $f_b$ and $g_b$ are loitering quadratics,
 then $h_b$ is a loitering quadratic
 whenever $u = u_2 u_1$ as words, and $h = u_1 \star f$.
\end{lem}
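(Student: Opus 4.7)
The plan is to establish two complementary facts: first, that the wandering quadratic $W := f_b$ equals $g_b$ as factors (so in particular $\pi(u)(b) = b$ under the projection $\pi:\Affk \to \operatorname{Sym}(\ZZ)$ of Remark~\ref{rk:meet-the-groups-rk}); and second, that this same $W$ occupies position $b$ in every intermediate decomposition.

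For the first fact, I would invoke Proposition~\ref{prop:asf-and-gates} together with uniqueness of the wandering quadratic owning a given gate. Because $f_b$ and $g_b$ are both loitering, both boundary lists contain $b-\tfrac{1}{2}$, and the corresponding boundary of $g$ (via $u$) lies in $\{b-1, b-\tfrac{1}{2}, b\}$. If this corresponding boundary were an integer $b-1$ or $b$, then the wandering quadratic $f_b$ would end up at some position $\pi(u)(b) \in \{b-1,b+1\}$ in $g$ owning a gate distinct from the loitering gate at $b-\tfrac{1}{2}$ owned by $g_b$, which is impossible because the loitering boundary of $g$ at $b-\tfrac{1}{2}$ has a unique owner. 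Hence the corresponding boundary is also $b-\tfrac{1}{2}$, so $\pi(u)(b)=b$ and $f_b = g_b = W$.

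For the second fact, write $u = w_r \cdots w_1$ and let $h^s := (w_s \cdots w_1)\star f$. I would argue by contradiction: suppose there is an intermediate $h^s$ in which $W$ is not at position $b$. Let $s_1$ be the first step with $W$ off $b$ and $s_2 > s_1$ the first step with $W$ back at $b$. Without loss of generality $w_{s_1} = t_b$, so $W$ swaps with a necessarily non-quadratic factor $X$ at position $b{+}1$ (tautological Ritt swaps are forbidden) and sits at $b{+}1$ throughout the intermediate steps; the word $v := w_{s_2-1}\cdots w_{s_1+1}$ therefore involves neither $t_b$ nor $t_{b+1}$, and $w_{s_2} = t_b$ returns $W$ to $b$. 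Applying Lemma~\ref{lem:waqualem} to the sub-action $(w_{s_2}\cdots w_{s_1+1})\star h^{s_1}$ (which brings the quadratic $W$ at position $b{+}1$ back to the loitering position $b$), no factor $h^{s_1}_i$ with $b \leq i \leq b{+}1$ is quadratic other than $W$ itself. Combined with Proposition~\ref{prop:one-swap-and-clusters}, this forces the cluster structure near $b$ in the intermediates to be rigid enough that the braid interaction of $v$ with the outer $t_b$'s can be untangled: any appearance of $t_{b-1}$ in $v$ can be shuffled past one of the two $t_b$'s using the braid relation $t_b t_{b-1} t_b = t_{b-1} t_b t_{b-1}$ together with commutation, producing a strictly shorter expression for $u$ and contradicting reducedness.

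The main obstacle will be the last paragraph's braid-theoretic simplification: although $v$ contains no $t_b$ or $t_{b+1}$, it may contain $t_{b-1}$, which does not simply commute with $t_b$. Here I expect that the cluster-structure invariants (the presence, absence, and direction of gates and the kinds of clusters, as catalogued in Theorem~\ref{thm:unique-brdyset} and Proposition~\ref{prop:asf-and-gates}) together with Lemma~\ref{lem:wqua-one-transit} will allow one to rewrite the offending portion of $u$ into a genuinely reduced form in which $W$ stays at $b$, thereby delivering the needed contradiction. Once $W$ is confirmed to sit at position $b$ at every intermediate step, the boundary list of $h$ must then also contain $b - \tfrac{1}{2}$ at the same index (otherwise the loitering quadratic $W$ at position $b$ would not exist in $h$), and so $h_b$ is a loitering quadratic, completing the proof.
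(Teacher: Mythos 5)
Your plan shares the paper's intuitions---tracking the quadratic $W$, invoking Lemma~\ref{lem:waqualem}, and using braid relations to contradict reducedness---but it has a central gap in the second paragraph. After $w_{s_1}=t_b$ moves $W$ to position $b+1$, you assert that $W$ ``sits at $b+1$ throughout the intermediate steps'' and conclude that $v := w_{s_2-1}\cdots w_{s_1+1}$ avoids $t_b$ and $t_{b+1}$. Nothing prevents $W$ from drifting further to $b+2, b+3, \ldots$ before returning, so this deduction has no support. You appeal to Lemma~\ref{lem:waqualem} to bound the motion, but that lemma requires the endpoint to be a \emph{loitering} quadratic of the target decomposition, whereas your choice of $s_2$ only ensures $W$ is back at position $b$---not that $h^{s_2}_b$ is loitering there. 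A related gap appears at the very end: having a quadratic at position $b$ in an intermediate $h$ does not by itself put $b-\tfrac{1}{2}$ into the boundary set of $h$, so your final sentence does not follow.

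The missing structural idea is the paper's minimal-counterexample device. The paper's proof inducts on the length of $u$ and takes the shortest counterexample; minimality then yields a property $(\spadesuit)$: for every proper factorization $u = u_2 u_1$, the $b$th factor of $u_1 \star f$ is \emph{not} a loitering quadratic. From $(\spadesuit)$ the paper proves the Claim that every $v_1 t_x \star f$ has a one-way, left-to-right, \emph{unoccupied} gate at $(b-1)$---which is precisely the rigidity you conjecture in your last paragraph but do not establish. That Claim forces $t_{b-1}$ not to occur in $v$, drops $v$ into a copy of $\Symk$, and lets the first-canonical-form analysis produce the length contradiction cleanly, rather than via the braid-shuffling you candidly acknowledge is incomplete. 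If you recast your argument as a minimal counterexample, extract $(\spadesuit)$, and prove the gate-control claim, the remainder converges with the paper's proof.
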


\begin{proof}
We induct on the length of $u$. The base cases where the length of $u$ is $0$ or $1$ are trivial. Assuming that the length of $u$ is at least $2$, we may write $u = t_y v t_x$ for some $x, y \in \mathbb{Z}$ and some, possibly empty, word $v$.

Suppose that $u$ is the shortest counterexample; then it is a counterexample in the strongest sense possible:

\begin{itemize}
\item[$(\spadesuit)$] for every way of writing $u$ as $u_2 u_1$ with non-empty $u_1$ and $u_2$, the $b$th factor of $u_1 \star f$ is not a loitering quadratic.
\end{itemize}

Otherwise, one of $u_2$ and $u_1$ is a shorter counterexample.
In particular, $t_x$ must move the loitering quadratic $f_b$. We work out the case where $x=b$; the other case where $x = b-1$ is symmetric.

\begin{claim*}
For any words $v_1$, $v_2$ with $v = v_2 v_1$, the long decomposition $v_1 t_x \star f$ has a one-way, left-to-right, unoccupied gate at $(b-1)$.
\end{claim*}

\begin{pfc}
If the gate of $v_1 t_x \star f$ corresponding to the gate of $f$ at $b$ is occupied, we contradict $(\spadesuit)$. Before the gate can move or change direction, it must have been occupied.
\end{pfc}

So $v t_x \star f$ has a one-way, left-to-right, unoccupied gate at $(b-1)$; but $u \star f = t_y \star (v t_x \star f)$ has a loitering quadratic at $b$. It follows that $y=b$, and $u = t_b v t_b$, and
\begin{itemize}
\item[$(\clubsuit)$] the $(b+1)^\text{st}$ factor of $v t_b \star f$ is a wandering quadratic.
\end{itemize}

The Claim also implies that $t_{b-1} \star (v_1 t_x \star f)$ is not defined for any such $v_1$, and so $t_{b-1}$ does not occur in $v$.

Thus, $v$ lies in a copy of $\Symk$, so we may replace
$v$ by its first canonical form $v = s_0 s_Q w$ where
 $w$ is a word in $t_i$ with $i \neq b-1, b, b+1$,
 permuting the factors in the big chunk on the left that
 consists of all the factors except the $b^\text{th}$
 and the $(b+1)^\text{st}$;
 and $s_Q$ is a right-to-left transit move the wandering
 quadratic from its starting $(b+1)^\text{st}$
  spot; and $s_0$ is a right-to-left transit moving
  the $b^\text{th}$ factor of $t_b \star f$ left.
  But we know from $(\clubsuit)$ that the quadratic ends up back in the $(b+1)^\text{st}$ spot! So, there are only two possibilities for $s_0$ and $s_Q$.

\vspace{.1in}
\noindent
{\bf Case 1:} If $s_Q$ is empty, then $s_0$ must also be empty.
 Since $w$ commutes with $t_b$, we now have
 $$u = t_b v t_b \simeq t_b s_0 s_Q w t_b = t_b w t_b \simeq w t_b t_b$$
where $\simeq$ means ``reducible of each other'' so same length.
This contradicts $u$ being reduced.

\vspace{.1in}
\noindent
{\bf Case 2:} If $s_Q$ is non-empty, it still can only move the wandering quadratic one step, so $s_Q = t_{b+1}$. Now $s_0$ must move its factor past the quadratic, so at least two steps: $s_0 = s_1 t_{b+1} t_b$. So now
$$u = t_b v t_b \simeq t_b s_0 s_Q w t_b = t_b (s_1 t_{b+1} t_b) t_{b+1} w t_b \simeq
t_b s_1 \mathbf{ t_b t_{b+1} t_b } w t_b \simeq t_b s_1  t_b t_{b+1} \mathbf{  w t_b t_b }$$
again, contradicting  $u$ being reduced. (In the
displayed equations, the two boldface expressions
come from the braid relation $t_{b+1} t_b  t_{b+1} \simeq t_b t_{b+1} t_b$ and the commutation of $w$ with $t_b$.)

In both cases, we have obtained the desired contradiction.
\end{proof}

From Lemma~\ref{lem:one-stuck-loitering} we immediately
obtain the following useful corollary.

\begin{cor}
\label{cor:one-stuck-loitering}
Fix a reduced word $u$ and long decompositions $f$ and $g$ with $u \star f = g$.
If both $f_b$ and $g_b$ are loitering quadratics, then $t_b$ and $t_{b-1}$ do not occur in $u$.
\end{cor}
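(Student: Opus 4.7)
The plan is to argue by contradiction, using Lemma~\ref{lem:one-stuck-loitering} and Lemma~\ref{lem:waqualem} in tandem to derive incompatible requirements on the factor at position $b$ of a suitable intermediate long decomposition. Suppose $u$ contains $t_b$; the case of $t_{b-1}$ is symmetric and handled identically.

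First, I would fix any occurrence of $t_b$ in $u$ and write $u = u_2 \, t_b \, u_1$. Setting $h := u_1 \star f$ and $h' := t_b \star h = (t_b u_1) \star f$, two applications of Lemma~\ref{lem:one-stuck-loitering}---once with the splitting $u = u_2 \cdot (t_b u_1)$ and once with $u = (u_2 t_b) \cdot u_1$---yield that both $h_b$ and $h'_b$ are loitering quadratics; in particular, $h_b$ is a quadratic factor of $h$.

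Next, I would apply Lemma~\ref{lem:waqualem} to the one-letter action $t_b \star h = h'$. Since the permutation $\pi(t_b)$ from Remark~\ref{rk:meet-the-groups-rk} swaps positions $b$ and $b+1$, the factor $h'_b$ comes from $h_{b+1}$ in the sense of Definition~\ref{def:def-becomes}. Invoking Lemma~\ref{lem:waqualem} with its $a$ equal to $b$ and its $b$ equal to $b+1$ (the case $b > a$ in that lemma) forces $h_i$ not to be quadratic for $i \in \{b, b+1, \dots, b\} = \{b\}$. So $h_b$ is not quadratic, directly contradicting the previous paragraph. For the case that $u$ contains $t_{b-1}$, the analogous argument applies: $h'_b$ then comes from $h_{b-1}$, and Lemma~\ref{lem:waqualem} is applied with its $b$ equal to $b-1 < a = b$, again forcing $h_b$ to be non-quadratic.

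No significant obstacle is anticipated; the corollary is a clean two-lemma combination, with Lemma~\ref{lem:one-stuck-loitering} pinning $h_b$ to be a loitering quadratic at every intermediate stage and Lemma~\ref{lem:waqualem} forbidding the factor immediately adjacent to a moving loitering quadratic from itself being quadratic. Any instance of $t_b$ or $t_{b-1}$ in $u$ would produce an immediate neighbor situation that violates this wandering-quadratic restriction.
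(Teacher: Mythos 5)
Your argument is correct. Applying Lemma~\ref{lem:one-stuck-loitering} to both prefixes $u_1$ and $t_b u_1$ (respectively $t_{b-1}u_1$) establishes that the intermediate factor at position $b$ is a loitering quadratic both before and after the Ritt swap at $b$ (resp.\ $b-1$), and since $\pi(t_b)$ sends $b+1$ to $b$ (resp.\ $\pi(t_{b-1})$ sends $b-1$ to $b$), Lemma~\ref{lem:waqualem} applied to the one-letter word gives that the factor occupying position $b$ before the swap cannot be quadratic --- the desired contradiction. The paper states this corollary without proof as an immediate consequence of Lemma~\ref{lem:one-stuck-loitering} alone; your write-up makes the underlying mechanism explicit by bringing in Lemma~\ref{lem:waqualem} to forbid a quadratic factor at the target position, which is exactly the extra ingredient needed to pass from ``$h_b$ is loitering at every intermediate stage'' to ``$t_b$ and $t_{b-1}$ never occur.'' The only stylistic slip is the phrase ``forbidding the factor immediately adjacent to a moving loitering quadratic from itself being quadratic'': Lemma~\ref{lem:waqualem} forbids quadratics in the strip of positions the traveling loitering quadratic passes through, which in your one-step application reduces to the single position $b$; the adjacency framing is fine here but could mislead for longer transits.
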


\begin{Def}
\label{def:lazy}
A long decomposition f is \emph{lazy} is all wandering quadratics of $f$ are loitering.
\end{Def}

\begin{lem}
\label{lem:stuck-loitering}
If $g$ and $u \star	g$ are both lazy and the same
gates are occupied by wandering quadratics in both,
then $u = u_r \ldots u_1$ where $u_i$ permutes factors
in the $i^\text{th}$ cluster (with respect to some
listing of the clusters) and does not move any
wandering quadratic in that cluster.
\end{lem}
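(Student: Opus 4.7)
The plan is to use the loitering quadratics and the gate-free boundaries of a common clustering as walls that block the action of $u$, breaking it into cluster-wise pieces and leaving each wandering quadratic fixed, in close analogy with the wall argument of Proposition~\ref{prop:warm-up}.

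By Proposition~\ref{prop:asf-and-gates} together with Theorem~\ref{thm:unique-brdyset}, the boundary set $B \subseteq \frac{1}{2}\mathbb{Z}$ is $\Affk$-invariant, so $g$ and $u \star g$ share it. Since both decompositions are lazy, every wandering quadratic is loitering, and, by Lemma~\ref{lem:unowned-gate-wall}, every gate is owned by a wandering quadratic. Each half-integer $b - \frac{1}{2} \in B$ therefore corresponds, by Definition~\ref{def:def-bdry}, to a loitering quadratic sitting at the integer position $b$ in both $g$ and $u \star g$, while any integer $c \in B$ must be a cluster boundary with no gate, hence a wall of $g$ by Proposition~\ref{prop:no-gate-wall}.

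Next I will apply Corollary~\ref{cor:one-stuck-loitering} at each half-integer $b - \frac{1}{2} \in B$: because $g_b$ and $(u \star g)_b$ are both loitering quadratics at the same physical position $b$, neither $t_b$ nor $t_{b-1}$ appears in any reduced word representing $u$. At each wall $c \in B$, the wall property itself forbids $t_c$ from any such word. Fix a clustering $A$ of $g$; every cluster boundary of $A$ is an integer $c$ that is either in $B$ (a wall) or equal to $b - 1$ or $b$ for some loitering half-integer $b - \frac{1}{2} \in B$, and in every case the unique boundary-crossing generator $t_c$ has already been excluded. Hence any reduced word for $u$ uses only generators $t_j$ with $j$ strictly in the interior of some $A$-cluster. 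Since generators supported in disjoint clusters commute, I may rearrange such a word to obtain $u = u_r \cdots u_1$ with each $u_i$ supported on generators strictly inside the $i$th cluster, consistent with the second canonical form described in Remark~\ref{rk:canonical-form}.

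The final claim that $u_i$ does not move any wandering quadratic in the $i$th cluster is then immediate: laziness forbids wandering quadratics from the interior of any cluster, and the loitering quadratics at the two boundaries of the $i$th cluster cannot be touched by $u_i$ because precisely the neighbouring generators $t_b, t_{b-1}$ were excluded in the previous step. I expect the one real subtlety to be the matching of physical positions of loitering quadratics between $g$ and $u \star g$ required to invoke Corollary~\ref{cor:one-stuck-loitering}; this is handled cleanly by Definition~\ref{def:def-bdry}, whereby the half-integer $b - \frac{1}{2} \in B$ by itself pins down the integer position $b$ of the quadratic, so that the shared boundary set forces the two physical positions to coincide.
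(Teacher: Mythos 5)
Your treatment of the occupied gates and the final collection step match the paper: at each half-integer $b-\frac12$ of the boundary set you correctly invoke Corollary~\ref{cor:one-stuck-loitering} to exclude $t_b$ and $t_{b-1}$, and the commutation of intracluster swaps in distinct clusters then gives the product form. The gap is in your handling of the integer elements of the boundary set. You assert that ``any integer $c \in B$ must be a cluster boundary with no gate, hence a wall of $g$ by Proposition~\ref{prop:no-gate-wall}.'' That is false: by Definition~\ref{def:def-bdry}, $c \in B \cap \ZZ$ only records that Lemma~\ref{lem:lem421} does not apply at $c$, i.e.\ that no loitering quadratic currently sits against that boundary. The boundary can perfectly well carry a one-way gate owned by a wandering quadratic that, in the lazy configuration $g$, happens to be loitering at a \emph{different} gate it owns. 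This is in fact the typical situation in the intended application (Proposition~\ref{prop:lazy-transit} with $p$ wandering quadratics and $r>p$ clusters per period): $r-p$ gates per period are owned but unoccupied, and they sit at integer elements of $B$. Such a boundary is not a wall --- its owner can travel to it and cross --- so your justification for excluding $t_c$ collapses exactly in the case that carries the content of the lemma. (Relatedly, you misquote Lemma~\ref{lem:unowned-gate-wall}: it says unowned gates are walls, not that every gate is owned.)

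The conclusion you want at these boundaries is still true, but it needs the hypothesis that the \emph{same} gates are occupied in $g$ and $u \star g$, used as follows. Each occupied gate is occupied in both $g$ and $u\star g$ by its unique owner sitting at the same integer position, so Lemma~\ref{lem:one-stuck-loitering} pins that quadratic at that position in \emph{every} intermediate decomposition $u_1 \star g$. Since $g$ is lazy, every wandering quadratic is so pinned; hence the unoccupied gate at $c$ is never occupied at any intermediate stage. A one-way gate can only be crossed by a quadratic that first occupies it (Lemma~\ref{lem:purposeofgates} and Proposition~\ref{prop:asf-and-gates}), so $t_c$ cannot occur in a reduced word for $u$. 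With that repair your argument coincides with the paper's proof.
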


\begin{proof}
Fix a reduced presentation $w$ of $u$.
By Corollary~\ref{cor:one-stuck-loitering}, whenever $g$ has an occupied gate at $b$, neither $t_b$ nor $t_{b-1}$ occur in $w$.
Furthermore, by Lemma~\ref{lem:one-stuck-loitering} and Corollary~\ref{cor:one-stuck-loitering}, whenever $g$ has an unoccupied gate at $a$, the gate remains unoccupied, so $t_a$ does not occur in $w$. Intracluster swaps in different clusters commute with each other, and so they can be collected as described. \end{proof}

\begin{prop}
\label{prop:lazy-transit}
Let $f$ be a lazy long decomposition with $r$ clusters and $p$ wandering quadratics per period,
and suppose that $w \star f$ is also lazy, for some $w \in \Affk$.
Then
$w \star f$ may be expressed as
$$w \star f = s_p \ldots s_2 s_1 u_r \ldots u_2 u_1  \star f$$
where
 \begin{itemize}
 \item $u_j$ permutes factors in the $j$th cluster
 (with respect to some listing of the clusters) and
 \item $s_i$ is a transit that moves the $i$th wandering quadratic (again with respect to some listing of the wandering quadratics) from one gate to another.
 \end{itemize}
\end{prop}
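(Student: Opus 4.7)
The plan is to induct on a combinatorial invariant $N(w)$ counting the wandering quadratics of $g:=w\star f$ sitting at the ``wrong'' gate. By Proposition~\ref{prop:asf-and-gates} and Theorem~\ref{thm:unique-brdyset}, $f$ and $g$ share a common cluster structure, and the permutation $\pi(w)$ induces a bijection between their wandering quadratics: if $g_b$ comes from $f_a$ via $w\star f$, then $g_b$ loiters at the gate $b-\frac{1}{2}$ in $g$ and $f_a$ loiters at the gate $a-\frac{1}{2}$ in $f$. I will call $g_b$ \emph{aligned} relative to $w$ when the boundary $b-\frac{1}{2}$ of $g$ is the $w$-image of the boundary $a-\frac{1}{2}$ of $f$ in the sense of Definition~\ref{def:gate-becomes}, and \emph{misplaced} otherwise; set $N(w)$ to be the number of misplaced wandering quadratics per period.

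For the base case $N(w)=0$, the same gates are occupied by wandering quadratics in both $f$ and $g$, so Lemma~\ref{lem:stuck-loitering} immediately yields $w = u_r\cdots u_1$ as a product of within-cluster permutations; padding with $p$ trivial transits gives the desired form. For the inductive step with $N(w)\geq 1$, I pick a misplaced wandering quadratic $g_b$ of $g$ coming from $f_a$. Taking the witness $v=w^{-1}$ in Definition~\ref{def:loiter-def} exhibits $g_b$ as a wandering quadratic of $g$ that owns the gate $a''$, where $a''-\frac{1}{2}$ is the $w$-image of $a-\frac{1}{2}$. Lemma~\ref{lem:wqua-one-transit} then supplies a transit $s$ such that in $g^{(1)}:=s\star g$ the quadratic descended from $g_b$ loiters at the gate $a''-\frac{1}{2}$. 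By Lemma~\ref{lem:waqualem} this transit crosses no other quadratic factor, so every other wandering quadratic of $g$ retains both its position and the cluster boundary at which it loiters in $g^{(1)}$, which therefore remains lazy.

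Setting $w^{(1)}:=sw$ gives $g^{(1)}=w^{(1)}\star f$. The relocated quadratic is aligned relative to $w^{(1)}$ by construction, while the alignment of every other wandering quadratic is unchanged from $w$ to $w^{(1)}$ (because $s$ fixes their positions as well as the boundary correspondences used to define alignment), so $N(w^{(1)})=N(w)-1$. The induction hypothesis then supplies $w^{(1)}=s_{p-1}\cdots s_1 u_r\cdots u_1$ in the desired form; setting $s_p:=s^{-1}$, which is itself a transit moving a single wandering quadratic between two gates in $g^{(1)}$, gives $w = s^{-1}w^{(1)} = s_p s_{p-1}\cdots s_1 u_r\cdots u_1$, as required. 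The most delicate step is the verification that $N(w^{(1)})=N(w)-1$: this rests entirely on Lemma~\ref{lem:waqualem} to ensure that $s$ does not disturb the other wandering quadratics or the gates at which they loiter, so that their alignment status transfers cleanly from $w$ to $sw$.
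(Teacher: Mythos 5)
Your proof is correct and is essentially the paper's argument run in mirror image: the paper peels the correcting transit off on the right (applying it to $f$ first, so the recursion produces the reversed form $u_r\cdots u_1 s_p\cdots s_1$, which is then converted to the stated form via Lemma~\ref{lem:transit-twist-commute} or, as the paper notes, by applying the result to $w^{-1}$), whereas you peel it off on the left by acting on $g$, which yields the stated form directly. The induction quantity (loitering quadratics moved to a non-corresponding gate), the base case via Lemma~\ref{lem:stuck-loitering}, and the inductive step via Lemmas~\ref{lem:wqua-one-transit} and~\ref{lem:waqualem} all coincide with the paper's.
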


\begin{proof}
We are actually going to obtain a slightly different form $u_r \ldots u_2 u_1 s_p \ldots s_2 s_1$ where transits happen first and intracluster swaps happen second. The stated result follows from Lemma~\ref{lem:transit-twist-commute}, or by applying the proved result to $w^{-1}$.

We induct on the number of loitering quadratics of $f$ moved to a different gate by $w\star f$.
The base case where none are moved is Lemma~\ref{lem:stuck-loitering}, with all $s_j$ empty.

Now suppose that a wandering quadratic $f_a$ of $f$ is moved to $g_b$ via $w \star f =g$.
Applying Lemma~\ref{lem:wqua-one-transit} produces a transit $s$,
a word $w' := w s^{-1}$, and a decomposition $f' := s \star f$ with
the factor $f'_b$ being a loitering quadratic of $f'$, and all other loitering quadratics of $f$ unmoved.

Now $w' \star f' = g$, both $f'$ and $g$ are lazy, and one more loitering quadratic stays put in
$w' \star f' = g$ as compared to $w \star f = g$. Thus, by inductive hypothesis, we may express $w'$ as desired; and then
$w = s w'$ is also of the requisite form.

\end{proof}

\begin{prop}
\label{prop:via-lazy-transit}	
For a long decomposition $f$ with $r$ clusters and $p$ wandering quadratics per period,
and for any $w \in \Affk$,
$w \star f$ may be expressed as
$$w \star f = v_{in} s_p \ldots s_2 s_1 u_r \ldots u_2 u_1 v_{out} \star f$$
where
 \begin{itemize}
 \item $v_{out}$ moves all wandering quadratics to a nearest owned gate, via transits,
 so that $v_{out} \star f$ is lazy,
 \item $u_j$ permutes factors in the $j$th cluster
 (with respect to some listing of the clusters),
 \item $s_i$ is a transit that moves the $i$th wandering
 quadratic (again with respect to some listing of the
 wandering quadratics) from one gate to another, and
 \item $v_{in}$ moves some wandering quadratics from a gate into clusters, via transits.
 \end{itemize}
  \end{prop}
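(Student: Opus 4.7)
The plan is to bootstrap this from Proposition~\ref{prop:lazy-transit} by sandwiching an arbitrary action between two processes that force the endpoints to be lazy. More precisely, set $g := w \star f$. I would first construct a word $v_{\text{out}}$ so that $f' := v_{\text{out}} \star f$ is lazy, and separately a word $v_{\text{in}}$ so that $g' := v_{\text{in}}^{-1} \star g$ is lazy. Then $v_{\text{in}}^{-1} w v_{\text{out}}^{-1} \star f' = g'$ is an identity between two lazy long decompositions, to which Proposition~\ref{prop:lazy-transit} applies, producing a factorization $v_{\text{in}}^{-1} w v_{\text{out}}^{-1} = s_p \cdots s_1 u_r \cdots u_1$ of the required shape. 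Rearranging yields $w = v_{\text{in}} s_p \cdots s_1 u_r \cdots u_1 v_{\text{out}}$, as wanted.

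To construct $v_{\text{out}}$, list the wandering quadratics $f_{a_1}, \ldots, f_{a_p}$ of $f$ (one per period), and for each $f_{a_i}$ fix a nearest gate $b_i$ that it owns. By Lemma~\ref{lem:wqua-one-transit}, there is a transit $t_i$ that moves $f_{a_i}$ into a loitering position at $b_i$. I would then set $v_{\text{out}} := t_p \cdots t_1$, showing inductively that at each stage the quadratic being moved has not been displaced by earlier transits and the transit remains defined in the intermediate decomposition. The key tool here is Corollary~\ref{cor:wqua-gate-direction}, which says every gate between $a_i$ and $b_i$ is oriented in the direction of motion, so $t_i$ acts without obstruction. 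The number of wandering quadratics is an invariant of the $\Affk$-orbit, so after all $p$ transits, every wandering quadratic of $f' = v_{\text{out}} \star f$ is loitering, making $f'$ lazy. The construction of $v_{\text{in}}$ is identical applied to $g$ in place of $f$, giving a product of transits whose inverse moves the wandering quadratics of $g$ into loitering positions.

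Now $f'$ and $g'$ are both lazy, with $v_{\text{in}}^{-1} w v_{\text{out}}^{-1} \star f' = g'$. By Proposition~\ref{prop:lazy-transit}, write $v_{\text{in}}^{-1} w v_{\text{out}}^{-1} = s_p \cdots s_1 u_r \cdots u_1$, where the $s_i$ are transits swapping wandering quadratics between owned gates and each $u_j$ permutes factors inside the $j$th cluster of $f'$. Solving for $w$ and applying to $f$ gives the claimed decomposition.

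The main obstacle is showing that the sequence $v_{\text{out}} = t_p \cdots t_1$ is well-defined and actually produces a lazy decomposition — in particular, that moving one wandering quadratic toward its owned gate does not disturb another wandering quadratic that has not yet been moved, and does not prevent the subsequent transit from being defined. The uniqueness of the owner of each gate (already established), together with Lemma~\ref{lem:waqualem} and Corollary~\ref{cor:wqua-gate-direction}, which together prevent two quadratics from interfering on the path between a wandering quadratic and its owned gate, should handle this; the residual bookkeeping is an induction on $p$ where at each stage one verifies that the next transit $t_{i+1}$ survives as a transit in $t_i \cdots t_1 \star f$, and that all still-unmoved wandering quadratics retain their ownership of a nearby gate.
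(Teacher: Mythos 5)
Your proposal is correct and follows essentially the same route as the paper: build $v_{\text{out}}$ and $v_{\text{in}}$ from the transits supplied by Lemma~\ref{lem:wqua-one-transit}, then apply Proposition~\ref{prop:lazy-transit} to the lazy decomposition $v_{\text{out}} \star f$ and the element $v_{\text{in}}^{-1} w v_{\text{out}}^{-1}$. The extra bookkeeping you describe for the non-interference of successive transits is detail the paper leaves implicit, but it is the right thing to check.
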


\begin{proof}
Lemma~\ref{lem:wqua-one-transit} gives the transits
making up $v_{in}$ and $v_{out}$.  Apply
Proposition~	\ref{prop:lazy-transit} to the
long decomposition
$v_{out} \star f$ and  $v_{in}^{-1} w v_{out}^{-1}
\in \Affk$ to obtain the $u_j$'s and $s_i$'s.
\end{proof}

\begin{Rk}
\begin{enumerate}
\item $s_j$ and $u_j$ can be done in any order.  See
Lemma~\ref{lem:transit-twist-commute}. 
\item All intermediate decompositions between $s_p \ldots s_2 s_1 u_r \ldots u_2 u_1 v_{out} \star f$
and $v_{out} \star f$ are lazy.
\item  In constructing the transits in $v_{out}$ and $v_{in}$, if a particular wandering quadratic $f_i$ owns the gates at both sides of the cluster it is in, we are free to choose to move this $f_i$ to either one of those gates.
\end{enumerate}
\end{Rk}

We proceed to complete the proof of Theorem~\ref{thm:skew-inv} under the
hypothesis that a long decomposition of the
polynomial $P$ admits a non-empty clustering.
With the results we have already proven, this
reduces to proving Theorem~\ref{thm:skew-inv}
under much stronger hypotheses on such a clustering.

\begin{prop}
\label{prop:one-way-Mahler}
Let $f$ be a long decomposition admitting
a non-empty clustering
with at least one $\mathsf{C}$~cluster and
only one-way gates all of which are owned by some
wandering quadratic.  If for some $\gamma \in \STpk$, we have
$\gamma \star f = f^\tau$,
then there are $\alpha$ and $\beta$ in $\Symk$ such
that $\beta \phi^N \alpha \star f = f^\tau$ encodes
the same skew-invariant curve.  Thus, that curve is
a skew-twist.
\end{prop}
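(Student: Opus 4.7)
The plan is to find another encoding $\beta \phi^{N_0} \alpha$ of the same curve with $\alpha, \beta \in \Symk$, from which Remark~\ref{rk:encoding} identifies the curve as a skew-twist. First, I would eliminate the $\epsilon_p$-factors from $\gamma$: writing $\gamma = \epsilon \phi^N w$ per the semidirect-product description of Remark~\ref{rk:semidirect} with $w \in \Affk$ and $\epsilon = \prod_p \epsilon_p^{m_p}$, an argument along the lines of Lemma~\ref{lem:clean-no-epsilons} — comparing the in-degree data of $f$ and $f^\tau$ prime-by-prime, exploiting that $\tau$ preserves this data while a nonzero $m_p$ would shift the $p$-parts — forces $m_p = 0$ for every $p$, so up to equivalence $\gamma = \phi^N w$ with $w \in \Affk$.

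Next, I would reduce to lazy decompositions and decompose the $\Affk$-part. Applying Lemma~\ref{lem:wqua-one-transit} in turn to each wandering quadratic produces $\rho \in \Affk$ so that $\bar{f} := \rho \star f$ is lazy; since $\rho$ acts purely combinatorially, the same $\rho$ lazifies $f^\tau$. The equation rewrites as $\phi^N w' \star \bar{f} = \bar{f}^\tau$ for some $w' \in \Affk$, encoding the same curve. Both $\bar{f}$ and $w' \star \bar{f} = \phi^{-N} \bar{f}^\tau$ are lazy, so Proposition~\ref{prop:lazy-transit} writes $w' = s_p \cdots s_1 u_r \cdots u_1$ with each $u_j$ an intra-cluster permutation and each $s_i$ a transit carrying a wandering quadratic between two owned gates. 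Theorem~\ref{thm:unique-brdyset} and Proposition~\ref{prop:asf-and-gates} then force $\phi^N w'$ to implement a cyclic shift by $N$ of the clustering structure of $\bar{f}$, which pins down the permutation realized by the transit part $s_p \cdots s_1$ up to intra-cluster action.

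The heart of the argument, and its main obstacle, is the final rearrangement across the $k$-$(k{+}1)$ boundary. Mirroring Case 2 of the proof of Proposition~\ref{prop:warm-up}, I would put $w'$ in second canonical form with respect to this boundary via Lemma~\ref{lem:improved-2nd-canonical}, thereby separating the across-boundary contributions — both from the intra-cluster swaps in the cluster containing the boundary and from the transits crossing it — into a distinguished block $v$. A degree-counting (or wandering-quadratic-counting) invariant, analogous to those used in Proposition~\ref{prop:warm-up}, should then show that whatever part of $v$ is not already accounted for by $\phi^N$ must act trivially, so the surviving Ritt swaps sort into words $\alpha \in \Symk$ on the right of $\phi^{N_0}$ and $\beta \in \Symk$ on the left, yielding the desired representation $\beta \phi^{N_0} \alpha$. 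The delicate part is verifying that the relevant invariant distinguishes genuine cross-boundary permutations from the cyclic shift implicit in $\phi^N$, particularly when a transit and an intra-cluster swap both touch the boundary inside the same cluster.
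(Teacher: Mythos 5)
Your lazification step is where the proof breaks down. You take $\rho \in \Affk$ with $\bar{f} := \rho \star f$ lazy and assert that $\phi^N w' \star \bar{f} = \bar{f}^\tau$ encodes the same curve as $\gamma \star f = f^\tau$. But the encoded curve is preserved under this conjugation only when $\rho \in \Symk$ (see Remark~\ref{rk:encoding} and the argument in Proposition~\ref{prop:reduce-to-periodic}, where the replacement is justified precisely because $u \in \Symk$ encodes the diagonal). There is no reason the transits making up $v_{out}$ can avoid $t_k$: if a non-loitering wandering quadratic sits near the $k$/$(k{+}1)$ boundary and its nearest owned gate lies across that boundary, $v_{out}$ must use $t_k$. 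The paper isolates exactly this obstruction in an explicit Claim that splits the proof into two cases: either $v_{out}$ can be chosen in $\Symk$ (after which your Proposition~\ref{prop:lazy-transit} decomposition and the cross-boundary bookkeeping proceed, modulo the details of comparing $C_{bad}$ with the cluster of $f^\tau$ it corresponds to), or else the cluster $C_{bad}$ containing $f_k$ and $f_{k+1}$ has a wandering quadratic owning only one of its two adjacent gates. The second case ($\spadesuit$) is a substantive separate argument — tracking the corresponding cluster $Y$ of $f$ and how $Q_{bad}$ versus $Q_{own}$ correspond across $\phi^N w \star f = f^\tau$, and ruling out the degenerate subcase where all gates point the same way via Corollary~\ref{cor:wqua-gate-direction} — and your proposal omits it entirely.

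A secondary issue: your elimination of $\epsilon_p$ invokes the nomodata comparison of Lemma~\ref{lem:clean-no-epsilons}, but nomodata are defined only for clean $\mathsf{C}$-free long decompositions, and $f$ here has a type $\mathsf{C}$ factor. The correct (and much cheaper) observation, which the paper uses, is that $\epsilon_p^{\pm 1} \star f$ is simply not defined: for odd $p$ because of the Chebyshev factor, and for $p=2$ because the hypothesis that every gate is owned forces $f$ to have a quadratic factor. So the mere definedness of $\gamma \star f$ already puts $\gamma \in \STk$, with no prime-by-prime in-degree argument needed.
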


\begin{proof}
Since we have at least on type $\mathsf{C}$ factor,
$\epsilon_p^{\pm 1} \star f$ is not defined for any
odd $p$.  Since gates are owns, $f$ must have at least
one quadratic factor; so, $\epsilon_2^{\pm 1} \star f$ is
also undefined.  Thus, $\gamma \in \STk$ and may be expressed
as $\gamma = \phi^N w$ for some $N \in \ZZ$ and $w \in \Affk$.
We now have $\phi^N w \star f = f^\tau$

 The following claim identifies the two cases
 we will consider.

 \begin{claim*}
  One of the following holds \begin{itemize}
 \item[$\clubsuit$] Applying Proposition~\ref{prop:via-lazy-transit} to $w \star f$, we may take $v_{out} \in \Symk$.
 \item[$\spadesuit$]  The long decomposition $f$ does not have a cluster boundary at $k$ and $f_{k+1}$ and $f_k$ are not loitering quadratics. The cluster $C_{bad}$ of $f$ containing $f_{k+1}$ and $f_k$ also contains a wandering quadratic $f_a$ which owns exactly one of the gates on the sides of this cluster.
 If $f_a$ owns the gate on the left of this cluster, $k \geq a$; and
 if  $f_a$ owns the gate on the right of this cluster, $a \geq k+1$. 
 \end{itemize}
\end{claim*}
\begin{pfc}
  The only
 reason that $\clubsuit$ would fail is that $v_{out}$ contains $t_k$.
This cannot happen if either $f_{k+1}$ or $f_k$ is a loitering quadratic of $f$, because such
a loitering quadratic is not moved by $v_{out}$.

  Consider the situation were the
  transit $s$ in $v_{out}$ containing
  $t_k$ is left-to-right; the other one is analogous.
 At some stage, this transit $s$ takes the wandering quadratic from the $(k+1)^\text{st}$
  position to the $k^\text{th}$ one. If $f$ had a cluster boundary at $k$, the wandering quadratic would already be loitering when it is in the
  $(k+1)^\text{st}$ spot, contradicting the fact that $v_{out}$ moves each wandering quadratic to a nearest gate.

 Thus, ``the cluster $C_{bad}$ of $f$ containing $f_{k+1}$ and $f_k$'' is well-defined. The wandering quadratic moved by the transit $s$ must be inside this cluster when $t_k$ moves it
  from the $(k+1)^\text{st}$ position to the $k^\text{th}$ position. So, it starts out in $C_{bad}$. Since $s$ contains $t_k$, it starts out at the $(k+1)^\text{st}$ spot or further left, and moves to the gate on the right side of $C_{bad}$.  If it also owns the gate on the left side of $C_{bad}$, it could have moved there instead, giving us a different $v_{out}$, with no $t_k$.
 \end{pfc}

We consider now the two cases identified in the claim.

\vspace{.1in}
\noindent
{\bf Case $\clubsuit$:}  Replacing $f$ and $f^\tau$ by $v_{out} \star f$ and $v_{out} \star f^\tau$, it suffices to solve the Mahler problem for a lazy decomposition. Fix a
representation of $w$ as given by  Proposition~\ref{prop:lazy-transit}.  We argue
just like the case with walls: in
 $$\phi^N s_p \ldots s_2 s_1 u_r \ldots u_2 u_1$$
 at most one $s_h$ and at most one $u_j$ contains $t_k$. We move those left via commutation to get $\phi^N u_j s_h v$ where $v \in \Symk$.

 We consider the special case where  $C_{bad}^\tau$ corresponds to $C_{bad}$ via $\phi^N w \star f = f^\tau$.
 Then for each $b$, if there is a gate of $f^\tau$ at $b$, it corresponds to a gate of $f$ at $b$ via
 $\phi^n w \star f = f^\tau$.  This implies that
 $w$ does not move any loitering quadratics, so that
 all $s_i$'s are empty.
 The following shows that the $u_i$s act trivially.

 \begin{claim*}
 Fix a cluster $f_{(b,a]}$ of some clustering of a long decomposition $f$, and a word $u$ in $\{ t_i : b > i >a \}$. If the degree sequence of $u \star f$ is the same as the degree sequence of $f$
 and positions of the monomials are also preserved, then $u$ represents the identity of $\Affk$.	
 \end{claim*}
 \begin{pfc}
 If $f_{(b,a]}$ is a $\mathsf{C}$~cluster, then the
 degrees completely determine the factors $f_j$.
 Because tautological swaps are not allowed, the
 order of factors of the same degree cannot change.

 If $f_{(b,a]}$ is a $\mathsf{C}$-free cluster, then
 the degrees completely determine the monomial factors
 and again because tautological swaps are not allowed,
 the order of monomial factors of the same degree
 cannot change.  Since every swap in a $\mathsf{C}$-free
 cluster must involve a monomial, the action of
 $u$ is trivial.	
 \end{pfc}

From now on we may assume that  $C_{bad}^\tau$
does not correspond to $C_{bad}$ via $\phi^N w \star f = f^\tau$.

\begin{claim*}
The transit $s_j$ which contains $t_k$
does not contain $t_{k-N}$. Thus,
$\widehat{s}$ defined by
$\phi^N s_j = \widehat{s} \phi^N$ does not contain $t_k$.
 \end{claim*}

 \begin{pfc}
 We work out the case that
 $s_j$ is right-to-left. So, each gate it crosses starts right-to-left and ends left-to-right. Consider the gates on the sides of the cluster $X$ of $f$ containing $f_k$, and the cluster $Y$ of $f$
 for which $X^\tau$ corresponds to $Y$ via $\phi^N w \star f = f^\tau$.
  Since $s_j$ contains $t_k$, it crosses $X$.
  Thus, $X$ has right-to-left gates on both sides in $f$.
  So $X^\tau$ also has right-to-left gates on both sides in $f^\tau$.
  If $s_j$ also contains $t_{k - N}$, then it crosses $Y$.  So, the corresponding clusters $Z'$ and $Z$ in $s_j \star f$ and in $w \star f$,
  respectively,
 have left-to-right gates at both ends.
  But $X^\tau$ corresponds to $Z$ via $\phi^N \star (w\star f) = f^\tau$, so it also must have left-to-right gates at both ends, contradicting
  that $X^\tau$ must have right-to-left gates
  at both ends.
  \end{pfc}

We now have the ingredients to complete the proof in this
case.
Let $s_h$ be the
transit containing $t_k$
and let $u_j$ be the one acting on $C_{bad}$, and let
$\widehat{u_j}$ and $\widehat{s_h}$ be defined by
$\phi^N u_j s_h = \widehat{u_j} \widehat{s_h} \phi^N$.
Because $s_h$ does not contain $t_{k-N}$,
we know that $\widehat{s_h}$ does not contain $t_k$.
Because $C_{bad}$ gets moved to a different cluster by
$\phi^N$, we know that $\widehat{u_j}$
does not contain $t_k$.
Thus, $\phi^N w = \widehat{u}_j \widehat{s}_h \phi^N v$
with $v \in \Symk$ and
$\widehat{u}_j \widehat{s}_h \in \Symk$,
and we are done with Case $\clubsuit$.

 \vspace{.1in}
 \noindent
{\bf Case $\spadesuit$}: Throughout the proof in
this case we shall refer to the wandering quadratic
$f_a$ and any factor it becomes via $v \star f$ for any
$v \in \STk$ as $Q_{bad}$.

As in Case $\clubsuit$, we may assume that all
other wandering quadratics of $f$ are loitering.
Suppose that $Q_{bad}$ owns the gate on the left of $C_{bad}$, which then must be right-to-left.
The other direction is symmetric. The gate on the right
of $C_{bad}$ is either occupied or right-to-left,
because its owner is at a gate somewhere.

We first consider the special case where this gate is owned by $f_{a-k}$.
Under this hypothesis, Corollary~\ref{cor:wqua-gate-direction} gives that all
gates in $f$ are right-to-left and all gates in
$w \star f$, and therefore in $f^\tau = \phi^N w \star f$ are left-to-right.  This is a contradiction.

So, we may assume that this gate is owned by some
wandering quadratic $f_b$ with $a > b > a-k$. We shall
refer to any wandering quadratic $f_b$ may become
as $Q_{own}$.

 Gates on both sides of $C_{bad}$ have right-to-left gates in $f$ (one might be occupied).
 Consider the cluster $Y$ of $f$ corresponding to $C_{bad}^\tau$ via $\phi^N w \star f = f^\tau$, and the cluster $Z$ of $w \star f$ corresponding to $Y$.

 The wandering quadratic $Q_{own}$ in $Z$
 corresponding to $Q_{bad}^\tau$ via $\phi^N (w\star f) = f^\tau$ does not correspond to $Q_{bad}$ via $w \star f$: because
  $Q_{bad}^\tau$ does not own the gate to the right of $C_{bad}$, but
  the wandering quadratic in $w \star f$ corresponding to $Q_{bad}$ via $w \star f$ owns the gate on its right, since $s_j$ that brought $Q_{bad}$ there is a right-to-left transit.

 Furthermore, $s_j$ cannot go all the way across $Y$, as that would leave no wandering quadratic there. So, moving $s_j$ across $\phi^N$ gets rid of $t_k$.
 Moving $u_h$ across $\phi^N$ also removes $t_k$
 because we have already ruled out the special case
 that $C_{bad}$ corresponds to $C_{bad}^\tau$ via
 $\phi^N w \star f = f^\tau$.
 Thus, $\phi^N w = \widehat{u_j} \widehat{s_h} \phi^N v$
  with $v \in \Symk$ and $\widehat{u_j} \widehat{s_h} \in \Symk$, concluding the proof of Case
  $\spadesuit$.

\end{proof}

\begin{cor}
\label{cor:thm1.3-one-way-gate}
Suppose that a polynomial $P$ admits
a long decomposition $f$ having
a non-empty clustering
with at least one $\mathsf{C}$~cluster and
only one-way gates all of which are owned by some
wandering quadratic.  Then every $(P,P^\tau)$-skew
invariant curve is a skew-twist.
\end{cor}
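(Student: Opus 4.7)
The plan is to deduce the corollary directly from Proposition~\ref{prop:one-way-Mahler} combined with the encoding formalism of Remark~\ref{rk:encoding}. First, I would fix a $(P,P^\tau)$-skew-invariant curve $\mathcal{C}$ and a long decomposition $\vec{f}$ of $P$ of the kind hypothesized (non-empty clustering, at least one $\mathsf{C}$-cluster, all gates one-way and owned). Then, by Remark~\ref{rk:encoding}, there exists some $\gamma \in \STpk$ so that $\mathcal{C}$ is an irreducible component of a curve encoded by the identity $\gamma \star [\vec{f}] = [\vec{f}^\tau]$.

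Next, I would invoke Proposition~\ref{prop:one-way-Mahler} with this $\gamma$, which is precisely applicable since $\vec{f}$ satisfies its hypotheses. The proposition then yields an integer $N$ and $\alpha, \beta \in \Symk$ such that the identity $\beta \phi^N \alpha \star \vec{f} = \vec{f}^\tau$ encodes the same skew-invariant curve $\mathcal{C}$.

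Finally, appealing to the second half of Remark~\ref{rk:encoding}, any curve encoded by an identity of the form $\beta \phi^N \alpha \star \vec{f} = \vec{g}$ with $\alpha, \beta \in \Symk$ is a skew-twist: indeed, $\alpha \star \vec{f}$ is another long decomposition of $P$ and $\beta^{-1} \star \vec{f}^\tau$ is another long decomposition of $P^\tau$, so the identity presents $\mathcal{C}$ in the standard skew-twist form $\phi^N \star [\alpha \star \vec{f}] = [\beta^{-1} \star \vec{f}^\tau]$. Thus $\mathcal{C}$ is a skew-twist curve. There is no genuine obstacle here since the entire substance of the argument is already packaged in Proposition~\ref{prop:one-way-Mahler}; the corollary simply translates its conclusion through the dictionary between encoded identities and skew-invariant curves established in Remark~\ref{rk:encoding}.
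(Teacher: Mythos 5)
Your proposal is correct and takes essentially the same approach as the paper: the paper's proof of this corollary is the two-sentence observation that $\mathcal{C}$ is encoded by some $\gamma \star f = f^\tau$ and that Proposition~\ref{prop:one-way-Mahler} then gives the conclusion. You simply unpack the steps the paper leaves implicit, namely, that Remark~\ref{rk:encoding} both supplies the encoding word $\gamma \in \STpk$ and also certifies that any curve encoded by $\beta \phi^N \alpha$ with $\alpha,\beta \in \Symk$ is a skew-twist.
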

\begin{proof}
Consider a $(P,P^\tau)$-skew invariant curve
$\mathcal{C}$ encoded by $\gamma \star f = f^\tau$ for some
 $\gamma \in \STpk$.  By Proposition~\ref{prop:one-way-Mahler},
 $\mathcal{C}$ is a skew-twist.	
\end{proof}

\subsection{Completing the proof of Theorem~\ref{thm:skew-inv}.}
\label{sec:complete-skew-inv}
 
We now complete the proof of
Theorem~\ref{thm:skew-inv}.

\begin{proof}
We are given a non-exceptional polynomial $P$.
We need to show that every skew-invariant curve for
$(P,P^\tau)$ is a skew-twist.

We have shown with Proposition~\ref{prop:warmup-indecomposable} that Theorem~\ref{thm:skew-inv} 
holds when $P$ is indecomposable.  For the remainder
of this proof we assume that $P$ is decomposable.

Fix a long decomposition $f$ of the polynomial $P$. We break
into three cases.

\textit{Case 1: Unswappable factor}  If some factor $f_i$ of $f$ is not swappable in the sense of Definition~\ref{def:Ritt-poly}, then by Lemma~\ref{lem:unswappable-to-wall}, the long decomposition $f$ has a wall at $i$. Proposition~\ref{prop:warm-up} is Theorem~\ref{thm:skew-inv} for this $f$.

\textit{Case 2: All factors swappable, $\mathsf{C}$-free}
 Suppose that all factors of $f$ are swappable but no factor is of
 type $\mathsf{C}$.   
 Proposition~\ref{prop:c-free} breaks this into two subcases. In one subcase, the long decomposition $f$ has a wall and then Proposition~\ref{prop:warm-up} is Theorem~\ref{thm:skew-inv} for this $f$. The other subcase is exactly
Proposition~\ref{prop:thm1.3-in-out-case}.

\textit{Case 3: All factors swappable with at least one
type $\mathsf{C}$ factor}
Suppose now that all factors of $f$ are swappable and
at least one of them is type $\mathsf{C}$.

 By Remark~\ref{rk:ezrmk45}, such decompositions always admit preclusterings. If the empty set is a preclustering of such a decomposition, Proposition~\ref{prop:emptyclean} shows that $P$ is an exceptional polynomial (Chebyshev or negative Chebyshev, up to skew-conjugation), which is explicitly excluded by the hypotheses of Theorem~\ref{thm:skew-inv}. Otherwise, by
 Corollary~\ref{cor:clustering-exists},  $f$ admits a nonempty clustering $A$.

 If some cluster boundary $a \in A$ has no gate, then $f$ has a wall at $a$ by Proposition~\ref{prop:no-gate-wall}; and again Proposition~\ref{prop:warm-up} is Theorem~\ref{thm:skew-inv} for this $f$. By Definition~\ref{def:fakedef}, no gates of $(A, f)$ are two-way. Thus, we may assume that all cluster boundaries of $(A, f)$ have one-way gates.

 If some gate of $(A,f)$ is not owned by any wandering quadratic and $f$ has a quadratic factor, Lemma~\ref{lem:unowned-gate-wall} shows that $f$ has a wall, and again Proposition~\ref{prop:warm-up} is Theorem~\ref{thm:skew-inv} for this $f$.

 It remains to prove Theorem~\ref{thm:skew-inv} for long decompositions $f$ with all of the following properties: \begin{itemize}
 \item all factors of $f$ are swappable;
 \item at least one factor of $f$ is type $\mathsf{C}$;
 \item $f$ admits a non-empty clustering $A$;
 \item all cluster boundaries of $(A, f)$ have one-way gates; and
 \item every gates of $(A,f)$ is owned by some wandering quadratic.
\end{itemize}
 This is done in Corollary~\ref{cor:thm1.3-one-way-gate}.

\end{proof}

\section{Independence for non-exceptional polynomial type}
\label{non-exceptional-sect}

In this section we complete the proof of Theorem~\ref{thm:ind-mahler}.
As we noted in the introduction, this is achieved by showing a much stronger algebraic
independence result for Mahler functions of non-exceptional polynomial type to the
effect that if $f$ is a transcendental $q$-Mahler function of non-exceptional polynomial type,
$p \in \QQ_+$ is multiplicatively independent from $q$, and $g_1, \ldots, g_m$
is a sequence of functions each satisfying some algebraic difference equation with respect to
the substitution $t \mapsto t^p$, then $f$ is algebraically independent from
$g_1, \ldots, g_m$ over $\CC(t)$.

We prove this theorem as an instance of a general 
result about solutions to difference equations.  
With the following Convention we outline the conditions
required for our general theorem. 

\begin{conv}
\label{axconv}
Throughout the remainder of this section
we work with a quintuple $(C,K,L,\sigma,\tau)$ satisfying the following.

\begin{enumerate}
\item $C \subseteq K \subseteq L$ is a tower of algebraically closed fields of characteristic zero.
\item $\sigma$ and $\tau$ are commuting automorphisms of $L$.
\item $\sigma(K) = K$ and $\tau(K) = K$.
\item $C = \operatorname{Fix}(\sigma) = \operatorname{Fix}(\tau)$ is the common fixed field of $\sigma$ and $\tau$.
\item $\sigma$ and $\tau$ are independent in the sense that for $a \in L \smallsetminus C$ and
$(m,n) \in \ZZ^2 \smallsetminus \{ (0,0) \}$ one has $\sigma^m \tau^n (a) \neq a$.
\item For $\rho$ any element of the group generated by $\sigma$ and
$\tau$ and $c \in C \smallsetminus \{ 0, 1 \}$ a nonzero constant distinct from one,
there are no nonzero solutions to $\rho(y) = c y$.
\item For $\rho$ and $\mu$ independent elements of $\langle \sigma, \tau \rangle$, the group of
automorphisms generated by $\sigma$ and $\tau$, $M \in \ZZ_+$ a positive integer, and $a \in L^\times$, if
$\rho( \frac{\mu(a)}{a} ) = (\frac{\mu(a)}{a})^M$, then $a \in K^\times$.
\end{enumerate}
\end{conv}

We note now that our intended structures fit Convention~\ref{axconv}.

\begin{prop}
Let $p$ and $q$ be multiplicatively independent positive rational numbers.   Let $C := {\mathbb C}$,
$K := \CC (t)^\text{alg}$, and $L := \CC((t))^\text{alg}$. Let $\sigma:L \to L$ be
defined by $\sum a_r t^r \mapsto \sum a_r t^{rp}$ and $\tau:L \to L$ be defined by
$\sum a_r t^r \mapsto \sum a_r t^{rq}$.    Then $(C,K,L,\sigma,\tau)$ meets the conditions of
Convention~\ref{axconv}.
\end{prop}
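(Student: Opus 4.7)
The plan is to check the seven conditions of Convention~\ref{axconv} in sequence. Conditions (1)--(3) are immediate: $\mathbb{C} \subseteq \mathbb{C}(t)^{\text{alg}} \subseteq \mathbb{C}((t))^{\text{alg}}$ is a tower of algebraically closed characteristic-zero fields, and the substitutions $t \mapsto t^p$ and $t \mapsto t^q$ commute (since both composites send $t$ to $t^{pq}$) and visibly preserve $\mathbb{C}(t)^{\text{alg}}$.

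For conditions (4)--(6), the unifying tool is the Puiseux expansion. Every $y \in L$ is a Puiseux series $\sum_i a_i t^{e_i}$ with well-ordered support $\{e_i\} \subset \mathbb{Q}$ bounded below, and $\sigma^m \tau^n$ acts by $t^e \mapsto t^{re}$ with $r := p^m q^n$; multiplicative independence of $p$ and $q$ is precisely the statement that $r = 1 \iff (m,n) = (0,0)$. For any $(m,n) \neq (0,0)$ and any $c \in C^\times$, the equation $\sigma^m \tau^n(y) = cy$ forces the support of $y$ to be invariant under $e \mapsto re$; since $r > 0$ and $r \neq 1$, well-orderedness forces $y$ to be a nonzero constant (otherwise an orbit would be infinite and unbounded below). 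Comparing constant terms then yields $c = 1$ or $y = 0$; this simultaneously gives (4) (taking $c = 1$ with $(m,n) = (1,0)$ or $(0,1)$), (5) (taking $c = 1$ with arbitrary $(m,n) \neq (0,0)$), and (6) (with $c \neq 1$, forcing $y = 0$, and the trivial case $(m,n) = (0,0)$ handled directly).

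The main obstacle is (7). Fix independent $\rho = \sigma^{m_1}\tau^{n_1}$, $\mu = \sigma^{m_2}\tau^{n_2}$ in $\langle \sigma, \tau\rangle$, set $r_i := p^{m_i}q^{n_i}$ (both $\neq 1$ by the independence of $\rho$ and $\mu$ from the identity), and suppose $b := \mu(a)/a$ satisfies $\rho(b) = b^M$. The key step is to show $b \in K^\times$ via a two-term Puiseux analysis. Writing $b = c_1 t^{e_1} + c_2 t^{e_2} + \cdots$ with $e_1 < e_2$ and $c_1, c_2 \neq 0$, matching the leading terms of $\rho(b)$ and $b^M$ gives $c_1^{M-1} = 1$ together with $(r_1 - M) e_1 = 0$. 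Matching the second-lowest exponents gives $r_1 e_2 = (M-1) e_1 + e_2$: if $e_1 = 0$ this forces $r_1 = 1$, impossible; and if $r_1 = M$ with $M > 1$ it forces $e_2 = e_1$, contradicting $e_2 > e_1$. (The degenerate case $M = 1$ reduces $\rho(b) = b$ to $b \in C$ via (5).) Hence $b$ has only one Puiseux term, so $b = c_1 t^{e_1} \in K^\times$ with $c_1$ a root of unity.

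Having placed $b$ in $K^\times$, substitute $a = t^{\alpha} u$ with $\alpha := e_1/(r_2 - 1)$ (well-defined since $r_2 \neq 1$); the equation $\mu(a) = ba$ becomes $\mu(u) = c_1 u$. Applying condition (6) to this last equation forces $c_1 = 1$, and then condition (5) forces $u \in C$. Hence $a = \lambda t^\alpha \in K^\times$ for some $\lambda \in C^\times$, completing the verification. The most delicate step in the plan is the two-term Puiseux analysis forcing $b$ to be a monomial; it depends critically on the multiplicative independence of $p$ and $q$ (through $r_1 \neq 1$), which is what renders the case dichotomy above genuinely incompatible.
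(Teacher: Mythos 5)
Your verification is correct. Conditions (1)--(6) are handled essentially as in the paper: there too one compares exponents under the scaling $t^e \mapsto t^{re}$ (for (6) the paper splits $y = y_- + y_0 + y_+$ and compares orders, which is your support-invariance argument in different packaging). One parenthetical of yours is slightly off: for $e>0$ the orbit $\{r^k e : k \in \mathbb{Z}\}$ is \emph{not} unbounded below --- it accumulates at $0$ --- but it is still an infinite strictly descending chain, so well-ordering (equivalently, the fact that the support of an element of $\mathbb{C}((t))^{\mathrm{alg}}$ lies in $\frac{1}{N}\mathbb{Z}$ and is bounded below) gives the contradiction in either case.

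For condition (7) your route is genuinely different in organization, though it rests on the same comparison of the two lowest-order Puiseux terms. The paper writes $a = ct^{r}(1+\epsilon)$ with $\mathrm{ord}(\epsilon)>0$ and derives the contradiction directly from the leading term of $\epsilon$, concluding $\epsilon = 0$ in one pass. You instead first pin down $b = \mu(a)/a$ as a single monomial and only then recover $a$ by the substitution $a = t^{\alpha}u$ together with the already-verified conditions (5) and (6). Your two-term analysis is correct, and can be compressed: subtracting $r_1 e_1 = M e_1$ from $r_1 e_2 = (M-1)e_1 + e_2$ gives $r_1(e_2 - e_1) = e_2 - e_1$, hence $r_1 = 1$ outright, which removes the case split on $e_1 = 0$ versus $r_1 = M$ (and subsumes $M=1$). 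The modular version is a bit longer but makes explicit that only $\rho \neq \mathrm{id}$ and $\mu \neq \mathrm{id}$ are used rather than their full independence, and it cleanly recycles (5) and (6); the paper's one-pass computation is more direct but leaves that observation implicit.
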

\begin{proof}
Conditions (1) and (2) are obvious.  For condition (3), note that $\sigma$ and $\tau$ preserve the
subfield $\CC( \{ t^r ~:~ r \in \QQ \})$ of $K$, and hence, its algebraic closure, $K$, itself.   For
condition (4), we express $a \in L\setminus C$ as $a = \sum a_r t^r$
and let $s\neq 0$ be minimal such that $a_s\neq 0$. Then
among the terms in $\sigma(a)$ with a non-zero coefficient,
$t^{ps}$ is the non-constant term of minimal degree.
Since $ps\neq s$, we have $\sigma(a)\neq a$.
Applying the same reasoning to $\tau$ and $\sigma^m \tau^n$ gives conditions (4) and (5).

For condition (6), the automorphism $\rho$ is given by the substitution
$t \mapsto t^{ap + bq}$ for some $(a,b) \in \ZZ^2$.  Suppose that
$\rho(y) = c y$ and $y$ is not zero.   If $(a,b) = (0,0)$,
then $\rho(y) = y$ which is not $cy$ unless $y = 0$.  So, we
may assume that $(a,b) \neq (0,0)$.  Express $y = y_- + y_0 + y_+$
where $y_0 \in C$, $y_-$ is a Puisseux series supported with only
negative exponents, and $y_+$ is a Puisseux series supported with only
positive exponents.  From the additivity of the equation and the
preservation of the three cases (exponent zero, all exponents negative,
or all exponents positive), we see that we must have $\rho(y_*) = c y_*$
for $* \in \{ 0, -, + \}$.  Since $\rho$ is the identity function on
$C$ and $c \neq 1$, we must have $y_0 = 0$.  Since the order of
$\rho(y_*)$ is  $(ap + bq)$ times the order of $y_*$ and $ap + bq \neq 1$,
unless $y_* = 0$ (for $* = + \text{ or } -$), the equality $\rho(y) = cy$
cannot hold.

For
condition (7), notationally it suffices to consider $\mu = \sigma$ and
$\rho = \tau$.  Write our solution $a$ as $a = c t^r (1 + \epsilon)$ where
$c \in \CC^\times$, $r \in \QQ$, and $\epsilon \in \CC((t))^\text{alg}$ has
strictly positive order.  We have $\tau (\frac{\sigma(a)}{a}) = t^{q(p-1)r} \tau(\frac{\sigma(1+\epsilon)}{1+\epsilon}))$,
while $(\frac{\sigma(a)}{a})^M = t^{(p-1)Mr} (\frac{\sigma(1 + \epsilon)}{1 + \epsilon})^M$.  Thus, for $a$ to be
a solution we must
have $qr = Mr$ (so that $r =0$ or $q = M$) and $\tau(\frac{\sigma(1 + \epsilon)}{1 + \epsilon}) =
(\frac{\sigma(1+\epsilon)}{1+\epsilon})^M$.   If $\epsilon  =0 $, then $a = c t^r \in K$, as required.
If $\epsilon \neq 0$, then we may express $\epsilon = d t^s + \eta$ where $d \in \CC^\times$ and the order of
$\eta$ is strictly greater than $s$.  Depending on whether $p < 1$ or $p > 1$, we have $\frac{\sigma(1 + \epsilon)}{1 + \epsilon} = 1 + d t^{sp} + \text{ higher order}$ (or $1 - d t^s + \text{ higher order}$, respectively).
Thus, $\tau ( \frac{ \sigma(1 + \epsilon)}{1 + \epsilon} ) = 1 + d t^{spq} + \text{ higher order}$
(or, $1 - d t^{sq} + \text{ higher order}$, respectively) and $(\frac{\sigma(1 + \epsilon)}{1 + \epsilon})^M =
1 + M d t^{sp} + \text{ higher order}$ (or $1 - Md t^s + \text{ higher order}$, respectively).  In either case,
because $p$ and $q$ are multiplicatively independent, we cannot have such identities.  Thus, $\epsilon = 0$ and
$a \in K$.
\end{proof} 

Let us reformulate Theorem~\ref{thm:abs-Mah} in the language of Convention~\ref{axconv}.

\begin{thm}
\label{axiomaticthm}
Let $(C,K,L,\sigma,\tau)$ be a quintuple satisfying Convention~\ref{axconv}.
Suppose that $f \in L$, $g_1, \ldots, g_n \in L$ is a sequence of elements of
$L$, $P \in K[X]$ is a non-exceptional polynomial over $K$, and that $f$ satisfies
$\sigma(f) = P(f)$ while each $g_i$ satisfies a nontrivial $\tau$-algebraic difference
equation over $K$.  Then $f$ is algebraically independent from $g_1, \ldots, g_n$ over $K$. 
\end{thm}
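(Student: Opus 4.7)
The plan is to argue by contradiction, reducing algebraic dependence to the existence of a skew-invariant curve and then invoking Theorem~\ref{thm:skew-inv}. Assume for contradiction that $f$ is algebraically dependent on $g_1,\ldots,g_n$ over $K$; the first step is to show that $f$ is $\tau$-algebraic over $K$. Because each $g_i$ satisfies a nontrivial $\tau$-difference equation over $K$, iterating these equations shows that the field $M := K\langle g_1,\ldots,g_n\rangle_\tau$ generated over $K$ by all $\tau$-iterates of the $g_i$ is closed under $\tau^{\pm 1}$ and has finite transcendence degree over $K$. Its algebraic closure $\widetilde M \subseteq L$ is therefore also $\tau$-closed of the same finite transcendence degree, and by assumption contains $f$; hence $K\langle f\rangle_\tau \subseteq \widetilde M$, so $f$ itself is $\tau$-algebraic over $K$. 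I also assume $f \notin K$, since the case $f \in K$ is incompatible with the desired conclusion and is implicitly excluded by the statement of the theorem.

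Second, I would exhibit a skew-invariant curve. Let $r \geq 1$ be minimal such that $\tau^r(f)$ is algebraic over $K(f,\tau f,\ldots,\tau^{r-1}f)$, and set $K^* := K(f,\tau f,\ldots,\tau^{r-2}f)^{\mathrm{alg}}$ (the algebraic closure taken inside $L$; $K^* = K$ when $r = 1$). Using $\sigma(f) = P(f)$, the field $K^*$ is closed under $\sigma^{\pm 1}$ and $\tau^{\pm 1}$, and the enlarged quintuple $(C, K^*, L, \sigma, \tau)$ still satisfies Convention~\ref{axconv}: conditions (5)--(6) refer only to $L$, and condition (7) with ground field $K$ implies the same conclusion with any intermediate field $K^* \supseteq K$. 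The polynomial $P^{\tau^{r-1}} \in K[X]$ remains non-exceptional, since the variety of linear conjugators of $P^{\tau^{r-1}}$ to a monomial or $\pm C_N$ is zero-dimensional and defined over the algebraically closed field $K$. By minimality of $r$, the element $\tau^{r-1}f$ is transcendental over $K^*$ while $\tau^r f$ is algebraic over $K^*(\tau^{r-1}f)$, so the Zariski closure $V \subseteq \mathbb{A}^2_{K^*}$ of the pair $(\tau^{r-1}f, \tau^r f)$ is an irreducible curve. The identity $\sigma(\tau^j f) = P^{\tau^j}(\tau^j f)$ exhibits $V$ as a $(P^{\tau^{r-1}}, (P^{\tau^{r-1}})^\tau)$-skew-invariant curve, and Theorem~\ref{thm:skew-inv} classifies $V$ as a horizontal line, a vertical line, or a skew-twist curve.

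Horizontal and vertical lines $y = c$ or $x = c$ would force $\tau^{r-1}f$ or $\tau^r f$ into $K^*$ (using $\tau$-closure of $K^*$), contradicting transcendence of $\tau^{r-1}f$. The remaining skew-twist case yields $\tau^r f = \alpha^{\sigma^n}(\sigma^n \tau^{r-1}f)$ for some $n \in \ZZ$ and a factorization $P^{\tau^{r-1}} = \beta \circ \alpha$ with $\alpha,\beta \in K[X]$ (by Ritt's theorem, using that $K$ is algebraically closed to realize the conjugating linears over $K$), together with $P^{\tau^r} = \alpha^{\sigma^{n+1}} \circ \beta^{\sigma^n}$. Applying $\tau^{-(r-1)}$ reduces this to the first-order relation $\tau(f) = B(\sigma^n(f))$ with $B := \alpha^{\sigma^n \tau^{-(r-1)}} \in K[X]$. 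Setting $\mu := \sigma^{-n}\tau$, which by condition (5) is nontrivial in $\langle \sigma, \tau\rangle \cong \ZZ^2$ and independent of $\sigma$ there, one obtains the polynomial $\mu$-difference equation $\mu(f) = A(f)$ with $A := B^{\sigma^{-n}} \in K[X]$.

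The hard part of the proof is to extract a contradiction from $\mu(f) = A(f)$ together with $\sigma(f) = P(f)$ and Convention~\ref{axconv}; the plan is to split on $\deg A$. If $A$ is linear, write $\mu(f) = \lambda f + \kappa$ and pair with $\sigma(f) = P(f)$ to derive the polynomial identity $P^\mu \circ L = L^\sigma \circ P$ with $L(X) = \lambda X + \kappa$; comparing leading coefficients gives $\mu(c_d)\lambda^d = \sigma(\lambda) c_d$ (with $c_d$ the leading coefficient of $P$ and $d = \deg P$), so after a preliminary linear conjugation normalizing $P$ to a monic, one obtains $\sigma(\lambda) = \lambda^d$. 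An additive normalization $h := f - \gamma$ with $\gamma \in K$ solving $\mu(\gamma) - \lambda\gamma = \kappa$ (the $K$-solvability of this auxiliary equation being itself a nontrivial matter, expected to succumb to an inductive reapplication of the whole argument) then yields $\mu(h)/h = \lambda$, so condition (7) applied with $\rho = \sigma$, $M = d$, and $a = h$ forces $h \in K$ and hence $f \in K$, a contradiction. If $\deg A \geq 2$, applying $\sigma$ to $\mu(f) = A(f)$ produces the polynomial semiconjugacy $P^\mu \circ A = A^\sigma \circ P$; the Ritt-theoretic rigidity enforced by non-exceptionality of $P$, of exactly the sort analyzed throughout Section~\ref{sec:skew-inv}, forces $A$ to be, up to linear conjugation, essentially a scalar multiple of $X^M$ with $M = \deg A$, and one then manufactures an $a \in L^\times$ built out of $f$ and the leading-coefficient data satisfying $\rho(\mu(a)/a) = (\mu(a)/a)^M$ with $\rho = \sigma$ independent of $\mu$, so that condition (7) once more yields $a \in K$ and hence $f \in K$. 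The principal technical obstacle throughout lies in the auxiliary normalizations and in ensuring the precise hypothesis of condition (7); this is exactly where the tight interplay between the non-exceptional structure of $P$ and conditions (5)--(7) of Convention~\ref{axconv} is meant to pay off.
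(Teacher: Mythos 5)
Your opening reduction (finite $\tau$-transcendence degree of $K\langle g_1,\ldots,g_n\rangle_\tau$, hence an algebraic dependence among finitely many $\tau$-iterates of $f$) is exactly the paper's first move, and your overall strategy --- produce a skew-invariant curve, classify it by Theorem~\ref{thm:skew-inv}, and feed the resulting functional equation into conditions (6)--(7) of Convention~\ref{axconv} --- is the right one. But there are three genuine gaps. First, your base extension $K^* = K(f,\tau f,\ldots,\tau^{r-2}f)^{\mathrm{alg}}$ is \emph{not} $\tau$-stable: $\tau(K^*)$ contains $\tau^{r-1}f$, which is transcendental over $K^*$ by minimality of $r$, so condition (3) fails for $(C,K^*,L,\sigma,\tau)$ and Theorem~\ref{thm:skew-inv} cannot be invoked over $K^*$ as stated. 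The paper avoids this entirely by applying Proposition 2.21 of~\cite{MS} to the locus of $(f,\tau f,\ldots,\tau^N f)$ over $K$ itself, which produces a skew-invariant curve between two coordinates $i<j$ over the original $K$, reducing at once to the algebraic dependence of $f$ and $\tau^{j-i}(f)$ (this is Theorem~\ref{neti} and Lemma~\ref{reductiontoneti}).

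Second, and most seriously, you never reduce to the case where the polynomial relating $f$ to its transform is \emph{linear}, and your proposed treatment of the case $\deg A \geq 2$ is wrong: the semiconjugacy $P^{\mu}\circ A = A^{\sigma}\circ P$ does not force $A$ to be conjugate to a monomial --- it only says that $y=A(x)$ is a skew-invariant curve for $(P,P^{\mu})$, so $A$ is a skew-twist factor $\alpha^{\sigma^{n}}\circ P^{\sigma^{n-1}}\circ\cdots\circ P$, which for non-exceptional $P$ is typically nothing like a monomial. The missing idea is the counting argument of Lemma~\ref{failure-to-linear}: writing $\tau(f)=\pi(f)$ and iterating $k$ times (where $k$ is the number of indecomposable factors of $P$) gives a skew-invariant curve for $(P,P^{\tau^{k}})$ defined by $\pi^{\tau^{k-1}}\circ\cdots\circ\pi = \lambda^{\sigma^{m}}\circ P^{\sigma^{m-1}}\circ\cdots\circ P$; since the left side has a number of indecomposable factors divisible by $k$ and $\lambda$ is a proper initial factor of $P$, $\lambda$ must be linear. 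After replacing $\tau$ by $\sigma^{-m}\tau^{k}$ one is in the purely linear case. Third, in that linear case you acknowledge being unable to solve $\mu(\gamma)-\lambda\gamma=\kappa$ in $K$; the paper's Lemma~\ref{nlin} sidesteps this by first centering $P$ (killing its coefficient of $X^{d-1}$), whereupon the identity $\lambda^{\sigma}\circ P = P^{\tau}\circ\lambda$ forces $\lambda$ to be a pure scaling $\lambda(X)=AX$, and then uses the non-monomiality of $P$ together with condition (6) to normalize the leading coefficient to a constant before applying condition (7). As written, your argument does not close.
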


We will deduce Theorem~\ref{axiomaticthm} from the following theorem about a single solution to a
difference equation of non-exceptional polynomial type.

\begin{thm}
\label{neti}
Let $(C,K,L,\sigma,\tau)$ be a quintuple meeting Convention~\ref{axconv}, $P \in K[X]$ a non-exceptional
polynomial, and $f \in L$ a solution to $\sigma(f) = P(f)$.  Then $f$ and $\tau(f)$ are algebraically
independent over $K$.	
\end{thm}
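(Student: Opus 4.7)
The plan is a proof by contradiction, with Theorem~\ref{thm:skew-inv} as the central input.

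Suppose for contradiction that $f$ and $\tau(f)$ satisfy a nontrivial polynomial relation $\Phi(f,\tau(f))=0$ over $K$ with $\Phi\in K[x,y]$ irreducible, and let $\mathcal{C}\subseteq\mathbb{A}^2_K$ be the curve it cuts out. Applying $\sigma$ to the vanishing relation and using $\sigma(f)=P(f)$ together with the commutativity $\sigma\tau=\tau\sigma$ (hence $\sigma\tau(f)=P^\tau(\tau(f))$), I would obtain $\Phi^\sigma(P(f),P^\tau(\tau(f)))=0$. Since $(f,\tau(f))$ is the generic point of the irreducible curve $\mathcal{C}$, the regular map $(P,P^\tau)\colon\mathcal{C}\to\mathbb{A}^2$ must factor through $\mathcal{C}^\sigma$, so $\mathcal{C}$ is $(P,P^\tau)$-skew-invariant. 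Theorem~\ref{thm:skew-inv} then classifies $\mathcal{C}$ as a horizontal or vertical line or as a skew-twist curve.

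The line cases both force $f\in K$: a vertical line $x=c$ directly gives $f=c\in K$, while a horizontal line $y=c$ yields $\tau(f)=c$ and hence $f=\tau^{-1}(c)\in K$. From this I would derive a contradiction by combining $\sigma(f)=P(f)$ inside $K$ with the non-exceptionality of $P$ and axioms~(5)--(7) of Convention~\ref{axconv}: the case $f\in C$ collapses $\Phi$ to a trivial relation, contradicting irreducibility, while for $f\in K\smallsetminus C$ the iterated $\sigma$- and $\tau$-equations produce a cocycle on which axiom~(7) forces $P$ to be linearly conjugate to a monomial or Chebyshev polynomial.

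In the skew-twist case with $n\geq 0$ there exist polynomials $\alpha,\beta\in K[X]$ with $P=\beta\circ\alpha$, $P^\tau=\alpha^{\sigma^{n+1}}\circ\beta^{\sigma^n}$, and $\tau(f)=\alpha^{\sigma^n}\circ P^{\sigma^{n-1}}\circ\cdots\circ P(f)$; using the identity $\sigma^n(f)=P^{\sigma^{n-1}}\circ\cdots\circ P(f)$, this collapses to the compact relation $\tau(f)=\sigma^n(\alpha(f))$, equivalently $\rho(f)=\alpha(f)$ for $\rho:=\sigma^{-n}\tau$. The automorphisms $\sigma$ and $\rho$ generate the same subgroup of $\mathrm{Aut}(L)$ as $\sigma$ and $\tau$ and remain independent in the sense of axiom~(5), so $f$ simultaneously satisfies the pair of commuting polynomial difference equations $\sigma(f)=\beta(\rho(f))$ and $\rho(f)=\alpha(f)$, linked by the compositional identity $\alpha^{\sigma^{n+1}}\circ\beta^{\sigma^n}=P^\tau$. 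The case $n\leq -1$ is handled symmetrically.

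The hardest part is extracting a contradiction from this pair of equations. In the linear subcase $\deg\alpha=1$, where $\rho(f)=af+b$ with $a,b\in K$, the compositional identity reduces to $P^\tau$ being linearly skew-conjugate to $P^{\sigma^n}$, and axiom~(7), applied to the cocycle $\rho(f)/f$ after additive normalization, forces $P$ to have a monomial structure, contradicting non-exceptionality. In the nonlinear subcase $\deg\alpha\geq 2$, $f$ is simultaneously $\sigma$-Mahler of non-exceptional polynomial type and $\rho$-Mahler of polynomial type for independent $\sigma,\rho$; here I would iterate the two equations to produce polynomials $Q_{m,k}\in K[x]$ with $\sigma^m\rho^k(f)=Q_{m,k}(f)$ of degree $(\deg P)^m(\deg\alpha)^k$, apply Theorem~\ref{thm:skew-inv} again with $\rho$ in place of $\tau$ to the curve $y=\alpha(x)$ (already $(P,P^\rho)$-skew-invariant by construction), and perform an induction on the compositional depth of $P$ to reduce to the linear case or to a direct contradiction with axiom~(5).
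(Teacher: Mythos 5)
Your overall strategy matches the paper's: pass to the locus of $(f,\tau(f))$, observe it is $(P,P^\tau)$-skew-invariant, invoke Theorem~\ref{thm:skew-inv}, and then analyze the resulting skew-twist relation. Your rewriting of the skew-twist equation as $\rho(f)=\alpha(f)$ with $\rho=\sigma^{-n}\tau$ is correct and is a clean repackaging of what the paper calls $\pi(f)=\tau(f)$. Your linear subcase is, in spirit, the paper's Lemma~\ref{nlin} (center $P$, force $\lambda$ to be a scaling, use non-monomiality and axiom~(6) to normalize the leading coefficient, derive $A^\sigma=A^d$, and apply axiom~(7)); note though that axiom~(7) concludes $f\in K$, it does not ``force $P$ to have a monomial structure'' --- the non-monomiality of $P$ is an input to the normalization, not an output. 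Also, the line cases need no contradiction: the theorem carries the implicit standing hypothesis $f\notin K$ (as the paper notes in Lemma~\ref{reductiontoneti}), and for $f\in K\smallsetminus C$ there is genuinely nothing to contradict.

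The real gap is the nonlinear subcase $\deg\alpha\geq 2$. The paper's Lemma~\ref{failure-to-linear} handles it with a specific mechanism you do not have: letting $k$ be the number of indecomposable factors of $P$, it forms the composite $\pi^{\tau^{k-1}}\circ\cdots\circ\pi^\tau\circ\pi$, observes that the curve it defines is skew-invariant for $(P,P^{\tau^k})$, applies Theorem~\ref{thm:skew-inv} a second time to write this composite as $\lambda^{\sigma^m}\circ P^{\sigma^{m-1}}\circ\cdots\circ P$ with $\lambda$ a proper initial factor of $P$, and then counts indecomposable factors modulo $k$ to force $\lambda$ to be linear. This converts the nonlinear relation into a linear relation between $\sigma^m(f)$ and $\tau^k(f)$, i.e.\ reduces to Lemma~\ref{nlin} after replacing $(\tau,f,P)$ by $(\sigma^{-m}\tau^k,\sigma^m(f),\sigma^m(P))$. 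Your substitute --- re-applying Theorem~\ref{thm:skew-inv} to $y=\alpha(x)$, which as you admit is already known to be a skew-twist and so yields nothing, followed by an unspecified ``induction on compositional depth'' --- does not close the case. Concretely: iterating $\sigma(f)=\beta(\rho(f))$ and $\rho(f)=\alpha(f)$ gives $f$ a Mahler equation for the factor $\beta^{\rho^{-1}}$ (or for $\alpha$), but a compositional factor of a non-exceptional polynomial can be exceptional (a monomial or Chebyshev), so Theorem~\ref{thm:skew-inv} need not apply at the next stage, and nothing in your sketch forces the degrees to descend to $1$. Without the composition-and-counting step (or an equivalent), the argument does not go through.
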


\begin{remark}
It follows from the main theorem of~\cite{MS} that Theorem~\ref{neti} may be strengthened to the
conclusion that $\{ \tau^j(f) ~:~ j \in \ZZ \}$ are algebraically independent over $K$.   Indeed, in
Theorem~\ref{axiomaticthm} we could strengthen the conclusion to the conclusion that
$\{ \tau^j (f) ~:~ j \in \ZZ \}$ is algebraically independent over $K ( \{ \sigma^i \tau^j (g_k) ~:~
i, j \in \ZZ, 1 \leq k \leq n \} )$. 	
\end{remark}

\begin{lem}
\label{reductiontoneti}
Theorem~\ref{neti} implies Theorem~\ref{axiomaticthm}.	
\end{lem}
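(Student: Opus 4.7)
The plan is to argue by contradiction, assuming $f$ is algebraic over $K(g_1, \ldots, g_n)$, and then to apply Theorem~\ref{neti} over a carefully chosen enlarged base field. Specifically, I will construct a $\sigma$- and $\tau$-invariant, algebraically closed subfield $K' \subseteq L$ of finite transcendence degree over $K$ which contains $f$; then Theorem~\ref{neti} applied to the quintuple $(C, K', L, \sigma, \tau)$ will force $f$ and $\tau(f)$ to be algebraically independent over $K'$, directly contradicting $f \in K'$.

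The construction of $K'$ proceeds in two stages. First, the $\tau$-difference equation $Q_i(g_i, \tau(g_i), \ldots, \tau^{r_i}(g_i)) = 0$ for each $g_i$, together with applications of $\tau^{\pm n}$, forces each $\tau^j(g_i)$ to be algebraic over $K(g_i, \tau(g_i), \ldots, \tau^{r_i - 1}(g_i))$. Hence
$$F := K\bigl(\tau^j(g_i) : 1 \leq i \leq n,\ j \in \mathbb{Z}\bigr)^{\mathrm{alg}}$$
is an algebraically closed, $\tau$-stable subfield of $L$ of finite transcendence degree (at most $\sum r_i$) over $K$. Since $f \in K(g_1, \ldots, g_n)^{\mathrm{alg}} \subseteq F$ and $F$ is $\tau$-invariant, every $\tau^j(f)$ lies in $F$. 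I then set
$$K' := K\bigl(\tau^j(f) : j \in \mathbb{Z}\bigr)^{\mathrm{alg}} \subseteq F,$$
which inherits finite transcendence degree over $K$ and is plainly $\tau$-stable.

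The key verification is that $K'$ is also $\sigma$-stable, and here the hypothesis $\sigma(f) = P(f)$ does the essential work: applying $\sigma$ to a generator yields $\sigma(\tau^j(f)) = \tau^j(P(f)) = P^{\tau^j}(\tau^j(f)) \in K[\tau^j(f)] \subseteq K'$, whence $\sigma(K') \subseteq K'$; equality follows from both fields being algebraically closed of the same transcendence degree over $K$. Given this, the quintuple $(C, K', L, \sigma, \tau)$ satisfies Convention~\ref{axconv}: conditions (1)--(6) pass through the base change unchanged, and condition (7) only becomes weaker (its new conclusion ``$a \in (K')^{\times}$'' is implied by the original ``$a \in K^{\times}$'' since $K \subseteq K'$). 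Moreover $P \in K[X]$ remains non-exceptional when viewed as an element of $K'[X]$, non-exceptionality being a geometric property preserved under extension of an already algebraically closed ground field.

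The contradiction is then immediate: Theorem~\ref{neti} applied to $(C, K', L, \sigma, \tau)$ asserts that $f$ and $\tau(f)$ are algebraically independent over $K'$, yet $f \in K'$ by construction. The main technical point requiring care in this plan is the verification of the $\sigma$-invariance of $K'$, which is what makes the extended quintuple fall under the scope of Theorem~\ref{neti}; the subsidiary issue of whether ``non-exceptional'' is genuinely preserved under the base change from $K$ to $K'$ is the only other point of potential friction, but should be disposed of using the standard geometric characterization of exceptional polynomials.
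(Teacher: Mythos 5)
Your construction of $K'$ is largely sound: the finite transcendence degree of $K\langle g_1,\dots,g_n\rangle_\tau$ over $K$, the $\tau$-stability, and in particular the use of $\sigma(\tau^j(f))=P^{\tau^j}(\tau^j(f))$ to obtain $\sigma$-stability are all fine, as is the observation that Convention~\ref{axconv} and non-exceptionality pass to $(C,K',L,\sigma,\tau)$. The gap is the final step. Theorem~\ref{neti} cannot be invoked when the solution $f$ lies in the base field: read literally for such $f$ the theorem is false (take any non-exceptional $P$ with $P(0)=0$ and $f=0$; then $\tau(f)=f$ and the pair is certainly not algebraically independent over the base), and the version that is actually proved carries the implicit hypothesis $f\in L\smallsetminus K$ --- see the statement and proof of Lemma~\ref{failure-to-linear}, where a counterexample to Theorem~\ref{neti} is always taken with $f\in L\smallsetminus K$. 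Your strategy is precisely to enlarge the base field until $f$ lies inside it and then apply Theorem~\ref{neti} over the enlarged field; that is exactly the degenerate case the theorem excludes, so the ``immediate contradiction'' at the end is not a contradiction at all: the theorem says nothing about $f$ and $\tau(f)$ over a field $K'$ containing $f$.

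What is missing, and what the paper supplies, is the passage from ``the $\tau$-orbit of $f$ has finite transcendence degree over $K$'' to ``$f$ and $\tau^{m}(f)$ are algebraically dependent over the \emph{original} $K$ for some $m\ge 1$.'' Finite transcendence degree yields an algebraic dependence among $f,\tau(f),\dots,\tau^N(f)$ over $K$ for some $N$; the locus of this tuple is a $\sigma$-skew-invariant variety for $(P,P^\tau,\dots,P^{\tau^N})$ projecting dominantly to each coordinate, and one needs Proposition~2.21 of~\cite{MS} to conclude that some pairwise projection is already a proper curve, i.e.\ that $f$ and $\tau^{j-i}(f)$ are dependent over $K$ for some $i<j$. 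Only then can Theorem~\ref{neti} be applied, namely to the quintuple $(C,K,L,\sigma,\tau^{j-i})$, whose base field $K$ does not contain $f$. Without some reduction of this kind to a pairwise dependence over a base not containing $f$, the base-change strategy cannot be completed.
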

\begin{proof}
Consider $(C,K,L,\sigma,\tau)$, $f$, $P$, $g_1, \ldots, g_n$ as in the hypotheses of Theorem~\ref{axiomaticthm}.
We may take $f \notin K$ as the the theorem holds trivially when $f \in K$.
If $f$ is algebraically dependent on $g_1, \ldots, g_n$ over $K$, then $f \in K(g_1, \ldots,g_m)^\text{alg}$.
Applying $\tau$ repeatedly, we see that for each $j \in \NN$ that
$\tau^j(f) \in K(\tau^j(g_1), \ldots, \tau^j(g_m))^\text{alg}$.  Thus,
$K \langle f \rangle_\tau := K( \{ \tau^j (f) ~:~ j \in \NN \}) \subseteq K \langle g_1, \ldots, g_m \rangle_\tau^\text{alg}$.
By our hypothesis on the $g_j$'s, $\operatorname{tr.deg}_K (K \langle g_1, \ldots, g_m \rangle_\tau) < \infty$.
Thus, there must be a nontrivial algebraic dependence over $K$ amongst $\{ \tau^j (f) ~:~ j \in \NN \}$.
Take $N$ so that $\{ \tau^j(f) ~:~ 0 \leq j \leq N \}$ are algebraically dependent over $K$.
Because $\sigma$ and $\tau$ commute, we see that $\sigma(\tau^j(f)) = \tau^j (\sigma(f)) = \tau^j (P(f))
= P^{\tau^j} (\tau^j(f))$.  That is, $\tau^j(f)$ satisfies the difference equation $\sigma(y) = P^{\tau^j}(y)$.
Since $P$ is non-exceptional, so is each ${}^{\tau^j} P$.   The variety $V$ given as the
locus of $(f, \tau(f), \ldots, \tau^N(f))$ over $K$
is then $\sigma$-skew-invariant variety for $(P,P^\tau, \ldots, P^{\tau^N})$ which projects
dominantly to each coordinate.  By Proposition 2.21 of~\cite{MS}, for some pair $i < j$ the projection of $V$ to
$i^\text{th}$ and $j^\text{th}$ coordinates is a $\sigma$-skew-invariant curve for $(P^{\tau^i}, P^{\tau^j})$.
Applying $\tau^{-i}$, we obtain a nontrivial algebraic dependence between $f$ and $\tau^{j-i} (f)$.   This
contradicts Theorem~\ref{neti} in the case of the quintuple $(C,K,L,\sigma,\tau^{j-i})$, $f$, and $P$.
\end{proof}

The rest of this section is devoted to the proof of Theorem~\ref{neti}.

By moving to another quintuple satisfying Convention~\ref{axconv}, we may arrange that from a counterexample to
Theorem~\ref{neti} that we may produce one in which $f$ and $\tau(f)$ are related by a linear relation.

\begin{lem}
\label{failure-to-linear}
If Theorem~\ref{neti} fails, then there is some quintuple $(C,K,L,\sigma,\tau)$ satisfying
Convention~\ref{axconv}, a non-exceptional $P \in K[X]$, a linear polynomial $\lambda \in K[X]$, and
$f \in L \smallsetminus K$ satisfying $\sigma(f) = P(f)$ and $\tau(f) = \lambda(f)$.	
\end{lem}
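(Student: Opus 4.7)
The plan is to start from a counterexample to Theorem~\ref{neti}, use Theorem~\ref{thm:skew-inv} to give the algebraic relation between $f$ and $\tau(f)$ a skew-twist form, and then twist $\tau$ by a suitable power of $\sigma$ to convert the resulting polynomial relation into a linear one.

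Suppose $(C,K,L,\sigma,\tau,P,f)$ is a counterexample to Theorem~\ref{neti}. I would take $f \in L \smallsetminus K$, so that $f$ is transcendental over $K$ (since $K$ is algebraically closed in $L$), and the Zariski closure $\mathcal{C}$ over $K$ of $(f,\tau(f))$ is an irreducible curve. Applying $\sigma$ to the parametric identity and using $\sigma\tau = \tau\sigma$ gives $(P(f), P^\tau(\tau(f))) \in \mathcal{C}^\sigma$, so $\mathcal{C}$ is a $(P,P^\tau)$-skew-invariant curve. By Theorem~\ref{thm:skew-inv}, $\mathcal{C}$ is a horizontal or vertical line or a skew-twist curve; the line cases force $f \in K$ (using $\tau(K)=K$), a contradiction. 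Hence $\mathcal{C}$ is a skew-twist curve, giving a factorization $P = \beta \circ \alpha$ with $\alpha,\beta \in K[X]$ and an integer $n$. In the case $n \geq 0$ the curve equation reads $\tau(f) = \alpha^{\sigma^n} \circ P^{\sigma^{n-1}} \circ \cdots \circ P(f)$, which via iterated use of $\sigma(f) = P(f)$ simplifies to $\tau(f) = \sigma^n(\alpha(f))$; setting $\widetilde{\tau} := \sigma^{-n}\tau$ yields $\widetilde{\tau}(f) = \alpha(f)$. The case $n \leq -1$ is analogous, producing a relation $\beta(\widetilde{\tau}(f)) = \beta(\alpha(f))$ which, after resolving the $\beta$-preimage ambiguity, yields $\widetilde{\tau}(f) = c\alpha(f)$ for some $c \in C$. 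Since $\langle \sigma, \widetilde{\tau}\rangle = \langle \sigma, \tau\rangle$, the quintuple $(C,K,L,\sigma,\widetilde{\tau})$ continues to satisfy Convention~\ref{axconv}.

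It remains to perform a case analysis on the factorization. If $\alpha$ is linear, take $\lambda := c\alpha$ and conclude. If $\beta$ is linear, then $\alpha = \beta^{-1} \circ P$ gives $\widetilde{\tau}(f) = c\beta^{-1}(\sigma(f))$, and applying $\sigma^{-1}$ yields $\sigma^{-1}\widetilde{\tau}(f) = c^{\sigma^{-1}}(\beta^{-1})^{\sigma^{-1}}(f)$, a linear relation; we take $\tau' := \sigma^{-n-1}\tau$ and $\lambda := c^{\sigma^{-1}}(\beta^{-1})^{\sigma^{-1}}$. When neither $\alpha$ nor $\beta$ is linear, I would recurse on the substituted element $g := \alpha(f)$, which satisfies $\sigma(g) = \widetilde{P}(g)$ with $\widetilde{P} := \alpha^\sigma \circ \beta$ (non-exceptional by Ritt's theorem, since its indecomposable compositional factors agree with those of $P$ up to linear equivalence) and $\widetilde{\tau}(g) = \alpha^{\widetilde{\tau}}(g)$, yielding a new counterexample in which the $\widetilde{\tau}$-polynomial has degree $\deg\alpha < \deg P$. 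The hard part will be setting up the correct induction that makes this descent terminate: a single recursive step strictly reduces the degree of the polynomial relating the current element to its $\widetilde{\tau}$-image whenever $\deg \beta \geq 2$, so if one can always pick a decomposition at each stage in which $\beta$ remains nonlinear until $\alpha$ becomes linear, termination follows, and the base case $\alpha$ linear produces the desired $\lambda$.
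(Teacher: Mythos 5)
Your first half is sound and matches the paper: the locus of $(f,\tau(f))$ is a $(P,P^\tau)$-skew-invariant curve, Theorem~\ref{thm:skew-inv} makes it a skew-twist, and (in the case $n\geq 0$) iterating $\sigma(f)=P(f)$ collapses the curve equation to $\widetilde{\tau}(f)=\alpha(f)$ for $\widetilde{\tau}=\sigma^{-n}\tau$, with the convention preserved by the independence hypothesis. The cases where $\alpha$ or $\beta$ is linear are also handled correctly. The genuine gap is the descent. Your proposed recursion replaces $f$ by $g=\alpha(f)$ and $P$ by $\widetilde{P}=\alpha^\sigma\circ\beta$, and the new relation is $\widetilde{\tau}(g)=\alpha^{\widetilde{\tau}}(g)$, whose degree is $\deg\alpha$ --- exactly the degree of the relation you started the round with. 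When you feed this back into Theorem~\ref{thm:skew-inv}, the new skew-twist datum $\widetilde{P}=\beta'\circ\alpha'$ must satisfy $\deg\alpha^{\widetilde{\tau}}=\deg\alpha'\cdot(\deg\widetilde{P})^{n'}$, forcing $n'=0$ and $\deg\alpha'=\deg\alpha$; the relating polynomial therefore has constant degree through every subsequent round and the descent never terminates. Your hedge about ``picking a decomposition in which $\beta$ remains nonlinear until $\alpha$ becomes linear'' cannot rescue this, because the decomposition is dictated by the curve that Theorem~\ref{thm:skew-inv} hands you, and because no quantity is actually decreasing. A secondary problem: in the case $n\leq -1$ you claim that $\beta(\widetilde{\tau}(f))=\beta(\alpha(f))$ resolves to $\widetilde{\tau}(f)=c\,\alpha(f)$ with $c\in C$; for a general polynomial $\beta$ the fibre relation $\beta(u)=\beta(v)$ does not force $u$ and $v$ to differ by a scalar, so this step is unjustified. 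The paper sidesteps it by simply replacing $\tau$ with $\tau^{-1}$ and $f$ with $\tau(f)$ to land back in the case $n\geq 0$.

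What the paper does instead of a descent is a single counting argument. Writing $\pi$ for the polynomial with $\pi(f)=\tau(f)$ and $\pi^\sigma\circ P=P^\tau\circ\pi$, and letting $k$ be the number of indecomposable compositional factors of $P$ (an invariant of $P$ by Ritt's theorem~\cite{Ritt}), one forms the composite $\pi^{\tau^{k-1}}\circ\cdots\circ\pi^\tau\circ\pi$, which carries $f$ to $\tau^k(f)$ and defines a skew-invariant curve for the pair $(P,P^{\tau^k})$. Applying Theorem~\ref{thm:skew-inv} once more writes this composite as $\lambda^{\sigma^m}\circ P^{\sigma^{m-1}}\circ\cdots\circ P$ with $\lambda$ a proper initial compositional factor of $P$. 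Counting indecomposable factors, the left side has $k\cdot|\pi|$ of them and the right side has $km$ plus the number of factors of $\lambda$; divisibility by $k$ forces $\lambda$ to have no indecomposable factors, i.e.\ to be linear, and one concludes by replacing $\tau$ with $\sigma^{-m}\tau^k$ and $f$ with $\sigma^m(f)$. You would need to import this composition-and-divisibility idea (or an equivalent substitute) to close the gap.
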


\begin{proof}
Consider a possible counterexample to Theorem~\ref{neti}.  So we have a quintuple
$(C,K,L,\sigma,\tau)$ satisfying Convention~\ref{axconv}, a non-exceptional polynomial
$P \in K[X]$ and some $f \in L \smallsetminus K$ with $\sigma(f) = P(f)$ and
$f$ and $\tau(f)$ algebraically dependent over $K$.  Throughout this proof, we will successively replace each
part of these data until we arrive at a situation as described in the conclusion of the Lemma.

 Let
$Y = \operatorname{loc}(f,\tau(f)/K)$ be the locus of $\left( f,\tau(f) \right)$ over $K$.
Then $Y$ is $\sigma$-skew-invariant for $(P,P^\tau)$.  By Theorem~\ref{thm:skew-inv}, $Y$ is a skew-twist.
 Thus, there are are $\alpha$, $\beta$ and $n$ so that either  $P = \beta \circ \alpha$, 
 $P^\tau = \alpha^{\sigma^n} \circ \beta^{\sigma^n}$, and
 $Y$ is defined by $y = \alpha^{\sigma^n}	 \circ P^{\sigma^{n-1}} \circ \cdots  P^\sigma  \circ P$ or 
 $P^\tau = \beta \circ \alpha$, 
 $P = \alpha^{\sigma^n} \circ \beta^{\sigma^n}$, 
 and
 $Y$ is defined by $x = \alpha^{\sigma^n}	 \circ P^{\tau \sigma^{n-1}} \circ \cdots  P^{\tau \sigma}  \circ P^\tau(y)$.
 In the former case, we make no changes yet.  In the latter case, we
replace $\tau$ by $\tau^{-1}$, $P$ by $P^\tau$, and $f$ by $\tau(f)$.   In so doing, we may assume that
$\sigma(f) = P(f)$ and that we are in the first case.

Let $\pi := \alpha^{\sigma^n} \circ P^{\sigma^{n-1}} \circ \cdots \circ P^\sigma \circ P$.  That is,
$Y$ is defined by $y = \pi(x)$ and we have the compositional identity $\pi^\sigma \circ P = P^\tau \circ \pi$.
Note, in particular, this means that $\pi(f) = \tau(f)$.

Let $k$ be the number of indecomposable factors in a complete docomposition of $P$.   The curve $Z$ defined by
$y = \pi^{\tau^{k-1}} \circ \cdots \pi^\tau \circ \pi (x)$ is $\sigma$-skew-invariant for $(P,P^{\tau^{k-1}})$.
Hence, by Theorem~\ref{thm:skew-inv}  there is some polynomial $\lambda$ which is a proper initial compositional
factor of $P$ and a natural number $m$ so that
$\pi^{\tau^{k-1}} \circ \cdots \pi^\tau \circ \pi = \lambda^{\sigma^m} \circ P^{\sigma^{m-1}}
\circ \cdots \circ P^\sigma \circ P$.   The number of indecomposable compositional factors of the
lefthand side of this equation is $k$ times the number of compositional factors of $\pi$ while on the righthand
side it is $km$ plus the number of factors of $\lambda$.  As $\lambda$ is a proper initial factor of $P$, it has fewer than $k$
indecomposable factors.   As $k$ divides the lefthand side, and
thus also the righthand side, namely, $km + \text{ number of
indecomposable factors of } \lambda$,  it must be that $\lambda$ has
no indecomposable factors.  That is, $\lambda$ is linear.

We compute that $\lambda(\sigma^m(f)) = \lambda ( P^{\sigma^{m-1}} \circ \cdots \circ P^\sigma \circ P (f)) =
\pi^{\tau^{k-1}} \circ \cdots \circ \pi^\tau \circ \pi (f) = \pi^{\tau^{k-1}} \circ
\cdots \circ \pi^\tau (\tau(f)) = \cdots = \tau^k(f) = \sigma^{-m} \tau^k (\sigma^m (f))$.  Thus,
if we  replace $\tau$ with $\sigma^{-m} \tau^k$, $f$ with $\sigma^m(f)$ and $P$ with $\sigma^m(P)$, then
we have our desired solution.

\end{proof}

With the next lemma we show that the situation described in the conclusion of Lemma~\ref{failure-to-linear}
cannot occur.

\begin{lem}
\label{nlin}
If $(C,K,L,\sigma,\tau)$ satisfies Convention~\ref{axconv}, $f \in L \smallsetminus K$, $P \in K[X]$ is
non-exceptional, $\lambda \in K[X]$ is linear, then it is not possible to have $\sigma(f) = P(f)$ and
$\tau(f) = \lambda(f)$.	
\end{lem}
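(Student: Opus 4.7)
The plan is to argue by contradiction: assume $f \in L \setminus K$. Since $K$ is algebraically closed (Convention~\ref{axconv}(1)), $f$ is transcendental over $K$. My first move is to extract a polynomial identity from the commutativity of $\sigma$ and $\tau$: computing $\sigma\tau(f)$ in two ways gives $\lambda^\sigma(P(f)) = P^\tau(\lambda(f))$, and the transcendence of $f$ upgrades this to the polynomial identity $\lambda^\sigma \circ P = P^\tau \circ \lambda$ in $K[X]$.

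Next I would normalize by linearly conjugating $P$ to a centered form. Taking $\gamma := c_{d-1}/(d c_d) \in K$ (where $P = \sum c_i x^i$) and setting $\tilde P := \ell^\sigma \circ P \circ \ell^{-1}$ and $\tilde f := \ell(f)$ for $\ell(x) = x + \gamma$, the polynomial $\tilde P$ is centered and still non-exceptional (skew-conjugation to an exceptional polynomial is unaffected by ordinary conjugation via $\ell$), and $\tilde f \in L \setminus K$ continues to satisfy $\sigma(\tilde f) = \tilde P(\tilde f)$. A short calculation produces $\tau(\tilde f) = a \tilde f + \tilde b$ for some $\tilde b \in K$; comparing the $x^{d-1}$ coefficients in the transported identity $\tilde \lambda^\sigma \circ \tilde P = \tilde P^\tau \circ \tilde \lambda$ (the left side vanishes because $\tilde P$ is centered) forces $\tilde b = 0$. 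So after centering, I may assume $\tau(\tilde f) = a \tilde f$.

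The decisive step exploits that $\tilde P$ is not a monomial (a consequence of non-exceptionality): pick any $d_2 < d$ with $\tilde c_{d_2} \neq 0$. The identity $\sigma(a)\, \tilde P(x) = \tilde P^\tau(a x)$ then yields, by matching coefficients at $x^d$ and $x^{d_2}$, the two relations $\sigma(a)/a^d = \tau(\tilde c_d)/\tilde c_d$ and $\sigma(a)/a^{d_2} = \tau(\tilde c_{d_2})/\tilde c_{d_2}$. Dividing these produces the key identity $a^N = \tau(u)/u$, where $N := d - d_2 \geq 1$ and $u := \tilde c_{d_2}/\tilde c_d \in K^\times$.

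The contradiction now falls out: $\tau(\tilde f^N / u) = a^N \tilde f^N / \tau(u) = \tilde f^N/u$, so $\tilde f^N / u \in \operatorname{Fix}(\tau) = C \subseteq K$ by Convention~\ref{axconv}(4). Hence $\tilde f^N \in K$, so $\tilde f$ is algebraic over $K$ and therefore lies in $K$ by algebraic closure, contradicting $\tilde f \notin K$. The main technical care is in the centering step, where one must track how $\tau$ interacts with the translation $\ell$; after that everything reduces to a leading-coefficient calculation, and notably the argument uses neither Theorem~\ref{thm:skew-inv} nor the strong hypotheses (5)--(7) of Convention~\ref{axconv}.
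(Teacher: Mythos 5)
Your proof is correct, and it takes a genuinely leaner route than the paper's. Both proofs start the same way (transcendence of $f$ upgrades the commutation relation to the polynomial identity $\lambda^\sigma \circ P = P^\tau \circ \lambda$, and the centering trick then kills the constant term of $\lambda$, so $\lambda(X)=AX$). From there the paper does further work: it uses Convention~\ref{axconv}(6) to carry out a second normalization (Claim~\ref{constantco}, forcing the leading coefficient $p_d$ into $C^\times$), which then yields $A^\sigma = A^d$, and finally applies Convention~\ref{axconv}(7) with the quantity $\tau(f)/f$ to conclude $f \in K$. Your argument short-circuits this: instead of normalizing $p_d$, you compare the leading coefficient with one other nonzero coefficient $\tilde c_{d_2}$ (which exists because a centered non-exceptional polynomial is not a monomial --- the same fact the paper uses to find its $m$ with $p_{d-m}\neq 0$), divide the two relations $\sigma(A)/A^i = \tau(\tilde c_i)/\tilde c_i$, and land directly on $A^N = \tau(u)/u$ with $u = \tilde c_{d_2}/\tilde c_d \in K^\times$. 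Then $\tilde f^N/u$ is $\tau$-fixed, hence in $C$ by (4), so $\tilde f^N \in K$ and $\tilde f \in K$ by algebraic closure --- contradiction. This eliminates the dependence on hypotheses (6) and (7) of Convention~\ref{axconv}, using only (1)--(4). (Hypothesis (5) is used by neither proof of this lemma; it and (6), (7) are needed elsewhere in the section, notably in the reduction of Lemma~\ref{failure-to-linear}.) One minor remark: you attribute ``not a monomial after centering'' to non-exceptionality, which is exactly what the paper does too; if one wanted to be fussy about the precise skew-conjugacy definition of ``exceptional,'' that step deserves the same scrutiny in both arguments, but it is not a defect specific to yours.
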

\begin{proof}

Using the fact that $\tau$ and $\sigma$ commute, we have
$P^\tau \circ \lambda(f) = \tau (P(f))  = \tau(\sigma(f)) = \sigma(\tau(f)) = \sigma( \lambda(f)) = \lambda^\sigma \circ P(f)$.  Since
$f \in L \smallsetminus K$, we have the functional identity

\begin{equation}
\label{funceq}	
 \lambda^\sigma \circ P  = P^\tau \circ \lambda \text{ .}
\end{equation}

We make two reductions before completing this proof.

Write $P(X) = \sum_{i=0}^d p_i X^i$ where $d = \deg(P)$.

\begin{claim}
\label{normalform}
We may assume that $p_{d-1} = 0$.  	
\end{claim}
\begin{pfc}
Let $\mu(X) := X - \frac{p_{d-1}}{d p_d}$.  Set $\widetilde{f} := \mu(f)$,
$\widetilde{P} := (\mu^\sigma) \circ P \circ \mu^{-1}$, and $\widetilde{\lambda} := \mu^\tau \circ \lambda \circ \mu^{-1}$.
Then $\sigma(\widetilde{f}) = \widetilde{P}(\widetilde{f})$,
$\tau(\widetilde{f}) = \widetilde{\lambda}(\widetilde{f})$,
$\widetilde{f} \in L \smallsetminus K$, and the coefficient of $X^{d-1}$
in $\widetilde{P}$ is zero.  Replacing $f$ by $\widetilde{f}$,
$P$ by $\widetilde{P}$, and $\lambda$ by $\widetilde{\lambda	}$, the
claim is established.
\end{pfc}

We continue with the reductions.

\begin{claim}
\label{constantco}
We may assume that $p_d \in C^\times$.	
\end{claim}
\begin{pfc}
From Equation~\ref{funceq} and Claim~\ref{normalform}, we see that
there is some $A \in K^\times$ with $\lambda(X) = A X$.  Thus, reading
Equation~\ref{funceq} degree-by-degree for any
index $i$, we have

\begin{equation}
\label{coeffeq}
A^\sigma p_i = A^i p_i^\tau	
\end{equation}

Since $d = \deg(P)$, $p_d \neq 0$.   Since $P$ is not conjugate to
a monomial, there is some positive $m$ with $p_{d-m} \neq 0$.
Dividing the instances of Equation~\ref{coeffeq} for
 $i = d$ and $i = d - m$, we obtain

\begin{equation}
\label{meq}
p_d/p_{d-m} = A^m \tau(p_d/p_{d-m})
\end{equation}

Take $q \in L$ with $q^m$, then from Equation~\ref{meq},
we see that
\begin{equation}
\label{Aq}	
A = \zeta q/\tau(q)
\end{equation}

for some $m^\text{th}$ root
of unity $\zeta$.  Combining Equation~\ref{coeffeq} in the case of
$i = d$ and Equation~\ref{Aq}, we obtain

\begin{equation}
p_d^\tau \zeta^d q^d / \tau(q^d) = \zeta \sigma(q) p_d/ \sigma \tau (q)	
\end{equation}

which implies

\begin{equation}
\tau ( \frac{ p_d \sigma(q)}{q^d} ) = \zeta^{1-d} \frac{p_d \sigma(q)}{q^d}
\end{equation}

Since $(C,K,L,\sigma,\tau)$ satisfies Convwntion~\ref{axconv},
the only solutions to the difference equation $\tau(y) = \zeta^{1-d} y$
are constant.   Thus, there is some $c \in C$ with $p_d =
c \frac{q^d}{\sigma(q)}$.     Let $\mu(X) = \frac{1}{q} X$,
then replacing $P$ with $\mu^\sigma \circ P \circ \mu^{-1}$, $f$ with
$\mu(f)$, and $\lambda$ with $\mu^\tau \circ \lambda \circ \mu^{-1}$,
we arrange that the leading coefficient of $P$ may be taken to be $c$.
\end{pfc}

With our reductions, because $p_d^\tau = p_d$, from Equation~\ref{coeffeq}
in the case of $i = d$, we conclude that $A^\sigma = A^d$.  Because
$\tau(f) = A f$, we see that $\sigma(\frac{\tau(f)}{f}) = (\frac{\tau(f)}{f})^d$, which implies that $f \in K$ by Convention~\ref{axconv} contrary to our hypotheses.
	
\end{proof}


\begin{thebibliography}{10}

\bibitem{AB}
{\sc B.~Adamczewski and J.~P. Bell}, {\em A problem about {M}ahler functions},
  Ann. Sc. Norm. Super. Pisa Cl. Sci. (5), 17 (2017), pp.~1301--1355.

\bibitem{ADHW}
{\sc B.~Adamczewski, T.~Dreyfus, C.~Hardouin, and M.~Wibmer}, {\em Algebraic
  independence and linear difference equations}.
\newblock \url{arXiv:2010.09266}, 2021.

\bibitem{BoOt}
{\sc R.~V. Book and F.~Otto}, {\em String-rewriting systems}, Texts and
  Monographs in Computer Science, Springer-Verlag, New York, 1993.

\bibitem{Brown}
{\sc K.~S. Brown}, {\em Buildings}, Springer Monographs in Mathematics,
  Springer-Verlag, New York, 1998.
\newblock Reprint of the 1989 original.

\bibitem{Exel}
{\sc R.~Exel}, {\em Partial actions of groups and actions of inverse
  semigroups}, Proc. Amer. Math. Soc., 126 (1998), pp.~3481--3494.

\bibitem{Mahler-first}
{\sc K.~Mahler}, {\em Arithmetische {E}igenschaften der {L}\"{o}sungen einer
  {K}lasse von {F}unktionalgleichungen}, Math. Ann., 101 (1929), pp.~342--366.

\bibitem{Matsumoto}
{\sc H.~Matsumoto}, {\em G\'{e}n\'{e}rateurs et relations des groupes de {W}eyl
  g\'{e}n\'{e}ralis\'{e}s}, C. R. Acad. Sci. Paris, 258 (1964), pp.~3419--3422.

\bibitem{MS}
{\sc A.~Medvedev and T.~Scanlon}, {\em Invariant varieties for polynomial
  dynamical systems}, Ann. of Math. (2), 179 (2014), pp.~81--177.

\bibitem{Ng15}
{\sc K.~D. Nguyen}, {\em Algebraic independence of local conjugacies and
  related questions in polynomial dynamics}, Proc. Amer. Math. Soc., 143
  (2015), pp.~1491--1499.

\bibitem{Nishioka-book}
{\sc K.~Nishioka}, {\em Mahler functions and transcendence}, vol.~1631 of
  Lecture Notes in Mathematics, Springer-Verlag, Berlin, 1996.

\bibitem{Pa17}
{\sc F.~Pakovich}, {\em Polynomial semiconjugacies, decompositions of
  iterations, and invariant curves}, Ann. Sc. Norm. Super. Pisa Cl. Sci. (5),
  17 (2017), pp.~1417--1446.

\bibitem{Pa19}
\leavevmode\vrule height 2pt depth -1.6pt width 23pt, {\em Invariant curves for
  endomorphisms of $\mathbb{P}^1 \times \mathbb{P}^1$}.
\newblock \url{arXiv:1904.10952}, 2019.

\bibitem{Ritt}
{\sc J.~F. Ritt}, {\em Prime and composite polynomials}, Trans. Amer. Math.
  Soc., 23 (1922), pp.~51--66.

\bibitem{ScSi}
{\sc R.~Sch\"{a}fke and M.~Singer}, {\em Consistent systems of linear
  differential and difference equations}, J. Eur. Math. Soc. (JEMS), 21 (2019),
  pp.~2751--2792.

\bibitem{Za99}
{\sc U.~Zannier}, {\em On a functional equation relating a {L}aurent series
  {$f(x)$} to {$f(x^m)$}}, Aequationes Math., 55 (1998), pp.~15--43.

\end{thebibliography}
\end{document}